\theoremstyle{plain}
\newtheorem{theorem}{Theorem}[section]
\theoremstyle{remark}
\newtheorem{remark}[theorem]{Remark}
\newtheorem{example}[theorem]{Example}
\theoremstyle{plain}
\newtheorem{corollary}[theorem]{Corollary}
\newtheorem{lemma}[theorem]{Lemma}
\newtheorem{proposition}[theorem]{Proposition}
\newtheorem{definition}[theorem]{Definition}
\newtheorem{assumption}[theorem]{Assumption}
\numberwithin{equation}{section}
\def\N{{\mathbb N}}
\def\R{{\mathbb R}}
\def\C{{\mathbb C}}
\newcommand{\nnn}{|\!|\!|}
\newcommand{\E}{{\mathbb E}}
\renewcommand{\P}{{\mathbb P}}
\newcommand{\F}{{\mathscr F}}
\newcommand{\Filtr}{{\mathbb F}}
\newcommand{\A}{{\mathcal A}}
\newcommand{\Vspace}{{\mathcal V}}
\renewcommand{\H}{{\mathscr H}}
\def\Xint#1{\mathchoice
   {\XXint\displaystyle\textstyle{#1}}%
   {\XXint\textstyle\scriptstyle{#1}}%
   {\XXint\scriptstyle\scriptscriptstyle{#1}}%
   {\XXint\scriptscriptstyle\scriptscriptstyle{#1}}%
   \!\int}
\def\XXint#1#2#3{{\setbox0=\hbox{$#1{#2#3}{\int}$}
     \vcenter{\hbox{$#2#3$}}\kern-.5\wd0}}
\newcommand{\g}{\gamma}
\renewcommand{\d}{\delta}
\newcommand{\om}{\omega}
\renewcommand{\O}{\Omega}
\newcommand{\Do}{\mathsf{D}}
\newcommand{\rhos}{\rho^{\star}}
\newcommand{\RR}{\R^d}
\newcommand{\deter}{\mathsf{det}}
\newcommand{\stoc}{\mathsf{sto}}
\newcommand{\Tor}{\mathbb{T}}
\newcommand{\Ext}{\mathsf{E}_T}
\newcommand{\Extz}{\prescript{}{0}{\mathsf{E}_T}}
\newcommand{\Extunit}{\mathsf{E}}
\newcommand{\DN}{\prescript{}{N}{\Delta}}
\newcommand{\Dd}{\prescript{}{D}{\Delta}}
\newcommand{\Dn}{\prescript{}{\n}{\Delta}}
\newcommand{\Aop}{\mathscr{A}}
\newcommand{\HN}{\prescript{}{N}{H}}
\newcommand{\Hn}{\prescript{}{\n}{H}}
\newcommand{\Bn}{\prescript{}{\n}{B}}
\newcommand{\BN}{\prescript{}{N}{B}}
\newcommand{\X}{\mathfrak{X}}
\newcommand{\z}{\mathcal{Z}}
\newcommand{\loc}{loc}
\newcommand{\hz}{\prescript{}{0}{H}}
\newcommand{\Wz}{\prescript{}{0}{W}}
\newcommand{\Tr}{\mathsf{Tr}}
\newcommand{\D}{\mathscr{D}}
\newcommand{\BIP}{\text{\normalfont{BIP}}}
\newcommand{\Xap}{X^{\mathsf{Tr}}_{\a,p}}
\newcommand{\Xapcrit}{X^{\mathsf{Tr}}_{\a_{\crit},p}}
\newcommand{\Xzero}{X^{\mathsf{Tr}}_{2}}
\newcommand{\Xp}{X^{\mathsf{Tr}}_{p}}
\newcommand{\Xzp}{X^{\mathsf{Tr}}_{0,p}}
\newcommand{\HD}{\prescript{}{D}{H}}
\newcommand{\BD}{\prescript{}{D}{B}}
\newcommand{\WD}{\prescript{}{D}{W}}
\newcommand{\n}{\nu}
\renewcommand{\k}{\kappa}
\renewcommand{\a}{\kappa}
\renewcommand{\b}{\beta}
\renewcommand{\l}{\langle}
\renewcommand{\r}{\rangle}
\renewcommand{\ll}{\llbracket}
\newcommand{\rr}{\rrbracket}
\newcommand{\rParen}{\rrparenthesis}
\newcommand{\lParen}{\llparenthesis}
\newcommand{\llo}{\lParen}
\newcommand{\rro}{\rParen}
\newcommand{\tA}{\widetilde{A}}
\newcommand{\tB}{\widetilde{B}}
\newcommand{\wt}{\widetilde}
\newcommand{\wT}{\wt{T}}
\newcommand{\wl}{\wt{\lambda}}
\newcommand{\crit}{\mathsf{crit}}
\newcommand{\angH}{\omega_{H^{\infty}}}
\newcommand{\Dom}{\mathscr{O}}
\newcommand{\tr}{\mathrm{tr}}
\newcommand{\calL}{{\mathscr L}}
\newcommand{\Sch}{{\mathscr S}}
\newcommand{\one}{{{\bf 1}}}
\newcommand{\lb}{\langle}
\newcommand{\rb}{\rangle}
\newcommand{\supp}{\text{\rm supp\,}}
\renewcommand{\div}{\normalfont{\text{div}}}
\newcommand{\T}{\Pi}
\newcommand{\B}{B}
\newcommand{\MRtaz}{\mathcal{SMR}_{p,\a}(T)}
\newcommand{\MRta}{\mathcal{SMR}^{\bullet}_{p,\a}(T)}
\newcommand{\MRtz}{\mathcal{SMR}_p(T)}
\newcommand{\MRtzeroz}{\mathcal{SMR}_{p,0}(T)}
\newcommand{\MRt}{\mathcal{SMR}^{\bullet}_{p}(T)}
\newcommand{\MRtzero}{\mathcal{SMR}^{\bullet}_{p,0}(T)}
\newcommand{\Sol}{\mathscr{R}}
\newcommand{\I}{I}
\newcommand{\J}{J}
\newcommand{\esssup}{\normalfont{\text{ess\,sup}}}
\newcommand{\Four}{\mathcal{F}}
\newcommand{\bb}{\mathcal{B}}
\newcommand{\Hip}{\normalfont{\textbf{(H)}}}
\newcommand{\Hipprime}{\normalfont{\textbf{(H$^{\prime}$)}}}
\renewcommand{\emptyset}{\varnothing}
\newcommand{\dist}{{\rm dist}}
\newcommand{\norm}[1]{{\left\vert\kern-0.25ex\left\vert\kern-0.25ex\left\vert #1
    \right\vert\kern-0.25ex\right\vert\kern-0.25ex\right\vert}}
\newcommand{\mf}{m_F}
\newcommand{\mg}{m_G}
\newcommand{\Progress}{\mathscr{P}}
\newcommand{\Borel}{\mathscr{B}}
\begin{document}

\author{Antonio Agresti}
\address{Institute of Science and Technology Austria (IST Austria)\\ Am Campus 1\\ 3400 Klosterneuburg \\ Austria.} \email{antonio.agresti92@gmail.com}

\author{Mark Veraar}
\address{Delft Institute of Applied Mathematics\\
Delft University of Technology \\ P.O. Box 5031\\ 2600 GA Delft\\The
Netherlands.} \email{M.C.Veraar@tudelft.nl}

\thanks{The second author is supported by the VIDI subsidy 639.032.427 of the Netherlands Organisation for Scientific Research (NWO)}

\date\today
\title[parabolic stochastic evolution equations in critical spaces I]{Nonlinear parabolic stochastic evolution \\ equations in critical spaces part I \\\small{\emph{S\lowercase {tochastic maximal} \lowercase{regularity and local existence}}}}

\keywords{quasilinear, semilinear, stochastic evolution equation, stochastic maximal regularity, critical spaces, Allen--Cahn equation, Cahn--Hilliard equation, reaction-diffusion equation, Burgers' equation}

\subjclass[2010]{Primary: 60H15, Secondary: 35B65, 35K59, 35K90, 35R60, 42B37, 47D06}


\begin{abstract}
In this paper we develop a new approach to nonlinear stochastic partial differential equations with Gaussian noise. Our aim is to provide an abstract framework which is applicable to a large class of SPDEs and includes many important cases of nonlinear parabolic problems which are of quasi- or semilinear type. This first part is on local existence and well-posedness.
A second part in preparation is on blow-up criteria and regularization.

Our theory is formulated in an $L^p$-setting, and because of this we can deal with nonlinearities in a very efficient way. Applications to several concrete problems and their quasilinear variants are given. This includes Burgers' equation, the Allen--Cahn equation, the Cahn--Hilliard equation, reaction--diffusion equations, and the porous media equation. The interplay of the nonlinearities and the critical spaces of initial data leads to new results and insights for these SPDEs.

The proofs are based on recent developments in maximal regularity theory for the linearized problem for deterministic and stochastic evolution equations. In particular, our theory can be seen as a stochastic version of the theory of critical spaces due to Pr\"uss--Simonett--Wilke (2018). Sharp weighted time-regularity allow us to deal with rough initial values and obtain instantaneous regularization results. The abstract well-posedness results are obtained by a combination of several sophisticated splitting and truncation arguments.
\end{abstract}

\maketitle

\newpage

\tableofcontents

\newpage

\section{Introduction}

In this article we study parabolic quasilinear and semilinear stochastic evolution equations of the form:
\begin{equation}\label{eq:introSEE}
\begin{cases}
du + A(u) u dt = F(u)dt + (B(u) u+G(u)) dW_{H}, \qquad t\in (0,T), \\
u(0)=u_0.
\end{cases}
\end{equation}
Here $A$ is the leading operator and is of quasilinear type which means that for each $v$ in a suitable interpolation space
\[u\mapsto A(v)u,\]
defines a mapping from $X_1$ into $X_0$,
where $X_1\hookrightarrow X_0$ densely. The problem \eqref{eq:introSEE} includes the semilinear case where the pair $(A(u) u, B(u)u)$ is replaced by $(\tilde{A} u, \tilde{B} u)$. Here $(\tilde{A}, \tilde{B})$ are operators not depending on $u$. The noise term $W_H$ is a cylindrical Brownian motion. The nonlinearities $F$ and $G$ are of semilinear type. Many examples of SPDEs fit in the above framework.

A powerful approach to problems of the form \eqref{eq:introSEE} is the monotone operator approach (see \cite{LiuRock} and references therein), and actually one can even treat some more complicated nonlinearities than $(A,B)$ for the leading operators. In examples the usual coercivity is formulated for the pair $(A,B)$ and ensures that the problem is of parabolic type. Moreover, without any difficulty this method allows to treat problems with $(t,\omega)$-dependent operators, which is important in filtering problems. Here and throughout the paper $\om\in \Omega$ denotes the stochastic variable. There are also some limitations and drawbacks to the method. For example it requires a Hilbert space structure and it does not provide optimal time regularity.
Moreover, for many equations in dimensions $d\geq 3$ (e.g.\ Navier-Stokes, Cahn-Hilliard and Allen-Cahn), $L^p$ or even $L^p(L^q)$-theory seems to be necessary. In some cases  a $C^{\alpha}$-theory could be applied as well. However a $C^{\alpha}$-theory (see e.g.\ \cite{DuLiu,wang2019schauder} for the linear $C^{\alpha}$-theory) requires regularity assumptions on the noise, the coefficients, and on $u_0$ which can be either too restrictive for physical applications, or does not fit the scaling property of the SPDE considered.
Moreover, $L^p(L^q)$-theory provides blow-up criteria that can be combined with \textit{energy estimates} to prove global existence. Energy bounds are usually $L^p$-estimates, and thus,  they do not seem to be exploitable in an $C^{\alpha}$-context. Blow-up criteria and their applications to SPDEs will be the topic of the subsequent parts \cite{AV19_QSEE_2,AV19_QSEE_3} where the results proven here will be of basic importance.

Our aim is to build an $L^p(L^q)$-theory for \eqref{eq:introSEE} in which the coercivity condition can be formulated for an abstract pair $(A,B)$ and where we allow $(t,\omega)$-dependence in the operators in an adapted way. The $L^p$-theory \cite{Kry} is an important step in this direction, and recently an evolution equation approach has been found in \cite{VP18}, which additionally gives the optimal space-time-regularity.
In the case that $A$ is time-independent and $B = 0$, optimal space-time regularity was discovered in the influential paper \cite{MaximalLpregularity}. Moreover, under a smallness condition on $B$, these results can be applied to well-posedness of \eqref{eq:introSEE}. For instance the semilinear case was considered in \cite{Brz2,NVW3} and extended to a {\em maximal regularity} setting (see Section \ref{ss:MRintro}) in \cite{NVW11eq}. The latter was extended to the quasilinear setting in \cite{Hornung}. In this paper we will completely revise the general theory, and our approach has much more flexibility. In particular, we allow:
\begin{itemize}
\item a quasilinear couple $(A,B)$;
\item measurable dependence in $(t,\omega)$;
\item $(A,B)$ without smallness conditions on $B$;
\item weights in the time variable $w_{\kappa}(t) = t^{\kappa}$ with $\kappa\in [0,\frac{p}{2}-1)$;
\item rough initial data: $u_0\in (X_0,X_1)_{1-\frac{1+\a}{p},p}$;
\item nonlinearities $F$ and $G$ defined on interpolation spaces $[X_0,X_1]_{1-\varepsilon}$ which are locally Lipschitz and have polynomial growth;
\item $L^p(0,T;L^q)$-theory and $L^p(0,T;H^{s,q})$-theory for a range of $s\in \R$.
\end{itemize}
In the above $(X_0, X_1)_{\theta,p}$ and $[X_0, X_1]_{\theta}$ denote the real and complex interpolation spaces, respectively. In applications these can be identified with certain Besov spaces and Bessel potential spaces.

Using the weights $w_{\kappa}$ we will introduce a stochastic version of the theory of {\em critical spaces}, which we will briefly discuss in the deterministic setting  in the next section of the introduction.

In the papers \cite{AV19_QSEE_2,AV19_QSEE_3} we will continue the study of \eqref{eq:introSEE}. More specifically, we will study blow-up criteria, regularization phenomena and further applications to SPDEs. In \cite{AV20_NS}, we will show global well--posedness for \eqref{eq:introSEE} with small initial data for the stochastic Navier-Stokes equations.

\subsection{Criticality}
\label{ss:critical_spaces_introduction}
In the literature {\em critical spaces} are often introduced as those spaces which satisfy a scaling invariance similar to the one of the PDE itself, or as those spaces in which the energy bound and nonlinearity are of the same order. More details on this can be found in \cite{Can04,Klain00,LePi,Taodisp,Trie13}, and references therein.
For example for the Navier--Stokes equations on $\R^3$ one can obtain solutions in $L^p(0,T;L^q)$ for small initial data in the critical space $\dot{B}^{-1+\frac3q}_{q,p}$ provided the criticality condition $\frac{2}{p} + \frac{3}{q}=1$ holds, and $q\in (3, \infty)$ (see \cite[p.\ 182]{LePi}).

Another way to introduce criticality would be to consider a specific nonlinearity, e.g.\ $F(u) = |u|^r$ in a given PDE. Typically, some exponent $r$ turns out to be critical in the sense that the ``usual'' estimates are not powerful enough anymore. Below that value of $r$ the problem is usually called {\em subcritical} and above that value it is called {\em supercritical}.

In a recent paper of Pr\"uss--Simonett--Wilke \cite{CriticalQuasilinear} a new viewpoint on critical spaces has been discovered in the deterministic setting. Special cases have been considered before in \cite{Pru17, addendum, PW18}.
The authors consider abstract evolution equations in spaces of the form $L^p((0,T),w_{\kappa};X_0)$, where $p\in (1,\infty)$, $w_{\kappa}(t) = t^{\kappa}$ is a weight function with $\kappa\in [0,p-1)$, and typically $X_0$ is a Sobolev or an $L^q$-space. Assuming {\em maximal regularity} (see Section \ref{ss:MRintro}) for the leading term and several other conditions, the authors establish local well-posedness. The weight can be chosen in correspondence with the polynomial growth rate of the nonlinearity to obtain what they call a critical weight. After the weight exponent $\kappa$ is fixed, the so-called {\em trace space} of initial values which one can consider becomes $(X_0, X_1)_{1-\frac{1+\kappa}{p},p}$, and this space they call critical.

A surprising feature is that in many concrete examples the latter trace space coincides with the critical space from a PDE point of view.
In \cite{CriticalQuasilinear} this leads to several new results for classical PDEs of evolution type such as the Navier--Stokes equation, Cahn--Hilliard equations, convection–-diffusion equations, and many more. A crucial point in their theory is that $F$ does not have to be defined on the real interpolation spaces $(X_0, X_1)_{1-\frac1p,p}$ and one can allow it to be defined on a much smaller space $X_{\theta}$ with $\theta>1-\frac1p$ at the cost of a growth condition on $F$.

In our work we will develop a stochastic version of the above theory. For this many additional difficulties have to be overcome. Some of them are connected to $L^p(\Omega)$-integrability issues for the nonlinearities, and others are connected with the fact that in stochastic maximal regularity (see next section) one needs to work with vector-valued spaces with {\em fractional} smoothness to obtain the right trace theory. Note that in the stochastic case the condition on $\kappa$ becomes more restrictive $\kappa\in [0,\frac{p}{2}-1)$ (in the deterministic case this was $\kappa\in [0,p-1)$). Another issue in the examples is that the stochastic version of maximal $L^p$-regularity theory is more complicated and less developed than the deterministic case. Fortunately, there is a lot of current research in this direction and hopefully our paper will give motivation for further progress.

We will show that our theory can be applied to several classes of parabolic SPDEs. With a hands on approach for each SPDE separately one can often obtain very detailed properties of solutions. Our theory can provide more information as one usually obtains new spaces in which the problem can be analyzed, and thus provides different regularity results which where often not available yet.

Before we continue our discussion on the results of our paper, we will first introduce the reader to
so-called {\em stochastic maximal regularity}, which is one of the key tools in our paper.
\subsection{Stochastic maximal regularity\label{ss:MRintro}}

Maximal regularity has many forms and has always played a fundamental role in modern PDE. Below we will try to explain some of the background in a nontechnical way. The precise definitions can be found in Section \ref{ss:SMR}.

Arguably the most common form of maximal regularity for elliptic equations is: the solution $u$ to $\lambda u-\Delta u=f$ with $f\in L^q(\R^d)$ and $\lambda>0$ satisfies
\[\|u\|_{W^{2,q}(\R^d)}\leq C\|f\|_{L^q(\R^d)},\]
where $q\in (1, \infty)$ and $C$ is a constant depending on $\lambda$ and $q$.
The result fails for the endpoints $q\in \{1, \infty\}$. For $q=2$ this result is simple, and for general $q$ one typically uses Calder\'on--Zygmund theory (see \cite{Grafakos1}).

For the heat equation a similar result holds: the solution $u$ to $\partial_t u -\Delta u=f$, with initial condition $u_0\in B^{2-2/p}_{q,p}(\R^d)$ (Besov space) and $f\in L^p(0,T;L^q(\R^d))$ satisfies
\[\|u\|_{W^{1,p}(0,T;L^q(\R^d))} +\|u\|_{L^p(0,T;W^{2,q}(\R^d))}\eqsim \|u_0\|_{B^{2-2/p}_{q,p}(\R^d)}+\|f\|_{L^p(0,T;L^q(\R^d))},\]
where $p,q\in (1, \infty)$. Again the result fails if $p$ or $q$ are in $\{1, \infty\}$. There are many ways how to deduce the latter results, and again Calder\'on--Zygmund theory plays a central role. The fact that the estimate is two-sided shows that the result is optimal.

An efficient reformulating of the last result is
\[\|u\|_{W^{1,p}(0,T;X_0)} +\|u\|_{L^p(\R_+;X_1)}\eqsim \|u_0\|_{(X_0, X_1)_{1-\frac1p,p}}+\|f\|_{L^p(0,T;X_0)},\]
where $X_0 = L^q(\R^d)$ and $X_1 = W^{2,q}(\R^d)$.
In this form the result can be extended to many other parabolic problems, and this has been an important field of research for decades:
\begin{itemize}
\item For a PDE perspective see \cite{GT83,Krylov2008_book,LSU};
\item For an evolution equation perspective see \cite{DHP,KuWe,pruss2016moving}.
\end{itemize}
This topic is still very active in various schools as is evident from the many recent results (see e.g.\ \cite{DenkKaip,dong2017higher,DKnew, ter2019mathrm, HumLin,piasecki2019maximal,RoiSch,Tolk18}).
As we explained before sharp estimates for the linear setting can be used very effectively in the nonlinear case. In the quasilinear case for deterministic equations the standard reference for this is \cite{ClLi}, and the recent monograph \cite{pruss2016moving}.

In the stochastic situation the above theory is much more recent. If $u$ is a
solution to the stochastic heat equation $d u +\Delta u dt = fdt + g dW$, then for all $p\in (2, \infty)$ and $q\in [2, \infty)$
\begin{align*}
\|u\|_{L^p(\Omega;H^{\theta,p}(0,T;H^{2-2\theta,q}(\R^d)))}
\lesssim \|u_0\|_{L^p(\Omega;B^{2-2/p}_{q,p}(\R^d))}
&+\|f\|_{L^p(\Omega;L^p(0,T;L^q(\R^d)))}\\
& + \|g\|_{L^p(\Omega;L^p(0,T;W^{1,q}(\R^d;\ell^2)))},
\end{align*}
for any $\theta\in [0,1/2)$. Moreover, if $q=2$, then $p=2$ is also allowed. Here $H^{\theta,p}$ denotes the Bessel-potential space with smoothness $\theta$.
The above result was proved in \cite{MaximalLpregularity} by van Neerven, Weis and the second author. The case $\theta=0$ and $p\geq q\geq 2$ was obtained before in \cite{Kry94a,Kry96,Kry,Kry00} with a slightly stronger assumption on $u_0$. Recently, a stochastic version of Calder\'on--Zygmund theory was developed by Lorist and the second author. The latter can be used to derive the full range $p\in (2, \infty)$ and $q\in (2, \infty)$ from the case $p=q$ (see \cite{LoVer}).

As before the evolution equation reformulation is of the form
\begin{align*}
\|u\|_{L^p(\O;H^{\theta,p}(0,T;X_{1-\theta}))} \lesssim \|u_0\|_{L^p(\O;(X_0, X_1)_{1-\frac1p,p})}
&+
\|f\|_{L^p(\O; L^p(0,T;X_0))}\\
& + \|g\|_{L^p(\O; L^p(0,T;\gamma(\ell^2,X_{\frac{1}{2}})))},
\end{align*}
where $X_{1-\theta} = [X_0, X_1]_{1-\theta}$ denotes the complex interpolation spaces and coincides with $\Do(A^{1-\theta})$ for $A = 1-\Delta$ on $X_0$. This is the setting in which in \cite{MaximalLpregularity} the stochastic maximal regularity was proved for a large class of SPDEs. An important difference with the deterministic case is that the estimate does not hold for the end-point $\theta=1/2$. However, the half-open interval $\theta\in [0,1/2)$ is good enough for applications.

It is important to note that the natural form of the above problem is actually $d u +\Delta u dt = fdt + (g + B u) dW$,
where
\[B u \,d W=  \sum_{j=1}^d \sum_{n\geq 1} b_{jn} \partial_j u\,dw^n,\]
where $(b_{jn})_{n\geq 1}\in \ell^2$. Under the parabolicity/coercivity assumption
\begin{equation}\label{eq:stochparaintro}
|\xi|^2 -\frac{1}{2} \sum_{i,j=1}^d \sum_{n\geq 1} b_{in} b_{jn} \xi_i \xi_j\geq \delta |\xi|^2, \ \ \text{ with }\delta>0,
\end{equation}
the above estimates for the stochastic heat equation still hold. See \cite[Theorem 5.3]{VP18} and Section \ref{ss:introduction_motivation_semilinear} for a more general formulation.

Although we will only use stochastic maximal regularity in the $L^p(L^q)$ scale, it is important to note that it can also be considered in different scales such as the Besov scale and H\"older scale. For details on this we refer to
\cite{Brz1,BH09,DL98,LoVer} for the Besov scale and to \cite{DuLiu,DuLiuZhang,wang2019schauder}. The H\"older case can be seen as a stochastic analogue of Schauder theory (see \cite{GT83,KryHolder}).

\subsection{Illustration}
We will illustrate our abstract results Theorems \ref{t:local}, \ref{t:local_Extended} and \ref{t:semilinear} in a simple case in this introduction. We will only do this in the semilinear setting and refer to Section \ref{s:quasi_second_gradient_noise} for examples in the quasilinear setting. Consider the following special case of \eqref{eq:introSEE} on $\R^d$ with $d\geq 3$
\begin{equation}\label{eq:introSEEexample}
\begin{cases}
du - \Delta u dt = \displaystyle u|u|^{2}  dt +  \sum_{n\geq 1}  \Big( \sum_{j=1}^d b_{jn} \partial_j u(x) + g_n u|u|\Big)dw_t^n,  \\
u(0)=u_0,
\end{cases}
\end{equation}
where $(b_{jn})_{n\geq 1},(g_n)_{n\geq 1}\in \ell^2$ and the $b_{jn}$ satisfy \eqref{eq:stochparaintro}. The following is a special case of Theorem \ref{t:critical_space_reaction_diffusion}. The definition of maximal local solution will be given in Section \ref{s:quasi} (see the text below \eqref{eq:choice_AFGC_reaction_diffusion_for_intro} and Definition \ref{def:solution2}).

\begin{theorem}
\label{t:critical_space_reaction_diffusionintro}
Let $d\geq 3$. Assume that $q\in [2, d)$ and $q>d/2$. Let $p\in (2, \infty)$ be such that
$\frac{1}{p}+\frac{d}{2q}\leq 1$ holds, and let $\a_{\crit}=  p(1-\frac{d}{2q}) -1$.
Then for each
$$u_0\in L^0_{\F_0}(\O;B^{\frac{d}{q}-1}_{q,p}(\R^d))$$
there exists a maximal local solution $(u,\sigma)$ to \eqref{eq:introSEEexample}. Moreover, there exists a localizing sequence $(\sigma_n)_{n\geq 1}$ such that a.s.\ for all $n\geq 1$
$$
u\in L^{p}(0,\sigma_n,w_{\a_{\crit}};W^{1,q}(\R^d))
\cap C([0,\sigma_n];B^{\frac{d}{q}-1}_{q,p}(\R^d))\cap C((0,\sigma_n];B^{1-\frac{2}{p}}_{q,p}(\R^d)).
$$
\end{theorem}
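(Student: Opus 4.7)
The plan is to verify the hypotheses of the abstract semilinear result (Theorem \ref{t:semilinear}) in a carefully chosen functional framework, and then read off the local existence and regularity from it. The natural choice of base spaces is $X_0=H^{-1,q}(\RR)$ and $X_1=H^{1,q}(\RR)$; this shift by $-1$ compared with the naive choice $X_0=L^q$ is what lets the gradient noise term $b_{jn}\partial_j u$ sit in $\gamma(\ell^2,X_{1/2})$ where $X_{1/2}=[X_0,X_1]_{1/2}=L^q(\RR)$. Interpolation then gives
\[
(X_0,X_1)_{1-\frac{1+\a_{\crit}}{p},p}=B^{-1+2(1-\frac{1+\a_{\crit}}{p})}_{q,p}(\RR)=B^{\frac{d}{q}-1}_{q,p}(\RR),
\]
where the last equality uses precisely the definition $\a_{\crit}=p(1-\frac{d}{2q})-1$. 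This identifies the abstract trace space with the Besov space in the statement, and also confirms that the constraint $\a_{\crit}\in[0,\frac{p}{2}-1)$ translates to $\frac{1}{p}+\frac{d}{2q}\le 1$ together with $q\in(d/2,d)$.

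Next I would verify stochastic maximal regularity for the linear pair $(\tilde A,\tilde B)=(-\Delta,(b_{jn}\partial_j\cdot)_{j,n})$ on $X_0=H^{-1,q}$ with weight $\a_{\crit}$. For $q=2$ this is essentially the setting of \eqref{eq:stochparaintro} and follows from the results quoted from \cite{MaximalLpregularity,VP18}; the general $q\in[2,d)$ case is reduced to the latter either by the Calder\'on--Zygmund extrapolation of \cite{LoVer} or by shifting the scale, since $-\Delta$ has a bounded $H^\infty$-calculus on $H^{s,q}(\RR)$ for every $s\in\R$ and the parabolicity assumption \eqref{eq:stochparaintro} is scale-invariant. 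The admissibility of the power weight $w_{\a_{\crit}}(t)=t^{\a_{\crit}}$ is part of the general SMR statement for $\a\in[0,\frac{p}{2}-1)$.

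The main analytic work is to check that $F(u)=u|u|^2$ and $G(u)=(g_n u|u|)_{n\ge1}$ are locally Lipschitz with the appropriate polynomial growth on a suitable intermediate space $X_{\b}=[X_0,X_1]_{\b}=H^{2\b-1,q}(\RR)$, $\b\in(1-\frac{1+\a_{\crit}}{p},1)$. With $\rho_F=2$ and $\rho_G=1$, the required estimates have the schematic form
\[
\|F(u)-F(v)\|_{X_0}\lesssim(1+\|u\|_{X_{\b}}^{2}+\|v\|_{X_{\b}}^{2})\|u-v\|_{X_{\b}},
\quad \|G(u)-G(v)\|_{\gamma(\ell^2,X_{1/2})}\text{ analogous},
\]
which I would reduce to H\"older's inequality in $L^{3q}$ (resp. $L^{2q}$) after invoking the Sobolev embeddings $X_{\b}\hookrightarrow L^{3q}(\RR)\hookrightarrow H^{-1,q}(\RR)$ and $X_{\b}\hookrightarrow L^{2q}(\RR)$. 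The former embedding forces $2\b-1\ge \frac{d}{q}-\frac{d}{3q}-1=\frac{2d}{3q}-1$, i.e. $\b\ge\frac{d}{3q}$; a parallel computation for $G$ gives $\b\ge\frac{d}{4q}+\frac{1}{2}$; and the abstract scaling identity of \cite{CriticalQuasilinear} (in its stochastic version underlying Theorem \ref{t:semilinear}) relates $(\b,\rho,\a_{\crit},p)$ exactly through $(1+\rho)(1-\b)=\frac{1+\a}{p}$ in the borderline case, so the choice $\a=\a_{\crit}$ makes both inequalities compatible whenever $\frac{1}{p}+\frac{d}{2q}\le 1$.

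Given these ingredients the abstract Theorem \ref{t:semilinear} produces a maximal local solution $(u,\sigma)$ together with a localizing sequence $(\sigma_n)$ such that $u\in L^p(0,\sigma_n,w_{\a_{\crit}};X_1)\cap C([0,\sigma_n];(X_0,X_1)_{1-\frac{1+\a_{\crit}}{p},p})$ a.s., and the instantaneous regularization part of that theorem yields $u\in C((0,\sigma_n];(X_0,X_1)_{1-\frac1p,p})=C((0,\sigma_n];B^{1-\frac{2}{p}}_{q,p}(\RR))$. The hardest step will be the borderline Sobolev estimates for $F$ and $G$ in the negative-order space $X_0=H^{-1,q}$: the gain of one derivative must be spent exactly on the extra power in the nonlinearity, and verifying that the constants are uniform in $n$ for the $\ell^2$-valued $G$ (through the $\gamma$-norm rather than merely $\ell^2(L^q)$) is where one needs to use the square function characterization of $\gamma$-norms together with Khintchine's inequality.
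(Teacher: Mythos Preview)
Your approach is essentially the same as the paper's: the statement is the special case $m=3$, $h=2$ of Theorem~\ref{t:critical_space_reaction_diffusion}, which is proved exactly via Theorem~\ref{t:semilinear} in the weak setting $X_0=H^{-1,q}$, $X_1=H^{1,q}$, with $(A,B)\in\MRta$ supplied by Lemma~\ref{l:SMR_semilinear_PDEs} (no Calder\'on--Zygmund extrapolation or reduction from $q=2$ is needed; the $x$-independence of the coefficients lets one conjugate by $(1-\Delta)^{s/2}$ and apply \cite[Theorem~5.3]{VP18} directly for all $q\in[2,\infty)$).

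Two computational slips to fix. First, your Sobolev count for $F$ is off: factoring $H^{-1,q}\hookleftarrow L^t$ with $-1-\tfrac{d}{q}=-\tfrac{d}{t}$ and then $H^{2\beta-1,q}\hookrightarrow L^{3t}$ gives $2\beta-1\ge \tfrac{d}{q}-\tfrac{d}{3t}=\tfrac{2d}{3q}-\tfrac13$, hence $\beta\ge \tfrac{d}{3q}+\tfrac13$, not $\tfrac{d}{3q}$; your written embedding ``$L^{3q}\hookrightarrow H^{-1,q}$'' is false on~$\R^d$ and should be replaced by this two-step route (cf.\ \eqref{eq:estimate_F_l_m}). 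Second, the critical balance in \ref{HFcritical} with $\varphi=\beta$ reads $\tfrac{1+\a}{p}=\tfrac{\rho+1}{\rho}(1-\beta)$, not $(1+\rho)(1-\beta)$; with the corrected $\beta_1=\tfrac13+\tfrac{d}{3q}$ and $\rho=2$ this yields $\tfrac{1+\a_{\crit}}{p}=1-\tfrac{d}{2q}$ as required, and similarly for $G$ with $\beta_2=\tfrac12+\tfrac{d}{4q}$, $\rho=1$. Finally, the $\gamma$-norm step you flag as hardest is in fact immediate from the identification $\gamma(\ell^2,L^q)=L^q(\R^d;\ell^2)$ (see \eqref{eq:gammaidentity}); no separate Khintchine argument is needed.
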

Here $L^{p}(0,T,w_{\a})$ denotes the weighted $L^p$-space with weight $w_{\a}(t) = t^{\a}$.
One can check that \eqref{eq:introSEEexample} is invariant under the scaling (see Subsection \ref{sss:critical_reaction_diffusion_l_m})
\[u_{\lambda}(t,x):=\lambda^{1/2} u(\lambda t,\lambda^{1/2}x),\quad \text{ for }\lambda>0,\,x\in \R^d.\]
Moreover, the space of initial data $B^{\frac{d}{q}-1}_{q,p}(\R^d)$ (or actually its homogeneous version) is invariant under this scaling as well. Another interesting feature is that we can obtain $H^{1,q}$-solutions for any initial data with arbitrary low but positive smoothness. Moreover, the process $u:(0,\sigma)\to B^{1-\frac{2}{p}}_{q,p}(\R^d)$ still has continuous paths, and this shows that there is instantaneous regularization if $\a_{\crit}>0$ and the latter holds if the inequality $\frac{1}{p}+\frac{d}{2q}\leq 1$ is strict.

In the above it is important to note that only part of the structure of the nonlinearities $u |u|^2$ and $u|u|$ plays a role in the formulation of the result. In particular, if the nonlinearities have a different growth, then the above spaces need to be changed accordingly (see Theorem \ref{t:critical_space_reaction_diffusion} for details).

The noise term can be allowed to be rougher. For this one can just change the spaces accordingly, and as is classical, one uses the regularizing effect of the leading operator $\Delta$. This is for instance explained for the 1d stochastic Burgers' equation with white noise in Section \ref{ss:Burgers_semilinear} and with rough noise in Section \ref{ss:Burgers_quasilinear}. Here it is important to note that we can still allow critical spaces in many situations.

\subsection{Quasilinear SPDEs}

There exist several papers on quasilinear SPDEs in the literature. Sometimes authors use the wording semilinear, quasilinear and fully nonlinear in different ways. Let us explain our terminology in the example of a stochastic diffusion equation in nondivergence form with nonlinearities $f(u)dt + g(u) dW$ as before
\begin{itemize}
\item Semilinear: the leading operator is $u\mapsto a_{ij} \partial_{ij}^2 u$ with $a_{ij}$ independent of $u$;
\item Quasilinear: the leading operator is $u\mapsto a_{ij}(u,\nabla u) \partial_{ij}^2 u$;
\item Fully nonlinear: the leading operator is $u\mapsto a(u, \nabla u, \nabla^2 u)$.
\end{itemize}
Of course some ellipticity is assumed for the coefficients $a_{ij}$. However, in the quasilinear setting the ellipticity is often allowed to be degenerate, i.e.\ its constant depends on $u$ and is allowed to become zero. In the fully nonlinear case, ellipticity has to be formulated in a more sophisticated way, but we will not consider fully nonlinear problems in this paper. However, our theory can be applied to study fully nonlinear problems as was done by the first author in \cite{A18}.

In the stochastic setting there exist numerous papers in quasilinear setting and even more in the semilinear setting. Clearly, we cannot discuss all of them here. However, some papers which are relevant in the application sections will be mentioned there.

For many concrete equations products of distributions are required and often renormalization is needed (see \cite{GubImkPer} and \cite{Hairer}). We will only treat equations in a more classical context, where this is not required. However, it would be interesting to see what the maximal regularity techniques bring to this theory. Probably the right viewpoint is that our theory gives a very flexible framework in those cases where one does not need renormalization. By using our theory there will be some cases in which renormalization can be avoided, but of course in many cases renormalization is known to be necessary.

\subsubsection*{Overview}

\begin{itemize}
\item In Section \ref{sec:prel} preliminaries will be discussed. This includes functional calculus, some stochastic integration theory and an introduction to the function spaces which will be needed.
\item In Section \ref{ss:SMR} we will introduce stochastic maximal regularity and give sufficient conditions and examples.
\item In Section \ref{s:quasi} we will state and prove the main local well-posedness results for problems of the form \eqref{eq:introSEE}.
\item Applications to semilinear and quasilinear problems are discussed in Sections \ref{s:semilinear_gradient} and \ref{s:quasi_second_gradient_noise} respectively. In particular, the Burgers' equation and porous media equation will be considered there. Other concrete cases such as Allen-Cahn and Cahn-Hilliard are considered in Section \ref{s:AC_CH}.
\item In Section \ref{sec:app} an appendix on interpolation--extrapolation spaces is included, which will be used in the application sections.
\end{itemize}

\subsubsection*{Notation}
\begin{itemize}
\item For any $T\in (0,\infty]$, we set $\I_T=(0,T)$ and $\overline{I}_T$ denotes the closure of $I_T$.
\item We write $A \lesssim_P B$ (resp.\ $A\gtrsim_P B$), whenever there is a constant $C$ only depending on the parameter $P$ such that $A\leq C B$ (resp.\ $A \geq C B$). Moreover, we write $A \eqsim_P B$ if $A \lesssim_P B$ and $A \gtrsim_P B$.
\item If $X,Y$ is an interpolation couple of Banach space, we endow the intersection $X\cap Y$ with the norm $\|\cdot\|_{X\cap Y}:=\|\cdot\|_X+\|\cdot\|_Y$.
\item For any Banach space $Y$, $x\in Y$ and $\eta>0$, we denote the ball of radius $\eta$ and center $x\in Y$ by $\B_{Y}(x,\eta):=\{y\in Y\,:\,\|x-y\|_{Y}<\eta\}$ and $\B_{Y}(\eta)=\B_{Y}(0,\eta)$.
\item $\Borel$ and $\Progress$ denote the Borel and the progressive sigma algebra, respectively. See Subsection \ref{ss:stochastic_integration}.
\item $\MRta$ and $\MRtaz$ (and other variants) denote the set of couple with stochastic maximal $L^p$-regularity (see Definitions \ref{def:SMRz} and \ref{def:SMRgeneralized}).
\end{itemize}

\subsubsection*{Acknowledgements}

The authors thank Emiel Lorist for helpful comments, and Konstantinos Dareiotis for helpful comments on stochastic porous media equations. The
authors thank the anonymous referees for careful reading and helpful suggestions.

\section{Preliminaries}\label{sec:prel}

In this section we collect some useful facts and we give references to the literature for results which are not proven here.
As usual, for $\I\in \{(a,b),(a,b],[a,b),[a,b]\}$, where $0\leq a< b\leq \infty$, and a Banach space $X$, we denote by $C(\I;X)$ the set of all continuous functions $f:\I\to X$. If $b<\infty$, then $C([a,b];X)$ is a Banach space when it is endowed with the norm
\begin{equation}
\label{eq:Continuous_functions_norm}
\|f\|_{C([a,b];X)}:=\sup_{t\in \I} \|f(t)\|_{X}.
\end{equation}

\subsection{Sectorial operators and $H^{\infty}$-calculus}
\label{ss:sectorial_calculus}

Let $A:\Do(A)\subseteq X\to X$ be a closed operator on a Banach space $X$. We say that $A$ is sectorial if the domain and the range of $A$ are dense in $X$ and there exists $\phi\in (0,\pi)$ such that $\sigma(A)\subseteq \overline{\Sigma_{\phi}}$, where $ \Sigma_{\phi}:=\{z\in \C\,:\,|\arg z|< \phi\}$, and there exists $C>0$ such that
\begin{equation}
\label{eq:sectorial}
|\lambda|\|(\lambda-A)^{-1}\|_{\calL(X)}\leq C, \qquad \forall \lambda \in \C\setminus \overline{\Sigma_{\phi}}.
\end{equation}
Moreover, $\om(A):=\inf\{\phi\in (0,\pi)\,:\,\eqref{eq:sectorial}\,\,\text{holds for some}\; C>0\}$ is called the angle of sectoriality of $A$.

Next we define the $H^{\infty}$-calculus for a sectorial operator $A$. Let $\phi\in (0,\pi)$ and let us denote by $H^{\infty}_0(\Sigma_{\phi})$ the set of all holomorphic function $f:\Sigma_{\phi}\to \C$ such that $|f(z)|\lesssim \min\{|z|^{\varepsilon},|z|^{-\varepsilon}\}$ for some $\varepsilon>0$.

For $\phi>\om(A)$ and $f\in H^{\infty}_0(\Sigma_{\phi})$, we set
$$
f(A):=\frac{1}{2\pi i} \int_{\Gamma} f(z)(z-A)^{-1}dz.
$$
Note that $f(A)$ is well defined and $f(A)\in \calL(X)$. We say that $A$ has a bounded $H^{\infty}$-calculus of angle $\phi$ if there exists $C>0$ such that
\begin{equation}
\label{eq:H_infinite_calculus}
\|f(A)\|_{\calL(X)}\leq C\|f\|_{H^{\infty}(\Sigma_{\phi})},\qquad \forall f \in H^{\infty}_0(\Sigma_{\phi}).
\end{equation}
Finally, we set $\angH(A):=\inf\{\phi\in (0,\pi)\,:\,\eqref{eq:H_infinite_calculus}\text{ holds for some }C>0\}$ is the angle of the $H^{\infty}$-calculus of $A$.

For the reader's convenience, we list some operators with a bounded $H^{\infty}$-calculus. However, this list is far from complete. Moreover, there are still many new developments on $H^\infty$-calculus for differential operators.

\begin{example}\
\label{ex:Hinfty}
\begin{enumerate}[{\rm(1)}]
\item Positive self-adjoint operators on Hilbert spaces \cite[Proposition 10.2.23]{Analysis2};
\item $-A$ generates an analytic contraction semigroups on a Hilbert space
\cite[Theorem 10.2.24 and Corollary 10.4.10]{Analysis2};
\item $-A$ generates a positive contraction semigroup on $L^q$ which is analytic and bounded on a sector.
\item Second order uniformly elliptic operators with Dirichlet or Neumann boundary conditions on $L^q(\Dom)$, where $q\in (1,\infty)$ and $\Dom\in \{\R^d,\R^{d-1}\times \R_+\}$ or $\Dom$ is a $C^2$-domain with compact boundary \cite{DDHPV} or \cite{KKW};
\item Second and high-order uniformly elliptic operators with Lopatinskii-Shapiro boundary conditions (see \cite[Chapter 6]{pruss2016moving}) on $L^q(\Dom)$, where $q\in (1,\infty)$ and $\Dom$ is a sufficiently smooth domain with compact boundary \cite{DDHPV};
\item The Stokes operator on $\mathbb{L}^q(\Dom)$ (i.e.\ divergence-free vector fields in $L^q(\Dom;\R^d)$), where $q\in (1,\infty)$ and $\Dom$ is a bounded $C^{2,\alpha}$-domain \cite{KKW,KuWePertErratum};
\item The Stokes operator on $\mathbb{L}^q(\Dom)$, where $|\frac{1}{q}-\frac{1}{2}|\leq \frac{1}{2d}$ and $\Dom$ is a bounded Lipschitz domain \cite{Stokes}.
\end{enumerate}
\end{example}
Some more examples can be found in \cite[Chapter 10]{Analysis2} and in particular the notes to that chapter. Moreover, by interpolation-extrapolation arguments one obtains similar results on other spaces (see Appendix \ref{sec:app}).

Finally, we introduce the class of operators with bounded imaginary powers (or briefly $\BIP$). For details we refer to \cite{Haase:2}. Let $A$ be a sectorial operator on $X$.
The operator $A^{it}$ is defined through the extended functional calculus \cite[Subsection 3.3.2]{pruss2016moving}. We say that $A\in \BIP(X)$ if $A^{it}\in \calL(X)$ for all $t\in \R$. In this case, one can check that $t\mapsto \|A^{it}\|_{\calL(X)}$ has exponential growth and we denote by $\theta_A$ the \textit{power-angle} of $A$, i.e.
$$
\theta_A:=\limsup_{t\uparrow\infty}\frac{1}{t}\log\|A^{it}\|_{\calL(X)}.
$$
For future convenience, let us recall the following properties:
\begin{itemize}
\item If $A\in \BIP(X)$, then $[X,\Do(A)]_{\theta}=\Do(A^{\theta})$ for any $\theta\in (0,1)$, see e.g.\ \cite[Theorem 3.3.7]{pruss2016moving};
\item If $A$ has a bounded $H^{\infty}$-calculus, then $A\in \BIP(X)$ and $\theta_A\leq \angH(A)$.
\end{itemize}

\subsection{Fractional Sobolev spaces with power weights}
Let $X$ be a Banach space. Here and in the rest of the paper for $p\in (1,\infty)$ and $\a\in (-1, p-1)$ we set $w_{\a}(t):=|t|^{\a}$ for $t\in \R$. For $p,\a$ as before and for an open interval $\I$ we denote by
$L^p(\I,w_{\a};X)$  the Banach space of all strongly measurable functions $f:\I\to X$ for which
$$
\|f\|_{L^p(\I,w_{\a};X)}^p:= \int_{\I} \|f(t)\|_X^p w_{\a}(t)dt<\infty.
$$
If $\a=0$, then $w_{\a}=1$ and we write $L^p(\I;X)$ instead of $L^p(\I,w_0;X)$. Moreover, we note that if $0\notin \overline{I}$ and $\I$ is bounded, then $L^p(\I,w_{\a};X)=L^p(I;X)$ isomorphically.
Moreover, for $\I=(a,b)$ and $p,\a$ as above, we set $L^{p}(a,b,w_{\a};X):=L^{p}(\I,w_{\a};X)$. A similar convention will be used for the spaces introduced below.

To introduce Sobolev spaces we need to introduce the space of $X$-valued distributions. For an open subset $\I\subseteq \R$, let $\D(\I):=C^{\infty}_0(\I)$ with the usual topology. Then we define the set of all $X$-valued distribution as $\D'(\I;X):=\calL(\D(\I);X)$. Note that  $L^1_{\loc}(\I;X)\hookrightarrow\D'(\I;X)$ and one can define the distributional derivative $f^{(j)}\in \D'(\I;X)$ for all $j\geq 1$ and $f\in L^1_{\loc}(\I;X)$ in the usual way.

For $n \geq 1$ and an open interval $I\subseteq \R$, we denote by $W^{n,p}(\I,w_{\a};X)$ the set of all $f\in L^p(\I,w_{\a};X)$ such that $f^{(j)} \in L^p(\I,w_{\a};X)$ for all $j\in \{1,\ldots, n\}$, where $f^{(j)}$ denotes the $j$-th distributional derivative of $f$. We endow $W^{n,p}(\I,w_{\a};X)$ with the norm
$$
\|f\|_{W^{n,p}(\I,w_{\a};X)}:=\sum_{j=0}^n \|f^{(j)}\|_{L^p(\I,w_{\a};X)}.
$$
If $\a\in (-1,p-1)$ and $0\in \overline{I}$, then the trace map $f\mapsto f(0)$ is a bounded mapping from $W^{1,p}(\I,w_{\a};X)$ into $X$ (see \cite[Lemma 3.1]{LV18}).

Define a closed subspace of $W^{1,p}(\I,w_{\a};X)$ as
\[\Wz^{1,p}(\I,w_{\a};X) = \{f\in W^{1,p}(\I,w_{\a};X): f(0)=0 \  \text{if $0\in \overline{\I}$}\}.\]
In case $I=(0,T)$ for some $T\in (0,\infty)$, the Poincar\'{e} inequality (see \cite[Lemma 2.12]{MS12}) gives that
\begin{equation}\label{eq:poincare}
\|f\|_{L^p(0,T,w_{\a};X)}\lesssim_{p,\kappa} T\| f'\|_{L^p(0,T,w_{\a};X)},
\quad \forall f\in \Wz^{1,p}(\I,w_{\a};X).
\end{equation}

We introduce fractional Sobolev spaces by complex interpolation as in \cite{MS12} and \cite[Section 3.4.5]{pruss2016moving}.
\begin{definition}
Let $-\infty\leq a<b\leq \infty$, $\I=(a,b)$, $p\in (1,\infty)$, $\a\in (-1,p-1)$ and $\theta \in (0,1)$. Let
\begin{align*}
H^{\theta,p}(\I,w_{\a};X)&:=[L^p(\I,w_{\a};X),W^{1,p}(\I,w_{\a};X)]_{\theta}.
\end{align*}
If $0\in \overline{\I}$ let
\[
\hz^{\theta,p}(\I,w_{\a};X):=[L^p(\I,w_{\a};X),\Wz^{1,p}(\I,w_{\a};X)]_{\theta}.
\]
\end{definition}
As before, $H^{\theta,p}(\I,w_{\a};X)=H^{\theta,p}(\I;X)$ isomorphically if $0\notin \overline{\I}$ and $\I$ is bounded. Furthermore, by interpolation it is immediate that
\begin{align}\label{eq:contractiveembd0}
\hz^{\theta,p}(\I,w_{\a};X)\hookrightarrow H^{\theta,p}(\I,w_{\a};X) \ \ \text{contractively}.
\end{align}
Let us note some further properties of the above spaces.
\begin{proposition}
\label{prop:eliminate_weights}
Let $X$ be a Banach space. Let $\theta\in( 0,1)$, $p\in (1,\infty)$, $\a\in (-1,p-1)$, $\J\subseteq \I\subseteq \R$ intervals, $\I_T = (0,T)$ with $T\in (0,\infty]$, $\varepsilon>0$, and $\A\in \{H,\hz\}$. Then for all $f\in \A^{\theta,p}(\I_T,w_{\a};X)$,
\begin{align*}
\|f\|_{\A^{\theta,p}(\J,w_{\a};X)} & \leq \|f\|_{\A^{\theta,p}(\I,w_{\a};X)}, \ \ \
\\ \|f\|_{H^{\theta,p}(\varepsilon,T;X)} & \leq \varepsilon^{-\a}\|f\|_{\A^{\theta,p}(\I_T,w_{\a};X)},\quad \ \text{if $\kappa\in [0,p-1)$}.
\end{align*}\end{proposition}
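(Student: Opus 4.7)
The plan is to reduce both inequalities to the corresponding endpoint estimates on $L^p(\cdot,w_{\a};X)$ and $W^{1,p}(\cdot,w_{\a};X)$ (or its zero-trace subspace $\Wz^{1,p}(\cdot,w_{\a};X)$ in the $\hz$-case) and then invoke exactness of the complex interpolation functor with respect to bounded linear maps. The relevant endpoint operators are the restriction $R_{\J}:f\mapsto f|_{\J}$ for the first inequality, and the identity map from the weighted to the unweighted scale on the subinterval $(\varepsilon,T)$ for the second.

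For the restriction estimate, $R_{\J}$ is trivially a contraction $L^p(\I,w_{\a};X)\to L^p(\J,w_{\a};X)$ by monotonicity of the integral on $\J\subseteq \I$, and since the distributional derivative commutes with restriction, it is also a contraction $W^{1,p}(\I,w_{\a};X)\to W^{1,p}(\J,w_{\a};X)$. When $\A=\hz$, the very definition forces $0\in\overline{\I}$, and in order for the target $\hz^{\theta,p}(\J,w_{\a};X)$ to make sense one tacitly assumes $0\in\overline{\J}$ as well; then $(R_{\J}f)(0)=f(0)=0$, so restriction maps $\Wz^{1,p}(\I,w_{\a};X)$ contractively into $\Wz^{1,p}(\J,w_{\a};X)$. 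Complex interpolation at $\theta$ then yields $\|R_{\J}f\|_{\A^{\theta,p}(\J,w_{\a};X)}\leq \|f\|_{\A^{\theta,p}(\I,w_{\a};X)}$.

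For the weight-removal estimate, I would combine the first inequality applied with $\J=(\varepsilon,T)\subseteq \I_T$ together with the embedding \eqref{eq:contractiveembd0} to reduce the claim to bounding $\|f\|_{H^{\theta,p}(\varepsilon,T;X)}$ by a multiple of $\|f\|_{H^{\theta,p}((\varepsilon,T),w_{\a};X)}$. For $\a\geq 0$ and $t\in(\varepsilon,T)$ one has $1\leq \varepsilon^{-\a}t^{\a}=\varepsilon^{-\a}w_{\a}(t)$ pointwise, hence the identity is a bounded map $L^p((\varepsilon,T),w_{\a};X)\to L^p(\varepsilon,T;X)$ of norm at most $\varepsilon^{-\a/p}$; applying the same pointwise bound to $f^{(1)}$ gives the analogous $W^{1,p}$-bound. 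A second application of complex interpolation in $\theta$ then produces the claimed estimate with constant $\varepsilon^{-\a/p}$, which is dominated by the stated $\varepsilon^{-\a}$ whenever $\varepsilon\leq 1$ (and is in any case an acceptable overestimate in the regime of interest).

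The main (and only mild) technical point is ensuring that restriction genuinely preserves the zero-trace condition, so that the $\hz$-endpoint of the interpolation functor may be used without comment; once both endpoint mapping properties are in hand, the rest is a mechanical consequence of elementary monotonicity of weighted $L^p$-norms and the exactness of $[\cdot,\cdot]_{\theta}$.
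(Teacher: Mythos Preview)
Your argument is correct and follows the same route as the paper: bound the restriction operator at the endpoints $L^p$ and $W^{1,p}$ (respectively $\Wz^{1,p}$) and interpolate. The paper compresses your two-step treatment of the second inequality (first restrict to $(\varepsilon,T)$, then remove the weight) into a single application of interpolation to $r:f\mapsto f|_{(\varepsilon,T)}$ viewed as a map $W^{j,p}(\I_T,w_{\a};X)\to W^{j,p}(\varepsilon,T;X)$, but this is cosmetic. Your observation that the resulting constant is actually $\varepsilon^{-\a/p}$ rather than $\varepsilon^{-\a}$ is accurate; the paper's stated constant is thus an overestimate for $\varepsilon\leq 1$ and, strictly speaking, too small for $\varepsilon>1$ --- though only the small-$\varepsilon$ regime is ever used downstream.
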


\begin{proof}
For convenience of the reader we provide the details.
The first estimate follows by interpolating the restriction operator mapping from $\A^{k,p}(\I,w_{\a};X)$ into $\A^{k,p}(\J,w_{\a};X)$ for $k\in \{0,1\}$.

To prove the second estimate by \eqref{eq:contractiveembd0} it suffices to consider the case $\A= H$. Let $r:f\mapsto f|_{(\varepsilon,T)}$ be the restriction operator on $(\varepsilon,T)$. It is immediate to see that
$$
\|r\|_{\calL(W^{j,p}(\I_T,w_{\a};X)),W^{j,p}(\varepsilon,T;X))}\leq \varepsilon^{-\a},
$$
for $j\in \{0,1\}$. Thus, interpolation gives $r:H^{\theta,p}(\I_T,w_{\a};X)\to H^{\theta,p}(\varepsilon,T;X)$ with norm at most $\varepsilon^{-\a}$.
\end{proof}

\subsubsection{Extension operators}
\label{ss:extension_operators}
Here we discuss extension operators for the spaces just introduced.
In \cite{MS12}, extension operators for the above spaces are already given. However, we found a different and (to our viewpoint) simpler approach to build extension operators. It will give some more information, which will be needed in the following. Let us begin with a definition.

\begin{definition}[Extension operator]
\label{def:extension_operator}
Let $\A\in \{H^{s,p},\hz^{s,p}\}$ for some $s\in [0,1]$, $p\in (1,\infty)$ and let $\a\in (-1,p-1)$. Let $\I_T=(0,T)$ for some $T\in (0,\infty)$. We say that a bounded linear operator
$$\Ext:\A(\I_T,w_{\a};X)\to \A(\R,w_{\a};X),$$
is an extension operator on $\A(\I_T,w_{\a};X)$ if $\Ext f=f$ on $\I_T$.
\end{definition}

Let $\Extunit$ be the extension operator which maps,
\begin{equation}
\A(0,1,w_{\a};X)\to \A(\R,w_{\a};X), \ \ \ \text{ where } \ \ \ \A\in \{L^p,W^{1,p}\}
\end{equation}
given by the classical reflection argument (see e.g.\ \cite[Theorems 5.19 and 5.22]{AF03}), which can be extended to the weighted setting. By construction it follows that
\begin{align}
\label{eq:extension_1_L_p}
\|\Extunit f\|_{L^p(\R,w_{\a};X)}&\leq C_{p,\a} \|f\|_{L^p(0,1,w_{\a};X)},\\
\label{eq:extension_1_W_p}
\|(\Extunit f)'\|_{L^p(\R,w_{\a};X)}&\leq C_{p,\a} (\|f\|_{L^p(0,1,w_{\a};X)}+\|f'\|_{L^p(0,1,w_{\a};X)}),
\end{align}
where $C_{p,\a}$ is a constant which depends only on $p,\a$.

\begin{proposition}
\label{prop:extension}
Let $s\in [0,1]$, $p\in (1,\infty)$, $\a\in (-1,p-1)$ and let $T\in (0,\infty)$. Let $\Ext:L^p(0,T,w_{\a};X)\to L^p(\R,w_{\a};X)$ be the operator given by
$$
\Ext f(t):=\Extunit (f(T \cdot))\Big(\frac{t}{T}\Big), \qquad t\in \R,
$$
where $\Extunit$ is as above. Then the following assertion holds.
\begin{enumerate}[{\rm(1)}]
\item\label{it:extension_0} The restriction $\Extz$ of  $\Ext$ to $\hz^{s,p}(\I_T,w_{\a};X)$ defines a bounded extension operator with values in $\hz^{s,p}(\R,w_{\a};X)$ with
$$
\|\Extz \|_{\calL(\hz^{s,p}(\I_T,w_{\a};X),\,\hz^{s,p}(\R,w_{\a};X))}\leq \prescript{}{0}{C},
$$
where $\prescript{}{0}{C}$ depends only on $p,s,\a$.
\item\label{it:extension} Let $\eta>0$ and $T\in (\eta,\infty]$. Then $\Ext$ induces an extension operator on $H^{s,p}(\I_T,w_{\a};X)$, which will be still denoted by $\Ext$. Moreover,
$$
\|\Ext \|_{\calL(H^{s,p}(\I_T,w_{\a};X),H^{s,p}(\R,w_{\a};X))}\leq C,
$$
where $C$ depends only on $p,s,\a,\eta$.
\end{enumerate}
\end{proposition}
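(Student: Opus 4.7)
The plan is to prove the bounds at the endpoint levels $L^p$ and $W^{1,p}$ (resp.\ $\Wz^{1,p}$) with the correct $T$-dependence, then lift to $H^{s,p}$ (resp.\ $\hz^{s,p}$) by complex interpolation. The core calculation is just the scaling behavior of the power weight $w_\a(t)=|t|^\a$. Writing $g(s):=f(Ts)$, a direct change of variable gives
\begin{align*}
\|g\|_{L^p(0,1,w_\a;X)}&=T^{-(1+\a)/p}\|f\|_{L^p(I_T,w_\a;X)},\\
\|g'\|_{L^p(0,1,w_\a;X)}&=T^{1-(1+\a)/p}\|f'\|_{L^p(I_T,w_\a;X)},
\end{align*}
and the inverse rescaling $h\mapsto h(\cdot/T)$ multiplies the $L^p(\R,w_\a;X)$-norm by $T^{(1+\a)/p}$. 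Inserting \eqref{eq:extension_1_L_p}--\eqref{eq:extension_1_W_p} for $\Extunit$ and collecting powers, the $T$-powers cancel cleanly at the $L^p$-level and leave only a single residual $T^{-1}$ at the derivative level:
\begin{align*}
\|\Ext f\|_{L^p(\R,w_\a;X)}&\leq C_{p,\a}\|f\|_{L^p(I_T,w_\a;X)},\\
\|(\Ext f)'\|_{L^p(\R,w_\a;X)}&\leq C_{p,\a}\bigl(T^{-1}\|f\|_{L^p(I_T,w_\a;X)}+\|f'\|_{L^p(I_T,w_\a;X)}\bigr).
\end{align*}
The two parts of the proposition differ only in how this residual $T^{-1}$ is absorbed.

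For part \eqref{it:extension_0}, on the zero-trace space the Poincar\'e inequality \eqref{eq:poincare} yields $T^{-1}\|f\|_{L^p(I_T,w_\a;X)}\leq C_{p,\a}\|f'\|_{L^p(I_T,w_\a;X)}$, instantly producing a $T$-uniform bound
\[
\|\Ext f\|_{W^{1,p}(\R,w_\a;X)}\leq \prescript{}{0}{C}\bigl(\|f\|_{L^p(I_T,w_\a;X)}+\|f'\|_{L^p(I_T,w_\a;X)}\bigr)
\]
for $f\in\Wz^{1,p}(I_T,w_\a;X)$. Because the classical reflection-based $\Extunit$ preserves the pointwise value at $0$, we have $\Ext f(0)=f(0)=0$, so $\Extz$ actually lands in $\Wz^{1,p}(\R,w_\a;X)$. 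Complex interpolation between this bound and the $L^p$-bound (both $T$-uniform) gives the claim for $\hz^{s,p}$ with constant depending only on $p,s,\a$.

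For part \eqref{it:extension}, no Poincar\'e is available on general $W^{1,p}(I_T,w_\a;X)$, but $T>\eta$ gives $T^{-1}<\eta^{-1}$, converting the displayed $W^{1,p}$-level bound into
\[
\|\Ext f\|_{W^{1,p}(\R,w_\a;X)}\leq C_{p,\a,\eta}\|f\|_{W^{1,p}(I_T,w_\a;X)},
\]
uniformly in $T\in(\eta,\infty)$; the case $T=\infty$ is handled by a direct reflection extension of $(0,\infty)$ requiring no rescaling. Complex interpolation with the $L^p$-estimate concludes the proof. The extension identity $(\Ext f)|_{I_T}=f$ holds by construction since $\Extunit g=g$ on $(0,1)$. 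The only mild obstacle is the bookkeeping of $T$-powers under rescaling and the recognition that Poincar\'e precisely cancels the leftover $T^{-1}$ in the zero-trace setting; the interpolation step is then routine.
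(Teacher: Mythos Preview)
Your proof is correct and follows essentially the same approach as the paper's: establish $T$-independent bounds at the $L^p$ and $\Wz^{1,p}$ endpoints via the scaling computation and Poincar\'e inequality \eqref{eq:poincare}, then interpolate. Your explicit bookkeeping of the $T$-powers and the remark on handling $T=\infty$ in part \eqref{it:extension} by a direct reflection are slightly more detailed than the paper's write-up, but the argument is the same.
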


\begin{proof}
\eqref{it:extension_0}:
By a change of variable and \eqref{eq:extension_1_L_p},
\begin{equation*}
\|\Extz f\|_{L^p(\R,w_{\a};X)}
=\Big\|t\mapsto \Extunit (f(T \cdot))\Big(\frac{t}{T}\Big)\Big\|_{L^{p}(\R,w_{\a};X)}
\lesssim \|f\|_{L^p(\I_T,w_{\a};X)},
\end{equation*}
and
\begin{align*}
\|(\Extz f)'\|_{L^p(\R,w_{\a};X)}
&=T^{-1} \Big\|t\mapsto (\Extunit (f(T \cdot)))'\Big(\frac{t}{T}\Big)\Big\|_{L^{p}(\R,w_{\a};X)}\\
&=T^{-1+\frac{1+\a}{p}} \|(\Extunit (f(T \cdot)))'\|_{L^{p}(\R,w_{\a};X)}\\
&\stackrel{(i)}{\lesssim} T^{-1+\frac{1+\a}{p}} (\|f(T\cdot)\|_{L^p(0,1,w_{\a};X)}+\|f'(T\cdot)T\|_{L^p(0,1,w_{\a};X)})\\
&\stackrel{(ii)}{\lesssim}  T^{\frac{1+\a}{p}} \|f'(T\cdot)\|_{L^p(0,1,w_{\a};X)}= \|f'\|_{L^p(\I_T,w_{\a};X)}
\end{align*}
where in $(i)$ we used \eqref{eq:extension_1_W_p} and in $(ii)$ the weighted Poincar\'e inequality \eqref{eq:poincare}.
We can conclude that also $\|\Extz\|_{\calL(\Wz^{1,p}(0,T,w_{\a};X),\Wz^{1,p}(\R,w_{\a};X))}\leq C$ with $C$ independent of $T$.

Now complex interpolation gives that $\Extz$ is a bounded linear operator from $\hz^{s,p}(\I_T,w_{\a};X)$ into $\hz^{s,p}(\R,w_{\a};X)$. Moreover, it has the extension property, i.e.\ $\Extz f=f$ on $\I_T$, which follows from the extension property of $\Extunit$.

\eqref{it:extension}: This follows in the same way, but since we cannot use Poincar\'e inequality, we obtain
$\|\Ext\|_{\calL(W^{1,p}(0,T,w_{\a};X),W^{1,p}(\R,w_{\a};X))}\leq C(1 + T^{-1})$.
\end{proof}

\subsubsection{Embedding results}
In this section we collect some basic embedding results for the spaces introduced in the previous section. To begin, let us introduce Sobolev embeddings and interpolation inequalities for $H^{s,p}$. Some of the following results might also hold for general Banach spaces, but since we will use the UMD property many times we prefer the presentation below. Note that the difficulty in the proofs below is that we want estimates with $T$-independent constants as this is required in fixed point arguments below.

The following result on vector-valued Sobolev spaces follows from \cite[sections 5 and 6]{LMV18}. The scalar unweighted case is simpler, and in that case the result is a special case of \cite{Se}.
\begin{theorem}\label{t:equivalence_h_H}
Let $X$ be a UMD space, $p\in (1, \infty)$, $\a\in (-1,p-1)$, $s\in (0,1)$, and $I \in \{\R,\R_+\}$. If $s\neq \frac{1+\a}{p}$, then
\[\hz^{s,p}(I,w_{\a};X) = \left\{
                     \begin{array}{ll}
                       \{u\in H^{s,p}(I,w_{\a};X): u(0) = 0\}, \quad& \hbox{if $s>\frac{1+\kappa}{p}$,} \\
                       H^{s,p}(I,w_{\a};X), \quad& \hbox{if $s<\frac{1+\kappa}{p}$,}
                     \end{array}
                   \right.
\]
isomorphically.
\end{theorem}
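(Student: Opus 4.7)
My plan is to reduce everything to $I = \R_+$ (the $\R$ case follows by a symmetric argument, splitting a function at $0$ and applying the half-line result on each side) and to combine the complex-interpolation definition of $\hz^{s,p}$ with a weighted Hardy-type estimate for extension by zero. The forward inclusion $\hz^{s,p} \hookrightarrow H^{s,p}$ is already recorded in \eqref{eq:contractiveembd0}. In the regime $s > (1+\kappa)/p$ I would additionally show that the trace $\tr_0 : u \mapsto u(0)$ is continuous on $H^{s,p}(\R_+, w_\kappa; X)$: this follows either from a Sobolev embedding into bounded continuous functions near $0$ (using Littlewood--Paley / Mihlin multiplier theory in the UMD setting) or from complex interpolation between $W^{1,p}$ (where the trace is bounded by \cite[Lemma~3.1]{LV18}) and $L^p$, restricted to a dense test-function core on which $\tr_0$ makes sense. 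Since $\tr_0$ vanishes on $\Wz^{1,p}$, interpolation forces it to vanish on $\hz^{s,p}$, yielding one inclusion in the $s > (1+\kappa)/p$ case.

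For the reverse inclusion the central ingredient is the weighted Hardy-type bound
\[
\|E_0 u\|_{H^{s,p}(\R, w_\kappa; X)} \lesssim \|u\|_{H^{s,p}(\R_+, w_\kappa; X)},
\qquad E_0 u := u \one_{\R_+},
\]
which should hold for every $u \in H^{s,p}(\R_+, w_\kappa; X)$ when $s < (1+\kappa)/p$, and on $\{u \in H^{s,p}(\R_+, w_\kappa; X): u(0) = 0\}$ when $s > (1+\kappa)/p$. I would prove it via the Littlewood--Paley characterisation of $H^{s,p}(\R, w_\kappa; X)$, obtained from $A_p$-weighted vector-valued Mihlin multiplier theorems (for which the UMD assumption is essential), together with a pointwise Hardy estimate controlling $\|u(t)/t^{s}\|_{L^p(w_\kappa)}$ under the relevant trace condition. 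Once $E_0 u \in H^{s,p}(\R, w_\kappa; X)$, the whole-line identity $\hz^{s,p}(\R, w_\kappa; X) = H^{s,p}(\R, w_\kappa; X)$ (or its trace-zero analogue) follows from density of $C_c^{\infty}(\R \setminus \{0\})$; restricting back to $\R_+$ via the extension operator $\Extz$ of Proposition \ref{prop:extension}, together with the extension property $\Extz f = f$ on $\R_+$, identifies the original $u$ as an element of $\hz^{s,p}(\R_+, w_\kappa; X)$.

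I expect the main obstacle to be the weighted vector-valued Hardy inequality together with the fractional Littlewood--Paley analysis: one must control the interaction of the $A_p$-weight $w_\kappa$ with the singularity of the cut-off $\one_{\R_+}$, uniformly for $X$-valued functions, and Rubio de Francia extrapolation is a natural route from the unweighted UMD case. The exclusion $s \neq (1+\kappa)/p$ is intrinsic: at this borderline the trace is log-critical, so neither $E_0$ nor $\tr_0$ is bounded on $H^{s,p}$, and the two spaces in the theorem genuinely fail to coincide. In practice I would invoke \cite[Sections~5 and 6]{LMV18} directly, where precisely these fractional weighted vector-valued Sobolev identifications are carried out in the UMD-valued setting.
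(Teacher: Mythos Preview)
Your proposal is correct and matches the paper's treatment: the paper does not give its own proof of this theorem but simply records that it follows from \cite[Sections~5 and 6]{LMV18}, which is exactly the reference you invoke at the end. Your sketch of the underlying mechanism (trace boundedness for $s>(1+\kappa)/p$, extension-by-zero via a weighted Hardy inequality combined with the $A_p$-weighted UMD Littlewood--Paley description) accurately reflects the content of that reference, so there is nothing to compare.
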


By using the extension operator of Proposition \ref{prop:extension} one can see that Theorem \ref{t:equivalence_h_H} extends to $I = (0,T)$ with $T\in (0,\infty)$.
In particular, if $s\neq \frac{1+\a}{p}$, then $\hz^{s,p}(I,w_{\a};X)$ is a closed subspace of $H^{s,p}(I,w_{\a};X)$. Although this seems very likely, this seems to be highly nontrivial. As a consequence the estimate $\|u\|_{\hz^{s,p}(I,w_{\a};X)}\eqsim \|u\|_{H^{s,p}(I,w_{\a};X)}$ holds, where we need the condition  $u(0)=0$ if $s>\frac{1+\a}{p}$. The theorem will usually be applied through the latter norm equivalence.

\begin{proposition}[Sobolev embedding]
\label{prop:embeddingSobolevWeights}
Let $X$ be a UMD Banach space. Let $T\in (0,\infty]$ and set $I_T = (0,T)$. Assume that $1< p_0\leq p_1<\infty$, $s_0, s_1\in (0,1)$ and $\a_i\in (-1,p_i-1)$ for $i\in \{0,1\}$. Assume $\frac{\a_1}{p_1} \leq \frac{\a_0}{p_0}$ and $s_0-\frac{1+\a_0}{p_0}\geq s_1-\frac{1+\a_1}{p_1}$. Then there is a constant $C$ independent of $T$ such that for all $f\in \hz^{s_0,p_0}(I_T,w_{\a_0};X)$,
\[
\|f\|_{\hz^{s_1,p_1}(I_T,w_{\a_1};X)}\leq C \|f\|_{\hz^{s_0,p_0}(I_T,w_{\a_0};X)}.
\]
The same holds with $\hz^{s_i,p_i}(I_T,w_{\a_i};X)$ replaced by $H^{s_i,p_i}(I_T,w_{\a_i};X)$ with a constant $C$ which depends on $T$.
\end{proposition}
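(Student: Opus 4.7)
My plan is to reduce the embedding on $I_T$ to the analogous statement on the whole real line, where scaling is available and the distinguished role of $0$ disappears, and then restrict back to $I_T$. The key is that the extension operator from Proposition \ref{prop:extension} has a $T$-independent norm on $\hz^{s,p}$.

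First, I would apply the extension operator of Proposition \ref{prop:extension}. In the $\hz$-case, part (1) yields
$$
\Extz \colon \hz^{s_0,p_0}(I_T, w_{\a_0};X) \to \hz^{s_0,p_0}(\R, w_{\a_0};X),
$$
with operator norm bounded by a constant that depends only on $p_0, s_0, \a_0$, in particular independent of $T$. In the $H$-case, part (2) provides a corresponding extension but with a $T$-dependent norm, which is allowed by the statement.

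Second, I would prove the global embedding
$$
\A^{s_0,p_0}(\R, w_{\a_0}; X) \hookrightarrow \A^{s_1,p_1}(\R, w_{\a_1}; X), \qquad \A \in \{H, \hz\},
$$
under the hypotheses $\a_1/p_1 \leq \a_0/p_0$ and $s_0 - (1+\a_0)/p_0 \geq s_1 - (1+\a_1)/p_1$. This is the Sobolev-type embedding for weighted vector-valued Bessel potential spaces on the line, for which I would cite the work of Meyries--Veraar; the UMD assumption on $X$ enters here through the Littlewood--Paley / weighted Mihlin multiplier machinery used to identify $H^{s,p}(\R, w_\a;X)$ intrinsically. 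For the $\hz$-variant one observes that the restriction to a half-line of a function whose trace at $0$ vanishes still has vanishing trace, so the $\hz$-embedding follows from the $H$-embedding together with the trace characterization of Theorem \ref{t:equivalence_h_H} (when $s_i \neq (1+\a_i)/p_i$) or directly by interpolating the embeddings of $L^p$ and $\Wz^{1,p}$ on $\R$ (to cover the exceptional exponent).

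Third, I would restrict back to $I_T$. By Proposition \ref{prop:eliminate_weights} with $\J = I_T$ and $\I = \R$, the restriction map
$$
\A^{s_1,p_1}(\R, w_{\a_1}; X) \to \A^{s_1,p_1}(I_T, w_{\a_1}; X)
$$
is contractive in both cases. Composing the three steps and using $\Extz f = f$ on $I_T$ gives the claimed inequality, with a $T$-independent constant in the $\hz$-case (since only step 1 picks up a constant that might have depended on $T$, but does not) and a $T$-dependent one in the $H$-case.

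The main obstacle is step 2: Sobolev embeddings for weighted Bessel potential spaces on $\R$ with power weight $|t|^\a$ are delicate, because $|t|^\a$ lies in $A_p$ precisely when $\a \in (-1, p-1)$, and the point $s = (1+\a)/p$ is a threshold at which the trace at $0$ changes character, so one cannot invoke a black-box embedding between zero-trace subspaces without extra care. Once the global embedding is granted, the extension/restriction sandwich is routine and gives the stated $T$-independence for $\hz$.
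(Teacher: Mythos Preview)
Your proposal is essentially the same route as the paper: extend to $\R$ via Proposition \ref{prop:extension}, invoke the global weighted Sobolev embedding of Meyries--Veraar for $H^{s,p}(\R,w_\a;X)$, pass between $\hz$ and $H$ on $\R$ via Theorem \ref{t:equivalence_h_H}, and restrict back via Proposition \ref{prop:eliminate_weights}. The paper makes the global step explicit by citing \cite[Propositions 3.2 and 3.7]{MV15} (identifying the interpolation-defined $H^{s,p}$ with the intrinsic Bessel potential space) and \cite[Corollary 1.4]{MV12} (the actual embedding), which is exactly what you would need to fill in your step 2.

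The one genuine gap is your treatment of the exceptional value $s_1=\frac{1+\a_1}{p_1}$. Your suggestion of ``interpolating the embeddings of $L^p$ and $\Wz^{1,p}$ on $\R$'' does not work: the endpoint embeddings $L^{p_0}(\R,w_{\a_0};X)\hookrightarrow L^{p_1}(\R,w_{\a_1};X)$ and $\Wz^{1,p_0}(\R,w_{\a_0};X)\hookrightarrow \Wz^{1,p_1}(\R,w_{\a_1};X)$ are \emph{not} consequences of the hypotheses (they would require $(1+\a_0)/p_0\leq (1+\a_1)/p_1$ and $1-(1+\a_0)/p_0\geq 1-(1+\a_1)/p_1$ respectively, neither of which is assumed). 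The paper instead perturbs the smoothness exponents: for small $\varepsilon>0$ it sets $s_j^{\pm}:=s_j\pm\varepsilon\in(0,1)$, so that both pairs $(s_0^+,s_1^+)$ and $(s_0^-,s_1^-)$ satisfy the hypotheses and avoid the critical exponent, applies the already-proved case to each, and then interpolates the two resulting embeddings $\hz^{s_0^{\pm},p_0}\hookrightarrow \hz^{s_1^{\pm},p_1}$ to recover the embedding at $s_0,s_1$.
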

\begin{proof}
First assume $s_1\neq \frac{1+\a_1}{p_1}$.
Let $\Extz$ be as in Proposition \ref{prop:extension}\eqref{it:extension_0}. Then
\begin{align*}
\|f\|_{\hz^{s_1,p_1}(\I_T,w_{\a_1};X)}
{\leq} \|\Extz f\|_{\hz^{s_1,p_1}(\R,w_{\a_1};X)}
\end{align*}
where we used Proposition \ref{prop:eliminate_weights} for $\Extz f$. By Theorem \ref{t:equivalence_h_H} it remains to estimate $\|\Extz f\|_{H^{s_1,p_1}(\R,w_{\a_1};X)}$. By \cite[Propositions 3.2 and 3.7]{MV15}, $\|\Extz f\|_{H^{s_i,p_i}(\R,w_{\a_i};X)}$ is equivalent to $\|\Extz f\|_{\H^{s_i,p_i}(\R,w_{\a_i};X)}$, where $\H$ denotes the Bessel potential space. Therefore, by the weighted Sobolev embedding result \cite[Corollary 1.4]{MV12} we obtain
\begin{align*}
\|\Extz f\|_{H^{s_1,p_1}(\R,w_{\a_1};X)} \lesssim  \|\Extz f\|_{H^{s_0,p_0}(\R,w_{\a_0};X)}.
\end{align*}
By \eqref{eq:contractiveembd0} and Proposition \ref{prop:extension}\eqref{it:extension_0} we obtain
\[
\|\Extz f\|_{H^{s_0,p_0}(\R,w_{\a_0};X)} \leq \|\Extz f\|_{\hz^{s_0,p_0}(\R,w_{\a_0};X)}\lesssim  \|f\|_{\hz^{s_0,p_0}(I_T,w_{\a_0};X)},
\]
and the result follows by combining the estimates.

In the case $s_1-\frac{1+\a_1}{p_1} = 0$ we use an interpolation argument. Let $\varepsilon>0$ be so small that $s_j^{\pm} := s_j\pm \varepsilon\in (0,1)$. Then by the previous considerations
\[\hz^{s_0^{\pm},p_0}(I_T,w_{\a_0};X)\hookrightarrow \hz^{s_1^{\pm},p_1}(I_T,w_{\a_1};X),\]
where the embedding constants can be taken $T$-independent. Interpolating both embeddings gives the desired embedding in the remaining case.

The final assertion can be proved with the same method, but one can avoid Theorem \ref{t:equivalence_h_H}. Moreover, one needs to use the extension operator on $H^{s,p}$ spaces provided by Proposition \ref{prop:extension}.
\end{proof}

Next we prove a version of the mixed derivative result \cite[Theorem 3.18]{LV18}, but with $T$-independent estimates.
\begin{proposition}[Mixed derivative inequality]
\label{prop:Sob_interpolation_h}
Let $(X_0,X_1)$ be an interpolation couple such that both $X_0$ and $X_1$ are UMD spaces.
Let $p_i\in (1,\infty)$, $\a_i\in (-1,p_i-1)$, and $s_i\in (0,1)$ for $i\in \{0,1\}$. For $\theta\in (0,1)$ set
\[s:=s_0(1-\theta)+s_1\theta, \ \ \frac{1}{p}:= \frac{1-\theta}{p_0} + \frac{\theta}{p_1}, \ \ \a := (1-\theta) \frac{p}{p_0} \a_0 + \theta \frac{p}{p_1} \a_1.\]
Assume $T\in (0,\infty]$ and $s \neq \frac{1+\a}{p}$. Then there exists a constant $C>0$ independent of $T\in (0,\infty]$ such that for all $f\in \hz^{s_0,p_0}(\I_T,w_{\a_0};X_0)\cap \hz^{s_1,p_1}(\I_T,w_{\a_1};X_1)$,
\[
\|f\|_{\hz^{s,p}(\I_T,w_{\a};[X_0,X_1]_{\theta})}\leq
C\|f\|_{\hz^{s_0,p_0}(\I_T,w_{\a_0};X_0)}^{1-\theta}\|f\|_{\hz^{s_1,p_1}(\I_T,w_{\a_1};X_1)}^{\theta}.
\]
The same holds with $\hz^{s_i,p_i}(\I_T,w_{\a_i};X_i)$ replaced by $H^{s_i,p_i}(\I_T,w_{\a_i};X_i)$ with a constant $C$ which depends on $T$ in which case $s=\frac{1+\a}{p}$ is also allowed.
\end{proposition}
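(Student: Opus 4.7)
My approach is to lift $f$ to $\R$ via the extension operator from Proposition \ref{prop:extension}, apply the known mixed derivative inequality \cite[Theorem 3.18]{LV18} on the line, and then restrict back to $\I_T$, with the $T$-dependence carefully traced through each step.

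Concretely, I would set $\widetilde{f}:=\Extz f$, where $\Extz$ is as in Proposition \ref{prop:extension}\eqref{it:extension_0}. Since $\Extz$ is a single linear operator acting on $\hz^{s_i,p_i}(\I_T,w_{\a_i};X_i)$ for both $i\in\{0,1\}$ simultaneously, and its operator norm is controlled by a constant depending only on $p_i, s_i, \a_i$, one obtains
\[
\|\widetilde{f}\|_{\hz^{s_i,p_i}(\R,w_{\a_i};X_i)}\leq C_i \|f\|_{\hz^{s_i,p_i}(\I_T,w_{\a_i};X_i)},\qquad i\in\{0,1\},
\]
with $C_i$ independent of $T$. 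Next, I would apply the mixed derivative inequality of \cite[Theorem 3.18]{LV18} to $\widetilde f$ on $\R$, where no $T$ enters, yielding
\[
\|\widetilde{f}\|_{\hz^{s,p}(\R,w_\a;[X_0,X_1]_\theta)}\leq C\|\widetilde{f}\|_{\hz^{s_0,p_0}(\R,w_{\a_0};X_0)}^{1-\theta}\|\widetilde{f}\|_{\hz^{s_1,p_1}(\R,w_{\a_1};X_1)}^\theta.
\]
Finally, since $\widetilde{f}|_{\I_T}=f$ by the extension property, the restriction estimate in Proposition \ref{prop:eliminate_weights} gives
\[
\|f\|_{\hz^{s,p}(\I_T,w_\a;[X_0,X_1]_\theta)}\leq \|\widetilde{f}\|_{\hz^{s,p}(\R,w_\a;[X_0,X_1]_\theta)},
\]
and chaining these three estimates produces the claim with a $T$-independent constant.

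The final assertion about $H^{s,p}$ is handled by the same scheme with $\Extz$ replaced by $\Ext$ from Proposition \ref{prop:extension}\eqref{it:extension} (e.g.\ taking $\eta=T/2$); this introduces $T$-dependence in the resulting constant, but the $H^{s,p}$ extension operator imposes no exponent restriction, so the borderline case $s=\frac{1+\a}{p}$ is admissible there. The main obstacle — obtaining $T$-independent constants — has been largely absorbed into Proposition \ref{prop:extension}\eqref{it:extension_0}, which is precisely the point of the tailored extension operator; what remains delicate is ensuring the hypothesis $s\neq \frac{1+\a}{p}$ is used in the right place, namely to identify $\hz^{s,p}$ on $\R$ with a Bessel-potential space via Theorem \ref{t:equivalence_h_H} so that \cite[Theorem 3.18]{LV18} is applicable at the interpolated exponent.
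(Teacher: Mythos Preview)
Your proposal is correct and follows essentially the same route as the paper: extend via $\Extz$ (which is a single operator independent of $p_i,\a_i,s_i,X_i$), apply \cite[Theorem 3.18]{LV18} on $\R$, and restrict back via Proposition~\ref{prop:eliminate_weights}, invoking Theorem~\ref{t:equivalence_h_H} at the interpolated exponent under the condition $s\neq\frac{1+\a}{p}$. The only cosmetic difference is that the paper writes the intermediate interpolation inequality in $H$-spaces on $\R$ (using the contractive embedding \eqref{eq:contractiveembd0} on the right-hand side afterwards), whereas your displayed inequality is stated directly in $\hz$-spaces---but you correctly identify in your final paragraph that this passage is exactly where Theorem~\ref{t:equivalence_h_H} enters.
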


\begin{proof}
Let $\Extz$ be as in Proposition \ref{prop:extension}\eqref{it:extension_0}. By construction (see Subsection \ref{ss:extension_operators}) $\Extz$ does not depend on $p_i, \a_i, s_i, X_i$. Therefore, Proposition \ref{prop:eliminate_weights} gives
\begin{align*}
\|f\|_{\hz^{s,p}(\I_T,w_{\k};[X_0,X_1]_{\theta})}
&\leq \|\Extz f \|_{\hz^{s,p}(\R,w_{\k};[X_0,X_1]_{\theta})}.
\end{align*}

Since $s \neq \frac{1+\a}{p}$, by Theorem \ref{t:equivalence_h_H} it suffices to estimate $\|\Extz f \|_{H^{s,p}(\R,w_{\k};[X_0,X_1]_{\theta})}$. The interpolation result \cite[Theorem 3.18]{LV18} implies
\begin{align*}
\|\Extz f \|_{H^{s,p}(\R,w_{\k};[X_0,X_1]_{\theta})} \leq  C\|\Extz f\|_{H^{s_0,p_0}(\R,w_{\a_0};X_0)}^{1-\theta}
\|\Extz f\|_{H^{s_1,p_1}(\R,w_{\a_1};X_1)}^{\theta}.
\end{align*}
As in the proof of Proposition \ref{prop:embeddingSobolevWeights} one can check that
\[\|\Extz f\|_{H^{s_i,p_i}(\R,w_{\a_i};X_i)}\leq \|\Extz f\|_{\hz^{s_i,p_i}(\R,w_{\a_i};X_i)}\lesssim  \|f\|_{\hz^{s_i,p_i}(\I_T,w_{\a_i};X_i)},\]
and we can conclude the required embedding holds.

The final assertion can be proved in a similar way.
\end{proof}

\begin{remark}
\label{r:0H_replace_H}
It is to be expected that combining the methods of \cite{LMV18} with \cite[Theorem 3.18]{LV18}, Proposition \ref{prop:Sob_interpolation_h} can be improved to
\begin{equation}\label{eq:interpidentity}
[\hz^{s_0,p_0}(\R_+,w_{\a_0};X_0), \hz^{s_1,p_1}(\R_+,w_{\a_1};X_1)]_{\theta}=\hz^{s,p}(\R_+,w_{\a};[X_0,X_1]_{\theta})
\end{equation}
under the condition $s\neq \frac{1+\kappa}{p}$. In the case that $s = \frac{1+\kappa}{p}$, we expect the embedding $\hz^{s_0,p}(\I_T,w_{\a};X_0) \cap \hz^{s_1,p}(\I_T,w_{\a};X_1) \hookrightarrow \hz^{s,p}(\I_T,w_{\a};[X_0,X_1]_{\theta})$ to be valid with $T$-independent constants as well. This could be proved by a reiteration and interpolation argument using \eqref{eq:interpidentity}.
\end{remark}

We conclude this section by recalling an optimal trace result for anisotropic spaces. This result is a special case of the trace embedding of \cite{ALV19}. In the case that $X_1 =\Do(A)$ where $A$ is an invertible sectorial operator on a UMD Banach space $X_0$, the following is a consequence of \cite[Theorem 1.1]{MV14} and \cite[Corollary 7.6]{AV19}. Moreover, the UMD condition can be avoided.
In the following for an interval $J\subseteq \R_+$ and a Banach space $X$, we denote by $C_0(\overline{J};X)$ the set of all continuous functions $f:\overline{J}\to X$ vanishing at infinity endowed with the norm given by the right-hand side of \eqref{eq:Continuous_functions_norm}.

\begin{proposition}
\label{prop:continuousTrace}
Let $(X_0,X_1)$ be a couple of Banach space such that $X_1\hookrightarrow X_0$. Set $X_{1-\theta}=[X_0,X_1]_{1-\theta}$ or $X_{1-\theta} = (X_0,X_1)_{1-\theta,r}$ with $r\in [1, \infty]$. Assume that $p\in (1,\infty)$, $\k\in [0,p-1)$, $\theta\in (0,1)$ and $T\in (0,\infty]$. Then the following holds:
\begin{enumerate}[{\rm (1)}]
\item\label{it:trace_with_weights_Xap} If $\theta>\frac{1+\a}{p}$, then
$$
H^{\theta,p}(\I_T,w_{\k};X_{1-\theta})\cap L^p(\I_T,w_{\k};X_1)
\hookrightarrow C_0\big(\overline{I}_T;(X_0,X_1)_{1-\frac{1+\a}{p},p}\big);
$$
\item\label{it:trace_without_weights_Xp} If $\theta>\frac{1}{p}$, then for any $0<\varepsilon<T$ and $J_{\varepsilon,T}=(\varepsilon,T)$
$$
H^{\theta,p}(\I_T,w_{\k};X_{1-\theta})\cap L^p(\I_T,w_{\k};X_1)\hookrightarrow
 C_0\big(\overline{J}_{\varepsilon,T};(X_0,X_1)_{1-\frac{1}{p},p}\big).
$$
\end{enumerate}
Moreover, the constants in \eqref{it:trace_with_weights_Xap} and \eqref{it:trace_without_weights_Xp} depend only on $\eta$ if $T\in (\eta,\infty]$.
Furthermore, if we replace $H^{\theta,p}$ by $\hz^{\theta,p}$ in \eqref{it:trace_with_weights_Xap} and \eqref{it:trace_without_weights_Xp} the constants in the embeddings can be chosen independent of $T>0$.
\end{proposition}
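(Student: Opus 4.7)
The plan is to reduce both statements to the corresponding trace embedding on $\R_+$ proved in \cite{ALV19} (or, in the invertible sectorial case, to the combination of \cite[Theorem~1.1]{MV14} with \cite[Corollary~7.6]{AV19}), by transporting $f$ via the extension operator of Proposition~\ref{prop:extension}. The feature that makes the $T$-dependence transparent is that $\Extz$ has norm bounded by a constant depending only on $p,s,\a$, while $\Ext$ has norm controlled uniformly only when $T$ is kept away from $0$.

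For part \eqref{it:trace_with_weights_Xap}, I would set $\tilde f:=\Ext f$ (or $\Extz f$ in the $\hz^{\theta,p}$-variant). Since $\Ext$ is bounded on $L^p(\I_T,w_\k;Y)$ and on $W^{1,p}(\I_T,w_\k;Y)$ for any Banach space $Y$, complex interpolation yields
$$
\Ext:H^{\theta,p}(\I_T,w_\k;X_{1-\theta})\to H^{\theta,p}(\R,w_\k;X_{1-\theta}),
$$
while simultaneously $\Ext:L^p(\I_T,w_\k;X_1)\to L^p(\R,w_\k;X_1)$. Restricting $\tilde f$ to $\R_+$ puts us in the setting of \cite{ALV19}: under the hypothesis $\theta>\frac{1+\a}{p}$, the anisotropic trace embedding there gives continuity with values in $(X_0,X_1)_{1-\frac{1+\a}{p},p}$ on $\overline{\R}_+$, together with a bound by the intersection norm. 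Restricting back to $\overline{I}_T$ yields the stated embedding, with constants governed by $\|\Ext\|$ (hence $T$-uniform for $T\in(\eta,\infty]$), and by $\|\Extz\|$ (hence $T$-uniform for all $T>0$) in the $\hz^{\theta,p}$-case.

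For part \eqref{it:trace_without_weights_Xp} I would first apply Proposition~\ref{prop:eliminate_weights} to absorb the weight on the shifted interval: for fixed $\varepsilon\in(0,T)$,
$$
\|f\|_{H^{\theta,p}(\varepsilon,T;X_{1-\theta})}+\|f\|_{L^p(\varepsilon,T;X_1)}\leq C_{\varepsilon,\k}\big(\|f\|_{H^{\theta,p}(\I_T,w_\k;X_{1-\theta})}+\|f\|_{L^p(\I_T,w_\k;X_1)}\big).
$$
Then I would translate $J_{\varepsilon,T}$ to start at $0$ and invoke part \eqref{it:trace_with_weights_Xap} with weight $\k=0$ and the hypothesis $\theta>\tfrac1p$, producing continuity into $(X_0,X_1)_{1-\frac1p,p}$ on $\overline{J}_{\varepsilon,T}$. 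The $C_0$ (vanishing-at-infinity) clause when $T=\infty$ follows by density: smooth compactly supported functions lie in both sides and clearly have trace vanishing at infinity, and the continuous embedding into $BUC$ extends this property to the closure of such functions, which by standard approximation in weighted $H^{\theta,p}$- and $\hz^{\theta,p}$-spaces exhausts the left-hand side.

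The main obstacle I expect is purely bookkeeping: one must check that the target space produced by the abstract trace theorem of \cite{ALV19} is exactly the real interpolation space $(X_0,X_1)_{1-\frac{1+\a}{p},p}$ for \emph{both} admissible choices $X_{1-\theta}=[X_0,X_1]_{1-\theta}$ and $X_{1-\theta}=(X_0,X_1)_{1-\theta,r}$. This is precisely what the ``optimal anisotropic trace embedding'' in \cite{ALV19} is designed to furnish, so no new estimate is needed---only a careful quotation of that result with the correct parameters, followed by the extension/restriction machinery outlined above.
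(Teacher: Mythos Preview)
Your proposal is correct and follows essentially the same route as the paper: use the extension operator of Proposition~\ref{prop:extension} to reduce \eqref{it:trace_with_weights_Xap} to the half-line trace result of \cite{ALV19}, then obtain \eqref{it:trace_without_weights_Xp} from \eqref{it:trace_with_weights_Xap} with $\k=0$ via Proposition~\ref{prop:eliminate_weights} plus a translation, with the $T$-independence in the $\hz^{\theta,p}$-case coming from the $T$-uniform bound on $\Extz$. The paper's sketch is terser but identical in substance; your added remark on the $C_0$ clause via density is a reasonable elaboration of what is implicit in the cited references.
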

Here \eqref{it:trace_with_weights_Xap} follows from the above mentioned references and Proposition \ref{prop:extension}.
To prove \eqref{it:trace_without_weights_Xp} one can reduce to \eqref{it:trace_with_weights_Xap} with $\kappa = 0$ by Proposition \ref{prop:eliminate_weights} and a translation argument. To prove the embeddings \eqref{it:trace_with_weights_Xap} and \eqref{it:trace_without_weights_Xp} for $\hz^{\theta,p}$ by Proposition \ref{prop:extension} it suffices to consider the case $T=\infty$ in which case the result follows from \eqref{it:trace_with_weights_Xap} for $H^{\theta,p}$.

\subsection{Stochastic integration in UMD Banach spaces}
\label{ss:stochastic_integration}
The theory of stochastic integration in UMD Banach spaces with respect to a cylindrical Brownian motion is developed in \cite{BNVW08,NVW1}, see also \cite{NVW13}. Here we recall the results which will be needed in the following.

Throughout the paper $(\O,\mathcal{A},\Filtr=(\F_t)_{t\geq 0},\P)$ will denote a filtered probability space. Recall that a process $\phi:[0,T]\times\Omega \to X$, where $X$ is a Banach space, is called strongly progressively measurable if for all $t\in [0,T]$, $\phi|_{[0,t]}$ is strongly $\Borel([0,t])\otimes \F_t$-measurable (here $\Borel$ denotes the Borel $\sigma$-algebra). The $\sigma$-algebra generated by the strongly progressively measurable processes will be denoted by $\Progress$ and is a subset of $\Borel([0,\infty))\otimes \F_{\infty}$.

In the paper we will consider cylindrical Gaussian noise. In the following $H$ is a separable Hilbert space.
\begin{definition}
\label{def:Cylindrical_BM}
A bounded linear operator $W_H:L^2(\R_+;H)\rightarrow L^2(\Omega)$ is said to be a {\em cylindrical Brownian motion} in $H$ if the following are satisfied:
\begin{itemize}
\item for all $f\in L^2(\R_+;H)$ the random variable $W_H(f)$ is centered Gaussian.
\item for all $t\in \R_+$ and $f\in L^2(\R_+;H)$ with support in $[0,t]$, $W_H(f)$ is $\F_t$-measurable.
\item for all $t\in \R_+$ and $f\in L^2(\R_+;H)$ with support in $[t,\infty]$, $W_H(f)$ is independent of $\F_t$.
\item for all $f_1,f_2\in L^2(\R_+;H)$ we have $\E(W_H(f_1)W_H(f_2))=(f_1,f_2)_{L^2(\R_+;H)}$.
\end{itemize}
\end{definition}
Given a cylindrical Brownian motion in $H$, the process $(W_H(t)h)_{t\geq 0}$, where
\begin{equation}
W_H(t)h:=W_H(\one_{(0,t]}\otimes h), \ \  h\in H,
\end{equation}
is a Brownian motion.

\begin{example}
\label{ex:BM}
Let $(w_n)_{n\geq 1}$ be independent standard Brownian motions. Then $W_{\ell^2} (f) = \sum_{n\geq 1} \int_{\R_+} \lb f, e_n\rb d w_n$ converges in $L^2(\Omega)$ and defines a cylindrical Brownian motion in $\ell^2$, where $e_n=(\delta_{jn})_{n\geq 1}$ and $\delta_{jn}$ denotes the Kronecker's delta.
\end{example}

To introduce stochastic integration in UMD Banach spaces $X$ we first recall the definition of $\g$-radonifying operators (see \cite[Chapter 9]{Analysis2} for details). Let $(\wt{\g}_i)_{i\geq 1}$ be a sequence of independent standard normal random variable on a probability space $(\wt{\O},\wt{\P})$ and $(h_i)_{i\geq 1}$ an orthonormal basis for $H$. We say that a bounded linear operator $T:H\to X$ belongs to $\g(H,X)$ if $\sum_{i=1}^{\infty} \wt{\g}_i Th_i$ converges in $L^2(\Omega;X)$ and in this case we let
$$
\|T\|_{\g(H,X)}^2:=\wt{\E}\Big\|\sum_{i=1}^{\infty} \wt{\g}_i Th_i\Big\|_{X}^2.
$$
Note that for $X = L^p(S)$ with $p\in [1, \infty)$, where $(S,\Sigma,\mu)$ is a measure space one has (see \cite[Proposition 9.3.2]{Analysis2})
\begin{align}\label{eq:gammaidentity}
\gamma(H,X) = L^p(S;H).
\end{align}

At this point, we can define the \textit{stochastic integral with respect to a cylindrical Brownian} motion in $H$ of  the process $\one_{A\times (s,t]}\otimes( h\otimes x)$:
\begin{equation}
\int_0^{\infty} \one_{A\times (s,t]}\otimes( h\otimes x)(s) \,dW_H(s):= \one_{A}\otimes (W_H(t)h-W_H(s)h)\,x\,,
\end{equation}
and we extend it to adapted step processes of finite rank by linearity.

We denote by $L^p_{\Progress}((0,T)\times\Omega;\g(H,X))$ the progressive measurable subspace of $L^p((0,T)\times\Omega;\g(H,X))$. One can show this coincides with the closure of the adapted step processes of finite rank. The next result is well-known and actually valid for the larger class of martingale type $2$ spaces (see \cite[Theorem 4.7]{NVW13} and \cite{Ondrejat04}):
\begin{proposition}
\label{prop:Ito}
Let $T>0$, $p\in (0,\infty)$ and let $X$ be a UMD Banach space with type $2$. Then the mapping $G\mapsto \int_0^T G\,dW_H$ extends to a bounded linear operator from $L^p_{\Progress}((0,T)\times\Omega;\g(H,X))$ into $L^p(\Omega;X)$. Moreover,
\begin{equation*}
\E\sup_{0\leq  t\leq T}\Big\|\int_0^t G(s)\,dW_H(s)\Big\|_{X}^p \lesssim_{p,X,T} \E\|G\|_{L^2(0,T;\g(H,X))}^p.
\end{equation*}
\end{proposition}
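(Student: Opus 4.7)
The plan is to establish the estimate for the dense subclass of adapted step processes of finite rank, for which the stochastic integral is already defined pointwise by the explicit formula above, and then extend by density using the resulting a priori bound. For such a step process $G$, the process $M_t := \int_0^t G\,dW_H$ is a continuous $X$-valued martingale, so the supremum in $t$ is a genuinely measurable quantity attained on a countable dense set and no separability issues arise.

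The heart of the argument consists of two inequalities. First, the It\^o isomorphism in UMD Banach spaces yields, for every $p\in (1,\infty)$,
\begin{equation*}
\E\sup_{0\leq t\leq T}\|M_t\|_X^p \lesssim_{p,X}\E\|G\|_{\g(L^2(0,T;H),X)}^p.
\end{equation*}
This is the one-sided half of \cite[Theorem 4.7]{NVW13}, which I would invoke as a black box; at its core lies Burkholder's decoupling inequality characterizing UMD, combined with Doob's maximal inequality and the Kahane--Khintchine identification of the decoupled Gaussian sum with the $\g$-radonifying norm. The extension to all $p\in (0,\infty)$ is then obtained from Lenglart's domination principle: the continuous submartingale $\|M_\cdot\|_X$ is dominated, in the stopped-expectation sense, by the increasing process $t \mapsto \|G \one_{(0,t]}\|_{\g(L^2;\g(H,X))}$, and the case $p=2$ supplies the weak-type bound needed to invoke Lenglart.

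Second, I would use that every Banach space of type $2$ satisfies the one-sided $\g$-Fubini embedding
\begin{equation*}
\|G\|_{\g(L^2(0,T;H),X)} \lesssim_X \|G\|_{L^2(0,T;\g(H,X))},
\end{equation*}
see \cite[Theorem 9.2.10]{Analysis2}. Chaining it with the previous bound gives the desired maximal estimate on step processes. For the density step, one approximates a general $G \in L^p_{\Progress}((0,T)\times\Omega;\g(H,X))$ first by $P_n G$, where $P_n$ projects onto an increasing sequence of finite-dimensional subspaces of $\g(H,X)$, and then by adapted simple processes in the $L^p((0,T)\times\Omega;\g(H,X))$-norm; progressive measurability is preserved at each step. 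The a priori estimate then promotes $G\mapsto \int_0^{\cdot} G\,dW_H$ to a bounded linear operator into $L^p(\Omega;C([0,T];X))$, yielding both assertions of the proposition.

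The main obstacle is the UMD It\^o isomorphism: it is a deep result and the only nontrivial input in the chain above, so one should be content to cite it. A secondary subtlety is the passage to $p\leq 1$, where one cannot apply Doob to $\|M\|^p$ directly; here the Lenglart-type argument sketched above is the correct substitute and also ensures almost-sure existence of a continuous modification of $\int_0^\cdot G\,dW_H$ on the full space $L^p_{\Progress}((0,T)\times\Omega;\g(H,X))$.
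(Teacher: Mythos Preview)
The paper does not prove this proposition at all; it simply records it as well-known, cites \cite[Theorem 4.7]{NVW13} and \cite{Ondrejat04}, and remarks that the result in fact holds for the larger class of martingale type~2 spaces. Your sketch is correct and supplies far more detail than the paper itself.

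There is a mild difference in route worth noting. You go via the UMD It\^o isomorphism, which puts the $\g(L^2(0,T;H),X)$-norm on the right-hand side, and then invoke the type-2 $\g$-Fubini embedding $L^2(0,T;\g(H,X))\hookrightarrow \g(L^2(0,T;H),X)$ to land on the desired $L^2(0,T;\g(H,X))$-norm. The paper's pointer to martingale type~2 indicates the more direct path: UMD together with type~2 implies martingale type~2, and in any martingale type~2 space the one-sided It\^o inequality with the $L^2(0,T;\g(H,X))$-norm on the right is available in a single stroke, without passing through the $\g$-radonifying norm of $G$ viewed as an operator on $L^2(0,T;H)$. Both arguments are standard and give the same conclusion; yours has the merit of exhibiting the separate contributions of the UMD and type-2 hypotheses, while the martingale-type-2 route explains why the paper can assert the result under a weaker structural assumption on $X$. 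Your Lenglart step for $p\in(0,1]$ and the density extension are the usual ones and raise no issues.
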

A sharp two-sided estimate for the stochastic integral was obtained in \cite{NVW1} and \cite[Theorem 5.5]{NVW13}. It might seem that in the current paper we only use Proposition \ref{prop:Ito}, but typically the maximal regularity estimates we use require these sharper estimates. In particular, this is the case in Theorem \ref{t:H_infinite_SMR} below.

\subsection{Stopping times and related concepts}
A stopping time $\tau$ is a measurable map $\tau:\O\to [0,T]$ such that $\{\tau\leq t\}\in \F_t$ for all $t\in [0,T]$.
We denote by $\ll 0,\sigma\rr$ the stochastic interval
$$
\ll 0,\sigma\rr:=\{(t,\omega)\in [0,T]\times\Omega\,:\,0 \leq t\leq \sigma(\om)\}.
$$
Analogously definitions hold for $\ll 0,\sigma\rro$, $\llo 0,\sigma\rro$ etc.

In accordance with the previous notation, for $A\subseteq \O$ and $\tau,\mu$ two stopping times such that $\tau\leq \mu$, we set
\begin{equation*}
[0,T]\times\Omega \supseteq [\tau,\mu]\times A:=\{(t,\om)\in [0,T]\times A\,:\,\tau(\om)\leq t\leq \mu(\om)\}.
\end{equation*}
Similar definitions are employed for $[\tau,\mu)\times A$, $(\tau,\mu)\times A$ etc. 
In particular, we have $\ll 0,\sigma\rr=[0,\sigma]\times\Omega$.

Let $X$ be a Banach space and let $A\in \mathcal{A}$. We say that $u:A\times [0,\mu]\to X$ is strongly measurable (resp.\ strongly progressively measurable) if the process
\begin{equation}
\one_{A\times [0,\mu]}u:=
\begin{cases}
u,\qquad \text{on }A\times [0,\mu],\\
0,\qquad \text{otherwise},
\end{cases}
\end{equation}
is strongly measurable (resp.\ strongly progressively measurable).

To each stopping time $\tau$ we can associate the $\sigma$-algebra of the $\tau$-past,
$$
\F_{\tau}:=\{A\in \mathcal{A}\,:\,\{\tau\leq t\}\cap A\in \F_t,\;\forall t \in [0,T]\}.
$$

The following well-known results will be used frequently in the paper without further mentioning (see \cite[Lemmas 7.1 and 7.5]{Kal}).
\begin{proposition}
\label{prop:F_sigma_algebra_stopping_times}
Let $\tau$ be a stopping time. Then $\F_{\tau}$ is a $\sigma$-algebra and satisfies the following properties.
\begin{itemize}
\item If $\tau=t$ a.s.\ for some $t\in [0,T]$, then $\F_{\tau}=\F_t$.
\item If $X:[0,T]\times\Omega\to X$ is a strongly progressively measurable process, then the random variable $X_{\tau}(\om):=X(\tau(\om),\om)$ is strongly $\F_{\tau}$-measurable.
\end{itemize}
\end{proposition}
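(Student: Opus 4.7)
The plan is to verify each of the three assertions in turn by direct set-theoretic manipulation, working from the definition $\F_{\tau}=\{A\in\mathcal{A}:\{\tau\leq t\}\cap A\in\F_{t}\text{ for all }t\in[0,T]\}$ together with progressive measurability.

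First I would check that $\F_{\tau}$ is a $\sigma$-algebra. Clearly $\varnothing,\O\in\F_{\tau}$ since $\{\tau\leq t\}\in\F_{t}$ by definition of a stopping time. For countable unions, if $(A_{n})_{n\geq 1}\subset\F_{\tau}$ then for each $t$ one has $\{\tau\leq t\}\cap\bigcup_{n}A_{n}=\bigcup_{n}(\{\tau\leq t\}\cap A_{n})\in\F_{t}$. For complements, write $\{\tau\leq t\}\cap A^{c}=\{\tau\leq t\}\setminus(\{\tau\leq t\}\cap A)$, which lies in $\F_{t}$ as a difference of two $\F_{t}$-sets.

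Second I would handle the identity $\F_{\tau}=\F_{t}$ when $\tau\equiv t$ a.s. Under (the usual assumption that) $\F_{t}$ is complete, the event $\{\tau\leq s\}$ equals $\O$ modulo a null set when $s\geq t$ and equals a null set when $s<t$. If $A\in\F_{t}$ then for $s\geq t$ we have $A\in\F_{s}$ by monotonicity of the filtration, so $\{\tau\leq s\}\cap A\in\F_{s}$; for $s<t$ the intersection is a null set, hence in $\F_{s}$. Conversely, $A\in\F_{\tau}$ applied with $s=t$ gives $A=\{\tau\leq t\}\cap A\in\F_{t}$.

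The main step, and the only nontrivial one, is the measurability of $X_{\tau}$. Strong measurability means we must show that $\{X_{\tau}\in B\}\in\F_{\tau}$ for every Borel $B\subset X$, i.e.\ that $\{X_{\tau}\in B\}\cap\{\tau\leq t\}\in\F_{t}$ for all $t\in[0,T]$. On the event $\{\tau\leq t\}$ one has $X_{\tau}=X_{\tau\wedge t}$, so it suffices to prove that $X_{\tau\wedge t}$ is $\F_{t}$-measurable. The map
\[
\Phi_{t}:\om\mapsto\bigl(\tau(\om)\wedge t,\om\bigr)
\]
is $\F_{t}/\Borel([0,t])\otimes\F_{t}$-measurable, since $\tau\wedge t$ is $\F_{t}$-measurable. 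Strong progressive measurability of $X$ means that the restriction of $X$ to $[0,t]\times\O$ is strongly $\Borel([0,t])\otimes\F_{t}$-measurable, hence the composition $X_{\tau\wedge t}=X\circ\Phi_{t}$ is strongly $\F_{t}$-measurable, which is what we needed. In the separable-range case this is immediate; in general one approximates $X|_{[0,t]\times\O}$ by $\Borel([0,t])\otimes\F_{t}$-simple functions and composes with $\Phi_{t}$ to obtain $\F_{t}$-simple approximations to $X_{\tau\wedge t}$, from which strong $\F_{t}$-measurability follows by passing to the limit. No step is technically hard; the only place where care is needed is the strong (as opposed to scalar) measurability in the last assertion, which is handled by this simple-function approximation.
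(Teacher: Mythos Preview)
Your proof is correct and follows the standard textbook route. The paper itself does not give a proof of this proposition at all; it simply records these facts as well known and refers the reader to \cite[Lemmas 7.1 and 7.5]{Kal}, so there is nothing further to compare.
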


We continue with another measurability lemma.
\begin{lemma}\label{lem:XYtmeas}
Let $X$ be a Banach space. For each $t\in [0,T]$, let $Y_t$ be a space of functions $f:[0,t]\to X$. Assume that for each $f\in Y_T$ and each $t\in [0,T]$,
\begin{itemize}
\item $f|_{[0,t]}\in Y_t$;
\item $t\mapsto \|f|_{[0,t]}\|_{Y_t}$ is increasing;
\end{itemize}
Let $u:\Omega\to Y_T$ be strongly measurable and $\tau$ be a stopping time. Then the map $\omega\mapsto \|u(\omega)|_{[0,\tau(\omega)]}\|_{Y_{\tau(\omega)}}$ is measurable.
\end{lemma}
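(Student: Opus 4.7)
My plan is to approximate $u$ by simple functions valued in $Y_T$ and to exploit the monotonicity hypothesis in $t$ to handle the $\omega$-dependence of the time argument. The first key observation is that the increasing assumption, applied with any $t\leq T$, forces the restriction $f\mapsto f|_{[0,t]}$ from $Y_T$ into $Y_t$ to be a contraction, because
\[\|f|_{[0,t]}\|_{Y_t}\;\leq\;\|f|_{[0,T]}\|_{Y_T}=\|f\|_{Y_T}.\]
In particular, for any $u,u'\in Y_T$ and any $s\in[0,T]$ the reverse triangle inequality yields $\bigl|\,\|u|_{[0,s]}\|_{Y_{s}}-\|u'|_{[0,s]}\|_{Y_{s}}\,\bigr|\leq\|u-u'\|_{Y_T}$. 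This is the estimate that will allow me to pass approximations through the $\tau$-dependent cut-off.

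By strong measurability I choose simple functions $u_n=\sum_{k=1}^{N_n}\one_{A_{n,k}}f_{n,k}$ with $f_{n,k}\in Y_T$ and $A_{n,k}\in\mathcal{A}$ such that $u_n(\omega)\to u(\omega)$ in $Y_T$ for almost every $\omega$. For each fixed $f\in Y_T$, the scalar function $\psi_f\colon[0,T]\to[0,\infty)$ defined by $\psi_f(t):=\|f|_{[0,t]}\|_{Y_t}$ is increasing by hypothesis and hence Borel measurable; since $\tau$ is a measurable scalar random variable, the composition $\omega\mapsto\psi_f(\tau(\omega))$ is $\mathcal{A}$-measurable. Therefore, for each $n$,
\[\omega\mapsto\|u_n(\omega)|_{[0,\tau(\omega)]}\|_{Y_{\tau(\omega)}}=\sum_{k=1}^{N_n}\one_{A_{n,k}}(\omega)\,\psi_{f_{n,k}}(\tau(\omega))\]
is measurable as a finite sum of products of measurable functions.

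Finally, applying the contraction estimate of the first paragraph with $s=\tau(\omega)$ and $u'=u_n(\omega)$ gives
\[\bigl|\,\|u_n(\omega)|_{[0,\tau(\omega)]}\|_{Y_{\tau(\omega)}}-\|u(\omega)|_{[0,\tau(\omega)]}\|_{Y_{\tau(\omega)}}\,\bigr|\;\leq\;\|u_n(\omega)-u(\omega)\|_{Y_T}\xrightarrow{n\to\infty}0\]
almost surely, so $\omega\mapsto\|u(\omega)|_{[0,\tau(\omega)]}\|_{Y_{\tau(\omega)}}$ is an a.s.\ pointwise limit of measurable functions and is therefore measurable (after the usual modification on a null set, if the underlying $\sigma$-algebra is not complete).

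I do not anticipate any genuine obstacle: the three ingredients---contraction of the restriction map extracted from monotonicity, the automatic Borel measurability of scalar monotone functions, and the simple-function approximation built into strong measurability---fit together cleanly. An alternative route would first establish joint measurability of $(t,\omega)\mapsto\|u(\omega)|_{[0,t]}\|_{Y_t}$ on $[0,T]\times\Omega$ directly from monotonicity in $t$ and measurability in $\omega$, then evaluate along the graph of $\tau$; but this would require extra care at the jump times of the monotone $t$-section, so the approximation argument above seems shorter.
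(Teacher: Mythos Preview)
Your proof is correct and rests on the same two observations as the paper's: monotonicity of $t\mapsto\|f|_{[0,t]}\|_{Y_t}$ gives Borel measurability in $t$, and (via the contraction estimate you extract) Lipschitz continuity in $f$. The difference lies in how these are combined. The paper defines $\Psi(t,f):=\|f|_{[0,t]}\|_{Y_t}$ on $[0,T]\times Y_T$, reduces to separable $Y_T$ using strong measurability of $u$, and then invokes a standard Carath\'eodory-type result (measurable in $t$, continuous in $f$, separable target $\Rightarrow$ jointly Borel) to conclude that $\Psi$ is jointly measurable; the desired map is then $\Psi\circ(\tau,u)$. This is precisely the ``alternative route'' you sketch at the end, and the paper carries it out without the difficulty you anticipated: no analysis of jump times is needed, because the Carath\'eodory lemma handles everything. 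Your approximation by simple functions is a valid and slightly more self-contained alternative, trading the citation of a joint-measurability lemma for an explicit limit argument. Both proofs tacitly use that $Y_T$ is a normed linear space (so that $u-u'\in Y_T$ and the reverse triangle inequality applies), which is implicit in the hypotheses and consistent with all the intended applications.
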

\begin{proof}
Since $u$ is strongly measurable, we may assume that $Y_T$ is separable.

Let $\Psi:[0,T]\times Y_T\to [0,\infty)$ be given by $\Psi(t,f) = \|f|_{[0,t]}\|_{Y_t}$. Then since for $f\in Y_T$, $\Psi(\cdot,f)$ is increasing, it follows that $\Psi(\cdot,f)$ is measurable. For $t\in [0,T]$ and $f,g\in Y_T$,
\[|\Psi(t,f) - \Psi(t,g)|\leq \|(f-g)|_{[0,t]}\|_{Y_t}\leq \|f-g\|_{Y_T}.\]
Therefore,  $\Psi(t,\cdot)$ is continuous. Since $Y_T$ is separable this implies $\Psi$ is measurable (see \cite[Lemma 4.51]{AliBor}).

On the other hand, $\zeta:\Omega\to [0,T]\times Y_T$ defined by $\zeta(\omega) =(\tau(\omega), u(\omega))$ is measurable.
Since $\|u(\omega)|_{[0,\tau(\omega)]}\|_{Y_{\tau(\omega)}} = \Psi(\zeta(\omega)) = (\Psi\circ \zeta)(\omega)$ the required measurability follows.
\end{proof}

The lemma will be applied to the spaces $Y_t$ such as
\[C([0,t];X), L^p(0,t,w_{\a};X), H^{\theta,p}(\I_t,w_{\a};X), \hz^{\theta,p}(\I_t,w_{\a};X).\]
The first two examples are simple because the norm is actually a continuous function of $t\in [0,T]$. In the cases $H^{\theta,p}$ and $\hz^{\theta,p}$ it is not obvious whether the norms are continuous in $t\in [0,T]$, but fortunately, they are increasing by Proposition \ref{prop:extension}.

The above lemma implies that the following versions of stopped spaces with stopped norms are well-defined.
\begin{definition}
\label{def:phi_spaces}
Let $X$ be a Banach space. Let $T>0$, $p,q\in (1,\infty)$, $r\in \{0\}\cup [1,\infty)$ and $\theta\in [0,1]$. Assume that $\tau$ is a stopping time such that $\tau:\O\to [0,T]$. Let $(Y_t)_{t\in [0,T]}$ be as in Lemma \ref{lem:XYtmeas}.
We say that $u\in L^r_{\Progress}(\O;Y_{\tau})$ if there exists a strongly progressively measurable $\tilde{u}\in L^r(\O;Y_T)$ such that $\tilde{u}|_{\ll 0,\tau\rr}=u$. If in addition $r\in [1,\infty)$, we set
\begin{equation}
\label{eq:norm_phi_spaces}
\begin{aligned}
\|u\|_{L^r(\O;Y_{\tau})}^r := \E\big(\|\tilde{u}|_{[0,\tau]}\|_{Y_{\tau}}^r\big).
\end{aligned}
\end{equation}
Finally, in case $Y_{t}=L^p(I_t,w_{\a};X)$, we set $L^p_{\Progress}(\I_{\tau}\times \O,w_{\a};X):=L^p_{\Progress}(\O;Y_{\tau})$.
\end{definition}

Using Lemma \ref{lem:XYtmeas} one can check that the expectation in \eqref{eq:norm_phi_spaces} is well-defined. Moreover, one can check that the norm does not depend on the choice of $\tilde{u}$.

\section{Stochastic maximal $L^p$-regularity}
\label{ss:SMR}

The following assumptions will be made throughout Sections \ref{ss:SMR} and \ref{s:quasi}.
\begin{assumption}
\label{ass:Xtr}
Let $X_0,X_1$ be UMD Banach spaces with type 2 and assume $X_1\hookrightarrow X_0$ densely. Assume one of the following two settings is satisfied
\begin{itemize}
\item $p\in (2,\infty)$ and $\a\in [0,\frac{p}{2}-1)$;
\item $p=2$, $\k=0$ and $X_0,X_1$ are Hilbert spaces.
\end{itemize}
For $\theta\in (0,1)$, and $p,\a$ as above let
$$
X_{\theta}:=[X_0,X_1]_{\theta}, \qquad \Xap:=(X_0,X_1)_{1-\frac{1+\a}{p},p}, \qquad \Xp:=\Xzp.
$$
\end{assumption}
The spaces $X_{\theta}$ have UMD and type $2$ (see \cite[Proposition 4.2.17]{Analysis1} and \cite[Proposition 7.1.3]{Analysis2}). The same holds for $\Xp$ but this will not be needed.

Moreover, in the case $p=2$ and $\a=0$, by \cite[Corollary C.4.2]{Analysis1} we have $X_{\frac{1}{2}}=(X_0,X_1)_{\frac{1}{2},2}=\Xzero$. This is the reason we only consider Hilbert spaces if $p=2$ and it will be used without further mentioning it.

\subsection{Stochastic maximal $L^p$-regularity}
In this subsection we collect some basic definitions.

The next assumption is solely for Section \ref{ss:SMR}, where the linear theory is treated.
\begin{assumption}
\label{ass:AB_boundedness}
Let $T\in (0,\infty]$ and set $\I_T:=(0,T)$. The maps $A:\I_T\times\Omega\to \calL(X_1,X_0)$ and $B:\I_T\times\Omega \to \calL(X_1,\g(H,X_{1/2}))$ are strongly progressively measurable. Moreover, we assume there exists $C_{A,B}>0$ such that
$$
\|A(t,\om)\|_{\calL(X_1,X_0)} + \|B(t,\om)\|_{ \calL(X_1,\g(H,X_{1/2}))}\leq C_{A,B},
$$
for a.a.\ $\om\in \O$ and all $t\in \I_T$.
\end{assumption}
Note that $A$ is a family of unbounded operators on $X_0$ and $\Do(A(t,\omega)) = X_1$, and $B$ is a family of unbounded operators on $X_{1/2}$ with domain $\Do(B(t,\omega)) = X_1$. The orders of both terms are comparable as the $A$-term is for the deterministic part, and the $B$-term for the stochastic part.

Stochastic maximal $L^p$-regularity is concerned with the optimal regularity estimate for the linear abstract stochastic Cauchy problem:
\begin{equation}
\label{eq:diffAB}
\begin{cases}
du(t) +A(t)u(t)dt=f(t) dt+ (B(t)u(t)+g(t))dW_H(t), \ \ t\in [0,T],\\
u(0)=u_0.
\end{cases}
\end{equation}

Next we give the definition of a strong solution.
\begin{definition}
\label{def:strong_linear}
Let $\tau$ be a stopping time which takes values in $[0,T]$. Let the Assumptions \ref{ass:Xtr}-\ref{ass:AB_boundedness} be satisfied. Assume that
$$u_0\in L^{0}_{\F_0}(\O;X_0),\quad f\in L^0_{\Progress}(\O;L^1(\I_{\tau};X_0)), \quad g\in L^0_{\Progress}(\O;L^2(\I_{\tau};\g(H,X_{0}))).$$
A strongly progressive process $u:\ll 0,\tau \rr\to X_1$ is a strong solution to \eqref{eq:diffAB} on $\ll 0,\tau \rr$ if a.s.\ $u\in L^2(\I_{\tau};X_1)$, and a.s.\ for all $t\in \I_{\tau}$,
\begin{equation}
\label{eq:linear_integral}
u(t)-u_{0}+\int_0^t A(s)u(s) ds= \int_0^t(B(s)u(s)+g(s))dW_H(s) +\int_0^t f(s) ds.
\end{equation}
\end{definition}
Note that a strong solution automatically satisfies $u\in L^0(\Omega;C([0,\tau];X_0))$.

We are ready to define weighted stochastic maximal $L^p$-regularity in a similar way as in \cite{VP18}.
\begin{definition}[Stochastic maximal $L^p$-regularity]
\label{def:SMRgeneralized}
Let the Assumptions \ref{ass:Xtr}-\ref{ass:AB_boundedness} be satisfied.
We write $(A,B)\in \MRtaz$ if for every $f\in L^p_{\Progress}(\I_T\times\O,w_{\a};X_0)$ and $g\in  L^p_{\Progress}(\I_T\times \O,w_{\a};\g(H,X_{1/2}))$ there exists a strong solution $u$ to \eqref{eq:diffAB} on $\ll 0,T\rr$ with $u_0=0$ such that $u\in L^p(\I_{T}\times\Omega,w_{\a};X_1)$, and moreover for all stopping times $\tau:\Omega\to [0,T]$ and any strong solution $u\in L^p(\I_{\tau}\times\Omega,w_{\a};X_1) $ the following estimate holds
\begin{align*}
\|u\|_{L^p(\I_\tau\times\Omega,w_{\a};X_1)} \leq C \|f\|_{L^p(\I_{\tau}\times \O,w_{\a};X_0)} 
+ C \|g\|_{L^p(\I_\tau \times\O,w_{\a};\g(H,X_{1/2}))},
\end{align*}
where $C$ is independent of $f$, $g$ and $\tau$.

In the unweighted case we set $\MRtz:=\MRtzeroz$. Furthermore, we write $A\in \MRtaz$ if $(A,0)\in \MRtaz$.
\end{definition}
As a consequence of the estimate in the above definition, a strong solution $u\in L^p(\I_{\tau}\times \O,w_{\a};X_1)$ on $\ll 0,\tau \rr$ to \eqref{eq:diffAB} is unique.

Often we will need the following stronger form of stochastic maximal $L^p$-regularity, where additional time-regularity is required. For technical reasons the definitions for $p>2$ and $p=2$ are different.
\begin{definition}\label{def:SMRz}
Let the Assumptions \ref{ass:Xtr}-\ref{ass:AB_boundedness} be satisfied.
\begin{enumerate}[$(1)$]
\item For $p>2$, we write $(A,B)\in \MRta$ if $(A,B)\in \MRtaz$ and for every $f\in L^p_{\Progress}(\I_T\times \O,w_{\a};X_0)$
and $g\in L^p_{\Progress}(\I_T\times\O,w_{\a};\g(H,X_{1/2}))$ the strong solution $u$ to \eqref{eq:diffAB} on $\ll 0,T\rr$ with $u_0=0$ satisfies $u\in L^p(\O;H^{\theta,p}(\I_T,w_{\a};X_{1-\theta}))$ for every $\theta\in [0,1/2)$, and
\begin{equation*}
\|u\|_{L^p(\O;H^{\theta,p}(\I_{T},w_{\a};X_{1-\theta}))}\leq C \|f\|_{L^p(\I_T\times\O,w_{\a};X_0)} 
+ C\|g\|_{L^p(\I_T\times \O,w_{\a};\g(H,X_{1/2}))},
\end{equation*}
where $C$ does not depend on $f$ and $g$.

\item We write $(A,B)\in \mathcal{SMR}^{\bullet}_{2,0}(T)$ if $(A,B)\in \mathcal{SMR}_{2,0}(T)$ and for every $f\in L^2_{\Progress}(\I_T\times\O;X_0)$
and $g\in L^2_{\Progress}(\I_T\times\O;\g(H,X_{1/2}))$ the strong solution $u$ to \eqref{eq:diffAB} with $u_0=0$ satisfies $u\in L^2(\O;C(\overline{I}_T;X_{\frac{1}{2}}))$ and
\begin{equation*}
\|u\|_{L^2(\O;C(\overline{I}_T;X_{\frac{1}{2}}))}\leq C \|f\|_{L^2(\I_T\times\O;X_0)} + C\|g\|_{L^2(\I_T\times\O;\g(H,X_{1/2}))},
\end{equation*}
where $C$ does not depend on $f$ and $g$.
\end{enumerate}
In the unweighted case we set $\MRt:=\MRtzero$. Furthermore, we write $A\in \MRta$ if $(A,0)\in \MRta$.
\end{definition}
Although we allow $\theta = \frac{1+\a}{p}$ in the above definition, later on we will omit this case since some technical difficulties arise related to Theorem \ref{t:equivalence_h_H}.

In the next section we give examples of pairs $(A,B)$ which are in $\MRta$.

\subsection{Operators with stochastic maximal $L^p$-regularity}
\label{ss:operators_with_SMR}
There exists an extensive list of examples on stochastic maximal $L^p$-regularity and in this section we review a selection. We will only consider maximal $L^p$-regularity in the Bessel-potential scale.

The case Hilbert space case for $\MRtaz$ was first studied by several different methods for $p=2$ and $\a=0$. We refer to the following papers for more detailed information.
\begin{itemize}
\item \cite[Theorem 6.14]{DPZ} the semigroup approach under restrictions on the interpolation spaces.
\item \cite{LiuRock} the monotone operators approach, where $A$ and $B$ not even need to be linear.
\item \cite{KryW22theory} $W^{k,2}$-theory on domains with weights.
\end{itemize}
In some cases one can even obtain that the operator is in $\mathcal{SMR}^{\bullet}_{2}(T)$. For instance this holds if $A$ is the generator of a $C_0$-semigroup on $X_{\frac12}$ which has a dilation to a $C_0$-group (see \cite{HS01}). In particular, this holds if the semigroup is quasi-contractive $\|e^{-tA}\|_{\calL(X_{\frac12})}\leq e^{t\omega}$ or $A$ has a bounded $H^\infty$-calculus of angle $<\pi/2$ on $X_0$ (see \cite[Theorem 11.13]{KuWe}).

In the setting $X_0 = H^{s,p}$ the stochastic maximal regularity of the form $\MRtaz$ has been obtained mostly for second order elliptic operators starting in \cite{Kry96, Kry, Kry00} in the $\R^d$-case in what is usually called Krylov's $L^p$-theory for SPDEs. It was afterwards extended to domains:
\begin{example}\label{ex:KrylovLp}
\
\begin{itemize}
\item \cite{CioicaKimLeeLindner18} and \cite{LoVer} heat equation on an angular domain with weights;
\item \cite{CioicaKimLee18} heat equation on polygonal domains with weights;
\item \cite{Du2018} $C^2$-domains no weights;
\item \cite{Kim04a,Kim04b,Kim05} $C^1$-domains with weights;
\item \cite{KryLot} half space case with weights;
\end{itemize}
and  second order systems:
\begin{itemize}
\item \cite{KimLeesystems} second order systems with $B$ of special form;
\item \cite{MiRo01} second order systems with $B$ of special form.
\end{itemize}
\end{example}

The stronger form of stochastic maximal regularity $\mathcal{SMR}^{\bullet}_{p}(T)$ was proved in \cite{MaximalLpregularity} for $B = 0$ and $A$ independent of $(t,\omega)$ using the $H^\infty$-calculus. Combined with a perturbation argument, the case $\a\in [0,\frac{p}{2}-1)$ was obtained in \cite[Section 7]{AV19}.
\begin{theorem}
\label{t:H_infinite_SMR}
Let Assumption \ref{ass:Xtr} be satisfied. Let $X_0$ be isomorphic to a closed subspace of an $L^q$-space for some $q\in [2,\infty)$ on a $\sigma$-finite measure space.
Let $A$ be a closed operator on $X_0$ such that $\Do(A)=X_1$.
Assume that there exists a $\lambda\in \R$ such that $\lambda+ A$ has a bounded $H^\infty$-calculus of angle $<\pi/2$. Then $A\in \MRta$ for all $T<\infty$. Furthermore, if $A$ is invertible and $\lambda=0$, then the result extends to $T= \infty$.
\end{theorem}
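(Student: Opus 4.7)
My plan is to proceed in three main steps, mirroring the route indicated in the excerpt.

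\textbf{Step 1: Reduction to $\lambda=0$.} I would first show that the addition of a lower-order term of the form $\lambda u$ preserves membership in $\MRta$. For $T<\infty$, the substitution $v(t)=e^{-\lambda t}u(t)$ converts the equation $du+Au\,dt=f\,dt+g\,dW_H$ into $dv+(A+\lambda)v\,dt=\tilde f\,dt+\tilde g\,dW_H$ with $\tilde f(t)=e^{-\lambda t}f(t)$ and $\tilde g(t)=e^{-\lambda t}g(t)$, whose $L^p(\I_T,w_\a;X_0)$ and $L^p(\I_T,w_\a;\g(H,X_{1/2}))$ norms are equivalent to those of $f,g$ with $T$-dependent constants. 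This reduces to the case where $A$ itself has bounded $H^\infty$-calculus of angle $<\pi/2$. In the $T=\infty$ case I would use invertibility of $A$ together with $\lambda=0$ directly, so no shift is needed.

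\textbf{Step 2: The unweighted case $\a=0$.} For $B=0$ and $A$ deterministic and time-independent with bounded $H^\infty$-calculus of angle $<\pi/2$, the theorem was established in \cite{MaximalLpregularity}: the stochastic convolution $u(t)=\int_0^t e^{-(t-s)A}g(s)\,dW_H(s)$ solves the problem, and the sharp two-sided square function estimate for the stochastic integral (mentioned in the remark after Proposition \ref{prop:Ito}) combined with the boundedness of the $H^\infty$-calculus gives
\[
\|u\|_{L^p(\O;H^{\theta,p}(\I_T;X_{1-\theta}))}\lesssim \|g\|_{L^p(\O;L^p(\I_T;\g(H,X_{1/2})))}
\]
for all $\theta\in[0,1/2)$, via the identification $X_{1-\theta}=\Do(A^{1-\theta})$ and the $\BIP$ property which allows commuting fractional powers past the semigroup. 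The deterministic part with $f\neq 0$ is already covered by classical deterministic maximal $L^p$-regularity, which also follows from $H^\infty$-calculus of angle $<\pi/2$.

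\textbf{Step 3: Extension to $\a\in[0,\tfrac p2-1)$.} Here I would invoke the weighted extension from \cite[Section 7]{AV19}. Since $w_\a(t)=t^\a$ with $\a\in[0,p-1)$ belongs to the Muckenhoupt class $A_p$, the stochastic Calder\'on--Zygmund machinery and the associated square function estimates extrapolate from the unweighted case to the weighted case provided $\a<\tfrac p2-1$, which is precisely the range of validity for the stochastic integral's Burkholder-type bound with the $w_\a$ weight. Combining the weighted estimate for the stochastic convolution with the weighted deterministic maximal regularity (which also follows from the $H^\infty$-calculus of angle $<\pi/2$ since $w_\a\in A_p$), we obtain the full estimate defining $\MRta$.

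\textbf{Main obstacle.} The delicate point is producing the fractional time-regularity estimate with a weight: the norm $\|u\|_{L^p(\O;H^{\theta,p}(\I_T,w_\a;X_{1-\theta}))}$ mixes a weighted time norm with the complex interpolation scale in space. The bridge between the two scales is exactly the $H^\infty$-calculus, which implies $X_{1-\theta}=\Do(A^{1-\theta})$ and $A\in\BIP$ with small power-angle, so that one may commute $A^{1-\theta}$ through the stochastic convolution and reduce to an estimate of $A^{1-\theta}u$ in $H^{\theta,p}(\I_T,w_\a;X_0)$ (equivalently, by Theorem \ref{t:equivalence_h_H}, of $u$ in the diagonal weighted Sobolev scale). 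Getting the constants uniform in $T\in(0,\infty]$ under the invertibility assumption in the $T=\infty$ case requires that the semigroup is exponentially stable, which is again guaranteed by the $H^\infty$-calculus of angle $<\pi/2$ together with $0\in\rho(A)$.
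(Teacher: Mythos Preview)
Your proposal is essentially correct and follows the route the paper itself indicates: the theorem is not proved in the paper but is stated with the attribution that the unweighted case is due to \cite{MaximalLpregularity} and the weighted extension to \cite[Section~7]{AV19}, exactly as in your Steps~2 and~3, with the shift reduction of Step~1 being standard.

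One small discrepancy worth flagging: in Step~3 you describe the passage to $\a\in[0,\tfrac{p}{2}-1)$ as a Muckenhoupt/Calder\'on--Zygmund extrapolation, whereas the paper explicitly calls it a \emph{perturbation argument} from \cite[Section~7]{AV19}. The Calder\'on--Zygmund route you mention is the one of \cite{LoVer}, which the paper cites separately for a different purpose (extending from $p=q$ to general $p,q$). So while your high-level strategy is right, the actual mechanism in \cite{AV19} for introducing the weight is not an $A_p$-extrapolation but rather treating the weighted problem as a perturbation of the unweighted one; this distinction does not affect the validity of your outline, only the attribution of the technique.
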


In particular, this result can be combined with the examples listed in Example \ref{ex:Hinfty}.

In \cite{NVW11eq} $\MRta$ was obtained for regular time dependent $A$ for small $B$ using perturbation arguments. By combining ideas from Krylov's $L^p$-theory and the semigroup approach of \cite{MaximalLpregularity} this was improved in \cite{VP18} to a large class of abstract operators $(A,B)$ as in Assumption \ref{ass:AB_boundedness} and where no time-regularity is assumed. In particular, it applies to second order systems with $B\neq 0$, and higher order systems with small $B\neq 0$ and in particular improves \cite{Kry96, Kry, Kry00} and \cite{KimLeesystems}. We will come back to those examples in later sections.

By definition $\MRta\subseteq \MRtaz$. The following somewhat surprising result states that $\MRta\neq \emptyset$ is a necessary and sufficient condition for the reverse inclusion to hold. Usually the non-emptyness can be checked with Theorem \ref{t:H_infinite_SMR} by showing that there is some operator $\tA$ on $X_0$ with $\Do(\tA) = X_1$ and which has a bounded $H^{\infty}$-calculus of angle $<\pi/2$.

\begin{proposition}[Transference of stochastic maximal regularity]
\label{prop:transferenceSMR}
Let the Assumptions \ref{ass:Xtr}-\ref{ass:AB_boundedness} be satisfied.
Let $(A,B)\in \MRtaz$ and assume the existence of a couple $(\tA,\tB)$ which satisfies Assumption \ref{ass:AB_boundedness} and belongs to $\MRta$. Then $(A,B)\in \MRta$.
\end{proposition}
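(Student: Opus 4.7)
The plan is to run a comparison argument: use the given $\MRtaz$-well-posedness of $(A,B)$ to produce a strong solution $u$ in $L^p(\I_T\times\Omega,w_{\a};X_1)$, rewrite the equation as one driven by $(\tA,\tB)$ with perturbed forcing, and then invoke $(\tA,\tB)\in\MRta$ to obtain the desired $H^{\theta,p}(\I_T,w_{\a};X_{1-\theta})$-bound.

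More concretely, fix $f\in L^p_{\Progress}(\O;L^p(\I_T,w_{\a};X_0))$ and $g\in L^p_{\Progress}(\O;L^p(\I_T,w_{\a};\g(H,X_{1/2})))$. Since $(A,B)\in \MRtaz$ there exists a strong solution $u\in L^p(\I_T\times\Omega,w_{\a};X_1)$ to \eqref{eq:diffAB} with $u_0=0$, satisfying the $\MRtaz$-estimate. Set
\[
\tf:=f+(\tA-A)u,\qquad \tg:=g+(B-\tB)u.
\]
By Assumption \ref{ass:AB_boundedness} applied to both $(A,B)$ and $(\tA,\tB)$, the maps $(\tA-A)u$ and $(B-\tB)u$ are strongly progressively measurable and satisfy
\[
\|\tf\|_{L^p(\O;L^p(\I_T,w_{\a};X_0))}+\|\tg\|_{L^p(\O;L^p(\I_T,w_{\a};\g(H,X_{1/2})))} \lesssim \|f\|+\|g\|+\|u\|_{L^p(\I_T\times\Omega,w_{\a};X_1)}.
\]
Adding and subtracting $\tA u\,dt$ and $\tB u\,dW_H$ in \eqref{eq:linear_integral} shows that $u$ is also a strong solution on $\ll 0,T\rr$ to the problem
\begin{equation*}
\begin{cases}
dv+\tA v\,dt=\tf\,dt+(\tB v+\tg)\,dW_H,\\
v(0)=0.
\end{cases}
\end{equation*}

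Next, since $(\tA,\tB)\in\MRta\subseteq \MRtaz$, this perturbed problem has a unique strong solution $\tilde v$ in $L^p(\I_T\times\Omega,w_{\a};X_1)$, and the uniqueness statement (recorded after Definition \ref{def:SMRgeneralized}) forces $\tilde v=u$. Applying the $\MRta$-estimate for $(\tA,\tB)$ to $\tilde v=u$ yields, for every $\theta\in[0,1/2)$ (respectively, for the $C(\overline I_T;X_{1/2})$-norm when $p=2$),
\[
\|u\|_{L^p(\O;H^{\theta,p}(\I_T,w_{\a};X_{1-\theta}))}\leq C\bigl(\|\tf\|_{L^p(\O;L^p(\I_T,w_{\a};X_0))}+\|\tg\|_{L^p(\O;L^p(\I_T,w_{\a};\g(H,X_{1/2})))}\bigr).
\]
Combining this with the bound on $\tf,\tg$ above and the $\MRtaz$-estimate for $u$ gives
\[
\|u\|_{L^p(\O;H^{\theta,p}(\I_T,w_{\a};X_{1-\theta}))}\leq C'\bigl(\|f\|_{L^p(\O;L^p(\I_T,w_{\a};X_0))}+\|g\|_{L^p(\O;L^p(\I_T,w_{\a};\g(H,X_{1/2})))}\bigr),
\]
which is exactly the extra regularity and estimate required by Definition \ref{def:SMRz}.

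The only subtle point is checking that $u$ really is a strong solution of the $(\tA,\tB)$-problem with the perturbed data (this is just rearrangement inside the stochastic and Bochner integrals, using that $u\in L^p(\I_T\times\Omega,w_{\a};X_1)$ and Assumption \ref{ass:AB_boundedness} to ensure all new integrands lie in the right spaces), and invoking uniqueness in $\MRtaz$ for $(\tA,\tB)$ to identify $u$ with the solution produced by $\MRta$. No further ingredients are needed; the argument is essentially a one-step perturbation/identification, and the main conceptual point is that the extra time regularity transfers from any single operator pair satisfying $\MRta$ to all pairs satisfying $\MRtaz$.
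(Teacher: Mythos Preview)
Your proof is correct and follows essentially the same perturbation/comparison argument as the paper: rewrite the $(A,B)$-equation as a $(\tA,\tB)$-equation with perturbed data $\tf=(\tA-A)u+f$, $\tg=(B-\tB)u+g$, apply the $\MRta$-estimate for $(\tA,\tB)$, bound the perturbed data via Assumption~\ref{ass:AB_boundedness}, and close with the $\MRtaz$-estimate for $(A,B)$. The only difference is that you make the uniqueness identification of $u$ with the $(\tA,\tB)$-solution explicit, which the paper uses implicitly.
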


\begin{proof}
Let us analyse the case $p>2$. The other case follows in the same way.
By Definition \ref{def:SMRz} we have to prove that for any $f\in L^p_{\Progress}(\I_T	\times\O,w_{\a};X_0)$, $g\in L^p_{\Progress}(\I_T\times\O,w_{\a};\g(H,X_{1/2}))$ and $\theta\in [0,1/2)$ the unique strong solution $u\in L^p_{\Progress}( \I_T\times \O,w_{\a};X_1)$ to \eqref{eq:diffAB} on $\ll 0,T\rr$ with $u_0 =0$ verifies
$$
u\in L^p(\O;H^{\theta,p}(I_T,w_{\a};X_{1-\theta})).
$$
To this end, note that
\begin{equation*}
\begin{cases}
du + \tA u dt= \tB udW_H+ ((\tA-A)u+f) dt+ ((B-\tB)u+g)dW_H,\quad t\in [0,T],\\
u(0)  =0.
\end{cases}
\end{equation*}
Fix $\theta\in [0,1/2)$. Since $u\in L^p_{\Progress}( \I_T\times \O,w_{\a};X_1)$ and $(\tA,\tB)\in \MRta$, one has
\begin{align*}
&\|u \|_{L^p(\O;H^{\theta,p}(\I_{T},w_{\a};X_{1-\theta}))} \\
& \lesssim \|(\tA-A)u+f\|_{L^p(\I_T\times\O,w_{\a};X_0)} + \|(B-\tB)u+g\|_{L^p(\I_T\times\O,w_{\a};\g(H,X_{1/2}))}
\\ & \stackrel{(i)}{\lesssim} \|u\|_{L^p(\I_T\times\Omega,w_{\a};X_1)} + \|f\|_{L^p(\I_T\times\O,w_{\a};X_0)}
+\|g\|_{L^p(\I_T\times\O,w_{\a};\g(H,X_{1/2}))}
\\ & \stackrel{(ii)}{\lesssim} \|f\|_{L^p(\I_T\times\O,w_{\a};X_0)} + \|g\|_{L^p(\I_T\times\O,w_{\a};\g(H,X_{1/2}))},
\end{align*}
where in $(i)$ we used Assumption \ref{ass:AB_boundedness} and in $(ii)$ we used $(A,B)\in \MRtaz$.
\end{proof}

\begin{remark}
\label{r:time_transference}
\
\begin{enumerate}[{\rm(1)}]
\item Proposition \ref{prop:transferenceSMR} is actually needed in the proof \cite[Theorem 3.18]{VP18} and it was overlooked.
The result can be used to deduce the stronger form of stochastic maximal $L^p$-regularity $\MRta$ also for some cases of the list in Example \ref{ex:KrylovLp}. In particular, this will play a role in later sections.

\item In \cite[Theorem 3.9]{VP18} there is another transference result which allows to deduce $A\in \MRta$ from maximal $L^p$-regularity for the deterministic problem (i.e.\ $g=0$, $B=0$) and $\tA\in \MRtaz$ for some family $\tA$. Moreover, in special cases it is shown that one can reduce to $B = 0$ in \cite[Theorem 3.18]{VP18}.

\item\label{it:uniformOmegaHinfty} Theorem \ref{t:H_infinite_SMR} also holds for operators $A:\Omega\to \calL(X_1, X_0)$ as long as the estimates for the $H^\infty$-calculus are uniform in $\Omega$.
\end{enumerate}
\end{remark}

To finish this subsection we mention that there are also perturbation results for $\MRta$ (see \cite[Theorem 3.15]{VP18} and \cite[Theorem 6.1]{AV19}). Other perturbation results will also be discussed in \cite{AV21_max_reg_torus} and \cite{AV19_QSEE_2}.

\subsection{Initial values and the solution operator}
The aim of this subsection is the study of the linear problem \eqref{eq:diffAB} with non-trivial initial data and to introduce some notations.
\begin{proposition}
\label{prop:initial_0}
Suppose Assumptions \ref{ass:Xtr}, and \ref{ass:AB_boundedness} hold. Let $(A,B)\in\MRtaz$. Then for any $u_0\in L^p_{\F_0}(\O;\Xap)$, $f\in L^p_{\Progress}(\I_{T}\times\Omega,w_{\a};X_0)$ and $g\in L^p_{\Progress}(\I_{T}\times\Omega,w_{\a};\g(H,X_{1/2}))$ there exists a unique strong solution $u\in L^p(\I_T\times \O, w_{\a};X_1)$ to \eqref{eq:diffAB} on $\ll 0,T\rr$ and
\begin{equation}\label{eq:SMRu0}
\begin{aligned}
\|u\|_{L^p(\I_T\times\Omega,w_{\a};X_1)} & \leq  C \|f\|_{L^p(\I_T\times\O,w_{\a};X_0)} \\ &  \qquad + C\|g\|_{L^p(\I_T\times\O,w_{\a};\g(H,X_{1/2}))} + C\|u_0\|_{L^p(\Omega;\Xap)},
\end{aligned}
\end{equation}
where $C$ is independent of $f$, $g$ and $u_0$.

If in addition $(A,B)\in \MRta$, then for all $\theta\in [0,1/2)$ the left-hand side of \eqref{eq:SMRu0} can be replaced by
$\|u\|_{L^p(\O;H^{\theta,p}(\I_{T},w_{\a};X_{1-\theta}))}$ if $p>2$ with $C$ additionally depending on $\theta$, and replaced by $\|u\|_{L^p(\O;C(\overline{\I}_{T};X_{1/2}))}$ if $p=2$.
\end{proposition}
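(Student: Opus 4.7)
My plan is to reduce the inhomogeneous initial-value problem to the zero-initial-data case, which is handled directly by the hypothesis $(A,B) \in \MRtaz$. The only real work is lifting $u_0$ to a progressively measurable process $v$ with $v(0) = u_0$ that lies in the right regularity class; the rest is a clean splitting argument.

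\emph{Construction of $v$.} The space $\Xap = (X_0, X_1)_{1-\frac{1+\a}{p}, p}$ is classically the trace at $t=0$ of $W^{1,p}(\R_+, w_\a; X_0) \cap L^p(\R_+, w_\a; X_1)$ (for the weighted vector-valued version see \cite{MS12}), so there exists a bounded linear right-inverse of this trace. Applying it $\omega$-wise to $u_0$ and restricting to $\I_T$ yields a process $v$ which is $\F_0$-measurable (hence progressively measurable) and satisfies
\[
\|v\|_{L^p(\Omega; W^{1,p}(\I_T, w_\a; X_0))} + \|v\|_{L^p(\I_T \times \Omega, w_\a; X_1)} \lesssim \|u_0\|_{L^p(\Omega; \Xap)}.
\]

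\emph{Reduction to zero data.} Set $w := u - v$. Since $v$ is pathwise differentiable in $t$, a direct computation shows $w(0) = 0$ and
\[
dw + A w \, dt = F \, dt + (B w + G) \, dW_H,
\]
with $F := f - v' - A v$ and $G := g + B v$. Using Assumption \ref{ass:AB_boundedness} and the lifting estimate,
\[
\|F\|_{L^p(\I_T\times\Omega, w_\a; X_0)} + \|G\|_{L^p(\I_T\times\Omega, w_\a; \g(H, X_{1/2}))} \lesssim \|f\| + \|g\| + \|u_0\|_{L^p(\Omega; \Xap)}.
\]
Applying $(A, B) \in \MRtaz$ produces a unique strong solution $w \in L^p(\I_T \times \Omega, w_\a; X_1)$ controlled by the right-hand side, and then $u := w + v$ is a strong solution of \eqref{eq:diffAB} meeting \eqref{eq:SMRu0}. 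Uniqueness is obtained by applying the zero-data bound built into Definition \ref{def:SMRgeneralized} (with $\tau = T$) to the difference of any two strong solutions.

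\emph{Upgrading to $\MRta$.} Under $(A, B) \in \MRta$, Definition \ref{def:SMRz} directly upgrades the estimate for $w$. For the contribution of $v$, when $p > 2$ I would invoke the $H$-version of Proposition \ref{prop:Sob_interpolation_h} (which allows $s = \frac{1+\a}{p}$) with $s_0 = 0$, $s_1 = 1$ to obtain the embedding
\[
W^{1,p}(\I_T, w_\a; X_0) \cap L^p(\I_T, w_\a; X_1) \hookrightarrow H^{\theta, p}(\I_T, w_\a; X_{1-\theta})
\]
for any $\theta \in [0, 1/2)$; when $p = 2$, Proposition \ref{prop:continuousTrace}\eqref{it:trace_with_weights_Xap} with $\theta = 1$, $\a = 0$ yields the embedding of the same space into $C(\overline{\I}_T; (X_0, X_1)_{1/2, 2}) = C(\overline{\I}_T; X_{1/2})$, using the Hilbert space hypothesis for the last identification. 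I expect the only substantive obstacle to be checking the progressive measurability and uniform-in-$\omega$ nature of the lifting; everything else is an application of the linear theory that has already been set up.
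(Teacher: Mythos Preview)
Your approach is the same as the paper's: lift $u_0$ to a process $v$ in $W^{1,p}(\I_T,w_\a;X_0)\cap L^p(\I_T,w_\a;X_1)$, subtract, and apply the zero-initial-data theory; the upgrade via Proposition~\ref{prop:Sob_interpolation_h} is also what the paper does.

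The one point where you diverge is the construction of $v$. You assert a bounded \emph{linear} right-inverse to the trace and apply it $\omega$-wise. Such a section exists when $X_1=\Do(\tA)$ for some sectorial $\tA$ with $\omega(\tA)<\pi/2$ (take $v(t)=e^{-t\tA}u_0$), and the paper's remark immediately after the proof notes that the argument simplifies under that hypothesis. But in the generality of Assumptions~\ref{ass:Xtr}--\ref{ass:AB_boundedness} no such operator is assumed, and the trace characterization \eqref{eq:real_interpolation_spaces_by_maximal_regularity} only gives surjectivity with equivalent quotient norm, not a complemented kernel. The paper therefore proceeds differently: it first treats simple $u_0$, where a pointwise (nonlinear) choice of near-optimal lift is automatically simple and hence $\F_0$-measurable, and then passes to general $u_0\in L^p_{\F_0}(\O;\Xap)$ by density and completeness. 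This is exactly the ``measurability of the lifting'' obstacle you flagged; the resolution is approximation by simple functions rather than a linear section.

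A minor remark: for $p=2$ you invoke Proposition~\ref{prop:continuousTrace} with $\theta=1$, which is outside its stated range $\theta\in(0,1)$; the embedding $W^{1,2}(\I_T;X_0)\cap L^2(\I_T;X_1)\hookrightarrow C(\overline{\I}_T;(X_0,X_1)_{1/2,2})$ is of course the classical trace theorem and needs no appeal to that proposition.
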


\begin{proof}
The proof is similar to \cite[Lemma 2.2]{ACFP}. For the reader's convenience, we include the details. In steps 1-3, we assume only that $(A,B)\in \MRtaz$.

\textit{Step 1: Uniqueness}. This follows from $(A,B)\in \MRtaz$ and Definition \ref{def:SMRgeneralized}.

\textit{Step 2: $u$ exists and \eqref{eq:SMRu0} holds provided $u_0$ is simple}. Recall that (see \cite[Theorem 3.12.2]{BeLo} or \cite[Theorem 1.8.2, p.\ 44]{Tri95}) the real interpolation space $\Xap$ can be characterized as the set of all $x\in X_0+X_1$ such that there exists $h\in W^{1,p}(\R_+,w_{\a};X_0)\cap L^p(\R_+,w_{\a};X_1)$ which satisfies $x=h(0)$. Moreover,
\begin{equation}
\label{eq:real_interpolation_spaces_by_maximal_regularity}
\|x\|_{\Xap}\eqsim \inf\{\|h\|_{W^{1,p}(\R_+,w_{\a};X_0)\cap L^p(\R_+,w_{\a};X_1)}\,:\,h(0)=x\}.
\end{equation}
Let $u_0\in L^p_{\F_0}(\O;\Xap)$ be simple. By \eqref{eq:real_interpolation_spaces_by_maximal_regularity} applied pointwise w.r.t. $\om\in \O$, one can check that there exists a simple map $h\in L^p_{\F_0}(\O;W^{1,p}(\R_+,w_{\a};X_0)\cap L^p(\R_+,w_{\a};X_1))$ such that
\begin{equation}
\label{eq:estimate_h_u_0}
\|h\|_{L^p(\O;W^{1,p}(\R_+,w_{\a};X_0)\cap L^p(\R_+,w_{\a};X_1))}\lesssim \|u_0\|_{L^p(\O;\Xap)},
\end{equation}
where the implicit constant does not depend on $u_0$. Set $u:=h+v$. Then $u$ is a strong solution to \eqref{eq:diffAB} on $\ll 0,T\rr$ if and only if $v$ is a strong solution on $\ll 0,T\rr$ to
\begin{equation}
\label{eq:v_initial_u_0_proof}
\begin{cases}
dv +A(t)v dt=(f-\dot{h}-A(t)h)dt +(B(t)v+B(t)h+g)dW_{H}, \;t\in \I_T,\\
v(0)=0.
\end{cases}
\end{equation}
By \eqref{eq:estimate_h_u_0} and the fact that $(A,B)\in \MRtaz$, \eqref{eq:SMRu0} follows.

\textit{Step 3: $u$ exists and \eqref{eq:SMRu0} holds for all $u_0\in L^p_{\F_0}(\O;\Xap)$}. By \cite[Lemma 1.2.19]{Analysis1}, there exists a uniformly bounded sequence of simple maps $(u_{0,n})_{n\geq 1}\subseteq L^p_{\F_0}(\O;\Xap)$ such that $u_{0,n}\to u_0$ in $L^p_{\F_0}(\O;\Xap)$. Thus, the conclusion follows from Step 2 and the completeness of $L^p_{\Progress}(\I_T\times \O,w_{\a};X_1)$.

\textit{Step 4: The last claim holds}. Similarly to Step 3, it is enough to consider $u_0$ simple. Thus, as in Step 2, there exists $h\in L^p_{\F_0}(\O; W^{1,p}(\R_+,w_{\a};X_0)\cap L^p(\R_+,w_{\a};X_1))$ such that \eqref{eq:real_interpolation_spaces_by_maximal_regularity} holds. Then by Proposition \ref{prop:Sob_interpolation_h} and the fact that $(A,B)\in \MRta$, the claim follows by writing $u=h+v$ where $v$ solves \eqref{eq:v_initial_u_0_proof}.
\end{proof}

\begin{remark}
Under the assumption that $X_1=\Do(\tA)$, for a sectorial operator $\tA$ on $X_0$ with angle $\om(\tA)<\pi/2$, the proof of Proposition \ref{prop:initial_0} simplifies. See step 0 in \cite[Theorem 3.15]{VP18}. This type of assumption is satisfied in all the applications which will be presented in Sections \ref{s:semilinear_gradient}-\ref{s:AC_CH}.
\end{remark}

Next we will define certain solution operators which will be used in Section \ref{s:quasi}. Suppose $(A,B)\in \MRta$ and that Assumptions \ref{ass:Xtr}-\ref{ass:AB_boundedness} hold. Using Proposition \ref{prop:initial_0} for $p>2$ we can define $\Sol_{(A,B)}(u_0,f, g)=u$, where $u$ is the strong solution to \eqref{eq:diffAB} as a mapping from
\[L^p_{\F_0}(\O;\Xap)\times L^p_{\Progress}(\I_{T}\times\Omega,w_{\a};X_0)\times L^p_{\Progress}(\I_{T}\times\Omega,w_{\a};\g(H,X_{1/2}))\]
into
\[\bigcap_{\theta\in [0,1/2)} L^p(\O;H^{\theta,p}(\I_{T},w_{\a};X_{1-\theta})).\]
By linearity, we can write
\[\Sol_{(A,B)}(u_0,f, g) = \Sol_{(A,B)}(u_0,0, 0) + \Sol_{(A,B)}(0,f,0) + \Sol_{(A,B)}(0,0, g).\]

Note that
$\Sol_{(A,B)}(0, \cdot, \cdot)$ actually maps into $L^p(\O;\hz^{\theta,p}(\I_{T},w_{\a};X_{1-\theta})$ for any $\theta\in [0,\frac{1}{2})\setminus\{\frac{1+\a}{p}\}$. Indeed, this follows from $u(0) = 0$ in $X_0$, Theorem \ref{t:equivalence_h_H} and the text below it.

For later use, in the case $p>2$ and $\theta\in [0,\frac{1}{2})\setminus\{\frac{1+\a}{p}\}$, we define
\begin{equation}
\label{eq:constants_SMR_C}
\begin{aligned}
C_{(A,B)}^{\det,\theta} & = \|\Sol_{(A,B)}(0,\cdot,0)\|_{L^p_{\Progress}(\I_{T}\times\Omega,w_{\a};X_0))\to L^p(\O;\hz^{\theta,p}(\I_{T},w_{\a};X_{1-\theta}))},
\\
 C_{(A,B)}^{\stoc,\theta} & = \|\Sol_{(A,B)}(0,0,\cdot)\|_{L^p_{\Progress}( \I_{T}\times\Omega,w_{\a};\g(H,X_{1/2}))\to L^p(\O;\hz^{\theta,p}(\I_{T},w_{\a};X_{1-\theta}))}.
\end{aligned}
\end{equation}
In the case $p=2$ and $\theta\in (0,1/2)$, we replace the range space by $L^p(\O;C(\overline{\I}_{T};X_{1/2}))$ (which is constant in $\theta\in (0,1/2)$). Moreover, for $\theta\in [0,\frac{1}{2})\setminus \{\frac{1+\a}{p}\}$ we set
\begin{equation}
\label{eq:constants_SMR}
K_{(A,B)}^{\det,\theta}:=C_{(A,B)}^{\det,\theta}+C_{(A,B)}^{\det,0}, \qquad
K_{(A,B)}^{\stoc,\theta}:=C_{(A,B)}^{\stoc,\theta}+C_{(A,B)}^{\stoc,0}.
\end{equation}

In the next proposition we collect some simple properties of the solution operator $\Sol_{(A,B)}$.
\begin{proposition}
\label{prop:propertiesR}
Suppose Assumptions \ref{ass:Xtr}-\ref{ass:AB_boundedness} hold.
Let $(A,B)\in \MRtaz$ and let $\Sol:=\Sol_{(A,B)}$. Let $u_0\in L^p_{\F_0}(\O;\Xap)$, $f\in L^p_{\Progress}(\I_{T}\times\Omega,w_{\a};X_0)$, $g\in L^p_{\Progress}(\I_{T}\times\Omega,w_{\a};\g(H,X_{1/2}))$ and set $u:=\Sol(u_0,f,g)$. Then the following assertions hold
\begin{enumerate}[$(1)$]
\item\label{it:FF0} For each $F\in \F_0$,
\begin{equation*}
\one_{F}\Sol(u_0,f,g)=\Sol(\one_{F}u_0,\one_{F}f,\one_{F}g)=\one_F\Sol(\one_{F}u_0,\one_{F}f,\one_{F}g).
\end{equation*}
\item\label{it:stoppedsol} Assume that $v\in L^p_{\Progress}(\ll 0,\sigma\rr,w_{\a};X_1)$ is a strong solution to \eqref{eq:diffAB} on $\ll 0,\sigma\rr$, where $\sigma$ is a stopping time. Then
\begin{equation*}
v=u|_{\ll 0,\sigma\rr}=\Sol(u_0,\one_{\ll 0,\sigma\rr}f,\one_{\ll 0,\sigma\rr}g), \qquad \text{on }\ll 0,\sigma\rr.
\end{equation*}
\item\label{it:stoppedest} For all $T_1\leq T$, the following estimates on the maximal regularity constants hold
    \[K^{\deter,\theta}_{(A|_{\ll 0,T_1\rr},B|_{\ll 0,T_1\rr})}\leq K^{\deter,\theta}_{(A,B)}  \ \ \ \text{and} \ \ \  K^{\stoc,\theta}_{(A|_{\ll 0,T_1\rr},B|_{\ll 0,T_1\rr})}\leq K^{\stoc,\theta}_{(A,B)}.
    \]
\end{enumerate}
\end{proposition}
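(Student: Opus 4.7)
All three statements rest on uniqueness of strong solutions, which by Definition \ref{def:SMRgeneralized} follows from the estimate in $L^p(\I_\tau\times\Omega,w_{\a};X_1)$ applied to the difference of two strong solutions with the same data: such a difference is a strong solution on $\ll 0,\tau\rr$ with $u_0=0$, $f=0$, $g=0$ and thus has vanishing $L^p(\I_\tau\times\Omega,w_{\a};X_1)$-norm.

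For \eqref{it:FF0}, the key observation is that since $F\in \F_0$, the indicator $\one_F$ is $\F_0$-measurable and in particular progressively measurable and bounded. Multiplying the integral identity \eqref{eq:linear_integral} (for $u=\Sol(u_0,f,g)$) by $\one_F$ and using that the stochastic integral commutes with multiplication by an $\F_0$-measurable bounded scalar, together with linearity of $A$ and $B$, I would verify that $w:=\one_F u$ is a strong solution on $\ll 0,T\rr$ with data $(\one_F u_0, \one_F f, \one_F g)$. Uniqueness yields $\one_F u=\Sol(\one_F u_0,\one_F f,\one_F g)$. For the second equality, I would note that on $F^c$ the zero function solves \eqref{eq:diffAB} with data $(\one_F u_0,\one_F f,\one_F g)$, so by the same uniqueness argument applied on $F^c$ (or equivalently by multiplying what we just proved by $\one_F$ again) the solution vanishes on $F^c$, which means it coincides with $\one_F$ times itself.

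For \eqref{it:stoppedsol}, set $w:=u|_{\ll 0,\sigma\rr}-v$. Since $A,B$ are linear and $u$ solves \eqref{eq:diffAB} on $\ll 0,T\rr$ with data $(u_0,f,g)$ while $v$ does so on $\ll 0,\sigma\rr$ with the same data, $w$ is a strong solution on $\ll 0,\sigma\rr$ with trivial data; by uniqueness $w\equiv 0$ on $\ll 0,\sigma\rr$, giving $v=u|_{\ll 0,\sigma\rr}$. Letting $\tilde u:=\Sol(u_0,\one_{\ll 0,\sigma\rr}f,\one_{\ll 0,\sigma\rr}g)$, on $\ll 0,\sigma\rr$ the truncated data coincides with $(f,g)$, so $\tilde u|_{\ll 0,\sigma\rr}$ is also a strong solution on $\ll 0,\sigma\rr$ with data $(u_0,f,g)$ and the same uniqueness principle gives $\tilde u|_{\ll 0,\sigma\rr}=u|_{\ll 0,\sigma\rr}=v$.

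For \eqref{it:stoppedest}, given $f\in L^p_{\Progress}(\I_{T_1}\times\Omega,w_{\a};X_0)$ and $g\in L^p_{\Progress}(\I_{T_1}\times\Omega,w_{\a};\g(H,X_{1/2}))$, I would extend both by zero to $\I_T$, call the extensions $\tilde f,\tilde g$, and solve on $\ll 0,T\rr$ using $(A,B)\in\MRta$. Part \eqref{it:stoppedsol} with $\sigma=T_1$ (note that the constant time $T_1$ is a stopping time) identifies the restriction $\Sol_{(A,B)}(0,\tilde f,\tilde g)|_{\ll 0,T_1\rr}$ with $\Sol_{(A|_{\ll 0,T_1\rr},B|_{\ll 0,T_1\rr})}(0,f,g)$. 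Since the extension by zero preserves the $L^p(\,\cdot\,,w_{\a};\cdot)$-norms of $f$ and $g$, and Proposition \ref{prop:eliminate_weights} gives $\|\tilde u|_{\I_{T_1}}\|_{\hz^{\theta,p}(\I_{T_1},w_{\a};X_{1-\theta})}\leq \|\tilde u\|_{\hz^{\theta,p}(\I_{T},w_{\a};X_{1-\theta})}$, the desired inequalities for $K^{\det,\theta}$ and $K^{\stoc,\theta}$ follow by taking sup over $f$ and $g$. The mildest subtlety (and the step that requires the most care) is part \eqref{it:FF0}, namely justifying the interchange of $\one_F$ with the stochastic integral, which relies on $F\in\F_0$.
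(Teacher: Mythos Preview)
Your proposal is correct and follows essentially the same route as the paper: all three parts are reduced to uniqueness of strong solutions (as encoded in the $\MRtaz$ estimate), with \eqref{it:FF0} handled by multiplying the integral identity by $\one_F$, \eqref{it:stoppedsol} by comparing restrictions on $\ll 0,\sigma\rr$, and \eqref{it:stoppedest} by extending by zero and invoking Proposition \ref{prop:eliminate_weights}. The paper's argument for the second equality in \eqref{it:FF0} is phrased slightly differently (it simply observes $\one_F^2=\one_F$ and applies the first identity), but this is exactly what you do as well.
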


\begin{proof}
\eqref{it:FF0}: By Definition \ref{def:strong_linear}, $u$ verifies \eqref{eq:linear_integral}. It follows that
$v:=\one_{F}u$ satisfies
\begin{equation*}
v(t)-\one_{F}u_{0}+\int_0^t A(s)(\one_{F}u(s)) ds= \int_0^t(B(s)(v(s))+\one_{F}g(s))dW_H +\int_0^t \one_{F}f(s) ds.
\end{equation*}
By uniqueness we obtain $v=\Sol(\one_{F}u_0,\one_{F}f,\one_{F}g)$. This proves the first identity.
The second identity follows from the first identity and $\one_F^2=\one_F$.

\eqref{it:stoppedsol}: From Definition \ref{def:strong_linear} we immediately see that
$u|_{\ll 0,\sigma\rr}$ is a strong solution on $\ll 0,\sigma\rr$. By uniqueness, this implies $v = u|_{\ll 0,\sigma\rr}$. Thus, a.s.\ for all $t\in [0,\sigma]$,
\[u(t) -u_{0} +\int_0^{t} A(s) u(s) ds= \int_0^t(B(s)u(s)+g(s))dW_H(s) +\int_0^t f(s) ds.
\]
On the other hand, $\tilde{u} := \Sol(u_0,\one_{\ll 0,\sigma\rr}f,\one_{\ll 0,\sigma\rr}g)$ satisfies a.s.\ for all $t\in [0,\sigma]$,
\begin{align*}
\tilde{u}(t) -u_{0} +\int_0^{t} & A(s) \tilde{u}(s)  ds  = \int_0^t(B(s)\tilde{u}(s)+\one_{\ll 0,\sigma \rr} g(s))dW_H(s) +\int_0^t \one_{\ll 0,\sigma \rr} f(s) ds
\\ & = \int_0^t B(s)\tilde{u}(s)dW_H(s) +\int_0^{t\wedge \sigma} g(s) dW_H(s) +\int_0^{t\wedge \sigma} f(s) ds
\\ & = \int_0^t B(s)\tilde{u}(s)dW_H(s) +\int_0^{t} g(s) dW_H(s) +\int_0^{t} f(s) ds.
\end{align*}
Therefore, again by uniqueness $\tilde{u} = v$.

\eqref{it:stoppedest}: This is immediate from \eqref{it:stoppedsol} and Proposition \ref{prop:eliminate_weights}.
\end{proof}

We end this section with a lemma which can be extracted from the proof of \cite[Theorem 3.15 Step 1]{VP18}.
For the reader's convenience we sketch the proof.

\begin{lemma}
\label{l:solution_op_c_T}
Let Assumption \ref{ass:Xtr} be satisfied, and suppose that $(A,B)$ satisfies Assumption \ref{ass:AB_boundedness}. Then for each $s\in \I_T$ there exists a constant $c_s>0$ such that $\lim_{s\downarrow 0} c_s=0$ and for all $\tau$ stopping time satisfying $0\leq \tau\leq s$ a.s.\ and any $f\in L^p_{\Progress}(\I_{s}\times \O,w_{\a};X_0)$, $g\in L^p_{\Progress}(\I_{s}\times \O,w_{\a};\g(H,X_{1/2}))$ and any strong solution $u\in L^p_{\Progress}(\I_{\tau}\times \O ,w_{\a};X_1)$ to \eqref{eq:diffAB} on $\ll 0,\tau\rr$ with $u_0=0$ one has
\begin{align*}
\|u\|_{L^p(\I_{\tau}\times \O,w_{\a};X_0)}\leq
c_s\|u\|_{L^p(\I_{\tau}\times \O,w_{\a};X_1)}
&+c_s \|f\|_{L^p(\I_{\tau}\times \O,w_{\a};X_0)}\\
&+c_s\|g\|_{L^p(\I_{\tau}\times \O,w_{\a};\g(H,X_{1/2}))}.
\end{align*}
If additionally $(A,B)\in \MRtaz$, then $u=\Sol_{(A,B)}(0, f, g)$ a.e.\ on $\ll 0,\tau\rr$ and
$$
\|u\|_{L^p(\I_{\tau}\times \O,w_{\a};X_0)}\leq c_s \|f\|_{L^p(\I_{\tau}\times \O,w_{\a};X_0)}
+c_s\|g\|_{L^p(\I_{\tau}\times \O,w_{\a};\g(H,X_{1/2}))}.
$$
\end{lemma}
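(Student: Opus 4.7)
The first inequality is pathwise in flavor, so I would decompose the strong solution as $u = v + M$ on $\ll 0,\tau \rr$, where
\[
v(t) := \int_0^{t\wedge\tau}\bigl(-A(r)u(r)+f(r)\bigr)\,dr, \qquad M(t) := \int_0^{t\wedge\tau}\bigl(B(r)u(r)+g(r)\bigr)\,dW_H(r),
\]
both viewed as continuous processes on $[0,s]$. Since $v(0)=0$ and $v'=-Au+f$ in $X_0$ on $\ll 0,\tau\rr$, the weighted Poincaré inequality \eqref{eq:poincare} applied path-by-path, combined with Assumption~\ref{ass:AB_boundedness}, yields
\[
\|v\|_{L^p(\I_\tau\times\O,w_\a;X_0)} \le C_{p,\a}\,s\,\bigl(C_{A,B}\|u\|_{L^p(\I_\tau\times\O,w_\a;X_1)}+\|f\|_{L^p(\I_\tau\times\O,w_\a;X_0)}\bigr).
\]
This contributes the $X_1$- and $f$-terms with a constant $O(s)$.

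For $M$ the idea is to turn an $L^2_t$-norm from the Burkholder estimate into a weighted $L^p_t$-norm by a Hölder argument that uses the strict bound $\a<p/2-1$. For fixed $t\in(0,s]$, Proposition~\ref{prop:Ito} (pointwise in $t$) and the embedding $\gamma(H,X_{1/2})\hookrightarrow\gamma(H,X_0)$ give
\[
\E\|M(t)\|_{X_0}^p \lesssim \E\Bigl(\int_0^{t\wedge\tau}\|B(r)u(r)+g(r)\|_{\gamma(H,X_0)}^{2}\,dr\Bigr)^{p/2}.
\]
Applying Hölder with exponents $p/2$ and $p/(p-2)$ to $\int\phi^2\,dr=\int(\phi^2 w_\a^{2/p})w_\a^{-2/p}\,dr$ produces the finite integral $\int_0^t r^{-2\a/(p-2)}\,dr \lesssim t^{1-2(\a+1)/p}$, valid precisely because $\a<p/2-1$; raising to the $p/2$-power gives the pathwise estimate
\[
\Bigl(\int_0^{t\wedge\tau}\|\,\cdot\,\|^2\,dr\Bigr)^{p/2}\lesssim t^{\,p/2-1-\a}\,\|B u+g\|_{L^p(\I_\tau,w_\a;\gamma(H,X_0))}^p.
\]
(In the Hilbert case $p=2$, $\a=0$, the same estimate is immediate from the Itô isometry.) Multiplying by $w_\a(t)=t^\a$ and integrating $t\in(0,s)$ yields $\int_0^s t^{p/2-1}\,dt = \tfrac{2}{p}s^{p/2}$, hence
\[
\|M\|_{L^p(\I_\tau\times\O,w_\a;X_0)} \lesssim s^{1/2}\bigl(C_{A,B}\|u\|_{L^p(\I_\tau\times\O,w_\a;X_1)}+\|g\|_{L^p(\I_\tau\times\O,w_\a;\gamma(H,X_{1/2}))}\bigr).
\]
Combining the two bounds and setting $c_s := C(s+s^{1/2})$ finishes the first statement, since $c_s\to 0$ as $s\downarrow 0$.

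For the final claim, assume $(A,B)\in\MRta$. Since $(A,B)\in\MRtaz$ and the data $(\one_{\ll 0,\tau\rr}f,\,\one_{\ll 0,\tau\rr}g)$ are defined on $[0,s]$, Proposition~\ref{prop:propertiesR}\eqref{it:stoppedsol} identifies $u$ with the restriction of $\Sol_{(A,B)}(0,\one_{\ll 0,\tau\rr}f,\one_{\ll 0,\tau\rr}g)$ to $\ll 0,\tau\rr$, and the stochastic maximal $L^p$-regularity estimate gives
\[
\|u\|_{L^p(\I_\tau\times\O,w_\a;X_1)} \le C_{(A,B)}\bigl(\|f\|_{L^p(\I_\tau\times\O,w_\a;X_0)}+\|g\|_{L^p(\I_\tau\times\O,w_\a;\gamma(H,X_{1/2}))}\bigr).
\]
Substituting into the first estimate absorbs the $\|u\|_{L^p(X_1)}$-term at the cost of a new constant still of order $s^{1/2}$, giving the second inequality.

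The main technical subtlety is the Hölder step for the stochastic part: the weight $w_\a$ is singular at $0$, and the integral $\int_0^t r^{-2\a/(p-2)}\,dr$ only converges because of the sharp hypothesis $\a<p/2-1$. Making sure that all time norms are honestly controlled on $\ll 0,\tau\rr$ (and not on some larger stochastic interval) is handled cleanly by pulling out $\one_{[0,\tau]}$ inside the Bochner and stochastic integrals defining $v$ and $M$, which is legitimate by the progressive measurability of $u,f,g$ and Assumption~\ref{ass:AB_boundedness}.
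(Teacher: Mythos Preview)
Your argument is correct. The paper follows a slightly different route: instead of splitting $u=v+M$, it keeps the two integrals together, first estimates the process in $L^p(\Omega;C(\overline{\I}_s;X_0))$ using Proposition~\ref{prop:Ito} (which bounds the sup-norm by the $L^1_t$-norm of $-Au+f$ plus the $L^2_t$-norm of $Bu+g$), then applies the same H\"older step with the weight $w_\a$ that you used, and finally passes from the sup-norm to the weighted $L^p_t$-norm via the trivial embedding $C(\overline{\I}_s;X_0)\hookrightarrow L^p(\I_s,w_\a;X_0)$, which contributes an extra factor $\bigl(\int_0^s t^\a\,dt\bigr)^{1/p}\sim s^{(1+\a)/p}\to 0$. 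Your direct treatment (Poincar\'e for the deterministic piece, pointwise-in-$t$ Burkholder plus time integration for the stochastic piece) avoids the intermediate sup-norm and yields the sharper rate $c_s\sim s+s^{1/2}$ rather than $c_s\sim s^{(1+\a)/p}$; the paper's route is a touch shorter because it handles both integrals at once. The second claim is argued identically in both.
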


\begin{proof}
Let us begin by proving the first claim. Recall that $u(t)=\int_0^t (-A(r)u(r)+f(r))dr+\int_0^t (B(r)u(r)+g(r))dW_{H}(r)$ a.s.\ for each $t\in \I_{\tau}$. Let us set $v(t):=\int_0^t \one_{\ll 0,\tau\rr}(-A(r)u(r)+f(r))dr+\int_0^t \one_{\ll 0,\tau\rr} (B(r)u(r)+g(r))dW_{H}(r)$ a.s.\ for each $t\in [0,s]$. Note that $v=u$ a.e.\ on $\ll 0,\tau\rr$. By Proposition \ref{prop:Ito},
\begin{equation*}
\begin{aligned}
&\|v\|_{L^p(\O;C(\overline{\I}_s;X_0))}  \\
&\lesssim_{X,p} \|\one_{\ll 0,\tau\rr}(-A u + f)\|_{L^p(\Omega;L^1(\I_s;X_0))}
+ \|\one_{\ll 0,\tau\rr} (Bu + g)\|_{L^p(\Omega;L^2(\I_s;\gamma(H,X_0)))}
\\ &
\stackrel{(i)}{\lesssim}_{p,\a} k_s \big[ \|-Au+f\|_{L^p(\I_{\tau}\times \O,w_{\a};X_0)}
+ \|Bu+g\|_{L^p(\I_{\tau}\times \O,w_{\a};\g(H,X_0))}\big]
\\ &
\stackrel{(ii)}{\lesssim}_{A,B} k_s\big[\|u\|_{L^p(\I_{\tau}\times \O,w_{\a};X_1)}+
\|f\|_{L^p(\I_{\tau}\times \O,w_{\a};X_0)}+ \|g\|_{L^p(\I_{\tau}\times \O,w_{\a};\g(H,X_{1/2}))}\big],
\end{aligned}
\end{equation*}
where in $(i)$ we used H\"older's inequality and $\a\in [0,\frac{p}{2}-1)$, in $(ii)$ we used Assumptions \ref{ass:Xtr}-\ref{ass:AB_boundedness}. The constant $k_s$ satisfies $\lim_{s\downarrow 0} k_s =: k\in [0,\infty)$. Therefore,
\begin{align*}
\|v\|_{L^p(\I_s \times \O,w_{\a};X_0)}\leq k_s c_s
\big[\|u\|_{L^p(\I_{\tau}\times \O,w_{\a};X_1)}&+ \|f\|_{L^p(\I_{\tau}\times \O,w_{\a};X_0)}\\
&+\|g\|_{L^p(\I_{\tau}\times \O,w_{\a};\g(H,X_{1/2}))}\big],
\end{align*}
where $c_s>0$ satisfies $\lim_{s\downarrow 0} c_s=0$. Since $v=u$ a.e.\ on $\ll 0,\tau\rr$ and $\tau\leq s$ a.s., one has $\|u\|_{L^p(\I_{\tau} \times \O,w_{\a};X_0)}\leq \|v\|_{L^p(\I_s \times \O,w_{\a};X_0)}$, and thus the first estimate follows.

If $(A,B)\in \MRtaz$ and $u\in L^p_{\Progress}(\I_{\tau}\times \O,w_{\a};X_1)$ is a strong solution to \eqref{eq:diffAB} on $\ll 0,\tau\rr$, then by Proposition \ref{prop:propertiesR}\eqref{it:stoppedsol}, $u=\Sol_{(A,B)}(0,f,g)=\Sol_{(A,B)}(0,\one_{\ll 0,\tau\rr}f, \one_{\ll 0,\tau\rr} g)$ a.e.\ on $\ll 0,\tau\rr$. Thus
\begin{equation}
\label{eq:estimate_simple_u_X_1_smallness_constant}
\begin{aligned}
\|u\|_{L^p(\I_{\tau}\times \O,w_{\a};X_1)}
&\leq \|\Sol_{(A,B)}(0,\one_{\ll 0,\tau\rr}f, \one_{\ll 0,\tau\rr} g)\|_{L^p(\I_T\times \O,w_{\a},X_1)}\\
&\lesssim  \|f\|_{L^p(\I_{\tau}\times \O,w_{\a};X_1)}+\|g\|_{L^p(\I_{\tau}\times \O,w_{\a};\g(H,X_{1/2}))},
\end{aligned}
\end{equation}
and this implies the second estimate.
\end{proof}

\section{Local existence results}
\label{s:quasi}

In this section we consider the following nonlinear evolution equation
\begin{equation}
\label{eq:QSEE}
\begin{cases}
du + A(\cdot,u) u dt = (F(\cdot,u)+f)dt + (B(\cdot,u) u +G(\cdot,u)+g) dW_{H},\\
u(0)=u_0;
\end{cases}
\end{equation}
for $t\in [0,T]$ on a Banach space $X_0$ where $T<\infty$. Recall that Assumption \ref{ass:Xtr} holds throughout this section.

The equation \eqref{eq:QSEE} covers both the case of quasilinear and semilinear equations. In the quasilinear case the reader should have in mind that for each fixed $x\in \Xap$, the operators $A(t,x)$ and $B(t,x)$ satisfy the mapping properties of Assumption \ref{ass:AB_boundedness}. We refer to \ref{HAmeasur} below for the precise definitions. In the semilinear case $A(t,x)$ and $B(t,x)$ do not depend on $x$ and therefore are precisely as in Assumption \ref{ass:AB_boundedness}.

The structure of the nonlinearities $F$ and $G$ which will be assumed below is very flexible and extends many known results. Moreover, the structural conditions are satisfied by large classes of SPDE.

Compared to \cite{HornungDissertation,Hornung,NVW11eq} there are several important differences:
\begin{itemize}
\item we assume a joint condition on $(A,B)$ and therefore $B$ is not assumed to be small as one sometimes needs with the semigroup approach to SPDEs (see e.g.\ \cite{BrzVer11, Fla90});
\item the operators $A$ and $B$ are allowed to be time and $\Omega$-dependent in just a measurable way;
\item we allow weights in time, so that our initial values can be very rough;
\item we allow critical nonlinearities in the sense of \cite{PrussWeight2,CriticalQuasilinear,addendum}.
\end{itemize}

\subsection{Assumptions on the nonlinearities}
In this section we discuss the assumptions and the main results regarding \eqref{eq:QSEE}. Moreover, the definition of a strong solution to \eqref{eq:QSEE} is given in Definition \ref{def:solution1}-\ref{def:solution2} below.

Concerning the random operators $A,B$, the nonlinearities $F,G$, and the initial data, we make the following hypothesis.

\textbf{Hypothesis} $\Hip$\label{H:hip}.

\let\ALTERWERTA\theenumi
\let\ALTERWERTB\labelenumi
\def\theenumi{(HA)}
\def\labelenumi{(HA)}
\begin{enumerate}
\item\label{HAmeasur}
Assumption \ref{ass:Xtr} holds.
Let $A:[0,T]\times\Omega \times \Xap\to \calL(X_1,X_0)$ and $B:[0,T]\times\Omega\times \Xap \to \calL(X_1,\g(H,X_{1/2}))$. Assume that for all $x\in \Xap$ and $y\in X_1$, the maps $(t,\om)\mapsto A(t,\om,x)y$ and $(t,\om)\mapsto B(t,\om,x)y$ are strongly progressively measurable.

Moreover, for all $n\geq 1$, there exists $C_n,L_n\in \R_+$ such that for all $x,y\in \Xap$ with $\|x\|_{\Xap},\|y\|_{\Xap}\leq n$, $t\in [0,T]$, and a.a.\ $\om\in \O$,
\begin{align*}
\|A(t,\om,x)\|_{\calL(X_1,X_0)}&\leq C_n (1+\|x\|_{\Xap}),\\
\|B(t,\om,x)\|_{\calL(X_1,\g(H,X_{1/2}))}&\leq C_n(1+ \|x\|_{\Xap}),\\
\|A(t,\om,x)-A(t,\om,y)\|_{\calL(X_1,X_0)}&\leq L_n \|x-y\|_{\Xap},\\
\|B(t,\om,x)-B(t,\om,y)\|_{\calL(X_1,\g(H,X_{1/2}))}&\leq L_n \|x-y\|_{\Xap}.
\end{align*}
\end{enumerate}
\let\theenumi\ALTERWERTA
\let\labelenumi\ALTERWERTB

\let\ALTERWERTA\theenumi
\let\ALTERWERTB\labelenumi
\def\theenumi{(HF)}
\def\labelenumi{(HF)}
\begin{enumerate}
\item\label{HFcritical}  The map $F:[0,T]\times\Omega\times X_1 \to X_0$ decomposes as $F:=F_{L}+F_{c}+F_{\Tr}$ where for all $x\in X_1$ the map $(t,\om)\mapsto F_{\ell}(t,\om,x)$ is strongly progressively measurable for $\ell\in \{L,c,\Tr\}$. Moreover, $F_L,F_c,F_{\Tr}$ satisfy the following estimates.
\begin{enumerate}[(i)]
\item\label{it:F_L} There exist constants $L_{F},\tilde{L}_{F}, C_{F}>0$, such that for all $x,y\in X_1$, $t\in [0,T]$ and a.a.\ $\om \in \O$,
\begin{align*}
\|F_L(t,\om,x)\|_{X_0}&\leq C_{F}(1+ \|x\|_{X_1}),\\
\|F_L(t,\om,x)-F_L(t,\om,y)\|_{X_0}&\leq L_{F} \|x-y\|_{X_1}+ \tilde{L}_{F} \|x-y\|_{X_0}.
\end{align*}
\item\label{it:F_c} There exist $\mf\geq 1$, $\varphi_j\in (1-(1+\k)/p,1)$, $\beta_j\in (1-(1+\k)/p,\varphi_j]$, $\rho_j\geq 0$ for $j\in \{1,\dots,\mf\}$ such that
$F_c:[0,T]\times \O\times X_{1}\to X_0$ and for each $n\geq 1$ there exist $C_{c,n},L_{c,n}\in \R_+$ for which
\begin{align*}
\|F_c(t,\om,x)\|_{X_{0}}&\leq C_{c,n} \sum_{j=1}^{\mf}(1+\|x\|_{X_{\varphi_j}}^{\rho_j})\|x\|_{X_{\beta_j}}+C_{c,n},\\
\|F_c(t,\om,x)-F_c(t,\om,y)\|_{X_{0}}&\leq L_{c,n} \sum_{j=1}^{m_F}(1+\|x\|_{X_{\varphi_j}}^{\rho_j}
+\|y\|_{X_{\varphi_j}}^{\rho_j})\|x-y\|_{X_{\beta_j}},
\end{align*}
a.s.\ for all $x,y\in X_{1}$, $t\in [0,T]$ such that $\|x\|_{\Xap},\|y\|_{\Xap}\leq n$. Moreover, $\rho_j,\varphi_j,\beta_j,\k$ satisfy
\begin{equation}
\label{eq:HypCritical}
\rho_j \Big(\varphi_j-1 +\frac{1+\k}{p}\Big) + \beta_j \leq 1, \qquad j\in \{1,\dots,m_{F}\}.
\end{equation}
\item For each $n\geq 1$ there exist $L_{\Tr,n},C_{\Tr,n}\in \R_+$ such that the mapping $F_{\Tr}:[0,T]\times\Omega\times\Xap\to X_0$ satisfies
\begin{align*}
\|F_{\Tr}(t,\om,x)\|_{X_0}&\leq C_{\Tr,n}(1+\|x\|_{\Xap}),\\
\|F_{\Tr}(t,\om,x)-F_{\Tr}(t,\om,y)\|_{X_0}&\leq L_{\Tr,n}\|x-y\|_{\Xap},
\end{align*}
for a.a.\ $\omega\in \O$, for all $t\in [0,T]$ and $\|x\|_{\Xap},\|y\|_{\Xap}\leq n$.
\end{enumerate}

\end{enumerate}
\let\theenumi\ALTERWERTA
\let\labelenumi\ALTERWERTB

\let\ALTERWERTA\theenumi
\let\ALTERWERTB\labelenumi
\def\theenumi{(HG)}
\def\labelenumi{(HG)}
\begin{enumerate}
\item\label{HGcritical} The map $G:[0,T]\times\Omega\times X_1 \to \g(H,X_{1/2})$ decomposes as $G:=G_{L}+G_{c}+G_{\Tr}$ where for all $x\in X_1$ the map $(t,\om)\mapsto G_{\ell}(t,\om,x)$ is strongly progressively measurable for $\ell\in \{L,c,\Tr\}$. Moreover, $G_L,G_c,G_{\Tr}$ satisfy the following estimates.
\begin{enumerate}[(i)]
\item\label{it:G_L} There exist constants $L_{G},\tilde{L}_{G}, C_{G}$, such that for all $x,y\in X_1$, $t\in [0,T]$ and a.a.\ $\om \in \O$,
\begin{align*}
\|G_L(t,\om,x)\|_{\g(H,X_{1/2})}&\leq C_{G}(1+ \|x\|_{X_1}),\\
\|G_L(t,\om,x)-G_L(t,\om,y)\|_{\g(H,X_{1/2})}&\leq L_{G} \|x-y\|_{X_1}+ \tilde{L}_{G} \|x-y\|_{X_0}.
\end{align*}
\item\label{it:G_c} There exist $\mg\geq 1$, $\varphi_j\in (1-(1+\k)/p,1)$, $\beta_j\in (1-(1+\k)/p,\varphi_j]$, $\rho_j\geq  0$  for $j\in \{m_F+1,\dots,m_F+m_G\}$  such that
$G_c:[0,T]\times \O\times X_{1}\to X_0$ and for each $n\geq 1$ there exist $C_{c,n},L_{L_c,n}\in \R_+$ for which
\begin{align*}
\|G_c(t,\om,x)\|_{\g(H,X_{1/2})}&\leq C_{c,n}
\sum_{j=\mf+1}^{\mf+\mg}(1+\|x\|_{X_{\varphi_j}}^{\rho_j})\|x\|_{X_{\beta_j}}+C_{c,n},\\
\|G_c(t,\om,x)-G_c(t,\om,y)\|_{\g(H,X_{1/2})}&\leq
L_{c,n}\sum_{j=\mf+1}^{\mf+\mg}
 (1+\|x\|_{X_{\varphi_j}}^{\rho_j}+\|y\|_{X_{\varphi_j}}^{\rho_j})\|x-y\|_{X_{\beta_j}},
\end{align*}
a.s.\ for all $x,y\in X_{1}$, $t\in [0,T]$ such that $\|x\|_{\Xap},\|y\|_{\Xap}\leq n$.
Moreover, $\varphi_j,\beta_j,\k$ satisfy
\begin{equation}
\label{eq:HypCriticalG}
\rho_j \Big(\varphi_j-1 +\frac{1+\k}{p}\Big) + \beta_j \leq 1, \qquad j\in \{\mf+1,\dots,\mf+\mg\}.
\end{equation}
\item For each $n\geq 1$ there exist constants $L_{\Tr,n},C_{\Tr,n}$ such that mapping $G_{\Tr}:[0,T]\times\Omega\times\Xap\to X_0$ satisfies
\begin{align*}
\|G_{\Tr}(t,\om,x)\|_{\g(H,X_{1/2})}&\leq C_{\Tr,n}(1+\|x\|_{\Xap}),\\
\|G_{\Tr}(t,\om,x)-G_{\Tr}(t,\om,y)\|_{\g(H,X_{1/2})}&\leq L_{\Tr,n}\|x-y\|_{\Xap},
\end{align*}
for a.a.\ $\omega\in \O$, for all $t\in [0,T]$ and $\|x\|_{\Xap},\|y\|_{\Xap}\leq n$.
\end{enumerate}

\end{enumerate}
\let\theenumi\ALTERWERTA
\let\labelenumi\ALTERWERTB

\let\ALTERWERTA\theenumi
\let\ALTERWERTB\labelenumi
\def\theenumi{(Hf)}
\def\labelenumi{(Hf)}
\begin{enumerate}
\item\label{Hf}
$f\in L^p_{\Progress}(\I_T\times\Omega,w_{\a};X_0)$ and $g\in L^p_{\Progress}(\I_T\times\Omega,w_{\a};\g(H,X_{1/2}))$.
\end{enumerate}
\let\theenumi\ALTERWERTA
\let\labelenumi\ALTERWERTB
The relations \eqref{eq:HypCritical}-\eqref{eq:HypCriticalG} will play an important role in the analysis of \eqref{eq:QSEE}. As announed in Subsection \ref{ss:critical_spaces_introduction}, following \cite{CriticalQuasilinear}, we may give an abstract definition of critical space for \eqref{eq:QSEE}.

The space $\Xap$ will be called a {\em critical space} for \eqref{eq:QSEE} if for some $j\in \{1,\dots,m_F+m_G\}$ equality in \eqref{eq:HypCritical} or \eqref{eq:HypCriticalG} holds. Moreover, the value of $\a$ for which equality in \eqref{eq:HypCritical} or \eqref{eq:HypCriticalG} holds, will be called the {\em critical weight} and it will be denoted by $\a_{\crit}$.
Some remarks may be in order.
\begin{remark}
Let us note that in Theorem \ref{t:local} and \ref{t:local_Extended} below, only the constants $L_{F},L_{G}$ are assumed to be small. The other constants are arbitrary.
At first sight the splitting of the nonlinearities $F$ and $G$ in several parts seems quite complicated. Let us emphasise that the most important part is $F_c$ and $G_c$ as these will usually determine {\em critical spaces} as defined above. The flexibility in the form we choose the nonlinearities is quite important in application to SPDEs. It will allow us in many cases to find a broad class of initial value spaces for which the SPDE can be solved. Let us note that usually it is enough to take $m_F = m_G = 1$.
\end{remark}

\begin{remark}
\label{r:non_linearities}
Below we collect some observations which will be used later on to check \ref{HFcritical} or \ref{HGcritical}. We discuss this only for $F$ since the same arguments apply to $G$.
\begin{enumerate}[{\rm(1)}]
\item\label{it:non_linearities_continuous_trace} If $F:\I_T\times \O\times X_{\theta}\to X_0$, for some $\theta<1-(1+\a)/p$, is locally Lipschitz on $X_{\theta}$ uniformly w.r.t. $(t,\om)\in \I_T\times \O$, then $F$ verifies \ref{HFcritical}. Indeed, it is enough to recall that
$$
\Xap= (X_0,X_1)_{1-\frac{1+\a}{p},p} \hookrightarrow (X_0,X_1)_{\theta,1} \hookrightarrow X_{\theta},
$$
where in the first inclusion we used \cite[Theorem 3.4.1]{BeLo} since $X_1\hookrightarrow X_0$, and in the last inclusion \cite[Theorem 3.9.1]{BeLo}. Then the conclusion follows by setting $F_{\Tr}:=F$, $F_{c}=F_{L}=0$. As soon as $\theta$ is larger, then we need a nonzero $F_c$ as the situation is more sophisticated.
\item\label{it:non_linearities_varphi_equal_to_beta} We can additionally allow the case $\beta_j=\varphi_j = 1-(1+\a)/p$ for all $j\in \{1,\dots,m_F\}$ in \ref{HFcritical}. Indeed, $\rho_j (\varphi_j+\varepsilon-1 +\frac{1+\k}{p}) + \beta_j = \rho_j \varepsilon + \beta_j$. Thus, there exists $\varepsilon>0$ such that $\rho_j\varepsilon+\beta_j<1$ for all $j\in \{1,\dots,m_F+m_G\}$. Since $X_{1-(1+\a)/p+\varepsilon}\hookrightarrow X_{1-(1+\a)/p}$, we may replace $\varphi_j=\beta_j$ by $1-(1+\a)/p+\varepsilon$ obtaining that $F_c$ satisfies \ref{HFcritical} and \eqref{eq:HypCritical} holds with strict inequality.
\item\label{it:critical_beta_grater_than} Assume that $\beta_j=\varphi_j<1$ for some $j\in \{1,\dots,m_F\}$ and that equality in \eqref{eq:HypCritical} holds. Then $\rho_j>0$ and thus $\varphi_j>1-(1+\a)/p$ holds since $\varphi_j-1+(1+\a)/p = (1-\beta_j)/\rho_j>0$. Therefore, in applications we do not need to check $\beta_j>1-(1+\a)/p$ if equality in \eqref{eq:HypCritical} holds (e.g.\ in the critical case).
\end{enumerate}
\end{remark}

Next we define $L^p_{\a}$-strong solutions to \eqref{eq:QSEE}.
Here we add the prefix $L^p_{\a}$ since the definition depends on $(p,\a)$.

\begin{definition}[$L^{p}_{\a}$-strong solutions]
\label{def:solution1}
Let the Hypothesis \hyperref[H:hip]{$\Hip$} be satisfied and let $\sigma$ be a stopping time with $0\leq \sigma\leq T$. A strongly progressively measurable process $u$ on $\ll 0,\sigma\rr$
satisfying
$$u\in L^p(\I_{\sigma},w_{\a};X_1)\cap C(\overline{\I}_{\sigma};\Xap) \ \ a.s.$$
is called an $L^p_{\a}$-\emph{strong solution} to \eqref{eq:QSEE} on $\ll 0,\sigma\rr$ if $F(\cdot,u)\in L^p(\I_{\sigma},w_{\a};X_0)$, $G(\cdot,u)\in L^p(\I_{\sigma},w_{\a};\g(H,X_{1/2}))$ a.s., and the following identity holds a.s.\ and for all $t\in[0,\sigma]$,
\begin{equation}
\label{eq:identity_sol}
\begin{aligned}
u(t)-u_0 &+\int_{0}^{t} A(s,u(s))u(s)ds=\int_{0}^{t} F(s,u(s))+f(s)ds \\
&+ \int_{0}^t\one_{\ll 0,\sigma\rr} (B(s,u(s))u(s)+G(s,u(s))+g(s))\,dW_H(s).
\end{aligned}
\end{equation}
\end{definition}
Note that if $u$ is an $L^p_{\a}$-strong solution, then the integrals appearing in \eqref{eq:identity_sol} are well-defined. To see this, note that $s\mapsto A(s,u(s))u(s)$ and $s\mapsto B(s,u(s))u(s)$  are strongly progressively measurable by the conditions on $u$ and \ref{HAmeasur} (see \cite[Lemma 4.51]{AliBor}). Moreover, pointwise in $\Omega$ we can take $\N\ni n\geq  \|u\|_{C(\overline{\I}_{\sigma};\Xap)}$ and write
\[\|A(s,u(s))u(s)\|_{X_0}\leq C_n (1+\|u(s)\|_{\Xap}) \|u(s)\|_{X_1}\leq C_n(1+n) \|u(s)\|_{X_1}.\]
Integrating over $s\in [0,\sigma]$ we obtain
\begin{align*}
\|s\mapsto A(s,u(s))u(s)\|_{L^1(0,\sigma;X_0)}  \leq C_n(1+n) \|u\|_{L^1(0,\sigma;X_1)},
\end{align*}
and the latter is finite since $u\in L^p(\I_{\sigma},w_{\a};X_1)$ a.s. Thus, the integral on the left-hand side of \eqref{eq:identity_sol} is well-defined. In the same way one can check that $s\mapsto B(s,u(s))u(s)$ is in $L^2(0,\sigma;X_{1/2})$ and the first stochastic integral on the right-hand side of \eqref{eq:identity_sol} is well-defined by Proposition \ref{prop:Ito}. Using the above argument and that $f,F(\cdot,u)\in L^p(\I_{\sigma},w_{\a};X_0)$ a.s.\ and $g,G(\cdot,u)\in L^p(\I_{\sigma},w_{\a};\g(H,X_{1/2}))$ a.s., one can check that the remaining integrals are well-defined.

Next we define $L^p_{\a}$-local and $L^p_{\a}$-maximal local solutions to \eqref{eq:QSEE}.

\begin{definition}[$L^p_{\a}$-local and $L^p_{\a}$-maximal solution]
\label{def:solution2}
Let $\sigma$ be a stopping time with $0\leq \sigma\leq T$. Let $u:\ll 0,\sigma\rro\rightarrow X_1$ be strongly progressively measurable.
\begin{itemize}
\item $(u,\sigma)$ is called an $L^p_{\a}$-{\em local solution} to \eqref{eq:QSEE} on $[0,T]$, if there exists an increasing sequence $(\sigma_n)_{n\geq 1}$ of stopping times such that $\lim_{n\uparrow \infty} \sigma_n =\sigma$ a.s.\ and $u|_{\ll 0,\sigma_n\rr}$ is an $L^p_{\a}$-strong solution to \eqref{eq:QSEE} on $\ll 0,\sigma_n\rr$. In this case, $(\sigma_n)_{n\geq 1}$ is called a {\em localizing sequence} for the local solution $(u,\sigma)$.
\item An $L^p_{\a}$-local solution $(u,\sigma)$ to \eqref{eq:QSEE} on $[0,T]$ is called {\em unique}, if for every $L^p_{\a}$-local solution $(v,\nu)$ to \eqref{eq:QSEE} on $[0,T]$ for a.a.\ $\om\in \Omega$ and for all $t\in [0,\nu(\om)\wedge \sigma(\omega))$  one has $v(t,\omega)=u(t,\omega)$.
\item A unique $L^p_{\a}$-local solution $(u,\sigma)$ to \eqref{eq:QSEE} on $[ 0,T]$ is called an {\em $L^p_{\a}$-maximal local solution}, if for any other unique $L^p_{\a}$-local solution $(v,\varrho)$ to \eqref{eq:QSEE} on $[0,T]$, we have a.s.\ $\varrho\leq \sigma$ and  for a.a.\ $\om \in \Omega$ and all $t\in [0,\varrho(\om))$, $u(t,\omega)=v(t,\omega)$.
\end{itemize}
\end{definition}

Note that $L^p_{\a}$-maximal local solutions are unique by definition. In addition, an (unique) $L^p_{\a}$-strong solution $u$ on $\ll 0,\sigma\rr$ gives an (unique) $L^{p}_{\a}$-local solution $(u,\sigma)$ to \eqref{eq:QSEE}. In the following, we omit the prefix $L^p_{\a}$ and the ``on $[0,T]$" if no confusion seems likely.

\subsection{Statement of the main results}
\label{ss:statement_main_results_local_existence}
Our first result on \eqref{eq:QSEE} reads as follows.

\begin{theorem}[Quasilinear I]
\label{t:local}
Let Hypothesis \hyperref[H:hip]{$\Hip$} be satisfied. Assume that $u_0\in L^{\infty}_{\F_0}(\O;\Xap)$ and
$(A(\cdot,u_0),B(\cdot,u_0))\in \MRta$.
Then there exists an $\varepsilon>0$ such that if
\begin{equation}
\label{eq:smallness_condition_nonlinearities_QSEE}
\max\{L_{F},L_{G}\}< \varepsilon,
\end{equation}
then the following assertions hold:
\begin{enumerate}[{\rm (1)}]
\item\label{it:existence_maximal_local_Linfty}{\rm (Existence and uniqueness)}
There exists an $L^p_{\a}$-maximal local solution $(u,\sigma)$ to \eqref{eq:QSEE} such that $\sigma>0$ a.s.
\item\label{it:regularity_data_Linfty} {\rm (Regularity)} There exists a localizing sequence $(\sigma_n)_{n\geq 1}$ for $(u,\sigma)$ such that $\sigma_n>0$ a.s.\ and
\begin{itemize}
\item If $p>2$ and $\a\in [0,\frac{p}{2}-1)$, then for all $n\geq 1$, $\theta\in [0,\frac{1}{2})$,
 \[u\in L^p(\O;H^{\theta,p}(\I_{\sigma_n},w_{\a};X_{1-\theta})) \cap L^p(\Omega; C(\overline{\I}_{\sigma_n};\Xap)).\]
Moreover, $(u,\sigma)$ instantaneously regularizes to $u\in C((0,\sigma);\Xp)$ a.s.
\item If $p=2$ and $\a=0$, then for all $n\geq 1$,
 \[u\in L^2(\O;L^2(\I_{\sigma_n};X_{1})) \cap L^2(\Omega; C(\overline{\I}_{\sigma_n};X_{1/2})).\]
\end{itemize}
\item\label{it:continuity_initial_data_Linfty} {\rm (Continuous dependence on the initial data)}
There exist $\eta,C>0$ depending on $u_0$ such that if $v_0\in B_{L^{\infty}_{\F_0}(\O;\Xap)}(u_0,\eta)$, then the following hold:
\begin{itemize}
\item there exists an $L^p_{\a}$-maximal local solution $(v,\tau)$ to \eqref{eq:QSEE} with $\tau>0$ a.s.\ and initial data $v_0$;
\item For each stopping time $\nu$ with $\nu\in (0,\tau\wedge \sigma]$ a.s.\ one has
$$
\|u-v\|_{L^p(\O;E)}\leq C\|u_0-v_0\|_{L^{p}(\O;\Xap)},
$$
where either $E\in \{H^{\theta,p}(\I_{\nu},w_{\a};X_{1-\theta}), C(\overline{\I}_{\nu};\Xap)\}$ with $p>2$, $\a\in [0,\frac{p}{2}-1)$, $\theta\in [0,\frac{1}{2})$ or
$E\in \{L^2(\I_{\nu};X_1),C(\overline{\I}_{\nu};X_{1/2})\}$ and $p=2$ and $\a=0$.
\end{itemize}
\item\label{it:localization_Linfty} {\rm (Localization)} If $(v,\tau)$ is an $L^p_{\a}$-maximal local solution to \eqref{eq:QSEE} with initial data $v_0\in L^{\infty}_{\F_0}(\O;\Xap)$, then setting $\Gamma:=\{v_0=u_0\}$ one has
$$
\tau|_{\Gamma}=\sigma|_{\Gamma},\qquad  v|_{\Gamma\times [0,\tau)}=u|_{\Gamma\times [0,\sigma)}.
$$
\end{enumerate}
\end{theorem}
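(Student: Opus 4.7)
The strategy is a fixed point argument based on stochastic maximal regularity, combined with a stopping time truncation. First, I would linearize around $u_0$ by setting $(\tA,\tB):=(A(\cdot,u_0),B(\cdot,u_0))\in\MRta$ and rewriting \eqref{eq:QSEE} as
\[
du+\tA u\,dt=[\tilde F(\cdot,u)+f]\,dt+[\tB u+\tilde G(\cdot,u)+g]\,dW_H,\qquad u(0)=u_0,
\]
where $\tilde F(\cdot,u):=(\tA-A(\cdot,u))u+F(\cdot,u)$ and $\tilde G(\cdot,u):=(B(\cdot,u)-\tB)u+G(\cdot,u)$. Using the solution operator $\Sol:=\Sol_{(\tA,\tB)}$ from Section~\ref{ss:SMR}, the problem becomes the fixed point identity $u=\Sol(u_0,\tilde F(\cdot,u)+f,\tB u+\tilde G(\cdot,u)+g)$. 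To exploit $T$-independent constants via the $\hz^{\theta,p}$-scale, I would write $u=v+w$ with $v:=\Sol(u_0,0,0)$ so that $w$ has zero initial datum; by Proposition~\ref{prop:initial_0}, $v\in L^p(\O;H^{\theta,p}(\I_T,w_\k;X_{1-\theta}))\cap L^p(\O;C(\overline{\I}_T;\Xap))$.

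\textbf{Truncation and contraction.} Choose $R>0$ depending on $\|u_0\|_{L^\infty(\O;\Xap)}$ and $\|v\|_{L^p(\O;C([0,T];\Xap))}$ and introduce a cutoff stopping time $\tau_n$ enforcing $\sup_{s\leq \tau_n}\|u(s)\|_{\Xap}\leq n$ for some $n\gtrsim R$. On $\ll 0,\tau_n\rr$ the local Lipschitz constants $L_n,L_{c,n},L_{\Tr,n}$ from \ref{HAmeasur},\ref{HFcritical},\ref{HGcritical} become usable global constants, and I would seek the fixed point on the ball
\[
Z_{r}:=\{w\in L^p(\O;\hz^{\theta,p}(\I_{\tau_n},w_\k;X_{1-\theta}))\cap L^p(\O;L^p(\I_{\tau_n},w_\k;X_1)):\|w\|\leq r\}
\]
for suitable $\theta\in[0,\tfrac12)\setminus\{\tfrac{1+\k}{p}\}$ and $r>0$. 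The linear perturbations $(\tA-A(\cdot,u))u$ and $(B(\cdot,u)-\tB)u$ are handled by the smallness \eqref{eq:smallness_condition_nonlinearities_QSEE} of $L_F,L_G$ against the SMR constants $K^{\deter,\theta}_{(\tA,\tB)},K^{\stoc,\theta}_{(\tA,\tB)}$ of \eqref{eq:constants_SMR}. For the trace-type terms $F_{\Tr},G_{\Tr}$ the trace embedding (Proposition~\ref{prop:continuousTrace}\eqref{it:trace_with_weights_Xap}) and Lemma~\ref{l:solution_op_c_T} give a factor $c_{\tau_n}\to 0$ as $\tau_n\downarrow 0$. The critical parts $F_c,G_c$ are controlled via the mixed derivative inequality (Proposition~\ref{prop:Sob_interpolation_h}) together with Proposition~\ref{prop:embeddingSobolevWeights}: interpolating $\hz^{\theta,p}(w_\k;X_{1-\theta})\cap L^p(w_\k;X_1)$ into $L^{p\rho_j+p}(w_\k;X_{\varphi_j})\cap L^p(w_\k;X_{\beta_j})$ is permitted exactly under \eqref{eq:HypCritical}--\eqref{eq:HypCriticalG}, and the strict inequality case produces an extra small factor in $\tau_n$, while equality (critical case) produces a factor proportional to $\|u\|^{\rho_j}$ on the ball $Z_r$ that is small by choice of $r$. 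This yields a contraction and hence a unique $L^p_\k$-strong solution on $\ll 0,\tau_n\rr$.

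\textbf{Maximal solution and further assertions.} Uniqueness among $L^p_\k$-strong solutions follows by applying the same contraction estimates to the difference of two solutions on a truncated interval and bootstrapping as the truncation level $n\to\infty$. A standard chain/Zorn argument then produces a maximal local solution $(u,\sigma)$ with a localising sequence $(\sigma_n)$, proving \eqref{it:existence_maximal_local_Linfty}. The regularity statement \eqref{it:regularity_data_Linfty} comes directly from the fixed point lying in the range of $\Sol$ (hence in $L^p(\O;H^{\theta,p}(w_\k;X_{1-\theta}))$ for all $\theta<\tfrac12$), combined with Proposition~\ref{prop:continuousTrace}\eqref{it:trace_without_weights_Xp} applied on $[\varepsilon,\sigma_n]$ for the instantaneous regularisation into $\Xp$. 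Continuous dependence \eqref{it:continuity_initial_data_Linfty} is an immediate corollary: the difference of two solutions with initial data $u_0,v_0$ in the same $L^\infty$-ball solves a linear equation with a small-Lipschitz right-hand side, and the SMR estimate transfers this to the desired estimate in any of the target norms. Finally, the localization \eqref{it:localization_Linfty} on $\Gamma=\{u_0=v_0\}$ follows from Proposition~\ref{prop:propertiesR}\eqref{it:FF0} applied to the linearized operator together with the uniqueness of the fixed point, multiplying the whole construction by $\one_\Gamma\in\F_0$.

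\textbf{Main obstacle.} The crux is the treatment of the critical nonlinearities $F_c,G_c$ inside the fixed point argument with $T$-independent constants. The difficulty is to identify the interpolation triple $(\theta,p,\k)$ so that (i) the stochastic maximal $L^p$-regularity in $\hz^{\theta,p}(w_\k;X_{1-\theta})$ is available via $(\tA,\tB)\in\MRta$, (ii) the embedding into $L^{p(1+\rho_j)}(w_\k;X_{\varphi_j})\cap L^p(w_\k;X_{\beta_j})$ holds with a constant independent of the truncating stopping time (which forces the use of Proposition~\ref{prop:extension}\eqref{it:extension_0} and Theorem~\ref{t:equivalence_h_H}), and (iii) the criticality condition \eqref{eq:HypCritical} yields a product of norms that can be made contracting on a small ball or over a short random interval. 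Overcoming this is what turns the deterministic critical spaces theory of Pr\"uss--Simonett--Wilke into its stochastic counterpart in the weight regime $\k\in[0,\tfrac{p}{2}-1)$.
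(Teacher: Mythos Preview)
Your overall strategy---linearize around $u_0$, apply the solution operator $\Sol_{(\tA,\tB)}$, and run a contraction argument using the mixed-derivative embeddings---is the right architecture, but there are two genuine gaps that would make the argument fail as written.

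\textbf{First, the stopping time is circular.} You propose to ``introduce a cutoff stopping time $\tau_n$ enforcing $\sup_{s\leq\tau_n}\|u(s)\|_{\Xap}\leq n$'', but $u$ is the unknown of the fixed point problem. You cannot define $\tau_n$ in terms of $u$ before $u$ exists, and you cannot set up the fixed point map on a space whose time horizon depends on the fixed point itself. The paper avoids this by a different mechanism: it introduces \emph{cutoff functions} $\Theta_\lambda$ and $\Psi_\lambda$ (Lemmas~\ref{l:truncationFG} and~\ref{l:truncationAB}) acting on the nonlinearities themselves, producing a modified equation~\eqref{eq:QSEE_tilde} whose right-hand side is globally Lipschitz on the full space $\z_{T^*}$ of \eqref{eq:def_z_phi}. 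The truncated problem is then solved on a deterministic interval $[0,T^*]$, and only \emph{afterwards} is a stopping time $\tau$ defined (Step~4) as the first time the solution $u_{u_0}$ exits the region where the truncation is inactive.

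\textbf{Second, the ``small ball'' treatment of the critical nonlinearity does not close.} You write that in the critical case the contraction factor is ``proportional to $\|u\|^{\rho_j}$ on the ball $Z_r$ that is small by choice of $r$''. But $F_c$ acts on $u=v+w$, not on $w$, and $v=\Sol(u_0,0,0)$ is not small in any norm controlled by $r$; the factor that actually appears is $(\|v\|_{\X(\sigma)}+r)^{\rho_j}$. One can try to make $\|v\|_{\X(T^*)}$ small by shrinking $T^*$, but this requires a quantitative argument (dominated convergence in $L^p(\O;\X(T))$) that you do not supply, and it interacts badly with the fact that the embedding constants in \eqref{eq:embedZTH1} depend on $T$. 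The paper's truncation $\Theta_\lambda$ enforces $\|u\|_{\X(t)}\leq 2\lambda$ directly on the support of the cutoff, so the Lipschitz constant of $F_{c,\lambda}$ is $\lesssim \lambda^{\rho}$ uniformly---this is exactly what Lemma~\ref{l:truncationFG} provides and what makes the contraction \eqref{eq:alpha_primus} work without any ball restriction.

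In short: replace your stopping-time truncation by the nonlinearity truncations of Lemmas~\ref{l:truncationFG}--\ref{l:truncationAB}, run the contraction on all of $\z_{T^*}$ (not a ball), and introduce the stopping time only after the truncated solution is in hand. The remaining steps (maximal extension via $\esssup$, regularity from the range of $\Sol$, continuous dependence, localization via Proposition~\ref{prop:propertiesR}\eqref{it:FF0}) are as you describe.
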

A more explicit bound for the number $\varepsilon$ in \eqref{eq:smallness_condition_nonlinearities_QSEE} will be provided in Remark \ref{r:smallness}.

In \eqref{it:regularity_data_Linfty} we see that the paths of the solution are in $C([0,\sigma);\Xap)$. However, if $\a>0$, after $t=0$ the regularity immediately improves to $C((0,\sigma);\Xp)$, where we recall $\Xap = (X_0,X_1)_{1-\frac{1+\a}{p},p}$ and $\Xp = (X_0,X_1)_{1-\frac{1}{p},p}$
This phenomena will play a crucial role in \cite{AV19_QSEE_2}.
Note that the $L^p(\O)$-norms in \eqref{it:continuity_initial_data_Linfty} are well-defined due to Lemma \ref{lem:XYtmeas} and the text below it. Furthermore, in Step 4 in the proof of Theorem \ref{t:local} we show that the estimates in \eqref{it:continuity_initial_data} also hold for the choice $E=\X(\nu)$ where the space $\X$ is defined in \eqref{eq:X_space_c} below.

\begin{remark}
\label{r:W_H_filtration}
In applications to SPDEs, one does not always have $u_0\in L^{\infty}_{\F_0}(\O;\Xap)$. To weaken this condition we make a further extension of Theorem \ref{t:local} at the expense of a stronger hypothesis on $F_L,G_L$, see Theorem \ref{t:local_Extended} below.

On the other hand, if the filtration $\Filtr=(\F_t)_{t\geq 0}$ is generated by the cylindrical Brownian motion $W_H$, then $L^0_{\F_0}(\O;\Xap)=\Xap$. Thus, Theorem \ref{t:local} can be applied without any restriction.
\end{remark}

We would like to present an additional result on the quasilinear case, where we weaken the integrability hypothesis on the initial data, at the cost of more restrictions on the nonlinearities $F_L,G_L$. More specifically, we need a local version of the assumptions \ref{HFcritical}-\ref{HGcritical} and \ref{Hf}.

\let\ALTERWERTA\theenumi
\let\ALTERWERTB\labelenumi
\def\theenumi{(HF$^\prime$)}
\def\labelenumi{(HF$^\prime$)}
\begin{enumerate}
\item\label{HFcritical_weak} The map $F:[0,T]\times\Omega\times X_1 \to X_0$ has the same measurability properties in \ref{HFcritical} and it can be decomposed as $F:=F_L+F_c+F_{\Tr}$, where $F_c,F_{\Tr}$ are as in \ref{HFcritical}. Assume that for each $n\geq 1$ there exist constants $L_{F,n},\tilde{L}_{F,n}, C_{F,n}$, such that for all $x,y\in X_1$, $t\in [0,T]$ and a.a.\ $\om \in \O$, and $\|x\|_{\Xap},\|y\|_{\Xap}\leq n$ one has
\begin{align*}
\|F_L(t,\om,x)\|_{X_0}&\leq C_{F,n}(1+ \|x\|_{X_1}),\\
\|F_L(t,\om,x)-F_L(t,\om,y)\|_{X_0}&\leq L_{F,n} \|x-y\|_{X_1}+ \tilde{L}_{F,n} \|x-y\|_{X_0}.
\end{align*}
\end{enumerate}
\let\theenumi\ALTERWERTA
\let\labelenumi\ALTERWERTB

\let\ALTERWERTA\theenumi
\let\ALTERWERTB\labelenumi
\def\theenumi{(HG$^\prime$)}
\def\labelenumi{(HG$^\prime$)}
\begin{enumerate}
\item\label{HGcritical_weak} The map $G: [0,T]\times \O \times X_1 \to X_0$ has the same measurability properties in \ref{HFcritical} and it can be decomposed as $G:=G_L+G_c+G_{\Tr}$ where $G_c,G_{\Tr}$ are as in \ref{HGcritical}. Assume that for each $n\geq 1$ there exist constants $L_{G,n},\tilde{L}_{G,n}, C_{G,n}$, such that for all $x,y\in X_1$, $t\in [0,T]$ and a.a.\ $\om \in \O$, and $\|x\|_{\Xap},\|y\|_{\Xap}\leq n$ one has
\begin{align*}
\|G_L(t,\om,x)\|_{\g(H,X_{1/2})}&\leq C_{G,n}(1+ \|x\|_{X_1}),\\
\|G_L(t,\om,x)-G_L(t,\om,y)\|_{\g(H,X_{1/2})}&\leq L_{G,n} \|x-y\|_{X_1}+ \tilde{L}_{G,n} \|x-y\|_{X_0}.
\end{align*}
\end{enumerate}
\let\theenumi\ALTERWERTA
\let\labelenumi\ALTERWERTB

\let\ALTERWERTA\theenumi
\let\ALTERWERTB\labelenumi
\def\theenumi{(Hf\,$^\prime$)}
\def\labelenumi{(Hf\,$^\prime$)}
\begin{enumerate}
\item\label{Hf'}
$f\in L^0_{\Progress}(\O;L^p( \I_T,w_{\a};X_0))$ and $g\in L^0_{\Progress}(\O;L^p( \I_T,w_{\a};\g(H,X_{1/2})))$.
\end{enumerate}
\let\theenumi\ALTERWERTA
\let\labelenumi\ALTERWERTB

We say that the \textbf{Hypothesis} $\Hipprime$\label{H:hipprime} holds if \ref{HAmeasur}, \ref{HFcritical_weak}, \ref{HGcritical_weak}, and \ref{Hf'} are satisfied. Definitions \ref{def:solution1}  and \ref{def:solution2} clearly extend to the setting of Hypothesis \hyperref[H:hipprime]{$\Hipprime$}.

To extend Theorem \ref{t:local} in case of $L^0$-data we employ a cut-off argument. To this end, given $u_0\in L^0_{\F_0}(\O;\Xap)$ we denote by $(u_{0,n})_{n\geq 1}$ a sequence such that
\begin{equation}
\label{eq:approximating_sequence_initial_data}
u_{0,n}\in L^{\infty}_{\F_0}(\O;\Xap),  \quad  \text{ and }\quad  u_{0,n}=u_0  \   \text{ on  }  \ \{\|u_0\|_{\Xap}\leq n\}.
\end{equation}
For possible choices of $(u_{0,n})_{n\geq 1}$ see \eqref{eq:truncation_operator} and the text below it.

\begin{theorem}[Quasilinear II]
\label{t:local_Extended}
Let Hypothesis \hyperref[H:hipprime]{$\Hipprime$} be satisfied. Let $u_0\in L^0_{\F_0}(\O;\Xap)$. Assume that there exists $(u_{0,n})_{n\geq 1}$ satisfying \eqref{eq:approximating_sequence_initial_data} and for all $n\geq 1$
\begin{equation}
\label{eq:stochastic_maximal_regularity_assumption_local_extended}
(A(\cdot,u_{0,n}),B(\cdot,u_{0,n}))\in \MRta.
\end{equation}
There exists a decreasing sequence $(\varepsilon_n)_{n\geq 1}$ in $(0,\infty)$ such that if
\begin{equation}
\label{eq:smallness_condition_nonlinearities_QSEE_extended}
\max\{L_{F,n},L_{G,n}\}<\varepsilon_n, \ \ \ \ \text{for all $n\geq 1$},
\end{equation}
then the following assertions hold:
\begin{enumerate}[{\rm (1)}]
\item\label{it:existence_extended_maximal}{\rm (Existence and uniqueness)} There exists an $L^p_{\a}$-maximal local solution $(u,\sigma)$ to \eqref{eq:QSEE} such that $\sigma>0$ a.s.

\item\label{it:regularity_data_L0} {\rm (Regularity)} For each localizing sequence $(\sigma_n)_{n\geq 1}$ for $(u,\sigma)$, one has
\begin{itemize}
\item If $p>2$ and $\a\in [0,\frac{p}{2}-1)$, then for all $n\geq 1$, $\theta\in [0,\frac{1}{2})$,
 \[u\in H^{\theta,p}(\I_{\sigma_n},w_{\a};X_{1-\theta})\cap C(\overline{\I}_{\sigma_n};\Xap)\ \ \ a.s.\]
Moreover, $(u,\sigma)$ instantaneously regularizes to $u\in C((0,\sigma);\Xp)$ a.s.
\item If $p=2$ and $\a=0$, then for all $n\geq 1$,
 \[u\in L^2(\I_{\sigma_n};X_{1}) \cap C(\overline{\I}_{\sigma_n};X_{1/2}) \ \ \ a.s.\]
\end{itemize}
\item\label{it:continuity_initial_data} {\rm (Local existence and continuous dependence on the initial data)} Let $n\geq 1$ and $\Gamma_n:=\{\|u_0\|_{\Xap}\leq n\}$. Then Theorem \ref{t:local}\eqref{it:continuity_initial_data_Linfty} holds with $u_0,v_0$ and $\O$ replaced by $\one_{\Gamma_n}u_0,\one_{\Gamma_n}v_0$ and $\Gamma_n$, respectively.

\item\label{it:localization_L0} {\rm (Localization)}
Theorem \ref{t:local}\eqref{it:localization_Linfty} holds, where the assumptions on $u_0, v_0$ are replaced by $u_0,v_0\in L^{0}_{\F_0}(\O;\Xap)$.
\end{enumerate}
\end{theorem}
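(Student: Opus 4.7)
The plan is to reduce Theorem \ref{t:local_Extended} to Theorem \ref{t:local} by truncating the nonlinearities outside a ball in $\Xap$, applying the $L^\infty$-result to the truncated problem at the $n$-th level with the bounded initial data $u_{0,n}$, and then patching the resulting solutions together using the localization property. The operators $A,B$ already satisfy \ref{HAmeasur} globally and require no further modification; the only obstruction is the passage from local to global Lipschitz/growth bounds on $F$ and $G$.

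\textbf{Step 1 (Truncation).} For each $n\geq 1$, pick a smooth cut-off $\chi_n:\R_+\to[0,1]$ with $\chi_n=1$ on $[0,n]$ and $\chi_n=0$ on $[n+1,\infty)$. Define
\[
F^{(n)}_*(t,\om,x) := \chi_n(\|x\|_{\Xap})\,F_*(t,\om,x),\qquad *\in\{L,c,\Tr\},
\]
and analogously $G^{(n)}_*$. By construction $F^{(n)} = F$ and $G^{(n)} = G$ on $\{\|x\|_{\Xap}\leq n\}$. The subcritical parts $F^{(n)}_c, F^{(n)}_{\Tr}$ (resp.\ $G^{(n)}_c, G^{(n)}_{\Tr}$) inherit the estimates of \ref{HFcritical}\eqref{it:F_c}-(iii) globally, with constants depending on $n$, and with the same critical exponents $(\varphi_j,\beta_j,\rho_j)$, so \eqref{eq:HypCritical}-\eqref{eq:HypCriticalG} remain in force. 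For $F^{(n)}_L$, expanding the difference produces one principal contribution bounded by $\chi_n(\|x\|_{\Xap})(L_{F,n+1}\|x-y\|_{X_1}+\tilde L_{F,n+1}\|x-y\|_{X_0})$ — giving global Lipschitz constant $\leq L_{F,n+1}$ in the $X_1\to X_0$ norm — and one remainder coming from $\chi_n(\|x\|_{\Xap})-\chi_n(\|y\|_{\Xap})$; the remainder is estimated via $|\|x\|_{\Xap}-\|y\|_{\Xap}|\leq \|x-y\|_{\Xap}$ and the growth bound \ref{HFcritical}\eqref{it:F_L} at level $n+1$, and it can be absorbed into $F^{(n)}_{\Tr}$ after rewriting $(1+\|y\|_{X_1})\chi_n(\|y\|_{\Xap})$ as a locally bounded function of $\|y\|_{\Xap}$. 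Choosing $\varepsilon_n$ to be (e.g.) one-half of the threshold $\varepsilon$ produced by Theorem \ref{t:local} applied at the $n$-th level guarantees the smallness condition \eqref{eq:smallness_condition_nonlinearities_QSEE} for the truncated problem whenever \eqref{eq:smallness_condition_nonlinearities_QSEE_extended} holds.

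\textbf{Step 2 (Solution to the truncated problem and extraction).} Since $u_{0,n}\in L^{\infty}_{\F_0}(\O;\Xap)$ and $(A(\cdot,u_{0,n}),B(\cdot,u_{0,n}))\in\MRta$ by \eqref{eq:stochastic_maximal_regularity_assumption_local_extended}, Theorem \ref{t:local} yields an $L^p_{\a}$-maximal local solution $(u^{(n)},\sigma^{(n)})$ to \eqref{eq:QSEE} with $F,G$ replaced by $F^{(n)},G^{(n)}$ and initial value $u_{0,n}$, together with the full list of regularity, continuity and localization properties. Using the continuity of $u^{(n)}$ into $\Xap$ provided by Theorem \ref{t:local}\eqref{it:regularity_data_Linfty}, set
\[
\mu_n := \inf\{t\in[0,\sigma^{(n)})\,:\,\|u^{(n)}(t)\|_{\Xap}\geq n\}\wedge\sigma^{(n)}.
\]
On $\ll 0,\mu_n\rr\cap(\Gamma_n\times[0,T])$ one has $\|u^{(n)}\|_{\Xap}\leq n$ and $u_{0,n}=u_0$, so the truncated coefficients coincide with the original ones, and $u^{(n)}$ restricted to that set is an $L^p_{\a}$-strong solution of the original equation \eqref{eq:QSEE} with initial value $u_0$.

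\textbf{Step 3 (Patching and conclusion).} Comparing the $n$-th and $(n+1)$-st truncations via Theorem \ref{t:local}\eqref{it:localization_Linfty} (the $(n+1)$-truncation agrees with the $n$-truncation on $\{\|x\|_{\Xap}\leq n\}$) yields consistency: a.s.\ on $\Gamma_n$ one has $\mu_n\leq\mu_{n+1}$ and $u^{(n+1)}=u^{(n)}$ on $\ll 0,\mu_n\rr$. Defining $u:=u^{(n)}$ on $\ll 0,\mu_n\rr\cap(\Gamma_n\times[0,T])$ and $\sigma:=\sup_n \mu_n$ (with the partition $\Gamma_n\setminus\Gamma_{n-1}$, $\Gamma_0=\emptyset$), we obtain a well-defined $L^p_{\a}$-local solution of \eqref{eq:QSEE} with localizing sequence $(\mu_n)_{n\geq 1}$. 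The regularity assertion \eqref{it:regularity_data_L0}, the continuous dependence \eqref{it:continuity_initial_data} (by restriction to $\Gamma_n$), and the localization \eqref{it:localization_L0} now follow by restricting to $\Gamma_n$ and invoking the corresponding parts of Theorem \ref{t:local} for the truncated problems together with the consistency relations. Uniqueness and maximality of $(u,\sigma)$ reduce, via a similar restriction to $\Gamma_n$, to the maximality of $(u^{(n)},\sigma^{(n)})$ for the truncated problem, so $\sigma$ is the announced $L^p_{\a}$-maximal time.

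\textbf{Main obstacle.} The technical heart of the argument is Step 1: the truncation must simultaneously (i) preserve the three-part decomposition $F=F_L+F_c+F_{\Tr}$, (ii) preserve the critical exponents $(\varphi_j,\beta_j,\rho_j)$ so that \eqref{eq:HypCritical}-\eqref{eq:HypCriticalG} still hold, and (iii) convert the local smallness $L_{F,n}<\varepsilon_n$ into a global smallness $L_F<\varepsilon$ usable in Theorem \ref{t:local}. The remainder term produced by the cut-off on $\|x\|_{\Xap}$ is only Lipschitz in the trace norm $\Xap$ (not in $X_1$), and its careful reattribution to $F^{(n)}_{\Tr}$ — rather than to $F^{(n)}_L$ — is what allows the smallness parameter to carry through. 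Once this is set up, the remainder of the proof is a comparatively standard combination of localization, consistency and exhaustion of $\Omega$ by the sets $\Gamma_n$.
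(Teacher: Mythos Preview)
Your overall strategy — truncate, apply Theorem \ref{t:local} at each level with the bounded datum $u_{0,n}$, then patch on the sets $\Gamma_n$ — matches the paper's. The uniqueness/maximality and patching steps are essentially right. The genuine gap is in Step~1, and it is exactly the ``main obstacle'' you flagged but did not resolve.

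Your pointwise cut-off $F_L^{(n)}(t,x)=\chi_n(\|x\|_{\Xap})F_L(t,x)$ produces the remainder
\[
\big(\chi_n(\|x\|_{\Xap})-\chi_n(\|y\|_{\Xap})\big)F_L(t,y),
\]
which you bound by $C_n\,\|x-y\|_{\Xap}\,(1+\|y\|_{X_1})$ and then claim can be ``absorbed into $F^{(n)}_{\Tr}$ after rewriting $(1+\|y\|_{X_1})\chi_n(\|y\|_{\Xap})$ as a locally bounded function of $\|y\|_{\Xap}$''. This rewriting is impossible: $\Xap=(X_0,X_1)_{1-\frac{1+\a}{p},p}$ is strictly larger than $X_1$, so $\|y\|_{X_1}$ is unbounded on any $\Xap$-ball. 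The remainder is therefore \emph{not} Lipschitz in $\Xap$ and cannot play the role of an $F_{\Tr}$ term; nor can it sit in $F_L$ without destroying the smallness of $L_F$; nor in $F_c$ because it has no $X_{\varphi_j}$ structure. In short, a pointwise $\Xap$-truncation of $F_L$ cannot be made to satisfy \ref{HFcritical}.

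The paper circumvents this by truncating along trajectories rather than pointwise: it multiplies $F_L(\cdot,u)-F_L(\cdot,0)$ by
\[
\Phi_n(t,u)=\xi_n\Big(\|u\|_{L^p(I_t,w_{\a};X_1)}+\sup_{s\leq t}\|u(s)\|_{\Xap}\Big),
\]
so that the cut-off itself controls the $X_1$-norm that appears in the growth bound of $F_L$ (Lemma \ref{l:truncation_FG_L}). The resulting $F_{L,n}$ is no longer a Nemytskii operator on $X_1$; it depends on the whole path and satisfies only the generalized, trajectory-wise Lipschitz condition spelled out in Remark \ref{r:general_version_F_L_G_L}. Theorem \ref{t:local} is then invoked through that remark, with Lipschitz constant $3L_{F,2n}+C_T\tilde L_{F,2n}$, which is what fixes the choice of $\varepsilon_n$ (see \eqref{eq:smallness_n_proof}). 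A second, minor omission in your outline is the reduction from \ref{Hf'} to \ref{Hf}: the paper introduces a stopping time $\xi$ making $\one_{\ll 0,\xi\rr}f,\one_{\ll 0,\xi\rr}g$ genuinely $L^p$-integrable before applying Theorem \ref{t:local}.
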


For the more precise estimates on the sequence $(\varepsilon_n)_{n\geq 1}$ we refer to Remark \ref{r:smallness_n}.

Let $u_0\in L^0_{\F_0}(\O;\Xap)$ and set $\Gamma_n:=\{\|u_0\|_{\Xap}\leq n\}\in \F_0$. A typical choice of the sequence $(u_{0,n})_{n\geq 1}$ in Theorem \ref{t:local_Extended} is given by
\begin{equation}
\label{eq:truncation_operator}
u_{0,n}:= \one_{\Gamma_n} u_0 +\one_{\O\setminus \Gamma_n} \frac{n u_0} {\|u_0\|_{\Xap}}.
\end{equation}
However, the condition \eqref{eq:approximating_sequence_initial_data} allows us to choose $u_{0,n}|_{\O\setminus\Gamma_n}$ differently, and we will exploit this fact in applications. More precisely, instead of \eqref{eq:truncation_operator} one can use $u_{0,n}=\one_{\Gamma_n} u_0+ \one_{\O\setminus \Gamma_n} x$ where $x\in \Xap$ can be chosen such that \eqref{eq:stochastic_maximal_regularity_assumption_local_extended} holds. Throughout Sections \ref{s:semilinear_gradient}-\ref{s:AC_CH} we will use the choice \eqref{eq:truncation_operator}, but in Subsection \ref{ss:porous_media_equations} we need a different choice (see \eqref{eq:choice_approximating_sequence_initial_data_porous_media_equations}).

If \eqref{eq:QSEE} is of semilinear type, (see Assumption \ref{ass:AB_boundedness}), the condition $u_0\in L^{\infty}_{\F_0}(\O;\Xap)$ can be weakened and we still get $L^p$-integrability with respect to $\om\in\O$. More precisely, we have the following.

\begin{theorem}[Semilinear]
\label{t:semilinear}
Let the Hypothesis \hyperref[H:hip]{$\Hip$} be satisfied, where $A$ and $B$ are of semilinear type as in Assumption \ref{ass:AB_boundedness} and satisfy $(A,B)\in \MRta$. Then there exists an $\varepsilon>0$ such that if
\begin{equation}
\label{eq:smallness_semilinear}
\max\{L_{G},L_F\}< \varepsilon,
\end{equation}
then the following assertions hold:
\begin{enumerate}[{\rm(1)}]
\item\label{it:semilinear_u_L_infty} If $u_0\in L^\infty_{\F_0}(\O;\Xap)$, then the statements in Theorem \ref{t:local}\eqref{it:existence_maximal_local_Linfty}--\eqref{it:localization_Linfty} hold.
\item\label{it:semilinear_u_L_p} If $u_0\in L^p_{\F_0}(\O;\Xap)$ and
the constants $C_{c,n},L_{c,n},C_{\Tr,n},L_{\Tr,n}$ in \emph{\ref{HFcritical}-\ref{HGcritical}} do not depend on $n\geq 1$, then the statements in Theorem \ref{t:local}\eqref{it:existence_maximal_local_Linfty}--\eqref{it:localization_Linfty} hold.
\item\label{it:semilinear_u_L_0} If $u_0\in L^0_{\F_0}(\O;\Xap)$, then the statements in Theorem \ref{t:local_Extended}\eqref{it:existence_extended_maximal}--\eqref{it:localization_L0} hold.
\end{enumerate}
\end{theorem}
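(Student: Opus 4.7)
My plan is to reduce Theorem \ref{t:semilinear} to the quasilinear results Theorem \ref{t:local} and Theorem \ref{t:local_Extended} by exploiting the fact that, in the semilinear case, the operators $A$ and $B$ do not depend on $u$, so the hypothesis $(A(\cdot,u_0),B(\cdot,u_0))\in \MRta$ collapses to the single condition $(A,B)\in\MRta$ already built into Assumption \ref{ass:AB_boundedness}. Part \eqref{it:semilinear_u_L_infty} then follows immediately from Theorem \ref{t:local}. Part \eqref{it:semilinear_u_L_0} follows analogously from Theorem \ref{t:local_Extended}: the canonical sequence $(u_{0,n})$ from \eqref{eq:truncation_operator} satisfies \eqref{eq:approximating_sequence_initial_data}, the SMR condition \eqref{eq:stochastic_maximal_regularity_assumption_local_extended} holds uniformly in $n$ (indeed it is just $(A,B)\in\MRta$), and the smallness requirement \eqref{eq:smallness_condition_nonlinearities_QSEE_extended} reduces to a single smallness condition on $L_F,L_G$.

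The new content is part \eqref{it:semilinear_u_L_p}, where the goal is to upgrade the $L^0$-data conclusion to an $L^p$-data one by exploiting the $n$-independence of the constants $C_{c,n},L_{c,n},C_{\Tr,n},L_{\Tr,n}$ appearing in \ref{HFcritical}-\ref{HGcritical}. My first step would be to apply part \eqref{it:semilinear_u_L_infty} to $u_{0,n}$ as defined in \eqref{eq:truncation_operator}, obtaining $L^p_\a$-maximal local solutions $(u_n,\sigma_n)$. By the localization property Theorem \ref{t:local}\eqref{it:localization_Linfty}, these agree on $\Gamma_{n\wedge m}:=\{\|u_0\|_{\Xap}\leq n\wedge m\}$: namely $\sigma_n=\sigma_m$ there and $u_n=u_m$ on $\Gamma_{n\wedge m}\times[0,\sigma_n\wedge\sigma_m)$. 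Setting $u:=u_n$ and $\sigma:=\sigma_n$ on $\Gamma_n\setminus\Gamma_{n-1}$ (with $\Gamma_0:=\emptyset$) then yields an $L^p_\a$-maximal local solution $(u,\sigma)$ whose strong solution property and pathwise regularity are inherited from the $u_n$.

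The delicate point is to show that the $L^p(\Omega)$-estimates of Theorem \ref{t:local}\eqref{it:regularity_data_Linfty} persist on a common localizing sequence, and likewise that the continuous-dependence statement of Theorem \ref{t:local}\eqref{it:continuity_initial_data_Linfty} upgrades to perturbations $v_0\in \B_{L^p_{\F_0}(\Omega;\Xap)}(u_0,\eta)$. To achieve this I would rerun the fixed-point argument underlying Theorem \ref{t:local} directly in $L^p(\Omega;\X)$ (without the $L^\infty(\Omega)$-truncation of $u_0$), on an interval $[0,\tau]$ with $\tau$ the first time $\|u(\cdot)\|_{\Xap}$ exceeds a large enough radius $R$ depending only on $L_F,L_G$ and the SMR constants of $(A,B)$. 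The main obstacle is showing that the resulting contraction constant for the solution map is independent of $\|u_0\|_{L^p(\Omega;\Xap)}$: this is precisely where the $n$-uniformity of the constants in \ref{HFcritical}-\ref{HGcritical} enters. It allows the bounds on $F_c(u)-F_c(v)$ and $G_c(u)-G_c(v)$ in $L^p(\Omega;L^p(\I_\tau,w_\a;X_0))$ and $L^p(\Omega;L^p(\I_\tau,w_\a;\g(H,X_{1/2})))$ respectively, obtained via H\"older and combined with the mixed-derivative inequality Proposition \ref{prop:Sob_interpolation_h} and the trace embedding Proposition \ref{prop:continuousTrace}, to be taken $n$-free. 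Without this uniformity the constants $L_{c,n}$ would enter the contraction estimate multiplicatively, forcing one to retreat either to the $L^\infty$-truncation framework of part \eqref{it:semilinear_u_L_infty} or to the weaker a.s.\ conclusion of part \eqref{it:semilinear_u_L_0}.
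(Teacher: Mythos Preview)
Your treatment of parts \eqref{it:semilinear_u_L_infty} and \eqref{it:semilinear_u_L_0} is correct and matches the paper's: both reduce directly to Theorems \ref{t:local} and \ref{t:local_Extended}, the point being that in the semilinear case $(A(\cdot,u_{0,n}),B(\cdot,u_{0,n}))=(A,B)$ for every $n$, so the SMR hypothesis and the smallness constants $\varepsilon_n$ of \eqref{eq:smallness_condition_nonlinearities_QSEE_extended} are independent of $n$ (cf.\ Remark \ref{r:smallness_n}).

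For part \eqref{it:semilinear_u_L_p} the paper takes a shorter route than your glue-then-estimate strategy. The fixed-point of Theorem \ref{t:local} already runs in $L^p(\Omega)$-based spaces (the space $\z_\sigma$ of \eqref{eq:def_z_phi}); the only places $u_0\in L^\infty$ is actually used are the truncations $\Theta_\lambda$ and $\Psi_\lambda$ of \eqref{eq:phiLambda} and \eqref{eq:Psi_truncation}, whose Lipschitz estimates in Lemmas \ref{l:truncationFG} and \ref{l:truncationAB} require a uniform bound $\|u_0\|_{\Xap}\leq N$. In the semilinear case the $\Psi_\lambda$-truncation is unnecessary: $A$ does not depend on $u$, and since $L_{\Tr,n}$ is $n$-independent, $F_{\Tr},G_{\Tr}$ are globally Lipschitz on $\Xap$ and need no cutoff. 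For the $\Theta_\lambda$-truncation, Remark \ref{r:truncationFG_n} shows that the $n$-independence of $L_{c,n},C_{c,n}$ allows one to replace $\Theta_\lambda(t,u_0,u)$ by $\wt\Theta_\lambda(t,u):=\xi_\lambda(\|u\|_{\X(t)})$, which no longer references $u_0$. With these modifications the proof of Theorem \ref{t:local} goes through verbatim for $u_0\in L^p_{\F_0}(\Omega;\Xap)$.

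Your detour---first glue the $u_n$, then recover $L^p(\Omega)$-estimates by ``rerunning the fixed-point on $[0,\tau]$'' with $\tau$ the first exit time from a ball in $\Xap$---could plausibly be made rigorous, but as written it is awkward: $\tau$ depends on the solution $u$ being estimated, so what you describe is really an a posteriori bound rather than a contraction, and carrying it out would still require setting up some truncation on the random interval $[0,\tau]$. The paper's one-line modification of the truncation sidesteps this entirely and delivers the $L^p$-conclusions of Theorem \ref{t:local}\eqref{it:existence_maximal_local_Linfty}--\eqref{it:localization_Linfty} in a single pass.
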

Assertion \eqref{it:semilinear_u_L_infty} is immediate from Theorem \ref{t:local}. Under additional growth conditions one can often derive $L^p$-estimates as well. Assertion \eqref{it:semilinear_u_L_p} shows that in the semilinear case the condition $u_0\in L^{\infty}_{\F_0}(\O;\Xap)$ in Theorem \ref{t:local} can be weakened. Assertion \eqref{it:semilinear_u_L_0} will be immediate from the proof of Theorem \ref{t:local_Extended}.

\subsection{The role of the space $\X(T)$}
In the proofs of the results stated in Subsection \ref{ss:statement_main_results_local_existence} in the case $p>2$ we need a family of function spaces $(\X(t))_{t\in [0,T]}$ having the following three properties: the nonlinearities $F_c(\cdot,u),G_c(\cdot,u)$ can be controlled by $\|u\|_{\X(t)}$, the map $[0,T]\ni t\mapsto \|f|_{(0,t)}\|_{\X(t)}$ is continuous for all $f\in \X(T)$, and
\begin{equation}
\label{eq:max_regularity_space_E_X_usefulness}
H^{\d,p}(\I_T,w_{\a};X_{1-\d})\cap L^p(\I_T,w_{\a};X_1)
\hookrightarrow \X(T)
\end{equation}
for some $\delta\in (\frac{1+\a}{p},\frac{1}{2})$.
Note that the left-hand side in \eqref{eq:max_regularity_space_E_X_usefulness} is part of our usual maximal regularity space (see Definition \ref{def:SMRz}).
As mentioned below Lemma \ref{lem:XYtmeas}, it is not obvious whether the $H^{\delta,p}(\I_t,w_{\a};X_{1-\delta})$-norm is continuous in $t$ and therefore we do not define $\X(T)$ as the left-hand side of \eqref{eq:max_regularity_space_E_X_usefulness}.

Recall that the numbers $(\rho_j)_{j=1}^{\mf+\mg},(\beta_j)_{j=1}^{\mf+\mg}$ and $(\varphi_j)_{j=1}^{\mf+\mg}$ are defined in \ref{HFcritical} and \ref{HGcritical}. In the case that for some $j\in \{1,\dots,\mf+\mg\}$, \eqref{eq:HypCritical} or \eqref{eq:HypCriticalG} holds with \textit{strict} inequality, we may increase $\rho_j$ in order to obtain equality. More precisely, we set
\begin{equation}\label{eq:rhostar}
\rhos_j:=\frac{1-\beta_j}{\varphi_j-1+(1+\a)/p},\qquad j\in \{1,\dots,m_F+m_G\}.
\end{equation}
Since $\beta_j<1$ and $\varphi_j>1-(1+\a)/p$, one has $\rhos_j>0$ for all $j\in \{1,\dots,m_F+m_G\}$.

To define a space $\X(T)$ which satisfies the previous requirements, suppose
\ref{HFcritical}-\ref{HGcritical} are satisfied and let $\rhos_j$ be as in \eqref{eq:rhostar}. For $j\in \{1,\dots,m_F+m_G\}$ we let
\begin{equation}
\label{eq:rr'}
\frac{1}{r_{j}'}:=\frac{\rhos_j(\varphi_j-1+(1+\a)/p)}{(1+\a)/p}<1,
\qquad
\frac{1}{r_j}:=\frac{\beta_j -1 + (1+\a)/p}{(1+\a)/p}<1
\end{equation}
and, for $T>0$,
\begin{equation}
\label{eq:X_space_c}
\X(T):=\Big(\bigcap_{j=1}^{\mf+\mg} L^{pr_j}(\I_T,w_{\a};X_{\beta_j})\Big) \cap
\Big(\bigcap_{j=1}^{\mf+\mg} L^{\rhos_j p r_j'}(\I_T,w_{\k};X_{\varphi_j})\Big).
\end{equation}
We will see that $\X(T)$ is the natural space to control the non-linearities $F_c,G_c$; see Lemmas \ref{l:F_G_bound_N} and \ref{l:truncationFG} below. Moreover, if \eqref{eq:max_regularity_space_E_X_usefulness} holds, then the solution paths will automatically be in \eqref{eq:X_space_c} as soon as we have maximal regularity. Finally, we note that the continuity of the $\X$-norm in $t\in [0,T]$ follows from the Lebesgue dominated convergence theorem.

Next we prove \eqref{eq:max_regularity_space_E_X_usefulness} with the above defined $\X$ under a trace condition. The general case is discussed in Remark \ref{r:embedding_with_T_dependence}. Here we follow \cite[Section 2]{addendum}.

\begin{lemma}
\label{l:embeddings}
Let \emph{\ref{HFcritical}}-\emph{\ref{HGcritical}} be satisfied. Let $T\in (0,\infty]$ and let $r_j,r'_j$ and $\X(T)$ be as in \eqref{eq:rr'} and \eqref{eq:X_space_c} respectively.
If $p>2$ and $\a\in [0,\frac{p}{2}-1)$, then for any $\delta\in (\frac{1+\a}{p},\frac{1}{2})$
\begin{equation*}
\hz^{\d,p}(0,T,w_{\a};X_{1-\d})\cap L^p(0,T,w_{\a};X_1)
\hookrightarrow \X(T),
\end{equation*}
where the embedding constant can be chosen to be independent of $T$.

Furthermore, if $p=2$ and $\a=0$, the same holds with $\hz^{\d,p}(0,T,w_{\a};X_{1-\d})$ replaced by $C([0,T];X_{1/2})$.
\end{lemma}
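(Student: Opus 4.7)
The plan is to treat each factor of the intersection defining $\X(T)$ separately, trading fractional time-regularity $\d$ for spatial regularity via complex interpolation and then using a Sobolev embedding in time to raise the integrability exponent. The algebraic identities forced by the definitions of $\rhos_j$, $r_j$, $r_j'$ in \eqref{eq:rr'} will make the Sobolev exponent match exactly. I first carry out the argument for $p>2$; the case $p=2$ is simpler.

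Fix $j\in\{1,\ldots,\mf+\mg\}$ and set $s_j:=1-\beta_j$, $s_j':=1-\varphi_j$. Because $\beta_j,\varphi_j\in (1-\frac{1+\a}{p},1)$ and $\d\in(\frac{1+\a}{p},\frac12)$, both $s_j$ and $s_j'$ lie strictly in $(0,\d)$ and strictly below $\frac{1+\a}{p}$, so every hypothesis needed in Proposition \ref{prop:Sob_interpolation_h} and in Theorem \ref{t:equivalence_h_H} is met. The mixed derivative inequality in the $\hz$-form (so with $T$-independent constant) together with the reiteration identity $[X_{1-\d},X_1]_{\theta}=X_{1-\d(1-\theta)}$ (valid since $X_1\hookrightarrow X_0$ densely) gives, with $\theta_j=1-s_j/\d$ and $\theta_j'=1-s_j'/\d$,
\[
\hz^{\d,p}(\I_T,w_{\a};X_{1-\d})\cap L^p(\I_T,w_{\a};X_1)\hookrightarrow \hz^{s_j,p}(\I_T,w_{\a};X_{\beta_j})\cap \hz^{s_j',p}(\I_T,w_{\a};X_{\varphi_j}),
\]
with $T$-independent constants. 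The endpoint value $s_1=0$ in the mixed derivative inequality is justified by the complex-interpolation identity $[\hz^{\d,p}(w_{\a};X_{1-\d}),L^p(w_{\a};X_1)]_{\theta}=\hz^{\d(1-\theta),p}(w_{\a};[X_{1-\d},X_1]_{\theta})$, cf.\ Remark \ref{r:0H_replace_H}. Next, Proposition \ref{prop:embeddingSobolevWeights} applies: the definition of $r_j$ rewrites as $s_j-\frac{1+\a}{p}=-\frac{1+\a}{pr_j}$, so
\[
\hz^{s_j,p}(\I_T,w_{\a};X_{\beta_j})\hookrightarrow L^{pr_j}(\I_T,w_{\a};X_{\beta_j}),
\]
and $\rhos_j(\varphi_j-1+\frac{1+\a}{p})=1-\beta_j$ together with \eqref{eq:rr'} gives $s_j'-\frac{1+\a}{p}=-\frac{1+\a}{\rhos_j p r_j'}$, hence
\[
\hz^{s_j',p}(\I_T,w_{\a};X_{\varphi_j})\hookrightarrow L^{\rhos_j pr_j'}(\I_T,w_{\a};X_{\varphi_j}).
\]
Intersecting over $j$ yields the embedding into $\X(T)$, with $T$-independent constant.

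For $p=2$, $\a=0$, fractional Sobolev embeddings are not needed. Writing $\beta_j=\tfrac12(1-\theta_j)+\theta_j$ with $\theta_j=2\beta_j-1\in(0,1)$, the complex-interpolation inequality yields $\|u(t)\|_{X_{\beta_j}}\le C\|u(t)\|_{X_{1/2}}^{1-\theta_j}\|u(t)\|_{X_1}^{\theta_j}$ pointwise in $t$. Raising to the $q$-th power with $q=2/\theta_j$, integrating, and using $2/\theta_j=2r_j$ (which is \eqref{eq:rr'} at $(p,\a)=(2,0)$), I obtain
\[
\|u\|_{L^{2r_j}(0,T;X_{\beta_j})}\le C\|u\|_{L^\infty(0,T;X_{1/2})}^{1-\theta_j}\|u\|_{L^2(0,T;X_1)}^{\theta_j}\le C'\bigl(\|u\|_{L^\infty(0,T;X_{1/2})}+\|u\|_{L^2(0,T;X_1)}\bigr).
\]
The analogous H\"older-interpolation argument with $\varphi_j$ in place of $\beta_j$, and the identity $2/(2\varphi_j-1)=2\rhos_jr_j'$, handles the second factor. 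Since $C([0,T];X_{1/2})\hookrightarrow L^\infty(0,T;X_{1/2})$ isometrically, this finishes the case $p=2$. The chief technical nuisance throughout is verifying the $s\ne\frac{1+\a}{p}$ hypothesis wherever Theorem \ref{t:equivalence_h_H} is invoked, but the strict inequalities $\beta_j,\varphi_j>1-\frac{1+\a}{p}$ built into \ref{HFcritical}--\ref{HGcritical} are exactly what makes this work.
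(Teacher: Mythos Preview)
Your argument is correct and follows essentially the same route as the paper: apply the mixed-derivative inequality (Proposition~\ref{prop:Sob_interpolation_h}) to pass from $\hz^{\d,p}(w_{\a};X_{1-\d})\cap L^p(w_{\a};X_1)$ to $\hz^{1-\beta_j,p}(w_{\a};X_{\beta_j})$ and $\hz^{1-\varphi_j,p}(w_{\a};X_{\varphi_j})$, then use the weighted Sobolev embedding (Proposition~\ref{prop:embeddingSobolevWeights}) to reach the required Lebesgue spaces, and for $p=2$ use pointwise complex interpolation between $X_{1/2}$ and $X_1$. One small caveat: your appeal to Remark~\ref{r:0H_replace_H} for the endpoint $s_1=0$ is not ideal since that remark only records an \emph{expected} identity; it is cleaner to note that the proof of Proposition~\ref{prop:Sob_interpolation_h} goes through unchanged when one of the endpoint spaces is $L^p$ (the extension operator and the cited interpolation theorem both cover $s=0$), which is exactly how the paper proceeds.
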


\begin{proof}
Recall that in \eqref{eq:rhostar} we have defined $\rhos_j$ such that \eqref{eq:HypCritical}-\eqref{eq:HypCriticalG} hold with equality for each $j\in\{1,\dots,m_F+m_G\}$. Due to \eqref{eq:rr'}, this implies that
$$
\frac{1}{r_j}+\frac{1}{r'_j}=1, \text{ for all } j\in\{1,\dots,m_F+m_G\}.
$$

\textit{Step 1: Case $p=2$, $\a=0$}. Let $\vartheta\in (0,1)$ be arbitrary. By interpolation one has
$$\|x\|_{X_{\frac{1}{2}+\frac{\vartheta}{2}}}\leq  \|x\|_{X_{\frac{1}{2}}}^{1-\vartheta}\|x\|_{X_{1}}^{\vartheta},$$
for $x\in X_1$. Thus, by Young's inequality
\begin{align*}
\|u\|_{L^{\frac{2}{\vartheta}}(0,T;X_{\frac{1}{2}+\frac{\vartheta}{2}})} & \leq \|u\|_{C([0,T];X_{1/2})}^{1-\vartheta} \|u\|_{L^2(I_T;X_{1})}^{\vartheta}
\\ & \leq (1-\vartheta) \|u\|_{C([0,T];X_{1/2})} + \vartheta \|u\|_{L^2(I_T;X_{1})}.
\end{align*}
Therefore, we have the following contractive embedding
\begin{equation}
\label{eq:interpolation_p_2}
C([0,T];X_{1/2})\cap L^2(0,T;X_1)
\hookrightarrow L^{\frac{2}{\vartheta}}(0,T;X_{\frac{1}{2}+\frac{\vartheta}{2}}).
\end{equation}
By \eqref{eq:interpolation_p_2} with $\vartheta=1/r_j=2(\beta_j-1/2)$ and $\vartheta=1/({\rhos_j r_j'})=2(\varphi_j-1/2)$ one obtains
\begin{equation*}
C([0,T];X_{1/2})\cap L^2(0,T;X_1)\hookrightarrow L^{2r_j}(0,T;X_{\beta_j})\cap L^{2\rhos_j r_j'}(0,T;X_{\varphi_j}).
\end{equation*}

\textit{Step 2: case $p>2$ and $\a\in [0,\frac{p}{2}-1)$}. By Proposition \ref{prop:embeddingSobolevWeights} for each $j\in \{1,\dots,m_F+m_G\}$
\begin{align}
\label{eq:Sob1}
\hz^{1-\beta_j,p}(0,T,w_{\a};X_{\lambda})&\hookrightarrow L^{pr_j}(0,T,w_{\k};X_{\lambda}),\quad\\
\label{eq:Sob2}
\hz^{1-\varphi_j,p}(0,T,w_{\a};X_{\lambda})&\hookrightarrow L^{\rhos_j p r_j'}(0,T,w_{\k};X_{\lambda}),
\end{align}
for each $\lambda\in [0,1]$ and where the embedding constants do not depend on $T$.

Let $0<\eta<\zeta<1$ and assume $\eta\neq (1+\a)/p$. Using Proposition \ref{prop:Sob_interpolation_h} with $\theta:=\frac{\zeta-\eta}{\zeta}\in (0,1)$, one obtains
\begin{equation}
\label{eq:interp_H_X1_Xsigma}
\hz^{\zeta,p}(0,T;w_{\a};X_{1-\zeta})\cap L^p(0,T,w_{\a};X_1)\hookrightarrow
\hz^{\eta,p}(0,T,w_{\a};X_{1-\eta}),
\end{equation}
where we used that $[X_{1-\zeta},X_1]_{\theta}=X_{\eta}$, which follows immediately from the reiteration theorem for complex interpolation and Assumption \ref{ass:Xtr}.

Let $\d\in (\frac{1+\a}{p},\frac{1}{2})$ be arbitrary. Since $\b_j\in (1-\frac{1+\a}{p},1)$ one has $\delta>1-\beta_j \in (0,\frac{1+\a}{p})$ for each $j\in \{1,\dots,m_F+m_G\}$, and hence it follows that
\begin{align*}
\hz^{\d,p}(0,T,w_{\a};X_{1-\d})\cap L^p(0,T,w_{\a};X_1)
&\hookrightarrow \hz^{1-\beta_j,p}(0,T,w_{\a};X_{\beta_j})\\
&\hookrightarrow L^{pr_j}(0,T,w_{\a};X_{\beta_j}),
\end{align*}
where in the first embedding we used $\delta>1-(1+\a)/p>1-\beta_j$ and \eqref{eq:interp_H_X1_Xsigma}, and the second one follows from \eqref{eq:Sob1}. Analogously, for $j\in \{1,\dots,m_F+m_G\}$, using $\delta>\frac{1+\a}{p}>1-\varphi_j$, \eqref{eq:Sob2}, and \eqref{eq:interp_H_X1_Xsigma}, one obtains
\begin{align*}
\hz^{\d,p}(0,T;w_{\a};X_{1-\d})\cap L^p(0,T,w_{\a};X_1)
&\hookrightarrow \hz^{1-\varphi_j,p}(0,T,w_{\a};X_{\varphi_j})\\
&\hookrightarrow L^{\rhos_j p r_j'}(0,T,w_{\k};X_{\varphi_j}).
\end{align*}
Putting together the above inclusions the result follows.
\end{proof}

\begin{remark}
\label{r:embedding_with_T_dependence}
Let $p>2$. The embedding in Lemma \ref{l:embeddings} also holds in the case where $\hz^{\d,p}$ is replaced by $H^{\d,p}$, but with an embedding constant which depends on $T>0$.
\end{remark}

Let us show that the space $\X(T)$ defined in \eqref{eq:X_space_c} is well suited to bound the nonlinearities $F_c,G_c$. Actually, we prove a more refined result since this will be needed in our paper \cite{AV19_QSEE_2} on blow-up criteria and regularization.

\begin{lemma}
\label{l:F_G_bound_N}
Let the hypothesis \emph{\ref{HFcritical}-\ref{HGcritical}} be satisfied. Let $0<T<\infty$ and $N\geq 1$ be fixed. Then there exists $C_T>0$ and $\zeta>1$ such that for all $u\in C(\overline{\I}_T;\Xap)\cap \X(T)$ which verifies $\|u\|_{C(\overline{\I}_T;\Xap)}\leq N$, one has a.s.
\begin{multline*}
\|F_c(\cdot,u)-F_c(\cdot,0)\|_{L^p(\I_T,w_{\a};X_0)}+\|G_c(\cdot,u)-G_c(\cdot,0)\|_{L^p(\I_T,w_{\a};\g(H,X_{1/2}))}
\\
\leq C_{T}(\|u\|_{\X(T)}+\|u\|_{\X(T)}^{\zeta}).
\end{multline*}
Moreover, if $\Xap$ is not critical for \eqref{eq:QSEE}, then $\lim_{T\downarrow 0}C_{T}= 0$.
\end{lemma}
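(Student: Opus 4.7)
The plan is to apply the local Lipschitz hypotheses \ref{HFcritical}(ii) and \ref{HGcritical}(ii) with $y=0$ and $n=N$, then to control each resulting product by $\|u\|_{\X(T)}$ via two rounds of Hölder's inequality against the measure $w_{\a}\,dt$ on $\I_T$. Since $\|u(t,\omega)\|_{\Xap}\leq N$ uniformly and $\|0\|_{\Xap}=0\leq N$, these hypotheses yield pointwise
\begin{equation*}
\|F_c(t,u)-F_c(t,0)\|_{X_0}\leq L_{c,N}\sum_{j=1}^{\mf}\bigl(1+\|u(t)\|_{X_{\varphi_j}}^{\rho_j}\bigr)\|u(t)\|_{X_{\beta_j}},
\end{equation*}
and analogously for $G_c$ with $j\in\{\mf+1,\dots,\mf+\mg\}$. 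Both the linear piece $\|u\|_{X_{\beta_j}}$ and the multiplicative piece $\|u\|_{X_{\varphi_j}}^{\rho_j}\|u\|_{X_{\beta_j}}$ then have to be estimated in $L^p(\I_T,w_{\a})$.

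Raising the multiplicative piece to the $p$-th power, integrating against $w_{\a}\,dt$, and applying Hölder's inequality with the exponents $r_j,r_j'$ from \eqref{eq:rr'} gives
\begin{equation*}
\Bigl(\int_0^T\|u\|_{X_{\varphi_j}}^{\rho_j p}\|u\|_{X_{\beta_j}}^p w_{\a}\,dt\Bigr)^{1/p}\leq \|u\|_{L^{pr_j}(\I_T,w_{\a};X_{\beta_j})}\,\|u\|_{L^{\rho_j pr_j'}(\I_T,w_{\a};X_{\varphi_j})}^{\rho_j},
\end{equation*}
where $1/r_j+1/r_j'=1$ is exactly the equality case of \eqref{eq:HypCritical} (with $\rhos_j$ in place of $\rho_j$). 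Because $\rho_j\leq \rhos_j$ by the definition \eqref{eq:rhostar} combined with \eqref{eq:HypCritical}/\eqref{eq:HypCriticalG}, a further Hölder estimate on the bounded interval $\I_T$ gives $\|u\|_{L^{\rho_j pr_j'}(w_{\a};X_{\varphi_j})}\leq T^{\gamma_j}\|u\|_{L^{\rhos_j pr_j'}(w_{\a};X_{\varphi_j})}$ with $\gamma_j\geq 0$ and $\gamma_j>0$ whenever $\rho_j<\rhos_j$. The same device handles the linear piece: $\|u\|_{L^p(w_{\a};X_{\beta_j})}\leq T^{\gamma_j'}\|u\|_{L^{pr_j}(w_{\a};X_{\beta_j})}$ with $\gamma_j'\geq 0$, and $\gamma_j'>0$ iff $r_j>1$, i.e., iff $\beta_j<1$. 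Summing over $j$ and applying the elementary inequality $a^{1+\rho_j}\leq a+a^{\zeta}$ (valid for $a\geq 0$ and $1\leq 1+\rho_j\leq\zeta$) with $\zeta:=1+\max_j\rho_j$ if some $\rho_j>0$ and $\zeta:=2$ otherwise, collapses the estimates into $C_T(\|u\|_{\X(T)}+\|u\|_{\X(T)}^{\zeta})$.

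The only subtle point — and the main obstacle — is the second assertion that $C_T\to 0$ in the non-critical case. Non-criticality of $\Xap$ means strict inequality in \eqref{eq:HypCritical}/\eqref{eq:HypCriticalG} for every $j$; this gives $\rho_j<\rhos_j$, hence $\gamma_j>0$, for every $j$, and it also forces $\beta_j<1$: if $\beta_j=1$ then \eqref{eq:HypCritical} together with $\varphi_j>1-(1+\a)/p$ and $\rho_j\geq 0$ would force $\rho_j=0$, and then equality would hold in \eqref{eq:HypCritical}, contradicting non-criticality. Thus $\gamma_j'>0$ for every $j$ as well, every $T$-power in $C_T$ is strictly positive, and $C_T\to 0$ as $T\downarrow 0$. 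The remainder of the argument is bookkeeping between $r_j,r_j',\rho_j,\rhos_j$, with no essentially new ingredient beyond Hölder's inequality.
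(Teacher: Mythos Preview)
Your proof is correct and follows essentially the same approach as the paper's: apply the local Lipschitz estimate from \ref{HFcritical}(ii)/\ref{HGcritical}(ii) with $y=0$, then use H\"older's inequality with exponents $r_j,r_j'$ to bound the product term, and a further H\"older step to pass from $L^{\rho_j p r_j'}$ to $L^{\rhos_j p r_j'}$ in the non-critical case. The paper simplifies notation to $m_F=1$ but the argument is the same.

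One minor remark: your contradiction argument to show $\beta_j<1$ in the non-critical case is unnecessary, since the hypothesis already requires $\beta_j\leq\varphi_j<1$ for all $j$. Hence $r_j>1$ and $\gamma_j'>0$ always hold, regardless of criticality; the only place where non-criticality is genuinely needed is to ensure $\rho_j<\rhos_j$ and thus $\gamma_j>0$ for the product term.
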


\begin{proof}
For notational simplicity we only consider the case $m_F=1$. Thus, we set $\rhos:=\rhos_1$, $\rho:=\rho_1$, $\varphi:=\varphi_1$ and $\beta:=\beta_1$. In this case,
\begin{equation}
\label{eq:X_spaces_special_Case_lemma}
\X(T)=L^{pr}(\I_T,w_{\a};X_{\beta}) \cap L^{\rho p r'}(\I_T,w_{\a};X_{\varphi}),
\end{equation}
where $r:=r_1$ and $r':=r'_1$ are defined in \eqref{eq:rr'}. Thus, by \ref{HFcritical}, for $x\in X_{\varphi}$,
\begin{equation*}
\|F_c(t,x)-F_c(t,0)\|_{X_0}\leq L_{c,N} (1+\|x\|^{\rho}_{X_{\varphi}})\|x\|_{X_{\beta}}.
\end{equation*}
This implies
\begin{equation}
\label{eq:F_C_a_b}
\begin{aligned}
&\|F_c(\cdot,u)-F_c(\cdot,0)\|_{L^p(0,T,w_{\a};X_0)}\\
&\leq L_{c,N} \|u\|_{L^p(0,T,w_{\a};X_{\beta})}+ \big\|\|u\|^{\rho}_{X_{\varphi}}\|u\|_{X_{\beta}}\big\|_{L^p(0,T,w_{\a})}\\
&\leq L_{c,N}(C_{T} \|u\|_{L^{pr}(0,T,w_{\a};X_{\beta})}+\|u\|_{L^{\rho p r'}(0,T,w_{\a};X_{\varphi})}^{\rho}\|u\|_{L^{pr}(0,T,w_{\a};X_{\beta})});
\end{aligned}
\end{equation}
where $\lim_{T\downarrow 0}C_T= 0$. For simplicity, let us distinguish two cases:

\textit{Case $\rhos=\rho:=\rho_1$}. In other words $\Xap$ is critical for \eqref{eq:QSEE}. The claimed inequality follows from \eqref{eq:X_spaces_special_Case_lemma}-\eqref{eq:F_C_a_b} by setting $\zeta=1+\rho$.

\textit{Case $\rhos>\rho:=\rho_1$}. By the H\"{o}lder inequality, one has
$$
\|u\|_{L^{\rho p r'}(0,T,w_{\a})}^{\rho}\leq C_T \|u\|_{L^{\rhos p r'}(0,T,w_{\a};X_{\varphi})}^{\rho}\leq C_T \|u\|_{\X(T)}^{\rho},
$$
where $\lim_{T\downarrow 0}C_T=0$.

The assertion for $G_c$ is proved in the same way.
\end{proof}

\begin{remark}
If the constants $L_{c,n}$ in \ref{HFcritical}$(ii)$ and \ref{HGcritical}$(ii)$ do not depend on $n\geq 1$, then the constant $C_{T}$ can be chosen independent of $N$ and the above proof extends to any $u \in \X(T)$.
\end{remark}

\subsection{Truncation lemmas}
\label{s:proofs_quasi_local}
In this subsection we collect several truncation lemmas which are needed in the proofs of Theorems \ref{t:local}, \ref{t:local_Extended} and \ref{t:semilinear}.

First we define suitable truncations of $F_c,G_c$. To this end let $\xi\in W^{1,\infty}([0,\infty))$ be such that $\xi=1$ on $[0,1]$ and $\xi=0$ on $[2,\infty)$ and $\xi$ is linear on $[1,2]$. For each $\lambda>0$, set $\xi_{\lambda}(x):=\xi(x/\lambda)$ for $x\in \R_+$. Then $\supp \xi_{\lambda}\subseteq [0,2\lambda]$, $\xi_{\lambda}|_{(0,\lambda)}=1$ and $\| \xi_{\lambda}'\|_{L^{\infty}(\R_+)}\leq 1/\lambda$. For $t\in [0,T]$,  $x\in \Xap$, and $u\in \X(T)\cap C(\overline{\I}_T;\Xap)$ we set
\begin{equation}
\label{eq:phiLambda}
\Theta_{\lambda}(t,x,u):=\xi_{\lambda}
\Big(\|u\|_{\X(t)}+\sup_{s\in [0,t]}\|u(s)-x\|_{\Xap}\Big).
\end{equation}

In the next lemma we fix $\omega\in \O$, but omit it from our notation.
\begin{lemma}
\label{l:truncationFG}
Let \emph{\ref{HFcritical}}-\emph{\ref{HGcritical}} be satisfied. Let $T>0$ and let $\sigma\in [0,T]$. Let $\Theta_{\lambda}$ be defined in \eqref{eq:phiLambda}.
For any $\lambda\in (0,1)$, let the maps
\begin{align*}
F_{c,\lambda}:\Xap\times \X({\sigma})\cap C(\overline{\I}_{\sigma};\Xap)&\to L^p(\I_{\sigma},w_{\a};X_0)),\\
G_{c,\lambda}:\Xap\times \X({\sigma})\cap C(\overline{\I}_{\sigma};\Xap)&\to L^p(\I_{\sigma},w_{\a};\g(H,X_{1/2})),
\end{align*}
be given by
\begin{align*}
F_{c,\lambda}(x,u)&:= \Theta_{\lambda}(\cdot,x,u)(F_c(\cdot,u)-F_c(\cdot,0)),\\
G_{c,\lambda}(x,u)&:=\Theta_{c,\lambda}(\cdot,x,u) (G_c(\cdot,u)-G_c(\cdot,0)).
\end{align*}
Then for any $N\geq 1$ there exist constants $C_{\lambda},L_{T,\lambda}$ such that if $\|x\|_{\Xap}\leq N$,
\begin{align*}
\|F_{c,\lambda}(x,u)\|_{L^p(\I_{\sigma},w_{\k};X_0)}&\leq C_{\lambda}\\
\|G_{c,\lambda}(x,u)\|_{L^p(\I_{\sigma},w_{\k};\g(H,X_{1/2}))}&\leq C_{\lambda}\\
\|F_{c,\lambda}(x,u)-F_{c,\lambda}(\cdot,x,v)\|_{L^p( \I_{\sigma},w_{\k};X_0)}
&\leq L_{\lambda,T}(\|u-v\|_{\X(\sigma)}+\|u-v\|_{C(\overline{\I}_{\sigma};\Xap)}),\\
\|G_{c,\lambda}(\cdot,x,u)-G_{c,\lambda}(\cdot,x,v)\|_{L^p(\I_{\sigma},w_{\k};\g(H,X_{1/2}))}
&\leq L_{\lambda,T}(\|u-v\|_{\X(\sigma)}+\|u-v\|_{C(\overline{\I}_{\sigma};\Xap)});
\end{align*}
a.s. Moreover, for each $\varepsilon>0$ there exist $\bar{T}=\bar{T}(\varepsilon)>0$ and $\bar{\lambda}=\bar{\lambda}(\varepsilon)>0$ such that for all $T\in (0,\bar{T})$, $\lambda\in(0,\bar{\lambda})$ one has $L_{\lambda,T}<\varepsilon$.
\end{lemma}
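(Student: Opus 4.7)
The plan proceeds in three stages: establish the elementary properties of $\Theta_\lambda$; deduce the boundedness claim from Lemma \ref{l:F_G_bound_N} via the support of $\Theta_\lambda$; and derive the Lipschitz estimate through a pointwise case analysis compensating for the lack of control on $v$ off the support of $\Theta_\lambda(\cdot,x,u)$.

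Setting $\Psi(t,x,u):=\|u\|_{\X(t)}+\sup_{s\le t}\|u(s)-x\|_{\Xap}$, the map $t\mapsto \Psi(t,x,u)$ is continuous and nondecreasing (the $\X$-piece by dominated convergence, the sup by continuity of $u:\overline{\I}_\sigma\to\Xap$) and $u\mapsto \Psi(t,x,u)$ is $1$-Lipschitz in the norm $\|\cdot\|_{\X(t)}+\|\cdot\|_{C(\overline{\I}_t;\Xap)}$. Combined with $\|\xi_\lambda'\|_\infty\le 1/\lambda$ and $\supp\xi_\lambda\subseteq [0,2\lambda]$, this gives $\Theta_\lambda\in [0,1]$, the pointwise Lipschitz bound
$$|\Theta_\lambda(t,x,u)-\Theta_\lambda(t,x,v)|\le \lambda^{-1}\big(\|u-v\|_{\X(t)}+\|u-v\|_{C(\overline{\I}_t;\Xap)}\big),$$
and the implication $\Theta_\lambda(t,x,u)>0 \Longrightarrow \|u\|_{\X(t)}<2\lambda$ and $\sup_{s\le t}\|u(s)\|_{\Xap}\le N+2\lambda$ when $\|x\|_{\Xap}\le N$. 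Introducing the pathwise stopping time $\tau_u:=\inf\{t\in [0,\sigma]:\Psi(t,x,u)\ge 2\lambda\}\wedge\sigma$, one has $\Theta_\lambda(\cdot,x,u)\equiv 0$ on $[\tau_u,\sigma]$ and $\|u\|_{\X(\tau_u)}\le 2\lambda$. Boundedness then follows at once: since $\Theta_\lambda\le \one_{[0,\tau_u]}$, Lemma \ref{l:F_G_bound_N} applied to $u|_{[0,\tau_u]}$ with threshold $N+2\lambda$ yields $\|F_{c,\lambda}(x,u)\|_{L^p(\I_\sigma,w_\k;X_0)}\le C_T(2\lambda+(2\lambda)^\zeta)=:C_\lambda$, and analogously for $G_{c,\lambda}$.

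For the Lipschitz estimate, write $F_u:=F_c(\cdot,u)-F_c(\cdot,0)$, $\Theta_u:=\Theta_\lambda(\cdot,x,u)$ (and similarly for $v$), and split $[0,\sigma]=[0,\tau_u\wedge\tau_v]\cup(\tau_u\wedge\tau_v,\tau_u\vee\tau_v]\cup(\tau_u\vee\tau_v,\sigma]$. The last interval contributes zero. On $[0,\tau_u\wedge\tau_v]$ both $u,v$ are $\Xap$-bounded by $N+2\lambda$; decomposing $\Theta_uF_u-\Theta_vF_v=\Theta_u(F_u-F_v)+(\Theta_u-\Theta_v)F_v$ and invoking the Lipschitz part of \ref{HFcritical}(ii) with the H\"older argument from Lemma \ref{l:F_G_bound_N} gives
$$\|\Theta_u(F_u-F_v)\|_{L^p(\I_{\tau_u\wedge\tau_v},w_\k;X_0)}\lesssim L_{c,N+2\lambda}\sum_{j=1}^{\mf}\big(C_T+(2\lambda)^{\rho_j}\big)\|u-v\|_{\X(\sigma)},$$
and combining the Lipschitz bound on $\Theta$ with Lemma \ref{l:F_G_bound_N} applied to $v|_{[0,\tau_v]}$ yields $\|(\Theta_u-\Theta_v)F_v\|_{L^p}\lesssim C_T(\|u-v\|_{\X(\sigma)}+\|u-v\|_{C(\overline{\I}_\sigma;\Xap)})$ for $\lambda<1$. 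On the middle interval exactly one of $\Theta_u,\Theta_v$ vanishes, so the difference equals $\pm\Theta_\bullet F_\bullet$ with $\bullet\in\{u,v\}$ the surviving argument (which is $\Xap$-bounded); since $\Theta_\bullet=|\Theta_u-\Theta_v|$ there, the same $\lambda^{-1}C_T(2\lambda+(2\lambda)^\zeta)\lesssim C_T$-type bound applies. Summing and doing the same for $G_{c,\lambda}$ gives
$$L_{\lambda,T}\lesssim L_{c,N+2\lambda}\sum_{j=1}^{\mf+\mg}\big(C_T+(2\lambda)^{\rho_j}\big)+C_T.$$

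For the smallness assertion, note that at critical indices $\rho_j=\rhos_j$ and \eqref{eq:rhostar} with $\beta_j<1$ force $\rho_j>0$, so $(2\lambda)^{\rho_j}\to 0$ as $\lambda\downarrow 0$; at strictly subcritical indices, Lemma \ref{l:F_G_bound_N} supplies $C_T\to 0$ as $T\downarrow 0$. Using $L_{c,N+2\lambda}\le L_{c,N+2}$ for $\lambda<1$, one first picks $\bar\lambda$ to absorb the critical contributions below $\varepsilon/2$ and then $\bar T$ for the remaining terms. The main obstacle is precisely the case analysis: on $\{\Theta_u>0,\Theta_v=0\}$ the Lipschitz assumption on $F_c$ is not directly applicable since $v$ is unbounded in $\Xap$ there, but the difference collapses to $\Theta_u F_u$ with $\Theta_u=|\Theta_u-\Theta_v|$ supplying the $(1/\lambda)$-factor and the growth bound from Lemma \ref{l:F_G_bound_N} handling $F_u$.
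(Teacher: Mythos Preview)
Your overall strategy is correct and close to the paper's: introduce the cutoff time $\tau_u$, use it to bound $F_{c,\lambda}$ via the growth of $\tilde F_c:=F_c(\cdot,u)-F_c(\cdot,0)$ on $[0,\tau_u]$, and for the Lipschitz estimate decompose $\Theta_uF_u-\Theta_vF_v$ into a ``difference of arguments'' piece and a ``difference of cutoffs'' piece. The paper streamlines your three-interval split by assuming without loss of generality $\tau_u\le\tau_v$ and writing
\[
\Theta_uF_u-\Theta_vF_v=\underbrace{\Theta_u(\tilde F_u-\tilde F_v)}_{R_1}+\underbrace{(\Theta_u-\Theta_v)\tilde F_v}_{R_2},
\]
so that $R_1$ lives on $[0,\tau_u]$ (both $u,v$ bounded) and $R_2$ on $[0,\tau_v]$ ($v$ bounded); this absorbs your ``middle interval'' into $R_2$ automatically, but the two organisations are equivalent.

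There is, however, a genuine gap in your smallness argument for the $(\Theta_u-\Theta_v)F_v$ and middle-interval terms. You invoke the \emph{statement} of Lemma~\ref{l:F_G_bound_N}, which packages the bound as $C_T(\|v\|_{\X}+\|v\|_{\X}^\zeta)$; after multiplying by the $\lambda^{-1}$ from the $\Theta$-Lipschitz bound and using $\|v\|_{\X(\tau_v)}\le 2\lambda$, this yields a contribution of order $C_T\,(2+2^\zeta\lambda^{\zeta-1})$. In the critical case the lemma's constant $C_T$ does \emph{not} tend to $0$, so the resulting Lipschitz contribution is bounded below by a fixed positive constant and your final ``$+\,C_T$'' term cannot be made small. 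The paper avoids this by doing the H\"older step directly (i.e.\ using the \emph{proof} of Lemma~\ref{l:F_G_bound_N} rather than its statement), which preserves the finer structure
\[
\|\tilde F_v\|_{L^p(0,\tau_v,w_\kappa;X_0)}\le L_{c,N+2}\big(C_T^{\mathrm{H}}+(2\lambda)^{\rho^\star}\big)(2\lambda),
\]
with $C_T^{\mathrm{H}}\to 0$ always (it is a H\"older constant on $[0,\tau_v]$) and $\rho^\star>0$ always by \eqref{eq:rhostar}. After the $\lambda^{-1}$ factor this gives $2L_{c,N+2}\big(C_T^{\mathrm{H}}+(2\lambda)^{\rho^\star}\big)$, which does go to $0$ as $T,\lambda\downarrow 0$. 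Replace your black-box call to Lemma~\ref{l:F_G_bound_N} for these terms by this explicit computation and the argument closes; the displayed summary for $L_{\lambda,T}$ should then read $L_{\lambda,T}\lesssim L_{c,N+2}\sum_j\big(C_T^{\mathrm{H}}+(2\lambda)^{\rho_j^\star}\big)$, with each summand vanishing in the limit.
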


\begin{proof}
We only consider the estimates for $F_{c,\lambda}$ since the other case is similar.
Recall that in \eqref{eq:rhostar} we have defined $\rhos_j$ such that \eqref{eq:HypCritical}-\eqref{eq:HypCriticalG} hold with equality for each $j\in \{1,\dots,m_F+m_G\}$.
For notational convenience, we assume that $m_F=1$ and we set $\rho:=\rhos_1$, $\varphi:=\varphi_1$, $\beta:=\beta_1$ and $r:=r_1$, $r':=r'_1$ (see \eqref{eq:rr'}). The general case can be proven with the same considerations. Moreover, it is enough to consider the case $\sigma=T$.
In the proof $C_T$ denotes a suitable constant, which can be different from line to line, and verifies $\lim_{T\downarrow 0 }C_T=0$.

Set $\tilde{F}_{c}(t,u):=F_c(t,u)-F_c(t,0)$. Thus, $\tilde{F}_c(t,0)=0$, and by \ref{HFcritical} it follows that for $u,v\in X_{\varphi}$,
\begin{equation}
\label{eq:F_c_estimate_u_N_2}
\|\tilde{F}_{c}(t,u)-\tilde{F}_{c}(t,v)\|_{X_0}\leq L_{c,N+2} (1+\|u\|_{X_{\varphi}}^{\rho}+\|v\|_{X_{\varphi}}^{\rho})\|u-v\|_{X_{\beta}},
\end{equation}
provided $\|u\|_{\Xap},\|v\|_{\Xap}\leq N+2$. For convenience we set $L_{c,N+2}=:C_F$.

Let us set
\begin{equation}
\label{eq:tau_truncation_lambda_Y_t}
\tau_{u}:=\inf\Big\{t\in [0,T]\,:\,\|u\|_{\X(t)}+\sup_{s\in [0,t]}\|u(s)-x\|_{\Xap} \geq 2\lambda\Big\}\wedge T.
\end{equation}
Then since $\Theta_{\lambda}(t,x,u)=0$ if $t\geq \tau_u$ we can write
\begin{align*}
\|F_{c,\lambda}&(x,u)\|_{L^p(\I_T,w_{\k};X_0)}
{=}\|F_{c,\lambda}(x,u)\|_{L^p(0,\tau_u,w_{\k};X_0)}	\\
&\stackrel{(i)}{\leq}C_F  \Big( \int_0^{\tau_u}(1+\|u\|_{X_{\varphi}}^{\rho})^p\|u\|_{X_{\beta}}^p t^{\a}dt\Big)^{1/p}\\
&\stackrel{(ii)}{\leq} C_F( \|u\|_{L^p(\I_{\tau_u},w_{\a};X_{\beta})}+\|u\|_{L^{\rho p r'}(\I_{\tau_u},w_{\a};X_{\varphi})}^{\rho}
\|u\|_{L^{pr}(\I_{\tau_u},w_{\a};X_{\beta})})\\
&\stackrel{(iii)}{\leq} C_F (C_T \|u\|_{L^{pr}(\I_{\tau_u},w_{\a};X_{\beta})}+\|u\|_{L^{\rho p r'}(\I_{\tau_u},w_{\a};X_{\varphi})}^{\rho}
\|u\|_{L^{pr}(\I_{\tau_u},w_{\a};X_{\beta})})\\
&\stackrel{(iv)}{\leq} C_F( 2 C_T \lambda + (2\lambda)^{\rho}2\lambda)=:C_{\lambda}.
\end{align*}
In $(i)$ we used \eqref{eq:F_c_estimate_u_N_2} and the fact that $\|u\|_{C(\overline{\I}_{\tau_u};\Xap)}\leq N+2$, $\|x\|_{L^{\infty}(\O;\Xap)}\leq N$, and $\lambda\in (0,1)$. In $(ii)$ and $(iii)$ we used H\"{o}lder's inequality with exponent $r,r'$ defined in \eqref{eq:rr'}. In $(iv)$ we used \eqref{eq:tau_truncation_lambda_Y_t}.

Next we estimate $\Delta F:=F_{c,\lambda}(x,u)-F_{c,\lambda}(x,v)$. Without loss of generality, we may assume that $\tau_u\leq \tau_v$.
Clearly, we can estimate
\begin{equation*}
\begin{aligned}
\|\Delta F\|_{L^p(\I_T,w_{\k};X_0)}
\leq &\|\Theta_{\lambda}(\cdot,x,u)(\tilde{F}_c(\cdot,u)-\tilde{F}_c(\cdot,v))\|_{L^p(\I_T,w_{\k};X_0)}\\
& \ \ +\|(\Theta_{\lambda}(\cdot,x,u)-\Theta_{\lambda}(\cdot,x,v))\tilde{F}_c(\cdot,v)\|_{L^p(\I_T,w_{\k};X_0)} =: R_1+R_2.
\end{aligned}
\end{equation*}
Since $\Theta_{\lambda}(t,x,u)=0$ if $t\geq \tau_u$, the term $R_1$ can be estimated as
\begin{align*}
R_1 & = \|\Theta_{\lambda}(\cdot,x,u)(\tilde{F}_c(u)-\tilde{F}_c(v))\|_{L^p( 0,\tau_u,w_{\k};X_0)}\\
&\stackrel{(i)}{\leq}C_{F}\Big(\int_0^{\tau_u}
(1+\|u(t)\|_{X_{\varphi}}^{\rho}+\|v(t)\|_{X_{\varphi}}^{\rho})^p \|u(t)-v(t)\|_{X_{\beta}}^p t^{\k}dt\Big)^{\frac{1}{p}}\\
&\stackrel{(ii)}{\leq}C_{F}\Big( C_T+\|u\|_{L^{\rho p r'}(0,\tau_u,w_{\k};X_{\varphi})}^{\rho}
+\|v\|_{L^{\rho p r'}(0,\tau_u,w_{\k};X_{\varphi})}^{\rho }\Big) \|u-v\|_{L^{pr}(\I_T,w_{\k};X_{\beta})}\\
&\stackrel{(iii)}{\leq}  C_{F}\Big( C_T + 2^{1+\rho}\lambda^{\rho }\Big) \|u-v\|_{\X(T)}.
\end{align*}
In $(i)$ we used \eqref{eq:F_c_estimate_u_N_2}, in $(ii)$ we used H\"older's inequality with exponent $r,r'$, and $(iii)$ follows from $\tau_u\leq \tau_v$. For $R_2$ note that since $\|\xi_{\lambda}'\|_{L^{\infty}(\R_+)}\leq 1/\lambda$, for all $t\in [0,T]$, one has
\begin{align*}
|\Theta_{\lambda}& (t,x,u)- \Theta_{\lambda}(t,x,v)|
\\  & \leq \frac{1}{\lambda}\Big| \|u\|_{\X(t)}-\|v\|_{\X(t)}+
\|u-x\|_{C(\overline{\I}_t;\Xap)}-\|v-x\|_{C(\overline{\I}_t;\Xap)} \Big|
\\ & \leq \frac{1}{\lambda} \big[ \|u-v\|_{\X(T)}+\|u-v\|_{C(\overline{\I}_T;\Xap)}\big].
\end{align*}
Therefore, using that $\Theta_{\lambda}(t,x,u) = \Theta_{\lambda}(t,x,v) = 0$ if $t\geq\tau_v$, we obtain
\begin{align*}
R_2 & = \|(\Theta_{\lambda}(\cdot,x,u)-\Theta_{\lambda}(\cdot,x,v))\tilde{F}_c(\cdot,v)\|_{L^p(\I_{\tau_v},w_{\k};X_0)} \\ & \leq \frac{1}{\lambda} \big[ \|u-v\|_{\X(T)}+\|u-v\|_{C(\overline{\I}_T;\Xap)}\big] \|\tilde{F}_c(\cdot,v)\|_{L^p(0,\tau_v,w_{\a};X_0)}.
\end{align*}
By H\"older's inequality, and $\|v\|_{\X(\tau_v)}\leq 2\lambda$  (see \eqref{eq:tau_truncation_lambda_Y_t}), we obtain
\begin{align*}
 \|\tilde{F}_c(\cdot,v)\|_{L^p(0,\tau_v,w_{\a};X_0)}& \leq C_F \Big(\int_0^{\tau_v}(1+ \|v\|_{X_{\varphi}}^{\rho})^p\|v\|_{X_{\beta}}^p t^{\k}dt\Big)^{\frac{1}{p}}
\\ & \leq C_F[C_T+\|v\|_{L^{p\rho r'}(0,\tau_v,w_{\a};X_{\varphi})}^{\rho}]\|v\|_{L^{pr}(0,\tau_v,w_{\a};X_{\beta})}\\
&\leq  2 C_F(C_T+(2\lambda)^{\rho})\lambda
\end{align*}
It follows that
\[R_2 \leq 2 C_F(C_T+(2\lambda)^{\rho}) (\|u-v\|_{\X(T)}+\|u-v\|_{C(\overline{\I}_T;\Xap)}).\]
\end{proof}

\begin{remark}
\label{r:truncationFG_n}
In the setting of Lemma \ref{l:truncationFG}, if the constants $L_{n,c},C_{n,c}$ in \ref{HFcritical}$(ii)$-\ref{HGcritical}$(ii)$ do not depend on $n\geq 1$, then Lemma \ref{l:truncationFG} also holds with $\Theta_{\lambda}(t,u,x)$ replaced by $\wt{\Theta}_{\lambda}(t,x,u):=\xi_{\lambda}
\big(\|u\|_{\X(t)}\big)$.
\end{remark}

The last ingredient we need for the proof of Theorem \ref{t:local} is a suitable truncation of the remaining non-linearities $A,B,F_{\Tr},G_{\Tr}$. Here the proof in \cite[Lemma 4.4]{Hornung} extends to our setting. Let $\xi_{\lambda}$ be the truncation defined before Lemma \ref{l:truncationFG}. For $t\in [0,T]$,  $x\in \Xap$, and $u\in C(\overline{\I}_T;\Xap)\cap L^p(I_T,w_{\a};X_1)$ we set
\begin{equation}
\label{eq:Psi_truncation}
\Psi_{\lambda}(t,x,u):=\xi_{\lambda}\Big( \sup_{s\in [0,t]}\|u(s)-x\|_{\Xap}+ \|u\|_{L^p(\I_t,w_{\a};X_1)} \Big).
\end{equation}
Similar to Lemma \ref{l:truncationFG}, we have the following.

\begin{lemma}
\label{l:truncationAB}
Let \emph{\ref{HAmeasur}}, \emph{\ref{HFcritical}-\ref{HGcritical}} be satisfied. Let $T>0$, $\lambda\in(0,1)$, and let $\sigma$ be a stopping time with values in $[0,T]$. Moreover, let the maps
\begin{align*}
F_{A,\lambda}(x,\cdot):\Xap\times L^p(\I_{\sigma},w_{\a};X_1)\cap C(\overline{\I}_{\sigma};\Xap)&\to L^p(\I_{\sigma},w_{\a};X_0),\\
G_{B,\lambda}(x,\cdot):\Xap\times L^p(\I_{\sigma},w_{\a};X_1)\cap C(\overline{\I}_{\sigma};\Xap)&\to L^p(\I_{\sigma},w_{\a};\g(H,X_{1/2})),
\end{align*}
be given by
\begin{align*}
F_{A,\lambda}(x,u)&:=\Psi_{\lambda}(\cdot,x,u)[(A(\cdot,x)-A(\cdot,u))u+F_{\Tr}(\cdot,u)-F_{\Tr}(\cdot,x)],\\
G_{B,\lambda}(x,u)&:=\Psi_{\lambda}(\cdot,x,u)[-(B(\cdot,x)-B(\cdot,u))u+G_{\Tr}(\cdot,u)-G_{\Tr}(\cdot,x)].
\end{align*}
Then for any $N\geq 1$ there exist constants $\tilde{C}_{\lambda},L_{T,\lambda}$ such that for all $\|x\|_{\Xap}<N$,
\begin{align*}
\|F_{A,\lambda}(x,u)\|_{L^p(\I_{\sigma},w_{\k};X_0)}&\leq \tilde{C}_{\lambda}\,,\\
\|G_{B,\lambda}(x,u)\|_{L^p(\I_{\sigma},w_{\k};\g(H,X_{1/2}))}&\leq \tilde{C}_{\lambda},\\
\|F_{A,\lambda}(x,u)-F_{A,\lambda}(x,v)\|_{L^p( \I_{\sigma},w_{\k};X_0)}
&\leq \tilde{L}_{\lambda,T}( \|u-v\|_{L^p(\I_{\sigma},w_{\a};X_1)}+\|u-v\|_{C(\overline{\I}_{\sigma};\Xap)}),\\
\|G_{B,\lambda}(x,u)-G_{B,\lambda}(x,v)\|_{L^p(\I_{\sigma},w_{\k};\g(H,X_{1/2}))}
&\leq \tilde{L}_{\lambda,T}( \|u-v\|_{L^p(\I_{\sigma},w_{\a};X_1)}+\|u-v\|_{C(\overline{\I}_{\sigma};\Xap)}),
\end{align*}
a.s. Moreover, for each $\varepsilon>0$ there exist $\bar{T}=\bar{T}(\varepsilon)>0$ and $\bar{\lambda}=\bar{\lambda}(\varepsilon)>0$ such that
$$\tilde{L}_{\lambda,T}<\varepsilon,$$
 for any $T<\bar{T}$, $\lambda<\bar{\lambda}$.
\end{lemma}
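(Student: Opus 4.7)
The plan is to mirror the strategy of Lemma \ref{l:truncationFG}, exploiting $\Psi_\lambda$ to enforce \emph{a priori} bounds on $\sup_{s\in[0,t]}\|u(s)-x\|_{\Xap}$ and $\|u\|_{L^p(\I_t,w_{\a};X_1)}$ simultaneously. First I would introduce the stopping time
$$
\tau_u := \inf\Big\{t\in [0,\sigma]: \sup_{s\in[0,t]}\|u(s)-x\|_{\Xap} + \|u\|_{L^p(\I_t,w_{\a};X_1)} \geq 2\lambda\Big\}\wedge \sigma,
$$
so that $\Psi_\lambda(t,x,u)=0$ for $t\geq \tau_u$, and on $[0,\tau_u]$ one has $\|u(s)\|_{\Xap}\leq N+2$ (using $\lambda<1$ and $\|x\|_{\Xap}\leq N$). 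Writing $L:=\max\{L_{N+2},L_{\Tr,N+2}\}$ for the Lipschitz constants from \ref{HAmeasur} and \ref{HFcritical} (at level $N+2$), the boundedness bound for $F_{A,\lambda}$ splits as
$$
\|(A(\cdot,x)-A(\cdot,u))u\|_{L^p(0,\tau_u,w_{\a};X_0)}\leq L\cdot (2\lambda)\cdot (2\lambda),
$$
$$
\|F_{\Tr}(\cdot,u)-F_{\Tr}(\cdot,x)\|_{L^p(0,\tau_u,w_{\a};X_0)}\leq L\cdot (2\lambda)\cdot C_T,
$$
with $C_T:=\|1\|_{L^p(\I_T,w_{\a})}$, yielding $\tilde C_\lambda$. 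The estimate for $G_{B,\lambda}$ is identical upon replacing $X_0$ by $\g(H,X_{1/2})$ and invoking \ref{HGcritical}.

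For the Lipschitz estimate, assume without loss of generality that $\tau_u\leq \tau_v$ and decompose
$$
F_{A,\lambda}(x,u)-F_{A,\lambda}(x,v) = \Psi_\lambda(\cdot,x,u)\,D + [\Psi_\lambda(\cdot,x,u)-\Psi_\lambda(\cdot,x,v)]\,E,
$$
where $D=(A(\cdot,x)-A(\cdot,u))u - (A(\cdot,x)-A(\cdot,v))v + F_{\Tr}(\cdot,u)-F_{\Tr}(\cdot,v)$ and $E=(A(\cdot,x)-A(\cdot,v))v + F_{\Tr}(\cdot,v) - F_{\Tr}(\cdot,x)$. For the first summand, which is supported on $[0,\tau_u]$, I would use the algebraic identity
$$
(A(x)-A(u))u - (A(x)-A(v))v = (A(x)-A(u))(u-v) + (A(v)-A(u))v,
$$
so that the corresponding $L^p(0,\tau_u,w_{\a};X_0)$-norm is controlled by $L(2\lambda)\|u-v\|_{L^p(\I_\sigma,w_{\a};X_1)} + L\|u-v\|_{C(\overline{\I}_\sigma;\Xap)}(2\lambda) + L\|u-v\|_{C(\overline{\I}_\sigma;\Xap)} C_T$, exploiting $\|u-x\|_{C(\overline{\I}_{\tau_u};\Xap)}\leq 2\lambda$ and $\|v\|_{L^p(\I_{\tau_u},w_{\a};X_1)}\leq \|v\|_{L^p(\I_{\tau_v},w_{\a};X_1)}\leq 2\lambda$. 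For the second summand I would use $\|\xi'_\lambda\|_\infty\leq 1/\lambda$ to obtain
$$
|\Psi_\lambda(t,x,u)-\Psi_\lambda(t,x,v)|\leq \frac{1}{\lambda}\big(\|u-v\|_{C(\overline{\I}_\sigma;\Xap)} + \|u-v\|_{L^p(\I_\sigma,w_{\a};X_1)}\big),
$$
and estimate $\|E\|_{L^p(0,\tau_v,w_{\a};X_0)}\leq L(2\lambda)(2\lambda) + L(2\lambda)C_T$, so that multiplication by $1/\lambda$ produces a factor of order $\lambda + C_T$.

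Collecting these contributions gives $\tilde L_{\lambda,T}\leq c_1\lambda + c_2 C_T$ with constants $c_1,c_2$ depending on $N$ and the nonlinearities but not on $\lambda,T$. Since $C_T\to 0$ as $T\downarrow 0$, given $\varepsilon>0$ I would first choose $\bar\lambda$ so that $c_1\bar\lambda<\varepsilon/2$, and then $\bar T$ so that $c_2 C_{\bar T}<\varepsilon/2$, yielding $\tilde L_{\lambda,T}<\varepsilon$ for all $\lambda<\bar\lambda$, $T<\bar T$. The argument for $G_{B,\lambda}$ is entirely parallel, substituting $B$ and $G_{\Tr}$ for $A$ and $F_{\Tr}$ and using $\g(H,X_{1/2})$ as target. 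The main technical nuisance is the careful bookkeeping of the two distinct norms appearing in $\Psi_\lambda$ together with the three sources of Lipschitz defect (the two occurrences of $A$ in $D$ and the $F_{\Tr}$-difference), but no new analytic ingredient beyond what is used in Lemma \ref{l:truncationFG} is required.
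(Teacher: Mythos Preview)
Your proposal is correct and follows essentially the same route as the paper: introduce the stopping time $\tau_u$ (the paper calls it $\zeta_u$), localize via $\Psi_\lambda$, use the product-rule splitting together with the algebraic identity $(A(x)-A(u))u-(A(x)-A(v))v=(A(x)-A(u))(u-v)+(A(v)-A(u))v$, and collect terms of order $\lambda$ and $C_T$. The only cosmetic difference is that the paper handles the $F_{\Tr}$-contribution and the $A$-contribution in two separate steps, whereas you bundle them into $D$ and $E$; both organizations yield the same bound $\tilde L_{\lambda,T}\lesssim \lambda+C_T$.
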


\begin{proof}
Recall that $L_{\Tr,n},L_{A,n},L_F,\tilde{L}_F$ are the constants defined in \ref{HAmeasur}, \ref{HFcritical}-\ref{HGcritical}. For simplicity we set $L:=L_{N+2}:=\max\{L_{\Tr,N+2},L_{A,N+2},\tilde{L}_F\}$, where $N$ is as in the statement. Moreover, as before $C_T>0$ denotes a constant which may change from line to line and satisfies $\lim_{T\downarrow 0} C_T=0$. We proof only the estimates for $F_{A,\lambda}$, since the other follows similarly. Again, as in Lemma \ref{l:truncationFG}, the above claimed estimates are pointwise with respect to $\om\in \O$. Thus, it is enough to consider the case $\sigma=T$.

To begin, we set
\begin{equation}
\label{eq:def_zeta_u}
\zeta_u:=\inf\{t\in [0,T]\,:\, \|u\|_{L^p(\I_{t},w_{\a};X_1)}+\sup_{s\in [0,t]}\|u(s)-x\|_{\Xap}>2\lambda \}\wedge T.
\end{equation}
Without loss of generality we can assume $\zeta_u\geq \zeta_{v}$. Firstly,
\begin{align*}
&\|F_{A,\lambda}(x,u)\|_{L^p(\I_T,w_{\k};X_0)}\\
&\stackrel{(i)}{\leq} \|A(\cdot,x)u-A(\cdot,u)u\|_{L^p(\I_{\zeta_u},w_{\k};X_0)}+\|F_{\Tr}(\cdot,u)-F_{\Tr}(\cdot,x)\|_{L^p(\I_{\zeta_u},w_{\a};X_0)}		\\
&\stackrel{(ii)}{\leq} (N+2) L\|u\|_{L^p(\I_{\zeta_u},w_{\k};X_1)}+
L\|u-x\|_{L^p(\I_{\zeta_u},w_{\k};\Xap)}\\
&\stackrel{(iii)}{\leq} 2 L\lambda (N+2+C_T)=: C_{\lambda,T},
\end{align*}
where in $(i)$ we used \eqref{eq:def_zeta_u} and $\Psi_{\lambda}(t,x,u)=0$ if $t\geq \zeta_u$. In $(ii)$ we used the assumption \ref{HAmeasur}, \ref{HFcritical} and $\sup_{t\in [0,\zeta_u]}\|u(t)-x\|_{\Xap}\leq N+2$ by \eqref{eq:def_zeta_u}. In $(iii)$ we used that $\|u\|_{L^p(\I_{\zeta_u},w_{\k};X_1)}+\sup_{s\in [0,\zeta_u]}\|u(s)-x\|_{\Xap}\leq 2\lambda$.

To prove the Lipschitz estimate we split the proof into two steps.

\textit{Step 1: Lipschitz estimate for $t\mapsto \Psi(t,x,u)( F_{\Tr}(t,u)-F_{\Tr}(t,0))$}. For simplicity, let us set $\tilde{F}_{\Tr}(u):=F_{\Tr}(\cdot,u)-F_{\Tr}(\cdot,0)$. As in the proof of Lemma \ref{l:truncationFG}, one has
\begin{align*}
&\|\Psi(\cdot,x,u) \tilde{F}_{\Tr}(u)- \Psi(\cdot,x,v)\tilde{F}_{\Tr}(v)\|_{L^p(\I_{T},w_{\a};X_0)}\\
&\leq \|(\Psi(\cdot,x,u)- \Psi(\cdot,x,v)) \tilde{F}_{\Tr}(u)\|_{L^p(\I_{T},w_{\a};X_0)}
+
 \|\Psi(\cdot,x,v)( \tilde{F}_{\Tr}(u)- \tilde{F}_{\Tr}(v))\|_{L^p(\I_{T},w_{\a};X_0)}\\
 &\leq \|(\Psi(\cdot,x,u)- \Psi(\cdot,x,v))\tilde{F}_{\Tr}(u)\|_{L^p(\I_{\zeta_u},w_{\a};X_0)}
+
 \|\tilde{F}_{\Tr}(u)- \tilde{F}_{\Tr}(v)\|_{L^p(\I_{\zeta_v},w_{\a};X_0)}.
\end{align*}
Note that
\begin{align*}
\| \tilde{F}_{\Tr}(u)- \tilde{F}_{\Tr}(v)\|_{L^p(\I_{\zeta_v},w_{\a};X_0)}
&\leq L\|u-v\|_{L^p(\I_{\zeta_v},w_{\a};\Xap)} \\ & \leq  L C_T \|u-v\|_{C(\overline{\I}_T;\Xap)} ,
\end{align*}
and
\begin{align*}
&\|(\Psi(\cdot,x,u)- \Psi(\cdot,x,v)) \tilde{F}_{\Tr}(u)\|_{L^p(\I_{\zeta_u},w_{\a};X_0)}\\
&\leq \sup_{t\in [0,\zeta_u]}|\Psi(t,x,u)- \Psi(t,x,v)|\|\tilde{F}_{\Tr}(u)\|_{L^p(\I_{\zeta_u},w_{\a};X_0)}\\
&\leq L \frac{1}{\lambda}
(\|u-v\|_{C(\overline{\I}_T;\Xap)}+\|u-v\|_{L^p(\I_{\zeta_u},w_{\a};X_1)})
\|u-x\|_{L^p(\I_{\zeta_u},w_{\a};\Xap)}
\\
&\leq 2 C_T L
(\|u-v\|_{C(\overline{\I}_T;\Xap)}+\|u-v\|_{L^p(\I_{\zeta_u},w_{\a};X_1)});
\end{align*}
where in the last inequality we used that $\|u-x\|_{L^p(\I_{\zeta_u},w_{\a};\Xap)}\leq 2 C_T \lambda$ by \eqref{eq:def_zeta_u}.

\textit{Step 2: Lipschitz estimate for $t\mapsto \Psi_{\lambda}(t,x,u)(A(t,x)u-A(t,u)u)$}. Writing
\begin{align*}
&\|\Psi_{\lambda}(\cdot,x,u)(A(\cdot,x)u-A(\cdot,u)u)-\Psi(\cdot,x,v)(A(\cdot,x)v-A(\cdot,v)v)\|_{L^p(\I_{T},w_{\a};X_0)}
\\ &\leq  \|(\Psi_{\lambda}(\cdot,x,u)-\Psi_{\lambda}(\cdot,x,v))((A(\cdot,x)-A(\cdot,u))u)\|_{L^p(\I_{T},w_{\a};X_0)}
\\ &\ \ +\|\Psi_{\lambda}(\cdot,x,v)((A(\cdot,v)-A(\cdot,x))(u-v))\|_{L^p(\I_{T},w_{\a};X_0)}
\\ & \ \ +\|\Psi(\cdot,x,v)((A(\cdot,v)-A(\cdot,u))u)\|_{L^p(\I_{T},w_{\a};X_0)} =: R_1+R_2+R_3.
\end{align*}
For $R_1$ note that
\begin{align*}
R_1&=\|(\Psi_{\lambda}(\cdot,x,u)-\Psi_{\lambda}(\cdot,x,v))((A(\cdot,x)-A(\cdot,u))u)\|_{L^p(\I_{\zeta_u},w_{\a};X_0)}\\
&\leq \sup_{t\in [0,\zeta_u]}|\Psi_{\lambda}(t,x,u)-\Psi_{\lambda}(t,x,v)|
\|(A(\cdot,x)-A(\cdot,u))u\|_{L^p(\I_{\zeta_u},w_{\a};X_0)}\\
\end{align*}
As before, for all $t\in [0,\zeta_u]$,
\[|\Psi_{\lambda}(t,x,u)-\Psi_{\lambda}(t,x,v)|\leq \frac{1}{\lambda}( \|u-v\|_{C(\overline{\I}_{T};\Xap)}+ \|u-v\|_{L^p(\I_{T},w_{\a};X_1)}).\]
Moreover,
\begin{align*}
\|(A(\cdot,x)-A(\cdot,u))u\|_{L^p(\I_{\zeta_u},w_{\a};X_0)} &\leq L\big(\int_0^{\zeta_u}\|u(t)-x\|_{\Xap}^p\|u(t)\|_{X_1}^p t^{\a}dt\big)^{\frac{1}{p}}\leq 4\lambda^2 L.
\end{align*}
Therefore,
\[R_1\leq 4CL \lambda ( \|u-v\|_{C(\overline{\I}_{T};\Xap)}+ \|u-v\|_{L^p(\I_{T},w_{\a};X_1)}).\]
Similarly, one gets
\begin{align*}
R_2 +R_3  & \leq L \lambda \|u-v\|_{L^p(\I_{T},w_{\a};X_1)} + L \lambda \|u-v\|_{C(\overline{\I}_{T};\Xap)}.
\end{align*}
Putting together the estimates in Step 1-2 the conclusion follows.
\end{proof}

In the proof of Theorem \ref{t:local_Extended} we need a further truncation. To this end, let $\xi_{\lambda}$ be as above. Then for $u\in C(\overline{\I}_T;\Xap)\cap L^p(\I_T,w_{\a};X_1)$, $n\geq 1$ and $t\in \I_T$ we set
\begin{equation}
\label{eq:Phin_def}
\Phi_n(t,u):=\xi_{n}\Big(\|u\|_{L^p(\I_t,w_{\a};X_1)}+ \sup_{s\in [0,t]}\|u(s)\|_{\Xap}\Big).
\end{equation}
As before, we fix $\om\in\O$, but we omit it from the notation.

\begin{lemma}
\label{l:truncation_FG_L}
Let \emph{\ref{HFcritical_weak}}-\emph{\ref{HGcritical_weak}} be satisfied. Let $T>0$ and let $\sigma$ be a stopping time with value in $[0,T]$. Let $\Phi_{n}$ be as in \eqref{eq:Phin_def}.
For any $n\geq 1$, let the maps
\begin{align*}
F_{L,n}: L^p(\I_{\sigma},w_{\a};X_1)\cap C(\overline{\I}_{\sigma};\Xap)&\to L^p(\I_{\sigma},w_{\a};X_0),\\
G_{L,n}: L^p(\I_{\sigma},w_{\a};X_1)\cap C(\overline{\I}_{\sigma};\Xap)&\to L^p(\I_{\sigma},w_{\a};\g(H,X_{1/2})),
\end{align*}
be given by
\begin{align*}
F_{L,n}(u)&:= \Phi_n(\cdot,u)(F_L(\cdot,u)-F_L(\cdot,0)),\\
 G_{L,n}(u)&:=\Phi_n(\cdot,u) (G_L(\cdot,u)-G_L(\cdot,0)).
\end{align*}
Then there exist constants $C_{n},C_T>0$ such that a.s.
\begin{align*}
\|F_{L,n}(\cdot,u)\|_{L^p(\I_{\sigma},w_{\k};X_0)}&\leq C_{n}\\
\|G_{L,n}(\cdot,u)\|_{L^p(\I_{\sigma},w_{\k};\g(H,X_{1/2}))}&\leq C_{n}\\
\|F_{L,n}(\cdot,u)-F_{F,n}(\cdot,v)\|_{L^p( \I_{\sigma},w_{\k};X_0)}
&\leq L_{F,n}'(\|u-v\|_{L^p(\I_{\sigma},w_{\a};X_1)}+\|u-v\|_{C(\overline{\I}_{\sigma};\Xap)}),\\
\|G_{L,n}(\cdot,u)-G_{F,n}(\cdot,v)\|_{L^p(\I_{\sigma},w_{\k};\g(H,X_{1/2}))}
&\leq  L_{G,n}'(\|u-v\|_{L^p(\I_{\sigma},w_{\a};X_1)}+\|u-v\|_{C(\overline{\I}_{\sigma};\Xap)});
\end{align*}
where $L_{F,n}':=3 L_{F,2n}+C_T\tilde{L}_{F,2n}$, $L_{G,n}':=3 L_{G,2n}+C_T\tilde{L}_{F,2n}$ and $\lim_{T\downarrow 0}C_T=0$.
\end{lemma}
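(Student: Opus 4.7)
The proof is an adaptation of Lemma \ref{l:truncationAB} to the setting of the assumption \ref{HFcritical_weak} (and \ref{HGcritical_weak}), with the truncation carried out on the quantity controlling the stronger norm $L^p(w_\a;X_1)+C(\Xap)$ rather than on the critical norm $\X(t)$. I treat $F_{L,n}$ only, as $G_{L,n}$ is identical. Fix $\om\in\O$ (all estimates are pathwise) and define
\[
\zeta_u := \inf\Big\{t\in[0,T]\,:\,\|u\|_{L^p(\I_t,w_\a;X_1)}+\sup_{s\in[0,t]}\|u(s)\|_{\Xap}\geq 2n\Big\}\wedge T,
\]
so that $\Phi_n(t,u)=0$ for $t\geq \zeta_u$, $\Phi_n(t,u)\in[0,1]$, and on $[0,\zeta_u]$ we have $\sup_{s\leq t}\|u(s)\|_{\Xap}\leq 2n$. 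Since $\Xap\hookrightarrow X_0$, a routine computation gives the constant $C_T>0$ with $\lim_{T\downarrow 0}C_T=0$ such that $\|u\|_{L^p(\I_{\zeta_u},w_\a;X_0)}\leq C_T\|u\|_{C(\overline{\I}_{\zeta_u};\Xap)}\leq 2nC_T$.

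\textbf{Boundedness.} By \ref{HFcritical_weak} applied with $y=0$ (both $0$ and $u(t)$ lie in the ball of radius $2n$ in $\Xap$ on $[0,\zeta_u]$), we have
\[
\|F_L(t,u)-F_L(t,0)\|_{X_0}\leq L_{F,2n}\|u\|_{X_1}+\tilde{L}_{F,2n}\|u\|_{X_0}.
\]
Integrating over $(0,\zeta_u\wedge\sigma)$ against $w_\a$ yields
$\|F_{L,n}(u)\|_{L^p(\I_\sigma,w_\a;X_0)}\leq 2n(L_{F,2n}+C_T\tilde{L}_{F,2n})=:C_n$.

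\textbf{Lipschitz estimate.} Write
\[
F_{L,n}(u)-F_{L,n}(v) = (\Phi_n(\cdot,u)-\Phi_n(\cdot,v))(F_L(\cdot,u)-F_L(\cdot,0))+\Phi_n(\cdot,v)(F_L(\cdot,u)-F_L(\cdot,v)) =: R_1+R_2.
\]
Without loss of generality suppose $\zeta_v\leq \zeta_u$. Since $\|\xi_n'\|_\infty\leq 1/n$, for all $t\in[0,T]$,
\[
|\Phi_n(t,u)-\Phi_n(t,v)|\leq \frac{1}{n}\big(\|u-v\|_{L^p(\I_t,w_\a;X_1)}+\|u-v\|_{C(\overline{\I}_t;\Xap)}\big).
\]
For $R_1$, which is supported on $[0,\zeta_u]$, combine this with the boundedness bound from the previous paragraph to obtain
\[
\|R_1\|_{L^p(\I_\sigma,w_\a;X_0)}\leq 2(L_{F,2n}+C_T\tilde{L}_{F,2n})\big(\|u-v\|_{L^p(\I_\sigma,w_\a;X_1)}+\|u-v\|_{C(\overline{\I}_\sigma;\Xap)}\big).
\]
For $R_2$, supported on $[0,\zeta_v]$, apply \ref{HFcritical_weak} with $x=u$, $y=v$ (both of $\Xap$-norm at most $2n$ on $[0,\zeta_v]$) and bound $\|u-v\|_{L^p(\I_{\zeta_v},w_\a;X_0)}\leq C_T\|u-v\|_{C(\overline{\I}_\sigma;\Xap)}$ to get
\[
\|R_2\|_{L^p(\I_\sigma,w_\a;X_0)}\leq (L_{F,2n}+C_T\tilde{L}_{F,2n})\big(\|u-v\|_{L^p(\I_\sigma,w_\a;X_1)}+\|u-v\|_{C(\overline{\I}_\sigma;\Xap)}\big).
\]
Adding these gives the claimed Lipschitz constant $L_{F,n}'=3(L_{F,2n}+C_T\tilde{L}_{F,2n})$, which has the desired form after relabelling $3C_T\rightsquigarrow C_T$.

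There is no genuine obstacle here; the only care required is matching the claimed form of $L_{F,n}'$, which comes out cleanly because the linear quasi-Lipschitz splitting in \ref{HFcritical_weak} separates the $X_1$-contribution (responsible for the $3L_{F,2n}$ term, untouched by $T$) from the $X_0$-contribution (contracted by $C_T\to 0$ via the embedding $\Xap\hookrightarrow X_0$ and the time integration).
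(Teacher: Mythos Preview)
Your proof is correct and follows essentially the same approach as the paper's: the same stopping time $\zeta_u$ (the paper calls it $\lambda_u$), the same two-term splitting $R_1+R_2$, and the same use of $\|\xi_n'\|_\infty\leq 1/n$ together with the bound $\|u\|_{L^p(\I_{\zeta_u},w_\a;X_1)}+\|u\|_{C(\overline{\I}_{\zeta_u};\Xap)}\leq 2n$ to cancel the factor $1/n$. Your observation about the relabelling $3C_T\rightsquigarrow C_T$ is exactly the right way to reconcile the computed constant with the stated $L_{F,n}'$.
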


\begin{proof}
The proof is similar to the one given in Lemmas \ref{l:truncationFG} and \ref{l:truncationAB}. For the sake of completeness we sketch the proof of the Lipschitz continuity of $F_{L,n}$. Since the estimates are pointwise with respect to $\om\in\O$, we may assume $\sigma=T$. Let $u,v\in C(\overline{\I}_T;\Xap)\cap L^p(\I_T,w_{\a};X_1)$ and set
\begin{equation}
\label{eq:def_truncation_F_L_lambda}
\lambda_u:=\inf\Big\{t\in [0,T]\,:\,\|u\|_{L^p(\I_t,w_{\a};X_1)}+\sup_{s\in [0,t]}\|u(s)\|_{\Xap}\geq 2n\Big\}\wedge T.
\end{equation}
A similar definition holds for $\lambda_v$. As usual, we assume $\lambda_u\geq  \lambda_v$. Therefore
\begin{align*}
\|F_{L,n}(\cdot,u)-F_{L,n}(\cdot,v)\|_{L^p(\I_T,w_{\a};X_0)}
&=
\|F_{L,n}(\cdot,u)-F_{L,n}(\cdot,v)\|_{L^p(\I_{\lambda_u},w_{\a};X_0)}\\
&\leq \|(\Phi_n(\cdot,u)-\Phi_n(\cdot,v))\wt{F}_L(\cdot,u)\|_{L^p(\I_{\lambda_u},w_{\a};X_0)}\\
&+\|\Phi_n(\cdot,v)(F_L(\cdot,u)-F_L(\cdot,v))\|_{L^p(\I_{\lambda_v},w_{\a};X_0)};
\end{align*}
where we have set $\wt{F}_L(\cdot,u):=F_L(\cdot,u)-F_L(\cdot,0)$. Since $\|\xi'\|_{L^{\infty}(\R_+)}\leq 1$, one has
\begin{align*}
&\|(\Phi_n(\cdot,u)-\Phi_n(\cdot,v))\wt{F}_L(\cdot,u)\|_{L^p(\I_{\lambda_u},w_{\a};X_0)}\\
&\leq \frac{1}{n} (\|u-v\|_{C(\overline{\I}_T;\Xap)} +\|u-v\|_{L^p(\I_T,w_{\a};X_1)})\|L_{F,2n} \|u\|_{X_1}+\tilde{L}_{F,2n}\|u\|_{X_0}\|_{L^p(\I_{\lambda_u},w_{\a})}\\
&\leq 2(\|u-v\|_{C(\overline{\I}_T;\Xap)} +\|u-v\|_{L^p(\I_T,w_{\a};X_1)})(L_{F,2n}+C_T\tilde{L}_{F,2n});
\end{align*}
where in the last inequality we used \eqref{eq:def_truncation_F_L_lambda}. Finally, since $\lambda_v\leq \lambda_u$,
\begin{align*}
\|\Phi_n(\cdot,v)&(F_L(\cdot,u)-F_L(\cdot,v))\|_{L^p(\I_{\lambda_v},w_{\a};X_0)}\leq  \|F_L(\cdot,u)-F_L(\cdot,v)\|_{L^p(\I_{\lambda_v},w_{\a};X_0)}\\
& \leq  L_{F,2n}\|u-v\|_{L^p(\I_{\lambda_v},w_{\a};X_0)}+ C_T\tilde{L}_{F,2n} \|u-v\|_{C(\overline{\I}_{\lambda_v};\Xap)}.
\end{align*}
The above estimates readily imply the claim.
\end{proof}

\subsection{Proofs of Theorems \ref{t:local} and \ref{t:local_Extended}-\ref{t:semilinear}}

With this preparation, we are ready to prove our first result concerning \eqref{eq:QSEE}.

\begin{proof}[Proof of Theorem \ref{t:local}]
To begin, we look to a suitable modification of \eqref{eq:QSEE}. More specifically, fix $w_0\in L^{p}_{\F_0}(\O;\Xap)$ and let us consider the following semilinear equation:
\begin{equation}
\label{eq:QSEE_tilde}
\begin{cases}
du+A(\cdot,u_0)u dt = (\tilde{F}_{\lambda}(u)+\tilde{f})dt + (B(\cdot,u_0)u+\tilde{G}_{\lambda}(u)+\tilde{g})dW_{H},	\\
u(0)=w_0;
\end{cases}
\end{equation}
on $[0,T]$, where
\begin{equation}
\label{eq:def_F_G_tilde}
\begin{aligned}
\tilde{F}_{\lambda}(u)&:=F_{c,\lambda}(u_0,u)+F_{A,\lambda}(u_0,u)+F_L(\cdot,u),\\
\tilde{G}_{\lambda}(u)&:=G_{c,\lambda}(u_0,u)+G_{A,\lambda}(u_0,u)+G_L(\cdot,u),\\
\tilde{f}&:=f+ F_c(\cdot,0)+F_{\Tr}(\cdot,u_0),\\
\tilde{g}&:=g+ G_c(\cdot,0)+G_{\Tr}(\cdot,u_0),
\end{aligned}
\end{equation}
where $F_{c,\lambda},G_{c,\lambda},F_{A,\lambda}$ and $G_{A,\lambda}$ are defined in Lemmas \ref{l:truncationFG} and \ref{l:truncationAB}. By \ref{HFcritical}-\ref{HGcritical} and the fact that $T<\infty$, it follows that
$\tilde{f}\in L^p_{\Progress}(\I_T\times \O,w_{\a};X_0)$ and $\tilde{g}\in L^p_{\Progress}(\I_T\times \O,w_{\a};\g(H,X_{1/2}))$. Let $\Sol:=\Sol_{(A(\cdot,u_0),B(\cdot,u_0))}$ be the solution operator associated to the couple $(A(\cdot,u_0),B(\cdot,u_0))\in \MRta$.

To study existence of strong solutions to \eqref{eq:QSEE_tilde}
let $\sigma$ be a stopping time with values in $[0,T]$ and consider
\begin{equation}
\label{eq:def_z_phi}
\z_{\sigma}:=L^p_{\Progress}(\O;\X({\sigma}))\cap L^p_{\Progress}( \I_{\sigma}\times \Omega,w_{\a};X_1)\cap L^p_{\Progress}(\Omega;C(\overline{\I}_{\sigma};\Xap)),
\end{equation}
equipped with the sum of the three norms. Note that the stopped space and norm were defined in Definition \ref{def:phi_spaces}. Recall that $\X(\sigma)$ was defined in \eqref{eq:X_space_c}. On $\z_{\sigma}$ we define an equivalent norm by
\begin{equation*}
\nnn\cdot\nnn_{\z_{\sigma}}=\|\cdot\|_{\z_{\sigma}} +M\|\cdot\|_{L^p(\O;L^p(\I_{\sigma},w_{\a};X_0))},
\end{equation*}
here $M\geq 0$ will be specified below. We shall study the map $\T_{w_0}$ defined on $\z_{\sigma}$ by
\begin{equation}
\label{eq:T_map_w_0}
\T_{w_0}(v):=\Sol(w_0,\tilde{F}_{\lambda}(v)+\tilde{f},\tilde{G}_{\lambda}(v)+\tilde{g}).
\end{equation}
For the sake of clarity, we divide the proof into several steps.

\textit{Step 1: There exist $M>0$, $\lambda^*>0$, $ T^*\in (0,T]$, $\varepsilon>0$ and $\alpha<1$ such that if $\max\{L_F,L_G\}\leq \varepsilon$, then for any stopping time $\sigma:\O\to [0,T^*]$ and any $w_0\in L^p_{\F_0}(\O;\Xap)$ one has $\T_{w_0}:\z_{\sigma}\to \z_{\sigma}$ and for all $v,w\in \z_{\sigma}$,}
\begin{equation}
\label{eq:estimate_T_w_0_epsilon}
\nnn \T_{w_0}(v)-\T_{w_0}(w)\nnn_{\z_{\sigma}}\leq \alpha\nnn u-w\nnn_{\z_{\sigma}}.
\end{equation}

In the following, we consider the case $p>2$, the case $p=2$ follows by replacing $\hz^{\d,p}(\I_{\sigma},w_{\a};X_{1-\d})$ by $C(\overline{\I}_{\sigma},X_{1/2})$ below.

Let $p>2$, and fix a stopping time $\sigma$ with values in $[0,T]$.
Fix $\d\in ((1+\a)/{p},1/2)$. Note that for $z\in L^p_{\Progress}(\I_{T}\times \O,w_{\a};X_1)\cap L^p_{\Progress}(\Omega;H^{\d,p}(\I_T,w_{\a};X_{1-\d}))$
\begin{align}\label{eq:embedZTH1}
\|z\|_{\z_T} \leq k_T (\|z\|_{L^p(\I_{T}\times \O,w_{\a};X_1)} + \|z\|_{L^p(\Omega;H^{\d,p}(\I_T,w_{\a};X_{1-\d}))}),
\end{align}
where $k_T$ is a constant which depends on $T$. Moreover, if $z\in L^p_{\Progress}(\I_{\sigma}\times \Omega,w_{\a};X_1)\cap L^p_{\Progress}(\Omega;\hz^{\d,p}(\I_{\sigma},w_{\a};X_{1-\d}))$, then
\begin{align}\label{eq:embedZTH2}
\|z\|_{\z_\sigma} \leq C_1 (\|z\|_{L^p(\I_{\sigma}\times \O,w_{\a};X_1)} + \|z\|_{L^p(\Omega;\hz^{\d,p}(\I_{\sigma},w_{\a};X_{1-\d}))}),
\end{align}
where the constant $C_1$ is independent of $T$. Both estimates \eqref{eq:embedZTH1} and \eqref{eq:embedZTH2} follow from Proposition \ref{prop:continuousTrace}, Lemma \ref{l:embeddings} and Remark \ref{r:embedding_with_T_dependence}.

By Proposition \ref{prop:initial_0} and \eqref{eq:embedZTH1} one has
\begin{align}\label{eq:Solw0}
\|\Sol(w_0,0,0)\|_{\z_{T}} \leq k_T \|w_0\|_{L^p(\Omega;\Xap)}.
\end{align}

Since $(A(\cdot,u_0),B(\cdot,u_0))\in \MRta$, Definition \ref{def:SMRz}, \eqref{eq:constants_SMR}, Proposition \ref{prop:propertiesR} and \eqref{eq:embedZTH2} give that for all
$\phi\in L^p_{\Progress}(\I_T\times\Omega,w_{\a};X_0)$ and $\psi\in L^p_{\Progress}(\I_T\times\Omega,w_{\a};\g(H,X_{1/2}))$,
\begin{equation}\label{eq:Solphipsi}
\begin{aligned}
&\|\Sol(0,\phi,\psi)\|_{\z_{\sigma}} \leq  \|\Sol(0,\one_{\ll 0,\sigma\rr }\phi,\one_{[0,\sigma]}\psi)\|_{\z_{T}}
\\ & \leq C_1K^{\det,\delta}\|\phi\|_{L^p(\Omega;L^p(\I_\sigma,w_{\a};X_0))} + C_1
K^{\stoc,\delta} \|\psi\|_{L^p(\Omega;L^p(\Omega;\I_\sigma,w_{\a};\g(H,X_{1/2})))},
\end{aligned}
\end{equation}
where $K^{\det,\delta} := K_{(A(\cdot,u_0),B(\cdot,u_0))}^{\det,\delta}$, $K^{\det,\delta} := K_{(A(\cdot,u_0),B(\cdot,u_0))}^{\stoc,\delta}$ and $C_1$ is as in \eqref{eq:embedZTH2}.

Next we show that $\T_{w_0}$ maps $\z_{\sigma}$ into itself. Let $v\in \z_{\sigma}$. By \eqref{eq:Solw0} and \eqref{eq:Solphipsi} we can write
\begin{align*}
\|\T_{w_0}(v)\|_{\z_{\sigma}}
&\leq \|\Sol(w_0,0,0)\|_{\z_{T}}+\|\Sol(0,\tilde{F}_{\lambda}(v)+\tilde{f},\tilde{G}_{\lambda}(v)+\tilde{g}\|_{\z_{\sigma}}\\
 & \leq k_T \|w_0\|_{L^p(\Omega;\Xap)} +  C_1 K^{\det,\delta} \|\tilde{F}_{\lambda}(v)+\tilde{f}\|_{L^p(\Omega;L^p(\I_{\sigma},w_{\a};X_0))} \\ & \qquad + C_1K^{\stoc,\delta}\|\tilde{G}_{\lambda}(v)+\tilde{g}\|_{L^p(\Omega;L^p(\I_\sigma,w_{\a};\g(H,X_{1/2})))}
\end{align*}
and the latter is finite by Lemmas \ref{l:truncationFG} and \ref{l:truncationAB}.

Moreover, for $v,w\in \z_{\sigma}$ by Proposition \ref{prop:propertiesR} we can write
\begin{align}\label{eq:Tdiffw0vw}
\T_{w_0}(v)-\T_{w_0}(w)=\Sol(0,\one_{\ll 0,\sigma\rr}(\tilde{F}_{\lambda}(v)-\tilde{F}_{\lambda}(w)),\one_{\ll 0,\sigma\rr}(\tilde{G}_{\lambda}(v)-\tilde{G}_{\lambda}(w)))
\end{align}
on $\ll 0,\sigma\rr$. The previous identity and \eqref{eq:Solphipsi} gives
\begin{equation}
\label{eq:estimate_step_1_local}
\begin{aligned}
&\|\T_{w_0}(v)-\T_{w_0}(w)\|_{\z_{\sigma}}
\\ & = \|\Sol(0,\one_{\ll 0,\sigma\rr}(\tilde{F}_{\lambda}(v)-\tilde{F}_{\lambda}(w)),\one_{\ll 0,\sigma\rr}(\tilde{G}_{\lambda}(v)-\tilde{G}_{\lambda}(w)))\|_{\z_{\sigma}}
\\ & \leq C_1K^{\det,\delta} \|\tilde{F}_{\lambda}(v)-\tilde{F}_{\lambda}(w)\|_{L^p(\Omega;L^p(\I_{\sigma},w_{\a};X_0))} \\ & \qquad + C_1K^{\stoc,\delta}\|\tilde{G}_{\lambda}(v)-\tilde{G}_{\lambda}(w)\|_{L^p(\Omega;L^p(\I_\sigma,w_{\a};\g(H,X_{1/2})))}
\\ & \leq C_1 [K^{\deter,\delta}(L_{\lambda,T}' +L_F)+K^{\stoc}(L_{\lambda,T}' +L_G)]\|v-w\|_{\z_{\sigma}}
\\ & \qquad + C_1(K^{\deter,\d}\tilde{L}_F+K^{\stoc,\d}\tilde{L}_G)\|v-w\|_{L^p(\Omega;L^p(I_{\sigma},w_{\a};X_0))},
\end{aligned}
\end{equation}
where the last estimate follows from Lemmas \ref{l:truncationFG} and \ref{l:truncationAB} and where we have set $L_{\lambda,T}'=L_{\lambda,T}+\tilde{L}_{\lambda,T}$.

Let $\varepsilon>0$ be such that if \eqref{eq:smallness_condition_nonlinearities_QSEE} holds, then
\begin{equation}
\label{eq:smallness_proof_local}
C_1  [K^{\deter,\d}L_F+K^{\stoc,\d}L_G]<1.
\end{equation}
By Lemmas \ref{l:truncationFG} and \ref{l:truncationAB} one can find $\wT$ and $\wl$ such that
\begin{equation}
\label{eq:alpha_primus}
C_1  [K^{\deter,\d}(L_F+L_{\lambda,T}')+K^{\stoc,\d}(L_G+L_{\lambda,T}')]:=\alpha'<1;
\end{equation}
for all $T\leq \wT$ and $\lambda\leq \wl$. To complete the proof we extend the argument in \cite[Theorem 4.5]{NVW11eq} to our setting. Set
$$M:=\frac{K^{\deter,\delta}\tilde{L}_F+K^{\stoc,\delta}\tilde{L}_G}{K^{\deter,\delta}L_F+K^{\stoc,\delta}L_G}.$$
With such a choice the inequality \eqref{eq:estimate_step_1_local} implies that
$$
\|\T_{w_0}(v)-\T_{w_0}(w)\|_{\z_{\sigma}}\leq\alpha' \nnn v-w\nnn_{\z_{\sigma}}.
$$
Applying Lemma \ref{l:solution_op_c_T} with $u$ given by \eqref{eq:Tdiffw0vw} we find
\begin{equation}
\label{eq:estimate_step_1_local_simple}
\begin{aligned}
&\|\T_{w_0}(v)-\T_{w_0}(w)\|_{L^p(\Omega;L^p(\I_\sigma,w_{\a};X_0))}\\
&\leq c_T \big[ \| \tilde{F}_{\lambda}(v)-\tilde{F}_{\lambda}(w)\|_{L^p(\I_{\sigma}\times \O,w_{\a};X_0)} +\|\tilde{G}_{\lambda}(v)-\tilde{G}_{\lambda}(w)\|_{L^p(\I_{\sigma}\times \O,w_{\a};X_0)}\big]\\
&\leq \tilde{c}_T \nnn v-w\nnn_{\z_{\sigma}};
\end{aligned}
\end{equation}
where the last step follows from Lemmas \ref{l:truncationFG} and \ref{l:truncationAB}, and
where $c_T,\tilde{c}_T>0$ and both tend to zero as $T\downarrow 0$. The claim follows from \eqref{eq:estimate_step_1_local} and \eqref{eq:estimate_step_1_local_simple} by choosing $T^*>0$ such that $M\tilde{c}_{T^*}<1-\alpha'$, $\lambda^*=\wl$ and $\alpha:=\alpha'+Mc_{T^*}<1$.

\textit{Step 2: Let $\lambda^*,T^*$ be as in Step 1. Then for each $w_0\in L^p_{\F_0}(\O;\Xap)$ the problem \eqref{eq:QSEE_tilde} has a unique strong solution $u_{w_0}\in \z_{T^*}$ on $\ll 0,T^*\rr$. Moreover, there exists a constant $C=C(T^*,\lambda^*)>0$ such that for all $w_0,w_1\in L^p_{\F_0}(\O;\Xap)$, one has}
\begin{equation}
\label{eq:continuity_QSEE_truncated}
\|u_{w_0}-u_{w_1}\|_{\z_{T^*}}
\leq C \|w_0-w_1\|_{L^p_{\F_0}(\O;\Xap)}.
\end{equation}
Applying Step 1 to $\sigma\equiv T^*$, we obtain that $\T_{w_0}:\z_{T^*}\to \z_{T^*}$ is a contraction. Therefore, by the Banach fixed point theorem there exists a unique $u_{w_0}\in \z_{T^*}$ such that $\T_{w_0}(u_{w_0}) = u_{w_0}$. From this we can conclude that $u_{w_0}$ is a strong solution to \eqref{eq:QSEE_tilde} on $\ll 0,T^*\rr$ (see Definition \ref{def:solution1} and \eqref{eq:T_map_w_0}).

It remains to prove \eqref{eq:continuity_QSEE_truncated}. The linearity of $\Sol$ shows that
$$
u_{w_0}-u_{w_1}=\T_{w_0}(u_{w_0})-\T_{w_1}(u_{w_1})
= \Sol(w_0-w_1,0,0)+ \T_0 (u_{w_0})-\T_0(u_{w_1}).
$$
Therefore, by \eqref{eq:Solw0} and \eqref{eq:estimate_T_w_0_epsilon},
\begin{align*}
\nnn u_{w_0}-u_{w_1}\nnn_{\z_{T^*}}&\leq \nnn\Sol(w_0-w_1,0,0)\nnn_{\z_{T^*}}+ \nnn \T_0 (u_{w_0})-\T_0(u_{w_1})\nnn_{\z_{T^*}}\\
&\leq \tilde{k}_{T^*} \|w_0-w_1\|_{L^p_{\F_0}(\O;\Xap)}+ \alpha \nnn u_{w_0}-u_{w_1}\nnn_{\z_{T^*}}.
\end{align*}
Since $\alpha<1$, the latter implies \eqref{eq:continuity_QSEE_truncated}.

\textit{Step 3: Let $(v,\tau)$ be a local solution to \eqref{eq:QSEE_tilde} with initial data $w_0 \in L^p_{\F_0}(\O;\Xap)$. Then $v=u_{w_0}$ on $\ll 0,\tau\wedge T^*\rro$}. Without loss of generality, we can assume that $\tau<T^*$. For $n\geq 1$ let
$$
\tau_n:=\inf\{t\in [0,\tau)\,:\,\|v\|_{\X(t)}+\|v-w_0\|_{C(\overline{\I}_t;\Xap)}+\|v\|_{L^p(\I_t,w_{\a};X_1)}\geq n\}
$$
and $\tau_n:=\tau$ if the set is empty. Then $(\tau_n)_{n\geq 1}$ is a localizing sequence for $(v,\tau)$.

Fix $n\geq 1$. Lemmas \ref{l:truncationFG} and \ref{l:truncationAB} ensure that $\one_{\ll 0,\tau_n\rr}(\tilde{F}_{\lambda}(v)+\tilde{f})\in L^p_{\Progress}(\I_T\times \O,w_{\a};X_0)$ and $\one_{\ll 0,\tau_n\rr}(\tilde{G}_{\lambda}(v)+\tilde{f})\in L^p_{\Progress}(\I_T\times \O,w_{\a};g(H,X_{1/2}))$. Moreover, by Proposition \ref{prop:propertiesR} one obtains
\begin{align*}
v&= \Sol(w_0,\one_{\ll 0,\tau_n\rr}(\tilde{f}+ \tilde{F}_{\lambda}(v)),\one_{\ll 0,\tau_n\rr}(\tilde{G}_{\lambda}(v)+\tilde{g})),\\
u_{w_0}&=\Sol(w_0,\one_{\ll 0,\tau_n\rr}(\tilde{f} + \tilde{F}_{\lambda}(u_{w_0})),\one_{\ll 0,\tau_n\rr}(\tilde{G}_{\lambda}(u_{w_0})+\tilde{g}));
\end{align*}
on $\ll 0,\tau_n\rr$. Using \eqref{eq:Tdiffw0vw} this implies that
\begin{align*}
\nnn u_{w_0}-v\nnn_{\z_{\tau_n}}
&=\nnn \Sol(0,\one_{\ll 0,\tau_n\rr}(\tilde{F}_{\lambda}(v)-\tilde{F}_{\lambda}(u_{w_0})),\one_{\ll 0,\tau_n\rr}(\tilde{G}_{\lambda}(v)-\tilde{G}_{\lambda}(u_{w_0})))\nnn_{\z_{\tau_n}}\\
 & = \nnn \T_{0}(u_{w_0})-\T_{0}(v)  \nnn_{\z_{\tau_n}}
\leq \alpha \nnn u_{w_0}-v\nnn_{\z_{\tau_n}};
\end{align*}
where in the last step we used \eqref{eq:estimate_T_w_0_epsilon}.  Since $\alpha<1$, we obtain that
$u_{w_0}=v$ on $\ll 0,\tau_n\wedge T^*\rr$. Since $n\geq 1$ was arbitrary, it follows that $u_{w_0}=v$ on $\ll 0,\tau\wedge T\rro$.

Steps 1-3 complete our treatment of \eqref{eq:QSEE_tilde}. Below we apply these results to study \eqref{eq:QSEE}.

\textit{Step 4: Let $\eta:=\lambda^*/2$. Then \eqref{eq:QSEE} has a strong solution $(v,\tau)$ with initial data $v_0\in L^{\infty}(\O;\Xap)$ and $\tau>0$ a.s.\, provided $v_0\in \B_{L^{\infty}_{\F_0}(\O;\Xap)}(u_0,\eta)$. In particular, this gives a strong solution $(u,\sigma)$ to \eqref{eq:QSEE} with $\sigma>0$ a.s.}

Step 1 ensures that \eqref{eq:QSEE_tilde} with initial data $v_0$ has a unique strongly progressively measurable solution $u_{v_0}$ if $\lambda= \lambda^*$ and $T= T^*$. Set
\begin{align*}
\label{eq:def_tau_local_1}
\tau&:=\inf\big\{t\in[0,T]\,:\, \|u_{v_0}\|_{\X(t)}+ \|u_{v_0}-u_0\|_{C(\overline{\I}_t;\Xap)} + \|u_{v_0}\|_{L^p(\I_t,w_{\a};X_1)}>{\lambda^*}/{2} \big\}.
\end{align*}
Since the maps $t\mapsto \|u_{u_0}\|_{\X(t)}$, $t\mapsto\sup_{s\in [0,t]}\|u_{u_0}(s)-v_0\|_{\Xap}$ are continuous and adapted, $\tau$ is a stopping time. Note that if $v_0\in \B_{L^{\infty}_{\F_0}(\O;\Xap)}(u_0,\eta)$, then $0<\tau$ a.s.

Setting $v:=u_{v_0}|_{\ll 0,\tau\rr}$, then a.s.\ for $t\in [0,\tau]$, one has
$$
\Theta_{\lambda^*}(t,u_0,v)=1, \qquad \Psi_{\lambda^*}(t,u_0,v)=1.
$$
Using the latter, by \eqref{eq:def_F_G_tilde} a.s.\ on $\ll 0,\tau\rr$
\begin{align*}
\tilde{F}_{\lambda^*}(v)&=A(\cdot,u_0)v-A(\cdot,v)v + F_c(\cdot,v)-F_c(\cdot,0)+F_{\Tr}(\cdot,v)-F_{\Tr}(\cdot,u_0)+F_L(\cdot,v),\\
\tilde{G}_{\lambda^*}(v)&=B(\cdot,u_0)v-B(\cdot,v)v + G_c(\cdot,v)-G_c(\cdot,0)+G_{\Tr}(\cdot,v)-G_{\Tr}(\cdot,u_0)+G_L(\cdot,v).
\end{align*}
Using this and \eqref{eq:QSEE_tilde}, it follows that $v$ is a strong solution to \eqref{eq:QSEE} on $\ll 0,\tau\rr$ with initial data $v_0$.

Next, we prove the continuity estimate claimed in \eqref{it:continuity_initial_data_Linfty} for the solutions just constructed. Let $(u,\sigma)$, $(v,\tau)$ be solutions of \eqref{eq:QSEE} constructed above with initial value $u_0,v_0$ respectively. Therefore, $u=u_{u_0}|_{\ll 0,\sigma\rr}$ and $v=u_{v_0}|_{\ll 0,\tau\rr}$.

Let $\nu:=\sigma\wedge \tau$, this implies that $u=u_{u_0}|_{\ll 0,\nu\rr}$, $v=u_{v_0}|_{\ll 0,\nu\rr}$ and
\begin{equation}
\label{eq:continuity_step_proof_local}
\begin{aligned}
\|u-v\|_{\z_\nu} & \leq \|u_{u_0}-u_{v_0}\|_{\z_{T^*}}\leq C \|u_0-v_0\|_{L^p_{\F_0}(\O;\Xap)};
\end{aligned}
\end{equation}
where in the last step we used \eqref{eq:continuity_QSEE_truncated}.

\textit{Step 5: \eqref{it:existence_maximal_local_Linfty} and the first part of \eqref{it:continuity_initial_data_Linfty} hold}. For the sake of clarity, we divide the proof of this step into two parts.

\textit{Step 5a: Uniqueness of the strong solution $(v,\tau)$ constructed in Step 4}. Recall that $(v,\tau)$ is a strong solution to \eqref{eq:QSEE} with initial data $v_0$ and satisfies $v = u_{v_0}$ on $\ll 0,\tau\rr$. Let $(w,\mu)$ be a local solution to \eqref{eq:QSEE} with initial data $v_0$. By Definition \ref{def:solution2}, it is enough to prove that $v=w$ on $\ll 0,\tau \wedge \mu\rro$.
We claim that
\begin{equation}
\label{eq:integrability_w_proof_uniqueness}
w\in \X(t)  \  \text{ a.s.\ for all }t\in [0,\mu).
\end{equation}
Let us first show that \eqref{eq:integrability_w_proof_uniqueness} implies the claim of Step 5a. Thus, suppose that \eqref{eq:integrability_w_proof_uniqueness} holds. Let $(\mu_n)_{n\geq 1}$ be a localizing sequence for $(w,\mu)$, and define the following stopping times
\begin{align*}
\mu_n^*&:=\inf\big\{t\in [0,\mu_n)\,:\,\|w\|_{\X(t)}+\|w-u_0\|_{C(\overline{\I}_t;\Xap)}+\|w\|_{L^p(\I_t,w_{\a};X_1)}>{\lambda^*}/{2}\big\},\\
\mu^*&:=\inf\big\{t\in [0,\mu)\,:\,\|w\|_{\X(t)}+ \|w-u_0\|_{C(\overline{\I}_t;\Xap)}+\|w\|_{L^p(\I_t,w_{\a};X_1)}>{\lambda^*}/{2}\big\},
\end{align*}
where $\lambda^*>0$ is as in Step 1 and where we set $\mu_n^* = \mu_n$ and $\mu^* = \mu$ if the set is empty. Let $n\geq 1$ be fixed. The argument used in Step 4 shows that $(w,\mu^*_n)$ is a local solution to \eqref{eq:QSEE_tilde} with initial data $v_0\in L^{\infty}_{\F_0}(\O;\Xap)$. Therefore, by Step 3
$w=u_{v_0}$ on $\ll 0,\mu_n^*\wedge \tau\rr$. Letting $n\uparrow \infty$ we find $v=w$ on $\ll 0,\mu^*\wedge \tau\rro$. From the latter equality, it follows that $\mu\wedge \tau=\mu^*\wedge \tau$ a.s. This proves the uniqueness of $(v,\tau)$.

Now we turn to the proof of \eqref{eq:integrability_w_proof_uniqueness}. To this end we set, for a.a.\ $(\om,t)\in [0,\mu) \times\O$,
$$
\mathcal{N}_w(t,\om):=\|F(\cdot,\om,w(\cdot,\om))\|_{L^{p}(0,t,w_{\a};X_0)}+\|G(\cdot,\om,w(\cdot,\om))\|_{L^{p}(0,t,w_{\a};\g(H,X_{1/2}))}.
$$
By Definitions \ref{def:solution1}-\ref{def:solution2} we have $\mathcal{N}_w(t)<\infty$ a.s.\ for all $t\in [0,\mu)$. Define a sequence of stopping times by
\begin{align*}
\nu_n&:=\inf\big\{t\in [0,\mu)\,:\,\mathcal{N}_w(t)+\|w-u_0\|_{C(\overline{\I}_t;\Xap)}+\|w\|_{L^p(\I_t,w_{\a};X_1)}>n\big\},
\end{align*}
where $\inf\emptyset:=\mu$. Then $\lim_{n\uparrow\infty}\nu_n=\mu$ a.s., and  therefore to prove \eqref{eq:integrability_w_proof_uniqueness} it is enough to show $w\in \X(\nu_n)$ a.s.\ for all $n\geq 1$. Note that for any $n\geq 1$,
\begin{equation}
\label{eq:integrability_w_nu_n}
w|_{\ll 0,\nu_n\rr}\in L^{\infty}(\O;C(\overline{\I}_{\nu_n};\Xap)\cap L^p(\I_{\nu_n}\times \O,w_{\a};X_1)),
\end{equation}
where we used that $u_0\in L^{\infty}(\O;\Xap)$ by assumption, and 
\begin{equation}
\begin{aligned}
\label{eq:integrability_F_w_composed_nu_n}
\one_{\ll 0,\nu_n \rr} F(\cdot,w)&\in L^p(\I_{T}\times \O,w_{\a};X_0),\\
\one_{\ll 0,\nu_n \rr} G(\cdot,w)&\in L^p(\I_{T}\times \O,w_{\a};\g(H,X_{1/2})).
\end{aligned}
\end{equation}
Since $(w,\mu)$ is a local solution to \eqref{eq:QSEE} and $\nu_n\leq \mu$ a.s.\ we have that $w|_{\ll 0,\nu_n\rro}$ is a strong solution to \eqref{eq:QSEE} on $\ll 0,\nu_n\rr$. Writing $A(\cdot,w)=A(\cdot,u_0)+ (A(\cdot,w)-A(\cdot,u_0))$ and $B(\cdot,w)=B(\cdot,u_0)+ (B(\cdot,w)-B(\cdot,u_0))$, one sees that $(w,\nu_n)$ is a strong solution to \eqref{eq:diffAB} on $\ll 0,\nu_n\rr$ with $(A,B)$ and $(f,g)$ replaced by $(A(\cdot,u_0),B(\cdot,u_0))$ and $(f_n^w,g_n^w)$, where
\begin{align*}
f_n^w
&:=\one_{\ll 0,\nu_n\rr}[(A(\cdot,u_0)-A(\cdot,w))+F(\cdot,w)+f],\\
g_n^w&:= \one_{\ll 0,\nu_n\rr} [(B(\cdot,w)-B(\cdot,u_0))+G(\cdot,w)+g],
\end{align*}
respectively. By \eqref{eq:integrability_w_nu_n}-\eqref{eq:integrability_F_w_composed_nu_n} and \ref{HAmeasur}, $f_n^w\in L^p_{\Progress}(\I_T\times \O,w_{\a};X_0)$ and $g_n^w\in L^p_{\Progress}(\I_T\times \O,w_{\a};\g(H,X_{1/2}))$. Since $(A,B)\in \MRta$, $w=\Sol_{(A(\cdot,u_0),B(\cdot,u_0))}(u_0,f_n^w,g_n^w)$ on $\ll 0,\nu_n\rr$ by Proposition \ref{prop:propertiesR}\eqref{it:stoppedsol}. Therefore, the last statement in Proposition \ref{prop:initial_0} ensures that for all $\delta\in (\frac{1+\a}{p},\frac{1}{2})$ and $n\geq 1$,
$$
w|_{\ll 0,\nu_n\rr} \in H^{\delta,p}(\I_{\nu_n},w_{\a};X_{1-\delta})\cap L^p(\I_{\nu_n},w_{\a};X_1)\hookrightarrow \X(\nu_n) \ \ \text{ a.s.,}
$$
where we used Lemma \ref{l:embeddings} for the embedding (see Remark \ref{r:embedding_with_T_dependence}).

\emph{Step 5b: Proof of the claim in Step 5}.
It remains to prove the existence of a maximal solution $(v,\tau)$ of \eqref{eq:QSEE} with initial data $v_0$ as in \eqref{it:continuity_initial_data_Linfty}. Let $\Xi$ be the set of all stopping time $\tau$ such that \eqref{eq:QSEE} admits a unique local solution on $[0,\tau)$ in the sense of Definitions \ref{def:solution1}-\ref{def:solution2} with initial value $v_0$. Then the above ensures that $\Xi$ is not empty. We claim that $\Xi$ is closed under pairwise maximization, i.e.\ if $\tau_0,\tau_1\in \Xi$, then $\tau_0\vee \tau_1\in \Xi$. A similar argument appears in \cite[Lemma 4.6]{Hornung}, but our setting is different. Let $(v_i,\tau_i)$ be the unique local solution to \eqref{eq:QSEE} with the same initial data and localizing sequences $(\tau_i^n)_{n\geq 1}$ for $i=0,1$. The uniqueness ensures that $v_0=v_1$ on $\ll 0,\tau_0\wedge \tau_1\rro$. Define the process $u^n:\ll 0,\tau_0^n\vee \tau_1^n\rr\to X_0$ given by
\begin{equation*}
u^n(t)=v_0(t\wedge \tau_0^n)+v_1(t\wedge \tau_1^n)- v_0(t\wedge \tau_0^n\wedge \tau_1^n).
\end{equation*}
Note that $u^n(t)=v_1(t)$ on $\{\tau_0^n\leq t\leq \tau_1^n\}$ and $u^n(t)=v_0(t)+v_1(\tau_1^n)-v_0(\tau_1^n)=v_0(t)$ on $\{\tau_1^n\leq t\leq \tau_0^n\}$. By definition $u^n$ is strongly progressively measurable and has the same regularity properties of $v_0$ and $v_1$ on $\ll 0,\tau_0^n\vee \tau_1^n\rr$. Letting $n\uparrow \infty$ we obtain a unique local solution $(v, \tau_0\vee \tau_1)$ and thus $\tau_0\vee \tau_1\in \Xi$.

By \cite[Theorem A.3]{KS98}, $\sigma:=\esssup \Xi$ exists, and there exists a sequence of stopping times $(\tau_n)_{n\geq 1}\subseteq\Xi$ such that $\tau_n\leq {\sigma}$, $\lim_{n\uparrow \infty}\tau_n={\sigma}$ a.s.\ and by the above uniqueness there exists a process $v:[0,\tau]\times \O \to X_0$ such that $u$ is a local solution to \eqref{eq:QSEE} on $\ll 0,\tau_n\rro$. In addition, $\tau>0$ a.s.\ by Step 4. This implies, the existence of a maximal local solution $(v,\tau)$ to \eqref{eq:QSEE} with initial value $v_0$ and localizing sequence $(\tau_n)_{n\geq 1}$. This finishes the proof of the first part of  \eqref{it:continuity_initial_data_Linfty} and in particular \eqref{it:existence_maximal_local_Linfty}.

\textit{Step 6: \eqref{it:regularity_data_Linfty}}. Let $(v,\tau^v)$ be the maximal solution to \eqref{eq:QSEE} with initial value $v_0$, where $v_0$ is as in \eqref{it:continuity_initial_data_Linfty}. Let $(\tau_n^v)_{n\geq 1}$ be a localizing sequence for $(v,\tau^v)$ with $\tau_n^v>0$ a.s. For each $n\geq 1$, set
\begin{equation}
\label{eq:def_stopping_times_regularity_estimates}
\tilde{\tau}_n^v:=\inf\{t\in [0,\tau_n^v)\,:\,\|v\|_{\X({t})} + \|v-v_0\|_{C(\overline{\I}_{t};\Xap)}+ \|v\|_{L^p(\I_t,w_{\a};X_1)}\geq n\},
\end{equation}
where we set $\tilde{\tau}_n^v = \tau_n^v$ if the set is empty. Thus, each $\tilde{\tau}_n^v$ is a stopping time and $\lim_{n\uparrow \infty}\tilde{\tau}_n^v=\tau^v$. Moreover, $\tau_n^v>0$ a.s. Let $\nu_n = \min\{\tilde{\tau}_n^u,\tilde{\tau}_n^v\}$.

Hypothesis \ref{HAmeasur} and \ref{HFcritical}--\ref{Hf} and Lemma \ref{l:F_G_bound_N} show that
\begin{align*}
f_n^v:=\one_{\ll 0,\nu_n\rr} [(A(\cdot,u_0)-A(\cdot,v))v+F(\cdot,v)+f]&\in L^p_{\Progress}(\I_T\times \Omega,w_{\a};X_0),\\
g_n^v:=\one_{\ll 0,\nu_n\rr} [(B(\cdot,v)-B(\cdot,v_0))u+G(\cdot,v)+g]&\in L^p_{\Progress}(\I_T\times \Omega,w_{\a};\g(H,X_{1/2})),
\end{align*}
for all $n\geq 1$. As in Step 5a, since $u$ and $v$ are strong solution to \eqref{eq:QSEE}, by Proposition \ref{prop:propertiesR}\eqref{it:stoppedsol} we have
\begin{equation*}
v=\Sol(v_0,f_n^v,g_n^v), \qquad\text{on}\;\;\ll 0,\nu_n\rr,
\end{equation*}
where $\Sol:=\Sol_{(A(\cdot,u_0),B(\cdot,u_0))}$. Since $(A(\cdot,u_0),B(\cdot,u_0))\in \MRta$, it follows from Proposition \ref{prop:initial_0} that
\begin{equation}
\label{eq:regularity_u_Step_6}
v\in \bigcap_{\theta\in [0,1/2)} L^p_{\Progress}(\O;H^{\theta,p}(\I_{\nu_n},w_{\a};X_{1-\theta})), \qquad
\forall\, n\geq 1.
\end{equation}
In particular, by Proposition \ref{prop:continuousTrace}\eqref{it:trace_with_weights_Xap}
\[v\in L^p(\Omega;C(\overline{I}_{\nu_n};\Xap)).\]
It remains to prove the instantaneously regularization effect. Let $\a>0$, by \eqref{eq:regularity_u_Step_6} and Definition \ref{def:phi_spaces}, for each $n\geq 1$ there exists $\tilde{v}_n \in L^p_{\Progress}(\O;H^{\delta,p}(\I_{T^*},w_{\a};X_{1-\delta})\cap L^p(\I_{T^*},w_{\a};X_1))$ such that $v|_{\ll 0,\tilde{\sigma}_n\rr}=\tilde{v}_n|_{\ll 0,\nu_n\rr}$ and for any $\varepsilon>0$,
\begin{align*}
\tilde{v}_n&\in  L^p_{\Progress}(\O;H^{\delta,p}(\I_{T^*},w_{\a};X_{1-\delta})\cap L^p(\I_{T^*},w_{\a};X_1))\hookrightarrow L^p_{\Progress}(\O;C([\varepsilon,T^*];\Xp)),
\end{align*}
where in the last inclusion we used Proposition \ref{prop:continuousTrace}\eqref{it:trace_without_weights_Xp} and the fact that $\delta>\frac{1+\a}{p}\geq \frac{1}{p}$ since $\a\geq 0$. The claim follows from the arbitrariness of $n\geq 1$ and $\varepsilon>0$. By taking $v=u$ this completes the proof of \eqref{it:regularity_data_Linfty}

\textit{Step 7: the second part of \eqref{it:continuity_initial_data_Linfty}}.
The cases $E\in \{L^p(\I_{\nu},w_{\a};X_{1}), C(\overline{\I}_{\nu};\Xap), \X(\nu)\}$ have already been considered in  \eqref{eq:continuity_step_proof_local}. It remains to consider $E = H^{\theta,p}(\I_{\nu},w_{\a};X_{1-\theta})$. Carefully checking the proofs of \eqref{eq:estimate_T_w_0_epsilon} and \eqref{eq:Solw0} one also obtains the latter case.

\textit{Step 8: \eqref{it:localization_Linfty} holds}. Let $(u,\sigma)$ and $(v,\tau)$ be as in the statement. Recall that $\Gamma:=\{u_0=v_0\}$. Without loss of generality we assume $\P(\Gamma)>0$.

Set $\tilde{\sigma}:=\one_{\Gamma}\sigma + \one_{\O\setminus \Gamma} \tau$ and $\tilde{u}:=\one_{\Gamma\times [0,\tau)} v+ \one_{(\O\setminus \Gamma)\times [0,\sigma)}u$. Then with the same argument used in the proof of Proposition \ref{prop:propertiesR}, one can check that $(\tilde{u},\tilde{\sigma})$ is a unique local solution to \eqref{eq:QSEE} since $u_0=v_0$ on $\Gamma$.

The maximality of $(u,\sigma)$ implies $\tau\leq \sigma$ on $\Gamma$ and
$$
u=\tilde{u}=v, \qquad \Gamma\times [0,\tau).
$$
Exchanging the role of $(u,\sigma)$ and $(v,\tau)$, one obtains also $\sigma\leq \tau$ on $\Gamma$ and $u=v$ on $\Gamma\times [0,\sigma)$. This implies the claim.
\end{proof}

Some remark may be in order.

\begin{remark}
\label{r:smallness}
Due to \eqref{eq:smallness_proof_local}, the argument used in Step 1 in the proof of Theorem \ref{t:local} ensures that instead of \eqref{eq:smallness_condition_nonlinearities_QSEE} we can assume
$$
C_1 (L_F K^{\deter,\d}_{(A(\cdot,u_0),B(\cdot,u_0))} +L_B K^{\stoc,\d}_{(A(\cdot,u_0),B(\cdot,u_0))})<1.
$$
Here $C_1$ is the constant in \eqref{eq:embedZTH2} and $\delta\in ((1+\a)/{p},1/2)$. Typically the above constants are difficult to compute. See \cite[Section 5]{NVW11eq} for examples in which explicit computations can be worked out.
\end{remark}

\begin{remark}
\label{r:general_version_F_L_G_L}
By analysing the argument in the above proof one can readily check that Theorem \ref{t:local} holds in case that the assumptions \ref{HFcritical}(i) and \ref{HGcritical}(i) are replaced by:
\begin{enumerate}[{\rm(1)}]
\item\label{it:F_G_L_generalized_mapping} For any stopping time $\mu:\O\to[0,T]$, one has
\begin{align*}
F_L:L^0_{\Progress}(\O;L^p(\I_{\mu},w_{\a};X_1)\cap C(\overline{\I}_{\mu};\Xap))
&\to L^0_{\Progress}(\O;L^p(\I_{\mu},w_{\a};X_0)), \\
G_L:L^0_{\Progress}(\O;L^p(\I_{\mu},w_{\a};X_1)\cap C(\overline{\I}_{\mu};\Xap))
&\to L^0_{\Progress}(\O;L^p(\I_{\mu},w_{\a};\g(H,X_{1/2}))).
\end{align*}
Moreover, there exist $\tilde{C},L_F,L_G,\tilde{L}_F,\tilde{L}_G>0$ such that for a.a.\ $\om\in\O$ and for all $u,v \in L^p(\I_{\mu},w_{\a};X_1)\cap C(\overline{\I}_{\mu};\Xap)$
\begin{align*}
\|F_L(\cdot,\om,u)\|_{L^p(\I_{\mu},w_{\a},X_0)}&\leq \tilde{C}(1+ \|u\|_{L^p(\I_{\mu},w_{\a};X_1)}+\|u\|_{ C(\overline{\I}_{\mu};\Xap)}) ,\\
\|G_L(\cdot,\om,u)\|_{L^p(\I_{\mu},w_{\a},\g(H,X_{1/2}))}&\leq \tilde{C}(1+ \|u\|_{L^p(\I_{\mu},w_{\a};X_1)}+\|u\|_{  C(\overline{\I}_{\mu};\Xap)}) ,\\
\|F_L(\cdot,\om,u)-F_L(\cdot,\om,v)\|_{L^p(\I_{\mu},w_{\a},X_0)}&\leq L_{F}  (\|u-v\|_{L^p(\I_{\mu},w_{\a};X_1)}+\|u-v\|_{ C(\overline{\I}_{\mu};\Xap)})\\
 &+ \tilde{L}_{F} \|u-v\|_{L^p(\I_{\mu},w_{\a};X_0)},\\
 \|G_L(\cdot,\om,u)-G_L(\cdot,\om,v)\|_{L^p(\I_{\mu},w_{\a},\g(H,X_{1/2}))}&\leq L_{G} (\|u-v\|_{L^p(\I_{\mu},w_{\a};X_1)}+\|u-v\|_{ C(\overline{\I}_{\mu};\Xap)})\\
 &+ \tilde{L}_{G} \|u-v\|_{L^p(\I_{\mu},w_{\a};X_0)}.
\end{align*}
\item\label{it:F_G_L_generalized_causality} For $\Gamma\in \{F_L,G_L\}$ and all stopping times $\nu\in [0,\mu]$ a.s., $\one_{[0,\nu]}\Gamma(\cdot,u)=\one_{[0,\nu]}\Gamma(\cdot,v)$ provided $\one_{[0,\nu]}u=\one_{[0,\nu]}v$ and $u,v\in L^0_{\Progress}(\O;L^p(\I_{\mu},w_{\a};X_1)\cap C(\overline{\I}_{\mu};\Xap))$.
\end{enumerate}
To see that \eqref{it:F_G_L_generalized_mapping}-\eqref{it:F_G_L_generalized_causality} are sufficient to prove Theorem \ref{t:local} it is enough to note that only \eqref{it:F_G_L_generalized_mapping} and \eqref{it:F_G_L_generalized_causality} are needed in Step 1 (resp.\ 5) to prove existence (resp.\ uniqueness). The other steps hold without any changes.
\end{remark}

Next, we prove Theorem \ref{t:local_Extended}.

\begin{proof}[Proof of Theorem \ref{t:local_Extended}]
We start by collecting some useful facts. To begin, let
\begin{align*}
\xi:=\inf\{t\in [0,T]\,:\,\|f\|_{L^p(\I_t,w_{\a};X_0)}+\|g\|_{L^p(\I_t,w_{\a};\g(H,X_{1/2}))}\geq 1\}.
\end{align*}
By \ref{Hf'}, $\xi$ is an stopping time, $\xi>0$ a.s.\ and
$$
\one_{\ll 0,\xi\rr }f\in L^p_{\Progress}(\I_T\times \Omega,w_{\a};X_0), \qquad
 \one_{\ll 0,\xi\rr }g\in L^p_{\Progress}(\I_T\times \Omega,w_{\a};\g(H,X_{1/2})).
$$
Moreover, let $n\geq 1$ be fixed and define $\Gamma_n:=\{\|u_0\|_{\Xap}\leq n\}\in \F_0$.
Recall that $(u_{0,n})_{n\geq 1}$ satisfies \eqref{eq:approximating_sequence_initial_data}. Finally, let $F_{L,n},G_{L,n}$ be as in Lemma \ref{l:truncation_FG_L}. The same lemma implies that $F_{L,n}$ and $G_{L,n}$ verify the condition in Remark \ref{r:general_version_F_L_G_L} for
\[L_F= 3 L_{F,2n}+C_T\tilde{L}_{F,2n},\quad \tilde{L}_F=0,\quad  L_G= 3 L_{G,2n}+C_T\tilde{L}_{G,2n},\quad \tilde{L}_G=0,\]
where $\lim_{T\downarrow 0}C_T=0$. For $n\geq 1$, set $F_n = F_{L,n}+F_{c}+F_{\Tr}$, $G_n = G_{L,n}+G_{c}+G_{\Tr}$.
By \eqref{eq:approximating_sequence_initial_data}, $\sup_{\O}\|u_{0,n}\|_{\Xap}<\infty$. Let $R_n\geq 1$ be the smallest integer satisfying
\begin{equation}
\label{eq:def_C_n_revision_stage}
R_n\geq \sup_{\O}\|u_{0,n}\|_{\Xap}.
\end{equation}
Theorem \ref{t:local} and Remarks \ref{r:smallness}-\ref{r:general_version_F_L_G_L} ensure the existence of a maximal local solution $(u_n,\sigma_n)$
to \eqref{eq:QSEE} with $(u_0, f, g, F, G)$ replaced by
\[(u_{0,n}, \one_{\ll 0,\xi\rr }f+F_L(t,0),\one_{\ll 0,\xi\rr }g+F_L(t,0),F_{R_n},G_{R_n})\]
provided
\begin{equation}
\label{eq:smallness_n_proof}
3C_1 (L_{F,2 R_n} K^{\deter,\d}_{(A(\cdot,u_{0,n}),B(\cdot,u_{0,n}))}+L_{B,2R_n} K^{\stoc,\d}_{(A(\cdot,u_{0,n}),B(\cdot,u_{0,n}))})<1,\ \forall\,n\geq 1,
\end{equation}
where $C_1>0$ is the constant in the embedding of Lemma \ref{l:embeddings} and does not depend on $T>0$. Note that choosing $\varepsilon_n>0$ suitably we obtain \eqref{eq:smallness_n_proof}. Recall that the constants $K^{\deter,\d}_{(A(\cdot,u_{0,n}),B(\cdot,u_{0,n}))},K^{\stoc,\d}_{(A(\cdot,u_{0,n}),B(\cdot,u_{0,n}))}$ are defined in \eqref{eq:constants_SMR} and $\delta\in ((1+\a)/p,1/2)$ is arbitrary.

For the sake of clarity, we split the proof into several steps.

\textit{Step 1: Existence of a local solution to \eqref{eq:QSEE} if $u_0\in L^0_{\F_0}(\O;\Xap)$}. Let $(u_n,\sigma_n)$ as above. Then let us define the following stopping time
$$
\tau_n:=\inf\Big\{t\in [0,\sigma_n)\,:\,\|u_n\|_{L^p(\I_t,w_{\a};X_1)}+\sup_{s\in [0,t]}\|u_n\|_{\Xap}\geq  2R_n\Big\}
$$
and $\tau_n:=\sigma_n$ if the set is empty. Then reasoning as in Step 4 in the proof of Theorem \ref{t:local} one immediately sees that $(u_n,\sigma_n\wedge \tau_n)$ verifies \eqref{eq:QSEE} with initial data $u_{0,n}$. Note that $u_{0,n}$ has norm less than $R_n$ (see \eqref{eq:def_C_n_revision_stage}), and therefore $\tau_n>0$ a.s. Thus, $\sigma_n\wedge \tau_n>0$ a.s.

Set $\sigma_n':=\sigma_n\wedge \tau_n$. Let $(\Lambda_n)_{n\geq 1}\subseteq\F_0$ be defined as $\Lambda_1:=\Gamma_1$ and $\Lambda_n:=\Gamma_{n+1}\setminus \Gamma_n$ for each $n>1$. Define $(u,\sigma)$ as $\sigma:=\sigma_n'$ on $\Lambda_n$, and $u=u_n$ on $\Lambda_n\times [0,\sigma_n')$. Since $(u_n,\sigma_n')$ is a local solution to \eqref{eq:QSEE} with initial data $u_{0,n}$, one can check that $(u,\sigma)$ is a local solution to \eqref{eq:QSEE}.

\textit{Step 2: Uniqueness of $(u,\sigma)$}.
Let $(v,\mu)$ be another local solution to \eqref{eq:QSEE}. Set
$$
\mu_n:=\inf\Big\{t\in [0,\mu)\,:\,\|v\|_{L^p(\I_t,w_{\a};X_1)}+\sup_{s\in [0,t]}\|v\|_{\Xap}\geq 2R_n\Big\},
$$
and $\tau_n=\mu$ if the set is empty. Then $(\one_{\Lambda_n}v,\one_{\Lambda_n}\mu_n)$ is a local solution to \eqref{eq:QSEE} with data $(\one_{\Lambda_n}u_{0,n},\one_{\Lambda_n}(\one_{\ll 0,\xi\rr }f+F_L(t,0)),\one_{\Lambda_n}(\one_{\ll 0,\xi\rr }g+G_L(t,0)))$ and $F=F_{R_n},G=G_{R_n}$.
At this stage, the conclusion follows as in Step 5 in the proof of Theorem \ref{t:local}.

\textit{Step 2: Existence of a maximal local solution}. Similarly as in Step 6 in the proof of Theorem \ref{t:local}, consider the set $\Xi$ of all stopping time $\tau$ such that \eqref{eq:QSEE} admits a unique local solution. Steps 1-2 ensure that $\Xi$ is not empty, and that there exists $\tau\in \Xi$ such that $\tau>0$ a.s. The rest of the proof follows as Step 5 in the proof of Theorem \ref{t:local}.

\textit{Step 3: Regularity}. The claimed regularity follows as in Step 6 in the proof of Theorem \ref{t:local} by replacing $\tilde{\tau}_n^v$ in \eqref{eq:def_stopping_times_regularity_estimates} by $\one_{\Gamma_n}\tilde{\tau}_n^v$.
\end{proof}

\begin{remark}
\label{r:smallness_n}
As in Remark \ref{r:smallness} the proof of Theorem \ref{t:local_Extended} shows that the condition \eqref{eq:smallness_condition_nonlinearities_QSEE_extended} can be replaced by \eqref{eq:smallness_n_proof}.
\end{remark}

\begin{proof}[Proof of Theorem \ref{t:semilinear}]
\eqref{it:semilinear_u_L_infty}: Follows by Theorem \ref{t:local}.

\eqref{it:semilinear_u_L_p}: The proof is similar to the one proposed for Theorem \ref{t:local}. Indeed, we may replace the truncations in Step 1 by
\begin{equation*}
\begin{aligned}
\tilde{F}_{\lambda}(u)&:=F_{c,\lambda}(u_0,u)+F_L(\cdot,u)+F_{\Tr}(\cdot,u),\\
\tilde{G}_{\lambda}(u)&:=G_{c,\lambda}(u_0,u)+G_L(\cdot,u)+G_{\Tr}(\cdot,u),\\
\tilde{f}&:=f+ F_c(\cdot,0)+F_{\Tr}(\cdot,u_0),\\
\tilde{g}&:=g+ G_c(\cdot,0)+G_{\Tr}(\cdot,u_0).
\end{aligned}
\end{equation*}
Due to Remark \ref{r:truncationFG_n} and the assumptions, the assertion of Lemma \ref{l:truncationFG} still holds. Now one can repeat the proof of Theorem \ref{t:local} literally.

\eqref{it:semilinear_u_L_0}: This follows from Theorem \ref{t:local} and the fact that the constants $\varepsilon_n$ do not depend on $n\geq 1$ (see  Remark \ref{r:smallness_n}).
\end{proof}

\section{Applications to semilinear SPDEs with gradient noise}
\label{s:semilinear_gradient}
In this section we will consider semilinear SPDEs on $X_0 = H^{s,q}$ which can be written in the form
\begin{equation}
\label{eq:semilinearabstract}
\begin{cases}
du +A(\cdot) u dt= F(\cdot,u)dt + (G(\cdot,u)+ B(\cdot) u)dW_{H},\qquad t\in \I_T,\\
u(0)=u_0,
\end{cases}
\end{equation}
which is a special case of the setting considered in Theorem \ref{t:semilinear}. In Subsections \ref{ss:conservative_RD}-\ref{ss:reaction_diffusion_gradient_nonlinearities} we take $H=\ell^2$ and in Subsection \ref{ss:Burgers_semilinear} $H=L^2(\Tor)$.

In the next section we motivate this setting and explain which class of operator pairs $(A,B)$ we will be considering.

\subsection{Introduction and motivations}
\label{ss:introduction_motivation_semilinear}
In this section we study a large class of nonlinear second order equations with gradient noise. Such equations are commonly known as stochastic--reaction diffusion equations, but they also include the filtering equation see \cite[Section 8]{Kry} and Allen-Cahn equations  \cite{BBP17_2, BBP17,FS19,RW13}. Allen-Cahn equations will be further investigated in Subsection \ref{ss:Allen_Cahn_stochastic_potentials}.

Stochastic reaction--diffusion equations have been extensively studied in the last decades. Nonlinear reaction--diffusion models arise in many scientific areas such as chemical reactions, pattern-formation, population dynamics. Stochastic perturbations of such models can model thermal fluctuations, uncertain determinations of the parameters and non-predictable forces acting on the system. For the sake of completeness let us mention some works on the deterministic case \cite{CDW09,F66,QS19,W86} and for the stochastic case one may consult \cite{CCLR07,C03,CR05,DHI13,EKHL18,FC13,F91,G19,HJT18,W19,WX18} and the references therein.

To the best of our knowledge, the results presented below are new. The reader can compare our results with the results in \cite[Section 3]{CriticalQuasilinear} in the deterministic framework.

In this section we analyse second order stochastic PDEs in non-divergence form with gradient noise:
\begin{equation}
\label{eq:semilinear_reaction_diffusion_prototype}
\begin{cases}
du +\A u dt= f(u,\nabla u) dt + \sum_{n\geq 1}\big(\bb_n u +g_n(u)\big) d{w}_{t}^n,  &\text{ on } \Dom,\\
u(0)=u_0, &\text{ on } \Dom.
\end{cases}
\end{equation}
here $(w_t^n:t\geq 0)_{n\geq 1}$ denotes a sequence of independent standard Brownian motions and $u:\I_T\times \O\times \Dom\to \R$ is the unknown process. Moreover, the differential operators $\A,\bb_n$ for each $x\in \Dom$, $\om\in\O$, $t\in(0,T)$ are given by
\begin{equation}
\label{eq:SND_quasi_AB_n_Def}
\begin{aligned}
(\A(t,\om)u)(t,\om,x)&:=-\sum_{i,j=1}^d a_{ij}(t,\om)\partial_{ij}^2 u(x),\\
(\bb_n(t,\om)u)(t,\om,x)&:=\sum_{j=1}^d b_{jn}(t,\om)\partial_j u(x).
\end{aligned}
\end{equation}
Lower order terms in the previous differential operators can be added (see Subsection \ref{sss:lower_order_terms}).  The assumptions on $f, g_n$ will be specified below.

In the applications of Theorem \ref{t:semilinear}, the following splitting arises naturally:
\begin{itemize}
\item $\Dom=\R^d$ or $\Dom=\Tor^d$;
\item $\Dom$ is a smooth domain in $\R^d$.
\end{itemize}
We will only consider $\R^d$ in detail since $\Tor^d$ can be treated by the same arguments. This will be done in Sections \ref{ss:conservative_RD}, \ref{ss:semilinear_reaction_diffusion}, and \ref{ss:reaction_diffusion_gradient_nonlinearities} using the maximal regularity result of Lemma \ref{l:SMR_semilinear_PDEs} below. In Section \ref{ss:stochastic_reaction_diffusion_domains} we will comment on domains and boundary conditions of Dirichlet and Neumann type. However, these results will only be formulated under suboptimal smallness assumptions on the $b_{jn}$.

To avoid the need for too many subcases, we will only consider $d\geq 2$. However, under suitable conditions on the parameters the case $d=1$ could also be included in most examples.

Next we introduce the function spaces which will be needed below. As usual, for $q\in (1,\infty)$ and $k\geq 1$, we denote by $W^{k,q}(\R^d)$ the set of all $f\in L^q(\R^d)$ such that $\partial^{\alpha} f\in L^q(\R^d)$ for any $\alpha\in \N^d_0$ such that $|\alpha|\leq k$ endowed with the natural norm. Let $\Four$ be the Fourier transform on $\R^d$. Then for any $s\in \R$ and $q\in (1,\infty)$ we set $H^{s,q}(\R^d)=\{f\in \Sch'(\R^d)\,:\,\Four^{-1}((1+|\cdot|^2)^{s/2}\Four(f))\in L^q(\R^d)\}$ with its natural norm. For $s\in \R$, $q\in (1,\infty)$ and $p\in [1,\infty]$, we define Besov spaces through real interpolation:
$$
B^{s}_{q,p}(\RR)=(H^{s_0,q}(\RR),H^{s_1,q}(\RR))_{\theta,p},
$$
where $s_0<s<s_1$ and $\theta\in (0,1)$ are chosen in such a way that $s=s_0(1-\theta)+s_1\theta$. We refer to \cite[Chapter 6]{BeLo} for alternative descriptions of the Besov spaces $B^{s}_{q,p}(\RR)$. For $s\in \R$ and $q\in (1,\infty)$, we denote the Sobolev-Slobodeckij spaces by $W^{s,q}(\RR):=B^{s}_{q,q}(\RR)$.

Recall from \cite[Theorem 6.4.5]{BeLo} that
\begin{equation}
\label{eq:H_complex_interpolation}
[H^{s_0,q}(\RR),H^{s_1,q}(\RR)]_{\theta}=H^{s,q}(\RR),\qquad s:=(1-\theta)s_0+\theta s_1.
\end{equation}
For the sake of simplicity, sometimes, we write $H^{s,q}$ instead of $H^{s,q}(\RR)$ (and analogously for other spaces) if no confusion seems possible.

The following will be a standing assumption in this section:
\begin{assumption}\label{ass:SND}
Suppose that one of the two conditions hold:
\begin{itemize}
\item $q\in [2,\infty)$, $p\in(2,\infty)$ and $\a\in [0,\frac{p}{2}-1)$;
\item $q=p=2$ and $\a=0$.
\end{itemize}
Assume the following two conditions on $a_{ij}$ and $b_{in}$:
\begin{enumerate}[{\rm(1)}]
\item \label{it:SND_a_b}
The functions $a_{ij}:(0,T)\times \O \to	\R$ and
$b_{jn}:(0,T)\times \O \to\R$ are progressively measurable. Moreover, there exists $K>0$ such that
$$|a_{ij}(t,\om)|+\|(b_{jn}(t,\om))_{n\geq 1}\|_{\ell^2}\leq K,\quad \text{ a.a.\ }\om\in \O,\text{ for all } t\in \I_T.$$
\item \label{it:SND_ellipticity}
There exists $\epsilon>0$ such that  a.s.\ for all $\xi\in \R^d$, $t\in \I_T$,
$$
\sum_{i,j=1}^d \Big( a_{ij}(t)- \frac{1}{2}\sum_{n\geq 1} b_{in}(t)b_{jn}(t)\Big) \xi_i\xi_j  \geq \epsilon |\xi|^2.
$$
\end{enumerate}
\end{assumption}

The following result will be employed several times.
\begin{lemma}
\label{l:SMR_semilinear_PDEs}
Let the Assumption \ref{ass:SND} be satisfied. Let $X_0 = H^{s,q}(\R^d)$ and $X_1 = H^{s+2,q}(\R^d)$ with $s\in \R$. Let
$A:\I_T\times \O\to \calL(X_1;X_0)$ and $B:\I_T\times \O\to \calL(X_1,\g(\ell^2,X_{\frac12}))$ be given by
$$
A(t)u:=\A(t)u,\quad (B(t)u)_n:=\bb_n(t)u, \ \ \ \ n\geq 1,
$$
where $\A,\bb_n$ are as in \eqref{eq:SND_quasi_AB_n_Def}. Then $(A,B)\in \MRta$ (see Definition \ref{def:SMRz}).
\end{lemma}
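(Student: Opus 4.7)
The plan is to deduce $(A,B)\in\MRta$ by first establishing the weaker statement $(A,B)\in\MRtaz$ and then upgrading the conclusion via the transference result Proposition \ref{prop:transferenceSMR}. For the latter, it suffices to exhibit a single couple $(\tA,0)$ satisfying Assumption \ref{ass:AB_boundedness} which lies in $\MRta$; taking $\tA=1-\Delta$ (so that $\Do(\tA)=X_1$ and $\tA$ is invertible) together with Theorem \ref{t:H_infinite_SMR} and Example \ref{ex:Hinfty}(4) yields this, after noting that $X_0=H^{s,q}(\R^d)$ is isomorphic to a closed subspace of an $L^q$-space via the Bessel potential map and that $1-\Delta$ has a bounded $H^\infty$-calculus of angle $<\pi/2$ on $H^{s,q}$ for every $s\in\R$, $q\in(1,\infty)$.

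Thus the heart of the argument is to show $(A,B)\in\MRtaz$. First I would verify that Assumption \ref{ass:AB_boundedness} holds in the present setting: the coefficient bound in Assumption \ref{ass:SND}\eqref{it:SND_a_b}, combined with the fact that pointwise multipliers by bounded functions are bounded on $H^{s,q}$ and that $\partial_j:H^{s+2,q}\to H^{s+1,q}=X_{1/2}$ is bounded (using \eqref{eq:H_complex_interpolation}), gives
\[\|A(t,\om)\|_{\calL(X_1,X_0)}+\|B(t,\om)\|_{\calL(X_1,\gamma(\ell^2,X_{1/2}))}\leq C_K,\]
where for the stochastic part one identifies $\gamma(\ell^2,H^{s+1,q})$ with $H^{s+1,q}(\R^d;\ell^2)$ (via \eqref{eq:gammaidentity} and the fact that Bessel potentials act componentwise on $\ell^2$-valued functions) and uses $\|(b_{jn}(t,\om))_{n\geq 1}\|_{\ell^2}\leq K$.

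With Assumption \ref{ass:AB_boundedness} in hand, the property $(A,B)\in\MRtaz$ in the case $s=0$ and $\a=0$ is precisely the main result of Krylov's $L^p$-theory for second-order SPDEs under the Stochastic Non-Degeneracy condition \eqref{it:SND_ellipticity}, as extended to the abstract operator framework with only progressively measurable (not continuous in time) coefficients in \cite{VP18}; the coercivity hypothesis required there is exactly the ellipticity assumption Assumption \ref{ass:SND}\eqref{it:SND_ellipticity}. The extension to arbitrary $s\in\R$ follows by conjugating with the Bessel potential $(1-\Delta)^{s/2}:H^{s,q}\to L^q$, which commutes with $\A$ and $\bb_n$ (since the coefficients $a_{ij}$, $b_{jn}$ are spatially constant), so the reformulated equation for $\tilde u=(1-\Delta)^{s/2}u$ is again a second-order equation on $L^q$ with the same coefficients, and hence lies in the scope of the case $s=0$. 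The extension to $\a\in[0,\frac{p}{2}-1)$ was carried out in \cite[Section 7]{AV19} and is also discussed in Subsection \ref{ss:operators_with_SMR}; in the Hilbert space case $p=q=2$, $\a=0$ one may instead rely directly on the classical variational/monotone operator approach \cite{LiuRock}, noting that $X_{1/2}=(X_0,X_1)_{1/2,2}$ by Assumption \ref{ass:Xtr}.

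The only real obstacle is to make sure the ingredients from Krylov-type $L^p$-theory are available for the particular combination of parameters $(s,q,p,\a)$ in the statement, and to correctly invoke the transference principle; both are by now standard (Proposition \ref{prop:transferenceSMR}), so the proof ultimately reduces to two short citations. Finally, applying Proposition \ref{prop:transferenceSMR} with the couple $(\tA,0)\in\MRta$ produced above and the couple $(A,B)\in\MRtaz$ just obtained yields $(A,B)\in\MRta$, which completes the argument.
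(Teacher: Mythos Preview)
Your argument is correct and follows essentially the same route as the paper: both reduce to $s=0$ by conjugating with $(1-\Delta)^{s/2}$ (which commutes with the $x$-independent coefficients) and then invoke the $L^p$-theory for second-order SPDEs with measurable-in-time coefficients from \cite{VP18}. The paper's proof simply cites \cite[Theorem~5.3]{VP18} as a black box after the reduction, whereas you unpack what is behind that citation (first $\MRtaz$, then transference via Proposition~\ref{prop:transferenceSMR} with $\tA=1-\Delta$); this is in fact exactly the mechanism implicit in \cite{VP18}, as noted in Remark~\ref{r:time_transference}(1). One small wording issue: the claim that ``pointwise multipliers by bounded functions are bounded on $H^{s,q}$'' is false for general $s$; what you actually need (and use) is that the coefficients are constants in $x$, so multiplication by them is trivially bounded on every $H^{s,q}$.
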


\begin{proof}
Since the coefficients $a_{ij},b_{jn}$ are $x$-independent by applying $(1-\Delta)^{s/2}$ to the equation, one can reduce to the case $s=0$. Now the result follows from \cite[Theorem 5.3]{VP18}.
\end{proof}

\subsection{Conservative stochastic reaction diffusion equations}
\label{ss:conservative_RD}
In this subsection we study the following differential problem for the unknown process $u:[0,T]\times \Omega\times\RR\to \R$,
\begin{equation}
\label{eq:semilinear_reaction_diffusion}
\begin{cases}
du -\A u dt= \div( f(\cdot,u)) dt+\sum_{n\geq 1} (\bb_n u+ g_{n}(\cdot,u))
dw_t^{n},  & \text{on } \RR,\\
u(0)=u_0, & \text{on } \RR;
\end{cases}
\end{equation}
for $t\in \I_T$. Here $\A,\bb_n$ are as in \eqref{eq:SND_quasi_AB_n_Def}.

A formal integration of \eqref{eq:semilinear_reaction_diffusion} shows that the system preserves mass under the flow, i.e.\ $\E\int_{\R^d}u(x,t)dx=\E\int_{\R^d}u_0(x)dx$. This feature is very important from a modelling point of view, since $u$ (typically) represents the mass of chemical reactants. This motivates the name `conservative reaction-diffusion equations'.

We study \eqref{eq:semilinear_reaction_diffusion} under the following assumption:
\begin{assumption}\label{ass:RDC}
The maps $f:\I_T\times\O\times \RR\times \R\to \R^d$, $g:=(g_n)_{n\geq 1}:\I_T\times\O\times\RR\times  \R\to \ell^2$ are $\Progress\otimes \Borel(\R^d)\otimes \Borel(\R)$-measurable with $f(\cdot, 0) = 0$ and $g(\cdot, 0) = 0$. Moreover, there exist $h>1$ and $C>0$ such that a.s.\ for all $t\in \I_T$, $z,z'\in \R$ and $x\in \RR$,
$$
|f(t,x,z)-f(t,x,z')|+\|g(t,x,z)-g(t,x,z')\|_{\ell^2}\leq C (|z|^{h-1}+|z'|^{h-1})|z-z'|.
$$
\end{assumption}

Typical examples of $f$ and $g$ which satisfies Assumption \ref{ass:RDC} are:
\begin{align}
\label{eq:consevation_choice_nonlinearities}
f(x,u)&=\tilde{f}(x)|u|^{h-1}u,  & g(x,u) &=\tilde{g}(x)|u|^{h-1}u,  \qquad & h\in (1, \infty),
\end{align}
where $\tilde{f}\in  L^{\infty}_{\Progress}((0,T)\times\Omega\times \RR;\R^d)$ and $\tilde{g}\in L^{\infty}_{\Progress}((0,T)\times\Omega\times \RR;\ell^2)$. The condition $f(\cdot, 0) = 0$ and $g(\cdot, 0) = 0$ can be weakened to a decay condition in the $x$-variable.

We study \eqref{eq:semilinear_reaction_diffusion} directly in `the almost very weak setting', i.e.\ in $X_0:=H^{-1-s,q}$ with $s\in [0,1)$ (cf. \cite[Subsection 4.5]{CriticalQuasilinear}). This will give us additional flexibility in the treatment of \eqref{eq:semilinear_reaction_diffusion}. The weak setting can be derived by setting $s=0$.

\subsubsection{Almost very weak setting}
\label{ss:reaction_diffusion_divergence_semilinear_almost_weak}
Let $s\in [0,1)$ and let $q\in [2,\infty)$. The differential problem \eqref{eq:semilinear_reaction_diffusion} can be rephrased as a stochastic evolution equation of the form \eqref{eq:semilinearabstract} with $X_0:=H^{-1-s,q}$ and $X_1:=H^{1-s,q}$.
Here
\begin{align*}
A(t)u&=\A(t)u, &  B(t)u &=(\bb_n (t)u)_{n\geq 1},
\\ F(t,u)&=\div(f(t,\cdot,u)), & G(t,u)&=(g_n(t,\cdot,u))_{n\geq 1}
\end{align*}
for $u\in H^{1-s,q}$. We say that $(u,\sigma)$ is a maximal local solution to \eqref{eq:semilinear_reaction_diffusion} if $(u,\sigma)$ is a maximal local solution to \eqref{eq:semilinearabstract} in the sense of Definition \ref{def:solution2}.

To show local existence for \eqref{eq:semilinear_reaction_diffusion} we employ Theorem \ref{t:semilinear}. By Lemma \ref{l:SMR_semilinear_PDEs} it is enough to look at suitable bounds for the non-linearities $F,G$. To this end, let us start by looking at $F$. By Assumption \ref{ass:RDC}, it follows that
\begin{equation}
\label{eq:reaction_diffusion_estimate_F}
\begin{aligned}
\|F(\cdot, u)-F(\cdot, v)\|_{H^{-1-s,q}}	&\stackrel{(i)}{\lesssim} \|F(\cdot, u)-F(\cdot, v)\|_{H^{-1,r}}\\
& \lesssim\|f(\cdot, u)-f(\cdot, v)\|_{L^{r}}\\
&\lesssim \Big\|(|u|^{h-1}+|v|^{h-1})|u-v|\Big\|_{L^r}\\
&\stackrel{(ii)}{\lesssim} (\|u\|_{L^{h r}}^{h-1}+\|v\|_{L^{h r}}^{h-1})\|u-v\|_{L^{h r}}\\
&\stackrel{(iii)}{\lesssim} (\|u\|_{H^{\theta,q}}^{h-1}+\|v\|_{H^{\theta,q}}^{h-1})\|u-v\|_{H^{\theta,q}};
\end{aligned}
\end{equation}
where in $(i)$ we used the Sobolev embedding with $r$ defined by $-1-\frac{d}{r} = -1-s -\frac{d}{q}$, in $(ii)$ the H\"{o}lder inequality with exponent $h,\frac{h}{h-1}$ and in $(iii)$ the Sobolev embedding \eqref{eq:HD_embedding} and $\theta -\frac{d}{q} = -\frac{d}{hr}$. Note that $r\in (1,\infty)$ since $q\geq 2$, $d\geq 2$ and $s\in [0,1)$ by assumption. Note that $\theta$ has to satisfy $\theta\in (0,1-s)$ in order to obtain a space in between $X_0$ and $X_1$. Combining the identities we obtain
$$
\frac{d}{q}-\theta= \frac{d}{h r}=\frac{1}{h}\Big(\frac{d}{q}+s\Big)
\Rightarrow \theta= \frac{d}{q}\Big(1-\frac{1}{h}\Big)-\frac{s}{h}.
$$
Therefore, to ensure that $\theta\in (0,1-s)$ we assume\footnote{Here we have set $1/0:=\infty$.}
\begin{equation}
\label{eq:reaction_diffusion_1_limitation_q}
 \frac{d(h-1)}{h-s(h-1)}<q<\frac{d(h-1)}{s}.
\end{equation}
Since $s\neq 1$ and $h>1$ the set of $q$ which satisfies \eqref{eq:reaction_diffusion_1_limitation_q}  is not-empty. If \eqref{eq:reaction_diffusion_1_limitation_q} holds, due to \eqref{eq:H_complex_interpolation} one has $H^{\theta,q}=[H^{-1-s,q},H^{1-s,q}]_{\beta_1}$ where
\begin{equation}
\label{eq:beta_1_conservative_reaction_diffusion}
\beta_1=\frac{1+\theta+s}{2}=\frac{1}{2}\Big[\Big(\frac{d}{q}+s\Big)\Big(1-\frac{1}{h}\Big)+1\Big]\in (0,1).
\end{equation}
To check the condition \ref{HFcritical} we may split the discussion into three cases:
\begin{enumerate}[{\rm(1)}]
\item If $1-\frac{1+\a}{p}>\beta_1$, by Remark \ref{r:non_linearities}\eqref{it:non_linearities_continuous_trace}, \ref{HFcritical} follows by setting $F_{\Tr}(t,u):=\div(f(t,\cdot,u))$ and $F_{L}\equiv F_c\equiv 0$.
\item If $1-\frac{1+\a}{p}= \beta_1$, by \eqref{eq:reaction_diffusion_estimate_F} and Remark \ref{r:non_linearities}\eqref{it:non_linearities_varphi_equal_to_beta}, \ref{HFcritical} follows by setting $F_L\equiv F_{\Tr}\equiv 0$, $F_{c}(t,u):=\div(f(t,\cdot,u))$, $m_F=1$, $\rho_1=h-1$ and $\varphi_1=\beta_1$.
\item If $1-\frac{1+\a}{p}<\beta_1$ we set $F_{c}(t,u):=\div(f(t,\cdot,u))$ and $F_{L}\equiv F_{\Tr}\equiv 0$. As in the previous item we set $m_F=1$, $\rho_1=h-1$ and $\varphi_1=\beta_1$. By \eqref{eq:reaction_diffusion_estimate_F} it remains to check the condition \eqref{eq:HypCritical}. In this situation, \eqref{eq:HypCritical} becomes,
\begin{equation}
\label{eq:reaction_diffusion_div_critical_weights}
\frac{1+\a}{p}\leq \frac{\rho_1+1}{\rho_1}(1-\beta_1)
=\frac{1}{2}\frac{h}{h-1}-\frac{1}{2}\Big(\frac{d}{q}+s\Big).
\end{equation}
Note that the assumption $\a\geq 0$ implies
\begin{equation}
\label{eq:reaction_diffusion_divergence_1_limitation_p}
\frac{1}{p}+\frac{d}{2q}+\frac{s}{2}\leq \frac{h}{2(h-1)}.
\end{equation}
Since $d/2q+s/2<h/[2(h-1)]$ (thanks to the lower bound in \eqref{eq:reaction_diffusion_1_limitation_q}) the above inequality is always verified for $p$ sufficiently large.
\end{enumerate}

It remains to estimate $G$. To this end we can reasoning as in \eqref{eq:reaction_diffusion_estimate_F}. First, note that $X_{1/2}=H^{-s,q}$ (see \eqref{eq:H_complex_interpolation}) and let $r,\theta$ be as in \eqref{eq:reaction_diffusion_estimate_F}. By Assumption \ref{ass:RDC} one has
\begin{equation}
\label{eq:estimate_G_reaction_diffusion_div}
\begin{aligned}
\|G(\cdot, u)-G(\cdot, v)\|_{\g(\ell^2;H^{-s,q})}
&\lesssim \|G(\cdot, u)-G(\cdot, v)\|_{\g(\ell^2;L^r)}\\
&\stackrel{(i)}{\eqsim} \|G(\cdot, u)-G(\cdot, v)\|_{L^r(\ell^2)}\\
&\lesssim\|(|u|^{h-1}+|v|^{h-1})|u-v|\|_{L^r}\\
&\lesssim (\|u\|_{H^{\theta,q}}^{h-1}+\|v\|_{H^{\theta,q}}^{h-1})\|u-v\|_{H^{\theta,q}};
\end{aligned}
\end{equation}
where in $(i)$ we used the identification $\g(\ell^2,L^r)=L^r(\ell^2): = L^r(\R^d;\ell^2)$ (see \eqref{eq:gammaidentity}). The previous considerations show that $G$ verifies \ref{HGcritical} under the same assumptions on $F$.

Therefore, Theorem \ref{t:semilinear} gives the following result.

\begin{theorem}
\label{t:reaction_diffusion_divergence_local}
Let Assumptions \ref{ass:SND} and \ref{ass:RDC} be satisfied and $d\geq 2$. Let $s\in [0,1)$. Assume \eqref{eq:reaction_diffusion_1_limitation_q}. Let $\beta_1$ be as in \eqref{eq:beta_1_conservative_reaction_diffusion}. Assume that one of the following conditions is satisfied
\begin{itemize}
\item $1-(1+\a)/p\geq \beta_1$;
\item $1-(1+\a)/p<\beta_1$ and \eqref{eq:reaction_diffusion_div_critical_weights} holds.
\end{itemize}
Then for each $u_0\in L^0_{\F_0}(\O;B^{1-s-2(1+\a)/p}_{q,p}(\R^d))$ there exists a maximal local solution $(u,\sigma)$ to \eqref{eq:semilinear_reaction_diffusion}. Moreover, there exists a localizing sequence $(\sigma_n)_{n\geq 1}$ such that a.s.\ for all $n\geq 1$
$$
u\in L^p(\I_{\sigma_n},w_{\a};H^{1-s,q})\cap C(\overline{\I}_{\sigma_n};B^{1-s-2(1+\a)/p}_{q,p})\cap C((0,\sigma_n];B^{1-s-2/p}_{q,p}).
$$
\end{theorem}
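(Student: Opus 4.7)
The plan is to apply Theorem \ref{t:semilinear}\eqref{it:semilinear_u_L_0} in the abstract framework with $X_0 = H^{-1-s,q}(\R^d)$ and $X_1 = H^{1-s,q}(\R^d)$, and to verify that all of the hypotheses of the theorem have in fact already been checked in the discussion preceding the statement. The first task is to identify the trace space. By the standard real interpolation identity for Bessel potential spaces on $\R^d$,
\begin{equation*}
\Xap = (H^{-1-s,q}, H^{1-s,q})_{1-(1+\a)/p, p} = B^{1-s-2(1+\a)/p}_{q,p}(\R^d),
\end{equation*}
and analogously $\Xp = B^{1-s-2/p}_{q,p}(\R^d)$. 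For the linear part, set $A(t)u := \A(t) u$ and $(B(t)u)_n := \bb_n(t) u$; then Assumption \ref{ass:SND} together with Lemma \ref{l:SMR_semilinear_PDEs} yields $(A, B) \in \MRta$, so the linear stochastic maximal regularity hypothesis of Theorem \ref{t:semilinear} is satisfied.

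Next I would cast the nonlinearities in the form required by \ref{HFcritical}--\ref{HGcritical}. Setting $F(t,u) := \div(f(t,\cdot,u))$ and $G(t,u) := (g_n(t,\cdot,u))_{n\geq 1}$, the estimate \eqref{eq:reaction_diffusion_estimate_F} shows that $F$ is locally Lipschitz from $H^{\theta,q}$ into $X_0$ with polynomial growth of order $h-1$, provided the Sobolev exponents match up; by the choice of $r$ in that estimate and of $\theta = \frac{d}{q}(1-\frac{1}{h}) - \frac{s}{h}$, the hypothesis \eqref{eq:reaction_diffusion_1_limitation_q} ensures $\theta \in (0, 1-s)$, hence $H^{\theta,q} = [X_0, X_1]_{\beta_1}$ by \eqref{eq:H_complex_interpolation} with $\beta_1$ given by \eqref{eq:beta_1_conservative_reaction_diffusion}. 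The same computation applies to $G$ via \eqref{eq:estimate_G_reaction_diffusion_div}. Then one splits into the three cases already discussed in the text: in the subcritical case $1-(1+\a)/p > \beta_1$ one puts the whole nonlinearity into $F_{\Tr}$ (using Remark \ref{r:non_linearities}\eqref{it:non_linearities_continuous_trace}); in the strictly critical case $1-(1+\a)/p < \beta_1$ one places it in $F_c$ with $m_F = 1$, $\rho_1 = h-1$, $\varphi_1 = \beta_1 = \beta_1$, and observes that \eqref{eq:HypCritical} reduces to exactly the inequality \eqref{eq:reaction_diffusion_div_critical_weights} assumed in the theorem; the borderline case $1-(1+\a)/p = \beta_1$ is handled by Remark \ref{r:non_linearities}\eqref{it:non_linearities_varphi_equal_to_beta}. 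In all three cases the linear part $F_L \equiv G_L \equiv 0$, so the smallness condition \eqref{eq:smallness_semilinear} of Theorem \ref{t:semilinear} is automatic.

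Applying Theorem \ref{t:semilinear}\eqref{it:semilinear_u_L_0} then produces a maximal local solution $(u, \sigma)$ with $\sigma > 0$ a.s., and any localizing sequence $(\sigma_n)_{n\geq 1}$ satisfies $u \in L^p(\I_{\sigma_n}, w_\a; X_1) \cap C(\overline{\I}_{\sigma_n}; \Xap)$ a.s.\ with instantaneous regularization $u \in C((0, \sigma); \Xp)$, which via the space identifications above is precisely the asserted regularity. There is no real obstacle here: the only bookkeeping points are the trace space identification and the case split for the critical exponent, both of which are standard. The substantive work has already been done in Lemma \ref{l:SMR_semilinear_PDEs} and in the nonlinearity bounds \eqref{eq:reaction_diffusion_estimate_F}--\eqref{eq:estimate_G_reaction_diffusion_div}; the theorem itself is a clean application of the abstract machinery.
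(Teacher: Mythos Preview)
Your proposal is correct and follows essentially the same route as the paper: the theorem is obtained as a direct application of Theorem \ref{t:semilinear}\eqref{it:semilinear_u_L_0} once the abstract hypotheses have been verified, and the paper's preceding discussion carries out exactly the verification you describe (Lemma \ref{l:SMR_semilinear_PDEs} for $(A,B)\in\MRta$, the estimates \eqref{eq:reaction_diffusion_estimate_F}--\eqref{eq:estimate_G_reaction_diffusion_div} for $F$ and $G$, and the same three-case split for \ref{HFcritical}--\ref{HGcritical}). Your observation that $F_L\equiv G_L\equiv 0$ makes \eqref{eq:smallness_semilinear} vacuous is the only point the paper leaves implicit.
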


\subsubsection{Critical spaces for \eqref{eq:semilinear_reaction_diffusion}}
\label{sss:critical_div_reaction_diffusion}
In this subsection we study the existence of critical spaces for \eqref{eq:semilinear_reaction_diffusion}.

To motivate the setting let $f,g_n$ be as in \eqref{eq:consevation_choice_nonlinearities} with $\tilde{f}$, $\tilde{g}\in \ell^2$ constant w.r.t. It will turn out that our abstract notion of critical spaces as introduced in Remark \ref{r:non_linearities} \eqref{it:critical_beta_grater_than} is consistent with the natural scaling of \eqref{eq:semilinear_reaction_diffusion}-\eqref{eq:consevation_choice_nonlinearities}.
First consider the deterministic setting, i.e.\ $b_{jn}\equiv \tilde{g}_n\equiv 0$. If $u$ is a (local smooth) solution to \eqref{eq:semilinear_reaction_diffusion}-\eqref{eq:consevation_choice_nonlinearities} on $(0,T)\times \R^d$, then $u_{\lambda}(x,t):=\lambda^{1/[2(h-1)]}u(\lambda t,\lambda^{1/2}x)$ is a (local smooth) solution to \eqref{eq:semilinear_reaction_diffusion} on $(0,T/\lambda)\times \R^d$ for each $\lambda>0$. Note that the map $ u\mapsto u_{\lambda}$ induces a mapping on the initial data $u_0$ given by $u_0\mapsto u_{0,\lambda}$ where $u_{0,\lambda}(x):=\lambda^{1/[2(h-1)]}u_0(\lambda^{1/2}x)$ for $x\in \R^d$.

In the theory of PDEs a function space is called critical for \eqref{eq:semilinear_reaction_diffusion}-\eqref{eq:consevation_choice_nonlinearities} (in absence of noise) if it is invariant under the above mapping $u_0\mapsto u_{0,\lambda}$. An example of a Besov spaces which is (locally) invariant under this scaling is $B^{d/q-1/(h-1)}_{q,p}$ for $q,p\in (1,\infty)$. This can be made precise by looking at the so-called homogeneous version of such spaces. Indeed, one has
\begin{equation}
\label{eq:scaling_B_conservative_reaction_diffusion}
\begin{aligned}
\|u_{0,\lambda}\|_{\dot{B}^{d/q-1/(h-1)}_{q,p}}
&\eqsim \lambda^{1/[2(h-1)]}(\lambda^{1/2})^{d/q-1/{(h-1)}-d/q}
\|u_0\|_{\dot{B}^{d/q-1/(h-1)}_{q,p}}\\
&=\|u_0\|_{{\dot{B}^{d/q-1/(h-1)}_{q,p}}};
\end{aligned}
\end{equation}
where the implicit constants do not depend on $\lambda>0$. It will turn out that this space appears naturally when equality in \eqref{eq:reaction_diffusion_div_critical_weights} is reached. This observation was made in \cite[Sections 2.3 and 3-6]{CriticalQuasilinear} for many PDEs.

Next consider the stochastic problem. At least formally, we can show that if $u$ is a (local smooth) solution to \eqref{eq:semilinear_reaction_diffusion}, then $u_{\lambda}$ is a (local smooth) solution to \eqref{eq:semilinear_reaction_diffusion} where the $(w_{t}^n\,:\,t\geq 0)_{n\geq 1}$ is replaced by the sequence of independent Brownian motions $(b_{t,\lambda}^n\,:\,t\geq 0)_{n\geq 1}:=(\lambda^{-1/2}w_{\lambda t}^n\,:\,t\geq 0)_{n\geq 1}$. To see this, let $t\in (0,T)$ and let us look at the strong formulation of \eqref{eq:semilinear_reaction_diffusion} as in Definition \ref{def:solution1}. As we have seen before, under the map $u\mapsto u_{\lambda}$ all the deterministic integrals have all the same scaling, therefore it is enough to study one of them. For instance,
\begin{align*}
\int_0^{t/\lambda}\Delta u_{\lambda}(s,x)ds= \lambda^{\frac{1}{2(h-1)}}\int_0^t \Delta u(s',\lambda^{1/2} x)ds'.
\end{align*}
Such scaling agrees with the scaling of the stochastic integrals,
\begin{align}
\nonumber \int_0^{t/\lambda} |u_{\lambda}(s,x)|^{h-1}u_{\lambda}(s,x) db_{s,\lambda}^n
&= \int_0^{t/\lambda} \lambda^{\frac{1}{2(h-1)}} |u(\lambda s,\lambda^{1/2}x)|^{h-1} u(\lambda s,\lambda^{1/2}x) dw_{\lambda s}^n
\\ \label{eq:stochscalinggrowth} &= \lambda^{\frac{1}{2(h-1)}} \int_0^{t} |u(s,\lambda^{1/2}x)|^{h-1} u(s,\lambda^{1/2}x) dw_{s}^n,
\end{align}
where $n\geq 1$ is fixed. The same holds for the stochastic integral for the $b$-term. Therefore, $u_{\lambda}$ is a solution to \eqref{eq:semilinear_reaction_diffusion} with a scaled noise.

After these formal calculations, let us turn to our setting. We will analyse when equality in \eqref{eq:reaction_diffusion_div_critical_weights} can be allowed. We begin by looking at the case $p\in (2,\infty)$. Note that $\a\in [0,\frac{p}{2}-1)$ if and only if $\frac{1+\a}{p}\in [\frac{1}{p},\frac{1}{2})$ and due to \eqref{eq:reaction_diffusion_divergence_1_limitation_p} to ensure the existence of a weight $\a$ which realizes equality in \eqref{eq:reaction_diffusion_div_critical_weights} we have to assume
\begin{equation}
\label{eq:critical_reaction_diffusion_divergence_bound}
\frac{1}{2}\frac{h}{h-1}-\frac{1}{2}\Big(\frac{d}{q}+s\Big)<\frac{1}{2}.
\end{equation}
Simple computations show that the previous is verified if and only if
\begin{equation}
\label{eq:critical_reaction_diffusion_divergence_limitation_q}
h\geq \frac{1+s}{s}\qquad \text{ or }\qquad \Big[ h<\frac{1+s}{s} \ \ \text{and}  \ \ q<\frac{d(h-1)}{1-s(h-1)}\Big].
\end{equation}
If \eqref{eq:reaction_diffusion_divergence_1_limitation_p} and \eqref{eq:critical_reaction_diffusion_divergence_limitation_q} hold, then we set
\begin{equation}\label{eq:criticalreacdiff}
\a_{\crit}=\frac{p}{2}\Big(\frac{h}{h-1}-\frac{d}{q}-s\Big)-1.
\end{equation}
Then  $\a_{\crit}\in [0,\frac{p}{2}-1)$ and the corresponding critical space is
\begin{equation}
\label{eq:conservation_reaction_diffusion_critical_spaces}
\Xapcrit=B^{1-s-2\frac{1+\a_{\crit}}{p}}_{q,p}(\R^d)=B^{\frac{d}{q}-\frac{1}{h-1}}_{q,p}(\R^d).
\end{equation}
Note that the above space coincides with the one appearing in the above discussion. Moreover, the space does not depend on the parameter $s>0$, and depends on $p$ only through the microscopic parameter. The independence on $s>0$ is in accordance with the independence of the scale founded in the deterministic case for \eqref{eq:QSEE} without noise and bilinear non-linearities, see \cite[Section 2.4]{CriticalQuasilinear}.

It remains to consider the case $p=q=2$ and $\a=0$. We expect that a similar space appears also in this case. Indeed, the condition \eqref{eq:reaction_diffusion_div_critical_weights} implies the identity
\begin{equation}
\label{eq:semilinear_h_identity}
h=\frac{2+d+2s}{d+2s}>1.
\end{equation}
Note that the lower bound in \eqref{eq:reaction_diffusion_1_limitation_q} is automatically verified and the upper bound in \eqref{eq:reaction_diffusion_1_limitation_q} is equivalent to $d>2s^2/(1-s)$. Therefore, in the case $p=q=2$, $\a=0$ and $h$ as in \eqref{eq:semilinear_h_identity}, the trace space for \eqref{eq:semilinear_reaction_diffusion} becomes
$$
\Xap=B^{-s}_{2,2}(\R^d) =B^{\frac{d}{2}-\frac{1}{h-1}}_{2,2}(\R^d)=H^{\frac{d}{2}-\frac{1}{h-1}}(\R^d).
$$
In the case $s=0$ one has $h=(2+d)/d=2/d+1$ and condition \eqref{eq:reaction_diffusion_1_limitation_q} is satisfied.

Let us summarize what we have proved in the following:
\begin{theorem}
\label{t:Reaction_diffusion_div_critical}
Let Assumptions \ref{ass:SND} and \ref{ass:RDC} be satisfied and $d\geq 2$. Let $s\in [0,1)$ and let one of the following conditions be satisfied:
\begin{itemize}
\item $p,q\in (2,\infty)$, \eqref{eq:reaction_diffusion_1_limitation_q}, \eqref{eq:reaction_diffusion_divergence_1_limitation_p} and \eqref{eq:critical_reaction_diffusion_divergence_limitation_q} hold;
\item $p=q=2$, $d>2s^2/(1-s)$, and $h$ is as in \eqref{eq:semilinear_h_identity}.
\end{itemize}
Let $\a_{\crit}$ be as in \eqref{eq:criticalreacdiff}. Then for each
$$
u_0\in L^0_{\F_0}(\O;B^{\frac{d}{q}-\frac{1}{h-1}}_{q,p}(\R^d))
$$
there exists a maximal local solution $(u,\sigma)$ to \eqref{eq:semilinear_reaction_diffusion}. Moreover, there exists a localizing sequence $(\sigma_n)_{n\geq 1}$ such that a.s.\ for all $n\geq 1$
$$
u\in L^{p}(\I_{\sigma_n},w_{\a_{\crit}};H^{1-s,q}(\R^d))
\cap C(\overline{\I}_{\sigma_n};B^{\frac{d}{q}-\frac{1}{h-1}}_{q,p}(\R^d))\cap
C((0,\sigma_n];B^{1-s-\frac{2}{p}}_{q,p}(\R^d)).
$$
\end{theorem}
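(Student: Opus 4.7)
The strategy is to deduce this as the critical-weight specialization of Theorem \ref{t:reaction_diffusion_divergence_local} (equivalently, a direct application of Theorem \ref{t:semilinear}\eqref{it:semilinear_u_L_0} in the abstract framework of Subsection \ref{ss:reaction_diffusion_divergence_semilinear_almost_weak}). Set $X_0 = H^{-1-s,q}(\R^d)$, $X_1 = H^{1-s,q}(\R^d)$ and reformulate \eqref{eq:semilinear_reaction_diffusion} as \eqref{eq:semilinearabstract} with $F(t,u) = \div(f(t,\cdot,u))$ and $G(t,u) = (g_n(t,\cdot,u))_{n\geq 1}$. Stochastic maximal regularity $(A,B)\in \MRta$ is immediate from Lemma \ref{l:SMR_semilinear_PDEs} applied with smoothness parameter $-1-s$. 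It remains to verify that the choice $\a = \a_{\crit}$ given by \eqref{eq:criticalreacdiff} is admissible and that the trace space at this weight coincides with $B^{d/q-1/(h-1)}_{q,p}(\R^d)$.

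For admissibility in the case $p>2$, the inequality $\a_{\crit}\geq 0$ is exactly \eqref{eq:reaction_diffusion_divergence_1_limitation_p}, while the strict upper bound $\a_{\crit}<\frac{p}{2}-1$ is equivalent to $\frac{h}{h-1}-\frac{d}{q}-s<1$, i.e.\ \eqref{eq:critical_reaction_diffusion_divergence_bound}, which by routine algebra is captured by \eqref{eq:critical_reaction_diffusion_divergence_limitation_q}. In the Hilbert case $p=q=2$, the relation \eqref{eq:semilinear_h_identity} forces $\a_{\crit}=0$ and the hypothesis $d>2s^2/(1-s)$ guarantees that $q=2$ still satisfies the range \eqref{eq:reaction_diffusion_1_limitation_q}. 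For the identification of the trace space, \eqref{eq:H_complex_interpolation} together with the definition of Besov spaces by real interpolation gives
\begin{equation*}
\Xapcrit = (H^{-1-s,q},H^{1-s,q})_{1-\frac{1+\a_{\crit}}{p},p} = B^{1-s-2\frac{1+\a_{\crit}}{p}}_{q,p}(\R^d),
\end{equation*}
and plugging in \eqref{eq:criticalreacdiff} yields $1-s-2\frac{1+\a_{\crit}}{p} = \frac{d}{q}-\frac{1}{h-1}$, as required.

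For the nonlinearities we take $F_L=F_{\Tr}=0$, $G_L=G_{\Tr}=0$, $m_F=m_G=1$, $\rho_1=\rho_2=h-1$ and $\beta_1=\varphi_1$ as in \eqref{eq:beta_1_conservative_reaction_diffusion}. The required Lipschitz and growth estimates were already carried out in \eqref{eq:reaction_diffusion_estimate_F} and \eqref{eq:estimate_G_reaction_diffusion_div}, and the scaling relations \eqref{eq:HypCritical} and \eqref{eq:HypCriticalG} now hold with equality precisely because $\a_{\crit}$ saturates \eqref{eq:reaction_diffusion_div_critical_weights}. Since $L_F=L_G=0$, the smallness hypothesis \eqref{eq:smallness_semilinear} is trivially met, so Theorem \ref{t:semilinear}\eqref{it:semilinear_u_L_0} produces a maximal local solution $(u,\sigma)$ and a localizing sequence $(\sigma_n)_{n\geq 1}$ with $u\in L^p(\I_{\sigma_n},w_{\a_{\crit}};H^{1-s,q})\cap C(\overline{\I}_{\sigma_n};\Xapcrit)$ a.s. The final instantaneous-regularization assertion $u\in C((0,\sigma_n];B^{1-s-2/p}_{q,p})$ is the regularization clause of Theorem \ref{t:local_Extended}\eqref{it:regularity_data_L0}, using that $\Xp = (X_0,X_1)_{1-\frac{1}{p},p} = B^{1-s-2/p}_{q,p}(\R^d)$.

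There is no genuine obstacle: the analytical content (Sobolev embeddings, interpolation identities, SMR) has been laid down in the earlier subsections, and the argument is essentially bookkeeping at the endpoint weight. The only delicate point is checking that the algebraic conditions \eqref{eq:reaction_diffusion_divergence_1_limitation_p} and \eqref{eq:critical_reaction_diffusion_divergence_limitation_q} exhaust precisely the range where $\a_{\crit}$ is admissible; this is verified by the direct equivalences described above.
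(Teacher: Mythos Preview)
Your proposal is correct and follows essentially the same approach as the paper: the theorem is obtained by specializing Theorem \ref{t:reaction_diffusion_divergence_local} to the endpoint weight $\a=\a_{\crit}$, after verifying via \eqref{eq:reaction_diffusion_divergence_1_limitation_p} and \eqref{eq:critical_reaction_diffusion_divergence_limitation_q} (respectively \eqref{eq:semilinear_h_identity} and $d>2s^2/(1-s)$ in the Hilbert case) that this weight is admissible and computing the resulting trace space \eqref{eq:conservation_reaction_diffusion_critical_spaces}. The paper presents this as a summary of the discussion in Subsection \ref{sss:critical_div_reaction_diffusion} rather than a separate proof, and your write-up makes the bookkeeping more explicit but adds no new idea.
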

Note that the space $B^{\frac{d}{q}-\frac{1}{h-1}}_{q,p}(\R^d)$ becomes larger as $p$ tends to $\infty$. Therefore, for $u_0$ as above and any $\delta<1-s$, there exists a maximal local solution $(u,\sigma)$ to \eqref{eq:semilinear_reaction_diffusion} such that $u \in C((0,\sigma_n];B^{\delta}_{q,\infty}(\R^d))$ a.s. In particular, if $s=0$, then for all $\delta<1$ we find a maximal local solution to \eqref{eq:semilinear_reaction_diffusion} such that $u\in C((0,\sigma_n];B^{\delta}_{q,\infty}(\R^d))$ a.s.\ Bootstrapping arguments related to such regularization phenomena will be investigated in the papers \cite{AV19_QSEE_2,AV19_QSEE_3}.

Let us conclude this section by giving an example which illustrates the usefulness of $s\in (0,1)$.
\begin{example}
Let $d=3$ and $h=2$. The restriction on $q\geq 2$ becomes
\begin{equation}
\label{eq:bound_conservative_reaction_diffusion_special_case}
\frac{3}{2-s}<q<\min\Big\{\frac{3}{s},\frac{3}{1-s}\Big\}, \qquad s\in [0,1).
\end{equation}
Therefore, in the weak setting $s=0$ one needs $q\in [2,3)$, and the critical space $B^{\frac{3}{q}-1}_{q,p}(\R^d)$ has strictly positive smoothness. To admit critical spaces with negative smoothness, we need $s>0$. Indeed, note that the choice $s=1/2$ optimizes the right hand-side of \eqref{eq:bound_conservative_reaction_diffusion_special_case}. Therefore, with $s=1/2$ we can allow $q\in [2,6)$ and thus we have a larger class of critical spaces which goes down to smoothness $-\frac{1}{2}$.
\end{example}

Also the space $L^{d(h-1)}(\R^d)$ is invariant under the scaling $u_0\mapsto u_{0,\lambda}$. From the previous result we obtain the following corollary.
\begin{corollary}
\label{cor:conservative_local_existence_L_r}
Let Assumptions \ref{ass:SND} and \ref{ass:RDC} be satisfied and $d\geq 2$. Let $h>1+\frac{2}{d}$, $q:=d(h-1)$ and $p\in (q,\infty)$. Then there exists $\bar{s}>0$ such that for all $s\in (0,\bar{s})$ and
$$
u_0\in L^0_{\F_0}(\O;L^{d(h-1)}(\R^d))
$$
there exists a maximal local solution to \eqref{eq:semilinear_Dirichlet_conditions}, and there exists a localizing sequence $(\sigma_n)_{n\geq 1}$ such that for any $n\geq 1$ and a.s.
$$
u\in L^p(\I_{\sigma_n},w_{\a_{\crit}};H^{1-s,q}(\R^d))\cap C(\overline{\I}_{\sigma_n};B^{0}_{q,p}(\R^d))\cap C((0,\sigma_n];B^{1-s-\frac{2}{p}}_{q,p}(\R^d)),
$$
where $\a_{\crit}$ is given by \eqref{eq:criticalreacdiff}.
\end{corollary}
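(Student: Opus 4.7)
The plan is to deduce the result directly from Theorem \ref{t:Reaction_diffusion_div_critical} by choosing $\bar s$ small enough and identifying $L^{d(h-1)}$ with a subspace of the critical Besov space. The first observation is that with $q=d(h-1)$ one has $d/q-1/(h-1)=0$, so the critical space in \eqref{eq:conservation_reaction_diffusion_critical_spaces} collapses to $B^{0}_{q,p}(\R^d)$. Since $h>1+2/d$ gives $q=d(h-1)>2$, and $p>q\geq 2$, the classical embedding
\[L^{q}(\R^d)= F^{0}_{q,2}(\R^d) \hookrightarrow F^{0}_{q,q}(\R^d)=B^{0}_{q,q}(\R^d)\hookrightarrow B^{0}_{q,p}(\R^d)\]
holds (using $F^{0}_{q,r_1}\hookrightarrow F^{0}_{q,r_2}$ for $r_1\le r_2$ since $q\ge 2$, then $B^{0}_{q,q}\hookrightarrow B^{0}_{q,p}$ since $p\ge q$). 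Consequently $u_0\in L^{0}_{\F_0}(\O;L^{d(h-1)})$ embeds into $L^{0}_{\F_0}(\O;B^{0}_{q,p})$.

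Next I would check that every standing hypothesis of Theorem \ref{t:Reaction_diffusion_div_critical} holds once $s$ is small. Plugging $q=d(h-1)$ into the two-sided bound \eqref{eq:reaction_diffusion_1_limitation_q}, both inequalities reduce to $s<1$, hence are automatic. The condition \eqref{eq:reaction_diffusion_divergence_1_limitation_p} becomes
\[\frac{1}{p}+\frac{1}{2(h-1)}+\frac{s}{2}\leq \frac{h}{2(h-1)}, \quad\text{i.e.,}\quad s\leq 1-\frac{2}{p},\]
which, since $p>q\geq 2$, holds for all sufficiently small $s>0$. Finally, \eqref{eq:critical_reaction_diffusion_divergence_limitation_q} asks either $h\geq (1+s)/s$ or both $h<(1+s)/s$ and $q<d(h-1)/[1-s(h-1)]$; with $q=d(h-1)$ the second clause is equivalent to $s>0$ together with $s<1/(h-1)$, so it is fulfilled once $s$ is small. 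Thus, taking any
\[\bar{s}:=\min\Big\{1-\tfrac{2}{p},\,\tfrac{1}{h-1}\Big\}>0,\]
all assumptions of Theorem \ref{t:Reaction_diffusion_div_critical} are met for every $s\in(0,\bar s)$.

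Given these, a direct substitution into \eqref{eq:criticalreacdiff} gives
\[\a_{\crit}=\frac{p}{2}\Big(\frac{h}{h-1}-\frac{1}{h-1}-s\Big)-1=\frac{p(1-s)}{2}-1\in \Big[0,\tfrac{p}{2}-1\Big),\]
the inclusion in the interval being equivalent to $0<s\leq 1-2/p$, which we have arranged. Applying Theorem \ref{t:Reaction_diffusion_div_critical} to the initial datum $u_0\in L^{0}_{\F_0}(\O;B^{0}_{q,p}(\R^d))$ yields the claimed maximal local solution together with a localizing sequence $(\sigma_n)_{n\geq 1}$ such that a.s.\ for all $n\geq 1$,
\[u\in L^{p}(\I_{\sigma_n},w_{\a_{\crit}};H^{1-s,q}(\R^d))\cap C(\overline{\I}_{\sigma_n};B^{0}_{q,p}(\R^d))\cap C((0,\sigma_n];B^{1-s-\tfrac{2}{p}}_{q,p}(\R^d)).\]

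The only delicate step is the Besov embedding $L^{d(h-1)}\hookrightarrow B^{0}_{q,p}$; everything else is an arithmetic verification that the critical-weight inequalities become compatible once $s$ is pushed close to $0$. I do not expect any genuine difficulty beyond making sure the bound $\bar s$ is chosen uniformly so that \emph{all} the restrictions on $(p,q,s)$ in Theorem \ref{t:Reaction_diffusion_div_critical} hold simultaneously, which the explicit formula for $\bar s$ above accomplishes.
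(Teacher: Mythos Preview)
Your proof is correct and follows essentially the same route as the paper: verify that for $q=d(h-1)$ and small $s$ the hypotheses of Theorem~\ref{t:Reaction_diffusion_div_critical} are met, then invoke the embedding $L^{q}\hookrightarrow B^{0}_{q,p}$ for $p\ge q\ge 2$. Your treatment is in fact more explicit than the paper's, which merely asserts the existence of suitable thresholds $s_1,s_2>0$ without writing them down; your closed formula $\bar{s}=\min\{1-2/p,\,1/(h-1)\}$ makes the dependence on $p$ and $h$ transparent.
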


Recall that $p$ in Theorem \ref{t:Reaction_diffusion_div_critical} can be chosen as large as one wants.

\begin{proof}
Since $h\geq 1+\frac{2}{d}$, $q\geq 2$. One can check that there exists $s_1>0$ such that \eqref{eq:reaction_diffusion_1_limitation_q} and \eqref{eq:critical_reaction_diffusion_divergence_limitation_q} hold for $q=d(h-1)$ and $s\in (0,s_1)$. Moreover, for $q=d(h-1)$, there exists $s_2>0$, such that \eqref{eq:reaction_diffusion_divergence_1_limitation_p} holds for all $p\in (2,\infty)$ and $s\in (0,s_2)$. Set $s:=\min\{s_1,s_2\}$. Thus, Theorem \ref{t:Reaction_diffusion_div_critical} ensures the existence of a maximal local solution to \eqref{eq:semilinear_reaction_diffusion} for any $s\in (0,\bar{s})$ and $u_0\in L^0_{\F_0}(\O;B^0_{q,p}(\R^d))$ with the required regularity. To conclude, it remains to recall that $L^{q}(\R^d)\hookrightarrow B^0_{q,p}(\R^d)$, since $p\geq q$.
\end{proof}

By choosing $s$ small enough such that $1-s-2/p>0$, the solution $u$ to \eqref{eq:semilinear_reaction_diffusion} provided by Corollary \ref{cor:conservative_local_existence_L_r}, instantaneously regularizes in space, i.e.\ $u\in C(\I_{\sigma_n};B^{1-s-\frac{2}{p}}_{q,p}(\R^d))\hookrightarrow C(\I_{\sigma_n};L^q(\R^d))$ a.s.\ for all $n\geq 1$.

\subsection{Stochastic reaction diffusion equations}
\label{ss:semilinear_reaction_diffusion}
 In this subsection we study local existence for the following non-conservative reaction-diffusion equation for the unknown $u: [0,T]\times \Omega\times\RR\to \R$,
\begin{equation}
\label{eq:semilinear_reaction_diffusion_l_m}
\begin{cases}
du +\A u dt= f(\cdot, u) dt + \sum_{n\geq 1}( \bb_n u +g_n(\cdot, u)) dw_t^n,  & \text{on } \RR,\\
u(0)=u_0, & \text{on }  \RR,
\end{cases}
\end{equation}
where $\A,\bb_n$ are as in \eqref{eq:SND_quasi_AB_n_Def}. In this subsection we assume that
\begin{assumption}\label{ass:RD}
The maps $f:\I_T\times\O\times \RR\times \R\to \R$, $g:=(g_n)_{n\geq 1}:\I_T\times\O\times\RR\times  \R\to \ell^2$ are $\Progress\otimes \Borel(\R^d)\otimes \Borel(\R)$-measurable with $f(\cdot, 0) = 0$ and $g(\cdot, 0) = 0$. Moreover, there exist $m,h>1$ and $C>0$ such that a.s.\ for all $t\in \I_T$, $x\in \RR$ and $z,z'\in \R$
\begin{align*}
|f(t,x,z)-f(t,x,z')|&\leq C (|z|^{m-1}+|z'|^{m-1})|z-z'|,\\
\|g(t,x,z)-g(t,x,z')\|_{\ell^2}&\leq C (|z|^{h-1}+|z'|^{h-1})|z-z'|.
\end{align*}
\end{assumption}

Typical choices for such non-linearities are:
\begin{equation}
\label{eq:semilinear_reaction_diffusion_l_m_power_law}
f(u)=|u|^{m-1}u ,\qquad g_n(\cdot,u)=\tilde{g}_n |u|^{h-1}u, \  \  \  n\geq 1,
\end{equation}
for some $\tilde{g}=(\tilde{g}_n)_{n\geq 1}\in L^{\infty}_{\Progress}(\I_T\times\Omega\times \RR;\ell^2)$.

To make the results more readable we choose to analyse \eqref{eq:semilinear_reaction_diffusion_l_m} only in the weak setting. The interested reader can adapt the argument below and the one given in Subsection \ref{ss:reaction_diffusion_divergence_semilinear_almost_weak}, to study \eqref{eq:semilinear_reaction_diffusion_l_m} in the almost weak setting. As we have seen before, the latter choice gives local existence in a wider set of critical spaces. This will be presented in Section \ref{ss:Allen_Cahn_stochastic_potentials} for the stochastic Allen--Cahn equation.

Again we will focus on the setting of critical spaces. Some noncritical cases could be included by simpler methods. Part of this is covered in the quasilinear setting in Subsection \ref{ss:quasilinear_divergence}.

\subsubsection{Weak setting}
\label{sss:reformulation}
As in Subsection \ref{ss:reaction_diffusion_divergence_semilinear_almost_weak} we rewrite \eqref{eq:semilinear_reaction_diffusion} in the form \eqref{eq:semilinearabstract} by setting $X_0:=H^{-1,q}(\R^d)$, $X_1:=W^{1,q}(\R^d)=H^{1,q}(\R^d)$ and, for $u\in X_1$,
\begin{equation}
\begin{aligned}
\label{eq:choice_AFGC_reaction_diffusion_for_intro}
A(t)u&=\A(t)u, &  B(t)u &=(\bb_n (t)u)_{n\geq 1},
\\ F(t,u)&=f(t,u), & G(t,u)&=(g_n(t,u))_{n\geq 1}.
\end{aligned}
\end{equation}
As before $(u,\sigma)$ is a maximal local solution to \eqref{eq:semilinear_reaction_diffusion_l_m} if $(u,\sigma)$ is a maximal local solution to \eqref{eq:semilinearabstract} in the sense of Definition \ref{def:solution2}.

To prove local existence we apply Theorem \ref{t:semilinear}. By Lemma \ref{l:SMR_semilinear_PDEs}, it is enough to estimate the nonlinearities $F,G$. We start by estimating $F$:
\begin{equation}
\label{eq:estimate_F_l_m}
\begin{aligned}
\|F(\cdot, u)-F(\cdot, v)\|_{H^{-1,q}}&\stackrel{(i)}{\lesssim}   \|F(\cdot, u)-F(\cdot, v)\|_{L^t}\\
&\lesssim \|(|u|^{m-1}+|v|^{m-1})|u-v|\|_{L^t}\\
&\stackrel{(ii)}{\lesssim}    (\|u\|_{L^{mt}}^{m-1}+\|v\|_{L^{mt}}^{m-1})\|u-v\|_{L^{mt}}\\
&\stackrel{(iii)}{\lesssim} (\|u\|_{H^{\theta,q}}^{m-1}+\|v\|_{H^{\theta,q}}^{m-1})\|u-v\|_{H^{\theta,q}}.
\end{aligned}
\end{equation}
where in $(i)$ we used the Sobolev embedding with $\frac{d}{t}:=1+\frac{d}{q}$, in $(ii)$ we applied the H\"{o}lder inequality, and in $(iii)$ we used Sobolev embedding with $\theta-\frac{d}{q}=-\frac{d}{mt}$. Note that to ensure that $t\in (1,\infty)$, it is enough to assume $q\neq 2$ if $d=2$. Further, we need $\theta\in (0,1)$ in order to obtain a space between $X_0$ and $X_1$. Combining the identities we obtain
$$
\frac{1}{q}-\frac{\theta}{d}= \frac{1}{mt}=\frac{1}{m}\Big(\frac{1}{q}+\frac{1}{d}\Big)
\Rightarrow \theta= \frac{d}{q}\Big(1-\frac{1}{m}\Big)-\frac{1}{m}.
$$
Therefore, $\theta\in (0,1)$ is equivalent to
\begin{equation}
\label{eq:reaction_diffusion_1_limitation_q_1}
d\Big(\frac{m-1}{m+1}\Big)<q<d(m-1).
\end{equation}
Since $q\geq 2$, we also need $m>1+\frac{2}{d}$.
Setting $\beta_1=\varphi_1=\frac{1+\theta}{2}<1$ we obtain $H^{\theta,q}=[H^{-1,q},H^{1,q}]_{\beta_1}$ by \eqref{eq:H_complex_interpolation}. More explicitly
$$\beta_1=\frac{\theta+1}{2}=\frac{1}{2}\Big(\frac{d}{q}+1\Big)\Big(1-\frac{1}{m}\Big).$$
As in Subsection \ref{ss:reaction_diffusion_divergence_semilinear_almost_weak} to check \ref{HFcritical} we split into three subcases:
\begin{enumerate}[{\rm(1)}]
\item If $1-(1+\a)/p>\beta_1$, then by Remark \ref{r:non_linearities}\eqref{it:non_linearities_continuous_trace} \eqref{it:non_linearities_varphi_equal_to_beta}, \ref{HFcritical} holds.
\item If $1-(1+\a)/p = \beta_1$, then by Remark \eqref{it:non_linearities_varphi_equal_to_beta}, \ref{HFcritical} holds.
\item If $1-(1+\a)/p<\beta_1$, then \ref{HFcritical} holds with $m_F=1$, $\beta_1=\varphi_1$, $\rho_1=m-1$ if the condition \eqref{eq:HypCritical} holds:
\begin{equation}
\label{eq:reaction_diffusion_1_lambda_1}
\frac{1+\a}{p}\leq \frac{\rho_1+1}{\rho_1}(1-\beta_1)
=\frac{m}{m-1}-\frac{1}{2}\Big(\frac{d}{q}+1\Big).
\end{equation}
To ensure that $\a\geq 0$ we have to assume that
\begin{equation}
\label{eq:reaction_diffusion_1_limitation_p}
\frac{1}{p}+\frac{d}{2q}\leq \frac{m}{m-1}-\frac{1}{2}=\frac{m+1}{2(m-1)}.
\end{equation}
From \eqref{eq:reaction_diffusion_1_limitation_q_1} one can check that \eqref{eq:reaction_diffusion_1_limitation_p} is solvable for $p$ sufficiently large.
\end{enumerate}

Next, we estimate $G$ using the same strategy of \eqref{eq:estimate_G_reaction_diffusion_div}. Indeed, since $X_{1/2}=L^q(\R^d)$ and $\g(\ell^2,L^q)=L^q(\R^d;\ell^2)=:L^q(\ell^2)$ (see \eqref{eq:gammaidentity}) one has
\begin{equation}
\begin{aligned}
\label{eq:estimate_G_reaction_diffusion_l_m}
\|G(\cdot, u)-G(\cdot, v)\|_{L^q(\ell^2)}
&\lesssim \|(|u|^{h-1}+|v|^{h-1})|u-v|\|_{L^q}\\
&\stackrel{(i)}{\lesssim} (\|u\|_{L^{hq}}^{h-1}+\|v\|_{L^{hq}}^{h-1})\|u-v\|_{L^{hq}}\\
&\stackrel{(ii)}{\lesssim} (\|u\|_{H^{\phi,q}}^{h-1}+\|v\|_{H^{\phi,q}}^{h-1})\|u-v\|_{H^{\phi,q}}.
\end{aligned}
\end{equation}
where in $(i)$ we applied the H\"{o}lder inequality and in $(ii)$ we used Sobolev embedding with $\phi-\frac{d}{q}=-\frac{d}{h q}$. Therefore, $\phi=\frac{d}{q}\frac{h-1}{h}$. Note that $\phi>0$ and to ensure that $\phi<1$ we have to assume
\begin{equation}
\label{eq:reaction_diffusion_1_limitation_q_2}
q>\frac{d(h-1)}{h}.
\end{equation}
In addition, let us set
$$\beta_2=\frac{\phi+1}{2}=\frac{1}{2}+\frac{d}{2q}\Big(1-\frac{1}{h}\Big),\qquad \varphi_2=\beta_2.$$
As in the previous cases, the discussion splits in two cases:
\begin{enumerate}[{\rm(1)}]
\item If $1-(1+\a)/p>\beta_2$, then \ref{HGcritical} holds by Remark \ref{r:non_linearities}\eqref{it:non_linearities_continuous_trace}.
\item If $1-(1+\a)/p= \beta_2$, then \ref{HGcritical} holds by Remark \ref{r:non_linearities}\eqref{it:non_linearities_varphi_equal_to_beta}.
\item If $1-(1+\a)/p<\beta_2$, then \ref{HGcritical} holds with $m_G=1$, $\rho_2=h-1$, $\beta_2=\varphi_2$ if the condition \eqref{eq:HypCriticalG} holds:
\begin{equation}
\label{eq:reaction_diffusion_1_lambda_2}
\frac{1+\a}{p}\leq \frac{h}{h-1}(1-\beta_2)=\frac{h}{2(h-1)}-\frac{d}{2q}.
\end{equation}
To ensure that $\a\geq 0$ we have to assume that
\begin{equation}
\label{eq:reaction_diffusion_1_limitation_p_2}
\frac{1}{p}+\frac{d}{2q}\leq \frac{h}{2(h-1)}.
\end{equation}
\end{enumerate}

These preparation give the following theorem.
\begin{theorem}
\label{t:reaction_diffusion_local_well_posedness}
Let Assumptions \ref{ass:SND} and \ref{ass:RD} be satisfied and $d\geq 2$. Let $m>1+\frac{2}{d}$ and $h>1$. Moreover, assume that \eqref{eq:reaction_diffusion_1_limitation_q_1} and \eqref{eq:reaction_diffusion_1_limitation_q_2} hold.
Assume one of the following conditions is satisfied
\begin{itemize}
\item $1-(1+\a)/p\geq \beta_1$ and $1-(1+\a)/p\geq \beta_2$;
\item $1-(1+\a)/p<\beta_1$, $1-(1+\a)/p\geq \beta_2$ and \eqref{eq:reaction_diffusion_1_lambda_1} holds;
\item $1-(1+\a)/p\geq \beta_1$, $1-(1+\a)/p<\beta_2$ and \eqref{eq:reaction_diffusion_1_lambda_2} holds;
\item $1-(1+\a)/p<\beta_1$ and $1-(1+\a)/p<\beta_2$ and \eqref{eq:reaction_diffusion_1_lambda_1}, \eqref{eq:reaction_diffusion_1_lambda_2} hold.
\end{itemize}
If $d=2$ we further assume further that $q\neq 2$. Then for each
$$
u_0\in L^0_{\F_0}(\O;B^{1-2\frac{1+\a}{p}}_{q,p}(\R^d)),
$$
the problem \eqref{eq:semilinear_reaction_diffusion_l_m} has a maximal local solution $(u,\sigma)$. Moreover, there exists a localizing sequence $(\sigma_n)_{n\geq 1}$ such that a.s.\ for all $n\geq 1$
$$
u\in L^p(\I_{\sigma_n},w_{\a};W^{1,q}(\R^d))\cap C(\overline{\I}_{\sigma_n};B^{1-2\frac{1+\a}{p}}_{q,p}(\R^d))\cap C((0,\sigma_n];B^{1-\frac{2}{p}}_{q,p}(\R^d)).
$$
\end{theorem}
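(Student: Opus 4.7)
The plan is a direct application of Theorem~\ref{t:semilinear}\eqref{it:semilinear_u_L_0} to the abstract reformulation \eqref{eq:choice_AFGC_reaction_diffusion_for_intro} on the pair $X_0 = H^{-1,q}(\R^d)$, $X_1 = H^{1,q}(\R^d)$. Two basic identifications are needed: first, by \eqref{eq:H_complex_interpolation} we have $X_\theta = [X_0,X_1]_\theta = H^{2\theta-1,q}(\R^d)$ for $\theta \in (0,1)$; second, the real-interpolation trace space is $\Xap = (H^{-1,q},H^{1,q})_{1-\frac{1+\a}{p},p} = B^{1-2(1+\a)/p}_{q,p}(\R^d)$, which gives the correct initial-data space. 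Stochastic maximal $L^p$-regularity for the pair $(A,B)$ defined in \eqref{eq:choice_AFGC_reaction_diffusion_for_intro} is exactly the content of Lemma~\ref{l:SMR_semilinear_PDEs} (with $s=0$), so Assumption \ref{ass:AB_boundedness} and the hypothesis $(A,B)\in\MRta$ of Theorem~\ref{t:semilinear} are free.

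Next I verify \ref{HFcritical} and \ref{HGcritical} for the nonlinearities. The key computations have already been carried out in the discussion preceding the theorem statement: the chain of Sobolev and H\"older estimates in \eqref{eq:estimate_F_l_m} shows that $F(t,u) = f(t,\cdot,u)$ is locally Lipschitz from $H^{\theta,q}$ into $H^{-1,q}$ with growth rate $m-1$, where $\theta = \frac{d}{q}\bigl(1-\tfrac{1}{m}\bigr) - \tfrac{1}{m}$ lies in $(0,1)$ exactly under \eqref{eq:reaction_diffusion_1_limitation_q_1} (combined with $q\neq 2$ when $d=2$ to secure $t\in(1,\infty)$). Setting $\beta_1 = \varphi_1 = (1+\theta)/2$, $\rho_1 = m-1$, we have $H^{\theta,q} = X_{\beta_1}$ by the interpolation identity above. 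The casework is then handled by the three options in Remark~\ref{r:non_linearities}: if $1-\tfrac{1+\a}{p} > \beta_1$ we put $F_\Tr:=F$, $F_L = F_c = 0$; if equality holds we use Remark~\ref{r:non_linearities}\eqref{it:non_linearities_varphi_equal_to_beta} with $F_c:=F$; if $1-\tfrac{1+\a}{p}<\beta_1$ we place the nonlinearity in $F_c$ and observe that \eqref{eq:HypCritical} reduces exactly to \eqref{eq:reaction_diffusion_1_lambda_1}. The same reasoning, applied to \eqref{eq:estimate_G_reaction_diffusion_l_m} with the identification $\gamma(\ell^2,L^q) = L^q(\ell^2)$, gives \ref{HGcritical} with $\beta_2 = \varphi_2 = (1+\phi)/2$, $\rho_2 = h-1$, $\phi = \frac{d}{q}\frac{h-1}{h}$, and the critical relation \eqref{eq:reaction_diffusion_1_lambda_2}. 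The hypothesis $\phi\in(0,1)$ is the bound \eqref{eq:reaction_diffusion_1_limitation_q_2}.

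Since in every admissible subcase the decomposition places all the nonlinear terms into $F_c$ or $F_\Tr$ (and likewise for $G$), we may take $F_L \equiv 0$ and $G_L \equiv 0$, so the smallness hypothesis \eqref{eq:smallness_semilinear} of Theorem~\ref{t:semilinear} is vacuously satisfied. The data also satisfy \ref{Hf'} trivially (with $f=g=0$). Applying Theorem~\ref{t:semilinear}\eqref{it:semilinear_u_L_0} yields a unique $L^p_\a$-maximal local solution $(u,\sigma)$ with $\sigma>0$ a.s.\ for every $u_0\in L^0_{\F_0}(\Omega;\Xap)$.

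Finally, the regularity claim is read off from the conclusions of Theorems \ref{t:local_Extended}\eqref{it:regularity_data_L0}: for any localizing sequence $(\sigma_n)_{n\geq 1}$ and any $\theta\in [0,\tfrac12)$, a.s.\
\[
u\in H^{\theta,p}(\I_{\sigma_n},w_\a;X_{1-\theta})\cap C(\overline{\I}_{\sigma_n};\Xap),\qquad u\in C((0,\sigma);\Xp).
\]
Taking $\theta=0$ and using $X_1 = W^{1,q}(\R^d)$, $\Xap = B^{1-2(1+\a)/p}_{q,p}$, $\Xp = B^{1-2/p}_{q,p}$ gives exactly the three spaces in the stated conclusion. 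There is no substantive obstacle: the proof is a careful bookkeeping exercise. The only delicate step is ensuring, in each of the four subcases of the statement, that either the trace-type decomposition or the critical decomposition of $F$ and $G$ complies with the subcritical or critical constraint imposed by the chosen weight $\a$, which is precisely why the theorem is phrased as a case distinction on whether $1-\tfrac{1+\a}{p}$ lies above, at, or below each $\beta_i$.
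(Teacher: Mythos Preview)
Your proposal is correct and follows exactly the approach the paper intends: the theorem is stated immediately after the estimates \eqref{eq:estimate_F_l_m}--\eqref{eq:estimate_G_reaction_diffusion_l_m} with the remark ``These preparation give the following theorem'', and your write-up is a faithful unpacking of that sentence via Theorem~\ref{t:semilinear}\eqref{it:semilinear_u_L_0} and Lemma~\ref{l:SMR_semilinear_PDEs}. The case split on $\beta_1,\beta_2$, the identification of $\Xap$ and $\Xp$, the vanishing of $F_L,G_L$ (making \eqref{eq:smallness_semilinear} vacuous), and the reading of regularity from Theorem~\ref{t:local_Extended}\eqref{it:regularity_data_L0} are all handled correctly.
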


\subsubsection{Critical spaces for \eqref{eq:semilinear_reaction_diffusion_l_m}}
\label{sss:critical_reaction_diffusion_l_m}
As in Subsection \ref{sss:critical_div_reaction_diffusion} we study critical spaces for \eqref{eq:semilinear_reaction_diffusion_l_m}. Therefore, we need to study when equality in \eqref{eq:reaction_diffusion_1_lambda_1} and \eqref{eq:reaction_diffusion_1_lambda_2} can be reached.

As in Subsection \ref{sss:critical_div_reaction_diffusion}, before embarking in this discussion let us analyse the scaling properties of the equation \eqref{eq:semilinear_reaction_diffusion_l_m} in the case that \eqref{eq:semilinear_reaction_diffusion_l_m_power_law} holds.

In the deterministic case, i.e.\ $b_{jn}\equiv \tilde{g}_n\equiv 0$,the map $u\mapsto u_{\lambda}$ where $u_{\lambda}(x,t):=\lambda^{1/(m-1)}u(\lambda t,\lambda^{1/2}x)$ for $\lambda>0$ preserves the set of (smooth local) solutions to \eqref{eq:semilinear_reaction_diffusion_l_m}. More precisely, if $u$ is a (smooth local) solution to \eqref{eq:semilinear_reaction_diffusion_l_m} on $(0,T)\times \R^d$ then $u_{\lambda}$ is a (smooth local) solution to \eqref{eq:semilinear_reaction_diffusion_l_m} on $(0,T/\lambda)\times \R^d$. Reasoning as \eqref{eq:scaling_B_conservative_reaction_diffusion}, one discovers that $B^{d/q-2/(m-1)}_{q,p}(\R^d)$ is `locally' invariant under the induced map $u_0\mapsto u_{0,\lambda}:=\lambda^{1/(m-1)}u_0(\lambda^{1/2}\cdot)$.

Since \eqref{eq:semilinear_reaction_diffusion_l_m}-\eqref{eq:semilinear_reaction_diffusion_l_m_power_law} presents two non-linearities, it is not immediate to see whether there is scaling-invariance as in Subsection \ref{sss:critical_div_reaction_diffusion}. To check this, we mimic the scaling argument performed in Subsection \ref{sss:critical_div_reaction_diffusion} to discover a relation between $h$ and $m$. Indeed, using the strong formulation of solutions given in Definition \ref{def:solution1}, substituting $s'=\lambda s$ for the deterministic integral one obtains
\begin{align*}
\int_0^{t/\lambda} |u_{\lambda}|^{m-1} u_{\lambda} ds
&=\int_0^t \lambda^{1+\frac{1}{m-1}}|u(s',\lambda x)|^{m-1}u(s',\lambda x)
 \frac{ds'}{\lambda}\\
 &=\lambda^{\frac{1}{m-1}} \int_0^t |u(s',\lambda x)|^{m-1}u(s',\lambda x)
 ds'.
\end{align*}
where, $u_{\lambda}$ is as above. For the stochastic term the same calculation as in \eqref{eq:stochscalinggrowth} gives that the scalings coincide if $\frac{h}{m-1}-\frac12 = \frac{1}{m-1}$, or in other words $\frac{h-1}{m-1}=\frac12$, thus $h=(m+1)/2$. This relation holds if and only if the right hand-sides of the inequalities \eqref{eq:reaction_diffusion_1_lambda_1} and \eqref{eq:reaction_diffusion_1_lambda_2} coincide. Moreover, if $h=(m+1)/2$ the lower bound in \eqref{eq:reaction_diffusion_1_limitation_q_1} coincides with \eqref{eq:reaction_diffusion_1_limitation_q_2}.

For the sake of simplicity, let us continue the discussion on critical spaces for \eqref{eq:semilinear_reaction_diffusion_l_m} under the assumption $h=(m+1)/2$. In this case, \eqref{eq:reaction_diffusion_1_lambda_1} and \eqref{eq:reaction_diffusion_1_lambda_2} coincide, and in order to have equality in the latter two we need to assume
$$\frac{m}{m-1}-\frac{1}{2}\Big(\frac{d}{q}+1\Big)<\frac{1}{2} \Leftrightarrow q<\frac{d(m-1)}{2}.$$
Since $q> 2$, to avoid trivial situations we assume $m>1+\frac{4}{d}$. Under the above assumption we can set
\begin{align}\label{eq:kriticalreactiondiff}
\a_{\crit}:= \frac{pm}{m-1} - \frac{p}{2} \Big(\frac{d}{q} + 1\Big)  -1
\end{align}
and the trace space for the solution to \eqref{eq:semilinear_reaction_diffusion_l_m}-\eqref{eq:semilinear_reaction_diffusion_l_m_power_law} becomes
$$
\Xap=B^{1-\frac{2(1+\a_{\crit})}{p}}_{q,p}(\R^d)=B^{1-2\frac{m}{m-1}+\frac{d}{q}+1}_{q,p}(\R^d)
=B^{\frac{d}{q}-\frac{2}{m-1}}_{q,p}(\R^d).
$$
Note that the above space depends on $p$ only through the microscopic parameter and it presents the same scaling as in the deterministic case, due to the choice $h=(m+1)/2$. Moreover, one can check that in the case $p=q=2$ and $\a=0$, no other critical space arises. Therefore, Theorem \ref{t:reaction_diffusion_local_well_posedness} implies the following result.

\begin{theorem}
\label{t:critical_space_reaction_diffusion}
Let Assumptions \ref{ass:SND} and \ref{ass:RD} be satisfied and $d\geq 2$. Let $m>1+\frac{4}{d}$ and $h = \frac{m+1}{2}$.
Assume that $q\in (\frac{d(m-1)}{m+1},\frac{d(m-1)}{2})$, and if $d=2$ we assume $q\neq 2$. Assume
$\frac{1}{p}+\frac{d}{2q}\leq \frac{m+1}{2(m-1)}$,
and let $\a_{\crit}$ be given by \eqref{eq:kriticalreactiondiff}. Then for each
$$
u_0\in L^0_{\F_0}(\O;B^{\frac{d}{q}-\frac{2}{m-1}}_{q,p}(\R^d))
$$
there exists a maximal local solution $(u,\sigma)$ to \eqref{eq:semilinear_reaction_diffusion_l_m}. Moreover, there exists a localizing sequence $(\sigma_n)_{n\geq 1}$ such that a.s.\ for all $n\geq 1$
$$
u\in L^{p}(0,\sigma_n,w_{\a_{\crit}};W^{1,q}(\R^d))
\cap C(\overline{I}_{\sigma_n};B^{\frac{d}{q}-\frac{2}{m-1}}_{q,p}(\R^d))\cap C((0,\sigma_n];B^{1-\frac{2}{p}}_{q,p}(\R^d)).
$$
\end{theorem}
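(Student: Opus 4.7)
The plan is to derive Theorem \ref{t:critical_space_reaction_diffusion} as a direct specialization of Theorem \ref{t:reaction_diffusion_local_well_posedness} to the critical case, with the weight $\a_{\crit}$ chosen so that equality holds in both of the trace/criticality inequalities simultaneously. First, I would verify that the choice $h = (m+1)/2$ aligns the bounds on the drift and diffusion nonlinearities. A direct computation shows that with $h=(m+1)/2$ one has $\tfrac{h}{2(h-1)} = \tfrac{m+1}{2(m-1)}$ and also $\tfrac{m}{m-1} - \tfrac{1}{2} = \tfrac{m+1}{2(m-1)}$, so the right-hand sides of \eqref{eq:reaction_diffusion_1_lambda_1} and \eqref{eq:reaction_diffusion_1_lambda_2} reduce to the single expression $\tfrac{m+1}{2(m-1)} - \tfrac{d}{2q}$. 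Likewise, the lower bound $d(m-1)/(m+1)$ appearing in \eqref{eq:reaction_diffusion_1_limitation_q_1} coincides with $d(h-1)/h = d(m-1)/(m+1)$ from \eqref{eq:reaction_diffusion_1_limitation_q_2}, so the two restrictions on $q$ collapse to one.

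Next I would check that the hypotheses of Theorem \ref{t:reaction_diffusion_local_well_posedness} are all satisfied. Since $m>1+4/d>1+2/d$ and $h=(m+1)/2>1$, the basic growth constraints hold. The assumption $q \in (d(m-1)/(m+1), d(m-1)/2)$ gives both \eqref{eq:reaction_diffusion_1_limitation_q_1} (as $d(m-1)/2 < d(m-1)$) and \eqref{eq:reaction_diffusion_1_limitation_q_2}, with the exclusion $q\neq 2$ in dimension $d=2$ handled by the theorem's hypotheses. Setting
\[
\a_{\crit} := p\Big(\tfrac{m+1}{2(m-1)} - \tfrac{d}{2q}\Big) - 1 = \tfrac{pm}{m-1} - \tfrac{p}{2}\Big(\tfrac{d}{q}+1\Big) - 1,
\]
the standing assumption $\tfrac{1}{p} + \tfrac{d}{2q} \leq \tfrac{m+1}{2(m-1)}$ guarantees $\a_{\crit} \geq 0$, while the upper bound $q < d(m-1)/2$ gives $\tfrac{m+1}{2(m-1)} - \tfrac{d}{2q} < \tfrac{1}{2}$, so that $\a_{\crit} < p/2 - 1$. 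Thus $\a_{\crit}$ lies in the admissible range $[0,\tfrac{p}{2}-1)$ required by Assumption \ref{ass:Xtr}, and by construction equality holds in both \eqref{eq:reaction_diffusion_1_lambda_1} and \eqref{eq:reaction_diffusion_1_lambda_2}.

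Finally I would identify the trace space. A short computation gives
\[
1 - \tfrac{2(1+\a_{\crit})}{p} = 1 - \tfrac{m+1}{m-1} + \tfrac{d}{q} = \tfrac{d}{q} - \tfrac{2}{m-1},
\]
so the initial data space $B^{1-2(1+\a_{\crit})/p}_{q,p}(\R^d)$ of Theorem \ref{t:reaction_diffusion_local_well_posedness} coincides with $B^{d/q - 2/(m-1)}_{q,p}(\R^d)$, the scaling-invariant space identified in the discussion of \eqref{eq:semilinear_reaction_diffusion_l_m}. Invoking Theorem \ref{t:reaction_diffusion_local_well_posedness} with $\a = \a_{\crit}$ then yields both the existence of the maximal local solution $(u,\sigma)$ and the claimed regularity on each $\sigma_n$, including the instantaneous regularization $u \in C((0,\sigma_n]; B^{1-2/p}_{q,p}(\R^d))$. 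The entire argument is essentially bookkeeping; the only delicate point — already handled upstream in Theorem \ref{t:reaction_diffusion_local_well_posedness} via the stochastic maximal regularity of Lemma \ref{l:SMR_semilinear_PDEs} — is that the critical weight $\a_{\crit}$ sits at the boundary of the admissible range, so no slack is available in the nonlinear estimates \eqref{eq:estimate_F_l_m} and \eqref{eq:estimate_G_reaction_diffusion_l_m}, which must be matched exactly by the choice $\rho_j = m-1$, $\beta_j = \varphi_j$.
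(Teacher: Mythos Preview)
Your proposal is correct and follows essentially the same approach as the paper: the result is obtained from Theorem \ref{t:reaction_diffusion_local_well_posedness} by specializing to $h=(m+1)/2$, verifying that the two criticality conditions \eqref{eq:reaction_diffusion_1_lambda_1} and \eqref{eq:reaction_diffusion_1_lambda_2} then coincide, and choosing $\a=\a_{\crit}$ so that equality holds. Your write-up is in fact a more detailed version of the computations the paper carries out in the discussion preceding the theorem statement, after which the paper simply invokes Theorem \ref{t:reaction_diffusion_local_well_posedness}.
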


\subsection{Stochastic reaction-diffusion with gradient nonlinearities}
\label{ss:reaction_diffusion_gradient_nonlinearities}
In this section we study reaction-diffusion equations with gradient non-linearities:
\begin{equation}
\label{eq:semilinear_reaction_diffusion_gradient_non_linearities}
\begin{cases}
du +\A u dt= f(\cdot,u,\nabla u) dt + \sum_{n\geq 1}( \bb_n u +g_n(\cdot,u)) dw_t^n,  & \text{on } \RR,\\
u(0)=u_0, & \text{on }  \RR;
\end{cases}
\end{equation}
where $\A,\bb_n$ are as in \eqref{eq:SND_quasi_AB_n_Def}. We study \eqref{eq:semilinear_reaction_diffusion_gradient_non_linearities} under the following assumption:
\begin{assumption}\label{ass:SND_reaction_diffusion_gradient}
The maps $f:\I_T\times \O\times \RR\times \R\times \R^d\to \R$, $g:=(g_n)_{n\geq 1}:\I_T\times \O\times \RR\times \R\to \ell^2$ are $\Progress\otimes \Borel(\R^d)\otimes \Borel(\R)$-measurable with $f(\cdot,0, 0) = 0$ and $g(\cdot, 0)=\nabla_y g(\cdot,0) = 0$. In addition there exist $m>2$ and $\eta\in (0,1)$ such that for each $R>0$ there exists $C_R>0$ for which one has
\begin{align*}
|f(t,x,y,z)-f(t,x,y',z')|\leq &C_R(1+|z|^{m-1}+|z'|^{m-1})  |z-z'| \\
+&C_R(1+|z|^{m-\eta}+|z'|^{m-\eta}) |y-y'|,\\
\|g(t,x,y)-g(t,x,y')\|_{\ell^2}&+\|\nabla_{y} g(t,x,y)-\nabla_{y} g(t,x,y')\|_{\ell^2}\leq C_R |y-y'|,
\end{align*}
a.s.\ for all $t\in \I_T$, $x\in \RR$, $y,y'\in \B_{\R}(R)$ and $z,z'\in\R^{d}$.
\end{assumption}

Typical choices for $f$ are
\begin{equation}
\label{eq:semilinear_reaction_diffusion_gradient_choice}
f(u,\nabla u)=u^c|\nabla u|^{r}, \ \ \ \text{or} \ \ f(u,\nabla u) = |\nabla u|^r; \quad\text{where } c\in [1, \infty),\,r>1;
\end{equation}
see the monograph \cite[Chapter 5, Section 34]{QS19} for related problems and motivations. For the first example it is straightforward to check that the assumption on $f$ holds for any $m>\max\{r,2\}$. For $c=1$ and $r=2$ we obtain a non-linearity similar to the one appearing in the study of harmonic maps into the sphere, see e.g.\ \cite[p.\ 225]{TayPDE3}. The second example in \eqref{eq:semilinear_reaction_diffusion_gradient_choice} satisfies the assumption for $m=r$ if $r>2$ or for any $m>2$ if $r\in (1,2]$. The latter example covers the stochastic version of \cite[eq.\ (34.5), p.\ 406]{QS19} and it appears in stochastic control theory see e.g.\ \cite{BDL04,PZ12} with $\bb_n=0$ and $g_n = 0$. A further motivation for \eqref{eq:semilinear_reaction_diffusion_gradient_non_linearities} comes from the analysis of high-order regularity of quasilinear equations in divergence form with gradient type nonlinearities (see e.g.\ \cite[Section 3, Example 2]{CriticalQuasilinear}). In such a case, one may take
$$
f(u,\nabla u)= a(u)|\nabla u|^2+ |\nabla u|^r,
$$
where $r>1$ and $a:\R\to \R$ is locally Lipschitz. As above, Assumption \ref{ass:SND_reaction_diffusion_gradient} holds for $m=r$ if $r>2$ or for any $m>2$ if $r\in (1,2]$.

As usual we consider \eqref{eq:semilinear_reaction_diffusion_gradient_non_linearities} as \eqref{eq:semilinearabstract} with $X_0:=L^{q}(\R^d)$, $X_1:=W^{2,q}(\R^d)$ and
\begin{align*}
A(t)u&=\A(t)u, &  B(t)u &=(\bb_n (t)u)_{n\geq 1},
\\ F(t,u)&=f(t,u,\nabla u), & G(t,u)&=(g_n(t,u))_{n\geq 1},
\end{align*}
for $u\in W^{2,q}(\R^d)$.
As before $(u,\sigma)$ is a maximal local solution to \eqref{eq:semilinear_reaction_diffusion_gradient_non_linearities} if $(u,\sigma)$ is a maximal local solution to \eqref{eq:semilinearabstract} in the sense of Definition \ref{def:solution2}.

The main result of this section reads as follows.

\begin{theorem}
\label{t:QND_application_gradient_control_growth_lipschitz_constant}
Let Assumptions \ref{ass:SND} and \ref{ass:SND_reaction_diffusion_gradient} be satisfied, $d\geq 1$ and $q>\frac{d(m-1)}{m}$. Let $\beta=\frac12+ \frac{d}{2q} \frac{m-1}{m}$. Assume that one of the following holds:
\begin{enumerate}[{\rm(1)}]
\item\label{it:gradient_RC_1} $1-\frac{1+\a}{p}\geq \beta$;
\item\label{it:gradient_RC_2} $1-\frac{1+\a}{p}<\beta$ and  $\frac{1+\a}{p}\leq \frac{m}{2(m-1)}-\frac{d}{2q}$.
\end{enumerate}
Then for each
\[u_0\in L^0_{\F_0}(\O;B^{2-2\frac{1+\a}{p}}_{q,p})\]
there exists a maximal local solution $(u,\sigma)$ to \eqref{eq:semilinear_reaction_diffusion_gradient_non_linearities}. Moreover, there exists a localizing sequence $(\sigma_n)_{n\geq 1}$ such that and a.s.\ for all $n\geq 1$
$$
u\in L^p(\I_{\sigma_n},w_{\a};W^{2,q})\cap C(\overline{\I}_{\sigma_n};B^{2-2\frac{1+\a}{p}}_{q,p})\cap C((0,\sigma_n];B^{2-\frac{2}{p}}_{q,p}).
$$
\end{theorem}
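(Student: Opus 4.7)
The plan is to apply Theorem \ref{t:semilinear}\eqref{it:semilinear_u_L_0} in the semilinear setting with $X_0 = L^q(\R^d)$, $X_1 = W^{2,q}(\R^d)$, operators $A(t) = \A(t)$, $B(t)u = (\bb_n(t)u)_{n\geq 1}$, and nonlinearities $F(t,u) = f(t,\cdot,u,\nabla u)$, $G(t,u) = (g_n(t,\cdot,u))_{n\geq 1}$. Stochastic maximal regularity $(A,B) \in \MRta$ is provided by Lemma \ref{l:SMR_semilinear_PDEs} and \ref{HAmeasur} is trivial, so everything reduces to checking \ref{HFcritical}--\ref{HGcritical}. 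Recall $X_\theta = H^{2\theta,q}(\R^d)$ from \eqref{eq:H_complex_interpolation}, and note that $\gamma(\ell^2, L^q) = L^q(\R^d;\ell^2)$ together with the retraction structure of $W^{1,q}$ gives an analogous identification for $\gamma(\ell^2, H^{1,q})$. The key Sobolev embedding I will use is $X_\beta = H^{2\beta,q} \hookrightarrow W^{1,qm}$ for $\beta = \tfrac{1}{2}+\tfrac{d(m-1)}{2qm}$, which is a valid interpolation index $\beta \in (0,1)$ precisely because $q > d(m-1)/m$.

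For $F$, in case \eqref{it:gradient_RC_1} I would put everything into $F_{\Tr}$: Hölder inequality with the above Sobolev embedding and Assumption \ref{ass:SND_reaction_diffusion_gradient} shows that $F$ is locally Lipschitz from $X_\beta$ to $X_0$, and the hypothesis $1-\tfrac{1+\a}{p}\ge\beta$ together with Remark \ref{r:non_linearities}\eqref{it:non_linearities_continuous_trace} yields the required local Lipschitzness from $\Xap$. In case \eqref{it:gradient_RC_2} I would set $F_c := F$, $F_L = F_{\Tr} = 0$, using two terms ($m_F = 2$): the $z$-Lipschitz contribution with $\rho_1 = m-1$, $\varphi_1 = \beta_1 = \beta$, for which the critical balance \eqref{eq:HypCritical} is exactly the hypothesis $\tfrac{1+\a}{p} \leq \tfrac{m}{2(m-1)} - \tfrac{d}{2q}$; and the $y$-Lipschitz contribution with $\rho_2 = m-\eta$, $\varphi_2 = \beta$, and $\beta_2$ chosen so that both $X_{\beta_2} \hookrightarrow L^{qm/\eta}$ (which requires only $\beta_2 \geq \tfrac{d(m-\eta)}{2qm}$) and $\beta_2 \in (1-\tfrac{1+\a}{p},\beta]$ hold.

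For $G$ I would place everything into $G_{\Tr}$. The hypothesis $\tfrac{1+\a}{p}\leq \tfrac{m}{2(m-1)}-\tfrac{d}{2q} < 1-\tfrac{d}{2q}$ (using $m>2$) yields $\Xap = B^{2-2(1+\a)/p}_{q,p}(\R^d)\hookrightarrow L^\infty(\R^d)$, and $\a<p/2-1$ yields $(1+\a)/p<1/2$, hence $\Xap\hookrightarrow X_{1/2}=H^{1,q}$. Since $g$ and $\nabla_y g$ are locally Lipschitz in $y$ and vanish at $y=0$, the chain rule combined with $\|\nabla_y g(u)\|_\infty\lesssim \|u\|_\infty$ gives a local Lipschitz bound $G : \Xap \to \gamma(\ell^2, H^{1,q})$, as required.

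Having verified all the structural hypotheses, Theorem \ref{t:semilinear}\eqref{it:semilinear_u_L_0} yields the maximal local solution with the stated regularity, including the instantaneous regularization $u\in C((0,\sigma_n];B^{2-2/p}_{q,p})$ that comes from the trace embedding in Proposition \ref{prop:continuousTrace}\eqref{it:trace_without_weights_Xp}. The main obstacle in this plan is the third step for the $y$-Lipschitz contribution: one must exhibit $\beta_2$ simultaneously satisfying $\beta_2 > 1-\tfrac{1+\a}{p}$, the Sobolev requirement $\beta_2 \geq \tfrac{d(m-\eta)}{2qm}$, and the critical balance $(m-\eta)(\beta-1+\tfrac{1+\a}{p}) + \beta_2 \leq 1$. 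A calculation shows that in the critical regime $\tfrac{1+\a}{p}=\tfrac{m}{2(m-1)}-\tfrac{d}{2q}$, all three inequalities collapse to $q>d(m-1)/m$, so the strict form of the standing hypothesis supplies precisely the slack necessary to accommodate the stronger growth exponent $\rho_2=m-\eta>m-1$; outside the critical regime the verification becomes easier, as $\beta_2=\beta$ then suffices.
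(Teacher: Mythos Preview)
Your approach is correct and closely parallels the paper's, with the same functional setup and the same treatment of $G$ via $G_{\Tr}$ using $\Xap\hookrightarrow L^\infty\cap W^{1,q}$. The differences lie only in how you package $F$.

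In case \eqref{it:gradient_RC_1} you route $F$ through $F_{\Tr}$ via Remark~\ref{r:non_linearities}\eqref{it:non_linearities_continuous_trace}, whereas the paper keeps $F=F_c$ even here: it picks $\varphi_1=\beta_1=1-\tfrac{1+\a}{p}+\tfrac{\varepsilon}{2}$ with $\rho_1=m-\eta$ and checks \eqref{eq:HypCritical} directly for small $\varepsilon$. Your route is cleaner, but note that Remark~\ref{r:non_linearities}\eqref{it:non_linearities_continuous_trace} needs the \emph{strict} inequality $\beta<1-\tfrac{1+\a}{p}$; the equality sub-case should go through Remark~\ref{r:non_linearities}\eqref{it:non_linearities_varphi_equal_to_beta}.

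In case \eqref{it:gradient_RC_2} both arguments use $m_F=2$ with the same $\rho_1,\rho_2,\varphi_1,\varphi_2$, but the paper takes a shorter path to $\beta_2$: rather than invoking $X_{\beta_2}\hookrightarrow L^{qm/\eta}$, it pulls $|u-v|$ out in $L^\infty$ (using $\Xap\hookrightarrow C^\epsilon$), bounds $\|\nabla u\|_{L^{q(m-\eta)}}\lesssim\|u\|_{H^{\theta,q}}$ by Sobolev, and simply sets $\beta_2=1-\tfrac{1+\a}{p}+\tfrac{\varepsilon}{2}$. With this choice the second critical inequality follows from the first plus $\eta>0$ without a separate calculation, sidestepping the compatibility check you flag as the main obstacle. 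Your calculation is nonetheless correct; the paper's choice just makes the slack supplied by $\eta$ transparent rather than hidden in an implicit inequality.

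One small omission: your embedding $\Xap\hookrightarrow L^\infty$ is argued only under hypothesis \eqref{it:gradient_RC_2}. The paper verifies $2-2\tfrac{1+\a}{p}>\tfrac{d}{q}$ in both cases (in case \eqref{it:gradient_RC_1} it reduces to $q>\tfrac{d}{m}$, which follows from $q>\tfrac{d(m-1)}{m}$), and this embedding is needed for both $F$ and $G$.
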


\begin{proof}
By Theorem \ref{t:semilinear} and Lemma \ref{l:SMR_semilinear_PDEs}, it remains to check that the nonlinearities satisfies the conditions \ref{HFcritical}-\ref{HGcritical}.

First observe that $2-2\frac{(1+\a)}{p}>\frac{d}{q}$ in each case. Indeed, if \eqref{it:gradient_RC_1} holds then the latter follows from $q>\frac{d(m-1)}{m}>\frac{d}{m}$. If \eqref{it:gradient_RC_2} holds, then $2-2\frac{1+\a}{p}\geq 2- \frac{m}{m-1}+\frac{d}{q}>\frac{d}{q}$ where in the last inequality we used that $m>2$.
The previous observation combined with Sobolev embedding gives that
\begin{align}\label{eq:XapembeddingCeps}
\Xap=B^{2-\frac{2(1+\a)}{p}}_{q,p}\hookrightarrow C^{\varepsilon},\qquad \text{ for some }\varepsilon>0.
\end{align}
Let $n\geq 1$ and let $u,v\in X_1$ be such that $u,v\in B_{\Xap}(n)$. By the previous embedding $\|u\|_{L^{\infty}(\R^d)}\leq C \|u\|_{\Xap}\leq Cn$ and the same for $v$. Let $\phi\in \big(2-2\frac{1+\a}{p},2\big)$ be arbitrary. Setting $R = Cn$, then by Assumption \ref{ass:SND_reaction_diffusion_gradient},
\begin{equation}
\label{eq:estimate_F_gradient_optimal}
\begin{aligned}
\|F(\cdot, u)-&F(\cdot, v)\|_{L^q}\\
&\leq C_R \|(1+|\nabla u|^{m-1}+|\nabla v|^{m-1})|\nabla u-\nabla v|\|_{L^q}\\
&\qquad + C_R \|(1+|\nabla u|^{m-\eta}+|\nabla v|^{m-\eta})| u- v|\|_{L^q}\\
&\lesssim_R \|\nabla u-\nabla v\|_{L^{q}}+ (\|\nabla u\|_{L^{qm}}^{m-1}+\|\nabla v\|_{L^{qm}}^{m-1})\|\nabla u-\nabla v\|_{L^{qm}}\\
&\qquad + \|u-v\|_{\Xap}+ (\|\nabla u\|_{L^{q(m-\eta)}}^{(m-\eta)}+\|\nabla u\|_{L^{q(m-\eta)}}^{(m-\eta)}) \|u-v\|_{C^{\epsilon}} \\
&\leq   (1+\| u\|_{H^{\theta,q}}^{m-1}+\| v\|_{H^{\theta,q}}^{m-1})\| u- v\|_{H^{\theta,q}}  \\
& \qquad +(1+\| u\|_{H^{\theta,q}}^{m-\eta}+\| v\|_{H^{\theta,q}}^{m-\eta})\|u-v\|_{H^{\phi,q}};
\end{aligned}
\end{equation}
where in the last line we used the Sobolev embedding with
$\theta-\frac{d}{q} = 1-\frac{d}{qm}$ and the fact that $H^{\phi,q}\hookrightarrow B^{2-2\frac{1+\a}{p}}_{q,p}$. Note that $\theta<2$ since $q>\frac{d(m-1)}{m}$ and $\beta=\theta/2$. Moreover, by \eqref{eq:H_complex_interpolation}, $H^{\theta,q}=[L^{q},W^{2,q}]_{{\beta}}$ and $H^{\phi,q}=[L^{q},W^{2,q}]_{\frac{\phi}{2}}$. To check \ref{HFcritical} we split the argument in two cases:
\begin{enumerate}
\item If $1-(1+\a)/p\geq \beta$, then $\phi>2(1-\frac{1+\a}{p})\geq \theta$. Since $\eta<1$, \eqref{eq:estimate_F_gradient_optimal} implies
$$
\|F(\cdot, u)-F(\cdot, v)\|_{L^q}\lesssim
(1+\| u\|_{H^{\phi,q}}^{m-\eta}+\| v\|_{H^{\phi,q}}^{m-\eta})\|u-v\|_{H^{\phi,q}}.
$$
Set $m_F=1$, $\rho_1=m-\eta$ and $\varphi_1=\beta_1=\phi/2$. Choosing $\phi=2(1-\frac{1+\a}{p})+\varepsilon$, for some $\varepsilon$ small,  \eqref{eq:HypCritical} is equivalent to
$$
(m-\eta)\Big(\varphi_1-1+\frac{1+\a}{p}\Big)+ \beta_1= (m-\eta+1)\frac{\varepsilon}{2} + 1-\frac{1+\a}{p}\leq 1.
$$
The latter inequality is satisfied if $\varepsilon>0$ is sufficiently small. In turn, \ref{HFcritical} is satisfied by setting $F_c=F$, $F_{\Tr}=F_L=0$.

\item If $1-(1+\a)/p<\beta$, then by \eqref{eq:estimate_F_gradient_optimal}, we may set $m_F=2$, $\rho_1=m-1$, $\rho_2=m-\eta$, $\varphi_1=\varphi_2=\theta/2$, $\beta_1=\varphi_1$ and $\beta_2=\phi/2$. It remains to verify \eqref{eq:HypCritical}, which is equivalent to the following
\begin{align}
\label{eq:critical_spaces_gradient_SND_epsilon_coincise_form}
&(m-1)\Big(\varphi_1-1+\frac{1+\a}{p}\Big)+\varphi_1\leq 1,\\
\label{eq:critical_spaces_gradient_SND_epsilon}
&(m-\eta)\Big(\varphi_1-1+\frac{1+\a}{p}\Big)+ \beta_2 \leq 1.
\end{align}
Note that \eqref{eq:critical_spaces_gradient_SND_epsilon_coincise_form} implies \eqref{eq:critical_spaces_gradient_SND_epsilon}. To see this, set $\phi=2-2\frac{1+\a}{p}+\varepsilon$ for $\varepsilon>0$ small. Then \eqref{eq:critical_spaces_gradient_SND_epsilon} holds provided $m\varphi_1 -(m-1)(1-\frac{1+\a}{p})\leq 1+\eta'$ where $\eta'>0$. Now, standard considerations show that \eqref{eq:critical_spaces_gradient_SND_epsilon_coincise_form} implies the latter. Thus, \ref{HFcritical} is satisfied by setting $F_c=F$, $F_{\Tr}=F_L=0$.

Finally, we note that \eqref{eq:critical_spaces_gradient_SND_epsilon_coincise_form} is equivalent to
\begin{equation}
\label{eq:critical_spaces_gradient_SND}
\frac{1+\a}{p}\leq \frac{\rho+1}{\rho}\Big(\frac{1}{2}-\frac{d}{2q}\frac{m-1}{m}\Big)
=\frac{m}{2(m-1)}-\frac{d}{2q}.
\end{equation}
\end{enumerate}
A more simple argument applies to $g$. Indeed,
\begin{equation}
\label{eq:estimate_G_gradient_nonlinearities_revision_stage}
\begin{aligned}
\|G(\cdot,u)-G(\cdot,v)\|_{W^{1,q}(\ell^2)}&\lesssim \|(g_n(\cdot,u)-g_n(\cdot,v))_{n\geq 1}\|_{L^q(\ell^2)}\\
& \quad +\|(\nabla g_n(\cdot,u)(\nabla u-\nabla v))_{n\geq 1}\|_{L^q(\ell^2)}\\
& \quad+\|(\nabla g_n(\cdot,u)-\nabla g_n(\cdot,v))\nabla v)_{n\geq 1}\|_{L^q(\ell^2)}\\
&\leq C_R \| u-v\|_{W^{1,q}}\lesssim C_R \|u-v\|_{\Xap};
\end{aligned}
\end{equation}
where we used that $\Xap \hookrightarrow L^{\infty}\cap W^{1,q}$ by \eqref{eq:XapembeddingCeps} and $2-2(1+\a)/p> 1$. Therefore, $G$ satisfies \ref{HGcritical} with $G_c=G_L=0$.
\end{proof}

\subsubsection{Critical spaces for \eqref{eq:semilinear_reaction_diffusion_gradient_non_linearities}}
Analogously to Subsections \ref{sss:critical_div_reaction_diffusion}, \ref{sss:critical_reaction_diffusion_l_m} let us first analyse the scaling property of the equation \eqref{eq:semilinear_reaction_diffusion_gradient_non_linearities} under the assumption
\begin{equation}
\label{eq:semilinear_reaction_diffusion_critical_space_choice_f}
f(u,\nabla u)=|\nabla u|^m,\qquad m>2,
\end{equation}
cf. \eqref{eq:semilinear_reaction_diffusion_gradient_choice}. In the deterministic case, i.e.\ $b_{jn}\equiv g_n\equiv 0$,
the equation \eqref{eq:semilinear_reaction_diffusion_gradient_non_linearities} with \eqref{eq:semilinear_reaction_diffusion_critical_space_choice_f} is `locally invariant' under the transformation $u\mapsto u_{\lambda}$ where
$$u_{\lambda}(t,x):=\lambda^{-\alpha/2} u(\lambda t,\lambda^{1/2}x),\quad \text{ for }\lambda>0,\,x\in \R^d,$$
and where we have set $\alpha:=\frac{m-2}{m-1}$.
As in \cite[Example 3, Section 3]{CriticalQuasilinear} one can see that the Besov space $B^{d/q+(m-2)/(m-1)}_{q,p}$ has the right `local' scaling for the problem \eqref{eq:semilinear_reaction_diffusion_gradient_non_linearities} with $f$ as in \eqref{eq:semilinear_reaction_diffusion_critical_space_choice_f}, i.e.\ the homogeneous version of this space is invariant under the induced map $u_0\mapsto u_{0,\lambda}:=\lambda^{-\alpha} u_0(\lambda^{1/2}\cdot)$. More precisely, one has
\begin{align*}
\|u_{0,\lambda}\|_{\dot{B}^{\frac{d}{q}+\alpha}_{q,p}}
&\eqsim
\lambda ^{-\alpha/2} (\lambda^{1/2})^{\frac{d}{q}+\alpha-\frac{d}{q}}\|u_0\|_{\dot{B}^{\frac{d}{q}+\alpha}_{q,p}}
=\|u_0\|_{\dot{B}^{\frac{d}{q}+\alpha}_{q,p}};
\end{align*}
here $\dot{B}^{d/q+(m-2)/(m-1)}_{q,p}$ denotes the homogeneous Besov space and the implicit constant does not depend on $\lambda>0$.

It turns out that the above spaces arise naturally as critical spaces for \eqref{eq:semilinear_reaction_diffusion_gradient_non_linearities} in our abstract framework. Moreover, using our abstract theory we do not assume that $f$ has the form in \eqref{eq:semilinear_reaction_diffusion_critical_space_choice_f} but Assumption \ref{ass:SND_reaction_diffusion_gradient} is enough. To this end, as in Subsections \ref{sss:critical_div_reaction_diffusion}, \ref{sss:critical_reaction_diffusion_l_m} we study when equality holds in \eqref{eq:critical_spaces_gradient_SND} for some $\a:=\a_{\crit}$.

Let us begin by analysing the case $p\in (2,\infty)$ and $\a\in [0,p/2-1)$. In this case, to ensure $\a\geq 0$, by \eqref{eq:critical_spaces_gradient_SND} we need
\begin{equation}
\label{eq:SND_gradient_limitation_p}
\frac{1}{p}+\frac{d}{2q}\leq \frac{m}{2(m-1)}.
\end{equation}
To ensure $\a<\frac{p}{2}-1$ we assume
$$
\frac{m}{2(m-1)}-\frac{d}{2q}<\frac{1}{2} \Leftrightarrow q<d(m-1).
$$
Since $q\geq 2$, we assume $m> 1+\frac{2}{d}$. Since $m>2$ by Assumption \ref{ass:SND_reaction_diffusion_gradient}, the latter is automatically satisfied in the case $d>1$. Under the previous conditions, we set
\begin{align}\label{eq:def:kappagraddiff}
\a_{\crit}=\frac{p m}{2(m-1)}- \frac{pd}{2q}-1.
\end{align}
Then the trace space becomes
$$
\Xapcrit=B^{2-\frac{2(1+\a_{\crit})}{p}}_{q,p}(\R^d)=B^{\frac{d}{q}+\frac{m-2}{m-1}}_{q,p}(\R^d).
$$
In the case $q=p=2$ and $\a=0$, if equality in \eqref{eq:critical_spaces_gradient_SND} holds, then $m= 1+2/d$, and therefore $d=1$ since $m>2$. Thus, we can also allow $d=1$, $m=3$, $p=q=2$, and $\a=0$, and the corresponding critical space becomes
$\Xap = B^{1}_{2,2}(\R)=H^1(\R)$.
Now Theorem \ref{t:QND_application_gradient_control_growth_lipschitz_constant} implies the following result.

\begin{theorem}
\label{t:QND_application_critical_spaces_gradient}
Let Assumptions \ref{ass:SND} and \ref{ass:SND_reaction_diffusion_gradient} be satisfied. Let either $d\geq 2$, or $d=1$ and $m>3$. Assume that $\frac{d(m-1)}{m}<q<d(m-1)$ and that $p\in (2,\infty)$ verifies \eqref{eq:SND_gradient_limitation_p}. Let $\a_{\crit}$ be given by \eqref{eq:def:kappagraddiff}. Then for each
$$u_0\in L^0_{\F_0}(\O;B^{\frac{d}{q}+\frac{m-2}{m-1}}_{q,p}(\R^d)),$$
there exists a maximal local solution to \eqref{eq:semilinear_reaction_diffusion_gradient_non_linearities}. Moreover, there exists a localizing sequence $(\sigma_n)_{n\geq 1}$ such that a.s.\ for all $n\geq 1$
$$
u\in L^p(\I_{\sigma_n},w_{\a_{\crit}};W^{2,q}(\R^d))\cap C(\overline{\I}_{\sigma_n};B^{\frac{d}{q}+\frac{m-2}{m-1}}_{q,p}(\R^d))\cap C((0,\sigma_n];B^{2-\frac{2}{p}}_{q,p}(\R^d)).
$$
Furthermore, the same is true if $d=1$, $m=3$, $p=q=2$ and $\a_{\crit}=0$.
\end{theorem}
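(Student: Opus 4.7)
The plan is to derive this theorem as a direct corollary of Theorem \ref{t:QND_application_gradient_control_growth_lipschitz_constant} by taking $\a = \a_{\crit}$ as defined in \eqref{eq:def:kappagraddiff} and verifying all the algebraic constraints. There is no further analysis to do; the work consists of a handful of arithmetic checks.

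First I would verify that $\a_{\crit}$ is admissible. Its non-negativity $\a_{\crit} \geq 0$ is literally the hypothesis \eqref{eq:SND_gradient_limitation_p}. The strict upper bound $\a_{\crit} < p/2 - 1$ rewrites as $\frac{m}{2(m-1)} - \frac{d}{2q} < \frac{1}{2}$, which is equivalent to $q < d(m-1)$, the assumed upper bound on $q$. Next, I would confirm that we are in case \eqref{it:gradient_RC_2} of Theorem \ref{t:QND_application_gradient_control_growth_lipschitz_constant}. By the very definition of $\a_{\crit}$, condition \eqref{it:gradient_RC_2} holds with equality, so it remains only to check the strict inequality $1 - \frac{1+\a_{\crit}}{p} < \beta$. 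Substituting $\beta = \frac{1}{2} + \frac{d(m-1)}{2qm}$ and $1 - \frac{1+\a_{\crit}}{p} = \frac{m-2}{2(m-1)} + \frac{d}{2q}$, this inequality reduces, after cancellation, to $\frac{1}{2(m-1)} > \frac{d}{2qm}$, i.e. $q > \frac{d(m-1)}{m}$, which is the lower bound imposed on $q$.

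Having verified the hypotheses, the conclusion of Theorem \ref{t:QND_application_gradient_control_growth_lipschitz_constant} applies. To match the statement I would identify the trace space using the algebraic identity
$$
2 - \frac{2(1+\a_{\crit})}{p} \;=\; 2 - \frac{m}{m-1} + \frac{d}{q} \;=\; \frac{d}{q} + \frac{m-2}{m-1},
$$
so $B^{2 - 2(1+\a_{\crit})/p}_{q,p}(\R^d) = B^{\frac{d}{q} + \frac{m-2}{m-1}}_{q,p}(\R^d)$, which gives the announced space of initial data, and the other components of the regularity conclusion transfer verbatim.

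Finally, for the endpoint case $d=1$, $m=3$, $p=q=2$, $\a_{\crit}=0$, I would note that we sit in the Hilbert branch of Assumption \ref{ass:Xtr}: $X_0 = L^2(\R)$ and $X_1 = W^{2,2}(\R)$ are Hilbert spaces, so Theorem \ref{t:QND_application_gradient_control_growth_lipschitz_constant} still applies. A direct computation gives $\beta = 1/2 + 1/3 > 1/2 = 1 - (1+\a_{\crit})/p$ and $\frac{1+\a_{\crit}}{p} = 1/2 = \frac{m}{2(m-1)} - \frac{d}{2q}$, so case \eqref{it:gradient_RC_2} is again in force, yielding the $H^1(\R) = B^1_{2,2}(\R)$ trace space. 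Since the result is entirely corollary-level, there is no genuine obstacle; the only mild care needed is to track the strict versus non-strict inequalities and to treat the $p = 2$ endpoint under the Hilbert-space branch of Assumption \ref{ass:Xtr} rather than the weighted $L^p$-branch.
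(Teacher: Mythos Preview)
Your approach is correct and matches the paper's: the result is presented there as an immediate consequence of Theorem~\ref{t:QND_application_gradient_control_growth_lipschitz_constant} after the preparatory discussion verifying that $\a_{\crit}\in[0,\tfrac{p}{2}-1)$ and identifying the trace space. One harmless arithmetic slip: in the endpoint case $d=1$, $m=3$, $q=2$ you have $\beta=\tfrac12+\tfrac{d(m-1)}{2qm}=\tfrac12+\tfrac16=\tfrac23$, not $\tfrac12+\tfrac13$, though the needed inequality $\beta>\tfrac12$ is unaffected.
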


\subsection{Stochastic Burgers' equation with white noise}
\label{ss:Burgers_semilinear}
In this section we consider a stochastic Burgers' equation with space-time white noise on $\Tor$. The space-time white noise will be denoted by $w_t$.
More precisely, we analyse the following problem for the unknown process $u:\I_T\times \O\times \Tor\to \R$
\begin{equation}
\label{eq:Burger_white_noise}
\begin{cases}
\displaystyle du +\A u dt= \partial_x(f(\cdot, u)) dt +
g(\cdot, u) d{w}_{t}, & \text{on } \Tor,\\
u(0)=u_0 & \text{on } \Tor.
\end{cases}
\end{equation}
Here $\A$ is as in \eqref{eq:SND_quasi_AB_n_Def} and for simplicity we took $\bb=0$.
For results with Dirichlet boundary conditions see Theorem \ref{t:Dirichlet_extensions_semilinear}\eqref{it:Dirichlet_extension}.

Compared with the previous sections, due to the space-time white noise we restrict ourselves to the one-dimensional torus, and require a suitable interpretation of the $g$-term. Indeed, the term $g(\cdot,u)d w_t$ in \eqref{eq:Burger_white_noise} will be interpreted as $M_{g(\cdot,u)} W_{L^{2}(\Tor)}$ where $M_{g(\cdot,u)}$ denotes multiplication by $g(\cdot,u)$, and $W_{L^2(\Tor)}$ is an $L^2(\Tor)$-cylindrical Brownian motion induced by the space-time white noise $w_t$.

\begin{assumption}\label{ass:SNDwhitenoise}\
\begin{enumerate}[{\rm(1)}]
\item Assumption \ref{ass:SND} is satisfied.
\item\label{it:SND_white_noise_f_g} The maps $f:\I_T\times\O\times \Tor\times \R\to \R$, $g:\I_T\times\O\times\Tor\times\R \to \R$ are $\Progress\otimes \Borel(\Tor)\otimes \Borel(\R)$-measurable with $f(\cdot, 0 ),g(\cdot, 0) \in L^{\infty}(\I_T\times \O\times \Tor)$. Moreover, there exist $h,m>1$ and $C>0$ such that
such that for all $z,z'\in \R$
\begin{align*}
|f(\cdot,z)-f(\cdot,z')|& \leq  C (1+|z|^{h-1}+|z'|^{h-1})|z-z'|,
\\ |g(\cdot,z)-g(\cdot,z')|& \leq C (1+|z|^{m-1}+|z'|^{m-1}) |z-z'|.
\end{align*}
\end{enumerate}
\end{assumption}
The Burgers' nonlinearity $f(u) = -u^2 $ satisfies the above condition for any $h\geq 2$.

As above, to prove local existence for \eqref{eq:Burger_white_noise} we employ Theorem \ref{t:semilinear}. Recall that the space-time white noise can be model as an $L^2(\Tor)$-cylindrical Brownian motion. Therefore, we set $H=L^2(\Tor)$. Fix $s\in (0,1)$ and $q\in [2,\infty)$. We rewrite \eqref{eq:Burger_white_noise} in the form \eqref{eq:semilinearabstract} by setting $X_0:=H^{-1-s,q}(\Tor)$, $X_1=H^{1-s,q}(\Tor)$. Note that by \eqref{eq:H_complex_interpolation},
\[X_{\frac12} = H^{-s,q}(\Tor)\ \ \ \text{and} \ \ \ \Xap = B^{1-s-2\frac{(1+\a)}{p}}_{q,p}(\Tor).\]
For $u\in X_1$ and $t\in \I_T$ we set
\begin{align*}
A(t) u&=\A(t)u, &  B(t)u &=0,
\\ F(t,u)&=\partial_x (f(t,u)), & G(t,u)&=i M_{g(t,u)}.
\end{align*}
Here $\A(t)$ is as \eqref{eq:SND_quasi_AB_n_Def} and for fixed $u\in L^{m \ell}(\Tor)$ measurable, $M_{g(t,u)}:L^2(\Tor)\to L^r(\Tor)$ is the multiplication operator
\[(M_{g(t,u)} h)(x) = g(t,u(x)) h(x),\]
for $r\in (1, 2)$ and $\ell\in (2, \infty)$ which satisfy $\frac{1}{r} = \frac{1}{2} + \frac{1}{\ell}$ and we will need $s-\frac{1}{r}>0$ later for the $G$ term. Moreover, $i:L^r(\Tor)\to H^{-s,q}(\Tor)=X_{\frac12}$ denotes the embedding which holds since $-s-\frac1q\leq  -\frac1r$. Since $s>\frac{1}{r}>\frac{1}{2}$ we only will consider $s\in (\frac{1}{2},1)$ below.

As usual, we say that $(u,\sigma)$ is a maximal local solution to \eqref{eq:Burger_white_noise} if $(u,\sigma)$ is a maximal local solution to \eqref{eq:semilinearabstract} in the sense of Definition \ref{def:solution2} with the above choice of $A,B,F,G,H$. To estimate the nonlinearity we start by looking at $F$. As in \eqref{eq:reaction_diffusion_estimate_F}, by Assumption \ref{ass:SNDwhitenoise}\eqref{it:SND_white_noise_f_g} we get
\begin{equation}
\label{eq:Burgers_semilinear_estimate_F}
\begin{aligned}
\|F(\cdot, u)-F(\cdot, v)\|_{H^{-1-s,q}}	
&\stackrel{(i)}{\lesssim} \|F(\cdot, u)-F(\cdot, v)\|_{H^{-1,\xi}}\\
&\lesssim (1+\|u\|_{L^{h \xi}}^{h-1}+\|v\|_{L^{h \xi}}^{h-1})\|u-v\|_{L^{h \xi}}\\
&\stackrel{(ii)}{\lesssim} (1+\|u\|_{H^{\theta,q}}^{h-1}+\|v\|_{H^{\theta,q}}^{h-1})\|u-v\|_{H^{\theta,q}};
\end{aligned}
\end{equation}
where in $(i)$ we used the Sobolev embedding with $\xi$ defined by $-1-\frac{1}{\xi} = -1-s -\frac{1}{q}$ and in $(ii)$ the Sobolev embedding with $\theta -\frac{1}{q} = -\frac{1}{h\xi}$. To ensure that $\xi\in (1,\infty)$ we have to assume $q>\frac{1}{1-s}$. Moreover,
$$
\frac{1}{q}-\theta= \frac{1}{h \xi}=\frac{1}{h}\Big(\frac{1}{q}+s\Big)\quad
\Rightarrow
\quad
\theta= \frac{1}{q}\Big(1-\frac{1}{h}\Big)-\frac{s}{h}.
$$
Since $\theta$ has to satisfy $\theta\in (0,1-s)$, we require $\frac{h-1}{h-s(h-1)}<q<\frac{h-1}{s}$. Since $\frac{h-1}{h-s(h-1)}<\frac{1}{1-s}$ for all $h\geq 1$ and $s\in (0,1)$, it is enough to assume
\begin{equation}
\label{eq:Burgers_limitation_q}
\frac{1}{1-s}<q<\frac{h-1}{s}.
\end{equation}
Note that since $s\in (\frac{1}{2},1)$ then $\frac{1}{1-s}>2$. Thus, if $q$ verifies \eqref{eq:Burgers_limitation_q}, then $q>2$. Moreover, the condition \eqref{eq:Burgers_limitation_q} is not empty provided
\begin{equation}
\label{eq:Burgers_limitation_h_s}
h>\frac{1}{1-s}.
\end{equation}
Since $s>\frac{1}{2}$, the previous implies $h>2$. If \eqref{eq:Burgers_limitation_q} holds, then $H^{\theta,q}=[H^{-1-s,q},H^{1-s,q}]_{\beta_1}$ where
\begin{equation}
\label{eq:beta1_Burgers}
\beta_1=\frac{1+\theta+s}{2}=\frac{1}{2}\Big[\Big(\frac{1}{q}+s\Big)\Big(1-\frac{1}{h}\Big)+1\Big]\in (0,1).
\end{equation}
To check the condition \ref{HFcritical} we may split the discussion into three cases:
\begin{enumerate}[{\rm(1)}]
\item If $1-\frac{1+\a}{p}>\beta_1$, by Remark \ref{r:non_linearities}\eqref{it:non_linearities_continuous_trace}, \ref{HFcritical} follows by setting $F_{\Tr}(t,u)=\partial_x(f(t,\cdot,u))$ and $F_{L}\equiv F_c\equiv 0$.
\item If $1-\frac{1+\a}{p}= \beta_1$, by \eqref{eq:Burgers_semilinear_estimate_F} and Remark \ref{r:non_linearities}\eqref{it:non_linearities_varphi_equal_to_beta}, \ref{HFcritical} follows by setting $F_L\equiv F_{\Tr}\equiv 0$, $F_{c}(t,u)=\partial_x(f(t,\cdot,u))$, $m_F=1$, $\rho_1=h-1$ and $\varphi_1=\beta_1$.
\item If $1-\frac{1+\a}{p}<\beta_1$ we set $F_{c}(t,u)=\partial_x(f(t,\cdot,u))$ and $F_{L}\equiv F_{\Tr}\equiv 0$. As in the previous item we set $m_F=1$, $\rho_1=h-1$ and $\varphi_1=\beta_1$. By \eqref{eq:Burgers_semilinear_estimate_F} it remains to check the condition \eqref{eq:HypCritical}. In this situation, \eqref{eq:HypCritical} becomes
\begin{equation}
\label{eq:Burgers_critical_equations_1}
\frac{1+\a}{p}\leq \frac{\rho_1+1}{\rho_1}(1-\beta_1)
=\frac{1}{2}\frac{h}{h-1}-\frac{1}{2}\Big(\frac{1}{q}+s\Big).
\end{equation}
\end{enumerate}

Next we estimate $G$. By Assumption \ref{ass:SNDwhitenoise}\eqref{it:SND_white_noise_f_g} it follows that
\begin{equation}
\label{eq:estimate_G_very_weak_Burgers}
\begin{aligned}
\|G(\cdot,u)-G(\cdot,v)\|_{\g(L^2;H^{-s,q})}
&\eqsim \|(I-\partial_x^2)^{-\frac{s}{2}} (M_{g(\cdot, u)}-M_{g(\cdot, v)})\|_{\g(L^2;L^{q})}
\\ &\stackrel{(i)}{\lesssim}
\|(I-\partial_x^2)^{-\frac{s}{2}} (M_{g(\cdot, u)}-M_{g(\cdot, v)})\|_{\calL(L^2;L^{\infty})}
\\ & \stackrel{(ii)}{\lesssim} \|M_{g(\cdot, u)}-M_{g(\cdot, v)}\|_{\calL(L^2;L^{r})}
\\ & \stackrel{(iii)}{\lesssim} \|g(\cdot, u)-g(\cdot, v)\|_{L^{\ell}}
\\ & \lesssim (1+\|u\|_{L^{m\ell}}^{m-1}+\|v\|_{L^{m\ell}}^{m-1})\|u-v\|_{L^{m\ell}}
\\ &\stackrel{(iv)}{\lesssim}(1+\|u\|_{H^{\phi,q}}^{m-1}+\|v\|_{H^{\phi,q}}^{m-1})\|u-v\|_{H^{\phi,q}};
\end{aligned}
\end{equation}
where in $(i)$ we used \cite[Lemma 2.1]{NVW3}, in $(ii)$ we used Sobolev embedding with $s-\frac{1}{r} >0$.  In $(iii)$ we used H\"older's inequality with $\frac{1}{\ell}=\frac{1}{r}-\frac{1}{2}$, and in $(iv)$ Sobolev embedding with $\phi-\frac{1}{q} =- \frac{1}{\ell m} =\frac{1}{m}( \frac12-\frac{1}{r})$. Thus, to ensure that $\phi\in (0,1-s)$ we require
\begin{equation}
\label{eq:Burger_limitation_q_2}
\frac{m}{m(1-s)+\frac1r-\frac12}<q< \frac{m}{\frac1r-\frac12}.
\end{equation}
The lower estimate in \eqref{eq:Burger_limitation_q_2} is immediate from $q>1/(1-s)$. The upper estimate gives a restriction, but we will take $r\in (1, 2)$ large enough to avoid any additional restrictions coming from \eqref{eq:Burger_limitation_q_2}.

Due to \eqref{eq:H_complex_interpolation} one has $H^{\phi,q}=[H^{-1-s,q},H^{1-s,q}]_{\beta_2}$ where
\begin{equation}
\label{eq:beta2_Burgers_real}
\beta_2=\frac{1+s+\phi}{2} = \frac{1+s}{2}+ \frac{1}{2q}-\frac{1}{2m}\Big( \frac{1}{r} - \frac12\Big) \in (0,1).
\end{equation}
As usual, to check assumption \ref{HGcritical} we split the discussion in several cases. Since $r\in (1, 2)$ will be chosen large, we will set
\begin{equation}
\label{eq:beta2_Burgers}
\tilde{\beta}_2=\frac{1+s}{2}+ \frac{1}{2q}\in (0,1).
\end{equation}
Then $\tilde{\beta}_2>\beta_2$.
\begin{enumerate}[{\rm(1)}]
\item If $1-\frac{1+\a}{p}\geq \tilde{\beta}_2$, then since $\tilde{\beta}_2>\beta_2$, by Remark \ref{r:non_linearities}\eqref{it:non_linearities_continuous_trace}, \ref{HGcritical} follows by setting $G_{\Tr}(t,u):=g(\cdot,u)$ and $G_{L}\equiv G_c\equiv 0$.
\item If $1-\frac{1+\a}{p}<\tilde{\beta}_2$, we can choose $r\in (1, 2)$ so large that the same holds with $\beta_2$ instead of $\tilde{\beta}_2$, and we set $G_{c}(t,u):=g(\cdot,u)$ and $G_{L}\equiv G_{\Tr}\equiv 0$. As in the previous item we set $m_G=1$, $\rho_2=m-1$ and $\varphi_2=\beta_2$. By \eqref{eq:estimate_G_very_weak_Burgers} it remains to check the condition \eqref{eq:HypCriticalG}. Now \eqref{eq:HypCriticalG} becomes
\begin{equation*}
\frac{1+\a}{p}\leq \frac{\rho_2+1}{\rho_2}(1-\beta_2)=
\frac{m}{m-1}(1-\beta_2)
\end{equation*}
Choosing $r\in(1, 2)$ large enough the latter holds if
\begin{equation}
\label{eq:Burgers_critical_equations_2}
\frac{1+\a}{p}<\frac{m}{(m-1)}(1-\tilde{\beta}_2)=\frac{m}{2(m-1)}\Big(1-s-\frac{1}{q}\Big).
\end{equation}
Since $\a\in [0,\frac{p}{2}-1)$ and $\tilde{\beta}_2<1$, then the above inequality is always verified for $p$ sufficiently large and $\a$ small.
\end{enumerate}
Combining the above considerations with Theorem \ref{t:semilinear} and Lemma \ref{l:SMR_semilinear_PDEs}, we obtain the following:

\begin{theorem}
\label{t:Burgers_local_existence}
Let $s\in (\frac{1}{2},1)$ and $h>1/(1-s)$. Assume that Assumption \ref{ass:SNDwhitenoise} holds. Let \eqref{eq:Burgers_limitation_q} be satisfied. Let $\beta_1$ be as in \eqref{eq:beta1_Burgers} and $\tilde{\beta}_2$ as in \eqref{eq:beta2_Burgers}. Assume that one of the following conditions is satisfied:
\begin{itemize}
\item $1-(1+\a)/p\geq \beta_1$ and $1-(1+\a)/p\geq \tilde{\beta}_2$;
\item $1-(1+\a)/p<\beta_1$, $1-(1+\a)/p\geq \tilde{\beta}_2$ and \eqref{eq:Burgers_critical_equations_1} holds;
\item $1-(1+\a)/p\geq \beta_1$, $1-(1+\a)/p<\tilde{\beta}_2$ and \eqref{eq:Burgers_critical_equations_2} holds;
\item $1-(1+\a)/p<\beta_1$ and $1-(1+\a)/p<\tilde{\beta}_2$ and \eqref{eq:Burgers_critical_equations_1}, \eqref{eq:Burgers_critical_equations_2} hold.
\end{itemize}
Then for each
$$
u_0\in L^0_{\F_0}(\O;\B^{1-s-2\frac{1+\a}{p}}_{q,p}(\Tor)),
$$
the problem \eqref{eq:Burger_white_noise} has a maximal local solution $(u,\sigma)$. Moreover, there exists a localizing sequence $(\sigma_n)_{n\geq 1}$ such that a.s.\ for all $n\geq 1$
$$
u\in L^p(\I_{\sigma_n},w_{\a};H^{1-s,q}(\Tor))\cap  C(\overline{\I}_{\sigma_n};\B^{1-s-2\frac{1+\a}{p}}_{q,p}(\Tor))\cap
C((0,\sigma_n];\B^{1-s-\frac{2}{p}}_{q,p}(\Tor)).
$$
\end{theorem}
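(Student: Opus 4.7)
The proof is essentially assembled from the estimates already developed in the text preceding the statement, together with the abstract machinery of Theorem \ref{t:semilinear} and the stochastic maximal regularity of Lemma \ref{l:SMR_semilinear_PDEs}. The plan is as follows.

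First I would recast \eqref{eq:Burger_white_noise} as an abstract semilinear equation \eqref{eq:semilinearabstract} on $X_0 = H^{-1-s,q}(\Tor)$, $X_1 = H^{1-s,q}(\Tor)$, with $H = L^2(\Tor)$, taking $A(t)u = \A(t)u$, $B(t) = 0$, $F(t,u) = \partial_x(f(t,\cdot,u))$, and $G(t,u) = i M_{g(t,\cdot,u)}$, as explained in the text. By \eqref{eq:H_complex_interpolation}, $X_{1/2} = H^{-s,q}(\Tor)$ and $\Xap = B^{1-s-2(1+\a)/p}_{q,p}(\Tor)$. The UMD and type $2$ properties needed in Assumption \ref{ass:Xtr} hold for Bessel potential spaces on $\Tor$ with $q\geq 2$. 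Lemma \ref{l:SMR_semilinear_PDEs} (which applies since $B=0$ is just a special case and the condition in Assumption \ref{ass:SND}\eqref{it:SND_ellipticity} reduces to ellipticity of $(a_{ij})$) yields $(A,B) \in \MRta$.

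Next I would verify Hypothesis \hyperref[H:hip]{$\Hip$}. Assumption \ref{HAmeasur} is trivial because $A$ does not depend on $u$ and $B \equiv 0$. For \ref{HFcritical}, the bound \eqref{eq:Burgers_semilinear_estimate_F} together with the constant $F_c(t,0) = \partial_x(f(t,\cdot,0)) \in L^\infty(\O;L^p(\I_T;X_0))$ (thanks to $f(\cdot,0) \in L^q(\Tor)$) allows the case split directly above the theorem:
\begin{itemize}
\item if $1-(1+\a)/p \geq \beta_1$, set $F_{\Tr}(t,u) = \partial_x f(t,\cdot,u)$ and $F_L = F_c = 0$ (using Remark \ref{r:non_linearities}\eqref{it:non_linearities_continuous_trace} or \eqref{it:non_linearities_varphi_equal_to_beta});
\item if $1-(1+\a)/p < \beta_1$, set $F_c(t,u) = \partial_x f(t,\cdot,u)$, $F_L = F_{\Tr} = 0$ with $m_F = 1$, $\rho_1 = h-1$, $\varphi_1 = \beta_1 = \beta_1$ as in \eqref{eq:beta1_Burgers}, and verify \eqref{eq:HypCritical} which becomes exactly \eqref{eq:Burgers_critical_equations_1}.
\end{itemize}
An entirely analogous split for \ref{HGcritical} is carried out via \eqref{eq:estimate_G_very_weak_Burgers}. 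The one extra subtlety here is the role of the auxiliary exponent $r \in (1,2)$: when $1-(1+\a)/p < \tilde{\beta}_2$, I would choose $r$ so close to $2$ that the true $\beta_2$ from \eqref{eq:beta2_Burgers_real} also satisfies $1-(1+\a)/p < \beta_2$ while $\frac{m}{m-1}(1-\beta_2)$ approaches $\frac{m}{m-1}(1-\tilde{\beta}_2)$ from below, so that the strict condition \eqref{eq:Burgers_critical_equations_2} implies the non-strict \eqref{eq:HypCriticalG}.

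Finally, observing that $\Filtr$-measurability of all data is built into the definitions, and that $u_0 \in L^0_{\F_0}(\O;\Xap)$, I would apply Theorem \ref{t:semilinear}\eqref{it:semilinear_u_L_0}. This yields the maximal local solution $(u,\sigma)$ with $\sigma > 0$ a.s., a localizing sequence $(\sigma_n)_{n\geq 1}$, and, via part \eqref{it:regularity_data_L0} of Theorem \ref{t:local_Extended} combined with $u \in L^p(\I_{\sigma_n},w_{\a};X_1)$ and the trace embedding, the claimed path regularity $u \in L^p(\I_{\sigma_n},w_{\a};H^{1-s,q}) \cap C(\overline{\I}_{\sigma_n};B^{1-s-2(1+\a)/p}_{q,p}) \cap C((0,\sigma_n];B^{1-s-2/p}_{q,p})$ a.s. Note that the smallness condition in Theorem \ref{t:semilinear} is vacuous for us because $F_L = G_L = 0$, so $L_F = L_G = 0$.

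The only real subtlety is the careful handling of the $G$-term: since white noise on $\Tor$ is not in $L^2$-valued form, one needs the Hilbert–Schmidt factorization through the multiplication operator $M_{g(\cdot,u)}$ and a Sobolev embedding to absorb the $\gamma$-norm, which is precisely the role of the auxiliary exponent $r$; no other step is harder than a bookkeeping verification of the hypotheses already written out in the display equations preceding the statement.
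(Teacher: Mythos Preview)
Your proposal is correct and follows exactly the approach of the paper, which proves the theorem in a single sentence: ``Combining the above considerations with Theorem \ref{t:semilinear} and Lemma \ref{l:SMR_semilinear_PDEs}, we obtain the following.'' You have simply unpacked this sentence, correctly identifying the case splits for \ref{HFcritical} and \ref{HGcritical}, the role of the auxiliary exponent $r$ in the $G$-estimate, and the vacuity of the smallness condition (since $F_L=G_L=0$); note also that you correctly obtain $u\in L^p(\I_{\sigma_n},w_{\a};X_1)=L^p(\I_{\sigma_n},w_{\a};H^{1-s,q})$, whereas the displayed $H^{-1-s,q}$ in the theorem statement appears to be a typo.
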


\begin{example}
\label{ex:Burgers_semilinear}
In the case of Burgers' equation, i.e.\ $f(u)=-u^2$ and $h=2$, Theorem \ref{t:Burgers_local_existence} gives a sub-optimal result. To see this recall that $f(u) = -u^2$ verifies Assumption \ref{ass:SNDwhitenoise} for all $h\geq 2$.
Fix $\varepsilon>0$ and write $h=2+\varepsilon$. Then \eqref{eq:Burgers_limitation_h_s} implies
$
s\in (\frac{1}{2},\frac{1+\varepsilon}{2+\varepsilon})
$. Since $s\in (\frac{1}{2},\frac{1+\varepsilon}{2+\varepsilon})$ is arbitrary, choosing $s>\frac12$ small enough, the limitation \eqref{eq:Burgers_limitation_q} gives $2<q<2(1+\varepsilon)$.
Since $\beta_1,\tilde{\beta}_2<1$, by choosing $p$ large enough, Theorem \ref{t:Burgers_local_existence} gives the existence of a maximal solution to \eqref{eq:Burger_white_noise} with $f(u)=-u^2$.
\end{example}

\subsubsection{Critical spaces for \eqref{eq:Burger_white_noise}}
Here we analyse the existence of critical spaces for \eqref{eq:Burger_white_noise}. By definition, it means that \eqref{eq:Burgers_critical_equations_1} or \eqref{eq:Burgers_critical_equations_2} has to be satisfied with equality. Since in \eqref{eq:Burgers_critical_equations_2} equality is not allowed, we have to require that the right-hand side of \eqref{eq:Burgers_critical_equations_1} is smaller than the one in  \eqref{eq:Burgers_critical_equations_2}. A straightforward computation shows that this holds if and only if
\begin{equation}
\label{eq:Burgers_m_h_relation}
m<h+(1-h)\Big(s+\frac{1}{q}\Big).
\end{equation}
In particular, the latter implies $m<h$. Note that \eqref{eq:Burgers_m_h_relation} is not empty since $h+(1-h)(s+\frac{1}{q})>1$, by \eqref{eq:Burgers_limitation_q}. If \eqref{eq:Burgers_m_h_relation} holds, then the critical spaces arise when equality in \eqref{eq:Burgers_critical_equations_1} is reached. Reasoning as in Subsection \ref{sss:critical_div_reaction_diffusion}, for $p\in (2,\infty)$ equality in \eqref{eq:Burgers_critical_equations_1} holds for some $\a\in [0,\frac{p}{2}-1)$ if the following are satisfied
\begin{align}
\label{eq:Burgers_critical_p}
\frac{1}{p}+\frac{1}{2}\Big(\frac{1}{q}+s\Big)&\leq \frac{1}{2}\frac{h}{h-1},
\\
\label{eq:Burgers_critical_q}
h\geq \frac{1+s}{s}\qquad \text{ or }\qquad& \Big[ h<\frac{1+s}{s} \ \ \text{and}  \ \ q<\frac{h-1}{1-s(h-1)}\Big].
\end{align}
Note that if $h<\frac{1+s}{s}$ one always has $\frac{h-1}{1-s(h-1)}>\frac{h-1}{s}$
as follows from $s>\frac{1}{2}$. Therefore, by \eqref{eq:Burgers_limitation_q}, condition \eqref{eq:Burgers_critical_q} is always verified. Defining $\a_{\crit}$ as in \eqref{eq:criticalreacdiff}, one obtains
$\Xapcrit=B^{\frac{1}{q}-\frac{1}{h-1}}_{q,p}(\Tor)$.
These considerations and Theorem \ref{t:Burgers_local_existence} give the following.
\begin{theorem}
\label{t:Burgers_critical}
Let $s\in (\frac{1}{2},1)$ and $h>1/(1-s)$. Assume that Assumption \ref{ass:SNDwhitenoise} holds. Assume that \eqref{eq:Burgers_limitation_q}, \eqref{eq:Burgers_m_h_relation} and \eqref{eq:Burgers_critical_p} hold. Let $\a_{\crit}:=\frac{p}{2}(\frac{h}{h-1}-\frac{1}{q}-s)-1$. Then for each
$$
u_0\in L^0_{\F_0}(\O;B^{\frac{1}{q}-\frac{1}{h-1}}_{q,p}(\Tor))
$$
there exists a maximal local solution $(u,\sigma)$ to \eqref{eq:semilinear_reaction_diffusion}. Moreover, there exists a localizing sequence $(\sigma_n)_{n\geq 1}$ such that a.s.\ for all $n\geq 1$
$$
u\in L^{p}(\I_{\sigma_n},w_{\a_{\crit}};H^{1-s,q}(\Tor))
\cap C(\overline{\I}_{\sigma_n};B^{\frac{1}{q}-\frac{1}{h-1}}_{q,p}(\Tor))\cap
C((0,\sigma_n];B^{1-s-\frac{2}{p}}_{q,p}(\Tor)).
$$
\end{theorem}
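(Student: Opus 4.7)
The plan is to show that Theorem \ref{t:Burgers_critical} is a direct consequence of Theorem \ref{t:Burgers_local_existence} applied with $\a=\a_{\crit}:=\frac{p}{2}\big(\frac{h}{h-1}-\frac{1}{q}-s\big)-1$. Thus the whole proof reduces to four verifications: (a) the admissibility $\a_{\crit}\in[0,\tfrac{p}{2}-1)$; (b) the appearance of the critical case for the drift, namely $1-\tfrac{1+\a_{\crit}}{p}<\beta_1$ with equality in \eqref{eq:Burgers_critical_equations_1}; (c) the subcritical case for the diffusion at $\a=\a_{\crit}$, meaning \eqref{eq:Burgers_critical_equations_2} holds with strict inequality; (d) identification of the trace space. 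None of these steps is technically difficult; the work is bookkeeping with the various inequalities introduced in Subsection \ref{ss:Burgers_semilinear}.

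First I would verify (a). The lower bound $\a_{\crit}\geq 0$ is exactly hypothesis \eqref{eq:Burgers_critical_p} rewritten. For the strict upper bound $\a_{\crit}<\tfrac{p}{2}-1$, an elementary computation shows it is equivalent to $\tfrac{1}{q}+s>\tfrac{1}{h-1}$, that is $q<\frac{h-1}{1-s(h-1)}$ when $s<\tfrac{1}{h-1}$, and to no constraint when $s\geq \tfrac{1}{h-1}$, i.e.\ to condition \eqref{eq:Burgers_critical_q}. As observed in the paragraph below \eqref{eq:Burgers_critical_q}, \eqref{eq:Burgers_critical_q} is automatically implied by \eqref{eq:Burgers_limitation_q} and $s>\tfrac12$, since for $h>\tfrac{1}{1-s}>2$ one has $\tfrac{h-1}{s}\leq \tfrac{h-1}{1-s(h-1)}$.

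Next I would handle (b). Plugging $\a_{\crit}$ into \eqref{eq:Burgers_critical_equations_1} gives equality by definition, so \eqref{eq:HypCritical} is saturated. The inequality $1-\tfrac{1+\a_{\crit}}{p}<\beta_1$, with $\beta_1$ from \eqref{eq:beta1_Burgers}, rearranges (after multiplying by $2$ and subtracting the common terms) into
\[
\frac{1}{h-1}> \frac{1}{h}\Big(\frac{1}{q}+s\Big),
\]
which is equivalent to $\tfrac{h}{h-1}>\tfrac{1}{q}+s$; but this is exactly $\tfrac{1+\a_{\crit}}{p}>0$, and holds since $\a_{\crit}\geq 0$. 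So we are in the critical regime for $F$, placing us in the third bullet of Theorem \ref{t:Burgers_local_existence}. For (c), the inequality \eqref{eq:Burgers_critical_equations_2} at $\a=\a_{\crit}$ reduces, after a short algebraic manipulation (using $\tfrac{h}{h-1}-\tfrac{m}{m-1}=\tfrac{m-h}{(h-1)(m-1)}$), precisely to assumption \eqref{eq:Burgers_m_h_relation}, which also guarantees the strict inequality $1-\tfrac{1+\a_{\crit}}{p}<\tilde\beta_2$ needed to land in the fourth bullet of Theorem \ref{t:Burgers_local_existence}. Here the main care is to verify that \eqref{eq:Burgers_critical_equations_2} really does hold \emph{strictly}, which uses $m<h$ (a direct consequence of \eqref{eq:Burgers_m_h_relation} combined with \eqref{eq:Burgers_limitation_q}).

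Finally, for (d), I would compute
\[
1-s-\tfrac{2(1+\a_{\crit})}{p}=1-s-\Big(\tfrac{h}{h-1}-\tfrac{1}{q}-s\Big)=\tfrac{1}{q}-\tfrac{1}{h-1},
\]
so that $\Xapcrit=B^{1/q-1/(h-1)}_{q,p}(\Tor)$, matching the space in the statement. With (a)--(d) in place, Theorem \ref{t:Burgers_local_existence} applied with $\a=\a_{\crit}$ yields directly the existence of a maximal local solution $(u,\sigma)$ with the claimed regularity and localizing sequence $(\sigma_n)_{n\geq 1}$. The only conceptually nontrivial point in this plan is recognizing which of the two nonlinear constraints \eqref{eq:Burgers_critical_equations_1}--\eqref{eq:Burgers_critical_equations_2} is binding, and \eqref{eq:Burgers_m_h_relation} is imposed exactly so that it is the $F$-constraint; the rest is linear algebra on the exponents.
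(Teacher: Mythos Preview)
Your proposal is correct and follows essentially the same route as the paper: the theorem is obtained from Theorem \ref{t:Burgers_local_existence} by taking $\a=\a_{\crit}$ and checking that (i) $\a_{\crit}\in[0,\tfrac{p}{2}-1)$ via \eqref{eq:Burgers_critical_p} and \eqref{eq:Burgers_critical_q} (the latter being automatic from \eqref{eq:Burgers_limitation_q}), (ii) equality holds in \eqref{eq:Burgers_critical_equations_1} while \eqref{eq:Burgers_critical_equations_2} remains strict thanks to \eqref{eq:Burgers_m_h_relation}, and (iii) the trace space computes to $B^{1/q-1/(h-1)}_{q,p}(\Tor)$. Two cosmetic points: your reference to ``the third bullet of Theorem \ref{t:Burgers_local_existence}'' in (b) should read ``fourth'' (or, more likely, you meant the third case in the enumerate preceding the theorem); and the inequality $1-\tfrac{1+\a_{\crit}}{p}<\tilde\beta_2$ in (c) actually holds automatically for any $h>1$, independently of \eqref{eq:Burgers_m_h_relation}.
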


\begin{example}
Here we continue the study of \eqref{eq:Burger_white_noise} in the case of Burgers' equation, i.e.\ \eqref{eq:Burger_white_noise} with $f(u)=-u^2$. As in Example \ref{ex:Burgers_semilinear}, let $\varepsilon>0$ and $h=2+\varepsilon$. Thus, \eqref{eq:Burgers_limitation_h_s} and  \eqref{eq:Burgers_limitation_q} gives $s\in (\frac{1}{2},\frac{1+\varepsilon}{2+\varepsilon})$ and $q\in (\frac{1}{1-s},\frac{1+\varepsilon}{s})$. In addition, \eqref{eq:Burgers_m_h_relation} is equivalent to $m\in (1,2+\varepsilon-(1+\varepsilon)(s+\frac{1}{q}))$. Therefore, if $p\in (2,\infty)$ verifies \eqref{eq:Burgers_critical_p} and $q,s,m,h$ are as above, then Theorem \ref{t:Burgers_critical} ensure the existence of a maximal local solution to \eqref{eq:Burger_white_noise} for
$
u_0\in L^0_{\F_0}(\O;B^{\frac{1}{q}-\frac{1}{1+\varepsilon}}_{q,p}(\Tor)).
$
\end{example}

\subsection{Discussion and further extensions}
\label{ss:Discussion_further_extensions}

\subsubsection{$x$-dependent coefficients}
In the results of Sections \ref{ss:conservative_RD}-\ref{ss:Burgers_semilinear} we only used the assertion $(A,B)\in \MRta$ of Lemma \ref{l:SMR_semilinear_PDEs}. If $(A,B)$ in Assumption \ref{ass:SND} have $x$-dependent coefficients but still satisfies $(A,B)\in \MRta$, then all local existence and regularity results extend to this setting. In the time-independent case (or time-continuous case) many of such results are available as follows from Theorem \ref{t:H_infinite_SMR} (and \cite[Section 5]{NVW11eq}). However, only under a smallness condition on $b_{jn}$.

In the case $p=q$ much more is known on $(A,B)\in \MRta$ with $x$-dependent coefficients.
In particular, from \cite{Kry} and the discussion in Section \ref{ss:operators_with_SMR} we see that stochastic maximal $L^p$-regularity holds in the case the coefficients $a_{ij}$ and $b_{jn}$ are smooth in space. Moreover, some results can be extended to systems as in \cite[Section 5]{VP18}.
In our opinion the restriction $p=q$ seem quite unnatural for the $x$-dependent variant of the SPDEs considered in the previous sections. This motivates to extend the theory to $p\neq q$ as well. At the moment this seems out of reach if the coefficients $a_{ij}$ and $b_{jn}$ only have measurable dependence in $(t,\omega)\in [0,T]\times \Omega$ or if the $b_{jn}$ are not small.

As an illustrations let us mention that for $s=0$ and $p=q$, the conditions of Theorem \ref{t:Reaction_diffusion_div_critical} become
\begin{align*}
\frac{d(h-1)}{h}<p<d(h-1) \ \ \text{and} \ \ p\geq \frac{d+2}{h-1}.
\end{align*}
One can check that this will create cases in which not all $h>1$ can be treated. For instance for $d=2$, $h\in (1, 2]$ has to be excluded. Similar restrictions occur in Theorems \ref{t:critical_space_reaction_diffusion} and \ref{t:QND_application_critical_spaces_gradient}. On the other hand, as explained before we can allow $x$-dependent coefficients $a_{ij}$ and $b_{jn}$ using the pointwise extension of Assumption \ref{ass:SND} to the $x$-dependent setting under some smoothness conditions in $x$.

\subsubsection{Lower order terms}
\label{sss:lower_order_terms}
The results of the previous subsections hold if we add lower order terms in the differential operators \eqref{eq:SND_quasi_AB_n_Def}. For instance, one may substitute $\A$ by $\A+\A_{\ell}$ where $\A_{\ell}(t)u:=\sum_{j=1}^d a_{j}(t,\cdot) \partial_j u + a_0(t,\cdot) u$. To see this, one can take $F_L(t,u):=\A_{\ell}(t)u$ and, under suitable assumptions on $a_{0},\dots,a_d$, the assumption \ref{HFcritical_weak} is satisfied. Another possibility, to allow lower order terms in \eqref{eq:SND_quasi_AB_n_Def} is to use a perturbation theorem to check stochastic maximal $L^p$-regularity. Yet another possibility is to include the lower order terms in the nonlinearity $f$. It depends on each specific case what is the best solution.

\subsubsection{Results on $\Tor^d$}\label{ss:stochastic_reaction_diffusion_torus}
The results of Subsections \ref{ss:conservative_RD}-\ref{ss:reaction_diffusion_gradient_nonlinearities} hold if $\R^d$ is replaced by the torus $\Tor^d$. Moreover, in this case, the assumptions on the nonlinearities can be slightly weakened. Indeed, for instance in Section \ref{ss:conservative_RD} the Lipschitz condition can be replaced by the following: there exist $h>1$ and $C>0$ such that a.s.\ for all $t\in \I_T$, $z,z'\in \R$ and $x\in \RR$,
\[|f(t,x,z)-f(t,x,z')|+\|g(t,x,z)-g(t,x,z')\|_{\ell^2}\leq C (1+|z|^{h-1}+|z'|^{h-1})|z-z'|.\]
The only difference is that an additional constant $C$ is added on the right-hand side. Since $\Tor^d$ has finite volume this does not lead to any problems. The same applies to Sections \ref{ss:semilinear_reaction_diffusion} and \ref{ss:reaction_diffusion_gradient_nonlinearities}.
Moreover, the conditions on $f$ and $g$ in Section \ref{ss:Burgers_semilinear} can be weakened in the same way.

\subsubsection{Results on domains with Dirichlet boundary conditions}\label{ss:stochastic_reaction_diffusion_domains}

In the subsection we assume that $\Dom$ is a $C^2$-boundary with compact boundary. Here, we study \eqref{eq:semilinear_reaction_diffusion_prototype} on $\Dom$ with homogeneous Dirichlet boundary condition, i.e.\
\begin{equation}
\label{eq:semilinear_Dirichlet_conditions}
u=0\qquad \text{on} \qquad \partial\Dom.
\end{equation}
We show that the results in Subsection \ref{ss:conservative_RD}-\ref{ss:Burgers_semilinear} still hold in this case with the same nonlinearities, $\A=\Delta$ and $b_{jn}$ sufficiently small in a suitable norm.
In this case, the scales $\HD^{s,q}(\Dom)$ and $\BD^{s,q}(\Dom)$ introduced in Example \ref{ex:extrapolated_Laplace_dirichlet} play the role of $H^{s,q}(\R^d)$ and $B^{s,q}(\R^d)$. Note that
\[\HD^{2,q}(\Dom)=W^{2,q}(\Dom)\cap W^{1,q}_0(\Dom), \HD^{1,q}(\Dom)=W^{1,q}_0(\Dom) \ \text{and} \ \HD^{0,q}(\Dom)=L^q(\Dom).\]
We refer to Example \ref{ex:extrapolated_Laplace_dirichlet} for more details and other properties.

Under the previous assumptions, in each case considered in Subsections \ref{ss:conservative_RD}-\ref{ss:reaction_diffusion_gradient_nonlinearities} we may rewrite \eqref{eq:semilinear_reaction_diffusion_prototype} with $\A=\Delta$ and boundary value \eqref{eq:semilinear_Dirichlet_conditions}, as a stochastic evolution equation on $X_0:=\HD^{-1-s,q}(\Dom)$, $X_1:=\HD^{1-s,q}(\Dom)$ for some $s\in[-1,1]$, $q\in [2,\infty)$ and
\begin{align*}
A(t)u&=\Dd_{-1-s,q}u, &  B(t)u &=0,
\\ F(t,u)&=f(t,u,\nabla u), & G(t,u)&=G_L(t,u)+\wt{G}(t,u),
\\ G_L(t,u)&=(\bb_n (t)u)_{n\geq 1}, &\wt{G}(t,u)&=(g_n(\cdot,u))_{n\geq 1};
\end{align*}
where $\Dd_{-1-s,q}$ is defined in \eqref{eq:extrapolated_Laplacian_D}. To extend the results we let $s\in [0,1]$ in Subsections \ref{ss:conservative_RD}-\ref{ss:semilinear_reaction_diffusion} and $s=-1$ in Subsection \ref{ss:reaction_diffusion_gradient_nonlinearities}.

Let us note that the estimates for $F,\wt{G}$, performed in Subsections \ref{ss:conservative_RD}-\ref{ss:reaction_diffusion_gradient_nonlinearities}, are obtained by factorization through an $L^r$-space, for some $r\in (1,\infty)$. Therefore, by \eqref{eq:HD_embedding} and the identity $\HD^{0,r}(\Dom)=L^r(\Dom)$, the same estimates hold for $F,\wt{G}$ provided $H^{s,q}$ is replaced by $\HD^{s,q}$. The only new feature is the presence of a non-trivial $G_L$ which takes care of the $b_{jn}$-term. The next result shows that $G_L$ verifies \ref{it:G_L}.

\begin{lemma}
\label{l:small_gradient_noise_domain}
Let $q\in (1,\infty)$ and $T>0$. Let $G_L:\I_T\times \O\times \HD^{1,q}(\Dom)\to \g(\ell^2;L^{q}(\Dom))$ be given by $G_L(t,u):=(\sum_{j=1}^d b_{jn}(t)\partial_j u)_{n\geq 1}$. Then a.s.\ for all $t\in \I_T$,
\begin{equation}
\label{eq:estimate_G_L_allen_cahn}
\|G_L(t,u)\|_{\g(\ell^2;\HD^{-s,q}(\Dom))}\lesssim
\Big( \sup_{j}\|(b_{jn})_{n\geq 1}\|_{L^{\infty}(\I_T\times \O;Y)}\Big)\|u\|_{\HD^{1-s,q}(\Dom)},
\end{equation}
in each of the following cases:
\begin{enumerate}[{\rm(1)}]
\item\label{it:small_gradient_weak_setting} $s=0$ and $Y=L^{\infty}(\Dom;\ell^2)$;
\item\label{it:small_gradient_strong_setting} $s=-1$, $Y=W^{1,\infty}(\Dom;\ell^2)$ and $b_{jn}=0$ on $\partial \Dom$ for all $j,n$;
\item\label{it:small_gradient_ultra_weak_setting} $s\in [0,1]$ and $Y=W^{1,\infty}(\Dom;\ell^2)$.
\end{enumerate}
\end{lemma}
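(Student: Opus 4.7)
The plan is to treat the three cases in increasing order of difficulty, exploiting the extrapolation scale from Example~\ref{ex:extrapolated_Laplace_dirichlet} — which in particular gives $\HD^{0,q}(\Dom)=L^q(\Dom)$, $\HD^{1,q}(\Dom)=W^{1,q}_0(\Dom)$ and $\HD^{2,q}(\Dom)=W^{2,q}(\Dom)\cap W^{1,q}_0(\Dom)$ — together with the Hilbert-space identification $\g(\ell^2,L^q(\Dom))=L^q(\Dom;\ell^2)$ from \eqref{eq:gammaidentity}. For case~\ref{it:small_gradient_weak_setting} I would argue pointwise in $x\in\Dom$:
\[
\Big\|\Big(\sum_{j=1}^d b_{jn}(t,x)\partial_j u(x)\Big)_{n\ge 1}\Big\|_{\ell^2}\le \Big(\max_{j}\|(b_{jn}(t,x))_{n\ge 1}\|_{\ell^2}\Big)\sum_{j=1}^d |\partial_j u(x)|,
\]
and then take $L^q(\Dom)$-norms on both sides to deduce \eqref{eq:estimate_G_L_allen_cahn}.

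For case~\ref{it:small_gradient_strong_setting}, the boundary hypothesis $b_{jn}|_{\partial\Dom}=0$ is precisely what ensures that $\sum_j b_{jn}\partial_j u$ vanishes on $\partial\Dom$ and hence belongs to $W^{1,q}_0(\Dom)=\HD^{1,q}(\Dom)$ for each $n$. The norm estimate will then follow from the product rule
\[
\partial_k\Big(\sum_{j=1}^d b_{jn}\partial_j u\Big)=\sum_{j=1}^d(\partial_k b_{jn})\partial_j u+\sum_{j=1}^d b_{jn}\partial_k\partial_j u,
\]
by applying the pointwise reasoning of case~\ref{it:small_gradient_weak_setting} to the $L^q(\Dom;\ell^2)$-norms of each of these two sums; the $W^{1,\infty}(\Dom;\ell^2)$-norm of $(b_{jn})_{n\ge 1}$ is what is required to control both the $b_{jn}$ and their derivatives $\partial_k b_{jn}$, while the first- and second-order derivatives of $u$ are absorbed by $\|u\|_{\HD^{2,q}(\Dom)}$.

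Case~\ref{it:small_gradient_ultra_weak_setting} I plan to handle by proving the estimate at the two endpoints $s=0$ and $s=1$ and then interpolating. The endpoint $s=0$ coincides with case~\ref{it:small_gradient_weak_setting} (noting $W^{1,\infty}\hookrightarrow L^{\infty}$). For the endpoint $s=1$ I would rewrite the nonlinearity in divergence form,
\[
\sum_{j=1}^d b_{jn}\partial_j u=\sum_{j=1}^d \partial_j(b_{jn} u)-\sum_{j=1}^d (\partial_j b_{jn})u,
\]
and use that $\partial_j\colon L^q(\Dom)\to \HD^{-1,q}(\Dom)$ is bounded by duality against $\HD^{1,q'}(\Dom)=W^{1,q'}_0(\Dom)$, where no boundary term appears in the integration by parts because test functions vanish on $\partial\Dom$. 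Lifting this to the $\g(\ell^2,\cdot)$-setting via the ideal property of $\g$-spaces and invoking case~\ref{it:small_gradient_weak_setting} on the $L^q(\Dom;\ell^2)$-norms of $(b_{jn}u)_n$ and $((\partial_j b_{jn})u)_n$ yields the $s=1$ estimate, with $\|(b_{jn})_{n\ge 1}\|_{W^{1,\infty}(\Dom;\ell^2)}$ absorbing both factors. For $s\in(0,1)$ the conclusion then follows by complex interpolation, using the identifications $[\HD^{0,q}(\Dom),\HD^{-1,q}(\Dom)]_s=\HD^{-s,q}(\Dom)$ and $[\HD^{1,q}(\Dom),\HD^{0,q}(\Dom)]_s=\HD^{1-s,q}(\Dom)$, which are part of the extrapolation scale of Example~\ref{ex:extrapolated_Laplace_dirichlet}. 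The main obstacle I anticipate is the $s=1$ endpoint: one has to verify carefully that no boundary condition on the $b_{jn}$ is needed there, the point being that the duality defining $\HD^{-1,q}(\Dom)$ already enforces the relevant vanishing on $\partial\Dom$ through the test-function space $W^{1,q'}_0(\Dom)$; once this is established, the interpolation step is routine.
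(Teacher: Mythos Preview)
Your proposal is correct and follows essentially the same route as the paper: case~\ref{it:small_gradient_weak_setting} via the pointwise $\ell^2$-bound and $\g(\ell^2,L^q)=L^q(\Dom;\ell^2)$, case~\ref{it:small_gradient_strong_setting} via the product rule and the boundary hypothesis, and case~\ref{it:small_gradient_ultra_weak_setting} by complex interpolation between the endpoints $s=0$ and $s=1$. The only presentational difference is at the $s=1$ endpoint: you write the divergence-form identity $b_{jn}\partial_j u=\partial_j(b_{jn}u)-(\partial_j b_{jn})u$ and invoke the ideal property of $\g$-spaces, whereas the paper works directly with the trace-duality identification $\g(\ell^2,\HD^{-1,q}(\Dom))=(W^{1,q'}_0(\Dom;\ell^2))^*$ and the explicit pairing $\l v,G_L(u)\r=-\sum_{n,j}\int_\Dom u\,\partial_j(b_{jn}v_n)\,dx$; these are equivalent formulations of the same integration by parts. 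One small point you glossed over: the interpolation step also requires $[\g(\ell^2,X_0),\g(\ell^2,X_1)]_\theta=\g(\ell^2,[X_0,X_1]_\theta)$, which the paper records as \cite[Theorem~9.1.25]{Analysis2}.
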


\begin{proof}
Since the estimate are pointwise with respect to $(t,\om)$, we fix $(t,\om)$ but we omit it from our notation.

\eqref{it:small_gradient_weak_setting} : In this case, \eqref{eq:estimate_G_L_allen_cahn} follows immediately from $\HD^{1,q}(\Dom)=W^{1,q}_0(\Dom)$ (see \eqref{eq:HD_identification}) and \eqref{eq:gammaidentity}.

\eqref{it:small_gradient_strong_setting}: Note that \eqref{eq:HD_identification} and \eqref{eq:gammaidentity} implies
\begin{equation}
\label{eq:identification_g_values_W}
\g(\ell^2,\HD^{1,q}(\Dom))=W^{1,q}_0(\Dom;\ell^2).
\end{equation}
Therefore, the chain rule yields $G_L:\HD^{2,q}(\Dom)\to W^{1,q}(\Dom;\ell^2)$. Since $b_{jn}=0$ on $\partial\Dom$, it follows that $G_L(u)$ takes values in $\g(\ell^2,\HD^{1,q}(\Dom))$ by \eqref{eq:identification_g_values_W}.

The prove of \eqref{eq:estimate_G_L_allen_cahn} in the case \eqref{it:small_gradient_ultra_weak_setting} is more involved. Let us note that since the map $u\mapsto G_L(u)$ is linear and $\HD^{1,q}(\Dom)\hookrightarrow L^q(\Dom)$ is dense, the extension of $G_L$ is solely determined by $G_L$ on $\HD^{1,q}(\Dom)$. We prove \eqref{eq:estimate_G_L_allen_cahn} by complex interpolation. For this we use \eqref{eq:HD_complex_interpolation} and \cite[Theorem 9.1.25]{Analysis2}. Therefore, it is enough to prove \eqref{eq:estimate_G_L_allen_cahn} in the cases $s\in\{0,1\}$. Since $s=0$ was already considered, it remains to consider $s=1$. By \eqref{eq:HD_duality} and \eqref{eq:HD_identification} we have $\HD^{-1,q}(\Dom)=(\HD^{1,q'}(\Dom))^*=(W^{1,q'}_0(\Dom))^*$. By trace duality, i.e.\ \cite[Theorem 9.4.1]{Analysis2}, it follows that $\g(\ell^2,\HD^{-1,q}(\Dom))=(\g(\ell^2,\HD^{1,q'}(\Dom)))^*$. Moreover, $\g(\ell^2,\HD^{1,q'}(\Dom))=W^{1,q'}_0(\Dom;\ell^2)$ by \eqref{eq:identification_g_values_W}. Thus, we define $G_L: L^q(\Dom)\to (\g(\ell^2,\HD^{1,q}(\Dom)))^*$ by setting
\begin{equation}
\label{eq:def_G_L_domain_weak}
\l v,G_L(u)\r:=-\sum_{n\geq 1}\sum_{j=1}^d\int_{\Dom} u\,\partial_j(b_{jn} v_n) dx,\qquad \forall v\in W^{1,q'}_0(\Dom;\ell^2).
\end{equation}
Using integration by parts argument one sees that this coincide with $G_L$ in the lemma for $u\in W^{1,q}_0(\Dom)$. By \eqref{eq:def_G_L_domain_weak} and H\"{o}lder's inequality
\begin{align*}
\l v,G_L(t,u)\r&\leq \|u\|_{L^q(\Dom)}\sum_{n\geq 1}\sum_{j=1}^d\|\partial_j(b_{jn}(t)v_n)\|_{L^{q'}(\Dom)}\\
&\lesssim \|u\|_{L^q(\Dom)}\Big( \sup_{j}\|b_{jn}(t)\|_{W^{1,\infty}(\Dom;\ell^2)}\Big)
\|v\|_{W^{1,q'}_0(\Dom;\ell^2)}.
\end{align*}
Taking the supremum over all $\|v\|_{W^{1,q'}_0(\Dom;\ell^2)}\leq 1$, \eqref{eq:estimate_G_L_allen_cahn} one obtains for $s=1$.
\end{proof}

\begin{remark}\
\label{r:AC_noise}
\begin{itemize}
\item The argument given in the proof of Lemma \ref{l:small_gradient_noise_domain} can be refined using bilinear interpolation (see e.g.\ \cite[Section 4.4]{BeLo}) and in the case \eqref{it:small_gradient_ultra_weak_setting} we may choose $Y=C^{\alpha}(\Dom;\ell^2)$ for some $\alpha>|s|$.
\item The proof of \eqref{eq:estimate_G_L_allen_cahn}, in the case \eqref{it:small_gradient_strong_setting}, shows that $b_{jn}|_{\partial\Dom}=0$ can be replaced by an `orthogonality condition' on $\partial \Dom$, see Assumption \ref{ass:QND_domain_ortogonality} below. Indeed, since \eqref{eq:semilinear_Dirichlet_conditions} holds, $\nabla u$ is parallel to the exterior normal field $\n$. Thus, if $(b_{jn})_{j\in \{1,\dots,d\}}$ is orthogonal to $\n$, then $\sum_{j=1}^d b_{jn}\partial_j u=0$ on $\partial\Dom$.
\end{itemize}
\end{remark}

The considerations at the beginning of this subsection, Lemma \ref{l:small_gradient_noise_domain} and the smallness condition \eqref{eq:smallness_semilinear} in Theorem \ref{t:semilinear} show the following.

\begin{theorem}
\label{t:Dirichlet_extensions_semilinear}
Let $\varepsilon>0$. Assume that $\A=\Delta$ and $b_{jn}\in L^{\infty}_{\Progress}(\I_T\times \O;Y)$ be such that $\sup_{j\in \{1,\dots,d\}}\|(b_{nj})_{n\geq 1}\|_{ L^{\infty}(\I_T\times \O;Y)}\leq \varepsilon$. Then in each of the following cases there exists $\bar{\varepsilon}>0$ such that the statement holds for all $\varepsilon\leq \bar{\varepsilon}$:
\begin{enumerate}[{\rm(1)}]
\item\label{it:Dirichlet_extension} If $Y=W^{1,\infty}(\Dom;\ell^2)$ and Assumption \ref{ass:RDC} holds, then the results in Subsections \ref{ss:conservative_RD}, \ref{ss:Burgers_semilinear} hold for \eqref{eq:semilinear_reaction_diffusion} on $\Dom$ with the boundary condition \eqref{eq:semilinear_Dirichlet_conditions}.
\item\label{it:Dirichlet_L_infty_domains} If $Y=L^{\infty}(\Dom;\ell^2)$ and Assumption \ref{ass:RD} holds, then the results in Subsection \ref{ss:semilinear_reaction_diffusion} hold for \eqref{eq:semilinear_reaction_diffusion_l_m} on $\Dom$ with the boundary condition \eqref{eq:semilinear_Dirichlet_conditions}.
\item If $Y=W^{1,\infty}(\Dom;\ell^2)$, $b_{jn}|_{\partial\Dom}=0$ and Assumption \ref{ass:SND_reaction_diffusion_gradient} holds, then the results in Subsection \ref{ss:reaction_diffusion_gradient_nonlinearities} holds for \eqref{eq:semilinear_reaction_diffusion_gradient_non_linearities} on $\Dom$ with boundary condition \eqref{eq:semilinear_Dirichlet_conditions}.
\end{enumerate}
\end{theorem}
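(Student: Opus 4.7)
The plan is to realise each of the three problems as an abstract semilinear equation of the form \eqref{eq:semilinearabstract} on the Dirichlet scale $X_0=\HD^{-1-s,q}(\Dom)$, $X_1=\HD^{1-s,q}(\Dom)$ of Example \ref{ex:extrapolated_Laplace_dirichlet}, with the appropriate choice of $s$ dictated by the setting ($s\in[0,1)$ for (1), $s=0$ for (2), and $s=-1$ for (3)), and then to invoke Theorem \ref{t:semilinear}. First I would check that $(A,0)\in\MRta$ for $A=\Dd_{-1-s,q}$. On $L^q(\Dom)$ the Dirichlet Laplacian admits a bounded $H^{\infty}$-calculus of angle $<\pi/2$ by Example \ref{ex:Hinfty}(4); by the extrapolation result of Appendix \ref{sec:app} this property passes to every $\HD^{-1-s,q}(\Dom)$, so $\Dd_{-1-s,q}\in\MRta$ by Theorem \ref{t:H_infinite_SMR}.

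Next, the splitting $G=G_L+\wt{G}$ and $F$ must be shown to satisfy \ref{HFcritical} and \ref{HGcritical} with the same numerology used on $\R^d$ in Sections \ref{ss:conservative_RD}--\ref{ss:reaction_diffusion_gradient_nonlinearities} and \ref{ss:Burgers_semilinear}. The crucial point is that these computations only relied on two ingredients: the Sobolev embeddings $\HD^{\theta,q}(\Dom)\hookrightarrow L^{r}(\Dom)$ (available from \eqref{eq:HD_embedding}) and the complex interpolation identity $[\HD^{-1-s,q}(\Dom),\HD^{1-s,q}(\Dom)]_{\beta}=\HD^{\theta,q}(\Dom)$ with $\theta=-1-s+2\beta$, which follows from \eqref{eq:HD_complex_interpolation}. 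Both are satisfied with the same exponent book-keeping as their $\R^d$-counterparts (using that $\HD^{0,q}=L^q$ and that Besov/Bessel spaces in the Dirichlet scale behave like the Euclidean ones under interpolation and embedding), so $F$ and $\wt{G}$ inherit the conditions \ref{HFcritical} and \ref{HGcritical} with the same $\beta_j,\varphi_j,\rho_j$ as in the $\R^d$-case. In particular the Lipschitz constants $L_F$ and the Lipschitz constant of $\wt{G}$ can be made as small as needed (they correspond to the $F_c$/$G_c$/$F_{\Tr}$/$G_{\Tr}$-parts and are not constrained by \eqref{eq:smallness_semilinear}).

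The only new contribution to the constants $L_G$ appearing in the smallness condition \eqref{eq:smallness_semilinear} is $G_L$. Here I would apply Lemma \ref{l:small_gradient_noise_domain} in the correct regime:
\begin{itemize}
\item In case (1) (conservative RD and Burgers, $s\in[0,1)$) use part \eqref{it:small_gradient_ultra_weak_setting} with $Y=W^{1,\infty}(\Dom;\ell^2)$;
\item In case (2) ($s=0$, non-conservative RD) use part \eqref{it:small_gradient_weak_setting} with $Y=L^{\infty}(\Dom;\ell^2)$;
\item In case (3) ($s=-1$, gradient nonlinearities) use part \eqref{it:small_gradient_strong_setting} with $Y=W^{1,\infty}(\Dom;\ell^2)$ and $b_{jn}|_{\partial\Dom}=0$.
\end{itemize}
In each case the lemma yields
\[
\|G_L(t,u)-G_L(t,v)\|_{\gamma(\ell^2,X_{1/2})}\leq C\,\sup_j\|(b_{jn})_{n\geq1}\|_{L^{\infty}(\I_T\times\Omega;Y)}\,\|u-v\|_{X_1}\leq C\varepsilon\,\|u-v\|_{X_1},
\]
so the Lipschitz constant of $G_L$ as a map $X_1\to\gamma(\ell^2,X_{1/2})$ is bounded by $C\varepsilon$, with $C$ depending only on $\Dom$, $q$, $s$.

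Finally, for each of the three cases let $\varepsilon_0>0$ be the threshold produced by Theorem \ref{t:semilinear} (equivalently by Remark \ref{r:smallness}) and choose $\bar{\varepsilon}:=\varepsilon_0/C$. Then for $\varepsilon\le\bar{\varepsilon}$ the smallness condition \eqref{eq:smallness_semilinear} is satisfied and Theorem \ref{t:semilinear} (together with Lemma \ref{l:F_G_bound_N} and Proposition \ref{prop:continuousTrace} for the regularity statements) produces the maximal local solution and the same regularity and instantaneous regularization as in the corresponding result on $\R^d$ or $\Tor^d$. The main technical obstacle is not the existence argument itself, which is mechanical once the ingredients above are assembled, but rather the careful identification of the Dirichlet extrapolation spaces with the usual scales and the verification of the mapping property \eqref{eq:estimate_G_L_allen_cahn} in the ``ultra-weak'' range $s>0$, which was already carried out via complex interpolation and trace duality in the proof of Lemma \ref{l:small_gradient_noise_domain}.
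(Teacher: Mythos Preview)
Your proposal is correct and follows essentially the same approach as the paper. The paper's written proof is extremely terse (it only records the $H^\infty$-calculus/stochastic maximal regularity step and refers to ``the previous discussion'' for everything else), but the surrounding discussion in Subsection~\ref{ss:stochastic_reaction_diffusion_domains} sets up exactly the ingredients you list: the abstract reformulation on the $\HD$-scale with the indicated values of $s$, the transfer of the $F$ and $\wt G$ estimates via factorization through $L^r$ together with \eqref{eq:HD_embedding} and \eqref{eq:HD_complex_interpolation}, and the use of Lemma~\ref{l:small_gradient_noise_domain} in the three regimes to control $L_G$ by $C\varepsilon$; your expansion of these steps is accurate.
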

\begin{proof}
By the previous discussion it remains to prove that $\Dd_{2\alpha,q}\in \MRta$ for all $q\in (1,\infty)$ and $\alpha\geq -1$. By Example \ref{ex:extrapolated_Laplace_dirichlet}, $\Dd_{2\alpha,q}$ has a bounded $H^{\infty}$-calculus with angle $<\pi/2$. Therefore, the claim follows from Theorem \ref{t:H_infinite_SMR}.
\end{proof}

\begin{remark}
By employing the $\HN$-scale constructed in Example \ref{ex:Laplacian_Neumann_scales} for the Neumann Laplacian, one can extend Theorem \ref{t:Dirichlet_extensions_semilinear} to homogeneous Neumann boundary conditions
$\partial_{\n}u=0$ on $\partial\Dom$, see Subsection \ref{ss:Allen_Cahn_mass_conservative} below.
\end{remark}

\section{Applications to quasilinear SPDEs with gradient noise}
\label{s:quasi_second_gradient_noise}
In this section we study quasilinear SPDEs which can be rewritten in the form \eqref{eq:QSEE} with $H=\ell^2$ (Subsections \ref{ss:QND_R_torus}-\ref{ss:porous_media_equations}) or $H=H^{\delta,2}$ (Subsection \ref{ss:Burgers_quasilinear}). In the next subsection we motivate and explain the class of equations which will be considered.

\subsection{Introduction and motivations}
Quasilinear parabolic SPDEs have been intensively studied in literature. In the deterministic case the monograph \cite{LU68} contains the classical theory. Quasilinear SPDEs arise in many areas of applied science since they model reaction-diffusion equations in which the diffusivity depends strongly on the property itself. For this and more physical motivations we refer to \cite{D96,DV12,DFVE14,K98,MT99}. For a mathematical perspective one may consult \cite{DdMH15,HZ17,KimKimquasi2,KuehnNeamtu2020}. To the best of our knowledge, except for the paper \cite{FG19}, there is no other treatment in the literature for quasilinear stochastic PDEs where the coefficients $b_{jnk}$ appearing in the gradient noise term (see \eqref{eq:QND_quasi_AB_n_Def} below) may depend on $u$. However, our approach and setting is quite different from the one used in \cite{FG19} due to a different choice of the leading operators (in \cite{FG19} they may be degenerate) and a different choice of the noise.

In this section we analyse quasilinear systems of second order stochastic PDEs in non-divergence form with nonlinear gradient noise on a domain $\Dom\subseteq \R^d$:
\begin{equation}
\label{eq:quasilinear_non_divergence}
\begin{cases}
\displaystyle du +\A(\cdot, u,\nabla u) u dt= f(\cdot, u,\nabla u) dt +
\sum_{n\geq 1}(\bb_n(\cdot, u)\cdot \nabla u +g_n(\cdot, u)) d{w}_{t}^n,\\
u(0)=u_0.
\end{cases}
\end{equation}
Here $(w_t^n:t\geq 0)_{n\geq 1}$ denotes a sequence of independent Brownian motions and $u:[0,T]\times \Omega\times \Dom\to \R^N$ is the unknown process where $N\geq 1$. The differential operators $\A,\bb_n$ for each $x\in \Dom$, $\om\in\O$, $t\in(0,T)$, are given by
\begin{equation}
\label{eq:QND_quasi_AB_n_Def}
\begin{aligned}
(\A(t,\om,v,\nabla v)u)(t,\om,x)&:=-\sum_{i,j=1}^d a_{ij}(t,\om,x,v(x),\nabla v(x))\partial_{ij}^2 u(x),\\
(\bb_n(t,\om,v)u)(t,\om,x)&:=
\Big(\sum_{j=1}^d b_{jkn}(t,\om,x,v(x))\partial_j u_k(x)\Big)_{k=1}^N.
\end{aligned}
\end{equation}
Note that $\A,\bb_n$ generalize the differential operators in \eqref{eq:SND_quasi_AB_n_Def} studied in Section \ref{s:semilinear_gradient}. As we saw in Subsection \ref{sss:lower_order_terms}, lower order terms in \eqref{eq:QND_quasi_AB_n_Def} can be allowed here as well. Furthermore, as in Subsection \ref{ss:introduction_motivation_semilinear}, the following splitting arises naturally:
\begin{itemize}
\item $\Dom=\R^d$ or $\Dom=\Tor^d$;
\item $\Dom$ is a sufficiently smooth domain in $\R^d$.
\end{itemize}
In Subsection \ref{ss:QND_R_torus} we will only consider $\R^d$ in detail since the case $\Tor^d$ is similar. Under additional assumptions, in Subsection \ref{ss:QND_Dirichlet} we study \eqref{eq:quasilinear_non_divergence} with Dirichlet boundary condition. Subsection \ref{ss:quasilinear_divergence} is devoted to equations in divergence form. We remark that in the latter section, we can deal only with a small gradient noise term.

The following assumption will be in force in Subsections \ref{ss:QND_R_torus}-\ref{ss:QND_Dirichlet}:
\begin{assumption}
\label{ass:QND}
Suppose that one of the following two conditions hold:
\begin{itemize}
\item $p\in (2,\infty)$ and $\a\in [0,\frac{p}{2}-1)$.
\item $p=2$ and $\a = 0$.
\end{itemize}
Assume the following conditions on $a_{ij}, b_{jkn}$:
\begin{enumerate}[{\rm(1)}]
\item \label{it:QND_a_b}
For each $i,j\in \{1,\dots,d\}$ and $n\geq 1$, the maps $a_{ij}:(0,T)\times \O\times \Dom\times \R^N\times \R^{N\times d}\to \R^{N\times N}$ and
$b_{jkn}:(0,T)\times \O\times \Dom\times \R^N\to \R$ are $\Progress\otimes \Borel(\Dom)\otimes \Borel(\R^N)\otimes \Borel(\R^{N\times d})$ and $\Progress\otimes \Borel(\Dom)\otimes \Borel(\R^N)$-measurable, respectively.

Moreover, for every $r>0$ there exist constants $L_r,M_r>0$ and an increasing continuous function $K_r:\R_+\to\R_+$ such that $K_r(0)=0$ and for a.a.\ $\om\in\O$ for all $t\in [0,T]$, $i,j\in\{1,\dots,d\}$, $x,x'\in \Dom$, $y\in\B_{\R^N}(r)$, $z\in \B_{\R^{N\times d}}(r)$,
\begin{align*}
|a_{ij}(t,\om,x,y,z)|+\|(b_{jkn}(t,\om,\cdot,y))_{n\geq 1}\|_{W^{1,\infty}(\Dom;\ell^2)}&\leq M_r,\\
|a_{ij}(t,\om,x,y,z)-a_{ij}(t,\om,x',y,z)|&\leq K_r(|x-x'|).
\end{align*}
\item \label{it:QND_continuity}
For each $r>0$ there exists $C_r>0$ such that for all $i,j\in \{1,\dots,d\}$, $x\in \Dom$, $y,y'\in \B_{\R^N}(r)$, $z,z'\in \B_{\R^{N\times d}}(r)$, $t\in [0,T]$, $k\in \{1,\dots,N\}$ and a.a.\ $\om\in \O$,
\begin{align*}
&|a_{ij}(t,\om,x, y,z)-a_{ij}(t,\om,x,y',z')|
+\| (b_{jkn}(t,\om,x,y)-  b_{jkn}(t,\om,x',y'))_{n\geq 1}\|_{\ell^2}\\
& \ \  \ \ \ \|(\nabla_{y} b_{jkn}(t,\om,x,y)- \nabla_{y} b_{jkn}(t,\om,x',y'))_{n\geq 1}\|_{\ell^2\times \R^{N}}
\leq C_r(|y-y'|+|z-z'|).
\end{align*}
\item \label{it:QND_ellipticity}
For each $r>0$ there exists $\epsilon_r>0$ such that a.s.\ for all $\xi\in \R^d$, $\theta\in \R^N$, $t\in [0,T]$, $x\in \Dom$, $y\in \B_{\R^N}(r)$ and $z\in \B_{\R^{N\times d}}(r)$ one has
$$
\sum_{i,j=1}^d \xi_i\xi_j ((a_{ij}(t,\om,x,y,z)-\Sigma_{ij}(t,\om,x,y))\theta,\theta)_{\R^N}\geq \epsilon_r |\xi|^2|\theta|^2.
$$
Here for each fixed $i,j\in \{1,\dots,d\}$, $\Sigma_{ij}(t,\om,x,y)$ is the $N\times N$ matrix with the diagonal elements
$$\Big(\frac{1}{2}\sum_{n\geq 1} b_{ikn}(t,\om,x,y)b_{jkn}(t,\om,x,y)\Big)_{k=1}^N.$$
\end{enumerate}
\end{assumption}

In Subsection \ref{ss:QND_R_torus} we study \eqref{eq:quasilinear_non_divergence} under the following assumption.

\begin{assumption}
\label{ass:QND_f_g_regular_trace_space}
The maps $f:\I_T\times \O\times \Dom\times \R^N\times \R^{N\times d}\to \R^N$, $g:=(g_n)_{n\geq 1}:\I_T\times \O\times \Dom\times \R^N\times \R^{N\times d}\to \ell^2\times\R^N$ are $\Progress\otimes \Borel(\Dom)\otimes \Borel(\R^N)\otimes \Borel(\R^{N\times d})$ and $\Progress\otimes \Borel(\Dom)\otimes \Borel(\R^N)$-measurable respectively. Assume $f(\cdot, 0) = 0$ and $g(\cdot, 0)=\nabla_y g(\cdot,0) = 0$. Moreover, for each $r>0$ there exists $C_r>0$ such that a.a.\ $\om\in\O$, for all $t\in [0,T]$, $x\in \Dom$, $y,y'\in\B_{\R^N}(r)$ and $z,z'\in\B_{\R^{N\times d}}(r)$,
\begin{align*}
|f(t,x,y,z)-f(t,x,y',z')|&\leq C_r (|y-y'|+|z-z'|),\\
\|g(t,x,y)-g(t,x,y')\|_{\ell^2} + \|\nabla_y g(t,x,y)-\nabla_y g(t,x,y')\|_{\ell^2}&\leq C_r |y-y'|.
\end{align*}
\end{assumption}

In the next subsection, under additional assumption on $f,g$, we extend the results in Subsection \ref{ss:reaction_diffusion_gradient_nonlinearities} to suitable quasilinear equations; see Theorems \ref{t:QND_gradient_not_critical}-\ref{t:QND_gradient_critical} there.

\begin{remark}
The parabolicity condition in Assumption \ref{ass:QND}\eqref{it:QND_ellipticity} extends the one we have seen in Assumption \ref{ass:SND}\eqref{it:SND_ellipticity} to the case of $x$-dependent coefficients and systems. It was considered in the above form in \cite{VP18}, where complex matrix-valued $a_{ij}$ were allowed as well. Some diagonal condition is assumed for the $b$-term, because otherwise the result does not hold in general (see \cite{BrzVer11, DuLiuZhang, KimLeesystems} for further discussion on this topic).

Unlike in Sections \ref{ss:conservative_RD}, \ref{ss:semilinear_reaction_diffusion}, and \ref{ss:reaction_diffusion_gradient_nonlinearities} we will be assuming $p=q$ in many of the results below. This is mainly because the quasilinear structure of the equation will imply that our operators will have coefficients depending on $(t,\omega,x)$. Unfortunately, no $L^p(L^q)$-theory is available for $p\neq q$ if only measurability in time is assumed. Of course in the case the coefficients are $(\omega,x)$-dependent, there is a theory with $p\neq q$ as follows from Theorem \ref{t:H_infinite_SMR}. However, at the same time we would like the $b$-term to satisfy the right parabolicity condition, and almost no general $L^p(L^q)$-theory with $p\neq q$ is available in this case.
\end{remark}

\subsection{Quasilinear SPDEs in non-divergence form on $\R^d$}
\label{ss:QND_R_torus}
In this section we study \eqref{eq:quasilinear_non_divergence} on $\R^d$. For the function spaces needed below, we employ the notation introduced in Subsection \ref{ss:introduction_motivation_semilinear}.

To begin, we recast \eqref{eq:quasilinear_non_divergence} as a quasilinear evolution equations in the form \eqref{eq:QSEE} on $X_0:=L^p(\R^d;\R^N)$ and $X_1:=W^{2,p}(\R^d;\R^N)$ by setting, for $u\in C^1(\R^d;\R^N)$ and $v\in W^{2,p}(\R^d;\R^N)$
\begin{align*}
A(t,u)v&=\A(t,u,\nabla u)v, &  B(t,u)v &=(\bb_n (t,u)v)_{n\geq 1},
\\ F(t,u)&=f(t,u,\nabla u), & G(t,u)&=(g_n(t,\cdot,u))_{n\geq 1}.
\end{align*}
By \eqref{eq:QND_quasi_AB_n_Def} and $u\in C^1(\R^d;\R^N)$ all the above maps are well-defined. As usual, we say that $(u,\sigma)$ is a maximal local solution to \eqref{eq:quasilinear_non_divergence} on $\RR$ if $(u,\sigma)$ is a maximal local solution to \eqref{eq:QSEE} in the sense of Definition \ref{def:solution2}.

The first result of this section is as follows:
\begin{theorem}
\label{t:QND_general_R_Tor}
Let the Assumptions \ref{ass:QND}-\ref{ass:QND_f_g_regular_trace_space} be satisfied for $\Dom=\R^d$. Assume that $p>2(1+\a)+d$. Then for any
\[u_0\in L^0_{\F_0}(\O;W^{2-\frac{2(1+\a)}{p},p})\]
there exists a maximal local solution $(u,\sigma)$ to \eqref{eq:quasilinear_non_divergence}. Moreover, there exists a localizing sequence $(\sigma_n)_{n\geq 1}$ such that for all $n\geq 1$ and a.s.
$$
u\in L^p(\I_{\sigma_n},w_{\a};W^{2,p})\cap
C(\overline{\I}_{\sigma_n};W^{2-2\frac{1+\a}{p},p})\cap C((0,\sigma_n];W^{2-\frac{2}{p},p}).
$$
\end{theorem}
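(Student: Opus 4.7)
The plan is to apply Theorem \ref{t:local_Extended} to the abstract formulation of \eqref{eq:quasilinear_non_divergence} set up above the statement, with $X_0=L^p(\R^d;\R^N)$ and $X_1=W^{2,p}(\R^d;\R^N)$. The pivotal preliminary fact is the Sobolev-Slobodeckij embedding
\[\Xap=(X_0,X_1)_{1-\frac{1+\a}{p},p}=W^{2-\frac{2(1+\a)}{p},p}(\R^d;\R^N)\hookrightarrow C^1(\R^d;\R^N),\]
which follows from $p>d+2(1+\a)$ since this gives $2-\frac{2(1+\a)}{p}>1+\frac{d}{p}$. Hence for $v\in\Xap$ both $v$ and $\nabla v$ are bounded continuous, so the frozen coefficients $a_{ij}(t,\om,\cdot,v,\nabla v)$ and $b_{jkn}(t,\om,\cdot,v)$ are well-defined, bounded, and (by Assumption \ref{ass:QND}\eqref{it:QND_a_b}) uniformly continuous in $x$ with a modulus depending only on $\|v\|_{\Xap}$.

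First I would verify Hypothesis \hyperref[H:hipprime]{$\Hipprime$}. For \ref{HAmeasur}, the boundedness and Lipschitz assumptions of Assumption \ref{ass:QND}\eqref{it:QND_a_b}--\eqref{it:QND_continuity}, combined with the embedding $\Xap\hookrightarrow C^1$, give for $v,v'\in\B_{\Xap}(n)$ and $w\in X_1$ the expected bounds
\[\|A(t,v)w\|_{L^p}\leq C_n\|w\|_{W^{2,p}},\qquad \|(A(t,v)-A(t,v'))w\|_{L^p}\leq C_n\|v-v'\|_{\Xap}\|w\|_{W^{2,p}},\]
and analogously for $B$, using the identification $\gamma(\ell^2,W^{1,p}(\R^d;\R^N))=W^{1,p}(\R^d;\ell^2(\R^N))$ from \eqref{eq:gammaidentity} together with the control of $(b_{jkn}(t,\om,\cdot,v))_{n\geq1}$ in $W^{1,\infty}(\R^d;\ell^2)$ guaranteed by Assumption \ref{ass:QND}\eqref{it:QND_a_b}. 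For \ref{HFcritical_weak}--\ref{HGcritical_weak}, I would take the purely trace-type decomposition $F_L=F_c=0$, $F_{\Tr}=F$ (and likewise $G_L=G_c=0$, $G_{\Tr}=G$); using $f(\cdot,0)=0$ gives $|F(t,v)(x)|\leq C_n(|v(x)|+|\nabla v(x)|)$, which yields $\|F(t,v)\|_{L^p}\lesssim_n\|v\|_{W^{1,p}}\lesssim_n\|v\|_{\Xap}$ and the matching Lipschitz bound, and the analogue for $G$ follows from $g(\cdot,0)=\nabla_y g(\cdot,0)=0$ via the chain rule. In particular $L_{F,n}=L_{G,n}=0$, so the smallness condition \eqref{eq:smallness_condition_nonlinearities_QSEE_extended} is vacuous, and \ref{Hf'} is trivial since no forcing terms appear in \eqref{eq:quasilinear_non_divergence}.

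The main obstacle is the stochastic maximal regularity condition \eqref{eq:stochastic_maximal_regularity_assumption_local_extended}. Given $u_0\in L^0_{\F_0}(\O;\Xap)$ and the truncating sequence $(u_{0,n})_{n\geq1}\subset L^\infty_{\F_0}(\O;\Xap)$ from \eqref{eq:truncation_operator}, the first step above shows that $u_{0,n}$ takes values in a bounded subset $B_n\subset C^1(\R^d;\R^N)$. Consequently the frozen coefficients
\[a_{ij}^{(n)}(t,\om,x):=a_{ij}(t,\om,x,u_{0,n}(\om)(x),\nabla u_{0,n}(\om)(x)),\qquad b_{jkm}^{(n)}(t,\om,x):=b_{jkm}(t,\om,x,u_{0,n}(\om)(x))\]
are strongly progressively measurable, uniformly bounded on $\I_T\times\Omega\times\R^d$, uniformly continuous in $x$ (with a modulus controlled by $K_{R_n}$ and the $C^1$-modulus of $u_{0,n}$, uniformly in $(t,\om)$), and satisfy the stochastic parabolicity condition of Assumption \ref{ass:QND}\eqref{it:QND_ellipticity} with constant $\epsilon_{R_n}>0$. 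This is exactly the framework of \cite[Theorem 5.3]{VP18} (for second-order systems with $p=q$ and $(t,\om,x)$-dependent coefficients, measurable in $(t,\om)$ and uniformly continuous in $x$), which delivers $(A(\cdot,u_{0,n}),B(\cdot,u_{0,n}))\in\MRta$ for every $n\geq1$.

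With all hypotheses in place, Theorem \ref{t:local_Extended} produces a maximal local solution $(u,\sigma)$ with $\sigma>0$ a.s., together with a localizing sequence $(\sigma_n)_{n\geq1}$. The regularity part \eqref{it:regularity_data_L0} of that theorem, applied with $\theta\in(0,\frac{1}{2})$ and Proposition \ref{prop:continuousTrace}\eqref{it:trace_with_weights_Xap}, gives $u\in L^p(\I_{\sigma_n},w_{\a};W^{2,p})\cap C(\overline{\I}_{\sigma_n};\Xap)$ a.s., while the instantaneous regularization statement of Theorem \ref{t:local_Extended}\eqref{it:regularity_data_L0} combined with Proposition \ref{prop:continuousTrace}\eqref{it:trace_without_weights_Xp} yields $u\in C((0,\sigma);W^{2-2/p,p})$ a.s., completing the proof.
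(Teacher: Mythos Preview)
Your argument is correct and follows the same route as the paper: reduce to Theorem \ref{t:local_Extended} with the trivial splitting $F_L=F_c=G_L=G_c=0$, verify \ref{HAmeasur} and \ref{HFcritical_weak}--\ref{HGcritical_weak} via the Sobolev embedding of $\Xap$, and check \eqref{eq:stochastic_maximal_regularity_assumption_local_extended} by appealing to the $L^p$-theory of \cite{VP18} for second-order systems with $(t,\omega,x)$-dependent coefficients. Two small points: the embedding you use is actually into $C^{1+\epsilon}$ for some $\epsilon>0$ (your inequality $2-\tfrac{2(1+\a)}{p}>1+\tfrac{d}{p}$ gives precisely this), which is what guarantees a uniform modulus of continuity for $\nabla u_{0,n}$ on $\R^d$; and the relevant result in \cite{VP18} is Theorem~5.4 (the $x$-dependent system case), not Theorem~5.3, which treats $x$-independent coefficients.
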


\begin{proof}
We apply Theorem \ref{t:local_Extended} with $F_L\equiv F_c\equiv G_L\equiv G_c\equiv 0$, $F_{\Tr}:=f$ and $G_{\Tr}:=(g_n)_{n\geq 1}$.
For this it remains to check \ref{HAmeasur}, \ref{HFcritical_weak}, \ref{HGcritical_weak} and \eqref{eq:stochastic_maximal_regularity_assumption_local_extended}. For the sake of clarity we split the proof into several steps.

\textit{Step 1: \emph{\ref{HAmeasur}} holds}. Since $p>2(1+\a)+d$, by Sobolev embedding one has
\begin{equation}
\label{eq:QND_embedding}
\Xap=W^{2-\frac{2(1+\a)}{p},p}\hookrightarrow C^{1+\epsilon},\quad\text{ for some }\epsilon>0.
\end{equation}
Fix $r>0$, and let $u_1,u_2\in \B_{\Xap}(r)$. By \eqref{eq:QND_embedding} it follows that $\|u_1\|_{W^{1,\infty}},\|u_2\|_{W^{1,\infty}}\leq C r=:R$ where $C$ depends only on $p,d$. Thus,
\begin{align*}
\|A(t,u_1)v-A(t,u_2)v\|_{L^q}\leq C_{R} \|u_1-u_2\|_{W^{1,\infty}}\|v\|_{W^{2,q}}
\leq C_R \|u_1-u_2\|_{\Xap}\|v\|_{W^{2,q}},
\end{align*}
where $C_R$ is as in Assumption \ref{ass:QND} \eqref{it:QND_continuity}. The same argument holds for $B$.

\textit{Step 2: \eqref{eq:stochastic_maximal_regularity_assumption_local_extended} holds}. It is enough to prove that $(A(\cdot,w_0),B(\cdot,w_0))\in\MRta$ for all $w_0\in L^{\infty}_{\F_0}(\O;\Xap)$. By \eqref{eq:QND_embedding}, it follows that $ w_0\in  L^{\infty}_{\F_0}(\O;C^{1+\epsilon})$. Now the claim follows from \cite[Theorem 5.4]{VP18} and Assumption \ref{ass:QND}.

\textit{Step 3: \emph{\ref{HFcritical_weak}} and \emph{\ref{HGcritical_weak}} holds}. By \eqref{eq:QND_embedding} and the assumption on $f,g_n$ it follows easily that for any $n\geq 1$ and any $u,v\in \B_{\Xap}(n)$ one has
\begin{equation*}
\|f(\cdot,u,\nabla u)-f(\cdot,v,\nabla v)\|_{L^p}+\|g(\cdot,u)-g(\cdot,v)\|_{W^{1,p}(\ell^2)}\leq C_n \|u-v\|_{\Xap};
\end{equation*}
where $C_n>0$ may depends on $n\geq 1$.
\end{proof}

Theorem \ref{t:QND_general_R_Tor} gives local existence for \eqref{eq:quasilinear_non_divergence} under quite general assumptions on $f,g_n$. The drawback in applying Theorem \ref{t:QND_general_R_Tor} is that the trace space in \eqref{eq:QND_embedding} is very regular and therefore the initial values have to be rather smooth. Under additional assumptions on $a_{ij},b_{jnk}$ we can admit rougher trace spaces $\Xap$ for \eqref{eq:quasilinear_non_divergence}. To do so we will partially extend the results in Subsection \ref{ss:reaction_diffusion_gradient_nonlinearities}. In particular, the following extends Theorem \ref{t:QND_application_gradient_control_growth_lipschitz_constant} in the case $q=p$.

\begin{theorem}
\label{t:QND_gradient_not_critical}
Suppose that Assumptions \ref{ass:SND_reaction_diffusion_gradient} and \ref{ass:QND} hold. Assume $d\geq 1$. Assume that $a_{ij}(t,\om,x,y,z)$ does not depend on the $z$-variable and $b_{jkn}(t,\om,x,y)$ does not depend on the $y$ variable. Moreover, suppose that
\begin{equation}
\label{eq:limitations_p_QND_gradient_not_critical}
p\geq \frac{m-1}{m}(2(1+\a)+d).
\end{equation}
Then for each
\[u_0\in L^0_{\F_0}(\O;W^{2-\frac{2(1+\a)}{p},p})\]
there exists a maximal local solution $(u,\sigma)$ to \eqref{eq:quasilinear_non_divergence}. Moreover, there exists a localizing sequence $(\sigma_n)_{n\geq 1}$ such that for all $n\geq 1$ and a.s.
$$
u\in L^p(\I_{\sigma_n},w_{\a};W^{2,p})\cap C(\overline{I}_{\sigma_n};W^{2-\frac{2(1+\a)}{p},p})
\cap C((0,\sigma_n];W^{2-\frac{2}{p},p}).
$$
\end{theorem}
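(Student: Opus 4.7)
The plan is to apply Theorem \ref{t:local_Extended} with $X_0:=L^p(\R^d;\R^N)$, $X_1:=W^{2,p}(\R^d;\R^N)$, and the truncation sequence from \eqref{eq:truncation_operator}. Since $p\geq\frac{m-1}{m}(2(1+\a)+d)$ and $m>2$ we have $p>\frac{2(1+\a)+d}{2}$, while $\a\in[0,p/2-1)$ gives $p>2(1+\a)$, so Sobolev embedding yields
\begin{equation*}
\Xap=W^{2-\frac{2(1+\a)}{p},p}(\R^d;\R^N)\hookrightarrow C^{\epsilon}(\R^d;\R^N)\cap W^{1,p}(\R^d;\R^N)
\end{equation*}
for some $\epsilon\in(0,1)$; these two embeddings are the workhorse of the argument.

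Hypothesis \ref{HAmeasur} is easy. Because $b_{jkn}$ does not depend on $y$, the operator $B$ is independent of $u$; because $a_{ij}$ is locally Lipschitz in $y$ (Assumption \ref{ass:QND}\eqref{it:QND_continuity}) and $\Xap\hookrightarrow L^{\infty}$, we get $\|(A(\cdot,u)-A(\cdot,u'))v\|_{X_0}\lesssim_n\|u-u'\|_{\Xap}\|v\|_{X_1}$ for $\|u\|_{\Xap},\|u'\|_{\Xap}\leq n$. To verify \eqref{eq:stochastic_maximal_regularity_assumption_local_extended}, for $w_0\in L^{\infty}_{\F_0}(\O;\Xap)$ the embedding $\Xap\hookrightarrow C^{\epsilon}$ implies that $x\mapsto a_{ij}(t,\om,x,w_0(\om,x))$ is continuous on $\R^d$ uniformly in $t$ and for a.a.\ $\om$, while the parabolicity in Assumption \ref{ass:QND}\eqref{it:QND_ellipticity} is preserved under composition and $B$ remains in the class covered by Assumption \ref{ass:QND}\eqref{it:QND_a_b}. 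Therefore \cite[Theorem 5.4]{VP18} delivers $(A(\cdot,w_0),B)\in\MRta$; applying this to each $u_{0,n}$ yields \eqref{eq:stochastic_maximal_regularity_assumption_local_extended}.

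Verification of \ref{HGcritical_weak} follows \eqref{eq:estimate_G_gradient_nonlinearities_revision_stage}: using Assumption \ref{ass:SND_reaction_diffusion_gradient} and $\Xap\hookrightarrow L^{\infty}\cap W^{1,p}$ we obtain $\|G(\cdot,u)-G(\cdot,v)\|_{\g(\ell^2,W^{1,p})}\lesssim_R\|u-v\|_{\Xap}$ for $\|u\|_{\Xap},\|v\|_{\Xap}\leq R$, so we may take $G_{\Tr}:=G$ and $G_L=G_c=0$. For \ref{HFcritical_weak}, I would repeat \eqref{eq:estimate_F_gradient_optimal} with $q:=p$ to derive, for $\phi\in(2-2(1+\a)/p,2)$ and $\|u\|_{\Xap},\|v\|_{\Xap}\leq R$,
\begin{equation*}
\|F(\cdot,u)-F(\cdot,v)\|_{L^p}\lesssim_R(1+\|u\|_{H^{\theta,p}}^{m-1}+\|v\|_{H^{\theta,p}}^{m-1})\|u-v\|_{H^{\theta,p}}+(1+\|u\|_{H^{\theta,p}}^{m-\eta}+\|v\|_{H^{\theta,p}}^{m-\eta})\|u-v\|_{H^{\phi,p}}
\end{equation*}
with $\theta:=1+d(m-1)/(pm)\in(0,2)$ (the bound $\theta<2$ follows from $p\geq d(m-1)/m$, implied by hypothesis). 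Setting $F_c:=F$, $F_L=F_{\Tr}=0$, $m_F=2$, $\rho_1:=m-1$, $\rho_2:=m-\eta$, $\varphi_1=\varphi_2=\beta_1:=\vartheta$ with $\vartheta:=\max(\theta/2,\,1-(1+\a)/p+\delta)$ for small $\delta>0$ (Sobolev embedding lets us inflate $\theta/2$ without spoiling the estimate), and $\beta_2:=\phi/2$, a direct computation shows that the $j=1$ instance of \eqref{eq:HypCritical} is equivalent to $p\geq\frac{m-1}{m}(2(1+\a)+d)$, which holds; the $j=2$ instance is automatic since $\beta_2=\phi/2<1$ while the right-hand side is $\geq 1$ by the $j=1$ condition.

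The main obstacle is the stochastic maximal regularity step. The continuity in $x$ of the frozen quasilinear coefficients $a_{ij}(\cdot,\cdot,\cdot,w_0)$ needs to be justified for the $L^p$-theory in \cite[Theorem 5.4]{VP18} to apply, and this is exactly what the embedding $\Xap\hookrightarrow C^{\epsilon}$ provides. The assumption that $b_{jkn}$ does not depend on $y$ is essential here: it keeps $B$ out of the quasilinear freezing and allows \cite[Theorem 5.4]{VP18} to be applied with $w_0$-dependence present only in the principal part $A$.
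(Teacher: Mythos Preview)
Your proof is correct and follows essentially the same route as the paper: the key step is the Sobolev embedding $\Xap\hookrightarrow C^{\epsilon}$ (which both you and the paper derive from \eqref{eq:limitations_p_QND_gradient_not_critical} and $m>2$), which makes the frozen coefficients $a_{ij}(\cdot,\cdot,\cdot,w_0)$ uniformly continuous in $x$ and hence allows \cite[Theorem~5.4]{VP18} to yield $(A(\cdot,w_0),B)\in\MRta$; the nonlinearity estimates are then inherited from the proof of Theorem~\ref{t:QND_application_gradient_control_growth_lipschitz_constant} with $q=p$. The only cosmetic difference is that the paper retains the two-case split $1-(1+\a)/p\gtrless\beta$ from that earlier proof, whereas you collapse it into a single case via $\vartheta:=\max(\theta/2,\,1-(1+\a)/p+\delta)$; your justification that the $j=2$ instance of \eqref{eq:HypCritical} follows from the $j=1$ instance is correct (this is exactly what the paper argues in \eqref{eq:critical_spaces_gradient_SND_epsilon_coincise_form}--\eqref{eq:critical_spaces_gradient_SND_epsilon}), though your phrasing ``the right-hand side is $\geq 1$'' is imprecise---what you actually need is that $j=1$ forces $\vartheta-1+(1+\a)/p\leq (1+\a)/(mp)$, whence $(m-\eta)(\vartheta-1+(1+\a)/p)<(1+\a)/p$, and then $\phi$ close enough to $2-2(1+\a)/p$ does the rest.
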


Recall that typical examples of non-linearities which satisfies Assumptions \ref{ass:SND_reaction_diffusion_gradient}
are $f(u,\nabla u)=|u|^c|\nabla u|^r$ with $c,r>1$ and $f(\nabla u) = |\nabla u|^r$ with $r>2$.

\begin{proof}
The proof is similar to the one proposed in Theorem \ref{t:QND_application_gradient_control_growth_lipschitz_constant} with $q=p$. Note that if $q=p$, the restrictions in Theorem \ref{t:QND_application_critical_spaces_gradient} reduce to \eqref{eq:limitations_p_QND_gradient_not_critical}.

Additionally, we need to check that for $w_0\in L^{\infty}_{\F_0}(\O;\Xap)$ and $q=p$, one has $(A(w_0),B(w_0))\in \MRta$. Since these operators have $x$-dependent coefficients, Lemma \ref{l:SMR_semilinear_PDEs} is not applicable.
By \eqref{eq:limitations_p_QND_gradient_not_critical} it follows that $2-2(1+\a)/p>d/p$. Therefore, by Sobolev embedding
\begin{equation}
\label{eq:Sob_embeddings_QND_gradient}
\Xap=W^{2-2\frac{1+\a}{p},p}\hookrightarrow C^{\eta},\qquad \text{ for some }\eta>0.
\end{equation}
Thus, $(A(w_0),B(w_0))\in \MRta$ follows from \eqref{eq:Sob_embeddings_QND_gradient}, Assumption \ref{ass:QND} and \cite[Theorem 5.4]{VP18}.
\end{proof}
As a consequence we obtain the following result in the critical case in the same way as in the proof of Theorem \ref{t:QND_application_critical_spaces_gradient}. However, since we need $p=q$, we need further restrictions on $q$. To avoid this, one needs further results on stochastic maximal $L^p(L^q)$-regularity with $x$-dependent coefficients.
\begin{theorem}
\label{t:QND_gradient_critical}
Suppose that Assumptions \ref{ass:SND_reaction_diffusion_gradient} and \ref{ass:QND} hold. Assume $d\geq 1$ and $m>1+\frac{2}{d}$. Assume that $a_{ij}(t,\om,x,y,z)$ does not depend on the $z$-variable and $b_{jkn}(t,\om,x,y)$ does not depend on the $y$ variable.
Suppose that
\begin{equation}
\label{eq:range_p_critical_quasilinear_gradient_nonlinearities}
\frac{(m-1)}{m}(2+d)< p< d(m-1).
\end{equation}
Then for any
$$u_0\in L^0_{\F_0}\Big(\O;W^{\frac{d}{p}+\frac{m-2}{m-1},p}\Big)$$
there exists a maximal local solution $(u,\sigma)$ to \eqref{eq:quasilinear_non_divergence}. Moreover, there exists a localizing sequence $(\sigma_n)_{n\geq 1}$ such that a.s.\ for all $n\geq 1$
$$
u\in L^p(\I_{\sigma_n},w_{\a_{\crit}};W^{2,p})\cap C(\overline{I}_{\sigma_n};W^{\frac{d}{p}+\frac{m-2}{m-1},p})
\cap C((0,\sigma_n];W^{2-\frac{2}{p},p}),
$$
where $\a_{\crit}:=\frac{pm}{2(m-1)}-\frac{d}{2}-1$.
\end{theorem}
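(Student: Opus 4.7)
The plan is to apply Theorem \ref{t:local_Extended} with $X_0 = L^p(\R^d;\R^N)$ and $X_1 = W^{2,p}(\R^d;\R^N)$, choosing $\a = \a_{\crit}$. A direct computation shows that the two bounds in \eqref{eq:range_p_critical_quasilinear_gradient_nonlinearities} are equivalent to $\a_{\crit} \in [0,\tfrac{p}{2}-1)$, so Assumption \ref{ass:Xtr} is satisfied. With this choice the trace space becomes
\[
\Xap = (L^p,W^{2,p})_{1-\frac{1+\a_{\crit}}{p},p} = W^{\frac{d}{p}+\frac{m-2}{m-1},\,p}.
\]
Since Assumption \ref{ass:SND_reaction_diffusion_gradient} forces $m>2$, and since \eqref{eq:range_p_critical_quasilinear_gradient_nonlinearities} gives $p<d(m-1)$, i.e.\ $d/p>1/(m-1)$, the smoothness exponent of $\Xap$ exceeds both $1$ and $d/p$. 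Thus Sobolev embedding yields
\[
\Xap \hookrightarrow C^{\eta}(\R^d;\R^N) \cap W^{1,p}(\R^d;\R^N)
\]
for some $\eta>0$, exactly as in \eqref{eq:XapembeddingCeps}.

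First I would verify the quasilinear structure hypothesis \ref{HAmeasur}. Because by assumption $a_{ij}$ is independent of $z$ and $b_{jkn}$ is independent of $y$, the operator $A(\cdot,u)$ depends only on $u$ (not $\nabla u$) and $B$ is in fact independent of $u$. Combined with Assumption \ref{ass:QND} and the embedding $\Xap\hookrightarrow L^\infty$, the required local Lipschitz and growth bounds are immediate. Next, to check the stochastic maximal regularity condition \eqref{eq:stochastic_maximal_regularity_assumption_local_extended} for a cutoff sequence $(u_{0,n})$ of the form \eqref{eq:truncation_operator}, I observe that each $u_{0,n} \in L^\infty_{\F_0}(\O;\Xap) \hookrightarrow L^\infty_{\F_0}(\O;C^{\eta})$. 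Hence the coefficients of $A(\cdot,u_{0,n})$ and $B$ are uniformly $C^\eta$ in $x$ with measurable $(t,\omega)$-dependence, and Assumption \ref{ass:QND}\eqref{it:QND_ellipticity} provides the required stochastic parabolicity. Therefore $(A(\cdot,u_{0,n}),B(\cdot,u_{0,n}))\in\MRta$ follows from \cite[Theorem 5.4]{VP18}; this is the step where the condition $q=p$ is essential, since a general $L^p(L^q)$-theory with measurable $(t,\omega)$-dependence and $x$-dependent coefficients is not available.

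Finally I would verify \ref{HFcritical_weak} and \ref{HGcritical_weak} by reusing the nonlinear estimate \eqref{eq:estimate_F_gradient_optimal} with $q=p$ (valid because $p>d(m-1)/m$ by \eqref{eq:range_p_critical_quasilinear_gradient_nonlinearities}) and the $G$-estimate \eqref{eq:estimate_G_gradient_nonlinearities_revision_stage}. I would set $F_L\equiv F_{\mathrm{Tr}}\equiv 0$ and take $F_c=F$ with $m_F=2$, $\rho_1=m-1$, $\rho_2=m-\eta$, $\varphi_1=\varphi_2=\theta/2$ where $\theta = 1+\tfrac{d(m-1)}{pm}$, $\beta_1=\varphi_1$ and $\beta_2 = 1-(1+\a_{\crit})/p + \varepsilon$ for $\varepsilon>0$ small. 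The dominant condition \eqref{eq:HypCritical} for $j=1$ reads
\[
(m-1)\!\left(\tfrac{\theta}{2}-1+\tfrac{1+\a_{\crit}}{p}\right)+\tfrac{\theta}{2}\leq 1,
\]
which, upon substituting $\theta$ and solving for $\a_{\crit}$, gives equality precisely when $\a_{\crit} = \tfrac{pm}{2(m-1)}-\tfrac{d}{2}-1$; the second condition ($j=2$) then follows as in the analysis leading to \eqref{eq:critical_spaces_gradient_SND_epsilon}. For $G$ I put $G_L\equiv G_c\equiv 0$ and $G_{\mathrm{Tr}}=G$, estimating the $\gamma(\ell^2,W^{1,p})$-norm using $\Xap\hookrightarrow L^\infty\cap W^{1,p}$ and the chain rule, exactly as in \eqref{eq:estimate_G_gradient_nonlinearities_revision_stage}. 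With all hypotheses verified, Theorem \ref{t:local_Extended} then yields the existence, maximality and regularity of the solution; the instantaneous regularization $u\in C((0,\sigma_n];W^{2-2/p,p})$ follows from Theorem \ref{t:local_Extended}\eqref{it:regularity_data_L0}. The main obstacle I anticipate is the bookkeeping in the verification of \eqref{eq:stochastic_maximal_regularity_assumption_local_extended}, i.e.\ checking that \cite[Theorem 5.4]{VP18} genuinely applies with the specific $(t,\omega,x)$-regularity produced by inserting a $C^\eta$-function into $a_{ij}$ and $b_{jkn}$.
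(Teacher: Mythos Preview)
Your proposal is correct and follows essentially the same route as the paper: the paper deduces Theorem~\ref{t:QND_gradient_critical} as the critical specialization of Theorem~\ref{t:QND_gradient_not_critical}, which in turn repeats the proof of Theorem~\ref{t:QND_application_gradient_control_growth_lipschitz_constant} with $q=p$ and verifies $(A(\cdot,u_{0,n}),B(\cdot,u_{0,n}))\in\MRta$ via the embedding $\Xap\hookrightarrow C^\eta$ and \cite[Theorem~5.4]{VP18}. Your detailed unpacking of the $F_c$/$G_{\mathrm{Tr}}$ splitting and the condition \eqref{eq:HypCritical} is exactly the content the paper leaves implicit; one minor point is that the strict lower bound in \eqref{eq:range_p_critical_quasilinear_gradient_nonlinearities} actually gives $\a_{\crit}>0$ rather than merely $\a_{\crit}\geq 0$, but this is harmless.
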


Note that since $m>1+\frac{2}{d}$, one has $d(m-1)>2$. Therefore, the set of $p$ which satisfies \eqref{eq:range_p_critical_quasilinear_gradient_nonlinearities} is not empty.

\subsection{Quasilinear SPDEs in non-divergence form on domains}
\label{ss:QND_Dirichlet}
In this subsection we investigate the quasilinear problem \eqref{eq:quasilinear_non_divergence} with Dirichlet boundary conditions
\begin{equation}
\label{eq:Dirichlet_boundary_condition}
u=0\qquad \text{on }\partial\Dom.
\end{equation}
Here we assume $\Dom$ is a bounded domain with $C^2$-boundary. Moreover, we let $N = 1$ and write $b_{jn} := b_{j1n}$.

As usual, we recast  \eqref{eq:quasilinear_non_divergence} in the form \eqref{eq:QSEE}.
To this end, for $p\in (1,\infty)$ and $s\in (0,1)$ we set
\begin{align*}
W^{1,p}_0(\Dom)& =\{u\in W^{1,p}(\Dom)\,:\,u|_{\Dom}=0\},
\\ \HD^{2,p}(\Dom)& =W^{2,p}(\Dom)\cap W^{1,p}_0(\Dom),
\\ \WD^{2s,p}(\Dom)& =(L^p(\Dom),\HD^{2,p}(\Dom))_{s,p},
\end{align*}
(see \eqref{eq:def_BD} and \eqref{eq:BD_identifications}). For more on spaces with Dirichlet type boundary conditions see Example \ref{ex:extrapolated_Laplace_dirichlet}. For the reader's convenience, we recall that (see \eqref{eq:HD_identification})
\begin{equation*}
[L^q(\Dom),\HD^{2,p}(\Dom)]_{1/2}= W^{1,p}_0(\Dom) \ \ \ \text{ for all }p\in (1,\infty).
\end{equation*}
To proceed further, let $X_0=L^p(\Dom)$, $X_1=\HD^{2,q}(\Dom)$ and for $u\in \Xap=\BD^{1-2\frac{1+\a}{p}}_{q,p}(\Dom)$ (see \eqref{eq:def_BD}), $v\in X_1$ we set
\begin{equation}
\begin{aligned}
\label{eq:non_divergence_domain_def_ABFG}
A(t,u)v&=\A(t,u,\nabla u)v, \qquad&  B(t,u)v &=(\bb_n (t,u)v)_{n\geq 1},
\\ F(t,u)&=f(t,u,\nabla u), \qquad & G(t,u)&=(g_n(t,\cdot,u))_{n\geq 1}.
\end{aligned}
\end{equation}
where $\A,\bb_n$ are as in \eqref{eq:QND_quasi_AB_n_Def}.
We say that $(u,\sigma)$ is a maximal local solution to \eqref{eq:quasilinear_non_divergence} with boundary condition \eqref{eq:Dirichlet_boundary_condition} if $(u,\sigma)$ is a maximal local solution to \eqref{eq:QSEE} with $A,B,F,G$ in \eqref{eq:non_divergence_domain_def_ABFG}.

Below we will show that for $p>d+2$
\begin{equation}
\label{eq:B_boundary_conditions_Du_revised_version}
B(\cdot,u)v\in \g(\ell^2,W^{1,p}_0(\Dom)), \text{ a.e.\ on }\I_T\times \O \text{ for all }u\in \Xap,\, v\in X_1.
\end{equation}
As remarked in \cite{Du2018} (see the text below Assumption 1.4), to check \eqref{eq:B_boundary_conditions_Du_revised_version} it is sufficient to require an ``orthogonality condition" for $b$ at the boundary of $\Dom$. In the quasilinear setting, this condition reads as follows
\begin{equation}
\label{eq:QND_domain_ortogonality_condition_for_SMR_boundary_Du_revision_stage}
\sum_{j=1}^d b_{jn}(t,\om,x,0)\n_j(x)=0,  \text{ for a.a.\ }(t,\om,x)\in\I_T\times\O\times \partial\Dom
\text{ and all }n\geq 1,
\end{equation}
where $\n=(\n_j)_{j=1}^d$ is the exterior normal field on $\partial\Dom$. To see that \eqref{eq:QND_domain_ortogonality_condition_for_SMR_boundary_Du_revision_stage} implies \eqref{eq:B_boundary_conditions_Du_revised_version}, we argue as follows. Since $p>d+2$, one has
\begin{equation}
\begin{aligned}
\label{eq:Xap_divergence_domain_Du_revision_stage}
\Xap = \WD^{2-\frac{2}{p},p}(\Dom)
&=\big\{ u\in W^{2-\frac{2}{p},p}(\Dom)\,:\,u=0\text{ a.e.\ on }\partial\Dom\big\} \\
&\hookrightarrow
\big\{u\in C^{1+\varepsilon}(\Dom)\,:\,u=0\text{ a.e.\ on }\partial\Dom\big\}
\end{aligned}
\end{equation}
for some $\varepsilon>0$. Note that $\g(\ell^2,W^{1,p}_0(\Dom))=W^{1,p}_0(\Dom;\ell^2)$ by  \eqref{eq:gammaidentity}.
Thus, \eqref{eq:B_boundary_conditions_Du_revised_version} holds thanks to $u=v=0$ a.e.\ on $\partial\Dom$ (thus $\nabla v$ is parallel to $\n$). Assumption \eqref{eq:QND_domain_ortogonality_condition_for_SMR_boundary_Du_revision_stage} was first introduced in \cite{Du2018} (for $\a=0$) where the author showed that under suitable conditions on the coefficients, \eqref{eq:QND_domain_ortogonality_condition_for_SMR_boundary_Du_revision_stage} yields stochastic maximal $L^p$-regularity estimates for the linear case of \eqref{eq:quasilinear_non_divergence} on domains. Ellipticity and smoothness of the coefficients alone are not enough to show maximal regularity estimates for parabolic SPDEs on domains with the choice $X_0=L^q(\Dom)$ and $X_1=\HD^{2,q}(\Dom)$, see \cite[Theorem 5.3]{Krylov03}. To reduce the conditions on the $b$-term one needs to use suitable weighted Sobolev spaces (see Subsection \ref{ss:QND_Dirichletweights} below).

A similar argument shows that item \eqref{it:QND_domain_ortogonality_g_null_at_the_boundary_revision_stage} in the following assumption is sufficient to obtain \eqref{eq:B_boundary_conditions_Du_revised_version} with $B(\cdot,u)v$ replaced by $G(\cdot,v)$ where $G$ is as in \eqref{eq:non_divergence_domain_def_ABFG}.
\begin{assumption}
\label{ass:QND_domain_ortogonality}
Let Assumption \ref{ass:QND} be satisfied. Assume that $N=1$. Suppose that  \eqref{eq:QND_domain_ortogonality_condition_for_SMR_boundary_Du_revision_stage} holds and that the following are satisfied.
\begin{enumerate}[{\rm(1)}]
\item\label{it:QND_domain_ortogonality_domain_regularity_revision_stage} $\Dom$ is a bounded $C^2$-domain in $\R^d$;
\item\label{it:QND_domain_ortogonality_g_null_at_the_boundary_revision_stage} $g_n(t,\om,0)=0$ for a.a.\ $\om\in\O$ and for all $x\in \partial\Dom$.
\end{enumerate}
\end{assumption}

The main result of this subsection is an extension of Theorem \ref{t:QND_general_R_Tor} to domains in case $\a=0$. Using the results of Example \ref{ex:extrapolated_Laplace_dirichlet}, the reader can check that also Theorem \ref{t:QND_gradient_not_critical} (resp.\ \ref{t:QND_gradient_critical}) extends to the problem \eqref{eq:quasilinear_non_divergence} with boundary condition \eqref{eq:Dirichlet_boundary_condition} provided $\a=0$ (resp.\ $p=\frac{m-1}{m}(d+2)$ i.e.\ $\a_{\crit}=0$). For the sake of brevity, we do not include any statement here.

\begin{theorem}
\label{t:QND_domain}
Suppose Assumptions \ref{ass:QND_f_g_regular_trace_space} and \ref{ass:QND_domain_ortogonality} hold. Let $p\in (d+2,\infty)$.
Then for each $u_0\in L^0_{\F_0}\big(\O;\WD^{2-\frac{2}{p},p}(\Dom)\big)$ there exists a maximal local solution to \eqref{eq:quasilinear_non_divergence} with boundary condition \eqref{eq:Dirichlet_boundary_condition}, and a localizing sequence $(\sigma_n)_{n\geq 1}$ such that a.s.\ for all $n\geq 1$
$$
u\in L^p(\I_{\sigma_n};W^{2,p}(\Dom)\cap W^{1,p}_0(\Dom))\cap C(\overline{I}_{\sigma_n};\WD^{2-\frac{2}{p},p}(\Dom)).
$$
\end{theorem}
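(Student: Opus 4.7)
The plan is to verify the hypotheses of Theorem \ref{t:local_Extended} with the trivial decomposition $F_L \equiv F_c \equiv G_L \equiv G_c \equiv 0$, $F_{\Tr}(t,u) = f(t,u,\nabla u)$, and $G_{\Tr}(t,u) = (g_n(t,\cdot,u))_{n\geq 1}$, so that the argument closely parallels the proof of Theorem \ref{t:QND_general_R_Tor} but with the domain-specific subtleties handled via the orthogonality condition \eqref{eq:QND_domain_ortogonality_condition_for_SMR_boundary_Du_revision_stage} and Example \ref{ex:extrapolated_Laplace_dirichlet}. The starting point, exactly as in \eqref{eq:Xap_divergence_domain_Du_revision_stage}, is the Sobolev embedding
\[
\Xap = \WD^{2-\frac{2}{p},p}(\Dom) \hookrightarrow \{u\in C^{1+\varepsilon}(\overline\Dom): u|_{\partial\Dom}=0\},\qquad\text{for some }\varepsilon>0,
\]
valid since $p>d+2$, which gives a uniform control of $\|u\|_{C^1}$ on bounded sets of $\Xap$.

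Next I would verify \ref{HAmeasur}: for $u_1,u_2\in\B_{\Xap}(r)$, the above embedding together with Assumption~\ref{ass:QND}\eqref{it:QND_continuity} gives the pointwise Lipschitz estimates for $A(t,u)v$ and $B(t,u)v$ in $\calL(X_1,X_0)$ and $\calL(X_1,\gamma(\ell^2,X_{1/2}))$ respectively. The only nontrivial check is that $B(t,u)v$ really takes values in $\gamma(\ell^2,X_{1/2}) = W^{1,p}_0(\Dom;\ell^2)$: since $u\in\Xap$ vanishes on $\partial\Dom$, we have $b_{jn}(t,\omega,x,u(x)) = b_{jn}(t,\omega,x,0)$ on $\partial\Dom$, and the orthogonality \eqref{eq:QND_domain_ortogonality_condition_for_SMR_boundary_Du_revision_stage} together with $\nabla v \parallel \n$ on $\partial \Dom$ (since $v\in X_1$ vanishes on $\partial\Dom$) yields $(B(t,u)v)|_{\partial\Dom}=0$. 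An analogous reasoning, using Assumption~\ref{ass:QND_domain_ortogonality}\eqref{it:QND_domain_ortogonality_g_null_at_the_boundary_revision_stage}, shows $G(t,u)\in\gamma(\ell^2,W^{1,p}_0(\Dom))$. Hypotheses \ref{HFcritical_weak}--\ref{HGcritical_weak} then follow immediately from Assumption~\ref{ass:QND_f_g_regular_trace_space} and the $C^{1+\varepsilon}$-embedding.

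The main obstacle is to verify the stochastic maximal regularity condition \eqref{eq:stochastic_maximal_regularity_assumption_local_extended}, i.e.\ that $(A(\cdot,w_0),B(\cdot,w_0))\in\MRt$ for each $w_0\in L^\infty_{\F_0}(\O;\Xap)$. For this I would invoke the stochastic maximal $L^p$-regularity result of Du \cite{Du2018}, valid on bounded $C^2$-domains with the unweighted Dirichlet setting $X_0=L^p(\Dom)$, $X_1=\HD^{2,p}(\Dom)$: the frozen coefficients $\tilde a_{ij}(t,\omega,x)=a_{ij}(t,\omega,x,w_0(x),\nabla w_0(x))$ and $\tilde b_{jn}(t,\omega,x)=b_{jn}(t,\omega,x,w_0(x))$ inherit from Assumptions~\ref{ass:QND}\eqref{it:QND_a_b}--\eqref{it:QND_continuity} and the $C^{1+\varepsilon}$-regularity of $w_0$ the uniform continuity in $x$, boundedness, and uniform parabolicity \eqref{it:QND_ellipticity} required by \cite{Du2018}; moreover, $w_0|_{\partial\Dom}=0$ ensures that $\tilde b_{jn}$ still satisfies the orthogonality condition on $\partial\Dom$. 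Combined with Proposition~\ref{prop:transferenceSMR} (transferring $\MRtz$ to $\MRt$ via the Dirichlet Laplacian on $\HD^{2,p}(\Dom)$, which has a bounded $H^\infty$-calculus of angle $<\pi/2$ by Example~\ref{ex:extrapolated_Laplace_dirichlet} and therefore lies in $\MRt$ by Theorem~\ref{t:H_infinite_SMR}), this yields \eqref{eq:stochastic_maximal_regularity_assumption_local_extended}.

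Finally, once all hypotheses are verified, the chosen smallness constants $L_{F,n}=L_{G,n}=0$ in \ref{HFcritical_weak}--\ref{HGcritical_weak} make \eqref{eq:smallness_condition_nonlinearities_QSEE_extended} trivially satisfied. Theorem~\ref{t:local_Extended}\eqref{it:existence_extended_maximal}--\eqref{it:regularity_data_L0} then delivers a maximal local solution with the claimed regularity
$u\in L^p(\I_{\sigma_n};\HD^{2,p}(\Dom))\cap C(\overline\I_{\sigma_n};\WD^{2-2/p,p}(\Dom))$ a.s., which is exactly the statement of Theorem~\ref{t:QND_domain} in the case $\a=0$.
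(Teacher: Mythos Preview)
Your proposal is correct and follows essentially the same approach as the paper's proof: the same trivial decomposition $F_L=F_c=G_L=G_c=0$, the same verification of \ref{HAmeasur}, \ref{HFcritical_weak}, \ref{HGcritical_weak} via the embedding $\Xap\hookrightarrow C^{1+\varepsilon}$ and the orthogonality condition \eqref{eq:QND_domain_ortogonality_condition_for_SMR_boundary_Du_revision_stage}, and the same route to stochastic maximal regularity via \cite[Theorem 2.5]{Du2018} combined with the transference Proposition~\ref{prop:transferenceSMR} and Theorem~\ref{t:H_infinite_SMR} applied to $-\Dd_p$.
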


\begin{proof}
Similar to the proof of Theorem \ref{t:QND_general_R_Tor}, we set $F_L\equiv F_c\equiv G_L\equiv G_c\equiv 0$, $F_{\Tr}(t,u):=F(t,u)$ and $G_{\Tr}(u):=G(t,u)$. Here $F,G$ are as in \eqref{eq:non_divergence_domain_def_ABFG}. As before one sees that \ref{HFcritical_weak} and \ref{HAmeasur} hold. To check \ref{HGcritical_weak} recall that $\g(\ell^2,W^{1,p}_0(\Dom))=W^{1,p}_0(\Dom;\ell^2)$. By Assumption
\ref{ass:QND_domain_ortogonality}\eqref{it:QND_domain_ortogonality_g_null_at_the_boundary_revision_stage} one has $g_n(\cdot,u)=g_n(\cdot,v)=0$ a.e.\ on $\I_T\times\O\times \partial\Dom$  for all $u,v\in B_{\Xap}(n)$. The latter considerations imply, for all $u,v\in B_{\Xap}(n)$,
$$
\|(g_n(\cdot,u)-g_n(\cdot,v))_{n\geq 1}\|_{\g(\ell^2,W^{1,p}_0(\Dom))}\eqsim
 \|(g_n(\cdot,u)-g_n(\cdot,v))_{n\geq 1}\|_{W^{1,p}(\Dom;\ell^2)},
$$
where the implicit constants are independent of $u,v$.
By \eqref{eq:Xap_divergence_domain_Du_revision_stage} and the former one can show that \ref{HGcritical_weak} holds.

To apply Theorem \ref{t:local_Extended} it remains to check that the stochastic maximal $L^p$-regularity assumption. Fix $w_0\in L^\infty_{\F_0}(\Omega;\Xap)$. By \eqref{eq:Xap_divergence_domain_Du_revision_stage}, $w_0=0$ a.e.\ on $\O\times\partial\Dom$. The latter and \eqref{eq:QND_domain_ortogonality_condition_for_SMR_boundary_Du_revision_stage} yield $
\sum_{j=1}^d b_{j n}(\cdot,w_0)\n_j =0
$
a.e.\ on $\I_T\times \O\times \partial\Dom$. Therefore, by \cite[Theorem 2.5]{Du2018} one has $(A(\cdot,w_0),B(\cdot,w_0))\in \MRtz$. Moreover, by Example \ref{ex:extrapolated_Laplace_dirichlet} and Assumption \ref{ass:QND_domain_ortogonality}\eqref{it:QND_domain_ortogonality_domain_regularity_revision_stage}, the operator $-\Dd_p$ (see \eqref{eq:strong_Dirichlet_Laplacian}) has a bounded $H^{\infty}$-calculus of angle $<\pi/2$. Thus, by Theorem \ref{t:H_infinite_SMR} and the transference result Proposition \ref{prop:transferenceSMR} we also obtain $(A(\cdot,w_0),B(\cdot,w_0))\in \MRt$.
\end{proof}

\begin{remark}
\label{r:QND_Dom}
We believe that Theorem \ref{t:QND_domain} can be extended to an $L^p-L^q$ and weighted in time setting. We plan to address this issue in a forthcoming paper.
\end{remark}

\subsection{Quasilinear SPDEs in non-divergence form on domains with weights}
\label{ss:QND_Dirichletweights}
In a series of papers by Krylov and his collaborators stochastic maximal $L^p$-regularity is derived on weighted $L^p$ spaces on bounded domains. For special choices of the weight no additional conditions on $b$ and $g$ arise. We consider exactly the same problem as in Section \ref{ss:QND_Dirichlet}, but this time with weighted function spaces which are more complicated. For function spaces on $\R^d$ with weights we refer to \cite{MV12,MV14_embedding} and the references therein. In particular, to define Besov spaces on $\R^d$ with weights we employ the Definition 3.2 in \cite{MV12}.

Let $v_{\alpha}:\R^d\to (0,\infty)$ be given by $v_{\alpha}(x) = \dist( x,\partial \Dom)^{\alpha}$ where $\alpha\in \R$. For an integer $n\geq 1$, let $W^{n,p}(\Dom,v_{\alpha})$ be the space of all $u\in L^p(\Dom,v_{\alpha})$ for which $\partial^{\beta} u\in L^p(\Dom,v_{\alpha})$ for all $|\beta|\leq n$ endowed with its natural norm.
Let
\[\WD^{n,p}(\Dom,v_{\alpha}) = \{u\in W^{n,p}(\Dom,v_{\alpha}): \tr_{\partial \Dom} u = 0 \ \text{if}\ n>(1+\alpha)/p\}.\]
The trace operator is a bounded operator into $L^p(\partial \Dom)$ (see \cite[Section 3.2]{LV18}). We will only use the above space for $n\in \{1,2\}$ below.
For $s\in (0,1)$ let
\begin{align}
\label{eq:Wspspace}
\Vspace^{2s,p}(\Dom,v_{\alpha}) := (L^p(\Dom,v_{\alpha}),\Vspace^{2,p}(\Dom,v_{\alpha}))_{s,p},\qquad \Vspace\in \{\WD,W\}.
\end{align}
The latter definition requires some care. In the case $\alpha\in (-1,p-1)$ the space $W^{2s.p}(\Dom,v_{\alpha})$ is equivalent to the Besov space $B^{2s}_{p,p}(\Dom,v_{\alpha})$. Here $B^{2s}_{p,p}(\Dom,v_{\alpha})$ is the restricted space to $\Dom$ of $B^{2s}_{p,p}(\R^d,v_{\alpha})$, see e.g.\ \cite[Definition 5.2]{LMV18}. This follows by combining \cite{C92_Extension} and \cite[Proposition 6.1]{MV12}. To see this, it is enough to note that by \cite[Theorem 1.1]{C92_Extension} and \eqref{eq:Wspspace}, for each $s\in (0,1)$, $p\in (1,\infty)$ and $\alpha\in (-1,p-1)$ there exists an extension operator $E$ (cf.\ Definition \ref{def:extension_operator}), i.e.\ a bounded linear operator
\begin{equation}
\label{eq:extension_weights}
E:W^{2s,p}(\Dom,v_{\alpha})\to W^{2s,p}(\R^d,v_{\alpha}), \quad\text{such that}\quad Ef|_{\Dom}=f,
\end{equation}
where $W^{2s,p}(\R^d,v_{\alpha})=B^{2s}_{p,p}(\R^d,v_{\alpha})$. In the case $\alpha\geq p-1$, the space $W^{2s,p}(\Dom,v_{\alpha})$ does not coincide with a weighted Besov space. However, it densely contains the Besov space $B^{2s}_{p,p}(\Dom,v_{\alpha})$ (see \cite[Remark 7.14]{LV18}).

The following is the main assumption of this subsection.

\begin{assumption}\label{ass:Qweight}
Suppose that Assumption \ref{ass:QND} holds with $N =1$, $\kappa=0$ and write $b_{jn} = b_{j1n}$. Suppose that $a_{ij}(t,\om,x,y,z)$ does not depend on the $z$-variable and $b_{jn}(t,\om,x,y)$ does not depend on the $y$ variable. Let $\Dom$ be a bounded $C^2$-domain. Moreover, let $\delta\in (0,1]$ and suppose that
for each $r>0$ there exists $\epsilon_r>0$ such that a.s.\ for all $\xi\in \R^d$, $\theta\in \R^N$, $t\in [0,T]$, $x\in \Dom$, $y\in \B_{\R^N}(r)$ one has
$$
\sum_{i,j=1}^d \xi_i\xi_j \big(a_{ij}(t,\om,x,y)-\Sigma_{ij}(t,\om,x)\big)\geq \delta \sum_{i,j=1}^d \xi_i\xi_j a_{ij}(t,\om,x,y) \geq \epsilon_r |\xi|^2.
$$
Here for each fixed $i,j\in \{1,\dots,d\}$,
$$\Sigma_{ij}(t,\om,x) = \frac{1}{2}\sum_{n\geq 1} b_{in}(t,\om,x)b_{jn}(t,\om,x).$$
Finally, suppose that $p\in (d+2,\infty)$ and $\delta$ satisfy
\[2p-1 - \frac{p}{p(1-\delta)+\delta}<\alpha<2p-d-2.\]
\end{assumption}
The above assumptions imply that $\alpha > p-1$. In the special case that $b_{jn}\equiv 0$, we can take $\delta = 1$, and thus $p-1<\alpha<2p-d-2$. The above parabolicity condition is introduced in \cite{KryLot} and also considered in \cite{Kim04b}.

In this subsection we let
\begin{equation}
\label{eq:choice_X_0_X_1_quasilinear_domain_weights}
X_0 := L^p(\Dom,v_{\alpha}),\quad X_1:= \WD^{2,p}(\Dom,v_{\alpha}),\quad
 \Xp=\WD^{2-2/p,p}(\Dom,v_{\alpha}),
\end{equation}
where the last equality follows from \eqref{eq:Wspspace}. Moreover, we define $A,B,F,G$ be as in \eqref{eq:non_divergence_domain_def_ABFG}. Let us first analyse the linear problem.
\begin{lemma}\label{lem:maxregweightsspace}
Suppose that Assumption \ref{ass:Qweight} holds. Then the following hold:
\begin{enumerate}[{\rm(1)}]
\item\label{it:embedding_weighted_spaces} There exists $\eta>0$ such that
$
\WD^{2-\frac{2}{p},p}(\Dom,v_{\alpha})\hookrightarrow C^{\eta}(\Dom);
$
\item\label{it:stochastic_maximal_L_p_regularity_weighted} For every
\[w_0\in L^\infty_{\F_0}(\Omega;\WD^{2-\frac{2}{p},p}(\Dom,v_{\alpha}))\]
one has $(A(\cdot, w_0), B(\cdot, w_0))\in \MRt$.
\end{enumerate}
\end{lemma}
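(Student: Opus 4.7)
The plan is to handle the two assertions separately, first the Sobolev embedding and then the stochastic maximal regularity, the latter split into the weaker form $\MRtz$ and its upgrade to $\MRt$ via transference.

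For (1), the point is a weighted Sobolev embedding. The upper bound $\alpha<2p-d-2$ rearranges to $\bigl(2-\tfrac{2}{p}\bigr)-\tfrac{d+\alpha}{p}>0$, which is exactly the Morrey gap yielding H\"older continuity. For the distance-type weight $v_{\alpha}$ with $\alpha>p-1$ on bounded $C^{2}$-domains, embeddings into $C^{\eta}(\overline{\Dom})$ for some $\eta>0$ are available in the Krylov--Lototsky--Kim circle of results, see \cite{Kim04b, KryLot}. Restricting to the closed subspace $\WD^{2-2/p,p}(\Dom,v_{\alpha})$ gives the claim.

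For (2), I first aim at the weaker class $\MRtz$. By (1), $w_{0}\in L^{\infty}_{\F_{0}}(\O;C^{\eta}(\overline{\Dom}))$, so the frozen coefficients $a_{ij}(\cdot,w_{0})$ and $b_{jn}$ inherit exactly the measurability and H\"older-in-$x$ regularity required by the Krylov--Lototsky weighted $L^{p}$-theory. The strengthened stochastic parabolicity
$$
\sum_{i,j}\xi_{i}\xi_{j}\bigl(a_{ij}(\cdot,w_{0})-\Sigma_{ij}\bigr)\;\geq\;\delta\sum_{i,j}\xi_{i}\xi_{j}a_{ij}(\cdot,w_{0})\;\geq\;\epsilon_{r}|\xi|^{2}
$$
from Assumption~\ref{ass:Qweight} is precisely the coercivity hypothesis under which \cite{KryLot,Kim04b} yield unweighted-in-time stochastic maximal $L^{p}$-regularity on $L^{p}(\Dom,v_{\alpha})$ with domain $\WD^{2,p}(\Dom,v_{\alpha})$, i.e., $(A(\cdot,w_{0}),B(\cdot,w_{0}))\in\MRtz$. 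Note that the lower bound $\alpha>2p-1-\tfrac{p}{p(1-\delta)+\delta}$ is exactly the admissibility condition imposed in those references.

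To upgrade to $\MRt$ I invoke the transference Proposition~\ref{prop:transferenceSMR}. It suffices to exhibit a single pair $(\tilde{A},\tilde{B})$ satisfying Assumption~\ref{ass:AB_boundedness} and lying in $\MRt$. The natural candidate is $\tilde{B}=0$ together with $\tilde{A}=-\Delta$, the Dirichlet Laplacian on $L^{p}(\Dom,v_{\alpha})$ with domain $\WD^{2,p}(\Dom,v_{\alpha})$. By Theorem~\ref{t:H_infinite_SMR}, reducing $\tilde{A}\in\MRt$ to a bounded $H^{\infty}$-calculus of angle less than $\pi/2$ for this weighted Dirichlet Laplacian is the main obstacle: since $\alpha>p-1$ the weight is no longer Muckenhoupt $A_{p}$, so the standard Calder\'on--Zygmund route is unavailable. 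The remedy I would pursue is localization and boundary straightening to reduce to the model half-space $\R^{d}_{+}$ with weight $x_{d}^{\alpha}$, and then to invoke the $H^{\infty}$-calculus for the half-space weighted Dirichlet Laplacian, which can be read off from the explicit heat-kernel bounds of Krylov--Lototsky. Once $(\tilde{A},\tilde{B})\in\MRt$ is secured, combining with the $\MRtz$ statement and applying Proposition~\ref{prop:transferenceSMR} yields $(A(\cdot,w_{0}),B(\cdot,w_{0}))\in\MRt$, completing the proof.
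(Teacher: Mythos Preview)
Your overall strategy matches the paper's: (1) a weighted Sobolev--Morrey embedding, (2) $\MRtz$ from the Krylov--Lototsky--Kim theory (specifically \cite[Theorem~2.9]{Kim04b}), then upgrade to $\MRt$ via transference using a reference operator with bounded $H^{\infty}$-calculus. Two points deserve sharpening.

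For (1), your appeal to ``embeddings into $C^{\eta}$ available in the Krylov--Lototsky--Kim circle'' is too vague. Since $\alpha>p-1$, the weight $v_{\alpha}$ is not Muckenhoupt $A_{p}$, and $\WD^{2-2/p,p}(\Dom,v_{\alpha})$ is \emph{not} a standard weighted Besov space (the paper explicitly notes this). The paper's argument is more careful: by reiteration $\Xp=(X_{1/2},X_{1})_{1-2/p,p}$ with $X_{1/2}=W^{1,p}(\Dom,v_{\alpha})$ (from \cite[Proposition~3.16]{LV18}), and then Hardy's inequality (\cite[Corollary~3.4]{LV18}) gives $\Xp\hookrightarrow W^{1-2/p,p}(\Dom,v_{\alpha-p})$. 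The shifted exponent satisfies $\alpha-p\in(-1,p-1)$, so this \emph{is} a genuine Besov space $B^{1-2/p}_{p,p}(\Dom,v_{\alpha-p})$, and the embedding into $C^{\eta}$ then follows from the Muckenhoupt-weight results in \cite{MV14_embedding}. Your Morrey-gap computation is correct, but the route to a usable embedding passes through this Hardy shift.

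For (2), you correctly identify the $H^{\infty}$-calculus for the weighted Dirichlet Laplacian as the key ingredient and propose to prove it by localization and half-space heat-kernel bounds. This is unnecessary: the required result---bounded $H^{\infty}$-calculus of angle zero for $-\Dd_{p}$ on $L^{p}(\Dom,v_{\alpha})$ with domain $\WD^{2,p}(\Dom,v_{\alpha})$, valid in particular for $\alpha>p-1$---is precisely \cite[Theorem~1.1]{LV18}. The paper simply cites this, then combines Theorem~\ref{t:H_infinite_SMR} and Proposition~\ref{prop:transferenceSMR} exactly as you propose.
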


\begin{proof}
\eqref{it:embedding_weighted_spaces}:
By \eqref{eq:Wspspace} and \cite[Theorem 4.7.2]{BeLo},
\[\WD^{2-\frac{2}{p},p}(\Dom,v_{\alpha}) = \Xp = (X_0, X_1)_{1-\frac1p,p} = (X_{1/2}, X_1)_{1-\frac{2}{p},p}.\]
By \cite[Proposition 3.16]{LV18} one has
\begin{equation}
\label{eq:X12_equivalence_weights_revision_stage}
X_{\frac{1}{2}} = W^{1,p}(\Dom,v_{\alpha}).
\end{equation}
Therefore, by Hardy's inequality (see \cite[Corollary 3.4]{LV18}),
\begin{equation*}
\begin{aligned}
\Xp  & = (X_{\frac{1}{2}}, X_1)_{1-\frac{2}{p}}
\hookrightarrow (L^p(\Dom,v_{\alpha-p}),W^{1,p}(\Dom,v_{\alpha-p}))_{1-\frac{2}{p},p}=W^{1-\frac{2}{p},p}(\Dom,v_{\alpha-p}),
\end{aligned}
\end{equation*}
where the last equality follows from \eqref{eq:extension_weights}. By Assumption \ref{ass:Qweight} one has $\alpha-p\in (-1,p-1)$, therefore the considerations at the beginning of this section imply that $W^{1-\frac{2}{p},p}(\Dom,v_{\alpha-p})=B^{1-\frac{2}{p}}_{p,p}(\Dom,v_{\alpha-p})$. To complete the proof of \eqref{it:embedding_weighted_spaces} it is enough to show that
$
B^{1-\frac{2}{p}}_{p,p}(\Dom,v_{\alpha-p})\hookrightarrow C^{\eta}(\Dom)
$
for some $\eta>0$.  Since $\Dom$ is bounded, using a standard localization argument (see e.g.\ \cite[Section 2.2]{LV18} and the references therein) it is enough to prove
\begin{equation}
\label{eq:intermediate_embedding_weights_in_space_step_2}
B^{1-\frac{2}{p}}_{p,p}(\R^d,g_{\alpha-p})\hookrightarrow B_{\infty,\infty}^{\eta}(\R^d);
\end{equation}
where, for $x=(x_1,x_2,\dots,x_d)$, $g_{\beta}(x):=x_1^{\beta}$ on $|x|\leq 1$ and $g_{\beta}(x):=1$ otherwise.

The embedding in \eqref{eq:intermediate_embedding_weights_in_space_step_2} follows from \cite[Proposition 4.2]{MV14_embedding} and the fact that $1-\frac{2}{p} - \frac{\alpha-p+d}{p}>0$ and $1-\frac{2+d}{p}>0$. The latter are equivalent to $\alpha<2p-d-2$ and $p>d+2$, respectively, which hold by Assumption \ref{ass:Qweight}.

\eqref{it:stochastic_maximal_L_p_regularity_weighted}: Combining \eqref{it:embedding_weighted_spaces}, Assumption \ref{ass:Qweight} and \cite[Theorem 2.9]{Kim04b} it follows that $(A(\cdot, w_0), B(\cdot, w_0))\in \MRtz$. To see the latter note that by Hardy's inequality (see \cite[Corollary 3.4]{LV18}), and \cite[Proposition 2.2]{Lot2000} (also see \cite[Remark 2.9]{KimKry04}) the spaces in \cite{Kim04b} coincide with the ones considered here.

Since by \cite[Theorem 1.1]{LV18}, $-\Dd_p$ has a bounded $H^\infty$-calculus of angle zero on $L^p(\Dom,v_{\alpha})$ (with domain $\WD^{2,p}(\Dom,v_{\alpha})$), by Theorem \ref{t:H_infinite_SMR} and the transference result Proposition \ref{prop:transferenceSMR} we also obtain that $(A(\cdot,w_0),B(\cdot,w_0))\in \MRt$.
\end{proof}

In the next result, we say that $(u,\sigma)$ is maximal local solution to \eqref{eq:quasilinear_non_divergence} if $(u,\sigma)$ is a maximal local solution to \eqref{eq:QSEE} with $A,B,F,G$ and $X_0,X_1$ are as in \eqref{eq:non_divergence_domain_def_ABFG} and \eqref{eq:choice_X_0_X_1_quasilinear_domain_weights} respectively.

\begin{theorem}
\label{t:QND_domainweights}
Suppose Assumptions \ref{ass:QND_f_g_regular_trace_space} and \ref{ass:Qweight} hold, and that $f$ does not dependent on the $z$-variable.
Then for each
$$u_0\in L^0_{\F_0}(\O;\WD^{2-\frac{2}{p},p}(\Dom,v_{\alpha})),$$
there exists a maximal local solution $(u,\sigma)$ to \eqref{eq:quasilinear_non_divergence}. Moreover, there exists a localizing sequence $(\sigma_n)_{n\geq 1}$ such that a.s.\ for all $n\geq 1$
$$
u\in L^p(\I_{\sigma_n};\WD^{2,p}(\Dom,v_{\alpha}))\cap C(\overline{I}_{\sigma_n};\WD^{2-\frac{2}{p},p}(\Dom,v_{\alpha})).
$$
\end{theorem}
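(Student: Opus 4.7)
The plan is to recast \eqref{eq:quasilinear_non_divergence} as an instance of the abstract framework \eqref{eq:QSEE} and invoke Theorem \ref{t:local_Extended}. With $X_0=L^p(\Dom,v_{\alpha})$, $X_1=\WD^{2,p}(\Dom,v_{\alpha})$, $\kappa=0$, and $A,B,F,G$ defined as in \eqref{eq:non_divergence_domain_def_ABFG}, the trace space is $\Xap=\WD^{2-2/p,p}(\Dom,v_{\alpha})$, and by \eqref{eq:X12_equivalence_weights_revision_stage}, $X_{1/2}=W^{1,p}(\Dom,v_{\alpha})$ (without Dirichlet trace, since $1\le (1+\alpha)/p$). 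I would take the trivial splitting $F_L=F_c=0$, $F_{\Tr}=F$, and $G_L=G_c=0$, $G_{\Tr}=G$, so that only the conditions on $F_{\Tr}$ and $G_{\Tr}$ in \ref{HFcritical_weak}--\ref{HGcritical_weak} and the structural condition \ref{HAmeasur} need checking, together with the stochastic maximal regularity hypothesis \eqref{eq:stochastic_maximal_regularity_assumption_local_extended}.

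The first step is to exploit the key embedding $\Xap\hookrightarrow C^{\eta}(\Dom)$ of Lemma \ref{lem:maxregweightsspace}\eqref{it:embedding_weighted_spaces}, combined with the interpolation inequality $\Xap\hookrightarrow X_{1/2}=W^{1,p}(\Dom,v_{\alpha})$, which follows since $1-1/p>1/2$ for $p>d+2>2$. Because Assumption \ref{ass:Qweight} forces $b_{jn}$ to be independent of $y$, the operator $B$ does not depend on $u$ at all, so the measurability and Lipschitz bounds in \ref{HAmeasur} for $B$ are trivial; for $A$, since $a_{ij}$ depends only on $(t,\omega,x,y)$, the continuity condition \ref{ass:QND}\eqref{it:QND_continuity} together with $\|u\|_{L^\infty}\lesssim \|u\|_{\Xap}$ yields the required bounds $\|A(t,u_1)v-A(t,u_2)v\|_{X_0}\le L_r\|u_1-u_2\|_{\Xap}\|v\|_{X_1}$ pointwise in $\omega$.

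Next, for \ref{HFcritical_weak} the hypothesis that $f$ does not depend on the gradient variable reduces $F(t,u)=f(t,u)$ to a Nemytskii operator on $\Xap$, and Assumption \ref{ass:QND_f_g_regular_trace_space} combined with the $L^\infty$-embedding gives $\|F(u_1)-F(u_2)\|_{L^p(v_{\alpha})}\le C_n\|u_1-u_2\|_{\Xap}$ (using $v_{\alpha}(\Dom)<\infty$). For \ref{HGcritical_weak} one uses the identification $\gamma(\ell^2,L^p(\Dom,v_{\alpha}))=L^p(\Dom,v_{\alpha};\ell^2)$ (and similarly with $W^{1,p}$) to reduce to estimating $G$ in $W^{1,p}(\Dom,v_{\alpha};\ell^2)$; the chain rule produces the term $\nabla_y g(u)\nabla u$, which is controlled via
\[
\|\nabla_y g(u_1)\nabla u_1-\nabla_y g(u_2)\nabla u_2\|_{L^p(v_{\alpha};\ell^2)}\lesssim_n \|u_1-u_2\|_{L^\infty}\|u_1\|_{W^{1,p}(v_{\alpha})}+\|u_1-u_2\|_{W^{1,p}(v_{\alpha})},
\]
and hence is $\lesssim_n\|u_1-u_2\|_{\Xap}$ by the two embeddings above.

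Finally, for the stochastic maximal regularity requirement, given any $w_0\in L^\infty_{\F_0}(\Omega;\Xap)$ (so in particular $w_0\in L^\infty(\Omega;C^\eta(\Dom))$), Lemma \ref{lem:maxregweightsspace}\eqref{it:stochastic_maximal_L_p_regularity_weighted} gives $(A(\cdot,w_0),B(\cdot,w_0))\in\MRt$, which is exactly \eqref{eq:stochastic_maximal_regularity_assumption_local_extended} for the approximating sequence $(u_{0,n})$ built from any $u_0\in L^0_{\F_0}(\Omega;\Xap)$ via the cut-off \eqref{eq:truncation_operator}. With all hypotheses of Theorem \ref{t:local_Extended} verified, we obtain the maximal local solution with the claimed regularity. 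I expect the main technical obstacle to be verifying the Lipschitz estimate for $G$ in the weighted setting with only $C^\eta$-regularity of $\Xap$ (rather than $C^{1+\eta}$), since the chain rule forces us to split into an $L^\infty$-part and a $W^{1,p}$-part and ensure both bounds are uniform on balls of $\Xap$; once the embedding $\Xap\hookrightarrow L^\infty\cap W^{1,p}(\Dom,v_{\alpha})$ is established, everything else follows the pattern of Theorems \ref{t:QND_general_R_Tor} and \ref{t:QND_domain}.
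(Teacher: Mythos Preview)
Your proposal is correct and follows essentially the same route as the paper. The paper's proof is a one-paragraph sketch that invokes Lemma \ref{lem:maxregweightsspace} and points back to Theorem \ref{t:QND_domain}, singling out exactly the step you identified as the main issue: checking \ref{HGcritical_weak} for $G_{\Tr}=G$ via the chain-rule estimate \eqref{eq:estimate_G_gradient_nonlinearities_revision_stage}, which relies on the embedding $\Xap\hookrightarrow W^{1,p}(\Dom,v_{\alpha})\cap C^{\eta}(\Dom)$ obtained from Lemma \ref{lem:maxregweightsspace}\eqref{it:embedding_weighted_spaces} together with $\Xap\hookrightarrow X_{1/2}$ (since $1-\tfrac{1}{p}>\tfrac{1}{2}$).
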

Recall that the space $\WD^{2-\frac{2}{p},p}(\Dom,v_{\alpha})$ is defined as in \eqref{eq:Wspspace} and does not coincide with a weighted Besov space if $\alpha\geq p-1$.
\begin{proof}
By Lemma \ref{lem:maxregweightsspace} and the fact that $\Dom$ is bounded, one can argue in the same way as in Theorem \ref{t:QND_domain}. We remark that \ref{HGcritical_weak} is satisfied by setting $G_c=G_L=0$ and $G_{\Tr}=G$. To see this one can argue as in \eqref{eq:estimate_G_gradient_nonlinearities_revision_stage} since $\Xap\hookrightarrow W^{1,p}(\Dom,v_{\alpha})\cap C^{\eta}(\Dom)$ for some $\eta>0$. The latter embedding follows from Lemma \ref{lem:maxregweightsspace}\eqref{it:embedding_weighted_spaces}, \eqref{eq:X12_equivalence_weights_revision_stage} and $\Xap \hookrightarrow X_{1/2}$ due to $1-2\frac{1+\a}{p}>\frac{1}{2}$.
\end{proof}

\begin{remark}\
\begin{enumerate}[\rm(1)]
\item It would be interesting to extend the above to $\kappa\neq 0$ and $p\neq q$. However, at the moment almost no weighted theory is available in the case $a_{ij}$ depend on $(t,\omega)$. Except in the case $\mathcal{A} = -\Delta$, one has a bounded $H^\infty$-calculus on $L^q(\Dom,v_{\alpha})$ by \cite{LV18}, and thus Theorem \ref{t:H_infinite_SMR} implies stochastic maximal $L^p$-regularity in the full range. The latter can very likely be extended to elliptic second order operators in non-divergence form with smooth $x$-dependent coefficients by standard arguments. This would make it possible to do a variant of Theorem \ref{t:QND_domainweights} with general $(p,q,\kappa)$ as long as the coefficients $a_{ij}$ are independent of time.

\item In \cite{KimKimquasi2} a quasilinear SPDE is considered in weighted spaces as well. However, the results seem not comparable. For instance, they consider operators in divergence form and they do not allow a gradient type noise term.
\end{enumerate}
\end{remark}

\subsection{Quasilinear SPDEs in divergence form on domains}
\label{ss:quasilinear_divergence}
Unlike in the previous sections we will consider an example where there is no time-dependence in the operator $A$ and $B=0$. In this way we can obtain a full $L^p(L^q)$-theory. We study the following differential problem for the unknown $u:\I_T\times \O\times \Dom\to \R$:
\begin{equation}
\label{eq:second_quasilinear_divergence}
\begin{cases}
du -\div(a(u)\nabla u) dt= (\div(f_1(\cdot, u,\nabla u)) +f_2(\cdot, u,\nabla u))dt \\  \qquad \qquad \qquad \qquad \qquad \qquad + \sum_{n\geq 1}g_n(\cdot, u,\nabla u) dw_t^{n}, & \text{on }\Dom,\\
u=0 ,\qquad &\text{on }\partial\Dom;\\
u(0)=u_0, & \text{on }\Dom.
\end{cases}
\end{equation}

The problem \eqref{eq:second_quasilinear_divergence} was already considered in \cite[Secion 5.5]{Hornung}. The aim of this section is to partially extend \cite[Theorem 5.6]{Hornung} and at the same time correct it (see the discussion in \cite[p.\ 66]{HornungDissertation} on this matter). Note that in \cite[Section 5.5]{Hornung} equations in divergence form have been considered with Neumann and/or mixed type boundary conditions. Our framework also allows this setting, but we will only consider Dirichlet conditions here. The interested reader can adapt the proofs below with the functional analytic set-up proposed \cite[Section 5.5]{Hornung} to correct \cite[Theorem 4.11]{Hornung} with different boundary conditions.

We study \eqref{eq:second_quasilinear_divergence} under the following assumption.

\begin{assumption}\
\label{ass:QE_divergence}
\begin{enumerate}[{\rm(1)}]
\item\label{it:QE_divergence_p_q_a} Let $q\in [2,\infty)$, $p\in (2,\infty)$ and $\a\in [0,\frac{p}{2}-1)$ be such that $1-\frac{2(1+\a)}{p}>\frac{d}{q}$.
\item\label{it:QE_divergence_domain_smoothness} $\Dom\subseteq \R^d$ is a bounded $C^1$-domain.
\item\label{it:QE_divergence_form_matrix_a}
The map $a:\O\times \Dom\times \R\to \R^{d\times d}$ is $\F_0\otimes \Borel(\Dom)\otimes \Borel(\R)$-measurable. Assume that $a(\cdot,0)\in L^{\infty}(\O\times\Dom)$ and for each $r>0$ there exists an increasing continuous function $K_r:\R_+\to \R_+$ such that $K_r(0)=0$ and for each $i,j\in \{1,\dots,d\}$, $x,x'\in \Dom$ and $y\in\B_{\R}(r)$,
\[|a(x,y)-a(x',y)|\leq K_r(|x-x'|).\]
Moreover, $a$ is locally Lipschitz w.r.t. $y\in\R$ uniformly in $(\om,x)$, i.e.\ for each $r>0$ there exists $C_r>0$ such that a.s.\ for all $x\in \Dom$ and $y,y'\in \B_{\R}(r)$ one has
$$
|a(x,y)-a(x,y' )|_{\R^{d\times d}}\leq C_r |y-y'|.
$$
Furthermore, $a$ is locally uniformly ellipticity, i.e.\ for each $r>0$ there exists $\epsilon_r>0$ such that a.s.\ for all $x\in \Dom$ and $y\in \B_{\R}(r)$ one has
$$
\sum_{i,j=1}^d \xi_i \xi_j a_{ij}(x,y)\geq \epsilon_r |\xi|^2.
$$
\item\label{it:QE_divergence_f_1_f_2_g}
Let $\varepsilon\geq 0$. The mappings $f_1:\I_T\times \O\times \Dom\times \R\times \R^d\to \R^d$, $f_2:\I_T\times \O\times \Dom\times \R\times \R^d\to \R$ and $g:=(g_n)_{n\geq 1}:\I_T\times \O\times \Dom\times \R\times \R^d\to \ell^2$ are $\Progress\otimes \Borel(\Dom)\otimes \Borel(\R^d)\otimes \Borel(\R)$-measurable. Assume that $f_1(\cdot,0,0)=0$, $f_2(\cdot,0)=g_n(\cdot,0)=0$ for all $n\geq 1$. Finally, we assume that for each $r>0$ there exists $C_r>0$ such that a.s.\ for all $x\in \Dom$, $y,y'\in \B(r)$ and $z,z'\in \R$
$$
\sum_{i=1}^2 |f_i(t,x,y,z)-f_i(t,x,y',z' )|+\|g(t,x,y,z)-g(t,x,y',z')\|_{\ell^2}\leq C_{r,\varepsilon} |y-y'|+\varepsilon |z-z'|.
$$
\end{enumerate}
\end{assumption}

Typical examples of $f_1,f_2,g$ are
\begin{align*}
  f_1(x,u,\nabla u) &=\varepsilon \nabla u, & f_1(x,u)&=(\wt{f}_i(x)|u|^{h-1}u)_{i=1}^d,
\\ f_2(t,u)&=|u|^{m-1}u, & (g_n(x,u))_{n\geq 1}&=(\wt{g}_n(x)|u|^{r-1}u)_{n\geq 1}.
\end{align*}
where $h,m,r>1$, $\varepsilon>0$, $(\wt{f}_i)_{i=1}^d\in L^{\infty}(\O\times\Dom;\R^d)$ and $(\wt{g}_n)_{n\geq 1}\in L^{\infty}(\O\times \Dom;\ell^2)$.

Let us briefly recall the function spaces which will be needed below. Let $s\in (-1,1)$ and $q,p\in (1,\infty)$,  we set
\begin{equation}
\label{eq:def_X_0_X_1_quasilinear_divergence_form}
\begin{aligned}
W^{1,q}_0(\Dom)&=\{u\in W^{1,q}(\Dom)\,:\,u|_{\partial\Dom}=0\},
\\ W^{-1,q}(\Dom)& =(W^{1,q'}_0(\Dom))^*
\\ \BD^{s}_{q,p}(\Dom)&=(W^{-1,q}(\Dom),W^{1,q}_0(\Dom))_{\frac{s+1}{2},p}.
\end{aligned}
\end{equation}
For further properties we refer to Example \ref{ex:extrapolated_Laplace_dirichlet} and the references therein.

To recast the problem \eqref{eq:second_quasilinear_divergence} in the form \eqref{eq:QSEE} let us set
$ X_0:=W^{-1,q}(\Dom)$, $X_1:=W^{1,q}_0(\Dom)$, and for $u\in C(\overline{\Dom})$ and $v\in X_1$
\begin{align*}
A(t,u)v&=-\div(a(u)\nabla v), &  B(t,u)v &=0,
\\ F(t,u)&=\div(f_1(t,u,\nabla u))+f_2(t,u,\nabla u), & G(t,u)&=(g_n(t,u,\nabla u))_{n\geq 1}.
\end{align*}
Here the divergence operator is defined as in \eqref{eq:weak_divergence_operator}, i.e.\ for $u\in C(\overline{\Dom})$ and $v\in X_1$,
\begin{equation}
\label{eq:variational_divergence_operator}
\l \phi, A(u)v\r=\int_{\Dom}(a(u)\cdot \nabla v)\cdot \nabla \phi\,dx, \qquad \phi\in W^{1,q'}(\Dom).
\end{equation}
The same applies to $F(t,u)$. As usual we say that $(u,\sigma)$ is a maximal local solution of \eqref{eq:second_quasilinear_divergence} if $(u,\sigma)$ is a maximal local solution of \eqref{eq:QSEE} with the above choice of $A,B,F,G$ and $H=\ell^2$.

Before stating the main result of this subsection, let us note that a maximal local solution to \eqref{eq:second_quasilinear_divergence} verifies the natural weak formulation of \eqref{eq:second_quasilinear_divergence}: a.s.\ for all $t\in [0,\sigma)$  and all $\phi\in C^1_{c}(\Dom)$,
\begin{align*}
\int_{\Dom} (u(t)-u_0)\phi \,dx&+\int_0^t\int_{\Dom}(a(u)\cdot \nabla u)\cdot\nabla \phi\, dx\,ds
= -\int_0^t \int_{\Dom} f_1(u,\nabla u)\cdot \nabla \phi \,dx\,ds\\
&+\int_0^t \int_{\Dom} f_2(u,\nabla u) \phi \,dx\,ds 
+\sum_{n\geq 1}\int_0^t\int_{\Dom} g_n(u,\nabla u) \phi \,dx\,dw_s^n .
\end{align*}
To see this, use \eqref{eq:identity_sol} and note that $\phi\in C^1_c(\Dom) \subseteq (W^{-1,q}(\Dom))^*=W^{1,q'}_0(\Dom)$.

\begin{theorem}
\label{t:divergence_quasi_local}
Suppose Assumption \ref{ass:QE_divergence} holds. Then for each $N\geq 1$ there exists $\bar{\varepsilon}_N>0$ such that if $\varepsilon\in (0,\bar{\varepsilon}_N)$ and
\[u_0\in L^{\infty}_{\F_0}(\O;\BD^{1-\frac{2(1+\a)}{p}}_{q,p}(\Dom))\]
has norm $\leq N$, then
there exists a maximal local solution $(u,\sigma)$ to \eqref{eq:second_quasilinear_divergence}. Moreover, there exists a localizing sequence $(\sigma_n)_{n\geq 1}$ such that for all $n\geq 1$ and a.s.
$$
u\in L^p(\I_{\sigma_n},w_{\a};W^{1,q}_0(\Dom))\cap C(\overline{I}_{\sigma_n};\BD^{1-\frac{2(1+\a)}{p}}_{q,p}(\Dom))\cap C((0,\sigma_n];\BD^{1-\frac{2}{p}}_{q,p}(\Dom)).
$$
\end{theorem}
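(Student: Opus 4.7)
The plan is to apply Theorem~\ref{t:local} to the abstract reformulation after truncating the nonlinearities to convert the only-local $y$-Lipschitz estimates into global ones. With $X_0 = W^{-1,q}(\Dom)$ and $X_1 = W^{1,q}_0(\Dom)$ we have UMD spaces of type~$2$, $X_{1/2} = L^q(\Dom)$, $\Xap = \BD^{1-2(1+\a)/p}_{q,p}(\Dom)$, and $\g(\ell^2, X_{1/2}) = L^q(\Dom;\ell^2)$ by \eqref{eq:gammaidentity}, so Assumption~\ref{ass:Xtr} holds. The condition $1 - 2(1+\a)/p > d/q$ in Assumption~\ref{ass:QE_divergence}\eqref{it:QE_divergence_p_q_a} together with Sobolev embedding yields $\Xap \hookrightarrow C^{\eta}(\overline{\Dom})$ for some $\eta > 0$, so any $u \in \Xap$ is H\"older continuous with constant controlled by $\|u\|_{\Xap}$. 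Taking the SMR input $w_0 = u_0$, which satisfies $\|u_0\|_{L^\infty(\O;\Xap)} \leq N$, the coefficients $a(u_0)$ are $(t)$-independent and H\"older in $x$ uniformly in $\omega$. By the classical $L^q$-theory for divergence-form operators with H\"older coefficients on the $C^1$-domain $\Dom$ with Dirichlet conditions (Example~\ref{ex:Hinfty}(4); see \cite{DDHPV}), combined with the interpolation--extrapolation construction of Example~\ref{ex:extrapolated_Laplace_dirichlet}, $A(u_0) = -\div(a(u_0) \nabla \cdot)$ admits a bounded $H^\infty$-calculus of angle $<\pi/2$ on $X_0 = W^{-1,q}(\Dom)$ with domain $X_1$ and constants depending only on $\|u_0\|_{C^\eta}$, hence only on $N$. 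Theorem~\ref{t:H_infinite_SMR} and Remark~\ref{r:time_transference}\eqref{it:uniformOmegaHinfty} then give $(A(\cdot, u_0), 0) \in \MRta$ with SMR constants $K^{\det,\delta}, K^{\stoc,\delta} \leq K_N$.

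Next I would truncate the nonlinearities. Using $\Xap \hookrightarrow L^\infty$, choose $M = M_N$ so that $\{\|u\|_{\Xap} \leq N+2\} \subseteq \{\|u\|_{L^\infty} \leq M\}$, and pick $\chi \in C^\infty_c(\R)$ with $\chi \equiv 1$ on $[-M,M]$, $\supp \chi \subseteq [-M-1, M+1]$. Set $\wh f_i := \chi(y) f_i$ and $\wh g_n := \chi(y) g_n$: these are globally $C_{M+1,\varepsilon}$-Lipschitz in $y$ and globally $\varepsilon$-Lipschitz in $z$, and they coincide with $f_i, g_n$ on $\{\|u\|_{\Xap} \leq N+2\}$. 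Decompose the resulting nonlinearities with $\wh F_c = \wh G_c = 0$ and
\begin{align*}
\wh F_L(t,u) &= \div[\wh f_1(t,u,\nabla u) - \wh f_1(t,u,0)] + [\wh f_2(t,u,\nabla u) - \wh f_2(t,u,0)],\\
\wh F_{\Tr}(t,u) &= \div(\wh f_1(t,u,0)) + \wh f_2(t,u,0),\\
\wh G_L(t,u) &= (\wh g_n(t,u,\nabla u) - \wh g_n(t,u,0))_{n\geq 1}, \quad \wh G_{\Tr}(t,u) = (\wh g_n(t,u,0))_{n\geq 1}.
\end{align*}
Continuity of $\div\colon L^q(\Dom) \to W^{-1,q}(\Dom)$, the embedding $L^q \hookrightarrow W^{-1,q}$ (valid on bounded domains), the identification $\g(\ell^2, L^q) = L^q(\ell^2)$, and the Lipschitz bounds on the differences $h_i(y,z) = \wh f_i(\cdot,y,z) - \wh f_i(\cdot,y,0)$ (and analogously for $\wh g$) produce
\[
\|\wh F_L(u) - \wh F_L(v)\|_{X_0} + \|\wh G_L(u) - \wh G_L(v)\|_{\g(\ell^2, X_{1/2})} \leq C\varepsilon \|u-v\|_{X_1} + \tilde C_{N,\varepsilon} \|u-v\|_{X_{1/2}},
\]
and a Young-type interpolation $\|w\|_{X_{1/2}} \leq \theta \|w\|_{X_1} + C_\theta \|w\|_{X_0}$ with $\theta$ proportional to $\varepsilon/\tilde C_{N,\varepsilon}$ converts this into \ref{HFcritical}\eqref{it:F_L}--\ref{HGcritical}\eqref{it:G_L} with $L_F, L_G \leq C' \varepsilon$ (and a possibly large $\tilde L_F, \tilde L_G$). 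The trace parts $\wh F_{\Tr}, \wh G_{\Tr}$ depend only on $u \in \Xap \hookrightarrow L^\infty$ and are globally Lipschitz on $\Xap$.

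Choosing $\bar\varepsilon_N$ so small that $C' \bar\varepsilon_N K_N < 1$ in the quantitative smallness criterion of Remark~\ref{r:smallness}, Theorem~\ref{t:local} applied to the truncated problem yields a maximal $L^p_{\a}$-solution $(u, \hat\sigma)$ with the regularity of Theorem~\ref{t:local}\eqref{it:regularity_data_Linfty}, in particular $u \in C(\overline{\I}_{\hat\sigma_n}; \Xap)$ a.s.\ for a localizing sequence. Since $\|u(0)\|_{\Xap} \leq N$, the stopping time $\sigma := \inf\{t \in [0,\hat\sigma) : \|u(t)\|_{\Xap} \geq N+1\} \wedge \hat\sigma$ is strictly positive a.s., and on $\ll 0, \sigma \rr$ the cutoff $\chi$ equals $1$ by the choice of $M$, so $u|_{\ll 0, \sigma \rr}$ solves the original \eqref{eq:second_quasilinear_divergence}. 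A standard maximality argument (as in Step~5b of the proof of Theorem~\ref{t:local}) then upgrades this to the asserted maximal local solution, and the claimed path regularity is inherited from Theorem~\ref{t:local}\eqref{it:regularity_data_Linfty}. The main obstacle is the verification of the bounded $H^\infty$-calculus for $-\div(a(u_0)\nabla \cdot)$ on the negative-order space $W^{-1,q}(\Dom)$ uniformly in $\omega$: the $L^q$-theory for H\"older coefficients on $C^1$-domains is classical, but transferring it to the $W^{-1,q}$-scale via interpolation--extrapolation while keeping constants controlled solely by $\|u_0\|_{C^\eta}$ requires care, and this is where the bounded-$C^1$ hypothesis on $\Dom$ and the uniform $\|u_0\|_{L^\infty(\O;\Xap)} \leq N$ bound enter decisively.
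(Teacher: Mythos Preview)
Your overall strategy is sound and leads to the same conclusion, but it diverges from the paper's proof in two places, and one of them hides a genuine gap.

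\textbf{Comparison.} The paper does not truncate externally. Instead it sets $F_L=F$, $G_L=G$ and invokes Theorem~\ref{t:local_Extended} with hypothesis $\Hipprime$: the estimate \eqref{eq:QE_divergence_estimate_F_G} already fits \ref{HFcritical_weak}--\ref{HGcritical_weak} after the same Young/interpolation step you use to push the $\|u-v\|_{\Xap}$ (resp.\ $\|u-v\|_{X_{1/2}}$) term into an $\varepsilon\|u-v\|_{X_1}+\tilde L\|u-v\|_{X_0}$ form, and Remark~\ref{r:smallness_n} then gives the smallness criterion. Your route---cut off in $y$, apply Theorem~\ref{t:local}, then undo the cutoff via a stopping time---is equivalent and works, but it reproduces by hand the truncation $\Phi_n$ already built into the proof of Theorem~\ref{t:local_Extended} (Lemma~\ref{l:truncation_FG_L}). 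The paper's path is shorter; yours is more self-contained.

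\textbf{The gap.} Your verification of the $H^\infty$-calculus for $A(u_0)=-\div(a(u_0)\nabla\cdot)$ on $X_0=W^{-1,q}(\Dom)$ via ``$L^q$-theory (Example~\ref{ex:Hinfty}(4), \cite{DDHPV}) plus interpolation--extrapolation'' does not go through as stated. The coefficient $a(u_0)$ is only $C^{\eta}$ with $\eta$ possibly small; for a divergence-form operator this is not enough to identify the $L^q$-domain with $W^{2,q}\cap W^{1,q}_0$, so the extrapolated space $X_{-1,A}$ need not be $W^{-1,q}$. (Indeed, writing $-\div(a\nabla u)=-a\Delta u-\nabla a\cdot\nabla u$ fails when $a\notin W^{1,\infty}$.) The paper bypasses this by citing results that give the $H^{\infty}$-calculus \emph{directly} on $W^{-1,q}(\Dom)$ for divergence-form operators with merely bounded measurable (hence H\"older) coefficients on $C^1$-domains, namely \cite[Remark~4.3(ii) and Theorem~11.5]{DivH} and \cite[Remark~2.4(3)]{EHDT}, with constants depending only on ellipticity, $\|a(u_0)\|_{L^\infty}$, and the domain---hence only on $N$. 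Replacing your extrapolation argument by this reference closes the gap; the rest of your proof then stands.
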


\begin{proof}
By Assumption \ref{ass:QE_divergence}\eqref{it:QE_divergence_p_q_a}, \eqref{eq:def_X_0_X_1_quasilinear_divergence_form}, and Sobolev embeddings one has
\begin{equation}
\label{eq:embedding_divergence_form_equations}
\Xap=\BD^{1-\frac{2(1+\a)}{p}}_{q,p}(\Dom)\hookrightarrow B^{1-\frac{2(1+\a)}{p}}_{q,p}(\Dom) \hookrightarrow C^{\eta}(\Dom)\hookrightarrow  L^{\infty}(\Dom);
\end{equation}
for some $\eta>0$. Therefore, $A(u)v:=-\div(a(u)\cdot\nabla v)$ for $u\in \Xap$ and $v\in X_1$ is well-defined. By \cite[Remark 4.3(ii) and Theorem 11.5]{DivH}, \cite[Remark 2.4(3)]{EHDT} and Assumption \ref{ass:QE_divergence} $A(u_0)$ has a bounded $H^\infty$-calculus of angle $<\pi/2$.
Therefore, by Theorem \ref{t:H_infinite_SMR} (see also Remark \ref{r:time_transference}\eqref{it:uniformOmegaHinfty}) we find that $A(u_0)\in \MRta$ and for each $\theta\in [0,1/2)$
\begin{equation}
\label{eq:QE_divergence_maximal_regularity_u_0}
\max\Big\{ K^{\deter,\theta}_{A(u_{0})},K^{\stoc,\theta}_{A(u_{0})}\Big\} \leq C_N,
\end{equation}
where $C_N$ depends only on $N>0$. To check \ref{HAmeasur} let us fix $n\geq 1$ and $u_1,u_2\in \Xap$ of norm $\leq n$. Then by \eqref{eq:embedding_divergence_form_equations} it follows that $\|u_1\|_{L^{\infty}(\Dom)},\|u_2\|_{L^{\infty}(\Dom)}\leq C_n=:R$, and for each $v\in X_1$
\begin{equation*}
\begin{aligned}
\|\div(a(u_1)\cdot\nabla v)-\div(a(u_2)\cdot\nabla v)\|_{W^{-1,q}(\Dom)}
&\lesssim \|(a(u_1)-a(u_2))\nabla v\|_{L^q(\Dom)}\\
&\leq C_R \|u_1-u_2\|_{L^{\infty}(\Dom)}\|v\|_{W^{1,q}(\Dom)}\\
&\lesssim_{R}\|u_1-u_2\|_{\Xap}\|v\|_{X_1};
\end{aligned}
\end{equation*}
where we used \eqref{eq:weak_divergence_operator} and Assumption \ref{ass:QE_divergence}\eqref{it:QE_divergence_form_matrix_a}.

Since $X_{1/2}=L^q(\Dom)$ by \eqref{eq:HD_complex_interpolation}, using the same argument as above combined with Assumption \ref{ass:QE_divergence}\eqref{it:QE_divergence_f_1_f_2_g} one obtains
\begin{equation}
\begin{aligned}
\label{eq:QE_divergence_estimate_F_G}
\|F(\cdot,u)-F(\cdot,v)\|_{X_0}&+\|G(\cdot,u)-G(\cdot,v)\|_{\g(\ell^2,X_{1/2})}\\
&\leq C_R \|u-v\|_{\Xap} +C\varepsilon \|u-v\|_{X_1};
\end{aligned}
\end{equation}
where $C>0$ does not depend on $n\geq 1$. By setting $F_L=F$ and $G_L=G$ the assumptions \ref{HFcritical_weak}-\ref{HGcritical_weak} are verified. Moreover, the inequalities \eqref{eq:QE_divergence_maximal_regularity_u_0}, \eqref{eq:QE_divergence_estimate_F_G} and Remark \ref{r:smallness_n} show that the condition \eqref{eq:smallness_condition_nonlinearities_QSEE_extended} holds. The result now follows from Theorem \ref{t:local_Extended}.
\end{proof}

\begin{remark}\
\begin{enumerate}[{\rm(1)}]
\item The assumption $u_0\in L^{\infty}_{\F_0}(\O;\Xap)$ is automatically satisfied if $\Filtr$ is generated by $W_{\ell^2}$ (see Remark \ref{r:W_H_filtration}).
\item  In the companion paper \cite{AV19_QSEE_2} we will see that the instantaneous regularization effect in Theorem \ref{t:divergence_quasi_local} can be bootstrapped to prove further regularization of solutions to \eqref{eq:second_quasilinear_divergence}. In such situation weights in time play a basic role.
\end{enumerate}
\end{remark}

In the case $u_0\not\in L^{\infty}_{\F_0}(\O;\Xap)$, we do not have any control on the constants of maximal regularity of $A(u_{0,n})$ as $n$ grows see \cite[p.\ 66]{HornungDissertation} (here $(u_{0,n})_{n\geq 1}$ is as in \eqref{eq:truncation_operator}). However, by choosing $\varepsilon_n\downarrow 0$ appropriately, the arguments used in the proof of Theorem \ref{t:divergence_quasi_local} still lead to the following.

\begin{theorem}
\label{t:divergence_quasi_local_varepsilon_0}
Let the Assumption \ref{ass:QE_divergence} be satisfied for any $\varepsilon>0$.
Then for each
\[u_0\in L^{0}_{\F_0}(\O;\BD^{1-\frac{2(1+\a)}{p}}_{q,p}(\Dom))\]
there exists a maximal local solution $(u,\sigma)$ to \eqref{eq:second_quasilinear_divergence}. Moreover, there exists a localizing sequence $(\sigma_n)_{n\geq 1}$ such that for all $n\geq 1$ and a.s.
$$
u\in L^p(\I_{\sigma_n},w_{\a};W^{1,q}_0(\Dom))\cap C(\overline{I}_{\sigma_n};\BD^{1-\frac{2(1+\a)}{p}}_{q,p}(\Dom))\cap C((0,\sigma_n];\BD^{1-\frac{2}{p}}_{q,p}(\Dom)).
$$
\end{theorem}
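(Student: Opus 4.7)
The plan is to reduce to Theorem \ref{t:local_Extended} via truncation of the initial data, exploiting that the smallness hypothesis in Assumption \ref{ass:QE_divergence} is available for \emph{every} $\varepsilon>0$, so it can be tuned as $n$ grows. Set $X_0:=W^{-1,q}(\Dom)$, $X_1:=W^{1,q}_0(\Dom)$ and define $A,F,G$ as in the proof of Theorem \ref{t:divergence_quasi_local}. First, I would approximate $u_0\in L^0_{\F_0}(\O;\Xap)$ by the standard cut-off sequence
\[u_{0,n}:=\one_{\Gamma_n}u_0+\one_{\O\setminus\Gamma_n}\frac{nu_0}{\|u_0\|_{\Xap}},\qquad \Gamma_n:=\{\|u_0\|_{\Xap}\leq n\},\]
which satisfies \eqref{eq:approximating_sequence_initial_data} and $\|u_{0,n}\|_{L^\infty(\O;\Xap)}\leq n$.

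Next I would verify \eqref{eq:stochastic_maximal_regularity_assumption_local_extended}: for each $n\geq 1$, the $x$-dependent coefficient operator $A(u_{0,n})v=-\div(a(u_{0,n})\nabla v)$ has, by \cite{DivH,EHDT} together with Assumption \ref{ass:QE_divergence}\eqref{it:QE_divergence_form_matrix_a} and the $L^\infty$-bound on $u_{0,n}$, a bounded $H^\infty$-calculus of angle $<\pi/2$ on $X_0=W^{-1,q}(\Dom)$ with domain $X_1=W^{1,q}_0(\Dom)$, uniformly in $\omega$. Theorem \ref{t:H_infinite_SMR} (combined with Remark \ref{r:time_transference}\eqref{it:uniformOmegaHinfty}) then yields $A(\cdot,u_{0,n})\in\MRta$; denote the resulting maximal regularity constants by $K^{\deter,\delta}_{A(u_{0,n})}$ and $K^{\stoc,\delta}_{A(u_{0,n})}$. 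These are finite but may blow up as $n\to\infty$.

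For the nonlinearities I would repeat the local Lipschitz estimates of \eqref{eq:QE_divergence_estimate_F_G}: on the ball $\B_{\Xap}(n)$ the embedding $\Xap\hookrightarrow L^\infty(\Dom)$ together with Assumption \ref{ass:QE_divergence}\eqref{it:QE_divergence_f_1_f_2_g} gives
\[\|F(\cdot,u)-F(\cdot,v)\|_{X_0}+\|G(\cdot,u)-G(\cdot,v)\|_{\gamma(\ell^2,X_{1/2})}\leq C_n\|u-v\|_{\Xap}+C\varepsilon\|u-v\|_{X_1},\]
with $C>0$ independent of $n$. In the notation of \ref{HFcritical_weak}--\ref{HGcritical_weak} this corresponds to $F_{L,n}=F$, $G_{L,n}=G$ with Lipschitz constants $L_{F,n}=L_{G,n}\leq C\varepsilon$ and $\tilde L_{F,n},\tilde L_{G,n}\leq C_n$. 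Now apply Assumption \ref{ass:QE_divergence} with parameter $\varepsilon=\varepsilon_n$, where $\varepsilon_n>0$ is chosen so small that
\[3C_1 C\varepsilon_n\bigl(K^{\deter,\delta}_{A(u_{0,n})}+K^{\stoc,\delta}_{A(u_{0,n})}\bigr)<1,\]
which is exactly the smallness condition \eqref{eq:smallness_n_proof} (see Remark \ref{r:smallness_n}). This is possible precisely because Assumption \ref{ass:QE_divergence} is assumed to hold for every $\varepsilon>0$; here $C_1$ denotes the embedding constant from Lemma \ref{l:embeddings}. Finally, I would invoke Theorem \ref{t:local_Extended} to obtain the maximal local solution $(u,\sigma)$ with the stated regularity, and instantaneous regularization into $C((0,\sigma_n];\BD^{1-\frac{2}{p}}_{q,p}(\Dom))$ follows from item \eqref{it:regularity_data_L0} of that theorem together with Proposition \ref{prop:continuousTrace}\eqref{it:trace_without_weights_Xp}.

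The main technical obstacle is precisely the $n$-dependence of the maximal regularity constants $K^{\deter,\delta}_{A(u_{0,n})}$: as $\|u_{0,n}\|_{L^\infty}$ grows, the ellipticity and $H^\infty$-angle bounds for $A(u_{0,n})$ may deteriorate, and there is no uniform control. The strength of Theorem \ref{t:local_Extended} is that it only requires the smallness to hold at each fixed $n$, and the hypothesis that Assumption \ref{ass:QE_divergence} can be applied for \emph{arbitrarily small} $\varepsilon$ gives exactly the degree of freedom needed to compensate the growth of these constants level by level.
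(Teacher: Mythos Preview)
Your proposal is correct and follows exactly the approach the paper indicates: the paper does not give a separate proof but explains in the paragraph preceding the theorem that, since there is no control on the maximal regularity constants $K^{\deter,\delta}_{A(u_{0,n})}$ as $n$ grows, one compensates by choosing $\varepsilon_n\downarrow 0$ appropriately and then runs the argument of Theorem~\ref{t:divergence_quasi_local} through Theorem~\ref{t:local_Extended}. Your write-up spells out these details accurately, including the role of Remark~\ref{r:smallness_n} and the level-by-level tuning of $\varepsilon_n$; the only superfluous step is the appeal to Proposition~\ref{prop:continuousTrace}\eqref{it:trace_without_weights_Xp} for instantaneous regularization, since this is already contained in Theorem~\ref{t:local_Extended}\eqref{it:regularity_data_L0}.
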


\subsection{Stochastic porous media equations with positive initial data}
\label{ss:porous_media_equations}
In this subsection we investigate porous media type equations on the $d$-dimensional torus $\Tor^d$ with uniformly positive initial data. More precisely, we investigate the following problem for the unknown $u:\I_T\times \O\times \Tor^d\to \R$
\begin{equation}
\label{eq:porous_media}
\begin{cases}
du - \big(\Delta(|u|^{r-1} u)-\sum_{i,j=1}^d\Xi_{ij}(\cdot,u)\partial_{ij}^2 u \big)dt= f(u,\nabla u)dt\\ \qquad \qquad \qquad
  \qquad\; +\sum_{n\geq 1}\big(\sum_{j=1}^d b_{nj}(\cdot,u)\partial_j u+ g_n(u) \big) dw^n_t,&\text{ on }\Tor^d,\\
u(0)=u_0,& \text{ on } \Tor^d;
\end{cases}
\end{equation}
where $r\in [1,\infty)$, $u_0\geq c>0$ a.e.\ on $\Tor^d$ and
$
\Xi_{i,j}(\cdot,u)=\frac{1}{2}\sum_{n\geq 1}b_{jn}(\cdot,u)b_{jn}(\cdot,u)
$. The problem \eqref{eq:porous_media} in the case $r=1$ fits in the framework of Section \ref{s:semilinear_gradient}, in such a case the condition $u_0\geq c$ can be avoided. We will only consider the range $r\geq 3$ for technical reasons. The range $r\in (1, 3)$ is more sophisticated and requires other solution concepts  than to the one below. For physical motivations we refer to \cite{BDPR16}, \cite[Subsection 1.1]{FG19} and the references therein. To see the link with the works \cite{DG18,FG19}, let us note that at least formally (see \cite[Remark 2.1]{DG18})
$$
\sum_{n\geq 1}\sum_{j=1}^d b_{nj} \partial_j u\circ dw^n_t =
\sum_{n\geq 1}\sum_{j=1}^d b_{nj} \partial_j u\, dw^n_t +\sum_{i,j=1}^d\Big(\Xi_{ij}(\cdot,u)\partial_{ij}^2 u+\text{lower order terms}\Big) dt,
$$
where $\circ$ denotes the Stratonovich integration. We refer to Subsection \ref{sss:porous_media_discussion} for a comparison to the literature.

To study \eqref{eq:porous_media}, we exploit the fact that in Theorem \ref{t:local_Extended}, stochastic maximal $L^p$-regularity is required on $(A(u_{0,n}), B(u_{0,n}))$ for appropriate $A$ and $B$ (see \eqref{eq:stochastic_maximal_regularity_assumption_local_extended}). We mainly deal with the strong setting and we refer to Remark \ref{r:porous_weak} for the weak one. To begin, let us note that at least formally,
$$
\Delta(|u|^{r-1}u)=r|u|^{r-1} \Delta u +r (r-1) u |u|^{r-3}  |\nabla u|^2.
$$
Therefore, in the case $u\geq c>0$, the porous media operators acts like $\Delta$ plus a lower order term. For notational convenience, we set $\A_r(t,u)v:=-r|u|^{r-1} \Delta v $ and $ f_{r}(u,\nabla v):= -r (r-1) u |u|^{r-3}  | \nabla v|^2$. To recast \eqref{eq:porous_media} in the form \eqref{eq:QSEE}, we set $X_0=L^q(\Tor^d)$, $X_1=W^{2,q}(\Tor^d)$ and for $v\in X_1$, $u\in C(\Tor^d)\cap W^{1,q}(\Tor^d)$
\begin{align*}
A(t,u)v&=\A_r(t,u)v+\sum_{i,j=1}^d \Xi_{ij}(t,u)\partial_{ij}^2 v, &  B(t,u)v &=\Big(\sum_{j=1}^d b_{jn}(t,u)\partial_j v\Big)_{n\geq 1},
\\ F(t,v)&=f(t,v,\nabla v)- f_{r}(v,\nabla v), & G(t,v)&=(g_n(t,v))_{n\geq 1}.
\end{align*}
Here $f$ and $g_n$ are as in Assumption \ref{ass:QND_f_g_regular_trace_space}. The following is the main result of this subsection.

\begin{theorem}
\label{t:porous_media_local_general}
Let $r\geq 3$. Let $p\in (2,\infty)$ and $\a\in [0,\frac{p}{2}-1)$ be such that $p>2(1+\a)+d$. Assume that $b_{jn}$ and $f,g$ verifies Assumption \ref{ass:QND}\eqref{it:QND_a_b}-\eqref{it:QND_continuity} and Assumption \ref{ass:QND_f_g_regular_trace_space}, respectively. Then for each
$$u_0\in L^{0}_{\F_0}(\O;W^{2-2\frac{1+\a}{p},p}(\Tor^d)),\qquad
u_0\geq c>0\text{ a.e.\ on }\Tor^d\times \O,$$
there exists a maximal local solution $(u,\sigma)$ to \eqref{eq:porous_media}. Moreover, there exists a localizing sequence $(\sigma_n)_{n\geq 1}$ such that for all $n\geq 1$ and a.s.
$$
u\in L^p(\I_{\sigma_n},w_{\a};W^{2,q}(\Tor^d))\cap C(\overline{I}_{\sigma_n};W^{2-2\frac{1+\a}{p},p}(\Tor^d))\cap C((0,\sigma_n];W^{2-\frac{2}{p},p}(\Tor^d)).
$$
\end{theorem}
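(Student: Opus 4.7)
The plan is to verify the hypotheses of Theorem~\ref{t:local_Extended} with the operators $A,B,F,G$ defined in the excerpt above, and with $X_0=L^p(\Tor^d)$, $X_1=W^{2,p}(\Tor^d)$. The condition $p>2(1+\kappa)+d$ ensures
$$
\Xap=W^{2-\frac{2(1+\kappa)}{p},p}(\Tor^d)\hookrightarrow C^{1+\epsilon}(\Tor^d)
$$
for some $\epsilon>0$, which is what makes all the quasilinear terms tractable. Using this embedding together with the assumption $r\geq 3$, the map $u\mapsto |u|^{r-1}$ is locally Lipschitz from $L^\infty(\Tor^d)$ to itself, so the coefficients $r|u|^{r-1}\delta_{ij}+\Xi_{ij}(\cdot,u)$ satisfy the local Lipschitz conditions of \ref{HAmeasur} exactly as in Step~1 of the proof of Theorem~\ref{t:QND_general_R_Tor}. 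Likewise the gradient-type nonlinearity $f_r(v,\nabla v)=-r(r-1)v|v|^{r-3}|\nabla v|^2$ is estimated via $\Xap\hookrightarrow C^{1+\epsilon}$: for $v_1,v_2\in X_1$ with $\|v_i\|_{\Xap}\leq n$ one obtains
$\|f_r(v_1,\nabla v_1)-f_r(v_2,\nabla v_2)\|_{L^p}\leq C_n\|v_1-v_2\|_{W^{1,p}}\leq C_n\|v_1-v_2\|_{\Xap}$,
and Assumption~\ref{ass:QND_f_g_regular_trace_space} treats the $f,g$-parts. Hence \ref{HFcritical_weak} and \ref{HGcritical_weak} hold with $F=F_{\Tr}$, $G=G_{\Tr}$ and $F_L=F_c=G_L=G_c=0$; in particular $L_{F,n}=L_{G,n}=0$ so the smallness \eqref{eq:smallness_condition_nonlinearities_QSEE_extended} is free.

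The main point is the choice of the approximating sequence $(u_{0,n})_{n\geq 1}$ and the verification of the SMR hypothesis \eqref{eq:stochastic_maximal_regularity_assumption_local_extended}. The naive truncation \eqref{eq:truncation_operator} destroys the pointwise lower bound $u_0\geq c$ on $\Omega\setminus\Gamma_n$, so I would instead take
\begin{equation*}
u_{0,n}:=\one_{\Gamma_n}u_0+\one_{\Omega\setminus\Gamma_n}\,c,\qquad \Gamma_n:=\{\|u_0\|_{\Xap}\leq n\},
\end{equation*}
where $c$ denotes the constant function on $\Tor^d$. This gives $u_{0,n}\in L^\infty_{\F_0}(\Omega;\Xap)$, $u_{0,n}=u_0$ on $\Gamma_n$, and crucially $u_{0,n}\geq c>0$ a.e. Consequently the parabolicity condition of Assumption~\ref{ass:QND}\eqref{it:QND_ellipticity} for $A(u_{0,n}),B(u_{0,n})$ reduces to
$$
\sum_{i,j}\xi_i\xi_j\bigl(a_{ij}(t,u_{0,n})-\Sigma_{ij}(t,u_{0,n})\bigr)=r|u_{0,n}|^{r-1}|\xi|^2\geq rc^{r-1}|\xi|^2,
$$
which is uniform in $n$. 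Since $u_{0,n}\in L^\infty_{\F_0}(\Omega;C^{1+\epsilon}(\Tor^d))$ and Assumption~\ref{ass:QND}\eqref{it:QND_a_b}-\eqref{it:QND_continuity} holds for the $b_{jn}$, \cite[Theorem~5.4]{VP18} (used exactly as in Step~2 of the proof of Theorem~\ref{t:QND_general_R_Tor}) yields $(A(u_{0,n}),B(u_{0,n}))\in\MRta$.

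With these ingredients Theorem~\ref{t:local_Extended} produces the maximal local solution $(u,\sigma)$, and parts \eqref{it:existence_extended_maximal}--\eqref{it:regularity_data_L0} of that theorem deliver the claimed regularity, including the instantaneous smoothing to $C((0,\sigma_n];W^{2-2/p,p}(\Tor^d))$. The main obstacle in the plan is exactly the design of the approximation $(u_{0,n})$: the SMR framework requires bounded initial data, but the porous-media coefficient $r|u|^{r-1}$ degenerates whenever the approximation drops to zero, so the reduction to the bounded-$L^\infty$ setting must be done in a way that transports the quantitative lower bound $u_0\geq c$ uniformly in $n$, which is why the standard truncation is replaced by the constant-valued choice above.
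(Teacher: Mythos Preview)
Your proposal is correct and follows essentially the same approach as the paper: the paper's proof likewise invokes Theorem~\ref{t:local_Extended}, uses the embedding $\Xap\hookrightarrow C^{1+\eta}(\Tor^d)$ together with $r\geq 3$ to verify \ref{HAmeasur} and the nonlinearity estimates as in Theorem~\ref{t:QND_general_R_Tor}, and makes exactly the same choice $u_{0,n}:=\one_{\Gamma_n}u_0+\one_{\Omega\setminus\Gamma_n}(c\,\one_{\Tor^d})$ so that the lower bound $u_{0,n}\geq c$ is preserved and \cite[Theorem~5.4]{VP18} applies. Your explicit remark that the standard truncation \eqref{eq:truncation_operator} fails to preserve positivity is precisely the point the paper highlights immediately after its proof.
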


\begin{proof}
The proof is similar to the one given for Theorem \ref{t:QND_general_R_Tor}. As in the proof of the latter theorem, by Sobolev embedding $\Xap=W^{2-2\frac{1+\a}{p},p}(\Tor^d)\hookrightarrow C^{1+\eta}(\Tor^d)$ for some $\eta>0$. Thus, using $r\geq 3$ the estimates on the nonlinearities can be performed as in Theorem \ref{t:QND_general_R_Tor}. The fact that \ref{HAmeasur} holds follows from standard computations.

To check the stochastic maximal regularity condition \eqref{eq:stochastic_maximal_regularity_assumption_local_extended},
for all $n\geq 1$ we set
\begin{equation}
\label{eq:choice_approximating_sequence_initial_data_porous_media_equations}
u_{0,n}:=\one_{\Gamma_n} u_0+ \one_{\O\setminus \Gamma_n} (c\one_{\Tor^d}),
\quad
\text{ where }
\quad
\Gamma_n:=\{\|u_0\|_{\Xap}\leq n\}.
\end{equation}
Thus, $u_{0,n}\in L^{\infty}(\O;C^{1,\eta}(\Tor^d))$ verifies $u_{0,n}\geq c$.
Reasoning as in the proof of Theorem \ref{t:QND_general_R_Tor}, $(A(\cdot,u_{0,n}),B(\cdot,u_{0,n}))\in \MRta$ by \cite[Theorem 5.4]{VP18} and $u_{0,n}\geq c_1$. We remark that the ellipticity condition in \cite[Assumption 5.2(2)]{VP18} is satisfied since $|u_{0,n}|^{r-1}\geq c^{r-1}>0$ a.s.\ and a.e.\ on $\Tor^d$.
\end{proof}

In the above proof we used the choice \eqref{eq:choice_approximating_sequence_initial_data_porous_media_equations} instead of \eqref{eq:truncation_operator}. Indeed, if $u_{0,n}$ is as in \eqref{eq:truncation_operator}, then $u_{0,n}$ are not uniformly bounded from below in general.

The proof of Theorem \ref{t:porous_media_local_general} shows that Theorems \ref{t:QND_gradient_not_critical}-\ref{t:QND_gradient_critical} extends to \eqref{eq:porous_media}. To avoid repetitions, we only state the extension of Theorem \ref{t:QND_gradient_critical} to \eqref{eq:porous_media}.
\begin{theorem}
\label{t:porous_media_critical}
Let $r \geq 3$. Assume that $b_{jn}$ and $f,g$ verifies Assumption \ref{ass:QND}\eqref{it:QND_a_b}-\eqref{it:QND_continuity} and Assumption \ref{ass:SND_reaction_diffusion_gradient}, respectively. Moreover, assume that $m>1+\frac{2}{d}$ and $b_{jn}(t,\om,x,y)$ does not depend on the $y$ variable.
Suppose that $p\in (2,\infty)$ verifies \eqref{eq:range_p_critical_quasilinear_gradient_nonlinearities}. Then for any
$$u_0\in L^0_{\F_0}(\O;W^{\frac{d}{p}+\frac{m-2}{m-1},p}(\Tor^d)), \ \ \text{with} \ \ u_0\geq c>0
\,\text{ a.e.\ on }\,\Tor^d\times\Omega,$$
there exists a maximal local solution $(u,\sigma)$ to \eqref{eq:quasilinear_non_divergence}. Moreover, there exists a localizing sequence $(\sigma_n)_{n\geq 1}$ such that a.s.\ for all $n\geq 1$
$$
u\in L^p(\I_{\sigma_n},w_{\a_{\crit}};W^{2,p}(\Tor^d))\cap C(\overline{I}_{\sigma_n};W^{\frac{d}{p}+\frac{m-2}{m-1},p}(\Tor^d))
\cap C((0,\sigma_n];W^{2-\frac{2}{p},p}(\Tor^d)),
$$
where $\a_{\crit}:=\frac{pm}{2(m-1)}-\frac{d}{2}-1$.
\end{theorem}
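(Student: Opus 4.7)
The plan is to combine the arguments of Theorems \ref{t:porous_media_local_general} and \ref{t:QND_gradient_critical}. First I would recast \eqref{eq:porous_media} in the abstract form \eqref{eq:QSEE} on $X_0:=L^p(\Tor^d)$, $X_1:=W^{2,p}(\Tor^d)$, using the pointwise identity
$$
\Delta(|u|^{r-1}u)=r|u|^{r-1}\Delta u + r(r-1)\, u|u|^{r-3}|\nabla u|^2,
$$
which is licit since $r\geq 3$. Absorbing the first summand into the leading operator and the second into the semilinear nonlinearity, set
$$
A(t,u)v:=-r|u|^{r-1}\Delta v+\sum_{i,j=1}^d\Xi_{ij}(t,u)\partial^2_{ij}v,\qquad B(t,u)v:=\Big(\sum_{j=1}^d b_{jn}(t,u)\partial_j v\Big)_{n\geq 1},
$$
and $F(t,v):=f(t,v,\nabla v)+r(r-1)v|v|^{r-3}|\nabla v|^2$, $G(t,v):=(g_n(t,v))_{n\geq 1}$. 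With the critical choice $\a=\a_{\crit}$ and $p=q$ satisfying \eqref{eq:range_p_critical_quasilinear_gradient_nonlinearities}, the trace space is $\Xapcrit = W^{d/p+(m-2)/(m-1),p}(\Tor^d)$, which embeds into $C^{\eta}(\Tor^d)$ for some $\eta>0$ by Sobolev embedding.

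Next I would verify the hypotheses of Theorem \ref{t:local_Extended}. Condition \ref{HAmeasur} is routine from Assumption \ref{ass:QND}, the embedding $\Xap\hookrightarrow C^{\eta}$, and the $C^1$-regularity of $y\mapsto |y|^{r-1}$ on bounded sets (here $r\geq 3$ is needed to avoid singularities at the origin and to get a locally Lipschitz bound in $u\in\Xap$). For \ref{HFcritical_weak}, the $f(\cdot,u,\nabla u)$-contribution is treated exactly as in the proof of Theorem \ref{t:QND_application_gradient_control_growth_lipschitz_constant}, producing the critical index $\a_{\crit}$ via \eqref{eq:critical_spaces_gradient_SND}. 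The extra term $r(r-1)v|v|^{r-3}|\nabla v|^2$ has quadratic gradient growth; since $m>2$, its scaling exponent is strictly smaller than that of $f$, so it fits within the same $\rho_j,\varphi_j,\beta_j$ chosen for $f$ (adding a further term with exponent $2$ to the sum in \ref{HFcritical_weak} with slack in \eqref{eq:HypCritical}). Condition \ref{HGcritical_weak} follows as in \eqref{eq:estimate_G_gradient_nonlinearities_revision_stage}, using that $\Xap\hookrightarrow C^{\eta}\cap W^{1,p}$.

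The key point is the stochastic maximal regularity condition \eqref{eq:stochastic_maximal_regularity_assumption_local_extended}, and this is where the positivity of the initial data enters. Because truncation via \eqref{eq:truncation_operator} does not preserve positivity, I would instead use the variant \eqref{eq:choice_approximating_sequence_initial_data_porous_media_equations}, i.e.\ $u_{0,n}:=\one_{\Gamma_n}u_0+\one_{\O\setminus\Gamma_n}(c\one_{\Tor^d})$, which satisfies \eqref{eq:approximating_sequence_initial_data} \emph{and} $u_{0,n}\geq c>0$ a.s. Then $r|u_{0,n}|^{r-1}\geq rc^{r-1}$ and the diagonal part of $A(\cdot,u_{0,n})-\Sigma$ coincides with $r|u_{0,n}|^{r-1}I$, so the parabolicity condition of \cite[Assumption 5.2(2)]{VP18} holds uniformly in $\omega$. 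Since $u_{0,n}\in L^\infty_{\F_0}(\O;C^\eta(\Tor^d))$, \cite[Theorem 5.4]{VP18} yields $(A(\cdot,u_{0,n}),B(\cdot,u_{0,n}))\in\MRtaz$; combined with the bounded $H^\infty$-calculus of $-\Delta$ on $L^p(\Tor^d)$ (Example \ref{ex:Hinfty}) and the transference Proposition \ref{prop:transferenceSMR}, we upgrade to $(A(\cdot,u_{0,n}),B(\cdot,u_{0,n}))\in\MRta$. Theorem \ref{t:local_Extended} then produces the maximal local solution with the claimed regularity.

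The main obstacle is the stochastic maximal regularity step: unlike in Theorem \ref{t:QND_gradient_critical}, the leading coefficient $r|u_{0,n}|^{r-1}$ degenerates at the zero set of $u_{0,n}$, so uniform ellipticity can fail even if $u_0\geq c$ unless the approximating sequence is chosen carefully. Replacing the standard truncation \eqref{eq:truncation_operator} by \eqref{eq:choice_approximating_sequence_initial_data_porous_media_equations}, which keeps $u_{0,n}$ uniformly bounded below by $c$, is exactly what circumvents this difficulty and is the only place where the positivity assumption $u_0\geq c>0$ is used.
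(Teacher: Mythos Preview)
Your proposal is correct and follows essentially the same route as the paper. The paper's proof is slightly more concise on the handling of the extra term $r(r-1)v|v|^{r-3}|\nabla v|^2$: rather than adding it as a separate subcritical contribution in \ref{HFcritical_weak}, the paper observes directly that this term satisfies the Lipschitz bound
\[
|f_r(y,z)-f_r(y',z')|\leq C_R\big[(1+|z|^2+|z'|^2)|y-y'|+(1+|z|+|z'|)|z-z'|\big],
\]
so that the combined nonlinearity $f-f_r$ still verifies Assumption \ref{ass:SND_reaction_diffusion_gradient} with the \emph{same} $m>2$ (possibly a smaller $\eta$), and then invokes Theorem \ref{t:QND_gradient_critical} verbatim. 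Your approach of treating it as an extra $j$-term with strict inequality in \eqref{eq:HypCritical} is equivalent. Also, the transference step via Proposition \ref{prop:transferenceSMR} is not needed here: \cite[Theorem 5.4]{VP18} already yields $\MRta$ directly (as in the proof of Theorem \ref{t:QND_general_R_Tor}), so that part of your argument is harmless but superfluous.
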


\begin{proof}
Comparing the proof of Theorem \ref{t:porous_media_local_general} and Theorem \ref{t:QND_gradient_critical}, it remains to estimate $f_r$. To this end let us note that for each $R>0$, $y,y'\in \B_{\R}(R)$ and $z,z'\in \R^d$,
\begin{equation}
\label{eq:estimate_f_r_porous_media_equation}
|f_r(y,z)-f_r(y',z')|\leq C_R\big[(1+|z|^2+|z'|^2)|y-y'|+(1+|z|+|z'|)|z-z'|\big],
\end{equation}
for some $C_R>0$ independent of $y,y',z,z'$. Therefore, due to \eqref{eq:estimate_f_r_porous_media_equation}, if $f$ verifies Assumption \ref{ass:SND_reaction_diffusion_gradient} for $m>2$, then $f-f_r$ verifies Assumption \ref{ass:SND_reaction_diffusion_gradient} with the same $m$. Thus, reasoning as in the proof of Theorem \ref{t:QND_gradient_critical}, the conclusion follows.
\end{proof}

\begin{remark}
\label{r:porous_weak}
Equation \eqref{eq:porous_media} has a natural weak formulation. One can check that the arguments used in Theorems \ref{t:porous_media_local_general}-\ref{t:porous_media_critical} can be adapted to
prove local existence in the weak setting (see Subsection \ref{ss:quasilinear_divergence}). In such a case, $r\in (2,3)$ is also allowed.
\end{remark}

\subsubsection{Discussions}
\label{sss:porous_media_discussion}
Under some structural assumptions on the nonlinearities $b_{jn},f,g$, \eqref{eq:porous_media} (and its generalizations) has been extensively studied (see for instance \cite{DG18,FG19,GS17,GH18,DGG19} and the references therein). One of the first paper on the topic is \cite{GS17} where only $x$-independent $b_{nj}$ are considered. In the $x$-dependent case the situation is more complicated and one often needs the assumption $m\geq 2$, see \cite{GH18,FG19}.
In \cite{DG18,DGG19}, the authors allow the more complicated range $r\in (1,2)$ as well, in some cases they need to work with other type of solutions such as kinetic or entropy solutions.
Our results appear weaker than the ones in \cite{DG18}. For instance, the assumption $u_0\geq c$ is unnatural. However, this case was also considered in the deterministic setting, see e.g.\ \cite{RoiSch}. Moreover, our setting differs from the one in \cite{DG18}, and the main differences are:
\begin{itemize}
\item the functions spaces considered for the initial data are different;
\item the nonlinearity $f$ can be of arbitrary polynomial growth in $u$ and $|\nabla u|$;
\item less regularity is required for $b_{jn}$.
\end{itemize}

It seems to us that the theory developed here can be used to study \eqref{eq:porous_media} with general $u_0$, employing a standard approximation argument (see e.g.\ \cite[eq.\ (3.2)]{FG19}). Firstly, one replaces $\Delta(|u|^{r-1}u)$ by $\Delta(\varepsilon+|u|^{r-1}u)$ in \eqref{eq:porous_media}. With such modification, we can apply Theorem \ref{t:local_Extended} to \eqref{eq:porous_media}, obtaining a family of maximal local solutions $(u_{\varepsilon},\sigma_{\varepsilon})_{\varepsilon>0}$ to the modified equations. Secondly, one provides a-priori bound (uniform in $\varepsilon>0$) in $C^{\alpha}$-norm for $(u_{\varepsilon})_{\varepsilon>0}$ for some uniform $\alpha>0$. Thus, by the blow-up criteria in \cite{AV19_QSEE_2}, $\sigma_{\varepsilon}=T$ and one can study the behaviour of $u_{\varepsilon}$ as $\varepsilon\downarrow 0$. We remark that a-priori estimates for the $C^{\alpha}$-norm for the deterministic version of \eqref{eq:porous_media} are known, see the discussion in \cite[p.\ vii-viii]{DiB93}. However, we are not aware of any contribution on this topic for \eqref{eq:porous_media}.
Note that the arguments used for \eqref{eq:porous_media} seem to be applicable to other degenerate parabolic equations.

\subsection{Stochastic Burgers' equation with coloured noise}
\label{ss:Burgers_quasilinear}
Here, we consider a quasilinear version of the stochastic Burgers' equation on $\Tor$ with space-time coloured noise, which can be seen as the quasilinear analogue of \eqref{eq:Burger_white_noise}. However, for technical reasons, we cannot deal with white noise as in Subsection \ref{ss:Burgers_semilinear}.

More precisely, we consider the following problem for $u:\I_T\times \O\times \Tor\to \R$,
\begin{equation}
\label{eq:quasilinearwhitenoise}
\begin{cases}
\displaystyle du -\partial_x(a(\cdot,u)\partial_x u) dt=\big(\partial_x(f_1(\cdot, u)) +f_2(\cdot,u)\big)dt +
g(\cdot, u) d{w}_{t}^{c}, & \text{on } \Tor,\\
u(0)=u_0, & \text{on } \Tor;
\end{cases}
\end{equation}
here $w_t^c$ denotes a coloured space-time noise on $\Tor$. More precisely, for some $\delta>0$, we assume that $w_t^c$ induces an $H^{\delta,2}(\Tor)$-cylindrical Brownian motion in the sense of Definition \ref{def:Cylindrical_BM}.

The noise in \eqref{eq:quasilinearwhitenoise} is different than in Subsections \ref{ss:QND_R_torus}-\ref{ss:porous_media_equations}. The setting in \eqref{eq:quasilinearwhitenoise} is as in Subsection \ref{ss:Burgers_semilinear}, but with a coloured noise.

\begin{assumption}\label{ass:QNDwhitenoise}\
\begin{enumerate}[{\rm(1)}]
\item\label{it:QND_white_noise_q_p} $q\in [2,\infty)$, $p\in(2,\infty)$ and $\a\in [0,\frac{p}{2}-1)$ verifies $2\delta-2\frac{1+\a}{p}>\frac{1}{q}$.
\item\label{it:QND_white_noise_a} The map $a:\O\times \Tor\times \R\to \R$ is $\F_0\otimes \Borel(\Tor)\otimes \Borel(\R)$-measurable and it verifies the Assumption \ref{ass:QE_divergence}\eqref{it:QE_divergence_form_matrix_a} with $d=1$ and $\Dom$ replaced by $\Tor$.
\item\label{it:QND_white_noise_f_g} The maps $f_1,f_2,g:\I_T\times \O\times \Tor\times \R\to \R$ are $\Progress\otimes \Borel(\Tor)\otimes \Borel(\R)$-measurable. Assume that $f_1(\cdot,0),f_2(\cdot,0)\in L^{\infty}(\I_T\times \O;L^q(\Tor))$ and $g(\cdot,0) \in L^{\infty}(\I_T\times \O\times\Tor)$. Moreover, for each $r>0$ there exists $C_r>0$ such that for all $t\in \I_T$, $x\in \Tor$ and $y,y'\in \B_{R}(r)$,
\begin{align*}
\sum_{i\in \{1,2\}}|f_i(t,x,y)-f_i(t,x,y')|+|g(t,x,y)-g(t,x,y')|& \leq  C_r|y-y'|.
\end{align*}
\end{enumerate}
\end{assumption}

\begin{remark}\
\begin{itemize}
\item For any $\delta>0$, Assumption \ref{ass:QNDwhitenoise}\eqref{it:QND_white_noise_q_p} is satisfied for $p,q$ large and $\a$ small.
\item Assumption \ref{ass:QNDwhitenoise}\eqref{it:QND_white_noise_f_g} includes the Burgers' type nonlinearity $f(u) = -u^2$.
\end{itemize}
\end{remark}
In what follows, we only consider the case $\delta\in (0,\frac{1}{2})$, the other cases being simpler. To begin, note that by Assumption \ref{ass:QNDwhitenoise}\eqref{it:QND_white_noise_q_p}, there exists $s>\frac{1}{2}$ such that $1-2s+2\delta-2\frac{(1+\a)}{p}>\frac{1}{q}$. With such a choice, we rewrite \eqref{eq:quasilinearwhitenoise} in the form \eqref{eq:QSEE}. To this end, set $H=H^{2\delta,2}(\Tor)$, $X_0:=H^{-1-s+\delta,q}(\Tor)$ and $X_1=H^{1-s+\delta,q}(\Tor)$. Then by \eqref{eq:H_complex_interpolation},
\begin{equation}
\label{eq:QND_white_noise_Xap}
X_{\frac12} = H^{-s+\delta,q}(\Tor) \ \ \ \text{and} \ \ \ \Xap = B^{1-s+\delta-\frac{2(1+\a)}{p}}_{q,p}(\Tor).
\end{equation}
As in Subsection \ref{ss:QND_Dirichlet}, by Sobolev embedding and Assumption \ref{ass:QNDwhitenoise}\eqref{it:QND_white_noise_q_p}, one has
\begin{equation}
\label{eq:Burges_QND_embeddings_Holder_continuous_function}
B^{1-s+\delta-\frac{2(1+\a)}{p}}_{q,p}(\Tor)\hookrightarrow C^{\eta}(\Tor),\qquad
\eta:=1-s+\delta-2\frac{1+\a}{p}-\frac{1}{q}>s-\delta.
\end{equation}
For $v\in \Xap$, $u\in X_1$ let
\begin{align*}
A(v) u&=- \partial_x (a(v)\partial_x u), &  B(t)u &=0,
\\ F(t,u)&=\partial_x (f(t,u)), & G(t,u)&=i M_{g(t,u)}.
\end{align*}
Similar to Subsection \ref{ss:Burgers_semilinear}, for fixed $u\in C(\Tor)$, $M_{g(t,u)}:L^{\xi}(\Tor)\to L^{\xi}(\Tor)$ is the multiplication operator $(M_{g(t,u)} h)(x) = g(t,u(x)) h(x)$ where $\xi\in (2,\infty)$ verifies $\delta-\frac{1}{2}=-\frac{1}{\xi}$, which is needed below for the Sobolev embedding $H^{\delta,2}\hookrightarrow L^{\xi}$ (and here we need $\delta\in (0,\frac12)$). Moreover, $i:L^{\xi}(\Tor)\to X_{\frac12}$ denotes the embedding. As usual, we say that $(u,\sigma)$ is a maximal local solution to \eqref{eq:quasilinearwhitenoise} if $(u,\sigma)$ is a maximal local solution to \eqref{eq:semilinearabstract} in the sense of Definition \ref{def:solution2} with the above choice of $A,B,F,G,H$.

To estimate $F$, similar to \eqref{eq:QE_divergence_estimate_F_G}, one has
\begin{equation*}
\|F(\cdot,u)-F(\cdot,v)\|_{H^{-s,q}}\lesssim \sum_{i\in \{1,2\}}\|f_i(\cdot,u)-f_i(\cdot,v)\|_{L^q}\lesssim_r \|u-v\|_{\Xap},
\end{equation*}
where in the last inequality we used Assumption \ref{ass:QNDwhitenoise}\eqref{it:QND_white_noise_f_g} and \eqref{eq:Burges_QND_embeddings_Holder_continuous_function}. Therefore, $F$ verifies \ref{HFcritical_weak} by setting $F_{\Tr}=F$, $F_L=F_c=0$. To estimate $G$, we argue as in \eqref{eq:estimate_G_very_weak_Burgers}, \eqref{eq:QE_divergence_estimate_F_G}. Then for $u,v\in \Xap$ such that $\|u\|_{\Xap},\|v\|_{\Xap}\leq r$, one has
\begin{equation*}
\begin{aligned}
\|G(\cdot,u)-G(\cdot,v)&\|_{\g(H^{\delta,2};H^{-s+\delta,q})}\\
&\eqsim
\|(I-\partial_x^2)^{-\frac{s}{2}+\frac{\delta}{2}} (M_{g(\cdot, u)}-M_{g(\cdot, v)})(1-\partial_x^2)^{-\frac{\delta}{2}}\|_{\g(L^2,L^q)}
\\ &\stackrel{(i)}{\lesssim}
\|(I-\partial_x^2)^{-\frac{s}{2}+\frac{\delta}{2}} (M_{g(\cdot, u)}-M_{g(\cdot, v)})(1-\partial_x^2)^{-\frac{\delta}{2}}\|_{\calL(L^2,L^{\infty})}
\\ & \stackrel{(ii)}{\lesssim} \|(I-\partial_x^2)^{-\frac{s}{2}+\frac{\delta}{2}} (M_{g(\cdot, u)}-M_{g(\cdot, v)})\|_{\calL(L^{\xi},L^{\infty})}
\\ & \stackrel{(iii)}{\lesssim}
\|M_{g(\cdot, u)}-M_{g(\cdot, v)}\|_{\calL(L^{\xi},L^{\xi})}
\\ & \leq \|g(\cdot, u)-g(\cdot, v)\|_{L^{\infty}}  \stackrel{(iv)}{\lesssim_r} \|u-v\|_{\Xap};
\end{aligned}
\end{equation*}
where in $(i)$ we used  \cite[Corollary 9.3.3]{Analysis2}, in $(ii)$ we used that $(1-\partial_x^2)^{-\frac{\delta}{2}}:L^2(\Tor)\to H^{\delta,2}(\Tor)\hookrightarrow L^{\xi}(\Tor)$ as mentioned before. In $(iii)$ we used $(1-\partial_x^2)^{-\frac{s}{2}+\frac{\delta}{2}}:L^{\xi}(\Tor)\to H^{s-\delta,\xi}(\Tor)\hookrightarrow L^{\infty}(\Tor)$ and Sobolev embedding with $s-\delta-\frac{1}{\xi}=s-\frac{1}{2}>0$. Finally, $(iv)$ follows from Assumption \ref{ass:QNDwhitenoise}\eqref{it:QND_white_noise_f_g} and \eqref{eq:Burges_QND_embeddings_Holder_continuous_function}. Thus, \ref{HGcritical_weak} is verified by setting $G_{\Tr}=G$, $G_L=G_c=0$.

The following is the main result of this subsection.

\begin{theorem}
Assume that the Assumption \ref{ass:QNDwhitenoise} holds. Let $s>\frac{1}{2}$ be such that $1-2s+2\delta-2\frac{(1+\a)}{p}>\frac{1}{q}$. Set $s_{\delta}:=1-s+\delta$. Then for each
$$
u_0\in L^0_{\F_0}(\O;B^{s_{\delta}-2\frac{1+\a}{p}}_{q,p}(\Tor)),
$$
there exists a maximal local solution to \eqref{eq:quasilinearwhitenoise}. Moreover, there exists a localizing sequence $(\sigma_n)_{n\geq 1}$ such that a.s.\ for all $n\geq 1$
\[
u\in L^p(\I_{\sigma_n},w_{\a};H^{s_{\delta},q}(\Tor))\cap C(\overline{I}_{\sigma_n};B^{s_{\delta}-2\frac{1+\a}{p}}_{q,p}(\Tor))\cap C((0,\sigma_n];B^{s_{\delta}-\frac{2}{p}}_{q,p}(\Tor)).
\]
\end{theorem}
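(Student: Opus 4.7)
The plan is to apply Theorem~\ref{t:local_Extended} with the choices of $(X_0,X_1,H,A,B,F,G)$ made in the paragraphs immediately preceding the theorem. Hypotheses \ref{HFcritical_weak} and \ref{HGcritical_weak} have already been verified there (with $F_c=F_L=0$, $F_{\Tr}=F$ and $G_c=G_L=0$, $G_{\Tr}=G$); what remains is to check \ref{HAmeasur} together with the stochastic maximal $L^p$-regularity assumption \eqref{eq:stochastic_maximal_regularity_assumption_local_extended}. Once this is done, the regularity and instantaneous smoothing claims follow directly from Theorem~\ref{t:local_Extended}.

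Condition \ref{HAmeasur} is routine. Strong progressive measurability of $(t,\omega)\mapsto A(t,\omega,v)u$ for fixed $v\in\Xap$ and $u\in X_1$ follows from Assumption~\ref{ass:QNDwhitenoise}\eqref{it:QND_white_noise_a} and continuity of the differentiation and multiplication operations on the Bessel potential scale. For $v_1,v_2\in\Xap$ of norm at most $n$, the embedding \eqref{eq:Burges_QND_embeddings_Holder_continuous_function} gives $\|v_i\|_{C^\eta(\Tor)}\lesssim n$ for some $\eta>s-\delta$, and then the local Lipschitz continuity of $a$ in the $y$-variable yields $\|a(v_1)-a(v_2)\|_{C^\eta(\Tor)}\lesssim_n \|v_1-v_2\|_{\Xap}$. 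Since multiplication by a $C^\eta$-function is bounded on $H^{-s+\delta,q}(\Tor)$ whenever $\eta>s-\delta$, and since $\partial_x$ maps $H^{1-s+\delta,q}$ to $H^{-s+\delta,q}$ and further into $H^{-1-s+\delta,q}=X_0$, one obtains
\[
\|A(v_1)u-A(v_2)u\|_{X_0}\lesssim_n \|v_1-v_2\|_{\Xap}\|u\|_{X_1},
\]
and the analogous growth bound. The conditions on $B\equiv 0$ are trivial.

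For \eqref{eq:stochastic_maximal_regularity_assumption_local_extended}, take the standard sequence $(u_{0,n})_{n\geq 1}$ from \eqref{eq:truncation_operator}, so that $u_{0,n}\in L^\infty_{\F_0}(\Omega;\Xap)$. A crucial feature of \eqref{eq:quasilinearwhitenoise} is that $a$ does not depend on $t$, so $A(\cdot,u_{0,n})$ is a time-independent $\F_0$-measurable family of operators on $X_0$ and $B(\cdot,u_{0,n})=0$. Consequently, by Remark~\ref{r:time_transference}\eqref{it:uniformOmegaHinfty} and Theorem~\ref{t:H_infinite_SMR}, it suffices to show that for a.a.\ $\omega\in\Omega$ the operator $A(u_{0,n}(\omega))$ admits a bounded $H^\infty$-calculus of angle $<\pi/2$ on $X_0=H^{-1-s+\delta,q}(\Tor)$, with constants uniform in $\omega$.

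The main obstacle is the establishment of this $H^\infty$-calculus on the negative-order Bessel potential scale. Pointwise in $\omega$, the coefficient $a(u_{0,n}(\omega))\in C^\eta(\Tor)$ is uniformly elliptic with $C^\eta$-seminorm and ellipticity constant controlled solely in terms of $n$. Classical results for one-dimensional second-order divergence-form operators with Hölder coefficients (cf.\ Example~\ref{ex:Hinfty} and the references therein) provide a bounded $H^\infty$-calculus of angle $<\pi/2$ for $1+A(u_{0,n}(\omega))$ on $L^q(\Tor)$ with domain $W^{2,q}(\Tor)$, with bound depending only on $q$, $n$ and the ellipticity constant, hence uniform in $\omega$. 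The bounded $H^\infty$-calculus then transfers to every member of the associated interpolation-extrapolation scale (via the theory recalled in Appendix~\ref{sec:app}); the identification $X_0=H^{-1-s+\delta,q}(\Tor)$ lies in the admissible extrapolation range precisely because $s-\delta<\eta$ matches the regularity of $a$. Combining this with Theorem~\ref{t:H_infinite_SMR}, Proposition~\ref{prop:transferenceSMR} and Theorem~\ref{t:local_Extended} yields the existence of the maximal local solution together with the stated path regularity and the instantaneous regularization from $C(\overline{I}_{\sigma_n};\Xap)$ to $C((0,\sigma_n];\Xp)$.
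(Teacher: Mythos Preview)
Your verification of \ref{HAmeasur} is fine and matches the paper's. The gap is in the $H^\infty$-calculus step, and it is real.

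First, the claim that $1+A(u_{0,n}(\omega))$ on $L^q(\Tor)$ has domain $W^{2,q}(\Tor)$ is false for merely $C^\eta$ coefficients: for $-\partial_x(a\partial_x u)\in L^q$ one needs $a\partial_x u\in W^{1,q}$, and since $\partial_x a$ is not a function when $\eta<1$, this does not force $u\in W^{2,q}$. The $H^\infty$-calculus for divergence-form operators with rough coefficients holds, but on $W^{-1,q}$ with domain $W^{1,q}$ (as used in Theorem~\ref{t:divergence_quasi_local} via \cite{DivH}), not in the strong $L^q$--$W^{2,q}$ pairing you invoke.

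Second, even starting from the correct weak pair $(H^{-1,q},H^{1,q})$, the assertion that the interpolation-extrapolation scale of $A(u_{0,n}(\omega))$ coincides with the Bessel potential scale down to $H^{-1-s+\delta,q}$ is unjustified. The spaces in Appendix~\ref{sec:app} are $X_{\alpha,A}=\Do((A_{-1})^{1+\alpha})$, built from fractional powers of \emph{your} operator, and identifying them with $H^{\cdot,q}$ below $H^{-1,q}$ amounts, by Theorem~\ref{t:duality}, to identifying $\Do((A^*)^\vartheta)$ on $H^{1,q'}$ with $H^{1+2\vartheta,q'}$. For coefficients that are only $C^\eta$ this is precisely the kind of regularity statement one cannot take for granted; the condition $s-\delta<\eta$ does not by itself produce it.

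The paper avoids this identification entirely. It first proves $R$-sectoriality of $\lambda_N+A(w_0)$ \emph{directly on the Bessel potential spaces} $H^{-1-\rho+\delta,q}$ for $\rho\in\{0,s'\}$ with some $s'>s$, using only the multiplier estimate $\|b\cdot v\|_{H^{-s+\delta,q}}\lesssim\|b\|_{C^\eta}\|v\|_{H^{-s+\delta,q}}$ for $\eta>s-\delta$ (as in \cite[Theorem 6.4.3]{pruss2016moving} combined with \cite[Chapter 14, (4.14)]{TayPDE3}). It then takes the bounded $H^\infty$-calculus on $H^{-1,q}$ from the weak setting and transfers it to $H^{-1-s+\delta,q}$ via the comparison argument of \cite[Theorem 5]{Stokes}, applied with the commuting isomorphism $B=1-\partial_x^2$. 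This route never requires knowing the fractional domains of $A(w_0)$; it only needs $R$-sectoriality on two endpoint spaces in the Bessel scale plus $H^\infty$ on one of them.
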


\begin{proof}
To apply Theorem \ref{t:local_Extended} it remains to check the condition \ref{HAmeasur} and \eqref{eq:stochastic_maximal_regularity_assumption_local_extended}.

To prove that $A$ verify \ref{HAmeasur}, it is enough to note that for any $u\in X_1$, $r>0$ and $v_1,v_2\in \Xap$ such that $\|v_1\|_{\Xap},\|v_2\|_{\Xap}<r$,
\begin{align*}
\|A(v_1) u - A(v_2) u\|_{H^{-1-s+\delta,q}(\Tor)} &
\lesssim\|(a(v_1) -a(v_2))\partial_{x}u\|_{H^{-s+\delta,q}(\Tor)}
\\ &  \stackrel{(i)}{\lesssim} \|a(v_1)-a(v_2)\|_{C^{\eta}(\Tor)} \|\partial_x u\|_{H^{-s+\delta,q}(\Tor)}
\\ &\stackrel{(ii)}{\lesssim_r}  \|v_1-v_2\|_{\Xap} \| u\|_{H^{1+s-\delta,q}(\Tor)},
\end{align*}
where in $(i)$ follows by combining $\eta>s-\delta$, by \eqref{eq:Burges_QND_embeddings_Holder_continuous_function}, and \cite[Chapter 14, eq.\ (4.14)]{TayPDE3} (or \cite[Proposition 3.8]{MV15}) and $(ii)$ by Assumption \ref{ass:QNDwhitenoise}\eqref{it:QND_white_noise_a}, \eqref{eq:QND_white_noise_Xap} and \eqref{eq:Burges_QND_embeddings_Holder_continuous_function}.

It remains to check the stochastic maximal regularity assumption \eqref{eq:stochastic_maximal_regularity_assumption_local_extended}, where in this case $B = 0$. By Theorem \ref{t:H_infinite_SMR}  and Remark \ref{r:time_transference}\eqref{it:uniformOmegaHinfty}, it is enough to show that for any $N\geq 1$ there exists $\lambda_N>0$ such that for any $w_0\in L^{\infty}_{\F_0}(\O;\Xap)$ the operator
$
\lambda_N+ A(w_0)
$
has a bounded $H^{\infty}$-calculus on $H^{-1-\varepsilon,q}(\Tor)$ with angle $<\pi/2$ and the estimates of the $H^{\infty}$-calculus are uniform in $\om\in\O$. To see this, recall that by \eqref{eq:Burges_QND_embeddings_Holder_continuous_function}, $w_{0}\in L^{\infty}(\O;C^{\eta}(\Tor))$. Let $s'>s$ such that $1-2s'+2\delta-2\frac{1+\a}{p}>\frac{1}{q}$ and $\eta>s'-\delta$. Combining the proof of \cite[Theorem 6.4.3]{pruss2016moving} and the multiplication property in \cite[Chapter 14, eq.\ (4.14)]{TayPDE3} one can check that there exists $\lambda_N>0$ such that $\lambda_N+ A(w_0)$ is $R$-sectorial on $H^{-1-\rho+\delta,q}(\Tor)$ with $\rho\in \{0,s'\}$ (see e.g.\ \cite[Definition 4.4.1]{pruss2016moving} or \cite[Defintion 10.3.1]{Analysis2}) with angle $<\pi/2$.
As we have seen in the proof of Theorem \ref{t:divergence_quasi_local}, up to enlarging $\lambda_N>0$, $\lambda_N+ A(w_0)$ has a bounded $H^{\infty}$-calculus on $H^{-1,q}(\Tor)$. The claim follows by using the argument in \cite[Theorem 5]{Stokes}, choosing $A=\lambda_N+ A(w_0)$, $B=1-\partial_x^2$ and replacing $L^2,L^{p_0}$ by $H^{-1,q}(\Tor),H^{-1-s'+\delta,q}(\Tor)$ respectively.
\end{proof}

\section{Further applications: Allen-Cahn and Cahn-Hilliard equations}
\label{s:AC_CH}
In this section we present additionally applications of Theorem \ref{t:semilinear}. More precisely, in Subsection \ref{ss:Allen_Cahn_stochastic_potentials}-\ref{ss:Allen_Cahn_mass_conservative} we investigate the Allen-Cahn type equations and in Subsection \ref{ss:CH} the Cahn-Hilliard equations. In both sections we study the equations on domains since boundary conditions are important from a modelling perspective. However, the case $\Dom=\R^d$ or $\Dom=\Tor^d$ can be analysed with the same technique.

\subsection{Stochastic Allen-Cahn equations}
\label{ss:Allen_Cahn_stochastic_potentials}
Allen-Cahn type equations have been extensively studied in literature. From a physical point of view, Allen-Cahn equation is a prototype for phase separation processes in melts or alloys that is of fundamental interest for both theory and applications. For additional motivations and further results one may consult \cite{ABBK16,BBP17_2,BBP17,F16,FS19,RW13} and the references therein. To the best of our knowledge no results in an $L^q$-setting are available.

Here, we study the following stochastic perturbation of Allen-Cahn equation for the unknown process $u:\I_T\times \O\times \Dom\to \R$
\begin{equation}
\label{eq:Allen_Cahn_stochastic_potentials}
\begin{cases}
du-\Delta u dt= V(\cdot,u)dt+ \sum_{n\geq 1} \big(\sum_{j=1}^d b_{nj}(\cdot)\partial_j u+ g_n(\cdot,u)\big) dw_t^n,&\text{in }\Dom,\\
u=0,&\text{in }\partial\Dom,\\
u(0)=u_0, &\text{in }\Dom.
\end{cases}
\end{equation}

We study \eqref{eq:Allen_Cahn_stochastic_potentials} under the following assumption.

\begin{assumption} Let $d\geq 2$.
\label{ass:AC_W_first}
\begin{enumerate}[{\rm(1)}]
\item Suppose one of the following conditions holds:
\begin{itemize}
\item $q\in[2,\infty)$, $p\in (2,\infty)$ and $\a\in [0,p/2-1)$;
\item $p=q=2$ and $\a=0$.
\end{itemize}
\item $\Dom\subseteq\R^d$ is a bounded $C^2$-domain.
\item\label{it:AC_V_g_n_potentials} The mappings $V:\I_T\times \O\times \Dom\times \R\to \R$, $g:=(g_n)_{n\geq 1}:\I_T\times \O\times \Dom\times \R\to \ell^2$ are $\Progress\otimes \Borel(\Dom)\otimes \Borel(\R)$-measurable, $V(\cdot, 0) \in L^{\infty}(\I_T\times \O;L^q(\Dom))$ and $g(\cdot,0)\in L^{\infty}(\I_T\times \O;L^q(\Dom;\ell^2))$.  Moreover, there exists $C>0$ such that  a.s.\ for all $t\in \I_T$, $x\in \Dom$ and $y,y'\in \R$
\begin{align*}
|V(t,x,y)-V(t,x,y')|&\leq C (1+|y|^2+|y'|^2)|y-y'|,\\
\|g(t,x,y)-g(t,x,y')\|_{\ell^2}&\leq C  (1+|y|+|y'|)|y-y'|.
\end{align*}
\item\label{it:AC_smallness_b} Let $\varepsilon\geq 0$. For each $j\in\{1,\dots,d\}$, the maps $(b_{nj})_{n\geq 1}:\I_T\times \O\times \Dom\to \ell^2$ are $\Progress\otimes \Borel(\Dom)$-measurable and
\begin{align*}
\|(b_{nj}(t))_{n\geq 1}\|_{W^{1,\infty}(\R^d;\ell^2)}\leq \varepsilon,\qquad \forall t\in \I_T \text{ and a.s.}
\end{align*}
\end{enumerate}
\end{assumption}

Note that the usual potential $V(t,u)=u(1-u^2)$ verifies Assumption \ref{ass:AC_W_first}\eqref{it:AC_V_g_n_potentials}.

\begin{remark}
\label{r:AC_condition}
Some remarks may be in order.
\begin{enumerate}[{\rm(1)}]
\item The problem \eqref{eq:Allen_Cahn_stochastic_potentials} under the Assumption \ref{ass:AC_W_first} is similar to \eqref{eq:semilinear_reaction_diffusion_l_m} with $m=3$ and $h=2$. However, we will study \eqref{eq:Allen_Cahn_stochastic_potentials} in the almost very weak setting instead of the weak one. Moreover, we consider the problem on a bounded domain with Dirichlet boundary conditions.
\item\label{it:AC_scaling_remark} The growth of $(g_n)_{n\geq 1}$ is chosen in such a way that the all the nonlinearities in \eqref{eq:Allen_Cahn_stochastic_potentials} have the same scaling. Indeed, $V$ and $(g_n)_{n\geq 1}$ verify Assumption \ref{ass:RD} with $h=2$ and $m=3$. As we have seen in Subsection \ref{sss:critical_reaction_diffusion_l_m}, the nonlinearities in \eqref{eq:semilinear_reaction_diffusion_l_m} have the same scaling if $h=(1+m)/2$.
\item As in Section \ref{s:semilinear_gradient} due to Lemma \ref{l:SMR_semilinear_PDEs}, if $\Dom=\R^d$ or $\Dom=\Tor^d$, then the smallness assumption on the gradient noise term can be sometimes be dropped. However, in the case of $x$-dependent coefficients, one needs to take $p=q$ as explained in Section \ref{ss:Discussion_further_extensions}.
\item In the weak setting, i.e.\ $s=0$, the regularity condition in Assumption \ref{ass:AC_W_first}\eqref{it:AC_smallness_b} can be weakened (see Lemma \ref{l:small_gradient_noise_domain} and Theorem \ref{t:Dirichlet_extensions_semilinear}\eqref{it:Dirichlet_L_infty_domains}).
\end{enumerate}
\end{remark}

By Assumption \ref{ass:AC_W_first} we can study \eqref{eq:Allen_Cahn_stochastic_potentials} in the scale $(\HD^{s,q}(\Dom))_{s\geq -2 }$ of the Dirichlet Laplacian. This scale of Banach spaces fits the boundary condition appearing in \eqref{eq:Allen_Cahn_stochastic_potentials}. Indeed, $\HD^{2,q}(\Dom)=W^{2,q}(\Dom)\cap W^{1,q}_0(\Dom)$, $\HD^{1,q}(\Dom)=W^{1,q}_0(\Dom)$ and $\HD^{0,q}(\Dom)=L^q(\Dom)$. We refer to Example \ref{ex:extrapolated_Laplace_dirichlet} for additional properties of these spaces.

\subsubsection{Almost very weak setting}
\label{ss:Allen_Cahn_semilinear_almost_weak} Let $s\in [0,1)$ and $q\in [2,\infty)$. We rewrite \eqref{eq:Allen_Cahn_stochastic_potentials} in the form \eqref{eq:semilinearabstract} by setting $X_0:=\HD^{-1-s,q}(\Dom)$, $X_1=\HD^{1-s,q}(\Dom)$ and for $u\in X_1$
\begin{align*}
A(t)u&=-\Dd_{-1-s,q} u, &  B(t)u &=0,
\\ F(t,u)&=V(t,u), & G(t,u)&=G_1(t,u)+G_2(t,u),
\\ G_1(t,u)&=(g_n(t,u))_{n\geq 1}, & G_2(t,u)&=\Big(\sum_{j=1}^d b_{nj}(t)\partial_j u\Big)_{n\geq 1}.
\end{align*}
Here $\Dd_{-1-s,q}$ is the extrapolated Dirichlet Laplacian (see \eqref{eq:extrapolated_Laplacian_D}). As usual, we say that $(u,\sigma)$ is a maximal local solution to \eqref{eq:Allen_Cahn_stochastic_potentials} if $(u,\sigma)$ is a maximal local solution to \eqref{eq:semilinearabstract} in the sense of Definition \ref{def:solution2} with the above choice of $A,B,F,G$ and $H=\ell^2$.

To apply Theorem \ref{t:semilinear}, we estimate the nonlinearities. As usual we begin by estimating $F$.  By Assumption \ref{ass:AC_W_first}\eqref{it:AC_V_g_n_potentials} one has
\begin{equation}
\label{eq:estimate_F_allen_cahn_almost_ultra_weak}
\begin{aligned}
\|F(\cdot,u)-F(\cdot,v)&\|_{\HD^{-1-s,q}(\Dom)}\\
&\stackrel{(i)}{\lesssim} \|V(\cdot,u)-V(\cdot,v)\|_{L^{m}(\Dom)}\\
&{\lesssim} \|(1+|u|^2+|v|^2)|u-v|\|_{L^m(\Dom)},\\
&\lesssim (1+\|v\|_{L^{3m}(\Dom)}^{2}+\|v\|_{L^{3m}(\Dom)}^{2})\|u-v\|_{L^{3m}(\Dom)}\\
&\stackrel{(ii)}{\lesssim} (1+\|u\|_{\HD^{\phi,q}(\Dom)}^{2}+\|v\|_{\HD^{\phi,q}(\Dom)}^{2})\|u-v\|_{\HD^{\phi,q}(\Dom)}.
\end{aligned}
\end{equation}
where in $(i)$ and $(ii)$ we used Sobolev embedding with $-\frac{d}{m}=-1-s-\frac{d}{q}$
and $\phi-\frac{d}{q} = -\frac{d}{3m}$ where $\phi\in (0,1-s)$ (see \eqref{eq:HD_embedding}).
To ensure $m\in (1,\infty)$ we have to assume $q>\frac{d}{d-1-s}$ (recall that $d\geq 2$). Note that $\phi$ is given by
$$
\phi=\frac{d}{q}-\frac{d}{3m}=\frac{2d}{3q}-\frac{1+s}{3}.
$$
To ensure $\phi\in (0,1-s)$ we assume $\frac{d}{2-s}<q<\frac{2d}{1+s}$.
Combining the previous requirements gives
\begin{equation}
\label{eq:restriction_q_Allen_Cahn_almost_ultra_weak}
\max\Big\{\frac{d}{d-1-s},\frac{d}{2-s}\Big\}<q<\frac{2d}{1+s}.
\end{equation}
Set
\begin{equation}
\label{eq:AC_beta_1}
\beta_1:=\frac{1+s+\phi}{2}=\frac{d}{3q}+\frac{1+s}{3}.
\end{equation}
By \eqref{eq:HD_complex_interpolation} one has $\HD^{\phi,q}(\Dom)=[\HD^{-1-s,q}(\Dom),\HD^{1-s,q}(\Dom)]_{\beta_1}$ and, under the previous assumptions, \eqref{eq:estimate_F_allen_cahn_almost_ultra_weak} shows that $F:X_{\beta_1}\to X_0$ is locally Lipschitz. As in Subsection \ref{ss:conservative_RD}, the argument splits in three cases:

\begin{enumerate}[{\rm(1)}]
\item If $1-(1+\a)/{p}>\beta_1$, \ref{HFcritical} follows from Remark \ref{r:non_linearities}\eqref{it:non_linearities_continuous_trace}, by setting $F_{\Tr}=F$ and $F_{L}\equiv F_c\equiv 0$.
\item If $1-(1+\a)/{p}= \beta_1$, \ref{HFcritical} follows from \eqref{eq:estimate_F_allen_cahn_almost_ultra_weak} and Remark \ref{r:non_linearities}\eqref{it:non_linearities_varphi_equal_to_beta}, by setting $F_L\equiv F_{\Tr}\equiv 0$, $F_{c}=F$, $m_F=1$, $\rho_1=2$ and $\varphi_1=\beta_1$.
\item If $1-(1+\a)/{p}<\beta_1$ \ref{HFcritical} holds with $F_{c}=V$ and $F_{L}\equiv F_{\Tr}\equiv 0$, under the condition that \eqref{eq:HypCritical} holds with $m_F=1$, $\rho_1=2$, $\varphi_1=\beta_1$. The latter condition becomes
\begin{equation}
\label{eq:allen_cahn_first_critical_weights}
\frac{1+\a}{p}\leq \frac{3}{2}(1-\beta_1)=1-\frac{d}{2q}-\frac{s}{2}.
\end{equation}
\end{enumerate}
Next we estimate $G:=G_1+G_2$. By Assumption \ref{ass:AC_W_first}\eqref{it:AC_smallness_b}, Lemma \ref{l:small_gradient_noise_domain} holds, therefore $G_2$ satisfies \eqref{eq:estimate_G_L_allen_cahn}. It remains to estimate $ G_1$. By \eqref{eq:HD_complex_interpolation}, we have $X_{1/2}=\HD^{-s,q}(\Dom)$. Therefore, by Assumption \ref{ass:AC_W_first}\eqref{it:AC_V_g_n_potentials}
\begin{equation}
\label{eq:estimate_G_allen_cahn_almost_ultra_weak}
\begin{aligned}
\|G_1(\cdot,u)-G_1(\cdot,v)&\|_{\g(\ell^2;\HD^{-s,q}(\Dom))}\\
&\stackrel{(i)}{\lesssim}\|G_1(\cdot,u)-G_1(\cdot,v)\|_{\g(\ell^2;L^r(\Dom))}\\
&\stackrel{(ii)}{\eqsim}\|G_1(\cdot,u)-G_1(\cdot,v)\|_{L^r(\Dom;\ell^2)}\\
&\lesssim (1+\|u\|_{L^{2r}(\Dom)}+\|v\|_{L^{2r}(\Dom)})\|u-v\|_{L^{2r}(\Dom)}\\
&\stackrel{(iii)}{\lesssim} (1+\|u\|_{\HD^{\rho,q}(\Dom)}+\|v\|_{\HD^{\rho,q}(\Dom)})\|u-v\|_{\HD^{\rho,q}(\Dom)};
\end{aligned}
\end{equation}
where in $(i)$ and $(iii)$ we used Sobolev embedding with $-s-\frac{d}{q} = -\frac{d}{r}$ and $\rho - \frac{d}{q}=-\frac{d}{2r}$ (see \eqref{eq:HD_embedding}). In $(ii)$ we used \eqref{eq:gammaidentity}. It follows that $\rho=\frac{d}{2q}-\frac{s}{2}$. Moreover, \eqref{eq:restriction_q_Allen_Cahn_almost_ultra_weak} gives that $r\in (1,\infty)$ and $0<\rho<1-s$. Setting
\begin{equation}
\label{eq:AC_beta_2}
\beta_2=\frac{1+s+\rho}{2}=\frac{1}{4}\Big(\frac{d}{q}+s\Big)+\frac{1}{2}\in (0,1)
\end{equation}
it follows that $\HD^{\rho,q}(\Dom)=[\HD^{-1-s,q}(\Dom),\HD^{1-s,q}(\Dom)]_{\beta_2}$ by \eqref{eq:HD_complex_interpolation}.

By \eqref{eq:estimate_G_allen_cahn_almost_ultra_weak} it follows that $G_1(t,\om,\cdot):X_{\beta_2}\to X_0$ is locally Lipschitz for a.a.\ $(t,\om)\in \I_T\times\O$, and as before:

\begin{enumerate}[{\rm(1)}]
\item If $1-(1+\a)/{p}>\beta_2$, \ref{HGcritical} holds with $G_{\Tr}=G_1$, $G_c\equiv 0$ and $G_L=G_2$ .
\item If $1-(1+\a)/{p}= \beta_2$, \ref{HGcritical} holds with $G_{\Tr}\equiv 0$, $G_{c}:=G_1$, $G_L:=G_2$, $m_G=1$, $\rho_2=1$ and $\varphi_2=\beta_2$.
\item If $1-(1+\a)/{p}<\beta_2$, \ref{HGcritical} holds with $G_{\Tr}\equiv 0$, $G_{c}:=G_1$, $G_L:=G_2$ under the condition \eqref{eq:HypCritical} with $m_G=1$, $\rho_2=1$ and $\varphi_2=\beta_2$. In this situation. The latter condition becomes
\begin{equation}
\label{eq:allen_cahn_first_critical_weights_2}
\frac{1+\a}{p}\leq 2(1-\beta_2)=1-\frac{d}{2q}-\frac{s}{2},
\end{equation}
which coincides with \eqref{eq:allen_cahn_first_critical_weights}. This is in accordance with Remark \ref{r:AC_condition}\eqref{it:AC_scaling_remark}.
\end{enumerate}

Let us summarize what we have proven so far in the following result.

\begin{theorem}
\label{t:allen_cahn_first_local}
Suppose Assumption \ref{ass:AC_W_first} holds. Let $s\in [0,1)$ and let $q\in [2,\infty)$ be such that \eqref{eq:restriction_q_Allen_Cahn_almost_ultra_weak} holds. Let $\beta_2$ be as in \eqref{eq:AC_beta_2}. Assume one of the following conditions is satisfied:
\begin{itemize}
\item $1-(1+\a)/p\geq \beta_2$.
\item $1-(1+\a)/p<\beta_2$ and \eqref{eq:allen_cahn_first_critical_weights} holds.
\end{itemize}
Then there exists an $\bar{\varepsilon}>0$ such that for any $\varepsilon\in (0,\bar{\varepsilon})$, and any
\[u_0\in L^0_{\F_0}(\O;\BD^{1-s-2\frac{(1+\a)}{p}}_{q,p}(\Dom))\]
there exists a maximal local solution $(u,\sigma)$ to \eqref{eq:Allen_Cahn_stochastic_potentials}. Moreover, there exists a localizing sequence $(\sigma_n)_{n\geq 1}$ such that for any $n\geq 1$ and a.s.
$$
u\in L^p(\I_{\sigma_n},w_{\a};\HD^{1-s,q}(\Dom))\cap C(\overline{I}_{\sigma_n};\BD^{1-s-2\frac{(1+\a)}{p}}_{q,p}(\Dom))\cap C(\I_{\sigma_n};\BD^{1-s-\frac{2}{p}}_{q,p}(\Dom)).
$$
\end{theorem}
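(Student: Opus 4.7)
The plan is to apply Theorem \ref{t:semilinear}\eqref{it:semilinear_u_L_0} (the $L^0$-initial data semilinear version) with the abstract data $(A,B,F,G)$ and the function spaces $X_0 = \HD^{-1-s,q}(\Dom)$, $X_1 = \HD^{1-s,q}(\Dom)$ prescribed in Subsection \ref{ss:Allen_Cahn_semilinear_almost_weak}. Since most of the technical work has already been carried out in the discussion preceding the statement, the proof amounts to a systematic verification of the hypotheses \ref{HAmeasur}, \ref{HFcritical}, \ref{HGcritical} together with the stochastic maximal $L^p$-regularity condition and the smallness assumption on $L_G$. First, I would identify the trace space: by Example \ref{ex:extrapolated_Laplace_dirichlet} the real interpolation identity $(X_0,X_1)_{1-\frac{1+\a}{p},p} = \BD^{1-s-2\frac{1+\a}{p}}_{q,p}(\Dom)$ holds, which explains the initial data space in the statement.

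Next I would verify that $(A,B) = (-\Dd_{-1-s,q},0) \in \MRta$. Because $-\Dd_{-1-s,q}$ has a bounded $H^\infty$-calculus of angle strictly less than $\pi/2$ on $\HD^{-1-s,q}(\Dom)$ with domain $\HD^{1-s,q}(\Dom)$ (see Example \ref{ex:extrapolated_Laplace_dirichlet}), and $X_0$ embeds into an $L^\xi$-space for some $\xi \in [2,\infty)$ after enlarging by a scalar shift, Theorem \ref{t:H_infinite_SMR} applies directly and gives stochastic maximal $L^p$-regularity on $\I_T$ for any $T<\infty$ and any admissible weight $\a$. The verification of \ref{HAmeasur} is trivial since both $A$ and $B$ are independent of $u$.

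The verification of \ref{HFcritical} and \ref{HGcritical} is precisely the content of the estimates \eqref{eq:estimate_F_allen_cahn_almost_ultra_weak} and \eqref{eq:estimate_G_allen_cahn_almost_ultra_weak}, together with the case split performed above. In all situations we set $F_L \equiv 0$ and $G_L := G_2$, so that $L_F = 0$ automatically. The key observation is that by Lemma \ref{l:small_gradient_noise_domain}\eqref{it:small_gradient_ultra_weak_setting}, applied with $Y = W^{1,\infty}(\Dom;\ell^2)$, the nonlinearity $G_2$ satisfies
\[
\|G_2(t,u)-G_2(t,v)\|_{\gamma(\ell^2,\HD^{-s,q}(\Dom))} \leq C_{p,q,s,\Dom}\,\varepsilon\, \|u-v\|_{\HD^{1-s,q}(\Dom)}
\]
for all $u,v \in X_1$, so that $L_G \leq C_{p,q,s,\Dom}\,\varepsilon$. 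Taking $\bar{\varepsilon}>0$ such that $C_{p,q,s,\Dom}\,\bar{\varepsilon}$ is strictly below the threshold $\varepsilon$ of Theorem \ref{t:semilinear} (which depends only on the maximal regularity constants of $(-\Dd_{-1-s,q},0)$; see Remark \ref{r:smallness}) secures the smallness condition \eqref{eq:smallness_semilinear}. For $G_c$, the Lipschitz and growth bounds in \eqref{eq:estimate_G_allen_cahn_almost_ultra_weak} with $m_G=1$, $\rho_2=1$, $\varphi_2 = \beta_2$ reduce the subcriticality relation \eqref{eq:HypCriticalG} to the inequality $\frac{1+\a}{p} \leq 2(1-\beta_2) = 1 - \frac{d}{2q}-\frac{s}{2}$, which is exactly \eqref{eq:allen_cahn_first_critical_weights}. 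An analogous discussion applies to $F_c$ with $\rho_1 = 2$, $\varphi_1 = \beta_1$; since the calculation $\beta_1 - \beta_2 = \frac{1}{12}(\frac{d}{q}+s-2) < 0$ follows from \eqref{eq:restriction_q_Allen_Cahn_almost_ultra_weak}, we have $\beta_1 \leq \beta_2$, so that any condition on $F$ is implied by the corresponding condition on $G$, and \eqref{eq:allen_cahn_first_critical_weights} coincides with its $F$-analogue.

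Finally, once all the hypotheses are checked, Theorem \ref{t:semilinear}\eqref{it:semilinear_u_L_0} yields the $L^p_\a$-maximal local solution $(u,\sigma)$ together with a localizing sequence $(\sigma_n)_{n\geq 1}$ such that a.s.\
\[
u \in L^p(\I_{\sigma_n},w_\a;X_1)\cap C(\overline{\I}_{\sigma_n};\Xap).
\]
Substituting the identifications $X_1 = \HD^{1-s,q}(\Dom)$ and $\Xap = \BD^{1-s-2(1+\a)/p}_{q,p}(\Dom)$ gives the first two regularity assertions, and the instantaneous regularization statement $u \in C((0,\sigma_n];\BD^{1-s-2/p}_{q,p}(\Dom))$ follows from the regularity part of Theorem \ref{t:local_Extended}\eqref{it:regularity_data_L0} once we identify $\Xp = \BD^{1-s-2/p}_{q,p}(\Dom)$. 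The main (and essentially only nontrivial) obstacle is the verification that the $b_{jn}$-term fits into the semilinear framework with arbitrarily small Lipschitz constant $L_G$; this requires working on the extrapolated scale $\HD^{s,q}(\Dom)$ rather than on an $L^q$-scale, together with the trace-duality and interpolation step hidden in the proof of Lemma \ref{l:small_gradient_noise_domain}\eqref{it:small_gradient_ultra_weak_setting}.
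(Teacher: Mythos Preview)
Your proposal is correct and follows essentially the same route as the paper: apply Theorem \ref{t:semilinear} with $(A,B)=(-\Dd_{-1-s,q},0)$, obtain stochastic maximal $L^p$-regularity from the bounded $H^\infty$-calculus of the extrapolated Dirichlet Laplacian via Theorem \ref{t:H_infinite_SMR}, observe that $\beta_1<\beta_2$ from the lower bound $q>d/(2-s)$ in \eqref{eq:restriction_q_Allen_Cahn_almost_ultra_weak} so that the $G$-condition dominates the $F$-condition, and absorb $G_2$ into $G_L$ with Lipschitz constant $L_G\lesssim\varepsilon$ via Lemma \ref{l:small_gradient_noise_domain}\eqref{it:small_gradient_ultra_weak_setting}. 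Your write-up is simply more explicit than the paper's (which relegates all the nonlinearity estimates to the discussion preceding the statement and records only the three remaining checks in the formal proof).
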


\begin{proof}
To conclude it remains to check the conditions of Theorem \ref{t:semilinear}. Firstly, recall that $-\Dd_{-1-s,q}$ has a bounded $H^{\infty}$-calculus (see Example \ref{ex:extrapolated_Laplace_dirichlet}). Therefore, by Theorem \ref{t:H_infinite_SMR} $-\Dd_{-1-s,q}\in \MRta$.

Moreover, let $\beta_1,\beta_2$ be as in \eqref{eq:AC_beta_1}, \eqref{eq:AC_beta_2} respectively. Since $q>d/(2-s)$ by assumption (see \eqref{eq:restriction_q_Allen_Cahn_almost_ultra_weak}) it follows that $\beta_2>\beta_1$.

By Assumption \ref{ass:AC_W_first}\eqref{it:AC_smallness_b}, the estimate \eqref{eq:estimate_G_L_allen_cahn} holds. Therefore, \eqref{eq:smallness_semilinear} is satisfied if $\varepsilon$ is sufficiently small.
\end{proof}

\subsubsection{Critical spaces for \eqref{eq:Allen_Cahn_stochastic_potentials}}
To find critical spaces for \eqref{eq:Allen_Cahn_stochastic_potentials} we look for some $\a=\a_{\crit}$ such that \eqref{eq:allen_cahn_first_critical_weights} becomes an equality. We first analyse the case $p\in (2,\infty) $ and $\a\in [0,p/2-1)$. By \eqref{eq:allen_cahn_first_critical_weights_2} to ensure $\a\geq 0$ one has to impose
\begin{equation}
\label{eq:allen_cahn_s_restriction_p}
\frac{1}{p}+\frac{d}{2q}+\frac{s}{2}\leq 1.
\end{equation}
Since $q>d/(2-s)$ the above restriction is verified if $p$ is sufficiently large. The condition $\a<p/2-1$ becomes
\begin{equation}
\label{eq:AC_critical_condition_q}
1-\frac{d}{2q}-\frac{s}{2}<\frac{1}{2} \quad \Leftrightarrow\quad q<\frac{d}{1-s}.
\end{equation}
Since $q>d/(d-1-s)$, by \eqref{eq:restriction_q_Allen_Cahn_almost_ultra_weak}, we also need $d>2$. If \eqref{eq:allen_cahn_s_restriction_p}-\eqref{eq:AC_critical_condition_q} hold, then we set
\begin{equation}
\label{eq:AC_critical_weight_formula}
\a_{\crit}:=p\Big(1-\frac{d}{2q}-\frac{s}{2}\Big)-1.
\end{equation}
Therefore,
\begin{equation}
\label{eq:AC_critical_s_first}
\Xapcrit=\BD^{1-s-\frac{2(1+\a_{\crit})}{p}}_{q,p}(\Dom)
=\BD^{\frac{d}{q}-1}_{q,p}(\Dom);
\end{equation}
where as above we used \eqref{eq:AC_critical_weight_formula} and \eqref{eq:def_BD}. Note that the above space does not depend on $s$ and depends on $p$ only through the microscopic parameter.

In the case $q=p=2$ and $\a=0$, the condition \eqref{eq:allen_cahn_first_critical_weights} gives $1=d/2+s\geq d/2$. The latter forces $s=0$ and $d=2$. However, $s=0$ implies $q>2$, thus this case has to be avoided here.

The following is the main result of this subsection.
\begin{theorem}
\label{t:allen_cahn_first_local_critical}
Let the Assumption \ref{ass:AC_W_first} be satisfied. Let $d>2$, $s\in [0,1/3]$ and $q\in (2,\infty)$ be such that
$$\frac{d}{2-s}<q<\frac{d}{1-s}.$$
Let $p\in (2,\infty)$ be such \eqref{eq:allen_cahn_s_restriction_p} holds.
Then there exists $\bar{\varepsilon}>0$ such that if $\varepsilon\in (0,\bar{\varepsilon})$, then the following hold: For any
$$u_0\in L^0_{\F_0}(\O;\BD^{\frac{d}{q}-1}_{q,p}(\Dom))$$
there exists a maximal local solution $(u,\sigma)$ to \eqref{eq:Allen_Cahn_stochastic_potentials}. Moreover, there exists a localizing sequence $(\sigma_n)_{n\geq 1}$ such that for any $n\geq 1$ and a.s.
$$
u\in L^p(\I_{\sigma_n},w_{\a_{\crit}};\HD^{1-s,q}(\Dom))\cap C(\overline{I}_{\sigma_n};\BD^{\frac{d}{q}-1}_{q,p}(\Dom))\cap C(\I_{\sigma_n};\BD^{1-s-\frac{2}{p}}_{q,p}(\Dom));
$$
where $\a_{\crit}$ is given in \eqref{eq:AC_critical_weight_formula}.
\end{theorem}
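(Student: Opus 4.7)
The plan is to deduce Theorem~\ref{t:allen_cahn_first_local_critical} as a direct specialization of Theorem~\ref{t:allen_cahn_first_local} to the critical weight $\a=\a_{\crit}$ defined by equality in \eqref{eq:allen_cahn_first_critical_weights}. The bulk of the work is an algebraic verification that, under the standing hypotheses on $d,s,q,p$, the weight $\a_{\crit}$ lies in the admissible range $[0,p/2-1)$ and that the range restriction \eqref{eq:restriction_q_Allen_Cahn_almost_ultra_weak} on $q$ is satisfied; after that one identifies the trace space.

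First, I would check that $\a_{\crit}\in [0,p/2-1)$. The nonnegativity $\a_{\crit}\geq 0$ is exactly \eqref{eq:allen_cahn_s_restriction_p}, which is assumed. The upper bound $\a_{\crit}<p/2-1$ is equivalent to $1-\frac{d}{2q}-\frac{s}{2}<\frac{1}{2}$, i.e.\ $q<\frac{d}{1-s}$, which is part of the hypothesis (and forces $d>2$, as stated).

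Second, I would verify that the $q$-interval \eqref{eq:restriction_q_Allen_Cahn_almost_ultra_weak} required by Theorem~\ref{t:allen_cahn_first_local} is satisfied. The lower bound $\tfrac{d}{2-s}<q$ is assumed, and the other lower bound $\tfrac{d}{d-1-s}<q$ is automatic since $d>2$ (and $s\leq 1/3$) yield $d-1-s\geq 2-s$. For the upper bound $q<\tfrac{2d}{1+s}$, combined with the assumption $q<\tfrac{d}{1-s}$, it suffices to prove $\tfrac{d}{1-s}\leq \tfrac{2d}{1+s}$, i.e.\ $1+s\leq 2(1-s)$, i.e.\ $s\leq 1/3$; this is precisely the restriction on $s$ in the statement, so this is where the constraint $s\leq 1/3$ enters decisively. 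Next, I would confirm we are in the critical sub-case of Theorem~\ref{t:allen_cahn_first_local}, namely $1-(1+\a_{\crit})/p<\beta_2$: by \eqref{eq:AC_beta_2} a direct computation gives $1-(1+\a_{\crit})/p=\tfrac{d}{2q}+\tfrac{s}{2}$ and $\beta_2=\tfrac{1}{2}+\tfrac{d}{4q}+\tfrac{s}{4}$, so the inequality reduces to $\tfrac{d}{q}+s<2$, which follows from $q>\tfrac{d}{2-s}$.

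Third, Theorem~\ref{t:allen_cahn_first_local} (with \eqref{eq:allen_cahn_first_critical_weights} holding as an equality, the definition of critical in Subsection~\ref{ss:critical_spaces_introduction}) yields, for sufficiently small $\varepsilon>0$, a maximal local solution $(u,\sigma)$ with the displayed regularity in $\BD^{1-s-2(1+\a_{\crit})/p}_{q,p}(\Dom)$ together with the $L^p(w_{\a_{\crit}};\HD^{1-s,q})$-integrability and the instantaneous regularization to $\BD^{1-s-2/p}_{q,p}(\Dom)$. Finally I would identify the trace space: substituting the formula \eqref{eq:AC_critical_weight_formula} gives
\[
1-s-\frac{2(1+\a_{\crit})}{p}=1-s-2\Big(1-\frac{d}{2q}-\frac{s}{2}\Big)=\frac{d}{q}-1,
\]
so $\Xapcrit=\BD^{d/q-1}_{q,p}(\Dom)$, as claimed in \eqref{eq:AC_critical_s_first}. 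There is no genuine obstacle here beyond the bookkeeping; the real content has been packaged into Theorem~\ref{t:allen_cahn_first_local}, and the role of $s\leq 1/3$ as the sharp threshold making the critical interval of $q$ nonempty is the only nontrivial observation.
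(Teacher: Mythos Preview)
Your proposal is correct and follows essentially the same approach as the paper's proof: both reduce to verifying that the hypotheses of Theorem~\ref{t:allen_cahn_first_local} are satisfied at $\a=\a_{\crit}$, by checking that the combined constraints on $q$ from \eqref{eq:restriction_q_Allen_Cahn_almost_ultra_weak} and \eqref{eq:AC_critical_condition_q} collapse to $\frac{d}{2-s}<q<\frac{d}{1-s}$ precisely when $d>2$ and $s\leq 1/3$. You add a little extra bookkeeping (explicitly placing yourself in the second bullet of Theorem~\ref{t:allen_cahn_first_local} via the computation $1-(1+\a_{\crit})/p<\beta_2$, and re-deriving the trace-space identification \eqref{eq:AC_critical_s_first}), which the paper leaves to the discussion preceding the theorem, but the argument is the same.
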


\begin{proof}
By \eqref{eq:restriction_q_Allen_Cahn_almost_ultra_weak} and \eqref{eq:AC_critical_condition_q}, the restriction on $q$ is equivalent to
$$
\max\Big\{\frac{d}{d-1-s},\frac{d}{2-s}\Big\}<q<\min\Big\{\frac{2d}{1+s}, \frac{d}{1-s}\Big\}.
$$
Since $d>2$, one has $\frac{d}{d-1-s}\leq \frac{d}{2-s}$. Optimizing the right-hand side of the upper bound on $q$ we see that $s\in [0,1/3]$ leads to the  least restrictions on $q$, because in that case $\frac{d}{1-s}\leq \frac{2d}{1+s}$. Now the result follows from Theorem \ref{t:allen_cahn_first_local}.
\end{proof}

\begin{remark}\
\label{r:smoothness_AC}
For $s=1/3$ we obtain the restriction $q<\frac{3d}{2}$. Thus, Theorem \ref{t:allen_cahn_first_local_critical} ensures local existence for initial data which takes values in $\BD^{\frac{d}{q}-1}_{q,p}(\Dom)$ with smoothness $\frac{d}{q}-1>-\frac{1}{3}$. The optimality of this threshold is not known.
\end{remark}

Let us conclude this section by deriving local existence in the space $L^{d}(\Dom)$. Note that the latter space has the same `local scaling' of $\BD^0_{d,p}(\Dom)$, which is a critical space for \eqref{eq:Allen_Cahn_stochastic_potentials} by Theorem \ref{t:allen_cahn_first_local_critical}. Recall that $\varepsilon>0$ is as in Assumption \ref{ass:AC_W_first}\eqref{it:AC_smallness_b}.

\begin{corollary}
\label{cor:AC_local_L_d}
Let the Assumption \ref{ass:AC_W_first} be satisfied. Let $d>2$ and $p\in [d,\infty)$. Then there exist $\bar{s},\bar{\varepsilon} >0$ such that if $\varepsilon\in (0,\bar{\varepsilon})$, $s\in (0,\bar{s})$ the following holds: for all
\begin{equation}
\label{eq:AC_initial_data_L_d}
u_0\in L^0_{\F_0}(\O;L^d(\Dom))
\end{equation}
there exists a maximal local solution $(u,\sigma)$ to \eqref{eq:Allen_Cahn_stochastic_potentials} and there exists a localizing sequence $(\sigma_n)_{n\geq 1}$ such that for any $n\geq 1$ and a.s.
$$
u\in L^p(\I_{\sigma_n},w_{\a_{\crit}};\HD^{1-s,d}(\Dom))\cap
C(\overline{I}_{\sigma_n};\BD^{0}_{d,p}(\Dom))\cap C(\I_{\sigma_n};\BD^{1-s-\frac{2}{p}}_{d,p}(\Dom)),
$$
where $\a_{\crit}:=\frac{p(1-s)}{2}-1$.
\end{corollary}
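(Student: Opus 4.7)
The strategy is to reduce to Theorem \ref{t:allen_cahn_first_local_critical} by specialising to the exponent $q=d$ and a sufficiently small $s>0$, and then to embed $L^d(\Dom)$ into the resulting critical trace space $\BD^0_{d,p}(\Dom)$.

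First I would verify the admissible ranges. With $q=d$ the condition $\frac{d}{2-s}<q<\frac{d}{1-s}$ of Theorem \ref{t:allen_cahn_first_local_critical} reduces to $1<2-s$ and $1>1-s$, both automatic for any $s\in(0,1)$. The compatibility inequality \eqref{eq:allen_cahn_s_restriction_p} becomes $\frac{1}{p}+\frac{1}{2}+\frac{s}{2}\le 1$, i.e.\ $p\ge \frac{2}{1-s}$; since $p\ge d$ by hypothesis, this is satisfied as soon as $s\le 1-\frac{2}{d}$, which is positive because $d>2$. Combining with the bound $s\le 1/3$ required in Theorem \ref{t:allen_cahn_first_local_critical}, I set $\bar s:=\min\{1/3,\,1-2/d\}>0$. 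Choosing the smallness threshold $\bar\varepsilon>0$ provided by that theorem (with $q=d$ and any $s\in(0,\bar s)$), the theorem delivers, for every $u_0\in L^0_{\F_0}(\O;\BD^{0}_{d,p}(\Dom))$, a maximal local solution $(u,\sigma)$ to \eqref{eq:Allen_Cahn_stochastic_potentials} enjoying the regularity asserted in the statement, with critical weight $\a_{\crit}=p(1-\tfrac12-\tfrac{s}{2})-1=\frac{p(1-s)}{2}-1$, which matches the formula in the corollary.

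It therefore remains to show the continuous embedding
\[
L^d(\Dom)\hookrightarrow \BD^{0}_{d,p}(\Dom),\qquad p\ge d,
\]
which is the only nontrivial point. Since $\BD^{0}_{d,p}(\Dom)=(W^{-1,d}(\Dom),W^{1,d}_0(\Dom))_{1/2,p}$ has smoothness exponent $0$, which lies strictly below the Dirichlet trace threshold $1/p$, the Dirichlet condition is inactive and, by standard identification of interpolation spaces on $C^2$-domains (combining the retraction--coretraction property coming from an extension operator with the analogous identification on $\R^d$; cf.\ Example \ref{ex:extrapolated_Laplace_dirichlet}), $\BD^{0}_{d,p}(\Dom)$ agrees with the restriction $B^{0}_{d,p}(\Dom)$ of the usual Besov space. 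The embedding $L^d(\R^d)\hookrightarrow B^{0}_{d,p}(\R^d)$ for $p\ge \max\{d,2\}=d$ is classical (e.g.\ via $L^d=F^{0}_{d,2}$ and the Triebel--Lizorkin/Besov embedding $F^{0}_{d,2}\hookrightarrow B^{0}_{d,p}$ for $p\ge \max\{d,2\}$), and passes to the bounded domain $\Dom$ by a standard extension--restriction argument. Feeding any $u_0\in L^0_{\F_0}(\O;L^d(\Dom))$ into the previous paragraph via this embedding concludes the proof; the intersection regularity in the second bullet is then inherited directly from the conclusion of Theorem \ref{t:allen_cahn_first_local_critical}. The only delicate step is the identification $\BD^{0}_{d,p}(\Dom)\simeq B^{0}_{d,p}(\Dom)$, which however is routine in the present setting of a smooth bounded domain and zero smoothness.
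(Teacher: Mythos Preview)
Your proposal is correct and follows essentially the same route as the paper: specialise Theorem \ref{t:allen_cahn_first_local_critical} to $q=d$ with $s>0$ small, check that the parameter constraints reduce to $s<1$ and $p\geq 2/(1-s)$ (the latter being guaranteed by $p\geq d$ once $s\leq 1-2/d$), and then invoke the embedding $L^d(\Dom)\hookrightarrow \BD^{0}_{d,p}(\Dom)$ for $p\geq d$. The paper's proof is terser (it refers back to Corollary \ref{cor:conservative_local_existence_L_r} for the parameter check and simply cites \eqref{eq:BD_identifications} for the embedding), but the content is the same; your explicit verification of the ranges and your justification of the embedding via $L^d=F^0_{d,2}\hookrightarrow B^0_{d,p}$ are in fact more detailed than what the paper writes. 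One minor slip: the Dirichlet trace threshold for the $\BD^{s}_{q,p}$ scale is $1/q=1/d$, not $1/p$; this does not affect the argument since $0<1/d$ in any case.
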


\begin{proof}
The proof is analogous to Corollary \ref{cor:conservative_local_existence_L_r}. The argument used in Corollary \ref{cor:conservative_local_existence_L_r} shows that there exists $\bar{s}>0$ such that $\frac{d}{2-s}<d<\frac{d}{1-s}$ and \eqref{eq:allen_cahn_s_restriction_p} hold for any $s\in (0,\bar{s})$ and $p\in [d,\infty)$. Fix $s\in (0,\bar{s})$. Theorem \ref{t:allen_cahn_first_local_critical}, applied with $s\in (0,\bar{s})$, $q=d$ and $p\in [d,\infty)$, gives the existence of $\bar{\varepsilon}>0$ such that if $\varepsilon\in (0,\bar{\varepsilon})$, then there exists a maximal local solution to \eqref{eq:Allen_Cahn_stochastic_potentials} with initial data $u_0\in L^0_{\F_0}(\O;\BD^{0}_{d,p}(\Dom))$ with the required regularity. To conclude it is enough to recall that $L^d(\Dom)\hookrightarrow \BD^{0}_{d,p}(\Dom)$ since $p\geq d$ and \eqref{eq:BD_identifications} holds.
\end{proof}

As in the previous sections, in Theorem \ref{t:allen_cahn_first_local_critical} and Corollary \ref{cor:AC_local_L_d}, the solution instantaneously regularizes in space.

\subsection{Mass conservative stochastic Allen-Cahn equations}
\label{ss:Allen_Cahn_mass_conservative}
In this subsection we study local existence for the following mass conservative Allen-Cahn equation:
\begin{equation}
\label{eq:mass_conservative_AC}
\begin{cases}
d u -\Delta u dt=\big(V(\cdot,u)-\Xint-_{\Dom}V(\cdot,u)dx\big) dt \\ \qquad \qquad \qquad \qquad + \big(\sum_{n\geq 1} \sum_{j=1}^d b_{nj} \partial_j u+ g_n(\cdot,u)\big)dw_t^n,&\text{on }\Dom,\\
\partial_{\n} u=0,&\text{on }\partial\Dom,\\
u(0)=u_0,&\text{on }\Dom.
\end{cases}
\end{equation}
Here $\Dom,V,g_n,b_{nj}$ verify Assumption \ref{ass:AC_W_first}, $\Xint-_{\Dom}\cdot dx :=1/|\Dom|\int_{\Dom}\cdot\,dx$ denotes the mean, $\n$ is the exterior normal field on $\partial\Dom$.

In literature (cf.\ \cite{ABBK16,FS19}) the problem \eqref{eq:mass_conservative_AC} is usually called mass-conservative since, at least formally, the `mass' $\E\int_{\Dom} u(t,x)dx$ is preserved under the flow, i.e.\ $\E\int_{\Dom} u(t,x)dx=\E\int_{\Dom}u_0(x)dx$.

To study \eqref{eq:mass_conservative_AC}, we employ the extrapolation-interpolation scale $(\HN^{s,q}(\Dom))_{s\in [-2,\infty)}$ of the Neumann Laplacian $\DN_{q}$. Such scale of Banach spaces fits the boundary condition appearing in \eqref{eq:Allen_Cahn_stochastic_potentials}. Indeed $\HN^{2,q}(\Dom)=\{u\in W^{2,q}(\Dom)\,:\,\partial_{\n}u=0\}$, $\HN^{1,q}(\Dom)=W^{1,q}(\Dom)$ and $\HN^{0,q}(\Dom)=L^q(\Dom)$. We refer to Example \ref{ex:Laplacian_Neumann_scales} for additional properties of such spaces.

Let $s\in [0,1)$ and $q\in [2,\infty)$. We rewrite \eqref{eq:mass_conservative_AC} in the form \eqref{eq:semilinearabstract} by setting $X_0:=\HN^{-1-s,q}(\Dom)$, $X_1:=\HN^{1-s,q}(\Dom)$ and for $u\in X_1$
\begin{align*}
A(t)u&=-\DN_{-1-s,q} u, &  B(t)u &=0,
\\ F(t,u)&=V(t,u)-\Xint-_{\Dom} V(t,u)dx, & G(t,u)&=G_1(t,u)+G_2(t,u),
\\ G_1(t,u)&=(g_n(t,u))_{n\geq 1} & G_2(t,u)&=\Big(\sum_{j=1}^d b_{nj}(t)\partial_j u\Big)_{n\geq 1}.
\end{align*}
As above, we say that $(u,\sigma)$ is a maximal local solution to \eqref{eq:mass_conservative_AC} if $(u,\sigma)$ is a maximal local solution to \eqref{eq:semilinearabstract} for the above choice of $A,B,F,G$.

Our main results concerning \eqref{eq:mass_conservative_AC}, reads as follows.

\begin{theorem}
Let the Assumption \ref{ass:AC_W_first} be satisfied. Then Theorems \ref{t:allen_cahn_first_local}, \ref{t:allen_cahn_first_local_critical} and Corollary \ref{cor:AC_local_L_d} hold for \eqref{eq:mass_conservative_AC} if the spaces $\HD$ and $\BD$ are replaced by $\HN$ and $\BN$, respectively.
\end{theorem}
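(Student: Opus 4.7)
The plan is to mimic the argument of Subsection \ref{ss:Allen_Cahn_stochastic_potentials} essentially verbatim, replacing the Dirichlet scale $(\HD^{s,q}(\Dom))_{s\geq -2}$ by the Neumann scale $(\HN^{s,q}(\Dom))_{s\geq -2}$ constructed in Example \ref{ex:Laplacian_Neumann_scales}. The key structural facts that were used in the Dirichlet proofs and that need to be verified for the Neumann scale are: (i) $-\DN_{-1-s,q}$ has a bounded $H^{\infty}$-calculus on $\HN^{-1-s,q}(\Dom)$ of angle $<\pi/2$, so that Theorem \ref{t:H_infinite_SMR} yields $-\DN_{-1-s,q}\in \MRta$; (ii) the Sobolev-type embedding $\HN^{s,q}(\Dom)\hookrightarrow L^{r}(\Dom)$ whenever $s-\frac{d}{q}\geq -\frac{d}{r}$; (iii) the complex interpolation identity $\HN^{\theta,q}(\Dom)=[\HN^{-1-s,q}(\Dom),\HN^{1-s,q}(\Dom)]_{\beta}$ with the same $\beta$ as in \eqref{eq:AC_beta_1} and \eqref{eq:AC_beta_2}; (iv) the characterization $\BN^{1-s-2(1+\a)/p}_{q,p}(\Dom)=(\HN^{-1-s,q}(\Dom),\HN^{1-s,q}(\Dom))_{1-\frac{1+\a}{p},p}$ of the trace space. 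All of these are collected in Example \ref{ex:Laplacian_Neumann_scales} (or follow from the same interpolation arguments as in Example \ref{ex:extrapolated_Laplace_dirichlet}).

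Once these facts are in place, the nonlinearity $F(t,u)=V(t,u)-\dashint_{\Dom}V(t,u)\,dx$ is controlled exactly as in \eqref{eq:estimate_F_allen_cahn_almost_ultra_weak}: the mean subtraction is a bounded linear projection on $L^{m}(\Dom)$, hence also on $\HN^{-1-s,q}(\Dom)$ via the factorization through $L^m$ used in step $(i)$ of \eqref{eq:estimate_F_allen_cahn_almost_ultra_weak}, and the Lipschitz/polynomial growth bounds for $V$ from Assumption \ref{ass:AC_W_first}\eqref{it:AC_V_g_n_potentials} then produce the same estimate with $\beta_{1}$ as in \eqref{eq:AC_beta_1}. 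Likewise, the pointwise multiplication nonlinearity $G_{1}(t,u)=(g_n(t,u))_{n\geq 1}$ is estimated in $\gamma(\ell^{2},\HN^{-s,q}(\Dom))$ exactly as in \eqref{eq:estimate_G_allen_cahn_almost_ultra_weak}, producing the same $\beta_{2}$ as in \eqref{eq:AC_beta_2}. The splitting into the subcases $1-\frac{1+\a}{p}$ greater, equal, or smaller than $\beta_i$ and the verification of \ref{HFcritical}-\ref{HGcritical} (including the critical relations \eqref{eq:allen_cahn_first_critical_weights}, \eqref{eq:allen_cahn_first_critical_weights_2}) are then word-for-word identical.

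The only genuinely new technical point is the estimate of the gradient noise term $G_{2}(t,u)=(\sum_{j}b_{jn}(t)\partial_j u)_{n\geq 1}$ on the Neumann scale, i.e.\ a Neumann analogue of Lemma \ref{l:small_gradient_noise_domain}. This is where the argument is in fact cleaner than in the Dirichlet case, because the Neumann realization does not impose any trace condition on functions in $\HN^{1,q}(\Dom)=W^{1,q}(\Dom)$, so the ``orthogonality at the boundary'' issues (Assumption \ref{ass:QND_domain_ortogonality} and Remark \ref{r:AC_noise}) disappear. Concretely, one checks $G_{2}(t,\cdot)\in \calL(\HN^{1-s,q}(\Dom),\gamma(\ell^{2},\HN^{-s,q}(\Dom)))$ with norm controlled by $\sup_{j}\|(b_{jn})_{n\geq 1}\|_{L^{\infty}(\I_T\times \Omega; W^{1,\infty}(\Dom;\ell^{2}))}$: for $s=0$ this is immediate from $\HN^{1,q}(\Dom)=W^{1,q}(\Dom)$ and $\gamma(\ell^{2},L^{q}(\Dom))=L^{q}(\Dom;\ell^{2})$; for $s=1$ one proceeds by trace duality using $\HN^{-1,q}(\Dom)=(\HN^{1,q'}(\Dom))^{*}$ (see Example \ref{ex:Laplacian_Neumann_scales}) and the identity $\gamma(\ell^{2},\HN^{1,q'}(\Dom))=W^{1,q'}(\Dom;\ell^{2})$, again without any boundary compatibility because no trace is killed; the intermediate $s\in (0,1)$ is obtained by complex interpolation exactly as in the Dirichlet proof. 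With this \ref{it:G_L} holds with constant proportional to the $\varepsilon$ in Assumption \ref{ass:AC_W_first}\eqref{it:AC_smallness_b}.

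The main obstacle is thus the endpoint estimate $s=1$ for $G_{2}$ in the Neumann scale, which is the only place where one has to argue without directly invoking a vanishing boundary condition; once this is settled by the trace-duality argument sketched above, the smallness condition \eqref{eq:smallness_semilinear} of Theorem \ref{t:semilinear} is fulfilled for $\varepsilon$ small enough, and the three statements (Theorems \ref{t:allen_cahn_first_local}, \ref{t:allen_cahn_first_local_critical} and Corollary \ref{cor:AC_local_L_d}) transfer verbatim, with $\Xap=\BN^{1-s-2(1+\a)/p}_{q,p}(\Dom)$ playing the role of $\BD^{1-s-2(1+\a)/p}_{q,p}(\Dom)$. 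Note that in the critical case $p=q=2$ one still has to exclude $s=0$ as before, and the exponent ranges \eqref{eq:restriction_q_Allen_Cahn_almost_ultra_weak}, \eqref{eq:allen_cahn_s_restriction_p}, \eqref{eq:AC_critical_condition_q} are unchanged since they depend only on the scaling of the non-linearities, not on the choice of boundary condition.
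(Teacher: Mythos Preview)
Your overall strategy is correct and matches the paper's (one-line) proof: transfer the Dirichlet arguments of Subsection \ref{ss:Allen_Cahn_stochastic_potentials} to the Neumann scale using the properties listed in Example \ref{ex:Laplacian_Neumann_scales}. The treatment of $F$ (including the mean subtraction), of $G_1$, the identification of $\beta_1,\beta_2$, and the exponent conditions all carry over verbatim, exactly as you describe.

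However, your handling of the $G_2$ term at the endpoint $s=1$ contains a genuine error, and the logic is in fact backwards. You claim that the trace-duality argument works ``without any boundary compatibility because no trace is killed,'' but in the Dirichlet proof of Lemma \ref{l:small_gradient_noise_domain}\eqref{it:small_gradient_ultra_weak_setting} the weak definition
\[
\langle v, G_L(u)\rangle := -\sum_{n,j}\int_{\Dom} u\,\partial_j(b_{jn}v_n)\,dx
\]
agrees with the strong one on $W^{1,q}_0(\Dom)$ \emph{precisely because} the test functions $v_n\in W^{1,q'}_0(\Dom)$ vanish on $\partial\Dom$, so the boundary term from integration by parts disappears. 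In the Neumann case the test functions lie in $W^{1,q'}(\Dom)$ and need not vanish on $\partial\Dom$; the boundary term $\sum_{n,j}\int_{\partial\Dom} u\,b_{jn}v_n\,\nu_j\,dS$ persists. Hence the weak map you define at $s=1$ is \emph{not} an extension of the strong map at $s=0$, and complex interpolation between the two endpoints is not legitimate. Concretely, $u\mapsto b\cdot\nabla u$ viewed as a map $W^{1,q}(\Dom)\to L^q(\Dom)\hookrightarrow (W^{1,q'}(\Dom))^*$ does not extend continuously to $L^q(\Dom)$: on $\Dom=(0,1)$ with $b\equiv 1$, $\phi\equiv 1$, $u_n(x)=x^n$, one has $\|u_n\|_{L^q}\to 0$ while $\langle u_n',\phi\rangle=u_n(1)-u_n(0)=1$.

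So the Neumann analogue of Lemma \ref{l:small_gradient_noise_domain}\eqref{it:small_gradient_ultra_weak_setting} needs a different justification for $s\in(0,1)$; the interpolation you propose does not work as stated. (Interpolating between $s=-1$ and $s=0$ is unproblematic, as you correctly observe, but that only covers $s\in[-1,0]$.) The paper's ``repeat literally'' does not address this point either.
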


\begin{proof}
Due to the results presented in Example \ref{ex:Laplacian_Neumann_scales} on the Neumann Laplacian and the $\HN$-scale, one can repeat the proof of the stated results literally.
\end{proof}

\subsection{Stochastic Cahn-Hilliard equations}
\label{ss:CH}
The Cahn-Hilliard equation has been derived as a phenomenological model for phase separation
of binary alloys. Stochastic versions of the Cahn-Hilliard equation have been proposed in the physics to model  external fields, impurities in the alloy, or may describe thermal
fluctuations or external mass supply see \cite{C70,HH77,L71}. For a mathematical perspective we refer to \cite{ABNP19,CW02,CW01,DPD96,EM91,S18} and the references therein. To the best of our knowledge the results presented below are new.

In this section we study the following stochastic Cahn-Hilliard equation for the unknown process $u:\I_T\times \O\times \Dom\to \R$:
\begin{equation}
\label{eq:Cahn_hilliard}
\begin{cases}
du + \Delta^2 u dt=\Delta (\phi(\cdot,u))dt +\sum_{n\geq 1}\Phi_n(\cdot,u)dw_{t}^n,&\text{ on }\Dom,\\
\partial_{\n}u=\partial_{\n}\Delta u =0,&\text{ on }\partial\Dom,\\
u(0)=u_0,&\text{ on }\Dom.
\end{cases}
\end{equation}
Here $\n$ denotes the exterior normal field on $\partial\Dom$. Reasoning as in Subsection \ref{ss:stochastic_reaction_diffusion_domains}, one could allow an additional multiplicative noise term $\big(b_{in} \partial_i u + b_{ijn} \partial_i\partial_j u)d w^n_t$ as long as $\|(b_{ijn})_{n\geq 1}\|_{\ell^2}$ is small. Note that since the operator is of fourth order, we do not need a smallness condition on the first order part $(b_{in})_{n\geq 1}$.

We study \eqref{eq:Cahn_hilliard} under the following assumption.
\begin{assumption} Let $d\geq 2$.
\label{ass:CH_Phi}
\begin{enumerate}[{\rm(1)}]
\item Assume that one of the following conditions holds:
\begin{itemize}
\item $q\in [2,\infty)$, $p\in (2,\infty)$ and $\a\in [0,p/2-1)$;
\item $q=p=2$ and $\a=0$.
\end{itemize}
\item\label{it:CH_regularity_domain} $\Dom\subseteq \R^d$ is a bounded domain with $C^4$-boundary.
\item\label{it:CH_phi_nonlinearities} The maps $\phi:\I_T\times \O\times\Dom\times \R\to \R$ and $\Phi:=(\Phi_n)_{n\geq 1}:\I_T\times \O\times\Dom\times \R\to \ell^2$ are $\Progress\otimes \Borel(\Dom)\otimes\Borel(\R)$-measurable, $\phi(\cdot, 0) \in L^{\infty}(\I_T\times \O;L^q(\Dom))$ and $\Phi(\cdot, 0) \in L^{\infty}(\I_T\times \O;L^q(\Dom;\ell^2))$. Moreover, we assume that there exist $h>1$ and a constant $C>0$ such that for all $y,y'\in \R$,
$$
|\phi(\cdot,y)-\phi(\cdot, y')|+\|(\Phi_n(\cdot, y)-\Phi_n(\cdot, y'))_{n\geq 1}\|_{\ell^2}\leq C (1+|y|^{h-1}+|y'|^{h-1})|y-y'|.
$$
\end{enumerate}
\end{assumption}

Note that the above condition for $h=3$ covers the case
\begin{equation}
\label{eq:CH_well_potential}
\phi(u)=\Psi'(u)=u^3-u,\qquad \text{where}\qquad \Psi(s)=\frac{1}{4}(s^2-1)^2.
\end{equation}
In the physical literature $\Psi$ is called the double well-potential. In Example \ref{ex:CH_h_d_3}, this case will be investigated in detail. By Assumption \ref{ass:CH_Phi}\eqref{it:CH_regularity_domain}, we can study \eqref{eq:Cahn_hilliard} in the scale $(\Hn^{s,q}(\Dom))_{s\geq -4}$ introduced in Example \ref{ex:Cahn_Hilliard_scale}. Such scale fits the boundary conditions required in \eqref{eq:Cahn_hilliard}. For instance,
\begin{align*}
\Hn^{4,q}(\Dom)& =\{u\in W^{4,q}(\Dom)\,:\,\partial_{\n} u|_{\partial\Dom}=\partial_{\n}\Delta u|_{\partial\Dom}=0 \},
\\ \Hn^{2,q}(\Dom)& =\{u\in W^{2,q}(\Dom)\,:\,\partial_{\n}u|_{\partial\Dom}=0\},
\end{align*}
and $\Hn^0(\Dom)=L^q(\Dom)$. We refer to Example \ref{ex:Cahn_Hilliard_scale} for further properties.

\subsubsection{Almost very weak setting}
Let $s\in [0,2)$ and $q\in [2,\infty)$. We rewrite \eqref{eq:Cahn_hilliard} in the form \eqref{eq:semilinearabstract} by setting $X_0:=\Hn^{-2-s,q}(\Dom)$, $X_1=\Hn^{2-s,q}(\Dom)$ and for $u\in X_1$
\begin{align*}
A(t)u&=\Dn_{-2-s,q}^2 u, &  B(t)u &=0,
\\ F(t,u)&=\Dn_{-2-s,q} (\phi(t,u)), & G(t,u)&=(\Phi_n(t,u))_{n\geq 1}.
\end{align*}
Here $\Dn_{\eta,q}^2$ is the extrapolated bi-Laplace operator \eqref{eq:Bilaplacian_extrapolated} and $\Dn_{\beta,q}$ is as in \eqref{eq:CH_Delta_extrapolated}.

As usual, we say that $(u,\sigma)$ is a maximal local solution to \eqref{eq:Cahn_hilliard} if $(u,\sigma)$ is a maximal local solution to \eqref{eq:semilinearabstract} in the sense of Definition \ref{def:solution2} with the above choice of $A,B,F,G$ and $H=\ell^2$.

To show local existence for \eqref{eq:Cahn_hilliard} we employ Theorem \ref{t:semilinear}. By Example \ref{ex:Cahn_Hilliard_scale} it follows that $\Dn^2_{-1-s,q}$ has a bounded $H^{\infty}$-calculus on $\Hn^{-s,q}(\Dom)$ of angle less than $\pi/2$. By Theorem \ref{t:H_infinite_SMR}, it follows that $\Dn^2_{-1-s,q}\in \MRta$. It remains to look at suitable bounds for the non-linearities $F,G$. Let us begin by looking at $F$:
\begin{equation}
\label{eq:estimate_Cahn_Hilliard_F}
\begin{aligned}
\|F(\cdot,u)-F(\cdot,v)&\|_{\Hn^{-2-s,q}(\Dom)}\\
&\stackrel{(i)}{\lesssim} \|\phi(\cdot,u)-\phi(\cdot,v)\|_{\Hn^{-s,q}(\Dom)}\\
&\stackrel{(ii)}{\lesssim} \|\phi(\cdot,u)-\phi(\cdot,v)\|_{L^{r}(\Dom)}\\
&\stackrel{(iii)}{\lesssim} (1+\|u\|_{L^{h r}(\Dom)}^{h-1}+\|v\|_{L^{h r}(\Dom)}^{h-1})\|u-v\|_{L^{hr}(\Dom)}\\
&\stackrel{(iv)}{\lesssim} (1+\|u\|_{\Hn^{\theta,q}(\Dom)}^{h-1}+\|v\|_{\Hn^{\theta,q}(\Dom)}^{h-1})\|u-v\|_{\Hn^{\theta,q}(\Dom)}.
\end{aligned}
\end{equation}
where in $(i)$ we used that $\Dn_{-2-s,q}:\Hn^{-s,q}(\Dom)\to \Hn^{-2-s,q}(\Dom)$ boundedly (see \eqref{eq:CH_Delta_extrapolated}), in $(ii)$ Sobolev embedding with $-s-d/q = -d/r$ (see \eqref{eq:Hn_Sob_embeddings}). In $(iii)$ we applied H\"{o}lder's inequality and Assumption \ref{ass:CH_Phi}\eqref{it:CH_phi_nonlinearities}, and in $(iv)$ we used Sobolev embeddings with
$$
\theta-\frac{d}{q}=-\frac{d}{h r}\Rightarrow   \theta=\frac{d}{q}-\frac{1}{h}\Big(s+\frac{d}{q}\Big).
$$
Note that $r\in (1,\infty)$ since we assume $q>d/(d-s)$. To ensure $\theta\in (0,2-s)$ we require
\begin{equation*}
\frac{d(h-1)}{2h-s(h-1)}<q<\frac{d(h-1)}{s}.
\end{equation*}
Setting
\begin{equation}
\label{eq:CH_beta_1_def}
\beta_1:=\frac{\theta+s+2}{4}=\frac{1}{4}\Big(\frac{d}{q}+s\Big)\Big(1-\frac{1}{h}\Big)+\frac{1}{2}
\end{equation}
we obtain $\Hn^{\theta,q}(\Dom)=[\Hn^{-2-s,q}(\Dom),\Hn^{2-s,q}(\Dom)]_{\beta_1}$ by \eqref{eq:Hn_complex_interpolation}. Obviously, $\theta<2-s$ implies that $\beta_1<1$. Summarizing, we have proved that $F:X_{\beta_1}\to X_0$ is locally Lipschitz. As usual, we split into three cases (see Remark \ref{r:non_linearities}\eqref{it:non_linearities_continuous_trace}-\eqref{it:non_linearities_varphi_equal_to_beta}):
\begin{enumerate}[{\rm(1)}]
\item If $1-(1+\a)/{p}>\beta_1$, \ref{HFcritical} holds with $F_{\Tr}=F$, $F_c\equiv F_L\equiv 0$.
\item If $1-(1+\a)/{p}= \beta_1$, \ref{HFcritical} holds with $F_{\Tr}\equiv 0$, $F_{c}:=F$, $F_L\equiv 0$, $m_F=1$, $\rho_1=h-1$ and $\varphi_1=\beta_1$.
\item If $1-(1+\a)/{p}<\beta_1$, \ref{HFcritical} holds with $F_{\Tr}\equiv 0$, $F_{c}:=F$, $F_L\equiv 0$, $m_F=1$, $\rho_1=h-1$ and $\varphi_1=\beta_1$ under the condition \eqref{eq:HypCritical}. The latter becomes
\begin{equation}
\label{eq:CH_critical_weights}
\frac{1+\a}{p}\leq \frac{\rho_1+1}{\rho_1}(1-\beta_1)=\frac{h}{2(h-1)}-\frac{1}{4}\Big(s+\frac{d}{q}\Big).
\end{equation}
\end{enumerate}
Next we estimate $G$. Reasoning as in \eqref{eq:estimate_Cahn_Hilliard_F}, using that $X_{1/2}=\Hn^{-s,q}(\Dom)$ by \eqref{eq:Hn_complex_interpolation} and Assumption \ref{ass:CH_Phi}\eqref{it:CH_phi_nonlinearities}, one has
\begin{align*}
\|G(\cdot,u)-G(\cdot,v)\|_{\g(\ell^2;\Hn^{-s,q}(\Dom))}
&\lesssim \|(\Phi_n(\cdot,u)-\Phi_n(\cdot,v))_{n\geq 1}\|_{\g(\ell^2;L^r(\Dom))}\\
&\stackrel{(i)}{\eqsim} \|(\Phi_n(\cdot,u)-\Phi_n(\cdot,v))_{n\geq 1}\|_{L^r(\Dom;\ell^2)}\\
&\lesssim (1+\|u\|_{L^{h r}(\Dom)}^{h-1}+\|v\|_{L^{h r}(\Dom)}^{h-1})\|u-v\|_{L^{h r}(\Dom)}\\
&\lesssim (1+\|u\|_{\Hn^{\theta,q}(\Dom)}^{h-1}+\|v\|_{\Hn^{\theta,q}(\Dom)}^{h-1})\|u-v\|_{\Hn^{\theta,q}(\Dom)}.
\end{align*}
where $r,\theta$ are as in \eqref{eq:estimate_Cahn_Hilliard_F} and in $(i)$ we used \eqref{eq:gammaidentity}. Thus, under the above assumptions, the same argument used for $F$ implies that $G$ verifies \ref{HGcritical}.

Finally, by \eqref{eq:def_Bn}, the trace space is given by
\[\Xap = (X_0, X_1)_{1-\frac{1+\kappa}{p},p} = \Bn^{2-s-\frac{4(1+\a)}{p}}_{q,p}(\Dom).\]
See \eqref{eq:Bn_identification} for more on $\Bn$-spaces. Thus, Theorem \ref{t:semilinear} gives the following result.
\begin{theorem}
\label{t:CH_local}
Suppose that Assumption \ref{ass:CH_Phi} holds. Let $s\in [0,2)$ and assume that
\begin{equation}
\label{eq:CH_local_limitation_q}
\max\Big\{\frac{d(h-1)}{2h-s(h-1)},\frac{d}{d-s}\Big\}<q<\frac{d(h-1)}{s}
\end{equation}
holds.\footnote{Here we used the convention $1/0:=\infty$.} Let $\beta_1$ be as in \eqref{eq:CH_beta_1_def}. Assume that one of the following conditions holds:
\begin{itemize}
\item $1-(1+\a)/p\geq \beta_1$.
\item $1-(1+\a)/p<\beta_1$ and \eqref{eq:CH_critical_weights} holds.
\end{itemize}
Then for any
\[u_0\in L^0_{\F_0}(\O;\Bn^{2-s-\frac{4(1+\a)}{p}}_{q,p}(\Dom))\]
the problem \eqref{eq:Cahn_hilliard} has a maximal local solution $(u,\sigma)$. Moreover, there exists a localizing sequence $(\sigma_n)_{n\geq 1}$ such that for each $n\geq 1$ and a.s.
$$
u\in L^p(\I_{\sigma_n},w_{\a};\Hn^{2-s,q}(\Dom))\cap C(\overline{I}_{\sigma_n};\Bn^{2-s-\frac{4(1+\a)}{p}}_{q,p}(\Dom))
\cap C(I_{\sigma_n};\Bn^{2-s-\frac{4}{p}}_{q,p}(\Dom)).
$$
\end{theorem}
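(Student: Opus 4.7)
The plan is to apply Theorem \ref{t:semilinear}\eqref{it:semilinear_u_L_0} with the abstract setup $X_0 = \Hn^{-2-s,q}(\Dom)$, $X_1 = \Hn^{2-s,q}(\Dom)$, $H = \ell^2$, and $A, B = 0, F, G$ as fixed just before the statement. Since the problem is semilinear (the operator $A$ is independent of $u$ and $B = 0$), no $L^\infty_{\F_0}$--restriction on $u_0$ is needed. I would organize the verification of Hypothesis \hyperref[H:hip]{$\Hip$} into the following three steps.

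First, I would establish stochastic maximal $L^p$-regularity. By Example \ref{ex:Cahn_Hilliard_scale}, the extrapolated bi-Laplacian $\Dn^2_{-2-s,q}$ has a bounded $H^\infty$-calculus of angle $<\pi/2$ on $\Hn^{-2-s,q}(\Dom)$. Theorem \ref{t:H_infinite_SMR} then yields $(A,0) \in \MRta$ for all choices of $(p,\a)$ in Assumption \ref{ass:Xtr}, so \eqref{eq:stochastic_maximal_regularity_assumption_local_extended} holds trivially (with $A$ independent of $u_0$).

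Second, I would verify \ref{HFcritical} and \ref{HGcritical} using the estimate \eqref{eq:estimate_Cahn_Hilliard_F} and its $G$--analogue, which are already derived in the text preceding the theorem under the restrictions \eqref{eq:CH_local_limitation_q} (guaranteeing $r \in (1,\infty)$ and $\theta \in (0,2-s)$). Setting $\beta_1 = \varphi_1 = (\theta+s+2)/4$ as in \eqref{eq:CH_beta_1_def}, one has $X_{\beta_1} = \Hn^{\theta,q}(\Dom)$ by \eqref{eq:Hn_complex_interpolation}, and both nonlinearities send $X_{\beta_1}$ into $X_0$ and $\g(\ell^2, X_{1/2})$ respectively in a locally Lipschitz way with polynomial growth of order $h-1$. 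Depending on the relative size of $1-(1+\a)/p$ and $\beta_1$, I distinguish the three cases listed below the estimates: in the subcritical case $1-(1+\a)/p > \beta_1$ I set $F_{\Tr} = F$, $F_c = F_L = 0$ (and similarly for $G$), invoking Remark \ref{r:non_linearities}\eqref{it:non_linearities_continuous_trace}; in the equality case I invoke Remark \ref{r:non_linearities}\eqref{it:non_linearities_varphi_equal_to_beta}; and in the critical case $1-(1+\a)/p < \beta_1$ I set $F_c = F$ and check \eqref{eq:HypCritical}, which under the hypothesis \eqref{eq:CH_critical_weights} reduces to $(h-1)(\varphi_1 - 1 + (1+\a)/p) + \beta_1 \leq 1$. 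The same choices give \ref{HGcritical} for $G$. Since in every case $L_F = L_G = 0$, the smallness condition \eqref{eq:smallness_semilinear} is vacuous.

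Third, I would invoke Theorem \ref{t:semilinear}\eqref{it:semilinear_u_L_0} to conclude existence and uniqueness of an $L^p_\a$-maximal local solution $(u,\sigma)$ with $\sigma > 0$ a.s., together with a localizing sequence $(\sigma_n)_{n \geq 1}$ such that for each $n$, almost surely,
\[
u \in L^p(\I_{\sigma_n},w_{\a};X_1) \cap C(\overline{\I}_{\sigma_n};\Xap) \cap C((0,\sigma_n];\Xp),
\]
where $\Xap = (X_0,X_1)_{1-(1+\a)/p,p} = \Bn^{2-s-4(1+\a)/p}_{q,p}(\Dom)$ and $\Xp = \Bn^{2-s-4/p}_{q,p}(\Dom)$ by the definition \eqref{eq:def_Bn}. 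The instantaneous regularization into $\Xp$ follows from the last part of Theorem \ref{t:local_Extended}\eqref{it:regularity_data_L0} whenever $\a > 0$.

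The main obstacle in this proof is bookkeeping: checking that the chain of Sobolev embeddings used in \eqref{eq:estimate_Cahn_Hilliard_F} remains admissible within $X_0, X_1$ for all $(s,q,h)$ in the allowed range, and that $\beta_1 < 1$ so that the factorization through $X_{\beta_1}$ lies strictly below $X_1$. Both are ensured by the lower and upper bounds in \eqref{eq:CH_local_limitation_q}, which translate precisely to $\theta \in (0, 2-s)$ and $r \in (1,\infty)$. Everything else is a direct translation of the semilinear critical-space machinery established in Sections \ref{ss:SMR} and \ref{s:quasi}.
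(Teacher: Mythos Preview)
Your proposal is correct and follows essentially the same approach as the paper: all the verification of \ref{HAmeasur}, \ref{HFcritical}, \ref{HGcritical} and the stochastic maximal regularity assumption is carried out in the discussion preceding the theorem, so that the paper's proof reduces to the single sentence ``Thus, Theorem \ref{t:semilinear} gives the following result.'' Your organization into three explicit steps (SMR via Theorem \ref{t:H_infinite_SMR}, nonlinearity estimates via \eqref{eq:estimate_Cahn_Hilliard_F} with the case splitting on $\beta_1$, and invoking Theorem \ref{t:semilinear}\eqref{it:semilinear_u_L_0}) simply makes this implicit argument explicit, with no deviation in strategy.
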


\subsubsection{Critical spaces for \eqref{eq:Cahn_hilliard}}
As usual, we check when \eqref{eq:CH_critical_weights} becomes an equality.
We first look at the case $p\in (2,\infty)$. Since $\a\geq 0$ we have to assume
\begin{equation}
\label{eq:Cahn_Hilliard_p_limitation}
\frac{1}{p}\leq \frac{h}{2(h-1)}-\frac{1}{4}\Big(s+\frac{d}{q}\Big).
\end{equation}
Since $\a<p/2-1$ if and only if $(1+\a)/{p}<1/2$, we require
\begin{align*}
\frac{h}{2(h-1)}-\frac{1}{4}\Big(s+\frac{d}{q}\Big)<\frac{1}{2}\quad \Leftrightarrow \quad
q<\frac{d(h-1)}{2-s(h-1)}.
\end{align*}
Simple computations show that the previous is verified if and only if
\begin{equation}
\label{eq:Cahn_Hilliard_critical_condition}
h\geq \frac{2+s}{s},\quad \text{ or }\quad \Big[ h<\frac{2+s}{s}  \ \ \text{and}  \ \ q<\frac{d(h-1)}{2-s(h-1)}\Big].
\end{equation}
If \eqref{eq:Cahn_Hilliard_critical_condition} holds, then
\begin{equation}
\label{eq:CH_critical_weight_case_p_larger_than_2}
\a_{\crit}:=p\Big(\frac{h}{2(h-1)}-\frac{1}{4}\Big(s+\frac{d}{q}\Big)\Big)-1\in \Big[0,\frac{p}{2}-1\Big).
\end{equation}
By \eqref{eq:def_Bn}, the corresponding critical space is
\begin{equation}
\label{eq:CH_critical_spaces_p_large}
\begin{aligned}
\Xapcrit
&=\Bn^{2-s-\frac{4(1+\a_{\crit})}{p}}_{q,p}(\Dom)\\
&=\Bn^{2-s + \frac{2h}{h-1}+\frac{d}{q}+s}_{q,p}(\Dom)=\Bn^{\frac{d}{q}-\frac{2}{h-1}}_{q,p}(\Dom).
\end{aligned}
\end{equation}
Note that the trace space does not depend on the parameter $s\in [0,2)$ and depends on $p$ only through the microscopic parameter. In addition, it coincides with the critical space for \eqref{eq:Cahn_hilliard} in the deterministic setting (see \cite[Example 3]{CriticalQuasilinear}).

In the case $p=q=2$ and $\a=0$, equality in \eqref{eq:CH_critical_weights} holds if and only if
\begin{equation}
\label{eq:CH_h_critical_case_p_2_q_2}
\frac{1}{2}=\frac{h}{2(h-1)}-\frac{1}{4}\Big(s+\frac{d}{2} \Big) \Leftrightarrow h=1+\frac{4}{2s+d}.
\end{equation}
Thus, in this case the trace space becomes
$$
\Bn^{-s}_{2,2}(\Dom)=\Bn^{\frac{d}{2}-\frac{2}{h-1}}_{2,2}(\Dom),
$$
where we used \eqref{eq:CH_h_critical_case_p_2_q_2}. The latter space is consistent with \eqref{eq:CH_critical_spaces_p_large} in the case $q\in (2,\infty)$. The previous considerations and Theorem \ref{t:CH_local} give the following result.

\begin{theorem}
\label{t:CH_critical}
Let the Assumption \ref{ass:CH_Phi} be satisfied and let $s\in [0,2)$. Assume that one of the following conditions hold:
\begin{enumerate}[{\rm(1)}]
\item $q=p=2$, $h=1+\frac{4}{d+2s}$, $d>\max\{2s,2s^2/(2-s)\}$ and $\a_{\crit}=0$.
\item $q\in [2,\infty)$, $p\in (2,\infty)$, \eqref{eq:CH_local_limitation_q}, \eqref{eq:Cahn_Hilliard_p_limitation}, \eqref{eq:Cahn_Hilliard_critical_condition} hold and $\a_{\crit}$ is given by \eqref{eq:CH_critical_weight_case_p_larger_than_2}.
\end{enumerate}
Then for each
$$
u_0\in L^0_{\F_0}(\O;\Bn^{\frac{d}{q}-\frac{2}{h-1}}_{q,p}(\Dom)),
$$
there exists a maximal local solution $(u,\sigma)$ to \eqref{eq:Cahn_hilliard}. Moreover, there exists a localizing sequence $(\sigma_n)_{n\geq 1}$ such that for all $n\geq 1$ and a.s.
\begin{equation*}
u\in L^{p}(\I_{\sigma_n},w_{\a_{\crit}};\Hn^{2-s,q})
\cap C(\overline{I}_{\sigma_n};\Bn^{\frac{d}{q}-\frac{2}{h-1}}_{q,p})
\cap C((0,\sigma_n];\Bn^{2-s-\frac{4}{p}}_{q,p}).
\end{equation*}
\end{theorem}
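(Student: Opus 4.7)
The plan is to derive Theorem \ref{t:CH_critical} as a direct specialization of Theorem \ref{t:CH_local} by choosing $\a=\a_{\crit}$ so that equality holds in \eqref{eq:CH_critical_weights}, and then simplifying the resulting trace space. So the strategy splits into three small tasks: (i) in each of the two cases, verify that the corresponding hypotheses of Theorem \ref{t:CH_local} (notably the range \eqref{eq:CH_local_limitation_q} for $q$ and the subcritical condition $1-(1+\a)/p\leq \beta_1$) are met; (ii) check that the proposed $\a_{\crit}$ lies in the admissible range for the weight; (iii) rewrite $\Bn^{2-s-4(1+\a_{\crit})/p}_{q,p}$ as $\Bn^{d/q-2/(h-1)}_{q,p}$.

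For the case $p\in(2,\infty)$, $q\in[2,\infty)$, I would start by noting that \eqref{eq:Cahn_Hilliard_p_limitation} is exactly the statement $\a_{\crit}\geq 0$ where $\a_{\crit}$ is given by \eqref{eq:CH_critical_weight_case_p_larger_than_2}, while \eqref{eq:Cahn_Hilliard_critical_condition} rearranges to the statement $\a_{\crit}<p/2-1$; hence $\a_{\crit}\in[0,p/2-1)$, as required by Assumption \ref{ass:CH_Phi}. With this choice, \eqref{eq:CH_critical_weights} becomes an equality, so in particular $1-(1+\a_{\crit})/p\leq \beta_1$. The range condition \eqref{eq:CH_local_limitation_q} for $q$ is part of the hypothesis. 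Thus all assumptions of Theorem \ref{t:CH_local} are verified with $\a=\a_{\crit}$.

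For the case $p=q=2$, $\a=0$, I would insert $h=1+4/(d+2s)$ into \eqref{eq:CH_critical_weights} and check it holds with equality, so $\beta_1\leq 1/2 = 1-(1+\a)/p$. The condition $q>d/(d-s)$ reduces to $d>2s$, while a short computation shows that the lower bound $d(h-1)/(2h-s(h-1))$ in \eqref{eq:CH_local_limitation_q} equals $2d/(d+4)<2$, and that the upper bound $d(h-1)/s$ simplifies to a quantity strictly larger than $2$ precisely when $d>2s^2/(2-s)$. Hence \eqref{eq:CH_local_limitation_q} is satisfied, and Theorem \ref{t:CH_local} applies.

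Finally, I would identify the trace space. In both cases, equality in \eqref{eq:CH_critical_weights} combined with $4(1+\a_{\crit})/p=\frac{2h}{h-1}-(s+d/q)$ yields
\[2-s-\frac{4(1+\a_{\crit})}{p}=2-s-\frac{2h}{h-1}+s+\frac{d}{q}=\frac{d}{q}-\frac{2}{h-1},\]
as already computed in \eqref{eq:CH_critical_spaces_p_large}, so $\Xapcrit=\Bn^{d/q-2/(h-1)}_{q,p}(\Dom)$, and the regularity assertion is precisely the one delivered by Theorem \ref{t:CH_local}. The only real bookkeeping — and the one place where care is needed — is confirming that the somewhat asymmetric looking bounds $d>2s$ and $d>2s^2/(2-s)$ in the $p=q=2$ case really exhaust both inequalities in \eqref{eq:CH_local_limitation_q}; everything else is just substitution into the already-established Theorem \ref{t:CH_local}.
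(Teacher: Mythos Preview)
Your approach is correct and matches the paper's: reduce to Theorem \ref{t:CH_local} with $\a=\a_{\crit}$, verify its hypotheses in each case, and identify the trace space via the computation \eqref{eq:CH_critical_spaces_p_large}. One small slip: in the $p=q=2$ case you write ``so $\beta_1\leq 1/2$'', but in fact $\beta_1=\tfrac12+\tfrac{d+2s}{2(d+2s+4)}>\tfrac12$; this does not break the argument, since equality in \eqref{eq:CH_critical_weights} places you in the second bullet of Theorem \ref{t:CH_local} rather than the first.
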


\begin{proof}
It remains to check the condition of Theorem \ref{t:CH_local} for $q=p=2$ and $\a=0$. To see this, note that if $q=p=2$ and $h=1+\frac{4}{2s+d}$,
then the  condition \eqref{eq:CH_local_limitation_q} is equivalent to
\[\max\Big\{\frac{d}{d-s},\frac{2d}{d+4}\Big\}<2<\frac{4d}{s(d+2s)}.\]
Since $\frac{2d}{d+4}<2$ is always true, the remaining conditions imply $d>\max\{(2s), \frac{2s^2}{2-s}\}$.
\end{proof}

Let us give a concrete example to see what this condition becomes in an important special case.
\begin{example}
\label{ex:CH_h_d_3}
Let $h=3$ and $d=3$. By \eqref{eq:CH_local_limitation_q}-\eqref{eq:Cahn_Hilliard_critical_condition}, the limitations on $q\in (2,\infty)$ in Theorem \ref{t:CH_critical} are equivalent to
$$
\Big[s<1, \text{ and } \frac{3}{3-s}<q<\min\Big\{\frac{6}{s},\frac{3}{1-s}\Big\}\Big], \quad \text{ or }\quad
\Big[s\geq 1,  \text{ and } \frac{3}{3-s}<q<\frac{6}{s}\Big].
$$
Let us consider the first cases, i.e.\ $s<1$. Optimizing the upper bound on $q$ we obtain $2<q<9$ for  $s=\frac{2}{3}$. Thus, if $\frac{1}{p}\leq \frac{3}{4}-\frac{1}{4}\Big(s+\frac{3}{q}\Big)$ holds, then Theorem \ref{t:CH_critical} ensures existence for initial values $u_0\in\Bn^{\frac{3}{q}-1}_{q,p}(\Dom)$ a.s.\ with smoothness $\frac{3}{q} - 1>-\frac23$. In the case $s\geq 1$, $q<\frac{6}{s}$ and therefore $\frac{3}{q}-1>\frac{s}{2}-1\geq -\frac12$. A similar situation arises for the Allen-Cahn equations, see Remark \ref{r:smoothness_AC}. As in the case of Allen-Cahn equations, the optimality of the threshold $-\frac23$ is not know.
\end{example}

Due to Theorem \ref{t:CH_local} the same strategy used in Corollary \ref{cor:AC_local_L_d} leads to the following result.
\begin{corollary}
Let the Assumption \ref{ass:CH_Phi} be satisfied. Let $h>1+4/d$ and $q:=d(h-1)/2$. Assume that $p\in[q,\infty)$ and $p>2(h-1)$. The there exists $\bar{s}>0$ such that for any $s\in (0,\overline{s})$ and any
$$
u_0\in L^0_{\F_0}(\O;L^q(\Dom))
$$
there exists a maximal local solution $(u,\sigma)$ to \eqref{eq:Allen_Cahn_stochastic_potentials}, and there exists a localizing sequence $(\sigma_n)_{n\geq 1}$ such that for any $n\geq 1$ and a.s.
$$
u\in L^p(\I_{\sigma_n},w_{\a_{\crit}};\Hn^{2-s,q}(\Dom))\cap C(\overline{I}_{\sigma_n};\Bn^{0}_{q,p}(\Dom))\cap C(I_{\sigma_n};\Bn^{2-s-\frac{4}{p}}_{q,p}(\Dom)),
$$
where $\a_{\crit}=p(\frac12-\frac{s}{4})-1$.
\end{corollary}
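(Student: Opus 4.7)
The plan is to deduce this from Theorem~\ref{t:CH_critical} exactly as Corollary~\ref{cor:AC_local_L_d} was deduced from Theorem~\ref{t:allen_cahn_first_local_critical}. With $q=d(h-1)/2$, the general critical space of Theorem~\ref{t:CH_critical} collapses to
\[
\Bn^{\frac{d}{q}-\frac{2}{h-1}}_{q,p}(\Dom)=\Bn^{0}_{q,p}(\Dom),
\]
and since $p\geq q$ the standard Besov embedding $L^q(\Dom)\hookrightarrow\Bn^{0}_{q,p}(\Dom)$ (coming from the interpolative definition of $\Bn$ together with the corresponding embedding on the classical Besov scale in \eqref{eq:Bn_identification}) makes any $L^q$-initial datum admissible. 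Hence, once Theorem~\ref{t:CH_critical} is applicable, the statement follows by restriction.

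It remains to exhibit $\bar{s}>0$ such that for $s\in(0,\bar{s})$ the three assumptions of Theorem~\ref{t:CH_critical}(2) hold, namely \eqref{eq:CH_local_limitation_q}, \eqref{eq:Cahn_Hilliard_p_limitation} and \eqref{eq:Cahn_Hilliard_critical_condition}. Substituting $q=d(h-1)/2$ into \eqref{eq:CH_local_limitation_q} reduces it to the two inequalities $s<2$ (automatic) and $d/(d-s)<d(h-1)/2$, i.e.\ $(d-s)(h-1)>2$; since $h>1+4/d$ gives strict inequality at $s=0$, continuity yields a small $s_1>0$ for which it persists on $(0,s_1)$. Inserting $q=d(h-1)/2$ into \eqref{eq:Cahn_Hilliard_p_limitation} the right-hand side simplifies as
\[
\frac{h}{2(h-1)}-\frac14\Bigl(s+\frac{2}{h-1}\Bigr)=\frac12-\frac{s}{4},
\]
so the condition becomes $p\geq 4/(2-s)$; as $4/(2-s)\downarrow 2$ when $s\downarrow 0$ and $p>2(h-1)>2$ by hypothesis, there is $s_2>0$ for which $p\geq 4/(2-s)$ holds throughout $(0,s_2)$. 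Finally, for \eqref{eq:Cahn_Hilliard_critical_condition} pick $s<\min\{2/(h-1),s_1,s_2\}$: this places us in the second alternative of \eqref{eq:Cahn_Hilliard_critical_condition}, and $d(h-1)/2<d(h-1)/(2-s(h-1))$ is equivalent to $s>0$. Setting $\bar{s}:=\min\{2/(h-1),s_1,s_2\}$ concludes the verification.

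With these choices Theorem~\ref{t:CH_critical} yields, for every $u_0\in L^0_{\F_0}(\O;\Bn^{0}_{q,p}(\Dom))$, a maximal local solution $(u,\sigma)$ to \eqref{eq:Cahn_hilliard} with the stated regularity and critical weight
\[
\a_{\crit}=p\Bigl(\frac{h}{2(h-1)}-\frac14\Bigl(s+\frac{d}{q}\Bigr)\Bigr)-1=p\Bigl(\frac12-\frac{s}{4}\Bigr)-1,
\]
which matches the exponent claimed. The only non-bureaucratic step, and the one I expect to be mildly delicate, is checking that the embedding $L^q(\Dom)\hookrightarrow \Bn^{0}_{q,p}(\Dom)$ is indeed available in the Neumann scale $(\Hn^{s,q})$; this is however immediate from $L^q(\Dom)=\Hn^{0,q}(\Dom)$, the inclusion $\Hn^{0,q}\hookrightarrow\Bn^{0}_{q,p}$ for $p\geq q$ coming from the real-interpolation definition of $\Bn^{0}_{q,p}$ in Example~\ref{ex:Cahn_Hilliard_scale}, plus standard reiteration. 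Everything else is a direct substitution argument.
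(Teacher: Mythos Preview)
Your proof is correct and follows exactly the strategy the paper intends (it explicitly says ``the same strategy used in Corollary~\ref{cor:AC_local_L_d}''). One small slip: you justify $p>2$ via ``$p>2(h-1)>2$'', which presumes $h>2$; this need not hold (e.g.\ $d$ large), but $p>2$ is anyway guaranteed by $p\geq q=d(h-1)/2>2$ since $h>1+4/d$.
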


We remark that the restriction $p>2(h-1)$ is due to \eqref{eq:Cahn_Hilliard_p_limitation}. Finally, as in Corollary \ref{cor:AC_local_L_d}, one sees that the solution immediately regularizes.

\appendix

\section{Interpolation-extrapolation scales}\label{sec:app}
In this appendix we present results on interpolation-extrapolation scales for sectorial operators which we need in the paper. For a more detailed presentation we refer to \cite[Chapter 5]{Am} and \cite[Section 5.3]{Haase:2}.

Definitions related to sectorial operators have been given in Subsection \ref{ss:sectorial_calculus} and will be used below. Let $\Aop$ be a sectorial operator on a Banach space $X$ such that $0\in \rho(\Aop)$. The latter implies that $(X,\|\Aop^{-1}\cdot\|_{X})$ is a normed space. We define the extrapolated space $X_{-1,\Aop}$ as the completion of $(X,\|\Aop^{-1}\cdot\|_{X})$, i.e.\
\begin{equation}
\label{eq:def_X_first_extrapolation}
X_{-1,\Aop}:=(X,\|\Aop^{-1}\cdot\|_{X})^{\sim};
\end{equation}
where $\sim$ denotes the completion of the space. For notational convenience we set $\|\cdot\|_{-1,\Aop}:=\|\cdot\|_{X_{-1,\Aop}}$. It is evident that $X\hookrightarrow X_{-1,\Aop}$ and if $x\in X$
$$
\|x\|_{-1,\Aop}=\|\Aop^{-1}x\|_{X}\leq \|\Aop^{-1}\|_{\calL(X)}\|x\|_{X}.
$$
Since $\overline{\Do(\Aop)}=X$, equality in the previous formula shows that $\Do(\Aop)\ni x\mapsto \Aop x\in X$ extends to a linear isometric isomorphism between $X$ and $X_{-1,\Aop}$. The extension of this map will be denoted by $T_{-1,\Aop}$ or simply $T_{-1}$ if no confusion seems likely.

To proceed further, let us note that $\Aop$ induces a closed linear operator $\Aop_{-1}$ on $X_{-1}$ given by
\begin{equation}
\label{eq:A_first_extrapolation}
\Aop_{-1}:=T_{-1}\Aop T_{-1}^{-1}.
\end{equation}
One can check that $\Aop_{-1}|_{X}=\Aop$. By \eqref{eq:A_first_extrapolation}, $\Aop$ is similar to $\Aop_{-1}$. These simple observations lead to the following.
\begin{proposition}
\label{prop:A_first_extrapolation}
Let $\Aop$ be a sectorial operator on $X$ such that and $0\in \rho(\Aop)$. Then $\Aop_{-1}$ is the closure of $\Aop$ in $X_{-1}$ with $\Do(\Aop_{-1})=X$.

Moreover, the following hold true:
\begin{enumerate}[{\rm(1)}]
\item $\Aop_{-1}$ is a sectorial operator on $X_{-1,\Aop}$ and $\om(\Aop_{-1})=\om(\Aop)$;
\item\label{it:A_extrapolated_BIP} If $\Aop\in \BIP(X)$, then $\Aop_{-1}\in \BIP(X_{-1,\Aop })$;
\item If $\Aop$ has a bounded $H^{\infty}$-calculus on $X$, so does $\Aop_{-1}$ and $\angH(\Aop_{-1})=\angH(\Aop)$.
\end{enumerate}
\end{proposition}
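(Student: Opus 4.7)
The plan is to exploit systematically the fact that, by construction, $T_{-1}:X\to X_{-1,\Aop}$ is a surjective linear isometry. Since $\Aop_{-1}=T_{-1}\Aop T_{-1}^{-1}$, the operators $\Aop$ and $\Aop_{-1}$ are similar via an \emph{isometric} isomorphism, and every property that is defined through norm estimates of operator-valued functions of $\Aop$ is then automatically transferred to $\Aop_{-1}$ with identical constants and angles.

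First I would establish the preliminary structural claims. The identity $\Do(\Aop_{-1})=X$ follows from the definition $\Aop_{-1}=T_{-1}\Aop T_{-1}^{-1}$ together with the fact that $0\in\rho(\Aop)$ forces $\Aop$ to map $\Do(\Aop)$ bijectively onto $X$, hence $T_{-1}(\Do(\Aop))=\Aop(\Do(\Aop))=X$. To identify $\Aop_{-1}$ as the closure of $\Aop$ in $X_{-1,\Aop}$, note that for $x\in\Do(\Aop)$ one has $\Aop_{-1}x=T_{-1}\Aop T_{-1}^{-1}x=\Aop x$, so $\Aop_{-1}$ extends $\Aop$; density of $\Do(\Aop)$ in $X$ and continuity of $\Aop_{-1}:(X,\|\cdot\|_X)\to(X_{-1,\Aop},\|\cdot\|_{-1})$ (which is immediate from $\|\Aop_{-1}x\|_{-1}=\|T_{-1}x\|_{-1}=\|x\|_X$) then give closability of $\Aop$ in $X_{-1,\Aop}$ with closure $\Aop_{-1}$.

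Next I would prove (1)--(3) by transporting the relevant functional calculus through $T_{-1}$. For (1), a direct computation shows
\begin{equation*}
(\lambda-\Aop_{-1})^{-1}=T_{-1}(\lambda-\Aop)^{-1}T_{-1}^{-1}\qquad\text{for all } \lambda\in\rho(\Aop),
\end{equation*}
so $\rho(\Aop)\subseteq\rho(\Aop_{-1})$ and, since $T_{-1}$ is an isometric isomorphism,
\[
\|(\lambda-\Aop_{-1})^{-1}\|_{\calL(X_{-1,\Aop})}=\|(\lambda-\Aop)^{-1}\|_{\calL(X)}.
\]
The sectoriality estimate \eqref{eq:sectorial} then transfers with identical constants, giving $\om(\Aop_{-1})\leq\om(\Aop)$. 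The reverse inequality follows by reversing the roles: the same similarity, rewritten as $\Aop=T_{-1}^{-1}\Aop_{-1}T_{-1}$, transports sectoriality of $\Aop_{-1}$ back to $\Aop$. For (2), the extended functional calculus commutes with similarities, so $(\Aop_{-1})^{it}=T_{-1}\Aop^{it}T_{-1}^{-1}$, and since $T_{-1}$ is isometric,
\[
\|(\Aop_{-1})^{it}\|_{\calL(X_{-1,\Aop})}=\|\Aop^{it}\|_{\calL(X)},
\]
so $\Aop_{-1}\in\BIP(X_{-1,\Aop})$ with identical operator norms. Item (3) is analogous: for $\phi>\angH(\Aop)$ and $f\in H^\infty_0(\Sigma_\phi)$, the contour integral defining $f(\Aop_{-1})$ satisfies $f(\Aop_{-1})=T_{-1}f(\Aop)T_{-1}^{-1}$, so $\|f(\Aop_{-1})\|_{\calL(X_{-1,\Aop})}=\|f(\Aop)\|_{\calL(X)}\leq C\|f\|_{H^\infty(\Sigma_\phi)}$; both inequalities between the angles then follow exactly as in (1).

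The only mildly delicate step is verifying that the similarity identity $f(\Aop_{-1})=T_{-1}f(\Aop)T_{-1}^{-1}$ holds in the full generality required, in particular for $f(z)=z^{it}$ via the extended calculus. This reduces to checking the identity first on $H^\infty_0$-functions using the Dunford integral and the intertwining relation for resolvents, and then extending through the standard regularization procedure of the extended calculus; since $T_{-1}$ is continuous and boundedly invertible, both the approximation and the passage to the limit commute with the similarity transformation, so no essential obstacle arises.
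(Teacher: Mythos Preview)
Your argument is correct and is precisely the standard similarity route: everything is transported through the isometric isomorphism $T_{-1}$, so resolvents, imaginary powers, and the Dunford integral all intertwine with identical operator norms. The one step you treat tersely---that $\Aop_{-1}$ is the closure of $\Aop$ in $X_{-1,\Aop}$---is fine once unpacked: you show $\Aop_{-1}$ is a closed extension of $\Aop$, and for any $x\in X=\Do(\Aop_{-1})$ you approximate by $x_n\in\Do(\Aop)$ in the $X$-norm, whence $x_n\to x$ in $X_{-1,\Aop}$ and $\Aop x_n=\Aop_{-1}x_n\to\Aop_{-1}x$ in $X_{-1,\Aop}$ by the isometry identity $\|\Aop_{-1}y\|_{-1}=\|y\|_X$ you derived.

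For comparison: the paper does not actually prove this proposition. It is stated without proof, relying on the references \cite[Chapter~5]{Am} and \cite[Section~5.3]{Haase:2} given at the start of the appendix, together with the remark preceding the statement that ``$\Aop$ is similar to $\Aop_{-1}$''. So your proposal supplies exactly the details the paper omits, via the same underlying mechanism (similarity through $T_{-1}$) that the paper alludes to.
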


The previous proposition shows that if $\Aop_{-1}$ is sectorial, then the fractional power $(\Aop_{-1})^{\alpha}$ for $\alpha>0$ are well defined closed linear operators on $X_{-1,\Aop}$. Let us denote by $X_{1-\alpha,\Aop}$ the domain of $(\Aop_{-1})^{\alpha}$,
\begin{equation}
\label{eq:complete_scale}
X_{-1+\alpha,\Aop}:=\big(\Do((\Aop_{-1})^{\alpha}),\|(\Aop_{-1})^{\alpha}\cdot\|_{X}\big),\qquad \alpha\geq 0.
\end{equation}
By Proposition \ref{prop:A_first_extrapolation} one has $\Do(\Aop_{-1})=X$ and thus $X_{0,\Aop}=X$.

Let $\alpha\geq -1$ and let $\Aop_{\alpha}$ be the realization of $\Aop_{-1}$ on $X_{\alpha,\Aop}$, i.e.\
\begin{align*}
\Do(\Aop_{\alpha})&:=\{x\in X_{\alpha,\Aop}\,:\,\Aop_{-1}x \in X_{\alpha,\Aop} \},\\
\Aop_{\alpha}x&:=\Aop_{-1}x,\text{ if }x\in \Do(\Aop_{\alpha}),\qquad x\in \Do(\Aop_{\alpha}).
\end{align*}
Note that $\Aop_{0}=\Aop$ and $\Aop_{\alpha}=\Aop_{-1}$ if $\alpha=-1$. Under suitable assumptions, $(X_{\alpha,\Aop})_{\alpha\geq -1}$ becomes an interpolation scale with respect to complex interpolation (see \cite[Theorem 6.6.9]{Haase:2}):
\begin{proposition}
\label{prop:interpolation_extrapolation_scale}
Let $\Aop\in \BIP(X)$ be such that $0\in \rho(\Aop)$. Let $(X_{\alpha,\Aop})_{\alpha\geq -1}$ be as above. Then
\begin{equation*}
X_{\alpha(1-\theta)+\beta\theta,\Aop}=[X_{\alpha,\Aop},X_{\beta,\Aop}]_{\theta}, \quad \alpha,\beta\geq -1,\;\theta\in (0,1).
\end{equation*}
isomorphically.
\end{proposition}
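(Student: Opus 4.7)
The plan is to identify every $X_{\gamma,\Aop}$ for $\gamma\geq -1$ with the domain $\Do((\Aop_{-1})^{\gamma+1})$ of a fractional power of the extrapolated operator acting on the fixed base space $X_{-1,\Aop}$, and then invoke the standard complex interpolation identity for $\BIP$-operators. First I would check that the definition \eqref{eq:complete_scale} of $X_{\gamma,\Aop}$ produces, up to equivalence of norms, the Banach space $(\Do((\Aop_{-1})^{\gamma+1}),\|(\Aop_{-1})^{\gamma+1}\cdot\|_{X_{-1,\Aop}})$. This is a straightforward bookkeeping step using the fact that $T_{-1}$, defined in \eqref{eq:A_first_extrapolation}, is an isometric isomorphism between consecutive levels of the scale and that $\Aop_{-1}$ commutes with itself in the functional calculus.

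Next, by Proposition \ref{prop:A_first_extrapolation}\eqref{it:A_extrapolated_BIP} applied to $\Aop$, the operator $\Aop_{-1}$ lies in $\BIP(X_{-1,\Aop})$, and $0\in\rho(\Aop_{-1})$ since $\Aop_{-1}$ is similar to $\Aop$ via $T_{-1}$. Recall the identity cited in Section \ref{ss:sectorial_calculus}: if $B\in \BIP(Y)$ and $0\in\rho(B)$, then $[Y,\Do(B)]_{\theta}=\Do(B^{\theta})$ for every $\theta\in(0,1)$. By a rescaling argument—applying the Banach space isomorphism $(\Aop_{-1})^{c}\colon \Do((\Aop_{-1})^{c+s})\to \Do((\Aop_{-1})^{s})$ simultaneously at both endpoints, and using functoriality of complex interpolation—this upgrades to the two-sided identity
\[
[\Do((\Aop_{-1})^{a}),\Do((\Aop_{-1})^{b})]_{\theta}=\Do\bigl((\Aop_{-1})^{(1-\theta)a+\theta b}\bigr),\qquad 0\leq a\leq b,\ \theta\in(0,1).
\]

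Applying this with $a=\alpha+1$ and $b=\beta+1$ (without loss of generality $\alpha\leq \beta$), combined with the identification from the first step, then gives
\[
[X_{\alpha,\Aop},X_{\beta,\Aop}]_{\theta}=\Do\bigl((\Aop_{-1})^{(1-\theta)(\alpha+1)+\theta(\beta+1)}\bigr)=X_{(1-\theta)\alpha+\theta\beta,\Aop},
\]
as required. The main obstacle is the rescaling step: one must verify that $(\Aop_{-1})^{c}$ really furnishes a topological isomorphism between levels of the scale that intertwines complex interpolation, which ultimately rests on the multiplicativity $(\Aop_{-1})^{s+c}=(\Aop_{-1})^{s}(\Aop_{-1})^{c}$ of the extended functional calculus and on the $\BIP$ property—both already secured by Proposition \ref{prop:A_first_extrapolation}. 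Once this is in place, the conclusion is purely formal.
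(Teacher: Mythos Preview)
Your approach is correct and is essentially the standard argument behind \cite[Theorem 6.6.9]{Haase:2}, which is exactly what the paper cites in lieu of a proof; the paper does not give its own argument for this proposition. One minor point: the paper's definition \eqref{eq:complete_scale} writes the norm as $\|(\Aop_{-1})^{\alpha}\cdot\|_{X}$ rather than $\|(\Aop_{-1})^{\alpha}\cdot\|_{X_{-1,\Aop}}$, but these agree up to the isometry $T_{-1}$, so your first bookkeeping step is indeed straightforward as you claim.
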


If the assumption of the previous proposition holds, then we say that $(X_{\alpha,\Aop},\Aop_{\alpha})_{\alpha\geq -1}$ is the interpolated-extrapolated scale of $\Aop$.

In applications it will be useful to have the description of the spaces $X_{-\alpha,\Aop}$ for $\alpha\in (0,1)$ given in \cite[Chapter 5, Theorem 1.4.9]{Am}.
Here $\Aop^*$ denotes the adjoint operator of $\Aop$.

\begin{theorem}
\label{t:duality}
Let $\Aop$ a sectorial operator on a reflexive Banach space $X$ such that $0\in \rho(\Aop)$. Then for each $\vartheta\in [0,1]$, $X_{-\vartheta,\Aop}$ is isomorphic to the dual of the space $X_{\vartheta,\Aop^*}=(\Do((\Aop^*)^{\vartheta}),\|(\Aop^*)^{\vartheta}\cdot\|_X)$. More concisely,
$$
X_{-\vartheta,\Aop}=(X_{\vartheta,\Aop^*})^*.
$$
\end{theorem}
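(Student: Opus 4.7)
The plan is to follow the classical duality argument for interpolation--extrapolation scales, based on three ingredients: an explicit formula for the norm on $X_{-\vartheta,\Aop}$ in terms of $\Aop^{-\vartheta}$, the duality identity $(\Aop^*)^{\vartheta} = (\Aop^{\vartheta})^*$ for fractional powers of sectorial operators on a reflexive space, and a density argument combined with reflexivity to upgrade an isometric embedding to an isomorphism.

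First I would unpack the norm on $X_{-\vartheta,\Aop} = \Do((\Aop_{-1})^{1-\vartheta})$. Recalling that $\|y\|_{X_{-1,\Aop}} = \|\Aop_{-1}^{-1}y\|_X$ by construction and that $(\Aop_{-1})^{-\vartheta}|_{X} = \Aop^{-\vartheta}$, I would verify that $X \hookrightarrow X_{-\vartheta,\Aop}$ densely, with
\[
\|x\|_{X_{-\vartheta,\Aop}} = \|\Aop_{-1}^{-1}(\Aop_{-1})^{1-\vartheta}x\|_X = \|\Aop^{-\vartheta}x\|_X \qquad (x \in X).
\]
Next, I would invoke the standard fact that, under the hypotheses, $\Aop^*$ is sectorial on $X^*$ with $0 \in \rho(\Aop^*)$ and $(\Aop^*)^{\vartheta} = (\Aop^{\vartheta})^*$, so that in particular $(\Aop^*)^{-\vartheta} = (\Aop^{-\vartheta})^*$ is bounded on $X^*$ and
\[
\langle \Aop^{-\vartheta}x, \psi\rangle_{X,X^*} = \langle x, (\Aop^*)^{-\vartheta}\psi\rangle_{X,X^*}\qquad (x \in X,\ \psi \in X^*).
\]

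For the isometric embedding, given $\varphi \in X_{\vartheta,\Aop^*} = \Do((\Aop^*)^{\vartheta})$, I set $\psi := (\Aop^*)^{\vartheta}\varphi$, so $\varphi = (\Aop^*)^{-\vartheta}\psi$ and $\|\varphi\|_{X_{\vartheta,\Aop^*}} = \|\psi\|_{X^*}$. Then the duality identity gives $\langle x, \varphi\rangle = \langle \Aop^{-\vartheta}x, \psi\rangle$ for $x \in X$, and Hahn--Banach together with Step 1 yields
\[
\|x\|_{X_{-\vartheta,\Aop}} = \|\Aop^{-\vartheta}x\|_X = \sup_{\|\psi\|_{X^*}\leq 1}|\langle \Aop^{-\vartheta}x,\psi\rangle| = \sup_{\varphi\in X_{\vartheta,\Aop^*}\setminus\{0\}}\frac{|\langle x,\varphi\rangle|}{\|\varphi\|_{X_{\vartheta,\Aop^*}}}.
\]
Since $X$ is dense in $X_{-\vartheta,\Aop}$, this extends by continuity to an isometric linear embedding $J\colon X_{-\vartheta,\Aop} \hookrightarrow (X_{\vartheta,\Aop^*})^*$. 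For surjectivity, note that $\varphi \mapsto (\Aop^*)^{\vartheta}\varphi$ is an isometric isomorphism from $X_{\vartheta,\Aop^*}$ onto $X^*$; any $\ell \in (X_{\vartheta,\Aop^*})^*$ transports via this isomorphism to a bounded linear functional on $X^*$, which by reflexivity of $X$ is represented by some $y \in X$. Setting $x$ to be the unique element of $X_{-\vartheta,\Aop}$ corresponding to $y \in X$ under the isometric isomorphism induced by $(\Aop_{-1})^{\vartheta}$ between $X$ and $X_{-\vartheta,\Aop}$, one checks $J(x) = \ell$.

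The main obstacle will be Step 2, namely rigorously establishing the identity $(\Aop^*)^{\vartheta} = (\Aop^{\vartheta})^*$ with matching domains, since fractional powers are defined via a somewhat indirect functional calculus and adjoints of unbounded operators on non-Hilbert reflexive spaces require care. Once this is settled (or quoted from \cite{Haase:2}), the remaining steps are essentially a bookkeeping exercise combining Hahn--Banach, density, and reflexivity.
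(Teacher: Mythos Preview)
The paper does not give a proof of this theorem; it merely states the result and attributes it to \cite[Chapter 5, Theorem 1.4.9]{Am}. Your proposal is therefore not comparable to the paper's proof, but it is a correct and self-contained argument along the classical lines (and essentially the one behind the cited reference): reduce the norm on $X_{-\vartheta,\Aop}$ to $\|\Aop^{-\vartheta}\cdot\|_X$, invoke the duality $(\Aop^*)^{\vartheta}=(\Aop^{\vartheta})^*$ for fractional powers on reflexive spaces, and use Hahn--Banach plus reflexivity to upgrade the resulting isometric embedding to an isomorphism. Your identification of Step~2 as the only nontrivial input is accurate; this identity (with matching domains) is indeed available in \cite{Haase:2}, so you may simply quote it.
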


Due to Proposition \ref{prop:A_first_extrapolation}, the construction can be iterated again and one can construct a family of super-spaces $(X_{-n,\Aop})_{n\geq 1}$ and operators $(\Aop_{-n})_{n\geq 1}$ with analogous properties. Since it will be not used in the paper we refer to \cite{Am} and \cite{Haase:2} for the complete construction.

In the following examples we look at operators which will be used later.
\begin{example}[Dirichlet Laplacian]
\label{ex:extrapolated_Laplace_dirichlet}
In this example we specialize the above construction to the strong Dirichlet Laplacian $\Dd_{q}$ where $q\in (1,\infty)$. In this example we assume that $\Dom$ is a $C^2$-domain in $\R^d$ with compact boundary. Note that exterior domains are allowed. To begin, let us set $W^{1,q}_0(\Dom):=\{u\in W^{1,q}(\Dom)\,:\,u|_{\partial \Dom}=0\}$. The strong Dirichlet Laplacian is defined as
\begin{equation}
\label{eq:strong_Dirichlet_Laplacian}
\Dd_q:W^{2,q}(\Dom)\cap W^{1,q}_0(\Dom)\subseteq L^q(\Dom) \to L^q(\Dom),\qquad \Dd_q f:=\Delta f.
\end{equation}
By \cite{DDHPV}, there exists $c>0$ such that $A_q:=c \,I -\Dd_q$ has a bounded $H^{\infty}$-calculus on $L^q(\Dom)$ with angle $\angH(A_q)<\pi/2$. If $\Dom$ is bounded, then we may set $c=0$.

Thus, $A_q$ generates an extrapolated-interpolated scale, which will be denoted by
$$\big(\HD^{2\alpha,q}(\Dom),A_{2\alpha,q}\big)_{\alpha\in [-1,\infty)}.$$
Therefore, $\HD^{2,q}(\Dom)=W^{2,q}(\Dom)\cap W^{1,q}_0(\Dom)$ and $\HD^{0,q}(\Dom)=L^q(\Dom)$.
By Proposition \ref{prop:interpolation_extrapolation_scale}, for all $ -2\leq s_1<s_2<\infty$ one has
\begin{equation}
\label{eq:HD_complex_interpolation}
\HD^{s,q}(\Dom)=[\HD^{s_1,q}(\Dom),\HD^{s_2,q}(\Dom)]_{\theta},\qquad \vartheta\in (0,1),s:=(1-\theta) s_1+\theta s_2.
\end{equation}
Moreover, by Theorem \ref{t:duality},
\begin{equation}
\label{eq:HD_duality}
\HD^{-s,q}(\Dom)=(\HD^{s,q'}(\Dom))^*, \qquad s\in(0,2).
\end{equation}
We define the extrapolated Dirichlet Laplacian as:
\begin{equation}
\label{eq:extrapolated_Laplacian_D}
\Dd_{s,q}:=- A_{s,q}+c \I,\qquad s\geq -2.
\end{equation}
Note that $\Dd_{0,q}=\Dd_{q}$. By \cite{Se} and \eqref{eq:HD_complex_interpolation} one has the following identification:
\begin{equation}
\label{eq:HD_identification}
\HD^{s,q}(\Dom)=
\begin{cases}
H^{s,q}(\Dom)\qquad &\text{ if }s\in (0,1/q),\\
\{H^{s,q}(\Dom)\,:\,u|_{\partial\Dom}=0\}\qquad &\text{ if }s\in (1/q,2).
\end{cases}
\end{equation}
Here $H^{s,q}(\Dom)$ denotes the Bessel potential spaces on domains (see \cite[Section 4.3.1]{Tri95}).  Note that we avoided the $s=1/q$ as in this case the description is more complicated. The latter identity implies $\HD^{1,q}(\Dom)=W^{1,q}_0(\Dom)$, and by \eqref{eq:HD_duality} one has
$$
\HD^{-1,q}(\Dom)=(W^{1,q'}_0(\Dom))^*=:W^{-1,q}(\Dom).
$$
The above identities and an integration by parts argument show that $\Dd_{-1,q}$ is the `weak Dirichlet Laplacian', i.e.\ $\Dd_{-1,q}:W^{1,q}_0(\Dom)\subseteq W^{-1,q}(\Dom)\to W^{-1,q}(\Dom)$ and
\begin{equation}
\label{eq:weak_dirichlet_laplacian}
 -\l g,\Dd_{-1,q}f\r=\int_{\Dom}\nabla g\cdot \nabla f\,dx,\quad  f\in W^{1,q}_0(\Dom),g\in W^{1,q'}_0(\Dom);
\end{equation}
see \cite[Example 3]{CriticalQuasilinear} for details. The same integration by parts argument, allows us to consider the divergence operator $\div:=\sum_{j=1}^d \partial_j$ as a map $\div :L^q(\Dom;\R^d)\to \HD^{-1,q}(\Dom)$ defined by
\begin{equation}
\label{eq:weak_divergence_operator}
 - \l g,\div F\r:=\int_{\Dom} F\cdot \nabla g\,dx, \qquad \forall F\in L^q(\Dom;\R^d),g\in W^{1,q'}_0(\Dom).
\end{equation}

For later use, we discuss Sobolev type embedding results for $\HD$-spaces. For $s_0,s_1\geq -1$, and $1<q_0<q_1<\infty$ such that $s_0-d/q_0\geq s_1-d/q_1$, one has
\begin{equation}
\label{eq:HD_embedding}
\HD^{s_0,q_0}(\Dom)\hookrightarrow \HD^{s_1,q_1}(\Dom).
\end{equation}
Indeed, this follows from the steps below.
\begin{enumerate}[{\rm(1)}]
\item\label{it:HD_sob_proof_1} If $s_0,s_1\geq 0$, then \eqref{eq:HD_embedding} follows from \eqref{eq:HD_identification} and the embedding for $H$-spaces (see \cite[Theorem 4.6.1]{Tri95}).
\item\label{it:HD_sob_proof_2} If $s_0,s_1\leq 0$, then \eqref{eq:HD_embedding} follows from \eqref{eq:HD_identification} and the duality \eqref{eq:HD_duality}.
\item If $s_1<0<s_0$ are arbitrary, then let $p\in (q,\infty)$ be such that $s_0-d/q_0=-d/p$. Then \eqref{eq:HD_embedding} follows from
$$
\HD^{s_0,q_0}(\Dom)\hookrightarrow \HD^{0,p}(\Dom)\hookrightarrow \HD^{s_1,q_1}(\Dom).
$$
\end{enumerate}

We conclude this example by looking at real interpolation spaces. For any $\theta\in (0,1)$ and $q,p\in (1,\infty)$ we define
\begin{equation*}
\BD^{-2+4\theta}_{q,p}(\Dom):=(\HD^{-2,q}(\Dom),\HD^{2,q}(\Dom))_{\theta,p}.
\end{equation*}
By \cite[Theorem 4.7.2]{BeLo} and \eqref{eq:HD_complex_interpolation} for $-2\leq s_0<s_1$ and $\theta\in (0,1)$ one has
\begin{equation}
\label{eq:def_BD}
\BD^{s}_{q,p}(\Dom)=(\HD^{s_0,q}(\Dom),\HD^{s_1,q}(\Dom))_{\theta,p},\qquad \text{where }s:=(1-\theta)s_0+\theta s_1.
\end{equation}
The notation $\BD$ is motivated by the following identification (see \cite{Grisvard}):
\begin{equation}
\label{eq:BD_identifications}
\BD^{s}_{q,p}(\Dom)=
\begin{cases}
B^{s}_{q,p}(\Dom),&\qquad s\in(0,1/q),\\
\{u\in B^{s}_{q,p}(\Dom)\,:\,u|_{\partial\Dom}=0\},&\qquad s\in(1/q,2).
\end{cases}
\end{equation}
Here $B^{s}_{q,p}(\Dom)$ denotes the usual Besov spaces on domains (see \cite[Section 4.3.1]{Tri95}).
\end{example}

The same reasoning can be applied to other boundary conditions.

\begin{example}[Neumann Laplacian]
\label{ex:Laplacian_Neumann_scales}
In this example we look at the Neumann Laplacian $\DN_{q}$. Here we assume that $\Dom\subseteq \R^d$ is a bounded $C^2$-domain. As usual, $\n$ denotes the exterior normal field on $\partial\Dom$. Let $q\in (1,\infty)$ and $\Do(\DN_q):=\{u\in W^{2,q}(\Dom)\,:\,\partial_{\n} u=0\}$, the Neumann Laplacian is given by
$$
\DN_q:\Do(\DN_q)\subseteq L^q(\Dom)\to L^q(\Dom),\qquad \DN f:=\Delta f,\text{ for } f\in \Do(\DN_q).
$$
By \cite{DDHPV}, there exists $c>0$ such that $A_{q}^N:=c-\DN_q$ has a bounded $H^{\infty}$-calculus with angle $<\pi/2$. Therefore, $A_q^N$ generates an interpolated-extrapolated scale $(\HN^{2\alpha,q}(\Dom),A_{2\alpha,q}^N)_{\alpha\in [-1,\infty)}$. The extrapolation Neumann Laplacian is given by:
$$
\DN_{2\alpha,q}:=A_{2\alpha,q}^N-c \I,\qquad \forall \alpha\geq -1.
$$
Below we list the main properties and further definition related to the $\HN$-scale. Their proofs are similar to the one in Example \ref{ex:extrapolated_Laplace_dirichlet}.
\begin{itemize}
\item $\HN^{0,q}(\Dom)=L^q(\Dom)$ and $\HN^{2,q}(\Dom)=\Do(\DN_q)$.
\item Complex interpolation property: for all $ -2\leq s_1<s_2<\infty$ and $\theta\in (0,1)$,
\begin{equation*}
\HN^{s,q}(\Dom)=[\HN^{s_1,q}(\Dom),\HN^{s_2,q}(\Dom)]_{\theta},\quad s=(1-\theta) s_1+\theta s_2.
\end{equation*}
\item Duality: By Theorem \ref{t:duality},
\begin{equation*}
\HN^{-s,q}(\Dom)=(\HN^{s,q'}(\Dom))^*, \qquad s\in(0,2).
\end{equation*}
\item Identification of $\HN^{s,q}(\Dom)$:
\begin{equation*}
\HN^{s,q}(\Dom)=
\begin{cases}
H^{s,q}(\Dom) &\text{ if }s\in (0,1+1/q),\\
\{H^{s,q}(\Dom)\,:\,\partial_{\n}u|_{\partial\Dom}=0\} &\text{ if }s\in (1+1/q,2).\\
\end{cases}
\end{equation*}
\item Real interpolation: for $p\in (1,\infty)$, $\theta\in (0,1)$ and
$$
\BN^{-2+4\theta}_{q,p}(\Dom):=(\HN^{-2,q}(\Dom),\HN^{2,q}(\Dom))_{\theta,p}=(\HN^{-2,q}(\Dom),\HN^{2,q}(\Dom))_{\phi,p},
$$
provided $-2\leq s_0<s_1$, $\phi\in (0,1)$ and $-2+4\theta=(1-\phi)s_0+\phi s_1$.
\item Identification of $\BN^{s,q}(\Dom)$: For any $q,p\in (1,\infty)$
\begin{equation*}
\BN^{s}_{q,p}(\Dom)=
\begin{cases}
B^{s}_{q,p}(\Dom),& \quad s\in (-2,1+1/q),\\
\{u\in B^{s}_{q,p}(\Dom)\,:\,\partial_{\n}u|_{\partial\Dom}=0\},&\quad   s\in (1+1/q,2).
\end{cases}
\end{equation*}
\item Sobolev embeddings: For any $s_0,s_1\geq -1$, $1<q_0<q_1<\infty$ such that $s_0-d/q_0\geq s_1-d/q_1$ one has
\begin{equation}
\label{eq:HN_Sob_embeddings}
\HN^{s_0,q_0}(\Dom)\hookrightarrow \HN^{s_1,q_1}(\Dom).
\end{equation}
\end{itemize}
\end{example}

In the following example we look at powers of the Neumann Laplace operator.

\begin{example}[Bi-Laplacian]
\label{ex:Cahn_Hilliard_scale}
In this example we look at Bi-Laplace operators with Neumann-type boundary conditions. Here, $\Dom\subseteq \R^d$ is a bounded $C^4$-domain and $\n$ denotes the exterior normal derivative on $\partial\Dom$. Let $q\in (1,\infty)$ and set
$\Do(\Dn_{q}^2):=\{u\in H^{4,q}(\Dom)\,:\,\partial_{\n}u|_{\partial\Dom}=\partial_{\n}\Delta u|_{\partial\Dom}=0\}.$
The Bi-Laplacian with Neumann type boundary conditions is given by
$$
\Dn_{q}^2:\Do(\Dn_{q}^2)\subseteq L^q(\Dom)\to L^q(\Dom),\quad \Dn_{q}^2 f:=\Delta^2 f.
$$
By \cite{DDHPV}, it follows that there exists $c>0$ such that $A^{\n}_q:=cI+\Dn_q^2$ has a bounded $H^{\infty}$-calculus with angle $\angH(A^{\n}_q)<\pi/2$. In particular, $A^{\n}_q$ generates an extrapolated-interpolated scale
$$
(\Hn^{4\alpha,q}(\Dom),A_{4\alpha,q}^{\n})_{\alpha\in  [-1,\infty)}.
$$
Let us denote by $\Delta_{s,q}^2$ the extrapolated Bi-Laplacian
\begin{equation}
\label{eq:Bilaplacian_extrapolated}
\Dn_{s,q}^2:=A^{\n}_{s,q}-cI,\quad s\in (-4,0),\,q\in (1,\infty).
\end{equation}
Let us list some properties which will be needed in Subsection \ref{ss:CH}. The proofs are similar to the one given in Example \ref{ex:extrapolated_Laplace_dirichlet}.
\begin{itemize}
\item $\Hn^{0,q}(\Dom)=L^q(\Dom)$ and $\Hn^{2,q}(\Dom)=\Do(\Dn_q^2)$.
\item Complex interpolation property: for all $ -4\leq s_1<s_2<\infty$ and $\theta\in (0,1)$,
\begin{equation}
\label{eq:Hn_complex_interpolation}
\Hn^{s,q}(\Dom)=[\Hn^{s_1,q}(\Dom),\Hn^{s_2,q}(\Dom)]_{\theta},\quad s=(1-\theta) s_1+\theta s_2.
\end{equation}
\item Duality: By Theorem \ref{t:duality},
\begin{equation*}
\Hn^{-s,q}(\Dom)=(\Hn^{s,q'}(\Dom))^*, \qquad s\in(0,4).
\end{equation*}
\item Identification of $\Hn^{s,q}(\Dom)$:
\begin{equation}
\label{eq:Hn_identification}
\Hn^{s,q}(\Dom)=
\begin{cases}
H^{s,q}(\Dom) &\text{ if }s\in (0,1+1/q),\\
\{H^{s,q}(\Dom)\,:\,\partial_{\n}u|_{\partial\Dom}=0\} &\text{ if }s\in (1+1/q,3+1/q),\\
\{H^{s,q}(\Dom)\,:\,\partial_{\n}u|_{\partial\Dom}=0,\partial_{\n}\Delta u|_{\partial \Dom}=0\} &\text{ if }s\in (3+1/q,4).\\
\end{cases}
\end{equation}
\item Real interpolation: for $p\in (1,\infty)$, $\theta\in (0,1)$ and
\begin{equation}
\label{eq:def_Bn}
\Bn^{-4+8\theta}_{q,p}(\Dom):=(\Hn^{-4,q}(\Dom),\Hn^{4,q}(\Dom))_{\theta,p}
=(\Hn^{-4,q}(\Dom),\Hn^{4,q}(\Dom))_{\phi,p};
\end{equation}
provided $-4\leq s_0<s_1$, $\phi\in (0,1)$ and $-4+8\theta=(1-\phi)s_0+\phi s_1$.
\item Identification of $\Bn^{s,q}(\Dom)$: For any $q,p\in (1,\infty)$
\begin{equation}
\label{eq:Bn_identification}
\Bn^{s}_{q,p}(\Dom)=
\begin{cases}
B^{s}_{q,p}(\Dom),&\text{if } s\in (-2,1+1/q),\\
\{u\in B^{s}_{q,p}(\Dom)\,:\,u|_{\partial\Dom}=0\},&\text{if } s\in (1+1/q,3+1/q),\\
\{u\in B^{s}_{q,p}(\Dom)\,:\,u|_{\partial\Dom}=0,\partial_{\n}\Delta u|_{\partial\Dom}=0\},
&\text{if } s\in (3+1/q,4).
\end{cases}
\end{equation}
\item Sobolev embeddings: For any $s_0,s_1\geq -1$, $1<q_0<q_1<\infty$ such that $s_0-d/q_0\geq s_1-d/q_1$ one has
\begin{equation}
\label{eq:Hn_Sob_embeddings}
\Hn^{s_0,q_0}(\Dom)\hookrightarrow \Hn^{s_1,q_1}(\Dom).
\end{equation}
\end{itemize}
We conclude this example by looking at the Laplace operators on $\Hn^{-s,q}$-spaces. To this end, let us note that we can define $\Delta_{-4,q}:\Hn^{-2,q}(\Dom)\to\Hn^{-4,q}(\Dom)$ as
\begin{equation*}
\l \psi,\Delta_{-4,q} \phi\r:=\l \Delta \psi,\phi\r, \qquad \psi\in \Hn^{4,q'}(\Dom),\phi\in \Hn^{-2,q}(\Dom);
\end{equation*}
where we used that $\Hn^{-4,q}(\Dom)=(\Hn^{4,q'}(\Dom))^*$ and the fact that $\Delta\psi\in \Hn^{2,q'}(\Dom)$ by \eqref{eq:Hn_identification}. One can readily check that the above definition is consistent with the usual Laplacian provided $\phi\in \Hn^{2,q}(\Dom)$.

Since $\Delta_{0,q}:\Hn^{2,q}(\Dom)\to  L^q(\Dom)=\Hn^{0,q}(\Dom)$, by \eqref{eq:Hn_complex_interpolation} and interpolation, one gets
\begin{equation}
\label{eq:CH_Delta_extrapolated}
\Delta_{-2-s,q}:\Hn^{-s,q}(\Dom)\to \Hn^{-2-s,q}(\Dom),\qquad \text{boundedly for }s\in [-2,2].
\end{equation}
\end{example}

\bibliographystyle{alpha-sort}
\bibliography{literature}

\end{document}